\let\cal\mathcal
\def\AA{{\cal A}}
\def\BB{{\cal B}}
\def\CC{{\cal C}}
\def\DD{{\cal D}}
\def\EE{{\cal E}}
\def\FF{{\cal F}}
\def\GG{{\cal G}}
\def\LL{{\cal L}}
\def\PP{{\cal P}}
\def\QQ{{\cal Q}}
\def\RR{{\cal R}}
\def\TT{{\cal T}}
\def\UU{{\cal U}}
\def\VV{{\cal V}}
\def\WW{{\cal W}}
\def\XX{{\cal X}}
\def\YY{{\cal Y}}
\def\ZZ{{\cal Z}}
\let\blb\mathbb
\def\bF{{\blb F}} 
\def\bQ{{\blb Q}}
\def\bZ{{\blb Z}}
\def\bS{{\blb S}}
\def\bN{{\blb N}}
\def\bR{{\blb R}}
\def\bZ{{\blb Z}}
\let\frak\mathfrak
\def\aa{\frak{a}}
\def\bb{\frak{b}}
\def\Mod{\operatorname{Mod}}
\def\Modlfd{\operatorname{Mod^{lfd}}}
\def\mod{\operatorname{mod}}
\def\modc{\mod^{\rm{cfp}}}
\def\rad{\operatorname {rad}}
\def\Rep{\operatorname {Rep}}
\def\repc{\rep^{\rm{cfp}}}
\def\rep{\operatorname{rep}}
\def\Ext{\operatorname {Ext}}
\def\Hom{\operatorname {Hom}}
\def\End{\operatorname {End}}
\def\Fun{\operatorname {Fun}}
\def\im{\operatorname {im}}
\def\cone{\operatorname {cone}}
\def\coker{\operatorname {coker}}
\def\ker{\operatorname {ker}}
\def\Ker{\operatorname {ker}}
\def\End{\operatorname {End}}
\def\add{\operatorname {add}}
\def\exist{\exists}
\DeclareMathOperator{\Irr}{Irr}
\DeclareMathOperator{\colim}{colim}
\DeclareMathOperator{\Fac}{Fac}
\DeclareMathOperator{\ind}{ind}
\newcommand\Db{D^{b}}
\newcommand\DL{D_{\LL}}
\renewcommand\t{\tau}
\newtheorem{lemma}{Lemma}[section]
\newtheorem{proposition}[lemma]{Proposition}
\newtheorem{theorem}[lemma]{Theorem}
\newtheorem{corollary}[lemma]{Corollary}
\theoremstyle{definition}
\newtheorem{example}[lemma]{Example}
\newtheorem{definition}[lemma]{Definition}
\newtheorem{construction}[lemma]{Construction}
\theoremstyle{remark}
\newtheorem{remark}[lemma]{Remark}
\newtheorem{notation}[lemma]{Notation}
\newdimen\uboxsep \uboxsep=1ex
\def\uboxn#1{\vtop to 0pt{\hrule height 0pt depth 0pt\vskip\uboxsep
\hbox to 0pt{\hss #1\hss}\vss}}
\def\uboxs#1{\vbox to 0pt{\vss\hbox to 0pt{\hss #1\hss}
\vskip\uboxsep\hrule height 0pt depth 0pt}}
\def\Ob{\operatorname{Ob}}
\def\COrd{\mathfrak{C.Ord}}
\def\LOrd{\mathfrak{L.Ord}}
\def\arc{\operatorname{arc}}
\newcommand\exa{\nopagebreak \begin{center}\smallskip \nopagebreak               \begin{minipage}[t]{6cm}\sloppy}
\newcommand\exb{\end{minipage}\kern 1cm\begin{minipage}[t]{8cm}\sloppy}
\newcommand\exc{\end{minipage}\kern -3cm \smallskip\end{center}}
\renewcommand{\emptyset}{\varnothing}
\title{2-Calabi-Yau categories with a directed cluster-tilting subcategory}
\author{Jan \v{S}\v{t}ov\'{\i}\v{c}ek}
\address{Jan \v{S}\v{t}ov\'{\i}\v{c}ek\\ Dept. of Algebra \\ Charles University in Prague \\ Sokolovsk\'{a} 83 \\ 186 75 Praha 8 \\ Czech Republic}
\email{stovicek@karlin.mff.cuni.cz}
\author{Adam-Christiaan van Roosmalen}
\address{Adam-Christiaan van Roosmalen\\ Hasselt University \\ Agoralaan - gebouw D \\ B-3590 Diepenbeek}
\email{adamchristiaan.vanroosmalen@uhasselt.be}
\begin{document}
\date{\today}

\bibliographystyle{amsplain}

\begin{abstract}
As a generalization of acyclic 2-Calabi-Yau categories, we consider 2-Calabi-Yau categories with a directed cluster-tilting subcategory; we study their cluster-tilting subcategories and the cluster combinatorics that they encode.  We show that such categories have a cluster structure.

Triangulated 2-Calabi-Yau categories with a directed cluster-tilting subcategory are closely related to representations of certain semi-hereditary categories, more specifically to representations of thread quivers.  Thread quivers are a tool to classify and study certain semi-hereditary categories using both quivers and linearly ordered sets (threads).

We study the case where the thread quiver consists of a single thread (so that representations of this thread quiver correspond to representations of some linearly ordered set), and show that, similar to the case of a Dynkin quiver of type $A$, the cluster-tilting subcategories can be understood via triangulations of an associated cyclically ordered set.

In this way, we gain insight into the structure of the cluster-tilting subcategories of 2-Calabi-Yau categories with a directed cluster-tilting subcategory.  As an application, we show that every 2-Calabi-Yau category which admits a directed cluster-tilting subcategory with countably many isomorphism classes of indecomposable objects has a cluster-tilting subcategory $\VV$ with the following property: any rigid object in the cluster category can be reached from $\VV$ by finitely many mutations.  This implies that there is a cluster map which is defined on all rigid objects, and thus that there is a cluster algebra whose cluster variables are exactly given by the rigid indecomposable objects.
\end{abstract}

\subjclass[2010]{18E30, 16G70, 13F60}

\maketitle

\tableofcontents

\section{Introduction}

Cluster algebras, introduced by Fomin and Zelevinsky in ~\cite{FominZelevinsky02} in an attempt to understand canonical bases and total positivity for algebraic groups~\cite{Lusztig90,Lusztig94}, provide a fascinating meeting point for diverse fields of algebra, geometry and combinatorics.  We refer to~\cite{Keller10} for several examples and many further references.  There is a  particularly fruitful connection between cluster algebras and representation theory, emphasized in~\cite{Keller10} and originating from~\cite{BuanMarshReinekeReitenTodorov06}: to triangulated or stably 2-Calabi-Yau categories with cluster-tilting subcategories.  Such 2-Calabi-Yau categories can be seen as (additive) categorifications of cluster algebras and will be the focal point of this paper.

\smallskip

\begin{sloppypar}
Originally, cluster algebras were defined in~\cite{FominZelevinsky02} as constructively defined subalgebras of $R(x_1,\dots,x_n)$, where $R$ is a suitable commutative ring, equipped with a distinguished set of generators (cluster variables) grouped into overlapping subsets (clusters) of the same finite cardinality (the rank of the algebra in question); several important homogeneous coordinate rings (of Grassmannians, Schubert varieties or double Bruhat cells) are known to or expected to carry a cluster structure (\cite{Zelevinsky07}). Recently, there has been growing interest in the case where we start with the ring of rational functions $R(x_i \mid i \in I)$ in infinitely many indeterminates; see~\cite{GrabowskiGratz14,Gratz15,Ndoune16}.
\end{sloppypar}

Such an infinite rank setting appeared on the representation-theoretic side even more naturally---in an attempt to understand algebraic triangulated categories generated by a 2-spherical object~\cite{HolmJorgensen12}, to find explicit Frobenius models for triangulated orbit categories~\cite{IgusaTodorov13,IgusaTodorov15,IgusaTodorov15b}, or to understand certain intriguing aspects of the infinite cluster combinatorics per se~\cite{BaurGratz16,HolmJorgensen13,LiuPaquette15,Yang16}.  One standing problem which we tackle in this paper is to establish a well-behaved decategorification procedure relating the representation-theoretic approach to cluster algebras of infinite rank, enjoying the same favorable properties as the case of finite rank algebras.  We find a satisfactory solution in terms of the existence of a cluster structure (see Theorem \ref{theorem:IntroductionClusterStructure} below for the exact statement).

\smallskip

Part of the combinatorial datum for finite rank cluster algebras can often be encoded into a finite quiver, and the best understood cluster algebras are those where this quiver can be chosen to have no oriented cycles. On the representation-theoretic side, the corresponding object of interest is a triangulated 2-Calabi-Yau category $\CC$ with a cluster-tilting object $T$ such that $\Lambda = \End_\CC(T)$ has an acyclic (Gabriel) quiver. It has been shown in \cite{KellerReiten08} that the acyclicity of the quiver of $\Lambda$ is equivalent to a strong homological condition---that $\Lambda$ is a hereditary algebra.

In the infinite rank case we focus on a similar homological condition: analogously to~\cite{HolmJorgensen12,LiuPaquette15,Yang16}, we begin with a triangulated 2-Calabi-Yau category and we require that it admits a cluster-tilting subcategory $\TT$ where $\mod \TT$ is hereditary (we do not require that $\TT = \add T$, for some cluster-tilting object $T$).  In such a case, the acyclicity of the underlying (Auslander-Reiten) quiver of $\TT$ is no longer sufficient to conclude that $\mod \TT$ is hereditary (see Example \ref{example:ClusterD}).  This means that a na\"{\i}ve generalization of the criteria in \cite{KellerReiten08} to this setting fails.  However, it is possible to salvage the result by replacing the condition on the quiver of $\TT$ by the condition that there should be no cycles of morphisms in $\TT$ itself (see \S\ref{subsection:IntroDirected}). We call such cluster-tilting subcategories directed.  Somewhat surprisingly, we show that it is sufficient to only exclude cycles of morphisms in $\TT$ of length three; here is the statement of our first main result (see Theorem~\ref{theorem:Trianglefree} and Corollary \ref{corollary:Trianglefree} in the text):

\begin{theorem}\label{theorem:TriangleFreeIntroduction}
Let $\CC$ be a Krull-Schmidt 2-Calabi-Yau triangulated category with a cluster-tilting subcategory $\TT$. If $\TT$ is directed (or equivalently, has no cycles of length three), then $\mod \TT$ is hereditary with Serre duality.
\end{theorem}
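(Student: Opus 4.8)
The plan is to prove the contrapositive in the sharp form: \emph{if $\mod\TT$ is not hereditary, then $\TT$ contains a cycle of three non-isomorphisms}; this yields the theorem under either of the two hypotheses, and the Serre duality statement will then be extracted from the $2$-Calabi-Yau structure of $\CC$. Throughout I use the standard dictionary: since $\TT$ is cluster-tilting, $\CC = \TT * \TT[1]$, the functor $H = \Hom_\CC(\TT,-)\colon\CC\to\mod\TT$ is full and dense and induces an equivalence $\CC/[\TT[1]]\simeq\mod\TT$, so $\mod\TT$ is abelian with projective generators the $\Hom_\CC(\TT,T)$, and every $X\in\CC$ gives a projective presentation $\Hom_\CC(\TT,T_1)\to\Hom_\CC(\TT,T_0)\to H(X)\to 0$ coming from a triangle $T_1\to T_0\to X\to T_1[1]$ with $T_i\in\TT$ (using rigidity $\Hom_\CC(\TT,\TT[1])=0$). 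A first use of the Calabi-Yau property is to see that $\mod\TT$ is $1$-Gorenstein: the indecomposable injective at $T$ is $H(T[2])$ (since $\Hom_\CC(T',T)\cong D\Hom_\CC(T,T'[2])$), and the presentation it inherits from a triangle $T[1]\to S_1\to S_0\to T[2]$ is actually a projective resolution of length one, because $\Hom_\CC(\TT,T[1])=0=\Hom_\CC(\TT,S_i[1])$ by rigidity; applying this to $\TT^{\mathrm{op}}\subseteq\CC^{\mathrm{op}}$ bounds injective dimensions of projectives by one. Consequently a $\TT$-module of finite projective dimension has projective dimension $\le 1$.

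So assume $\mod\TT$ is not hereditary. Reducing (with the usual care about finiteness in the infinite-rank situation --- working with finitely presented modules and using that a syzygy of a finitely presented module is finitely presented, so minimal projective covers exist at each step) one finds an indecomposable $T_0\in\TT$ whose simple module $S_{T_0}$ has infinite projective dimension; by the Gorenstein bound this forces $\operatorname{pd}S_{T_0}\ge 2$, and moreover the second syzygy $\Omega^2 S_{T_0}$ is non-projective. Write the start $P_2\xrightarrow{d_2}P_1\xrightarrow{d_1}\Hom_\CC(\TT,T_0)\to S_{T_0}\to 0$ of the minimal projective resolution; here $P_1 = \bigoplus \Hom_\CC(\TT,T_1)$ over the arrows $T_1\to T_0$ of the Auslander-Reiten quiver of $\TT$, and $d_1$ on such a summand is (the class of) an irreducible morphism $g\colon T_1\to T_0$. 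Since $\Omega^2 S_{T_0}$ is non-projective it has a nonzero part $N$ without projective summands, whose top involves some $S_{T_2}$ with $T_2$ not a source; then $\Hom_\CC(\TT,T_2)$ is a summand of $P_2$, so $d_2$ has a nonzero radical component $\Hom_\CC(\TT,T_2)\to\Hom_\CC(\TT,T_1)$ to some summand $\Hom_\CC(\TT,T_1)$ of $P_1$, i.e.\ (by Yoneda) a nonzero radical morphism $h\colon T_2\to T_1$ in $\TT$ with an arrow $g\colon T_1\to T_0$ as above; any vanishing of a composite in $\mod\TT$ is a vanishing already in $\CC$ by $\Hom_\CC(\TT,\TT[1])=0$.

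\emph{The crux} is now the second and decisive use of the $2$-Calabi-Yau property, in the form of the analogue, for a cluster-tilting subcategory of a $2$-Calabi-Yau triangulated category, of the statement that $2$-Calabi-Yau tilted algebras are Jacobian algebras with relations dual to arrows: a duality $\operatorname{Ext}^2_{\mod\TT}(S_{T_0},S_{T_2})\cong D\operatorname{Ext}^1_{\mod\TT}(S_{T_2},S_{T_0})$ for non-projective simple modules. The left-hand side is nonzero because $\Hom_\CC(\TT,T_2)$ is a summand of $P_2$; the right-hand side has dimension the number of arrows $T_0\to T_2$ in the Auslander-Reiten quiver of $\TT$, so there is an irreducible morphism $\iota\colon T_0\to T_2$. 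Then $T_0\xrightarrow{\iota}T_2\xrightarrow{h}T_1\xrightarrow{g}T_0$ is a cycle of three non-isomorphisms in $\TT$, and the claim follows. I expect this paragraph to be the main obstacle: the ``relations are dual to arrows'' input is classically obtained from a minimal projective resolution of the diagonal bimodule (or from a Ginzburg dg-algebra model), both of which presuppose a finite quiver (and often a dg enhancement of $\CC$), so it must be re-derived here directly from the exchange triangles and the triangulated $2$-Calabi-Yau structure; one also has to be attentive to minimality (a non-minimal relation need not yield a returning arrow) and, as noted, to the finiteness issues in passing to simple modules.

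Finally, for Serre duality: a Hom-finite $2$-Calabi-Yau triangulated category has Serre functor $[2]$ and hence Auslander-Reiten triangles with $\tau_\CC=[1]$, and this structure descends along $\mod\TT\simeq\CC/[\TT[1]]$ --- now that $\mod\TT$ is hereditary and has enough projectives --- to a Serre functor on $\mod\TT$, i.e.\ functorial isomorphisms $\operatorname{Ext}^1_{\mod\TT}(M,N)\cong D\Hom_{\mod\TT}(N,FM)$ with $F$ an autoequivalence sending $\Hom_\CC(\TT,T)$ to the injective $H(T[2])$ and extended by heredity; concretely, combining the projective presentations above with the identity $\Hom_\CC(X,Y)\cong D\Hom_\CC(Y,X[2])$ produces the required pairing. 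This completes the proof.
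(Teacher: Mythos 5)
Your overall strategy (contrapositive: not hereditary $\Rightarrow$ a cycle of length three) agrees with the paper's, but the route through simple modules breaks down at exactly the step you yourself flag as ``the crux.'' The duality $\Ext^2_{\mod\TT}(S_{T_0},S_{T_2})\cong \Ext^1_{\mod\TT}(S_{T_2},S_{T_0})^*$ (``relations dual to arrows'') is a theorem about 2-Calabi--Yau \emph{tilted algebras}: it requires $\CC$ to be algebraic and $\TT=\add T$ for a cluster-tilting \emph{object}, and all known proofs pass through a dg/bimodule model. The present theorem assumes neither algebraicity of $\CC$ nor finiteness of $\ind\TT$, so this input is simply not available, and you offer no derivation of it. Since the returning morphism $\iota\colon T_0\to T_2$ that closes your 3-cycle is produced solely by this duality, the argument is incomplete at its decisive point. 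There is a second, independent problem earlier: in the infinite-rank setting a finitely presented $\TT$-module need not have finite length (already for $\TT=k\LL$ with $\LL$ an infinite linearly ordered set the indecomposables $X_{a,b}$ have infinite length), so the radical filtration need not terminate and ``some finitely presented module has projective dimension $\geq 2$'' does not formally yield ``some \emph{simple} module has projective dimension $\geq 2$.'' The usual d\'evissage along a composition series, which you invoke implicitly when passing to $S_{T_0}$, is unavailable.

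The paper's proof of Theorem~\ref{theorem:Trianglefree} avoids both issues by never reducing to simples and never invoking a Jacobian-type presentation. It takes an arbitrary finitely presented $X$ of projective dimension $\geq 2$, an indecomposable summand $T'_0$ of the \emph{second} term of a minimal projective resolution, and the two Iyama--Yoshino exchange triangles $T'_0\to D\to (T'_0)^*\to T'_0[1]$ and $(T'_0)^*\to E\to T'_0\to (T'_0)^*[1]$ from Corollary~\ref{corollary:Mutation}. Minimality of the resolution forces the relevant map out of $T'_0$ to factor through the left approximation $T'_0\to D$; the weak-kernel property forces a nonzero composite $D\to T_2$; and rigidity of $\TT$ ($\Ext^1(T'_0,T_2)=0$) lets one pull this back through $E$, yielding nonzero radical maps $T'_0\to D'\to E'\to T'_0$ with $D',E'$ indecomposable summands of $D,E$. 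This is precisely the ``re-derivation from the exchange triangles'' you predict would be needed, but it is a local triangle chase rather than a global duality between extension groups of simples; if you want to salvage your approach, you should replace your second and third paragraphs by this argument. (Your closing paragraph on Serre duality is also only a sketch; the paper gets this part from the fact that $\TT$ is then a semi-hereditary dualizing $k$-variety, see Corollary~\ref{corollary:Trianglefree}.)
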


Cluster-tilting subcategories also encode a more combinatorial side of 2-Calabi-Yau categories. The key concept in this context is the Iyama-Yoshino mutation (\cite{IyamaYoshino08}): if $\TT$ is a cluster-tilting subcategory of a 2-Calabi-Yau Krull-Schmidt category $\CC$, then for every indecomposable $T \in \ind \TT$ there is exactly one cluster-tilting subcategory $\TT^*$ of $\CC$ such that $\TT \cap \TT^* = \add(\ind \TT \setminus \{T\})$.  That is, we can exchange the indecomposable object $T \in \ind\TT$ with a unique $T^* \in \ind\TT^*$. Put differently, a 2-Calabi-Yau category has a weak cluster structure in the sense of \cite{BuanIyamaReitenScott09} (see also \S\ref{section:ClusterStructures}).  If the quivers of the cluster-tilting subcategories do not have loops or 2-cycles, then the Iyama-Yoshino mutation induces the usual Fomin-Zelevinsky cluster mutation on the underlying Auslander-Reiten quiver; one can say that the cluster-tilting subcategories encode a cluster structure \cite{BuanIyamaReitenScott09}.  Following \cite{LiuPaquette15}, we call a Krull-Schmidt 2-Calabi-Yau category with such a cluster structure a cluster category.  As a motivating example, the orbit categories constructed in \cite{BuanMarshReinekeReitenTodorov06} have a cluster structure (see Example \ref{example:ClusterStructureFinite}).

Let $\CC$ now be an algebraic 2-Calabi-Yau category with a directed cluster-tilting subcategory.  In the finite rank case, i.e.\ when $\CC$ has a cluster-tilting object $T$ (it follows from Theorem \ref{theorem:TriangleFreeIntroduction} that $\End T$ is hereditary), the category $\CC$ has a cluster structure and there exists a so-called cluster map (also called the Caldero-Chapoton map, see~\cite{CalderoChapoton06}) connecting the combinatorics of the category to the combinatorial structure of the corresponding cluster algebra.  Specifically, this cluster map sends rigid indecomposable objects to cluster variables and cluster-tilting objects to clusters.

In the infinite rank case, i.e. when $\CC$ admits cluster-tilting categories but no cluster-tilting objects, the existence of an appropriate combinatorial structure, namely a cluster structure, on $\CC$, as well as the existence a cluster map, namely the Caldero-Chapoton map, was understood only in a few specific cases; see~\cite{IgusaTodorov15,JorgensenPalu13,LiuPaquette15,Yang16}.  In this paper, we extend these results, and establish the existence of a cluster structure (and thus also a cluster map, following ~\cite[Theorem 2.3]{JorgensenPalu13}) on all algebraic 2-Calabi-Yau categories with a directed cluster-tilting subcategory.   

\begin{theorem}\label{theorem:IntroductionClusterStructure}
Let $\CC$ be an algebraic Krull-Schmidt 2-Calabi-Yau triangulated category.  If $\CC$ admits a directed cluster-tilting subcategory $\TT$, then cluster-tilting subcategories form a cluster structure on $\CC$.
\end{theorem}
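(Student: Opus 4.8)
The plan is to strip the statement down to a purely quiver-theoretic assertion. Recall from \cite{BuanIyamaReitenScott09} that a collection of cluster-tilting subcategories is a cluster structure exactly when it is a \emph{weak} cluster structure and, in addition, the quiver of every member has no loops and no $2$-cycles (the last requirement, that Iyama--Yoshino mutation at $T$ induces Fomin--Zelevinsky quiver mutation at $T$, is then automatic). The weak cluster structure costs nothing here: in any Krull--Schmidt $2$-Calabi--Yau triangulated category the Iyama--Yoshino mutation \cite{IyamaYoshino08} provides, for each cluster-tilting $\TT$ and each $T\in\ind\TT$, a unique cluster-tilting $\TT^{*}$ with $\TT\cap\TT^{*}=\add(\ind\TT\setminus\{T\})$ and the two exchange triangles. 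So the whole theorem reduces to: the quiver of every cluster-tilting subcategory $\UU\subseteq\CC$ has no loops and no $2$-cycles.

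First I would handle the initial subcategory. By Theorem~\ref{theorem:TriangleFreeIntroduction} directedness of $\TT$ makes $H:=\mod\TT$ hereditary with Serre duality, and the quiver of $\TT$ is the Gabriel quiver of $H$; directedness forces this quiver to be acyclic, hence free of loops and $2$-cycles. For the remaining cluster-tilting subcategories I would use that $\CC$ is algebraic to identify it, in the style of \cite{KellerReiten08}, with the cluster category attached to $H$ (the triangulated hull of the orbit category $\Db(H)/\tau^{-1}[1]$), under an equivalence carrying $\TT$ to the canonical cluster-tilting subcategory; this is the infinite-rank, cluster-tilting-subcategory analogue of the Keller--Reiten recognition theorem for acyclic cluster categories, and it is the point at which the thread-quiver description of $H$ enters. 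It then suffices to control the quivers of cluster-tilting subcategories inside this cluster category.

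For that I would argue locally, via reduction. A loop at $T\in\ind\UU$, or a $2$-cycle between $T,T'\in\ind\UU$, involves morphisms among only finitely many summands and is preserved by the Iyama--Yoshino $2$-Calabi--Yau reduction of $\CC$ at $\add(\ind\UU\setminus\{T\})$, respectively $\add(\ind\UU\setminus\{T,T'\})$; there $\UU$ becomes cluster-tilting of rank one, respectively two, and the reduction of the cluster category of $H$ is again of the same type over a hereditary category with one or two simple objects — i.e.\ a rank-$\le 2$ cluster category of linear or Kronecker type. In each such case one checks directly (or reads the quiver off a triangulation, in the model studied later in the paper) that no loop or $2$-cycle occurs; equivalently, one extends the Buan--Marsh--Reiten fact that cluster-tilted algebras have no loops or $2$-cycles to the present setting. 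Once this is known for every cluster-tilting subcategory, \cite{BuanIyamaReitenScott09} promotes the weak cluster structure to a cluster structure. The hard part, I expect, is exactly this last step in full generality: the "cluster-tilted" endomorphism categories arising from a hereditary thread-quiver category can have infinitely (even uncountably) many indecomposables, so one needs either the reduction dévissage to genuinely terminate in a controlled finite list of small $2$-Calabi--Yau categories, or an argument from the exchange triangles uniform in the rank; a secondary subtlety is making the recognition $\CC\simeq\CC_{H}$ precise when $\CC$ has no cluster-tilting object.
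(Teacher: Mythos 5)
Your outline matches the paper's proof of Theorem~\ref{theorem:ClusterStructure} in its essentials: reduce via \cite[Theorem II.2.1]{BuanIyamaReitenScott09} to excluding loops and $2$-cycles, then kill a putative loop or $2$-cycle by passing to the Iyama--Yoshino reduction at $\WW=\add(\ind\UU\setminus\{T\})$ or $\add(\ind\UU\setminus\{T,T'\})$, where $\UU$ becomes a cluster-tilting object of rank $\le 2$, and quoting \cite[Proposition 3.2]{BuanMarshReiten08}. (The descent of irreducible morphisms to $\UU/[\WW]$, which you assert, is the paper's Lemma~\ref{lemma:QuotientsOfRadicalMorphisms}; functorial finiteness of $\WW$ is Proposition~\ref{proposition:JorgensenPaluCofinite}.) However, the step you yourself flag as ``the hard part'' is a genuine gap, and it is exactly where the paper spends its effort. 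There is no a priori reason that $\CC_{[\WW]}$ should be ``a rank-$\le 2$ cluster category of linear or Kronecker type'': a $2$-Calabi--Yau category with a cluster-tilting object of rank two can perfectly well carry loops or $2$-cycles (see \cite{BuanMarshVatne10,BurbanIyamaKellerReiten08}, cited in Remark~\ref{remark:DefClusterTilting}). To rule this out one must show that $\CC_{[\WW]}$ \emph{still admits a directed cluster-tilting subcategory}; only then does the Keller--Reiten recognition theorem \cite{KellerReiten08} (which now applies because a cluster-tilting \emph{object} exists in the reduction) identify $\CC_{[\WW]}$ with $\CC_Q$ for a finite acyclic quiver $Q$. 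This is Theorem~\ref{theorem:HereditaryBongartzComplement} and Corollary~\ref{corollary:BongartzQuiver}, proved via the Bongartz completion of $\WW$ relative to the directed $\TT$ (Construction~\ref{construction:Bongartz} and the torsion-theoretic analysis of \S\ref{section:Reduction}); note that $\WW$ is in general not contained in $\TT$, so this is not a formal consequence of directedness of $\TT$.

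A second, smaller problem is your proposed global identification of $\CC$ with the cluster category of $H=\mod\TT$ ``in the style of \cite{KellerReiten08}.'' The paper explicitly states (in \S\ref{subsection:IntroDirected}) that it is not known whether this recognition holds when $\CC$ has only a cluster-tilting subcategory rather than a cluster-tilting object; this is precisely why the argument is organized around reductions. Fortunately your local route does not actually need the global identification, so dropping that sentence and supplying the directedness of a cluster-tilting subcategory of $\CC_{[\WW]}$ would complete the proof along the paper's lines.
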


One aspect of (weak) cluster structures being studied is their exchange graphs: vertices of this graph are cluster-tilting subcategories, and two vertices are connected by an edge if and only if they differ in exactly one indecomposable object (hence, vertices are connected if and only if they differ up to an Iyama-Yoshino mutation at a single indecomposable).  The exchange graph is connected when $\CC$ is an algebraic acyclic cluster category (see \cite{BuanMarshReinekeReitenTodorov06, Hubery11}, {and} also \cite[Theorem 8.8]{BarotKussinLenzing10} for cluster categories associated to some canonical algebras).  However, if the cluster-tilting subcategories of $\CC$ have infinitely many isomorphism classes of indecomposable objects, then one cannot expect the exchange graph to be connected.  Indeed, a cluster-tilting subcategory $\TT$ of $\CC$ differs in infinitely many indecomposable objects from the cluster-tilting subcategory $\TT[1]$, and hence $\TT$ and $\TT[1]$ cannot be in the same connected component of the exchange graph. We will say that a cluster-tilting subcategory $\UU$ is \emph{reachable} from a cluster-tilting subcategory $\VV$ if $\UU$ and $\VV$ lie in the same connected component of the exchange graph.  A rigid object $X$ is \emph{reachable} from a cluster-tilting subcategory $\VV$ if $X$ lies in a cluster-tilting subcategory reachable from $\VV.$

As considered in~\cite{JorgensenPalu13}, the Caldero-Chapoton map acts on all rigid objects reachable from a chosen cluster-tilting subcategory $\VV$.  This means that the clusters and the cluster variables are given by the cluster-tilting subcategories and the rigid indecomposables reachable from $\VV$, respectively.  Hence, the Caldero-Chapoton map decategorifies a different part of $\CC$, depending on the initial cluster-tilting subcategory $\VV$.  In our setting, we prove the following theorem (Theorem \ref{theorem:ExistenceOfCCMap} in the text), which can be interpreted as a ``weak connectedness'' of the exchange graph and postulates the existence of a cluster map defined on all rigid objects, thus providing as good a connection between cluster categories and infinite rank cluster algebras as one could hope for.

\begin{theorem}\label{theorem:IntroductionCountableReachable}
Let $\CC$ be an indecomposable Krull-Schmidt 2-Calabi-Yau triangulated category with a directed cluster-tilting subcategory $\TT$.  If $\ind \TT$ is countable, then there exists a cluster-tilting subcategory $\VV$ of $\CC$ such that:
\begin{enumerate}
\item every rigid object is reachable from $\VV$, and
\item there exists a cluster map $\rho_\VV: \Ob \EE \to \bQ(x_v)_{v \in \ind \VV}$ where $\EE$ is the full subcategory of $\CC$ given by the rigid objects. 
\end{enumerate}
\end{theorem}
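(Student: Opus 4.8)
The plan is to run the argument through the combinatorial model of cluster-tilting subcategories by triangulations, to single out one ``fountain'' cluster-tilting subcategory $\VV$, and then to feed the outcome into the Caldero--Chapoton machinery of \cite{JorgensenPalu13}. First I would dispose of the finite-rank case: if $\CC$ has a cluster-tilting object $T$, then $\End T$ is hereditary by Theorem~\ref{theorem:TriangleFreeIntroduction}, the exchange graph is connected (\cite{BuanMarshReinekeReitenTodorov06,Hubery11}), and (1)--(2) hold with any choice of $\VV$; so assume $\ind\TT$ is infinite, hence countably infinite. The structural results developed above then supply a combinatorial model: cluster-tilting subcategories of $\CC$ correspond bijectively to triangulations of a countable cyclically ordered set $\ZZ$ (this is most explicit in the single-thread case, to which I would reduce the general directed situation), rigid indecomposables correspond to arcs, objects of $\EE$ to finite sets of pairwise non-crossing arcs, and the Iyama--Yoshino mutation to the flip of a single arc; in particular each flip of a triangulation is again a triangulation. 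Since a path of length $n$ in the exchange graph changes at most $n$ arcs, any $\UU$ reachable from $\VV$ differs from $\VV$ in only finitely many arcs, so every arc lying in a triangulation reachable from $\VV$ crosses only finitely many arcs of $\VV$.

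The key step is to choose $\VV$ with the converse finiteness: \emph{every} arc of $\ZZ$ should cross only finitely many arcs of $\VV$. For this I would take $\VV$ to be a fountain as discussed above --- when $\ZZ$ has a single accumulation point $p$, the triangulation all of whose arcs are incident with $p$; in the presence of several accumulation points, the appropriate ``multiple fountain''/locally finite analogue. Countability of $\ZZ$ ensures such a $\VV$ exists, and the finiteness holds because a chord of $\ZZ$ not incident with $p$ can cross a spoke $\{p,x\}$ of $\VV$ only for the finitely many vertices $x$ lying strictly between its endpoints on the side away from $p$ (the many-accumulation-point case being analogous once the accumulation structure of $\ZZ$ is understood). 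Verifying that such a fountain exists for an arbitrary indecomposable directed $\CC$ with $\ind\TT$ countable, and that it is genuinely a cluster-tilting subcategory rather than merely a maximal rigid one, is the technical heart of the proof and is exactly where the explicit description of $\ZZ$ is used.

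Granting such a $\VV$, reachability is formal. Given a rigid object $X \cong X_1\oplus\cdots\oplus X_m$ with the $X_i$ pairwise non-crossing arcs, delete from $\VV$ the finitely many arcs crossing some $X_i$, leaving a partial triangulation $\VV'$; the region it no longer covers is a finite (disjoint) union of polygons, inside which all of $X_1,\dots,X_m$ lie and remain pairwise non-crossing, so they extend to a triangulation $T$ of that region. Then $\VV'\cup T$ is a triangulation of $\ZZ$, hence a cluster-tilting subcategory, with $X\in\add(\VV'\cup T)$; as it differs from $\VV$ only by arcs inside finitely many polygons and any two triangulations of a finite polygon are joined by flips, $\VV'\cup T$ is reachable from $\VV$, so $X$ is reachable from $\VV$. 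This gives (1). Taking $X$ indecomposable, every rigid indecomposable is reachable from $\VV$, so by Theorem~\ref{theorem:IntroductionClusterStructure} and \cite[Theorem~2.3]{JorgensenPalu13} the cluster structure on $\CC$ carries a cluster map defined on all rigid objects reachable from $\VV$ --- that is, on all of $\Ob\EE$ --- yielding the map $\rho_\VV\colon\Ob\EE\to\bQ(x_v)_{v\in\ind\VV}$ of (2). The main obstacle is thus the middle paragraph: one needs a precise enough grip on the cyclically ordered set $\ZZ$ attached to an indecomposable directed $\CC$ with countably many indecomposables to exhibit a cluster-tilting fountain meeting every arc in only finitely many arcs; the finite-rank reduction and the finite-polygon re-triangulation at the two ends are routine by comparison.
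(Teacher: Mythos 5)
Your overall strategy --- find a cluster-tilting subcategory $\VV$ such that every arc crosses only finitely many arcs of $\VV$ (equivalently, such that $\VV$ is locally bounded), then invoke the reachability criterion and \cite[Theorem 2.3]{JorgensenPalu13} --- is exactly the paper's strategy, and your endgame for (1) and (2) is essentially Corollary \ref{corollary:Bongartz} plus the cluster structure of Theorem \ref{theorem:IntroductionClusterStructure}. But the proposal has a genuine error precisely at the step you yourself identify as the heart of the argument: a \emph{fountain is the wrong choice of $\VV$}. If $S_p$ consists of all arcs incident with a vertex $p$, then any arc $\{u,v\}$ with $p\in(u,v)$ and $(v,u)$ infinite crosses the infinitely many spokes $\{p,x\}$ with $x\in(v,u)$; your claim that a chord can only cross spokes ending ``on the side away from $p$, which is finite'' fails exactly for these separating chords (and in this paper's formalism $p$ must be an actual vertex of the cyclically ordered set, not a point at infinity). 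This is Example \ref{example:NoncrossingSets}(\ref{enumerate:Example10}): the fountain is a connected triangulation but is \emph{not} locally finite. The triangulation you need is a locally finite connected one, which exists if and only if $C$ is countable (Theorem \ref{theorem:CountableLocallyFiniteTriangulation}); it is produced by a greedy enumeration argument (Construction \ref{Construction:ConstructionLocallyFinite}), looks nothing like a fountain (see Figure \ref{figure:LocallyFiniteTriangle}), and --- per Remark \ref{remark:LocFinAndDirectedMayBeIncompatible} --- in general cannot be taken to correspond to a directed cluster-tilting subcategory at all.

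The second gap is the ``reduction to the single-thread case,'' which you defer but which consumes most of the paper's effort. There is no bijection between cluster-tilting subcategories of a general directed $\CC$ and triangulations of a single cyclically ordered set; the arc model only covers $\TT\cong k\LL$. The paper's actual route is: split $\TT\cong kQ$ into its quiver part $\QQ$ and its maximal infinite threads, pass to the Iyama--Yoshino reduction $\CC_{[\QQ]}$, use the comparison map of Theorem \ref{theorem:EquivalentClusterStructures} (since $\CC_{[\QQ]}$ need not be known to equal $\oplus_i\CC_{\LL_i}$) to transport a locally bounded cluster-tilting subcategory from $\oplus_i\CC_{\LL_i}$ into $\CC_{[\QQ]}$, lift it to $\pi^{-1}(\WW)\subseteq\CC$, and then prove --- via the case analysis of approximation triangles in Propositions \ref{proposition:TypesOfApproximationTriangles}--\ref{proposition:TypesOfApproximationTrianglesDual} and their corollaries --- that the lift is still locally bounded. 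Without both the correct locally finite triangulation and this reduction-and-lifting argument, the proposal does not yield the theorem.
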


We note that the condition that $\ind \TT$ is countable is necessary for the result that every rigid object is reachable from $\VV$, see Remark \ref{remark:WhyCountable}.

\smallskip

Let us now give a more detailed overview of the other results and techniques used in the paper. Throughout, let $\CC$ be an essentially small algebraic Krull-Schmidt 2-Calabi-Yau triangulated category, and let $\TT$ be a directed cluster-tilting subcategory of $\CC$ (see Theorem \ref{theorem:AllThreadQuiversOccur} below for a rich source of examples).

\subsection{Directed cluster-tilting subcategories}
\label{subsection:IntroDirected}

The focus of this article is on Krull-Schmidt 2-Calabi-Yau categories with a directed cluster-tilting subcategory.  Recall that a path in a Krull-Schmidt category is a sequence $X_0, X_1, \ldots, X_n$ of indecomposable objects such that for every $i \in \{0, 1, \ldots, n-1\}$ there is a nonzero noninvertible morphism $X_i \to X_{i+1}$, and that a Krull-Schmidt category is called directed if there is no path from an indecomposable object to itself (see Definition \ref{definition:Directed}).  We look at directed cluster-tilting subcategories as a generalization of acyclic cluster-tilting objects.  Indeed, if $\TT = \add(T)$ for a $T \in \CC$, then $\TT$ is directed if and only if the quiver of $T$ is acyclic. 

Theorem \ref{theorem:TriangleFreeIntroduction} is a form of the criterion found in \cite{KellerReiten08}, but uses directedness instead of acyclicity of the (Gabriel) quiver of a cluster-tilting object.  We note that we do not require the full force of directedness in the proof of Theorem \ref{theorem:TriangleFreeIntroduction}; we only need to exclude paths of the form $X_0,X_1,X_2,X_0$.  That all paths are then excluded, is a consequence of this theorem.

We then look at which directed categories $\TT$ can actually occur as a cluster-tilting subcategory.  The question is fully answered by the following result (Theorem \ref{theorem:AllThreadQuiversOccur} in the text):

\begin{theorem}\label{theorem:IntroductionAllThreadQuiversOccur}
Let $\TT$ be a category such that $\mod \TT$ is hereditary and satisfies Serre duality.  The category $\Db(\mod \TT) / (\bS \circ [-2])$ is a Krull-Schmidt 2-Calabi-Yau category which admits a cluster-tilting subcategory equivalent to $\TT$.
\end{theorem}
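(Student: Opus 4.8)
The plan is to realise $\CC:=\Db(\mod\TT)/(\bS\circ[-2])$ as the cluster category of the hereditary category $\HH:=\mod\TT$, in the spirit of \cite{BuanMarshReinekeReitenTodorov06}, and then to show that the image of $\proj\TT$ is the promised cluster-tilting subcategory. Put $\DD=\Db(\HH)$ and $F=\bS\circ[-2]$. Since $\HH$ is hereditary abelian with Serre duality, $\DD$ has a Serre functor $\bS$, every object of $\DD$ splits as a finite direct sum of shifts of objects of $\HH$, and $\Hom_\DD(X,Y[n])=0$ for $X,Y\in\HH$ and $n\notin\{0,1\}$; with $\tau=\bS\circ[-1]$ the Auslander-Reiten translation of $\DD$ one has $F=\tau\circ[-1]$, so $F^{-1}=\tau^{-1}\circ[1]$. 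Let $\pi\colon\DD\to\CC$ be the projection. First I would check, via Keller's theorem on triangulated orbit categories, that $\CC$ is triangulated with $\pi$ a triangle functor: its hypotheses---that $F$ is a standard autoequivalence of an algebraic category, that each indecomposable of $\HH$ lies in $\HH$ under only finitely many powers of $F$, and that every $F$-orbit of indecomposables of $\DD$ contains an object whose cohomology occupies two consecutive degrees---all follow from heredity and Serre duality together with the ``staircase'' behaviour of $F$-orbits isolated in the last step. Hom-finiteness of $\CC$ is inherited from $\HH$, and with idempotent completeness this makes $\CC$ Krull-Schmidt; finally $\bS$ descends to a Serre functor of $\CC$, which is isomorphic to $[2]$ because $F$ becomes the identity in $\CC$, so $\CC$ is 2-Calabi-Yau.

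Next I would identify the candidate subcategory. Replacing $\TT$ by its idempotent completion (which does not alter $\mod\TT$), we may assume $\TT$ is Karoubian, so that the Yoneda embedding identifies $\TT$ with the full subcategory $\proj\TT\subseteq\HH$ of finitely generated projective objects; set $\UU=\add\pi(\proj\TT)$. The basic observation is that for $P\in\proj\TT$ and any $X\in\DD$ one has $\Hom_\DD(P,X)\cong\Hom_\HH(P,H^0X)$, because $X\cong\bigoplus_nH^n(X)[-n]$ and $\Ext^{\neq0}_\HH(P,-)=0$. Thus for $P,Q\in\proj\TT$, $\Hom_\CC(\pi P,\pi Q)=\bigoplus_{i\in\bZ}\Hom_\HH(P,H^0(F^iQ))$, and tracking cohomological degrees shows that, since $Q$ is projective, $F^iQ$ has vanishing $H^0$ for every $i\neq0$ (each application of $\bS$ shifts the cohomological degree of an $\HH$-indecomposable by $0$ or $-1$ and each $[-2]$ shifts it by $+2$, so the degree of $F^iQ$ runs away from $0$ as $|i|$ grows); hence $\Hom_\CC(\pi P,\pi Q)\cong\Hom_\HH(P,Q)$, so $\pi|_{\proj\TT}$ is fully faithful and $\UU\simeq\TT$. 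The same computation applied to $\Hom_\CC(\pi P,\pi Q[1])=\bigoplus_i\Hom_\HH(P,H^0(F^i(Q[1])))$, where $H^0$ now vanishes for every $i$, gives $\Hom_\CC(\UU,\UU[1])=0$: $\UU$ is rigid.

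The main step is to show that $\UU$ is cluster-tilting, for which I would prove $\{X\in\CC\mid\Hom_\CC(X,\UU[1])=0\}=\UU$; functorial finiteness of $\UU$ is then a routine consequence of the two-term projective resolutions $0\to P_1\to P_0\to M\to0$ available in $\mod\TT$ (with $P_1\in\proj\TT$ because $\HH$ is hereditary) together with rigidity. This requires a fundamental domain for $F$: since $F=\tau\circ[-1]$, iteratively applying $F$ or $F^{-1}$ to an indecomposable $M[n]$ with $M\in\ind\HH$ drives its cohomological degree toward $\{-1,0\}$---the only inputs being $\bS M\in\HH\cup\HH[1]$, $\bS^{-1}M\in\HH\cup\HH[-1]$, and the facts that $\bS M\in\HH$ exactly when $M$ is projective and $\bS^{-1}M\in\HH$ exactly when $M$ is injective (each a one-line Serre-duality argument)---so that every object of $\CC$ is a finite direct sum of objects $\pi Y$ with $Y\in\ind\HH$ and objects $\pi(P[1])$ with $P\in\ind\proj\TT$. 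For $Y\in\ind\HH$, the degree-tracking of the previous paragraph (now also keeping the $\Ext^1$-term, since $Y$ need not be projective) collapses $\Hom_\CC(\pi Y,\pi Q[1])$ to $\Ext^1_\HH(Y,Q)$ for $Q\in\proj\TT$, so $\Hom_\CC(\pi Y,\UU[1])=0$ precisely when $Y$ is projective---if $Y$ is not projective, its minimal projective presentation $0\to P_1\to P_0\to Y\to0$ has $P_1\in\proj\TT$ and represents a non-zero class in $\Ext^1_\HH(Y,P_1)$---hence $\pi Y\in\UU$ in that case. For $0\neq P\in\ind\proj\TT$, the analogous computation gives $\Hom_\CC(\pi(P[1]),\pi Q[1])\cong\Hom_\HH(P,Q)$, which is nonzero for $Q=P$, so $\pi(P[1])$ is excluded (and, by rigidity, lies outside $\UU$). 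Decomposing an arbitrary object and applying these two cases then yields $\{X\mid\Hom_\CC(X,\UU[1])=0\}=\UU$, so $\UU$ is a cluster-tilting subcategory of $\CC$ equivalent to $\TT$.

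I expect the genuine difficulties to be twofold: first, confirming that Keller's triangulated-orbit-category construction---and the Hom-finiteness and idempotent-completeness it needs---goes through when $\HH$ is only a hereditary abelian category with Serre duality and enough projectives, rather than $\mod H$ for a finite-dimensional hereditary algebra; and second, making the ``staircase'' reduction to the fundamental domain in the last step fully precise, since both the classification of the indecomposables of $\CC$ and the verification of Keller's hypotheses rest on it. Once that structural part is secured, the remaining Hom computations are mechanical bookkeeping with cohomological degrees.
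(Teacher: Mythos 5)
Your proposal is correct and follows essentially the same route as the paper: the paper invokes Keller's theorem on triangulated orbit categories for the algebraic 2-Calabi-Yau and Krull-Schmidt structure, and then applies its Proposition \ref{proposition:CTFromEnoughProj}, whose proof is exactly your fundamental-domain argument (every indecomposable has a lift with $H^0 \neq 0$ or with $H^{-1}$ nonzero projective) combined with the enough-projectives property of $\mod\TT$ to show that the image of the projectives is cluster-tilting. The Hom/Ext degree-tracking you spell out is the content of that proposition's proof, so no essential step is missing.
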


The categories $\TT$ such that $\mod \TT$ is hereditary {with Serre duality} have been classified in \cite{BergVanRoosmalen14} using thread quivers (see Corollary \ref{corollary:Trianglefree}, the formulation of Theorem \ref{theorem:AllThreadQuiversOccur} uses this classification).  Thread quivers are combinatorial objects that describe such a category $\TT$ as an amalgam of a (possibly infinite) locally finite quiver without oriented paths and certain linearly ordered sets. This description will allow us to prove certain properties for $\TT$ by proving them separately for quivers and linearly ordered sets.

There is, however, a subtle but important difference from the case where a cluster-tilting object exists.  If an algebraic triangulated 2-Calabi-Yau category $\CC$ admits a directed cluster-tilting object, it is shown in \cite{KellerReiten08} that $\CC$ is triangle-equivalent to $(\Db \mod \Lambda) / (\bS \circ [-2])$.  One interpretation of \cite{KellerReiten08} is that this orbit category is the ``unique'' 2-Calabi-Yau categorification of the corresponding acyclic cluster algebra.  We do not know whether such a uniqueness result holds if $\CC$ only admits a directed cluster-tilting subcategory. We work around the problem in~\S\ref{section:Comparing} using Iyama-Yoshino reductions.

\subsection{Iyama-Yoshino reductions}

In \S\ref{section:Reduction}, we recall some results about the Iyama-Yoshino reduction (see \cite{IyamaYoshino08}, this reduction is also called the Calabi-Yau reduction).  For this introduction, let $\ZZ \subseteq \CC$ be rigid and functorially finite in $\CC$.  We consider the $\Ext^1$-perpendicular subcategory $\ZZ^{\perp_1} = \{C \in \CC \mid \Ext^1(\ZZ,C) = 0\}.$  It is shown in \cite{IyamaYoshino08} that the category $\CC_{[\ZZ]} = \ZZ^{\perp_1} / [\ZZ]$ is a triangulated 2-Calabi-Yau category, and in \cite{AmiotOppermann12} that $\CC_{[\ZZ]}$ is algebraic if $\CC$ is algebraic.

The following theorem (Theorem \ref{theorem:HereditaryBongartzComplement} in the text) shows that Iyama-Yoshino reductions do not take us out of our overarching framework, namely that of categories with a directed cluster-tilting subcategory.

\begin{theorem}\label{theorem:HereditaryBongartzIntroduction}
Let $\CC$ be a 2-Calabi-Yau Krull-Schmidt category.  Let $\ZZ$ be a functorially finite rigid subcategory of $\CC$.  {If $\CC$ has a directed cluster-tilting subcategory, then the reduction $\CC_{[\ZZ]}$ also has} a {directed} cluster-tilting subcategory.
\end{theorem}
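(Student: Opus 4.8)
The plan is to start from a directed cluster-tilting subcategory $\TT$ of $\CC$ and produce a directed cluster-tilting subcategory of the reduction $\CC_{[\ZZ]}$. The most natural candidate would be the image of $\TT$ under the reduction, but a given $\TT$ need not be compatible with $\ZZ$ (i.e. $\ZZ$ need not be contained in a cluster-tilting subcategory built from $\TT$), so the first step is to replace $\TT$. Concretely, I would first enlarge $\ZZ$ to a cluster-tilting subcategory $\TT'$ of $\CC$ with the property that $\TT'$ is still directed. Here one uses a Bongartz-type completion argument: by functorial finiteness of $\ZZ$ and the 2-Calabi-Yau property, $\ZZ$ sits inside a cluster-tilting subcategory, and one must choose this completion inside the "directed world". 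The cleanest way to guarantee directedness is to use Theorem~\ref{theorem:TriangleFreeIntroduction}: it suffices to produce a cluster-tilting subcategory $\TT' \supseteq \ZZ$ such that $\mod \TT'$ is hereditary, and then directedness of $\TT'$ follows automatically. Since $\CC$ already has one cluster-tilting subcategory $\TT$ with $\mod\TT$ hereditary, and since $\mod\TT$ hereditary is, by the classification via thread quivers (Corollary~\ref{corollary:Trianglefree}), a condition one can transport along mutations, I would argue that the Bongartz completion of $\ZZ$ can be taken inside this hereditary class — essentially because the reduction and $\TT$ can be "aligned" by a sequence of Iyama-Yoshino mutations away from the indecomposables of $\ZZ$.

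The second step is to pass to the reduction. Once $\ZZ \subseteq \TT'$ with $\TT'$ cluster-tilting and directed, standard Iyama-Yoshino theory (\cite{IyamaYoshino08}) gives that $\TT'' := \TT' \cap \ZZ^{\perp_1} / [\ZZ]$, i.e. the image $\overline{\TT'}$ of $\TT'$ in $\CC_{[\ZZ]} = \ZZ^{\perp_1}/[\ZZ]$, is a cluster-tilting subcategory of $\CC_{[\ZZ]}$. So it remains to check that $\overline{\TT'}$ is directed. For this I would invoke Theorem~\ref{theorem:TriangleFreeIntroduction} again in the reduction: it is enough to verify that $\mod \overline{\TT'}$ is hereditary, or, equivalently, that $\overline{\TT'}$ has no cycle of morphisms of length three. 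I would show the latter directly: a nonzero noninvertible morphism in the quotient category $\ZZ^{\perp_1}/[\ZZ]$ lifts to a nonzero morphism in $\CC$ between objects of $\TT'$ that does not factor through $\ZZ$; hence a $3$-cycle among indecomposables of $\overline{\TT'}$ would lift to a path $X_0 \to X_1 \to X_2 \to X_0$ of noninvertible morphisms among indecomposables of $\TT'$ (the lifts of the individual maps need not compose to the identity, but composing the three lifts gives an endomorphism of $X_0$ that is either invertible — impossible in a directed category unless it's a genuine cycle — or non-invertible, and in a Krull-Schmidt directed category even the mere existence of the path $X_0,X_1,X_2,X_0$ is forbidden). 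Thus $\overline{\TT'}$ inherits the no-$3$-cycle condition from $\TT'$, and Theorem~\ref{theorem:TriangleFreeIntroduction} (applied to $\CC_{[\ZZ]}$) upgrades this to full directedness.

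The main obstacle I expect is the first step: arranging that the Bongartz completion of $\ZZ$ can be chosen to be directed (equivalently, $\mod$-hereditary). A priori the completion one gets from the general existence theorem need not be hereditary, and the subtlety flagged in \S\ref{subsection:IntroDirected} — that acyclicity of the Auslander-Reiten quiver does not imply $\mod\TT$ hereditary — warns that one cannot be cavalier. The resolution should come from the classification of hereditary $\mod\TT$ via thread quivers together with the structure of cluster mutation: one shows that, starting from the given directed $\TT$, a finite sequence of Iyama-Yoshino mutations (supported away from $\ind\ZZ$, which is possible since $\ZZ$ is rigid and finite-type at each step one only needs local freedom) reaches a cluster-tilting subcategory containing $\ZZ$, and that each such mutation preserves the property "$\mod$ is hereditary". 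Checking mutation-invariance of heredity is where the real work lies; alternatively one may prefer to prove it via the triangle-free criterion, showing directly that no mutation along an indecomposable of $\TT$ can create a $3$-cycle when $\ZZ$ is rigid. Either way, once heredity (equivalently directedness) of the completion $\TT'$ is in hand, the descent to $\CC_{[\ZZ]}$ is essentially formal modulo the lifting argument sketched above.
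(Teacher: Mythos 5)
Your first step contains a fatal gap: you cannot, in general, complete $\ZZ$ to a \emph{directed} cluster-tilting subcategory $\TT'$ of $\CC$. Any cluster-tilting subcategory containing $\ZZ$ contains all cycles of $\ZZ$, and the hypotheses only require $\ZZ$ to be functorially finite and rigid --- $\ZZ$ itself may fail to be directed (for instance, $\ZZ$ could be a non-directed cluster-tilting subcategory such as the one in Example~\ref{example:ClusterD}, coexisting in $\CC$ with a directed one; then the only cluster-tilting subcategory containing $\ZZ$ is $\ZZ$ itself). Your proposed mechanism for step 1 is also untenable on its own terms: when $\ind\ZZ$ is infinite and meets $\ind\TT$ in a small set, no finite sequence of Iyama--Yoshino mutations starting from $\TT$ reaches a cluster-tilting subcategory containing $\ZZ$ (cf.\ Corollary~\ref{corollary:Bongartz}(2), which says mutation-reachable subcategories differ in only finitely many indecomposables). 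This is precisely why the paper never asserts that the Bongartz completion $\RR=\add(\FF\cup\ZZ)$ is directed \emph{in} $\CC$; it only proves that its image in the reduction is directed.

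The correct route, which the paper follows, dispenses with your step 1 entirely. One takes the explicit Bongartz completion $\RR=\add(\FF\cup\ZZ)$ of Construction~\ref{construction:Bongartz} (built from approximation triangles over the given directed $\TT$), notes that $\FF$ becomes cluster-tilting in $\CC_{[\ZZ]}$ by \cite[Theorem 4.9(1)]{IyamaYoshino08}, and then proves semi-heredity of $\End_{\CC_{[\ZZ]}}(F)$ \emph{homologically}: working in the hereditary category $\mod\TT\cong\CC/[\TT[1]]$, one shows that $\AA=\Fac\overline{\RR}\cap\overline{\ZZ}^{\perp_0}$ is a hereditary abelian (perpendicular) subcategory, that $\overline{F}/t_\ZZ(\overline{F})$ is projective in $\AA$, and that $\End_{\CC_{[\ZZ]}}(F)\cong\End_\AA(\overline{F}/t_\ZZ(\overline{F}))$; Proposition~\ref{proposition:CategoryOfProjectivesIsSemiHereditary} and Corollary~\ref{corollary:Trianglefree} then give directedness. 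Your step 2 (lifting $3$-cycles from the quotient) is sound as far as it goes, but it is only usable if a directed $\TT'\supseteq\ZZ$ exists, which is exactly what fails.
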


The proof {of the} theorem is based on a version of the Bongartz completion for cluster-tilting subcategories (Theorem \ref{theorem:BongartzComplement} in the text, following \cite{Jasso13} where similar statements have been shown for cluster-tilting objects instead of the more general cluster-tilting subcategories):

\begin{theorem}\label{theorem:IntroductionBongartzComplement}
Let $\CC$ be a Krull-Schmidt 2-Calabi-Yau triangulated category, and let $\ZZ$ be a functorially finite rigid subcategory of $\CC$.  If $\CC$ has a cluster-tilting subcategory, then $\CC$ has a cluster-tilting subcategory $\RR$ which contains $\ZZ$.
\end{theorem}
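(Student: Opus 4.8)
The plan is to adapt to cluster-tilting \emph{subcategories} the Bongartz-completion argument given in \cite{Jasso13} for cluster-tilting objects. The new feature is that in infinite rank one cannot simply invoke Zorn's lemma to pass to a maximal rigid subcategory — such a subcategory need not be cluster-tilting — so the completion $\RR$ must be written down explicitly and its defining properties checked directly.

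First I would record that $\ZZ^{\perp_1}=\{C\mid\Ext^1(\ZZ,C)=0\}$ is functorially finite in $\CC$. Given $X\in\CC$, pick a left $\add\ZZ$-approximation $X\to Z^X$ and complete it to a triangle $X\to Z^X\to X^\flat\to X[1]$; applying $\Hom(-,\ZZ)$ and using $\Ext^1(\add\ZZ,\ZZ)=0$ together with the approximation property shows $X^\flat\in\ZZ^{\perp_1}$, and by 2-Calabi-Yau duality $\ZZ^{\perp_1}={}^{\perp_1}\ZZ$, so these triangles and their duals furnish the required approximations. Consequently, by \cite{IyamaYoshino08}, $\CC_{[\ZZ]}=\ZZ^{\perp_1}/[\ZZ]$ is a Krull-Schmidt 2-Calabi-Yau triangulated category, and taking preimages gives a bijection between its cluster-tilting subcategories and those cluster-tilting subcategories of $\CC$ that contain $\ZZ$. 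It therefore suffices to produce a cluster-tilting subcategory of $\CC_{[\ZZ]}$.

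To do this I would start from the given cluster-tilting subcategory $\TT$ of $\CC$. For each $T\in\ind\TT$ take the triangle $T\to Z^T\to T^\flat\to T[1]$ coming from a minimal left $\add\ZZ$-approximation, so $T^\flat\in\ZZ^{\perp_1}$, and set $\RR=\add\big(\ZZ\cup\{\,T^\flat\mid T\in\ind\TT\,\}\big)$. The claim is that $\RR$ is cluster-tilting in $\CC$ (equivalently, that $\RR/[\ZZ]$ is cluster-tilting in $\CC_{[\ZZ]}$). Rigidity is the part I can already see clearly: $\Ext^1$ vanishes between summands of $\ZZ$ by hypothesis, between $\ZZ$ and the $T^\flat$ because $T^\flat\in\ZZ^{\perp_1}$ and then by 2-Calabi-Yau symmetry in the other order, and $\Ext^1(T^\flat,S^\flat)=0$ because any $g\colon T\to S^\flat$ composed with $S^\flat\to S[1]$ lies in $\Ext^1(T,S)=0$ (rigidity of $\TT$), hence $g$ factors through $Z^S\to S^\flat$, hence — since $T\to Z^T$ is a left $\add\ZZ$-approximation — through $T\to Z^T$; this kills the relevant connecting homomorphism in the long exact sequence obtained from the defining triangle of $T^\flat$.

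The main obstacle is the remaining half: showing that $\RR$ is functorially finite and that $\RR^{\perp_1}=\add\RR$ (equivalently, that every object of $\CC_{[\ZZ]}$ admits an $\RR/[\ZZ]$-presentation), for then the characterization of cluster-tilting subcategories in \cite{IyamaYoshino08} applies and, pulling $\RR/[\ZZ]$ back to $\CC$, finishes the proof. Here I would use Iyama and Yoshino's description of the suspension of $\CC_{[\ZZ]}$ — for $A\in\ZZ^{\perp_1}$ the shift of $\overline A$ is computed from precisely the kind of triangle $A\to Z_A\to A^\flat\to A[1]$ used above — and then promote an $\RR$-presentation of an arbitrary object by taking a $\TT$-presentation $T_1\to T_0\to X\to T_1[1]$ (available since $\TT$ is cluster-tilting), replacing each $T_i$ via its triangle $T_i^\flat[-1]\to T_i\to Z^{T_i}\to T_i^\flat$, and reassembling with the octahedral axiom. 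Throughout this one must keep careful track of the interplay between the suspension of $\CC$ and that of $\CC_{[\ZZ]}$; the degenerate case $\ZZ=0$, where $T^\flat=T[1]$ and $\RR=\TT[1]$, is the model to keep in mind. This bookkeeping of shifts in the diagram chase, together with the verification of functorial finiteness, is where the real work lies.
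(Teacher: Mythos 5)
Your construction is not the one in the paper, but it is a correct one: the paper (Construction~\ref{construction:Bongartz}) takes $F_T$ from a triangle $T \to F_T \to W \to T[1]$ built on a \emph{right} $\ZZ$-approximation $W \to T[1]$, whereas you take $T^{\flat}=\cone(T\to Z^T)$ from a \emph{left} $\ZZ$-approximation of $T$ itself. Your $\RR$ is precisely the dual (opposite-category) Bongartz completion applied to the cluster-tilting subcategory $\TT[1]$ rather than to $\TT$ (in your triangle $T\to Z^T\to T^{\flat}\to T[1]$ the roles of the middle two terms are swapped relative to the paper's), and in examples the two completions genuinely differ. Your reduction to $\CC_{[\ZZ]}$ via \cite{IyamaYoshino08} and your rigidity argument (every $g\colon T\to S^{\flat}$ factors through the left approximation $T\to Z^T$, killing the relevant cokernel) are correct and complete.

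The gap is the half you yourself flag as ``where the real work lies'': you never prove that $\RR^{\perp_1}\subseteq\RR$ nor that $\RR$ is functorially finite, and these are exactly the parts on which the paper's Proposition~\ref{proposition:Bongartz} spends almost all of its effort. Moreover, the strategy you sketch for it does not work as stated. The triangles $T_i\to Z^{T_i}\to T_i^{\flat}\to T_i[1]$ give maps \emph{out of} $T_i$ and \emph{into} $T_i[1]$, so you cannot simply substitute them into a right $\TT$-approximation triangle $T_1\to T_0\to X\to T_1[1]$ and ``reassemble with the octahedral axiom'': the arrows point the wrong way to produce a map from an object of $\RR$ to $X$ (or from $X$ to an object of $\RR$). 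What is actually needed is the dual of the paper's key step: for $X$ rigid with $\Ext^1(X,\RR)=0$, one must show that the composite $X\to T_1[1]\to Z^{T_1}[1]$ of the left $\TT[1]$-approximation with the shifted left $\ZZ$-approximation is again a left $\add(\ZZ[1])$-approximation of $X$ (using $\Ext^1(X,\ZZ)=0$), identify its cone with an object of $\RR[1]$ up to summands, and only then run the $3\times 3$ diagram to exhibit $X$ as a summand of an object of $\RR$; functorial finiteness requires a separate argument, as in part (3) of the paper's proof. All of this can be supplied by dualizing Proposition~\ref{proposition:Bongartz} and applying it to $\TT[1]$, but as written your proof is incomplete precisely where the theorem is hard.
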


Since Theorem \ref{theorem:IntroductionBongartzComplement} does not rely on the category admitting a directed cluster-tilting subcategory, it might be of independent interest.

Intuitively, our strategy to study $\CC$ is to ``approximate'' it by Iyama-Yoshino reductions $\CC_{[\ZZ]}$ which are small enough to have an acyclic cluster-tilting object. The exact implementation of this idea becomes somewhat technical due to the fact that $\CC_{[\ZZ]}$ is neither a subcategory nor a quotient category, but a combination of both.

In order to use $\CC_{[\ZZ]}$ to study properties of objects in $\CC$ we will use the following definition (see Definition \ref{definition:LivesIn}): an object $C \in \CC$ \emph{lives in} $\CC_{[\ZZ]}$ if $C \in \ZZ^{\perp_1}$ and $[\ZZ](C,C) = 0$.  Specifically, we have that $\Hom_\CC(C,C) \cong \Hom_{\CC_{[\ZZ]}}(C,C)$.

The following theorem (Theorem \ref{theorem:Covering} in the text) allows us to reduce questions about $\CC$ to better understood 2-Calabi-Yau categories.

\begin{theorem}\label{theorem:CoveringIntroduction}
Let $\CC$ be a Krull-Schmidt 2-Calabi-Yau category with a {directed} cluster-tilting subcategory $\TT$.  For any object $X \in \CC$, there is a functorially finite and rigid full subcategory $\ZZ \subseteq \CC$ such that
\begin{enumerate}
\item $X$ lives in $\CC_{[\ZZ]}$, and
\item there is an object $T \in \TT$ which lives in $\CC_{[\ZZ]}$ and is a cluster-tilting object in $\CC_{[\ZZ]}$.
\end{enumerate}
If $\CC$ is algebraic, then $\CC_{[\ZZ]} \cong \CC_{\End T}$.
\end{theorem}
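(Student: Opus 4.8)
The plan is to reduce the whole statement to the construction of $\ZZ$, after which the closing isomorphism is automatic. Suppose $\ZZ$ as in (1) and (2) has been found and $\CC$ is algebraic. Then $\CC_{[\ZZ]}$ is an algebraic $2$-Calabi-Yau triangulated category by \cite{AmiotOppermann12}, and $T$ is a cluster-tilting object in it; since $T$ lives in $\CC_{[\ZZ]}$ we have $\End_{\CC_{[\ZZ]}}(T)\cong\End_\CC(T)$, and since $T\in\TT$ with $\TT$ directed, $\add T$ has no cycles of morphisms, so the Gabriel quiver of $\End_\CC(T)$ is acyclic. By \cite{KellerReiten08} the algebra $\End_\CC(T)$ is then hereditary and $\CC_{[\ZZ]}\cong\CC_{\End T}$. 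Thus the whole task is the construction of $\ZZ$.

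The first step I would take is to localise the data of $X$ to a finite part of $\TT$. As $\TT$ is cluster-tilting, every object of $\CC$ sits in a triangle with outer terms in $\TT$; in particular $X$ fits in a triangle $T^1\xrightarrow{f}T^0\to X\to T^1[1]$ with $T^0,T^1\in\TT$. Let $F\subseteq\ind\TT$ be the finite set of indecomposable summands of $T^0\oplus T^1$. Pushing this triangle through $\Hom_\CC(Z,-)$ and $\Hom_\CC(-,Z)$ and using $\Ext^1_\CC(\TT,\TT)=0$ together with $2$-Calabi-Yau duality $\Ext^2_\CC(A,B)\cong D\Hom_\CC(B,A)$, one finds that ``$X$ lives in $\CC_{[\ZZ]}$'' is implied by $\Hom_\CC(T^1,Z)=0$, $\Hom_\CC(Z,T^0)=0$ and $\Ext^1_\CC(Z,T^0\oplus T^1)=0$ for every indecomposable $Z$ of $\ZZ$. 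It therefore suffices to produce a cluster-tilting subcategory $\UU$ of $\CC$ with $\add F\subseteq\UU$ and a subcategory $\ZZ\subseteq\UU$ such that $\ind\UU\setminus\ind\ZZ$ is a finite subset of $\ind\TT$ containing no successor of $T^1$ and no predecessor of $T^0$ inside $\UU$, with $\ZZ$ functorially finite, and with $T:=\bigoplus(\ind\UU\setminus\ind\ZZ)$ living in $\CC_{[\ZZ]}$: then $\ZZ$ is rigid inside $\UU$, the Iyama--Yoshino correspondence makes $\add T=\UU/[\ZZ]$ a cluster-tilting subcategory of $\CC_{[\ZZ]}$, one has $T\in\TT$, and the $\Ext^1$-conditions hold automatically because $\add F\subseteq\UU$ is rigid.

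Constructing the pair $(\UU,\ZZ)$ is the real content, and here I would use the thread-quiver description of $\TT$ (Theorem~\ref{theorem:IntroductionAllThreadQuiversOccur}). For the contribution of the underlying locally finite quiver---which, having no long oriented paths, has finite ``Hom-neighbourhoods''---one may simply take $\UU=\TT$ and $\ZZ=\add(\ind\TT\setminus F')$ for a finite $F'\supseteq F$ large enough to contain every indecomposable of $\TT$ admitting a nonzero morphism to or from an object of $F$; removing finitely many indecomposables from the functorially finite $\TT$ leaves it functorially finite. For the thread contributions this breaks down, since an object near the bottom of an infinite thread maps onto cofinitely many objects of that thread; so instead I would first replace $\TT$, by a finite sequence of Iyama--Yoshino mutations supported along the thread and away from $F$ (equivalently, by re-triangulating the associated cyclically ordered set), with a cluster-tilting subcategory $\UU\supseteq\add F$ that agrees with $\TT$ on a finite ``window'' containing $F$ and is ``folded'' outside it, so that only finitely many objects of $\UU$ meet $F$ by a nonzero morphism. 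Taking $\ZZ$ to be the part of $\UU$ outside that window, carrying this out thread by thread along the amalgam and with the windows chosen convex, yields a functorially finite rigid $\ZZ$ with $\ind\UU\setminus\ind\ZZ\subseteq\ind\TT$ finite, satisfying the orthogonality conditions to $X$; that $T$ then lives in $\CC_{[\ZZ]}$ (in particular $[\ZZ](T,T)=0$) is read off from the folded configuration, and Theorem~\ref{theorem:HereditaryBongartzIntroduction} confirms $\CC_{[\ZZ]}$ stays within the framework.

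The hard part is exactly this last construction for threads. One cannot just truncate $\TT$: a cofinite subcategory of $\TT$ fails to be $\Ext^1$-orthogonal to objects such as $T^1[1]$ when $T^1$ sits at the bottom of an infinite thread, so passing to a folded $\UU$ is unavoidable, and the delicate point is to arrange all the windows simultaneously finite, convex, contained in $\ind\TT$, and compatible across the amalgam, so that $\CC_{[\ZZ]}$ both carries a cluster-tilting object belonging to $\TT$ and still sees $X$. This is precisely where the single-thread analysis via triangulations of a cyclically ordered set does the heavy lifting.
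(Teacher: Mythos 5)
There is a genuine gap: the construction of $\ZZ$, which you yourself identify as ``the real content,'' is never carried out, and the strategy you sketch for it cannot work as described. You propose to modify $\TT$ along each infinite thread ``by a finite sequence of Iyama--Yoshino mutations \dots (equivalently, by re-triangulating the associated cyclically ordered set)'' so that the result is ``folded'' outside a finite window. These two parenthetically-equated operations are not equivalent: a finite sequence of mutations changes $\ind\TT$ at only finitely many objects, whereas passing from a fountain-type thread to a configuration in which only finitely many objects admit nonzero maps to a fixed $F$ requires changing infinitely many of them. If instead you allow an infinite re-triangulation, you must then prove that the resulting subcategory $\UU$ is still cluster-tilting (in particular functorially finite) --- but in the paper that is exactly Theorem~\ref{theorem:WhenClusterTilting}, whose proof relies on Theorem~\ref{theorem:Covering} via Lemma~\ref{lemma:CompositionInTypeA}, so invoking the triangulation dictionary here is circular. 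A smaller but real gap sits in your reduction step: the vanishing of $\Hom(T^1,Z)$, $\Hom(Z,T^0)$ and $\Ext^1(Z,T^0\oplus T^1)$ does give $X\in\ZZ^{\perp_1}$, but it does not give $[\ZZ](X,X)=0$; for that one needs (as in the paper, via \cite[Lemma 4.8]{IyamaYoshino08}) that $X[1]\in\ZZ^{\perp_1}$ as well, i.e.\ $\Hom(X,\ZZ)=0$, which does not follow from your three conditions.

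The paper's construction avoids all of this and is worth contrasting with your plan. One chooses $T\in\TT$ large enough that the right $\TT$-approximation triangles of $X\oplus X[1]$ and of the resulting $T_1[2]$ have all terms in $\add T$, sets $\FF=\add T$, forms the mutated cluster-tilting subcategory $\UU=\mu_{\FF}(\TT)$, and takes $\ZZ=\add(\ind\UU\setminus\ind\FF)$. Thus $\ind\UU\setminus\ind\ZZ$ is finite but $\ZZ$ is \emph{disjoint} from $\ind\TT$ --- the opposite of your requirement that it be contained in $\ind\TT$ --- and functorial finiteness of $\ZZ$ is immediate from Proposition~\ref{proposition:JorgensenPaluCofinite}. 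That $[\ZZ](T,T)=0$ then follows because $T$ is projective in the hereditary category $\CC/[\TT[1]]\cong\mod\TT$ while no object of $\ZZ$ is, and that $X$ lives in $\CC_{[\ZZ]}$ is a direct diagram chase with the two approximation triangles. No thread-quiver case analysis, folding, or convexity of windows is needed. If you want to salvage your approach, you would have to replace the ``folding'' step by an honest construction and supply the missing functorial-finiteness and $[\ZZ](X,X)=0$ arguments; as written, the proof is a plan rather than a proof.
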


Here, $\CC_{\End T} = (\Db \mod \End T)/ \bS \circ [-2]$.  Note that $\End T$ is a hereditary algebra (since $\mod \TT$ is hereditary and $T \in \TT$, see Proposition \ref{proposition:ShIsLocal} in the text).  We remark that $\ZZ$ is (in general) not contained in $\TT$.

\subsection{Cluster structures}

In \S\ref{section:ClusterStructures}, we recall the notion of a cluster structure from \cite{BuanIyamaReitenScott09} (see Definition \ref{definition:ClusterStructure} in the text) and prove that the cluster-tilting subcategories give a cluster structure on $\CC$ (see Theorem \ref{theorem:IntroductionClusterStructure}).  The proof uses the fact that $\CC_{\rep Q}$ has a cluster structure for any acyclic finite quiver $Q$ and uses Theorem \ref{theorem:HereditaryBongartzIntroduction} to reduce to this case.  In more detail, it is shown in \cite[Theorem II.2.1]{BuanIyamaReitenScott09} that $\CC$ has a cluster structure if and only if the Auslander-Reiten quiver of the cluster-tilting objects do not have loops or 2-cycles.  We show that if a cluster-tilting subcategory $\UU$ of $\CC$ has a loop or a 2-cycle, then there is a functorially finite rigid subcategory $\WW$ of $\CC$ such that $\CC_{[\WW]}$ also admits a cluster-tilting category with a loop or a 2-cycle.  However, one can choose $\WW$ to be large enough so that $\CC_{[\WW]}$ admits a cluster-tilting object.  It then follows from Theorem \ref{theorem:IntroductionBongartzComplement} and the criteria in \cite{KellerReiten08} that $\CC_{[\WW]} \cong \CC_Q$, for a finite acyclic quiver $Q$.  Since no cluster-tilting object in $\CC_Q$ has a loop or a 2-cycle (Example \ref{example:ClusterStructureFinite}), we are done.

Having a cluster structure allows one to look at cluster maps, i.e. decategorifications from the cluster category to a cluster algebra.  The Caldero-Chapoton cluster map (see \cite{CalderoChapoton06}) has been generalised to categories admitting cluster-tilting subcategories in \cite{JorgensenPalu13}.  However, the cluster map in \cite{JorgensenPalu13} is only defined on rigid objects which are reachable from the initial cluster-tilting subcategory.  In our setting, it is not realistic to expect all cluster-tilting subcategories to be reachable from any given one (thus, for the exchange graph to be connected).

Instead, we will consider a weaker question: is there a cluster-tilting subcategory $\UU$ of $\CC$ such that every rigid object is reachable from $\UU$?  One obvious condition (in general) is that for every rigid object $X \in \CC$, one should have $\Ext^1(U,X) = 0$ for all but finitely many $U \in \ind \UU$.  We show that (in our setting), the previous statement is also sufficient.  We have the following statement (Corollary \ref{corollary:Bongartz}) in the text:

\begin{theorem}\label{theorem:IntroductionReachability}
Let $\CC$ be an algebraic 2-Calabi-Yau category with a cluster-tilting subcategory $\UU \subseteq \CC$.  Assume that $\CC$ admits a directed cluster-tilting subcategory.
\begin{enumerate}
\item Let $X \in \CC$ be a rigid object such that $\Ext^1(U,X) = 0$ for all but finitely many $U \in \ind \UU$.  There is a cluster-tilting subcategory $\VV$ of $\CC$, reachable from $\UU$, with $X \in \VV$.
\item A cluster-tilting subcategory $\VV$ is reachable from $\UU$ if and only if $\UU$ and $\VV$ differ at only finitely many indecomposable objects.  In this case $|(\ind \UU) \setminus (\ind \VV)| = |(\ind \VV) \setminus (\ind \UU)|$.
\end{enumerate}
\end{theorem}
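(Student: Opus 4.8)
The plan is to reduce both statements to the known connectedness of the exchange graph of a finite-rank acyclic cluster category $\CC_Q$, by performing an Iyama--Yoshino reduction at a suitably large rigid subcategory of $\CC$. First I would dispose of the elementary implications in (2): if $\WW'$ is obtained from a cluster-tilting subcategory $\WW$ by a single mutation at $W\in\ind\WW$, then $\ind\WW'=(\ind\WW\setminus\{W\})\cup\{W^*\}$ with $W^*\notin\ind\WW$ (otherwise $\WW'\subsetneq\WW$, impossible for cluster-tilting subcategories). One such step keeps $(\ind\UU)\setminus(\ind\WW)$ and $(\ind\WW)\setminus(\ind\UU)$ finite and leaves the difference of their cardinalities unchanged, so inducting along a mutation path yields the ``only if'' direction of (2) together with its last sentence.

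For the remaining, substantive implications I would use the reduction package developed in \S\ref{section:Reduction} (following \cite{IyamaYoshino08, AmiotOppermann12}): for a cluster-tilting subcategory $\UU$ of $\CC$ and a finite set $S\subseteq\ind\UU$, the subcategory $\ZZ:=\add(\ind\UU\setminus S)$ is functorially finite and rigid, the reduction $\CC_{[\ZZ]}$ is an algebraic $2$-Calabi--Yau category in which $\overline{\UU}:=\UU/[\ZZ]$ is a cluster-tilting object with exactly $|S|$ indecomposable summands, and $\WW\mapsto\WW/[\ZZ]$ is a bijection from the cluster-tilting subcategories of $\CC$ containing $\ZZ$ onto the cluster-tilting subcategories of $\CC_{[\ZZ]}$, intertwining Iyama--Yoshino mutations at indecomposables outside $\ZZ$. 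Since $\CC_{[\ZZ]}$ has a cluster-tilting object, all of its cluster-tilting subcategories have finitely many indecomposable objects; in particular the directed cluster-tilting subcategory supplied by Theorem~\ref{theorem:HereditaryBongartzIntroduction} is of the form $\add T_0$ for a cluster-tilting object $T_0$ with acyclic quiver, so $\End_{\CC_{[\ZZ]}}T_0$ is hereditary (Theorem~\ref{theorem:TriangleFreeIntroduction}) and, by \cite{KellerReiten08}, $\CC_{[\ZZ]}\cong\CC_Q$ where $Q$ is the finite acyclic quiver of $\End_{\CC_{[\ZZ]}}T_0$. Hence the exchange graph of $\CC_{[\ZZ]}$ is connected \cite{BuanMarshReinekeReitenTodorov06, Hubery11}, and since every rigid object of $\CC_{[\ZZ]}$ lies in a cluster-tilting subcategory by Bongartz completion (Theorem~\ref{theorem:IntroductionBongartzComplement}), every rigid object of $\CC_{[\ZZ]}$ lies in a cluster-tilting subcategory reachable from $\overline{\UU}$.

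Granting all this, part (1) goes as follows. Write $X=\bigoplus_i X_i$ with each $X_i$ indecomposable, put $S:=\{U\in\ind\UU\mid\Ext^1(U,X)\neq 0\}$, finite by hypothesis, and $\ZZ:=\add(\ind\UU\setminus S)$. By the $2$-Calabi--Yau property $\Ext^1(X,\ZZ)=\Ext^1(\ZZ,X)=0$, so $X\in\ZZ^{\perp_1}$ and $\add(\ZZ\cup\{X\})$ is rigid; each $X_i\in\ZZ^{\perp_1}$ therefore maps to a rigid object $\overline{X_i}$ of $\CC_{[\ZZ]}$, which is zero exactly when $X_i\in\ind\ZZ$ and otherwise is indecomposable with unique lift $X_i$ along the reduction bijection. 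By the previous paragraph there is a cluster-tilting subcategory $\overline{\VV}$ of $\CC_{[\ZZ]}$, reachable from $\overline{\UU}$, containing the rigid object $\bigoplus_i\overline{X_i}$. Lifting $\overline{\VV}$ gives a cluster-tilting subcategory $\VV\supseteq\ZZ$ of $\CC$ with $X_i\in\ind\VV$ for every $i$ (for those $i$ with $\overline{X_i}\neq 0$ because $X_i$ lifts $\overline{X_i}\in\ind\overline{\VV}$, for the rest because $X_i\in\ind\ZZ\subseteq\ind\VV$), hence $X\in\VV$; and lifting a mutation path from $\overline{\UU}$ to $\overline{\VV}$ shows $\VV$ is reachable from $\UU$. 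For the ``if'' direction of (2), given cluster-tilting subcategories $\UU,\VV$ differing in only finitely many indecomposables, take $S:=(\ind\UU)\setminus(\ind\VV)$ and $\ZZ:=\add(\ind\UU\cap\ind\VV)$; then $\ZZ\subseteq\UU$ and $\ZZ\subseteq\VV$, so $\overline{\UU}$ and $\overline{\VV}$ are both cluster-tilting subcategories of $\CC_{[\ZZ]}\cong\CC_Q$, hence $\overline{\VV}$ is reachable from $\overline{\UU}$ by connectedness of the exchange graph, and lifting the mutation path gives that $\VV$ is reachable from $\UU$.

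The step I expect to be the main obstacle is the reduction dictionary itself: one must know that deleting finitely many indecomposables from a cluster-tilting subcategory leaves a functorially finite rigid subcategory, and that the Iyama--Yoshino reduction at it transports cluster-tilting subcategories and their mutations faithfully in both directions --- this is exactly what \S\ref{section:Reduction} is designed to provide. Once that is in hand, the only genuinely new ingredient is the identification $\CC_{[\ZZ]}\cong\CC_Q$, which combines Theorems~\ref{theorem:HereditaryBongartzIntroduction} and~\ref{theorem:TriangleFreeIntroduction} with \cite{KellerReiten08} to force every such finite-rank reduction into the acyclic setting, where connectedness of the exchange graph and Bongartz completion of rigid objects are available off the shelf.
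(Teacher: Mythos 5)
Your proposal is correct and follows essentially the same route as the paper's proof of Corollary~\ref{corollary:Bongartz}: reduce at a cofinite rigid subcategory of $\UU$ (namely $\WW^{\perp_1}$-compatible with $X$, resp.\ $\UU\cap\VV$), identify $\CC_{[\WW]}\cong\CC_Q$ via Proposition~\ref{proposition:JorgensenPaluCofinite}, Theorem~\ref{theorem:HereditaryBongartzComplement} and \cite{KellerReiten08}, and conclude by Bongartz completion and connectedness of the exchange graph of $\CC_Q$. The only cosmetic difference is that you obtain the cardinality equality $|(\ind\UU)\setminus(\ind\VV)|=|(\ind\VV)\setminus(\ind\UU)|$ by an elementary induction along the mutation path, where the paper instead invokes \cite[Theorem 2.4]{DehyKeller08} inside the reduction; both are valid.
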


Thus, in order to prove Theorem \ref{theorem:IntroductionCountableReachable}, we need to show that there exists a cluster-tilting subcategory $\UU$ of $\CC$ satisfying the following property: for every rigid $X \in \CC$ we have $\Ext^1(U,X) = 0$ for all but finitely many $U \in \ind \UU$.  This is equivalent to $\UU$ being \emph{locally bounded}, i.e. for each $U \in \UU$, there are, up to isomorphism, only finitely many indecomposable objects $X \in \UU$ such that $\Hom(U,X) \not= 0$ or $\Hom(X,U) \not= 0$.  To find such a locally bounded cluster-tilting subcategory, we will use the classification of the directed cluster-tilting subcategories by thread quivers and handle two cases separately: one where the cluster-tilting subcategory is a linearly ordered set and one where the cluster-tilting subcategory is an (infinite) quiver.

\subsection{\texorpdfstring{Cluster-categories of type $A$ via triangulations}{Cluster-categories of type A via triangulations}}

In \S\ref{section:Triangulations} and \S\ref{section:CominatoricsOfTypeA}, we discuss the case where the cluster-tilting subcategory is given by a thread, or, equivalently, where the cluster-tilting subcategory is of the form $k\LL$ where $\LL$ is a bounded linearly ordered locally discrete set (here, \emph{bounded} means that $\LL$ has both a minimum and a maximum, and \emph{locally discrete} means that $\LL$ has no accumulation points).  The main results are those where we link the cluster-tilting subcategories with triangulations of cyclically ordered sets, building upon the finite case (\cite{CalderoChapotonSchiffler06}), and similar results for the infinite cases (\cite{HolmJorgensen12,LiuPaquette15}, see also \cite{GrabowskiGratz14, Gratz15} and \cite{IgusaTodorov15}).  We do note that our definition of a triangulation differs from the ones in these references (see Definition \ref{definition:MainTriangulationProperties}): a triangulation in our sense need not be a maximal set of pairwise noncrossing arcs, and conversely, a maximal set of pairwise noncrossing arcs need not be a triangulation (see Example \ref{example:NoncrossingSets}).

Recall (see \S\ref{subsection:CyclicallyOrderedDefinitions}) that a cyclic order $R$ on a set $C$ is a ternary relation.  We think of the cyclically ordered set $(C,R)$ as a polygon where the vertices are given by $C$.  The statement $(a,b,c) \in R$ then becomes: in walking over the polygon $(C,R)$ in a chosen direction (clockwise or counterclockwise), $b$ lies between $a$ and $c$.  An arc in $(C,R)$ is a diagonal in the polygon, i.e. a pair of distinct points which are not adjacent.  Two arcs are noncrossing if they do not cross in the usual sense.  

Let $\LL$ be a bounded linearly ordered locally discrete set.  We associate to $(\LL, \leq)$ a cyclically ordered set $(\LL, \leq)_{\rm cyc} = (C,R)$ in the usual way (namely $(a,b,c) \in R \Leftrightarrow (a < b < c) \lor (b < c < a) \lor (c < a < b)$, see Example \ref{example:CyclicFromLinear}).

In Proposition \ref{proposition:ObjectsArcs} we show, for an infinite $\LL$, that there is a bijection $\Psi: \arc (\LL, \leq)_{\rm cyc} \to \ind \CC_\LL$ between the set of arcs on $(\LL, \leq)_{\rm cyc}$ and the set of (isomorphism classes of) indecomposable objects in $\CC_\LL = (\Db \mod k\LL) / (\bS \circ [-2]).$  Moreover, arcs $ab$ and $cd$ in $(\LL, \leq)_{\rm cyc}$ cross if and only if $\Ext^1(\Psi(ab), \Psi(cd)) \not= 0$.  In this way, we see that there is a map $\Phi$ from sets of pairwise noncrossing arcs on $(\LL, \leq)_{\rm cyc}$ to rigid additive subcategories of $\CC_\LL$.

In \S\ref{section:Triangulations}, we study sets $S$ of pairwise noncrossing arcs on $(\LL, \leq)_{\rm cyc}$.  We concentrate on the properties of $S$ being maximal, connected, locally finite, and being a triangulation (see Definition \ref{definition:MainTriangulationProperties} for definitions).  These four properties are not independent, and our main results in this section investigate their relations.  These results are summarized in the table in Examples \ref{example:NoncrossingSets} where all possibilities are listed.

In \S\ref{section:CominatoricsOfTypeA}, we study the rigid additive subcategories corresponding to these sets of pairwise noncrossing arcs under the function $\Phi$.  We link the properties of sets of pairwise noncrossing arcs in Definition \ref{definition:MainTriangulationProperties} to the properties of rigid additive subcategories in Proposition \ref{proposition:ClusterTiltingSomeProperties}.  Our main result is where we describe which sets of arcs correspond to cluster-tilting subcategories (see Theorem \ref{theorem:WhenClusterTilting}):

\begin{theorem}\label{theorem:WhenClusterTiltingIntroduction}
Let $\LL$ be an unbounded linearly ordered locally discrete set.  A set $S$ of pairwise noncrossing arcs on $(\LL, \leq)_{\rm cyc}$ is a connected triangulation if and only if $\Phi(S)$ is a cluster-tilting subcategory.
\end{theorem}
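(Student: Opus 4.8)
The plan is to combine the combinatorial dictionary of Propositions~\ref{proposition:ObjectsArcs} and~\ref{proposition:ClusterTiltingSomeProperties} with the standard description of cluster-tilting subcategories of a 2-Calabi-Yau triangulated category. Recall that a functorially finite rigid subcategory $\mathcal X$ of a Krull-Schmidt 2-Calabi-Yau triangulated category $\CC$ is cluster-tilting if and only if $\mathcal X = \mathcal X^{\perp_1}$, where $\mathcal X^{\perp_1} = \{C \in \CC \mid \Ext^1(\mathcal X, C) = 0\}$; by the Calabi-Yau property $\mathcal X^{\perp_1} = {}^{\perp_1}\mathcal X$, so the condition is left-right symmetric. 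I would apply this with $\mathcal X = \Phi(S)$, which is rigid precisely because $S$ consists of pairwise noncrossing arcs (Proposition~\ref{proposition:ObjectsArcs}, which gives crossing $\Leftrightarrow$ nonvanishing $\Ext^1$, self-crossing being impossible).

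The first step is to identify the condition $\Phi(S) = \Phi(S)^{\perp_1}$ combinatorially. Since $\CC_\LL$ is Krull-Schmidt and every indecomposable of $\CC_\LL$ is $\Psi(a)$ for a unique arc $a$, membership in $\Phi(S)^{\perp_1}$ may be tested on indecomposables, and by Proposition~\ref{proposition:ObjectsArcs} one has $\Ext^1(\Phi(S), \Psi(a)) = 0$ exactly when the arc $a$ crosses no arc of $S$. Hence $\Phi(S) = \Phi(S)^{\perp_1}$ holds if and only if every arc crossing no arc of $S$ already lies in $S$, i.e.\ if and only if $S$ is a \emph{maximal} set of pairwise noncrossing arcs. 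Consequently $\Phi(S)$ is cluster-tilting if and only if $S$ is maximal and $\Phi(S)$ is functorially finite, and the theorem reduces to the purely combinatorial assertion that a set $S$ of pairwise noncrossing arcs on $(\LL,\leq)_{\rm cyc}$ is a connected triangulation if and only if it is maximal and $\Phi(S)$ is functorially finite. This last equivalence I would extract from Proposition~\ref{proposition:ClusterTiltingSomeProperties} together with the relations among the properties \emph{maximal}, \emph{connected}, \emph{locally finite} and being a \emph{triangulation} (Definition~\ref{definition:MainTriangulationProperties}) established in \S\ref{section:Triangulations} and tabulated in Examples~\ref{example:NoncrossingSets}: for the forward implication, that a connected triangulation is necessarily maximal and that $\Phi$ carries it to a functorially finite subcategory; for the converse, that functorial finiteness of $\Phi(S)$ forces $S$ to be connected and forces every region it cuts out to be a triangle, which one reads off from the minimal left and right $\Phi(S)$-approximations of an arbitrary indecomposable $\Psi(a)$ via Proposition~\ref{proposition:ObjectsArcs}.

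The main obstacle will be the functorial-finiteness step, which carries the genuine content. In the infinite-rank setting a self-perpendicular (equivalently, here, maximal rigid) subcategory need not be functorially finite, so connectedness of $S$ --- the absence of the ``fountain-like'' limit configurations that obstruct approximations --- and the triangle condition are both essential. Concretely, for an arc $a \notin S$ one must show that the arcs of $S$ that do not cross $a$ possess well-defined ``closest'' members to $a$ on each side; these assemble into a minimal right $\Phi(S)$-approximation of $\Psi(a)$, and the construction breaks down exactly when $S$ is disconnected or leaves a non-triangular limit region, in which case some $\Psi(a)$ admits no right approximation. Verifying that ``connected triangulation'' is precisely the combinatorial condition making all of these approximations exist is the heart of the matter, and is where the analysis of \S\ref{section:Triangulations} and the dictionary of Proposition~\ref{proposition:ClusterTiltingSomeProperties} do the real work.
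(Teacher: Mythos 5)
Your reduction is sound: since every indecomposable of $\CC_\LL$ is rigid, the condition $\Phi(S)=\Phi(S)^{\perp_1}$ is indeed equivalent to maximality of $S$, so the theorem does come down to ``connected triangulation $\Leftrightarrow$ maximal together with $\Phi(S)$ functorially finite.'' For the hard implication (connected triangulation $\Rightarrow$ approximations exist) your ``closest arcs on each side'' idea is essentially what the paper implements: Corollary~\ref{Corollary:TowardsFunctoriallyFiniteness} produces, for each arc $ab$, a finite set of arcs of $S$ through which --- by the one-dimensionality of Hom-spaces and Lemma~\ref{lemma:CompositionInTypeA} --- every morphism from $\Phi(S)$ into $\Phi(ab)$ (resp.\ out of it) factors. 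Where you genuinely diverge from the paper is the converse. You propose to read off connectedness and the triangle condition directly from the existence of minimal approximations; the paper instead gets this direction almost for free from general theory: $\Phi(S)$ is indecomposable as an additive category because $\CC_\LL$ is a block (Theorem~\ref{theorem:IndecomposableClustertiltingSubcategories}), whence $S$ is connected by Proposition~\ref{proposition:ClusterTiltingSomeProperties}; and since $\CC_\LL$ carries a cluster structure (Theorem~\ref{theorem:ClusterStructure}), every indecomposable of a cluster-tilting subcategory can be mutated, so every arc of $S$ is exchangeable, and Theorem~\ref{theorem:FlippingIsTriangulated} upgrades ``connected, maximal, all arcs exchangeable'' to ``triangulation.'' Your route would avoid that machinery, but it buys a nontrivial combinatorial obligation in exchange.

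That obligation is the one place where your sketch is thinner than it looks. For a maximal $S$ that is connected but not a triangulation (e.g.\ $S_9$ of Example~\ref{example:NoncrossingSets}), you must exhibit an object of $\CC_\LL$ with no right (or left) $\Phi(S)$-approximation; what actually happens is that the indecomposables of $\Phi(S)$ admitting a nonzero morphism to a suitably chosen $\Phi(c)$ form an infinite ascending chain with no finite cofinal subset, and a similar obstruction occurs when $S$ is disconnected. This is true and provable, but it does not ``fall out'' of Proposition~\ref{proposition:ClusterTiltingSomeProperties} or the table of examples --- it is a separate argument of roughly the same weight as the approximation construction you already flag as the heart of the matter. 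If you pursue your route, supply that argument explicitly (or replace it, as the paper does, by the appeal to Theorems~\ref{theorem:ClusterStructure}, \ref{theorem:FlippingIsTriangulated} and~\ref{theorem:IndecomposableClustertiltingSubcategories}).
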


As a corollary (see Corollary \ref{corollary:WhenClusterTilting}), we find a description of the cluster-tilting subcategories in $\CC_\LL$ that does not refer to functorial finiteness: namely, a rigid subcategory is a cluster-tilting subcategory if and only if it is indecomposable (as an additive category, see Appendix \ref{section:IndecomposableCTS}), maximal rigid, and can be mutated at every indecomposable object (in the sense of Definition \ref{definition:AllowMutation}).

\subsection{Locally bounded cluster-tilting objects} We are now set to prove {Theorem~\ref{theorem:IntroductionCountableReachable}}.  Based on Theorem \ref{theorem:IntroductionReachability}, we only need to show that there is a locally bounded cluster-tilting subcategory $\UU$.  To construct $\UU$, we will use the description of the directed cluster-tilting subcategory $\TT$ of $\CC$ by thread quivers.  Recall from \cite{BergVanRoosmalen14} that a thread quiver consists of the following information:
\begin{itemize}
\item a quiver $Q_u=(Q_0,Q_1)$ where $Q_0$ is the set of vertices and $Q_1$ is the set of arrows (called the \emph{underlying quiver}),
\item a decomposition $Q_1 = Q_s \coprod Q_t$ of the set of arrows into \emph{standard arrows} (contained in $Q_s$) and thread arrows (contained in $Q_t$),
\item a linearly ordered set $\PP_t$ (possibly empty) for every thread arrow $t \in Q_t$.
\end{itemize}
With a thread quiver $Q$, we may associate a Krull-Schmidt category $kQ$ such that $\mod kQ$ is hereditary and $\Db \mod kQ$ has a Serre functor.  The category $kQ$ is constructed as a 2-pushout, and we obtain a fully faithful functor $kQ_u \to kQ$.  Using this embedding as an identification, $kQ_u$ is a functorially finite subcategory of $kQ$.  Moreover $kQ / [kQ_u] \simeq \oplus_{t \in Q_t} k\LL_t$, where $k\LL_t$ is a bounded locally discrete linearly ordered set obtained from $\PP_t$.

Let $Q$ be a thread quiver such that $\TT \simeq kQ$.  The subcategory $kQ_u \subseteq kQ \subset \CC$ is functorially finite, and we may consider the Iyama-Yoshino reduction $\CC_{[kQ_u]}$ which has a cluster-tilting subcategory equivalent to $kQ / [kQ_u] \simeq \oplus_{t \in Q_t} k\LL_t$.  A priori, we do not know whether this implies that $\CC_{[kQ_u]} \cong \oplus_{t \in Q_t} \CC_{\LL_t}$.

This is where we will use the results in \S\ref{section:Comparing}.  Theorem \ref{theorem:EquivalentClusterStructures} states that there is a function $F: \Ob \CC_{[kQ_u]} \to \Ob \oplus_{t \in Q_t} \CC_{\LL_t}$ such that $F$ induces an algebra-isomorphism on endomorphism rings and $F$ maps cluster-tilting subcategories to cluster-tilting subcategories (compatible with the mutation).  The cluster-tilting subcategories of $\oplus_{t \in Q_t} \CC_{\LL_t}$ are understood via triangulations (see \S\ref{section:Triangulations} and \S\ref{section:CominatoricsOfTypeA}); in particular, we know that $\oplus_{t \in Q_t} \CC_{\LL_t}$ has a locally bounded cluster-tilting subcategory (Proposition \ref{corollary:LocallyBoundedClusterTilting}) and hence, so does $\CC_{[kQ_u]}$.  The intricacies of the proof of {Theorem~\ref{theorem:IntroductionCountableReachable}} then relate to showing that the corresponding cluster-tilting subcategory in $\CC$ is still locally bounded.

\subsection*{Acknowledgments}  This work was partially funded by the Eduard \v{C}ech Institute under grant GA \v{C}R P201/12/G028.  The second author is currently a postdoctoral researcher at FWO (12.M33.16N).

\section{Preliminaries and basic results}

Let $k$ be an algebraically closed field. We will assume all categories and functors to be $k$-linear. Furthermore, we assume all categories to be essentially small (with the obvious exception of module categories of the form $\Mod \frak{a}$).

\subsection{Krull-Schmidt categories}

An additive category is called \emph{Krull-Schmidt} (or Krull-Remark-Schmidt) if idempotents split and all endomorphism rings are semi-perfect, or equivalently (see \cite[Theorem 27.6]{AndersonFuller92}), every object is a finite direct sum of objects with local endomorphism rings.  This decomposition is then unique up to permutation of the direct summands and isomorphism.  Since every finite-dimensional algebra is semi-perfect, a Hom-finite additive category is Krull-Schmidt if and only if idempotents split (see for example \cite[Corollary A.2]{ChenYeZhang08}).

If $\AA$ is an abelian Ext-finite category, then $\AA$ and $\Db \AA$ are Krull-Schmidt categories, as both $\AA$ and $\Db \AA$ are Hom-finite and idempotents split (idempotents split in $\AA$ because $\AA$ is abelian; that they split in $\Db \AA$ has been shown in \cite[2.10. Corollary]{BalmerSchlichting01}).

Let $\CC$ be an additive Krull-Schmidt category. By $\ind \CC$ we will denote the full subcategory spanned by a set of chosen representatives of isomorphism classes of indecomposable objects of $\CC$. Note that $\CC$ is fully determined by $\ind \CC$ in that we can recover $\CC$ back (up to $k$-linear equivalence) as the category of finitely generated projective $\ind \CC$-modules.  If $\CC'$ is a Krull-Schmidt subcategory of $\CC$, then we will assume that $\ind \CC' \subseteq \ind \CC$.

\subsection{Linearly ordered sets}
\label{subsection:Lin ord sets}

Let $(\PP, \leq)$ be a poset.  We say that $\PP$ is \emph{bounded below} if $\PP$ has a minimum and that $\PP$ is \emph{bounded above} if $\PP$ has a maximum.  If $\PP$ is both bounded above and bounded below, we say that $\PP$ is \emph{bounded}.  If $\PP$ is neither bounded below nor bounded above, then we say that $\PP$ is \emph{unbounded}.

Let $(\PP_1, \leq_1)$ and $(\PP_2, \leq_2)$ be posets.  We will write $(\PP_1 \cdot \PP_2,\leq)$ for the poset with underlying set $\PP_1 \coprod \PP_2$ and with a partial ordering $\leq$ given by
\begin{eqnarray*}
a \leq b \Leftrightarrow \left\{
\begin{array}{l}
\mbox{$a,b \in \PP_1$ and $a \leq_1 b$,} \\
\mbox{$a,b \in \PP_2$ and $a \leq_2 b$,} \\
\mbox{$a \in \PP_1$ and $b \in \PP_2$}
\end{array} \right.
\end{eqnarray*}

If $(\PP_1,\leq_1)$ and $(\PP_2,\leq_2)$ are posets, then we will write $(\PP_1 \stackrel{\rightarrow}{\times} \PP_2,\leq)$ for the partially ordered set with underlying set $\PP_1 \times \PP_2$ endowed with the lexicographical ordering.

A poset which is totally ordered is called an \emph{ordered set} or a \emph{linearly ordered set}.  In this paper, we prefer the term linearly ordered to distinguish it from the cyclically ordered set from \S\ref{section:Triangulations}.

A linearly ordered set is called locally discrete if every nonmaximal element $x$ has a unique successor $x+1$ and every nonmimimal element $x$ has a unique predecessor $x-1$.  An unbounded locally discrete linearly ordered set is isomorphic to $\PP \stackrel{\rightarrow}{\times} \bZ$ for a linearly ordered set $\PP$.  A bounded locally discrete ordered set is isomorphic to $\bN \cdot (\PP \stackrel{\rightarrow}{\times} \bZ) \cdot (-\bN)$.  Here, we allow $\PP$ to be the empty set.

We will often see a partially ordered set $\PP$ as a category in the usual way.  If $\PP$ is a poset, we write $k\PP$ for the additive closure of $k$-linearized poset category and we choose $\ind k\PP$ to be $\PP \subseteq \Ob k\PP$, thus the objects are formal direct sums of elements of $\PP$, the morphisms are given by (for $a,b \in \ind k\PP = \PP$)
\[\Hom_{k\LL} (a,b) = \begin{cases} k & a \leq b, \\ 0 & a > b, \end{cases}\]
and the composition is given by multiplication.

\subsection{Serre functors and Calabi-Yau categories}
\label{subsection:Serre and CY}

Let $\CC$ be a Hom-finite category.  A \emph{Serre functor} \cite{BondalKapranov89} on $\CC$ is an additive auto-equivalence $\bS\colon \CC \to \CC$ such that for every $X,Y \in \Ob \CC$ there are isomorphisms
$$\Hom(X,Y) \cong \Hom(Y,\bS X)^*$$
natural in $X$ and $Y$, and where $(-)^*$ is the vector-space dual.  If $\CC$ is triangulated, then $\bS$ is a triangulated functor (see \cite[Proposition 3.3]{BondalKapranov89}).

We say that $\CC$ has \emph{Serre duality} if $\CC$ admits a Serre functor.  If there is an equivalence of triangle functors $\bS \cong [n]$ for some $n \geq 0$, then we say that the category $\CC$ is \emph{$n$-Calabi-Yau}.

It has been shown in \cite[Theorem A]{ReVdB02} that $\Db \AA$ has a Serre functor if and only if $\Db \AA$ has Auslander-Reiten triangles.  Writing $\tilde{\tau}$ for $\bS[-1]$, every indecomposable object $A \in \Db \AA$ admits an Auslander-Reiten triangle $\tilde{\tau} A \to M \to A \to \bS A$.  Note that $\tilde{\tau} \cong [1]$ when $\CC$ is a 2-Calabi-Yau category.

An abelian category $\AA$ is said to satisfy Serre duality if the bounded derived category $\Db \AA$ has Serre duality.  If $\AA$ is hereditary and has Serre duality, then it follows from \cite[Theorem A]{ReVdB02} that $\AA$ has Auslander-Reiten sequences.  We will denote the Auslander-Reiten translation in $\AA$ by $\tau$, thus for every nonprojective indecomposable object $A \in \AA$ there is an Auslander-Reiten sequence $0 \to \tau A \to M \to A \to 0$.

The Auslander-Reiten translation in $\AA$ and the functor $\tilde{\tau}\colon \Db \AA \to \Db \AA$ coincide on nonprojective indecomposable objects of $\AA$, meaning that for every indecomposable nonprojective $A \in \AA$, we have $(\tau A)[0] \cong \tilde{\tau} (A[0])$.  Since most abelian categories we will work with are hereditary, we can write $\tau$ for $\tilde{\tau}$ without confusion. Serre duality then takes the following form, known as the \emph{Auslander-Reiten formula}:

\begin{lemma} \label{lemma:AR-formula}
Let $\AA$ be a hereditary abelian category with Serre duality. If $X,Y \in \ind \AA$, then $\Ext^1_\AA(X,Y) \cong \Hom_\AA(Y,\tau X)^*$.
\end{lemma}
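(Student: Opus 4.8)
The plan is to read off the statement from the defining Serre duality isomorphism of $\Db\AA$ in cohomological degree one. Since $\AA$ is abelian, $\Ext^1_\AA(X,Y) \cong \Hom_{\Db\AA}(X,Y[1])$, and since $\AA$ has Serre duality it comes with a Serre functor $\bS$ on $\Db\AA$. The first step applies the Serre duality isomorphism to the pair $(X, Y[1])$:
\[
\Ext^1_\AA(X,Y) \;\cong\; \Hom_{\Db\AA}(X,Y[1]) \;\cong\; \Hom_{\Db\AA}(Y[1], \bS X)^{*}.
\]

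Next I would move the shift from the first argument to the second---$\bS$ commutes with $[1]$ since it is a triangle functor---to obtain $\Hom_{\Db\AA}(Y[1], \bS X) \cong \Hom_{\Db\AA}(Y, (\bS X)[-1]) = \Hom_{\Db\AA}(Y, \tilde\tau X)$, where $\tilde\tau = \bS[-1]$ in the notation introduced above. It then remains to identify $\Hom_{\Db\AA}(Y, \tilde\tau X)$ with $\Hom_\AA(Y, \tau X)$. For $X \in \ind\AA$ non-projective this is precisely the identity $\tilde\tau(X[0]) \cong (\tau X)[0]$ recalled just before the lemma, so that $\Hom_{\Db\AA}(Y[0], (\tau X)[0]) = \Hom_\AA(Y, \tau X)$, and the two displayed isomorphisms combine to give the claim. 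If $X$ is projective, then $\Ext^1_\AA(X,Y) = 0$; on the other side $\bS X$ again lies in $\AA$ (a Serre functor carries projectives to injectives), so $\Hom_{\Db\AA}(Y, (\bS X)[-1]) \cong \Ext^{-1}_\AA(Y, \bS X) = 0$ as well, and with the convention $\tau X = 0$ the formula again holds.

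I do not expect a genuine obstacle here: the argument is a formal manipulation of the Serre duality isomorphism together with the identification of $\Ext$ with morphisms in $\Db\AA$. The only place where heredity of $\AA$ is used---and it is already isolated in the discussion preceding the lemma---is the last identification, namely that $\bS[-1]$ carries a degree-zero object back to a degree-zero object on non-projective indecomposables (equivalently, that the derived Auslander-Reiten translate introduces no negative cohomological degrees there); this in turn rests on \cite[Theorem A]{ReVdB02}, which guarantees that $\Db\AA$ has Auslander-Reiten triangles.
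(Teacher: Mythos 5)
Your argument is correct and is essentially the paper's own proof: both reduce to the Serre duality isomorphism in $\Db\AA$, rewrite $\Ext^1_\AA(X,Y)$ as a Hom-space in the derived category, and use the identification $\tilde\tau(X[0]) \cong (\tau X)[0]$ for nonprojective $X$, with the projective case handled by noting both sides vanish. Your extra remark that $\bS X$ stays in degree zero when $X$ is projective is a valid (slightly more explicit) justification of the vanishing of the right-hand side.
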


\begin{proof}
If $X$ is nonprojective indecomposable, then
\[ \Ext^1_\AA(X,Y) \cong \Hom_{\Db \AA}(X[-1],Y) \cong \Hom_{\Db \AA}(Y,\bS X[-1])^* = \Hom_A(Y,\tau X)^*. \]
If on the other hand $X$ is projective, then both sides vanish since $\tau X = 0$.
\end{proof}

\subsection{Representations of preadditive categories}
\label{subsection:Reps of categories}

Let $\aa$ be a small preadditive category.  A right $\aa$-module is a contravariant functor from $\aa$ to $\Mod k$, the category of all vector spaces.  The category of all right $\aa$-modules is denoted by $\Mod \aa$.

Let $f\colon \frak{a} \to \frak{b}$ be a functor between small preadditive categories.  There is an obvious restriction functor
\[(-)_{\frak{a}}\colon \Mod \frak{b} \to \Mod \frak{a}\]
which sends $N$ to $N \circ f$.  This restriction functor has a left adjoint 
$$- \otimes_{\frak{a}} \frak{b}\colon \Mod \frak{a} \to \Mod \frak{b}$$
which is the right exact functor which sends the projective generators
$\frak{a}(-,A)$ in $\Mod \frak{a}$ to $\frak{b}(-,f(A))$ in $\Mod \frak{b}$.  As usual, if $f$ is fully faithful
we have $(N \otimes_{\frak{a}} \frak{b})_\frak{a}=N$.

Let $M$ be in $\Mod \frak{a}$.  We will say that $M$ is \emph{finitely generated} if $M$ is a quotient object of a finitely generated projective.  We say that $M$ is \emph{finitely presented} if $M$ has a presentation
\[P\to Q\to M\to 0\]
where $P,Q$ are finitely generated projectives.  We write $\mod \aa$ for the full subcatgeory of $\Mod \aa$ spanned by the finitely presented modules.  If $\mod \aa$ is an abelian category, we say that $\aa$ is \emph{coherent}.

Dually we will say that $M$ is \emph{finitely cogenerated} if it is contained in a finitely cogenerated injective (that is, in an injective envelope of a finitely generated semisimple module, \cite[Proposition 18.18]{AndersonFuller92}).  \emph{Finitely copresented} is defined in a similar way.  If $\aa$ is Hom-finite, it follows from the (equivalent) definition of finitely generated and cogenerated modules in~\cite[\S10]{AndersonFuller92} that the vector-space duality $(-)^*\colon \Mod \aa \to \Mod \aa^\circ$ sends finitely generated modules to finitely cogenerated ones, finitely presented modules to finitely copresented ones, and vice versa.

We will write $\modc \aa$ for the full subcategory of $\Mod \aa$ spanned by the objects which are both finitely presented and finitely copresented.

With every object $A$ of $\aa$, we may associate a \emph{standard projective} $\aa(-,A)$ and a \emph{standard injective} $\aa(A,-)^*$.  It is clear that every finitely generated projective is a direct summand of a finite sum of standard projectives.  If $\aa$ has finite direct sums and idempotents split in $\aa$, then every finitely generated projective is isomorphic to a standard projective. Dual notions hold for injective objects.

Assume that idempotents split in $\aa$.  We call $\aa$ \emph{locally bounded} provided it is Hom-finite and for each $A \in \aa$, there are up to isomorphism only finitely many indecomposable objects $X \in \aa$ such that $\aa(A,X) \ne 0$ or $\aa(X,A) \ne 0$ (see \cite[\S6]{LenzingReiten06}). One can show that a Hom-finite Krull-Schmidt additive category $\aa$ is locally bounded if and only if each standard projective and standard injective module is of finite length.

A small preadditive category $\frak{a}$ is called \emph{semi-hereditary} if the finitely presented objects $\mod \frak{a}$ in $\Mod \frak{a}$ form an abelian and hereditary category.  The following proposition (\cite{vanRoosmalen06}, see also \cite[Theorem 1.6]{AuslanderReiten75}) gives a convenient criterion for semi-heredity.

\begin{proposition}\label{proposition:ShIsLocal}
Let $\frak{a}$ be a small preadditive category; then $\frak{a}$ is semi-hereditary if and only if any full subcategory of $\frak{a}$ with a finite number of objects is semi-hereditary.
\end{proposition}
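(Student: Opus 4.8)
The plan is to reduce both implications to a finitary reformulation of semi-heredity. Namely: $\aa$ is semi-hereditary if and only if, for every $\aa$-linear map $g\colon Q\to P$ between finitely generated projective $\aa$-modules, the kernel $\ker g$ is again a finitely generated projective $\aa$-module (equivalently, the canonical epimorphism $Q\twoheadrightarrow\im g$ splits, i.e.\ $\im g$ is projective). One direction is immediate from the definition: if $\mod\aa$ is abelian and hereditary, then $\ker g$ is finitely presented (coherence) and has projective dimension $0$ (heredity), hence is a direct summand of a finitely generated projective. For the converse, ``$\ker g$ is always finitely presented'' already forces $\aa$ to be coherent, so $\mod\aa$ is abelian; and ``$\ker g$ is always projective'', applied to the differential $P_1\to P_0$ of a projective presentation of an arbitrary $M\in\mod\aa$, shows that the syzygy $\ker(P_0\to M)$ is projective, whence $\operatorname{pd}M\le 1$ and $\mod\aa$ is hereditary. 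Since every finitely generated projective is a direct summand of a finite direct sum of standard projectives $\aa(-,A)$, one checks that it suffices to treat $Q=\bigoplus_{j}\aa(-,B_j)$ and $P=\bigoplus_i\aa(-,A_i)$; so such a $g$ is nothing but a finite matrix $(g_{ij})$ of morphisms $g_{ij}\colon B_j\to A_i$ of $\aa$.

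Two elementary observations about a full subcategory $\bb\subseteq\aa$ will be used throughout. First, restriction $(-)_{\bb}\colon\Mod\aa\to\Mod\bb$ is exact and, by fullness, carries $\aa(-,B)$ to $\bb(-,B)$ for $B\in\bb$; hence it carries finitely generated projectives supported on objects of $\bb$ to finitely generated projectives, and it commutes with kernels. Second, by the Yoneda lemma and fullness, for objects $B_1,\dots,B_m$ and $C_1,\dots,C_r$ of $\bb$ restriction induces an isomorphism $\Hom_\aa\bigl(\bigoplus_j\aa(-,B_j),\bigoplus_k\aa(-,C_k)\bigr)\xrightarrow{\,\sim\,}\Hom_\bb\bigl(\bigoplus_j\bb(-,B_j),\bigoplus_k\bb(-,C_k)\bigr)$, since both sides are $\prod_j\bigoplus_k\aa(B_j,C_k)$; being a ring isomorphism on endomorphism rings, it reflects idempotents. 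We also recall that every $M\in\mod\aa$ is isomorphic to $M_{\bb}\otimes_{\bb}\aa$ for any finite full $\bb$ carrying the objects of a finite presentation of $M$, and that $-\otimes_{\bb}\aa$ sends projectives to projectives.

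For the forward implication, let $\bb\subseteq\aa$ be a finite full subcategory and $g_\bb\colon Q_\bb\to P_\bb$ a map between finitely generated projective $\bb$-modules, given by a matrix of morphisms of $\bb$. The same matrix defines a map $g\colon Q\to P$ of finitely generated projective $\aa$-modules whose restriction to $\bb$ is $g_\bb$. By hypothesis $\ker g=\im\tilde\varepsilon$ for an idempotent $\tilde\varepsilon\in\End_\aa(Q)$. Restricting and using exactness of restriction, $\tilde\varepsilon_\bb$ is an idempotent of $\End_\bb(Q_\bb)$ with $\im\tilde\varepsilon_\bb=(\ker g)_\bb=\ker g_\bb$, so $\ker g_\bb$ is a direct summand of $Q_\bb$, hence a finitely generated projective $\bb$-module. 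By the reformulation applied to $\bb$, the category $\bb$ is semi-hereditary.

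The reverse implication is the substantial one. Let $g\colon Q\to P$ be a map of finitely generated projective $\aa$-modules and choose a finite full subcategory $\bb$ of $\aa$ containing every object occurring in $Q$, $P$ and in the matrix of $g$; then $\bb$ is semi-hereditary, so $\ker g_\bb=\im\varepsilon_\bb$ with $\varepsilon_\bb$ an idempotent of $\End_\bb(Q_\bb)$, which by the second observation we view as an idempotent $\tilde\varepsilon\in\End_\aa(Q)$; put $K=\im\tilde\varepsilon$, a finitely generated projective $\aa$-module. One inclusion is cheap: the map $g\tilde\varepsilon\colon Q\to P$ restricts to $g_\bb\varepsilon_\bb=0$ on $\bb$, and restriction is injective on $\Hom_\aa(Q,P)$ by the second observation, so $g\tilde\varepsilon=0$, i.e.\ $K\subseteq\ker g$. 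The crux is the opposite inclusion $\ker g\subseteq K$, equivalently $(\Id_Q-\tilde\varepsilon)(\ker g)=0$: this holds after restriction to $\bb$, but a priori $\ker g$ may have components at objects of $\aa$ outside $\bb$ that $\bb$ does not detect. The plan to close this gap is two-step. First one shows $\aa$ is coherent, so that $\ker g$ is finitely generated; this is the technical heart, and it is here that one needs semi-heredity --- not merely coherence --- of all finite full subcategories, the idea being that every finitely presented $\aa$-module and every morphism between two such is already defined over a finite full subcategory, so that any obstruction to finite generation of $\ker g$ (equivalently, any failure of $\tilde\varepsilon$ to control $\ker g$ at a new object) would be visible over some finite full $\bb'\supseteq\bb$, contradicting its semi-heredity after comparing the projective summand $\ker g_{\bb'}$ of $Q_{\bb'}$ with $K_{\bb'}$. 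Once coherence is available, $\ker g$ is finitely generated, hence lives over a finite full subcategory $\bb^{\ast}$ of $\aa$; then $\ker g\cong(\ker g)_{\bb^{\ast}}\otimes_{\bb^{\ast}}\aa=\ker g_{\bb^{\ast}}\otimes_{\bb^{\ast}}\aa$, and $\ker g_{\bb^{\ast}}$ is projective over $\bb^{\ast}$ by heredity of $\bb^{\ast}$; applying $-\otimes_{\bb^{\ast}}\aa$, which preserves projectives, shows that $\ker g$ is a finitely generated projective $\aa$-module, and the reformulation yields that $\aa$ is semi-hereditary. I expect the coherence step --- controlling $\ker g$ at objects lying outside any chosen finite full subcategory --- to be the main obstacle; the rest is bookkeeping with restriction, the Yoneda lemma, and the induction functor $-\otimes_{\bb}\aa$.
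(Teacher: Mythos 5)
The paper itself gives no proof of this proposition (it cites \cite{vanRoosmalen06} and \cite[Theorem 1.6]{AuslanderReiten75}), so I am judging your argument on its own terms. Your overall strategy --- testing semi-heredity on maps between finite sums of standard projectives, using that restriction to a full subcategory $\bb$ induces an isomorphism $\End_\aa\bigl(\bigoplus_j\aa(-,B_j)\bigr)\cong\End_\bb\bigl(\bigoplus_j\bb(-,B_j)\bigr)$ to transport splitting idempotents, and enlarging the finite subcategory to detect the kernel at new objects --- is the right one. But there are two genuine problems. First, your ``finitary reformulation'' is false in the direction you need at the end: the condition ``$\ker g$ is finitely generated projective for every map $g$ between finitely generated projectives'' does \emph{not} imply semi-heredity, and it is not equivalent to ``$\im g$ is projective.'' For $\aa=k[x,y]$ the kernel of any map between finitely generated projectives is a second syzygy of its cokernel, hence projective (global dimension $2$), yet $k[x,y]$ is not semi-hereditary; concretely, $(a,b)\mapsto ax+by$ has free kernel but non-projective image. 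The slip in your derivation is that the syzygy of $M$ is $\ker(P_0\to M)=\im(P_1\to P_0)$, not $\ker(P_1\to P_0)$. The correct finitary criterion is that $\ker g$ is a \emph{direct summand} of $Q$ (equivalently $\im g$ is projective), exactly as in the proof of Proposition~\ref{proposition:CategoryOfProjectivesIsSemiHereditary} (``is projective \emph{and splits off}''). This is repairable, because your main argument actually produces the splitting idempotent rather than mere projectivity of the kernel, but as written the final appeal to the reformulation does not close the proof.

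Second, the step you yourself identify as the crux --- $\ker g\subseteq K=\im\tilde\varepsilon$ --- is only described as a plan, not proved, and your proposed detour through a separate coherence argument is both unproved and unnecessary. The direct argument is: for each object $X\in\aa$ and each $q\in\ker(g_X)$, pass to the finite full subcategory $\bb'$ on $\Ob\bb\cup\{X\}$. By hypothesis $\ker(g_{\bb'})=\im(\varepsilon')$ for an idempotent $\varepsilon'\in\End_{\bb'}(Q_{\bb'})\cong\End_\aa(Q)$; write $\tilde\varepsilon'$ for the corresponding idempotent of $\End_\aa(Q)$. Restricting further to $\bb$ gives $\im(\tilde\varepsilon'_\bb)=\ker(g_\bb)=\im(\tilde\varepsilon_\bb)$, and two idempotents with the same image satisfy $\tilde\varepsilon_\bb\tilde\varepsilon'_\bb=\tilde\varepsilon'_\bb$; since restriction is an isomorphism on $\End_\aa(Q)$, this identity holds in $\End_\aa(Q)$, so $\im\tilde\varepsilon'\subseteq\im\tilde\varepsilon=K$. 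As $q\in\ker(g_X)=\im(\tilde\varepsilon'_X)\subseteq K(X)$, you get $\ker g=K$, a direct summand of $Q$; in particular $\ker g$ is finitely generated projective and $\im g\cong Q/\ker g$ is projective, which gives both coherence and heredity of $\mod\aa$ at once. Without this (or an equivalent) argument, and with the reformulation corrected as above, the proof is incomplete.
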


The following statement is standard (see, for example \cite[Proposition 1.4]{Auslander74}); we provide a proof in the needed generality for the convenience of the reader.
\begin{proposition}\label{proposition:CategoryOfProjectivesIsSemiHereditary}
Let $\AA$ be an essentially small abelian hereditary category.  The full subcategory $\PP$ of all projective objects in $\AA$ is semi-hereditary.
\end{proposition}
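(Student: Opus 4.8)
The plan is to reduce to the finitely generated case via Proposition~\ref{proposition:ShIsLocal} and then to recognize the relevant module category as $\mod\Lambda$ for $\Lambda=\End_\AA(P)$. Concretely, it suffices to show that for every finite family $P_1,\dots,P_n$ of projective objects of $\AA$ the full subcategory $\PP'$ of $\AA$ on $P_1,\dots,P_n$ is semi-hereditary. Put $P=P_1\oplus\cdots\oplus P_n$ and $\Lambda=\End_\AA(P)$; then $\mod\PP'$ is equivalent to $\mod\Lambda$ (finitely presented right $\Lambda$-modules), and under this equivalence the functor $\Hom_\AA(P,-)$ identifies $\add_\AA P$ (the full subcategory of summands of finite powers of $P$) with the finitely generated projective right $\Lambda$-modules. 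So what has to be checked is that $\mod\Lambda$ is abelian and hereditary.

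The load-bearing step, and the only place where heredity of $\AA$ is used, is the following closure property: for any morphism $\phi\colon Y\to X$ in $\add_\AA P$, both $\ker_\AA\phi$ and $\im_\AA\phi$ again lie in $\add_\AA P$. Indeed, $\im_\AA\phi$ is a subobject of the projective object $X$, hence projective, since in a hereditary abelian category subobjects of projectives are projective (from the long exact $\Ext$-sequence of $0\to\im_\AA\phi\to X\to\coker_\AA\phi\to 0$ together with $\Ext^2_\AA=0$). Consequently the epimorphism $Y\twoheadrightarrow\im_\AA\phi$ splits, so $Y\cong\im_\AA\phi\oplus\ker_\AA\phi$, and both summands of $Y$ are then summands of a finite power of $P$.

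Next I would prove the main technical claim: every finitely generated $\Lambda$-submodule $N$ of a finitely generated projective right $\Lambda$-module is itself finitely generated projective. Since $N$ is finitely generated, it is the image of a morphism between finitely generated projective $\Lambda$-modules; via the equivalence above this morphism has the form $\Hom_\AA(P,\phi)$ for some $\phi\colon Y\to X$ in $\add_\AA P$. Factor $\phi$ as $Y\xrightarrow{\pi}I\xrightarrow{\iota}X$ with $I=\im_\AA\phi$. Because $P$ is projective, $\Hom_\AA(P,\pi)$ is an epimorphism; because $\Hom_\AA(P,-)$ is left exact, $\Hom_\AA(P,\iota)$ is a monomorphism. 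Hence $N=\im\Hom_\AA(P,\phi)\cong\Hom_\AA(P,I)$, and $I\in\add_\AA P$ by the previous paragraph, so $N$ is finitely generated projective.

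Finally I would assemble the conclusion. Applying $\Hom_\AA(P,-)$ to a morphism $\psi\colon Y\to X$ in $\add_\AA P$ and using left exactness gives $\ker\Hom_\AA(P,\psi)=\Hom_\AA(P,\ker_\AA\psi)$ with $\ker_\AA\psi\in\add_\AA P$; thus the kernel of any morphism between finitely generated projective $\Lambda$-modules is finitely generated (in fact projective), so $\Lambda$ is right coherent and $\mod\Lambda$ is abelian. In the abelian category $\mod\Lambda$ the projective objects are exactly the finitely generated projective modules, and by the technical claim every subobject of such is again projective; equivalently $\Ext^2_\Lambda\equiv 0$, so $\mod\Lambda$ is hereditary. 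Therefore $\PP'$ is semi-hereditary, and Proposition~\ref{proposition:ShIsLocal} yields the result for $\PP$. The one point one has to be careful about is the order of the logical steps: the technical claim and the coherence statement must be established inside the always-abelian category $\Mod\Lambda$, so that one does not implicitly assume beforehand that $\mod\Lambda$ is abelian.
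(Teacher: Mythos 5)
Your proof is correct, and the load-bearing fact is the same one the paper uses --- in a hereditary abelian category the kernel (and image) of a morphism between projectives is projective and the relevant inclusion splits --- but your route is genuinely different in its organization. The paper does not reduce to finite subcategories at all: it takes a morphism $f\colon \PP(-,Q)\to\PP(-,P)$ between standard projectives of $\mod\PP$, identifies it via Yoneda with $f'\colon Q\to P$ in $\AA$, notes that $K=\ker f'$ is projective and splits off $Q$, and then uses exactness of the restriction functor $\AA\to\mod\PP$, $A\mapsto\Hom_\AA(-,A)|_\PP$, to conclude that $\ker f=\PP(-,K)$ is a split subobject of $\PP(-,Q)$; the passage from this to ``$\mod\PP$ is abelian and hereditary'' is left implicit. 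Your detour through Proposition~\ref{proposition:ShIsLocal} and $\Lambda=\End_\AA(P)$ is longer but buys exactly what the paper glosses over: an explicit verification that $\Lambda$ is coherent (so $\mod\Lambda$ is abelian) and that finitely generated submodules of finitely generated projectives are projective, with the correct precaution of working in $\Mod\Lambda$ until coherence is in hand. One available simplification: once you know $\ker_\AA\phi\in\add_\AA P$ with split inclusion into $Y$, left exactness of $\Hom_\AA(P,-)$ already exhibits $\ker\Hom_\AA(P,\phi)\cong\Hom_\AA(P,\ker_\AA\phi)$ as a split, finitely generated projective submodule of $\Hom_\AA(P,Y)$; this yields both coherence and heredity (the first syzygy of a finitely presented module is then a direct summand of a finitely generated projective) and makes the separate technical claim about images unnecessary.
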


\begin{proof}
To show that $\PP$ is semi-hereditary, we need to show that, for all $P,Q \in \PP$, the kernel of a map $f\colon \PP(-,Q) \to \PP(-,P)$ is projective and splits off.  Via the Yoneda Lemma, the map $f$ corresponds to a map $f'\colon Q \to P$ in $\PP$.  Since $\AA$ is hereditary, the kernel $K$ of $f'$ is projective in $\AA$ (hence lies in $\PP$) and splits off.  Since the restriction $(-)_P\colon \AA \to \mod \PP\colon A \mapsto \Hom(-,A)|_\PP$ is exact, the kernel of $f$ is $\PP(-,K)$, and the embedding $\PP(-,K) \to \PP(-,Q)$ is split since the embedding $K \to Q$ is split.
\end{proof}

\begin{remark}\label{remark:modcHereditary}
If we assume that $\aa$ is Hom-finite, then $\aa$ is semi-hereditary if and only if $\aa^\circ$ is semi-hereditary.  In this case, we may infer that $\modc \aa$ is hereditary.
\end{remark}

We also recall the following result (see \cite[Proposition 2.2]{AuslanderReiten75}).
  
\begin{proposition}
\label{proposition:SemiHereditaryExact}
Let $\frak{b}\to \frak{c}$ be a full embedding of semi-hereditary categories.  Then the (fully faithful) functor $- \otimes_{\frak{b}} \frak{c}\colon \mod(\frak{b})\to \mod(\frak{c})$ is exact.
\end{proposition}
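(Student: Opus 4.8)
The plan is to reduce the assertion, by means of the local nature of semi-heredity (Proposition~\ref{proposition:ShIsLocal}), to an explicit flatness computation over finite-dimensional hereditary algebras. Since $-\otimes_{\frak b}\frak c$ is a left adjoint of the exact restriction functor $(-)_{\frak b}$, it is right exact, so it suffices to show it preserves monomorphisms; by the long exact $\Tor$-sequence this amounts to proving that $\Tor_1^{\frak b}(M,\frak c)=0$ for every $M\in\mod\frak b$. Here the hypothesis on $\frak b$ enters: because $\frak b$ is semi-hereditary, $\mod\frak b$ is abelian and hereditary, so every $M\in\mod\frak b$ fits into an exact sequence $0\to K\xrightarrow{\iota}Q\to M\to 0$ in which $Q$ is the finitely generated projective occurring in a finite presentation of $M$ and $K=\ker(Q\to M)$; then $K$ is finitely generated, and being a submodule of the projective $Q$ in the hereditary category $\mod\frak b$, it is projective. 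Hence $\Tor_1^{\frak b}(M,\frak c)=\ker(\iota\otimes_{\frak b}\frak c)$, and the whole statement reduces to showing that $-\otimes_{\frak b}\frak c$ carries a monomorphism $\iota$ between finitely generated projectives to a monomorphism.

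To prove this I would write $K\cong\bigoplus_{j}\frak b(-,b_j)$ and $Q\cong\bigoplus_{l}\frak b(-,c_l)$ with finitely many summands, so that $\iota$ is a matrix $(\iota_{lj})$ of morphisms of $\frak b$ and $\iota\otimes_{\frak b}\frak c$ is the matrix $(f\iota_{lj})$ from $\bigoplus_{j}\frak c(-,fb_j)$ to $\bigoplus_{l}\frak c(-,fc_l)$. Being a monomorphism can be tested after evaluation at an arbitrary object $x\in\frak c$, and only the finitely many objects $b_j$, $c_l$, $x$ are then involved, so one may pass to the full subcategory $\frak b_0\subseteq\frak b$ on the $b_j,c_l$ and the full subcategory $\frak c_0\subseteq\frak c$ on the $fb_j,fc_l,x$; then $f$ restricts to a full embedding $\frak b_0\to\frak c_0$. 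By Proposition~\ref{proposition:ShIsLocal} both $\frak b_0$ and $\frak c_0$ are semi-hereditary, and having finitely many objects one has $\mod\frak b_0\simeq\mod B$ and $\mod\frak c_0\simeq\mod C$ for finite-dimensional hereditary $k$-algebras $B\subseteq C$ with $B=eCe$ for the idempotent $e\in C$ determined by the objects of $\frak b_0$. Under these identifications $-\otimes_{\frak b_0}\frak c_0$ becomes $-\otimes_B(eC)\colon\mod B\to\mod C$ and $\iota$ a map of finitely generated projective $B$-modules, so it is enough to prove that $-\otimes_B(eC)$ is exact, that is, that $eC$ is flat as a left $B$-module.

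This flatness is the heart of the matter and the step I expect to be the main obstacle; everything preceding it is formal. As left $B$-modules $eC=eCe\oplus eC(1-e)=B\oplus eC(1-e)$, so it is enough to show that $eC(1-e)$ is a projective left $B$-module. Identifying $C$ with a path algebra $kQ$ ($Q$ a finite quiver without oriented cycles, using that $k$ is algebraically closed) and, after reduction to the basic case, $e$ with a sum $\sum_{i\in S}e_i$ of vertex idempotents, one has $eC(1-e)=\bigoplus_{j\notin S}eCe_j$ as a left $B$-module, so it remains to check that each $eCe_j$ is a projective $B$-module. I would argue this by induction following the arrows of $Q$: from $\rad(Ce_m)=\bigoplus_{\alpha\in Q_1,\ s(\alpha)=m}Ce_{t(\alpha)}$ and $e\cdot e_m=0$ for $m\notin S$ one gets $eCe_m=e\cdot\rad(Ce_m)=\bigoplus_{\alpha,\ s(\alpha)=m}eCe_{t(\alpha)}$; in this sum each summand with $t(\alpha)\in S$ is the direct summand $Be_{t(\alpha)}$ of $B$, hence projective, and each summand with $t(\alpha)\notin S$ is handled inductively. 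Finiteness and acyclicity of $Q$ make the recursion terminate---the base case being a sink $m\notin S$, where $eCe_m=e\cdot 0=0$---so $eCe_j$ is a finite direct sum of projective $B$-modules and is projective. Consequently $eC$ is flat over $B$, $-\otimes_B(eC)$ is exact, $\iota\otimes_{\frak b}\frak c$ is injective at $x$, and as $x$ was arbitrary $-\otimes_{\frak b}\frak c$ is exact. (This recovers \cite[Proposition~2.2]{AuslanderReiten75}.)
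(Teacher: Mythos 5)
The paper does not actually prove this proposition; it is recalled verbatim from \cite[Proposition 2.2]{AuslanderReiten75}, so there is no in-text argument to compare yours against. Your reductions at the start are sound: right exactness is automatic, the vanishing of $\Tor_1^{\frak{b}}(M,\frak{c})$ reduces (via the length-one finitely generated projective resolution supplied by semi-heredity of $\frak{b}$) to showing that $-\otimes_{\frak{b}}\frak{c}$ preserves monomorphisms between finitely generated projectives, and testing this object-by-object localizes the problem to full subcategories with finitely many objects, which are semi-hereditary by Proposition~\ref{proposition:ShIsLocal}. (Minor point: a finitely generated projective is in general only a direct \emph{summand} of a finite sum of standard projectives, but your evaluation-at-$x$ reduction carries the idempotents along harmlessly.) The combinatorial core is also correct: for a finite acyclic quiver $Q$ and a set $S$ of vertices, the recursion $eCe_m=\bigoplus_{\alpha,\,s(\alpha)=m}eCe_{t(\alpha)}$ for $m\notin S$ does terminate and exhibits $eC$ as a projective left $eCe$-module. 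The genuine gap is the passage to finite-dimensional hereditary algebras: the proposition assumes only that $\frak{b}$ and $\frak{c}$ are semi-hereditary, not that they are Hom-finite. Without Hom-finiteness the endomorphism ring $B$ of a full subcategory on finitely many objects is merely a semi-hereditary ring, which need not be hereditary, Noetherian, or finite-dimensional (an infinite product of fields, or any non-Noetherian von Neumann regular ring, is semi-hereditary), so the identification with $eC e\subseteq C=kQ$ is unavailable and the ``heart of the matter'' as you set it up does not apply. Your proof therefore establishes the statement only for Hom-finite categories over the algebraically closed base field; this happens to cover every use of the proposition in this paper, but not the statement as written. The Morita/basic-algebra reductions are also only gestured at.

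There is a short argument that closes the gap and makes the quiver computation unnecessary. Let $\iota\colon K\to Q$ be a monomorphism of finitely generated projectives in $\mod\frak{b}$ and set $L=\ker(\iota\otimes_{\frak{b}}\frak{c})$. Since $\frak{c}$ is semi-hereditary and $Q\otimes_{\frak{b}}\frak{c}$ is projective, the image of $\iota\otimes_{\frak{b}}\frak{c}$ is projective and $L$ splits off as a direct summand of $K\otimes_{\frak{b}}\frak{c}$ (this is exactly the argument in the proof of Proposition~\ref{proposition:CategoryOfProjectivesIsSemiHereditary}). Because the embedding is fully faithful, $(N\otimes_{\frak{b}}\frak{c})_{\frak{b}}\cong N$, and restriction is exact, so $L_{\frak{b}}\cong\ker\iota=0$. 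On the other hand $K\otimes_{\frak{b}}\frak{c}$, hence also its summand $L$, is an epimorphic image of $\bigoplus_j\frak{c}(-,fb_j)$; by the Yoneda lemma such an epimorphism is determined by elements of $\bigoplus_j L(fb_j)=\bigoplus_j L_{\frak{b}}(b_j)=0$, so $L=0$. This uses both semi-heredity hypotheses and fullness, but no Hom-finiteness, no hypothesis on $k$, and no classification of hereditary algebras.
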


\subsection{\texorpdfstring{Dualizing $k$-varieties}{Dualizing k-varieties}}\label{subsection:Dualizing}

We recall some definitions from \cite{Auslander74, AuslanderReiten74}.  A Hom-finite additive $k$-linear category where idempotents split is called a \emph{finite $k$-variety}.  A finite $k$-variety is always Krull-Schmidt.

Denote by $\Modlfd \aa$ the abelian category of locally finite-dimensional right $\aa$-modules, thus the full subcategory of $\Mod \aa$ spanned by all contravariant functors from $\aa$ to $\mod k$.  Note that an additive $k$-linear category where idempotents split is a finite $k$-variety if and only if every standard projective and standard injective lies in $\Modlfd \aa$.

Let $\frak{a}$ be a finite $k$-variety, and denote by $D\colon \Modlfd \aa \to \Modlfd \aa^\circ$ the duality given by sending a module $M \in \Ob \Modlfd \aa$ to the dual $D(M)$ where $D(M)(x) = M(x)^*$ for all $x \in \frak{a}$ (here, $(-)^*$ is the vector-space dual). If this functor induces a duality $D\colon \mod \frak{a} \to \mod \frak{a}^\circ$ by restricting its domain to the finitely presented objects in $\Modlfd \frak{a}$, then we will say that $\frak{a}$ is a \emph{dualizing $k$-variety}.

It follows from \cite{AuslanderSmalo81} that any functorially finite full subcategory $\bb$ (see \S\ref{subsection:FunctoriallyFinite} for a definition) of a dualizing $k$-variety $\aa$ is again dualizing.  It is shown in \cite[Proposition 2.11]{IyamaYoshino08} that a triangulated finite $k$-variety is a dualizing $k$-variety if and only if it satisfies Serre duality.

When $\aa$ is a finite $k$-variety, then $\mod \aa$ is an abelian hereditary category with Serre duality if and only if $\aa$ is a semi-hereditary dualizing $k$-variety (see \cite[Corollary 4.9]{BergVanRoosmalen14}).

\subsection{Thread quivers}\label{subsection:ThreadQuivers}

Thread quivers were introduced in \cite{BergVanRoosmalen14} to classify semi-hereditary dualizing $k$-varieties.  A \emph{thread quiver} consists of the following information:
\begin{itemize}
\item A quiver $Q=(Q_0,Q_1)$ where $Q_0$ is the set of vertices and $Q_1$ is the set of arrows.
\item A decomposition $Q_1 = Q_s \coprod Q_t$.  Arrows in $Q_s$ are called \emph{standard arrows}, while arrows in $Q_t$ are called \emph{thread arrows}.
\item For every thread arrow $t \in Q_t$, there is an associated linearly ordered set $\PP_t$, possibly empty.
\end{itemize}

For a thread quiver $Q$, we denote by $Q_u = (Q_0,Q_1)$ the underlying ``regular'' quiver, thus where all the thread arrows are (unlabeled) standard arrows.  We denote the $k$-linear additive path category of $Q_u$ by $kQ_u$.

With a thread arrow $\xymatrix@1{t\colon x_t \ar@{..>}[r]^-{\PP_t} & y_t}$ in $Q$ we associate a bounded locally discrete linearly ordered poset $\LL_t$: namely $\LL_t = \bN \cdot (\PP_t \stackrel{\rightarrow}{\times}\bZ) \cdot -\bN$.

The poset $\LL_t$ is interpreted as a category in the usual sense, and $k\LL_t$ will denote the associated $k$-linear additive category as in~\S\ref{subsection:Lin ord sets}.  For a thread arrow $\xymatrix@1{t\colon x_t \ar@{..>}[r]^-{\PP_t} & y_t}$ of $Q$, consider the poset $2_t = (\{x_t,y_t\}, \leq)$ given by $x_t < y_t$.  There is an obvious fully faithful functor $2_t \to \LL_t$ mapping $x_t,y_t \in 2_t$ to the minimum and maximum in $\LL_t$, respectively. Let $f^t\colon k2_t \to k\LL_t$ be the $k$-linearized functor.

The semi-hereditary dualizing $k$-variety $kQ$ is then defined as a 2-pushout:
$$\xymatrix{
\bigoplus_{t \in Q_t} k2_t \ar[r]^-f \ar[d]_g & {k Q_u} \ar@{-->}[d]^{i} \\
\bigoplus_{t \in Q_t} k\LL_t \ar@{-->}[r]_-{j} & kQ 
}$$
in the 2-category of small finite $k$-varieties, effectively replacing the thread arrows in $Q$ with the corresponding linearly ordered posets.  We refer to \cite{BergVanRoosmalen14} for more information.

\begin{remark}\label{remark:NonuniqueThreadQuivers}
\begin{enumerate}
\item Different thread quivers $Q,Q'$ may give rise to equivalent $k$-varieties $kQ, kQ'$, \cite[Example 7.10]{BergVanRoosmalen14}.
\item The functors $f^t\colon k2_t \to kQ_u$ (for each $t \in Q_t$) and $f\colon \oplus_t k2_t \to kQ_u$ are faithful, but not full in general.
\item Similarly, the functors $j^t\colon k\LL_t \to kQ$ (for each $t \in Q_t$) and $j\colon \oplus_t k\LL_t \to kQ$ are faithful, but not necessarily full.
\item The functors $g^t\colon k2_t \to k\LL_t$ and $i\colon kQ_u \to kQ$ are fully faithful, as is the functor $g\colon \oplus_t k2_t \to \oplus_t k\LL_t$.
\item For each thread quiver $Q$, there is a thread quiver $Q'$ with $kQ \simeq kQ'$ and for which the functors $f^t\colon k2_t \to kQ'_u$ and $j^t\colon k\LL_t \to kQ'$ are fully faithful (see \cite[Lemma 7.13 and Proposition 7.16]{BergVanRoosmalen14}).  The thread quivers $Q$ and $Q'$ have the same set of thread arrows.
\end{enumerate}
\end{remark}

\begin{definition}
A quiver $Q$ is called \emph{strongly locally finite} if the indecomposable projective and injective representations have finite dimension as $k$-vector spaces.
A thread quiver $Q$ will be called \emph{strongly locally finite} if and only if the underlying quiver $Q_u$ is strongly locally finite.
\end{definition}

\begin{remark}
A quiver $Q$ is strongly locally finite if and only if $kQ$ is locally bounded.  However, if a thread quiver $Q$ has a thread arrow, then $kQ$ is not locally bounded, even if the underlying quiver is strongly locally finite. 
\end{remark}

The following result is shown in \cite[\S 7]{BergVanRoosmalen14}.

\begin{theorem}\label{theorem:ClassificationIntroduction}
Let $Q$ be a thread quiver.  The category $kQ$ is a semi-hereditary dualizing $k$-variety if and only if $Q$ is strongly locally finite. Furthermore, every semi-hereditary dualizing $k$-variety $\TT$ is equivalent to a category $kQ$ where $Q$ is a strongly locally finite thread quiver.
\end{theorem}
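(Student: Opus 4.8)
The plan is to separate the statement into three assertions and treat them in turn: (II) if $kQ$ is a dualizing $k$-variety then $Q$ is strongly locally finite; (I) conversely, if $Q$ is strongly locally finite then $kQ$ is a semi-hereditary dualizing $k$-variety; and (III) every semi-hereditary dualizing $k$-variety $\TT$ is equivalent to $kQ$ for some strongly locally finite $Q$. Assertion (II) is the quickest. If $kQ$ is a dualizing $k$-variety it is in particular Hom-finite, and the fully faithful embedding $i\colon kQ_u\to kQ$ is functorially finite --- the one-sided $kQ_u$-approximations of an object of $kQ$ being visible from its $2$-pushout presentation --- so by the result of \cite{AuslanderSmalo81} recalled in \S\ref{subsection:Dualizing}, $kQ_u$ is again a dualizing $k$-variety. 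For the path category of a quiver without oriented cycles, being a dualizing $k$-variety is equivalent to all indecomposable projective and injective representations having finite dimension, i.e.\ to $Q_u$ --- and hence $Q$ --- being strongly locally finite.

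For (I), I would first check that $kQ$ is a finite $k$-variety: idempotents split because the $2$-pushout is formed inside finite $k$-varieties, and Hom-finiteness follows from that of the three corners together with the absence of infinite oriented paths in a strongly locally finite $Q_u$. Next, the three corners are themselves semi-hereditary dualizing $k$-varieties: for $Q_u$ this is classical, $k2_t$ is trivial, and for a bounded locally discrete linearly ordered set $\LL_t$ one checks that $k\LL_t$ is Hom-finite with split idempotents, that each of its finite full subcategories is the path category of a linearly oriented $A_n$ (hence hereditary, so $k\LL_t$ is semi-hereditary by Proposition \ref{proposition:ShIsLocal}), and that $\mod k\LL_t$ has explicit Auslander--Reiten sequences, giving Serre duality and, by \cite[Corollary 4.9]{BergVanRoosmalen14}, the dualizing property. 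Semi-heredity of $kQ$ then follows again from Proposition \ref{proposition:ShIsLocal}: a finite full subcategory of $kQ$ has $\Hom$-spaces spanned by finitely many composites of arrows of $Q_u$ and morphisms of the $k\LL_t$, contains no oriented cycle, and is therefore the path category of a finite acyclic quiver. Finally the dualizing property of $kQ$ reduces, again via \cite[Corollary 4.9]{BergVanRoosmalen14}, to producing Serre duality for $\mod kQ$, which I would do by assembling the Auslander--Reiten sequence of an indecomposable nonprojective $kQ$-module $M$ from the Auslander--Reiten sequences of the restrictions of $M$ to the functorially finite subcategories $kQ_u$ and $k\LL_t$, glued along their restrictions to the $k2_t$.

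For (III), let $\TT$ be a semi-hereditary dualizing $k$-variety, so that $\mod\TT$ is hereditary with Serre duality and $\TT\simeq\proj(\mod\TT)$. I would call $X\in\ind\TT$ \emph{interior} if $\rad_\TT(-,X)/\rad_\TT^2(-,X)$ and $\rad_\TT(X,-)/\rad_\TT^2(X,-)$ are both one-dimensional --- necessarily concentrated at single indecomposables $X^-$ and $X^+$ --- and the composite $X^-\to X\to X^+$ is nonzero. Using heredity and Serre duality one argues that the interior objects, together with their irreducible maps, break up into chains, each carrying a canonical linear order; one then lets $Q_0$ be the non-interior indecomposables, lets the non-interior irreducible maps be the standard arrows, contracts each maximal chain of interior objects to a single thread arrow between the non-interior objects it joins, and labels that arrow by the linearly ordered set the chain encodes. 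This produces a thread quiver $Q$ (not canonical, in line with Remark \ref{remark:NonuniqueThreadQuivers}). The inclusion of the non-interior part, a copy of $kQ_u$, and the inclusion of the threads, a copy of $\bigoplus_t k\LL_t$, agree on the $k2_t$, so the universal property of the $2$-pushout yields a comparison functor $kQ\to\TT$; one checks it is dense (every indecomposable is interior or not) and fully faithful (both sides compute $\Hom$ through the same composites), whence $kQ\simeq\TT$, and then $Q$ is strongly locally finite by (II).

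I expect the genuine difficulties to be two gluing problems. In (I), building the Auslander--Reiten sequence of a $kQ$-module from those over the corners --- equivalently, building the Serre functor of $\Db\mod kQ$ from those of $\Db\mod kQ_u$ and the $\Db\mod k\LL_t$ --- requires understanding precisely how the restriction functors interact with almost-split sequences across the overlaps $k2_t$, and it is here that strong local finiteness (no runaway composites) is really used. In (III), showing that the interior indecomposables organize into chains of the expected order type $\bN\cdot(\PP_t\stackrel{\rightarrow}{\times}\bZ)\cdot(-\bN)$, and that the comparison functor is an equivalence, is the structural heart of the classification: it needs a careful analysis of the radical layers of $\TT$ and of how morphisms factor through the glued pieces, together with the input that a dualizing $k$-variety forces the relevant projectives and injectives to be finitely (co)presented.
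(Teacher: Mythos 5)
The paper does not prove this statement at all: it is quoted verbatim from \cite[\S 7]{BergVanRoosmalen14}, so the only available comparison is with the architecture of that reference, which the paper partially recalls in \S\ref{subsection:ThreadQuivers} and \S\ref{section:ClusterMaps} (thread objects, maximal threads being equivalent to $k\LL$ for bounded locally discrete $\LL$, the complement $\QQ$ being $kQ_u$ for a strongly locally finite quiver, morphisms between distinct threads factoring through $\QQ$). Your skeleton for (III) does match that architecture, and your (II) is essentially sound once one supplies the functorial finiteness of $kQ_u$ in $kQ$, which you assert is ``visible'' but which is itself a nontrivial fact (it is Proposition \ref{proposition:ThreadQuivers} here, imported from the same source).

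The difficulty is that the two steps you yourself flag as ``the genuine difficulties'' \emph{are} the theorem, and neither is carried out. In (I), ``assembling the Auslander--Reiten sequence of $M$ from those of its restrictions'' is not an argument: restrictions of indecomposables to $kQ_u$ and to the $k\LL_t$ are generally decomposable, almost split sequences are not preserved or reflected by restriction along the non-full faithful functors $j^t$, and you say nothing about modules whose support meets several threads. Similarly, the claim that every finite full subcategory of $kQ$ ``is therefore the path category of a finite acyclic quiver'' presupposes the explicit description of $\Hom_{kQ}$ coming out of the $2$-pushout (no relations among composites crossing the glued pieces), which is exactly what must be proved. In (III), ``one argues that the interior objects break up into chains of the expected order type'' is the classification itself: you give no indication of where the dualizing hypothesis enters to force each maximal chain to be bounded and locally discrete of the form $\bN \cdot (\PP \stackrel{\rightarrow}{\times} \bZ) \cdot (-\bN)$ rather than, say, an uncapped copy of $\bZ$ or a dense order --- note that $k\bZ$ and $k\bQ$ are semi-hereditary but \emph{not} dualizing $k$-varieties, since the dual of a standard projective fails to be finitely generated over the opposite category, so this is precisely the point where the hypothesis must do work. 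Likewise, density and full faithfulness of the comparison functor $kQ \to \TT$, in particular on $\Hom$-spaces between objects lying in distinct threads, is asserted rather than proved. As it stands the proposal is a correct table of contents for the proof in \cite[\S 7]{BergVanRoosmalen14}, with every load-bearing step deferred.
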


\begin{example}\label{Example:ThreadsNotHereditary}
Let $\PP$ be a linearly ordered set and let $Q$ be the thread quiver $\xymatrix@1{ \ar@{..>}[r]^{\PP} &}$, so that $kQ$ is the $k$-linear poset category of the linearly ordered set $\bN \cdot (\PP \stackrel{\rightarrow}{\times}\bZ) \cdot -\bN$.  It follows from Proposition~\ref{proposition:ShIsLocal} that $\mod kQ$ is hereditary.  We will show that the category $\Mod kQ$ is not hereditary.

We consider the projective functor $P_{-0} = kQ(-, -0) \in \Mod kQ$ associated with the maximal element $-0$ in $\bN \cdot (\PP \stackrel{\rightarrow}{\times}\bZ) \cdot -\bN$.  Let $M$ be the submodule of $P_{-0}$ given by
$$M(i) = \begin{cases} k & \mbox{if $i \in \bN$,} \\ 0 & \mbox{if $i \not\in \bN.$} \end{cases}$$

We claim that $M$ is not projective, so that $\Mod kQ$ has global dimension at least two.

By the Yoneda Lemma, we have $\Hom(kQ(-,i), M) \cong M(i)$, for all $i \in \bN \cdot (\PP \stackrel{\rightarrow}{\times}\bZ) \cdot -\bN$.  For each $i \in \bN$, let $f_i \in \Hom(kQ(-,i), M)$ be the morphism associated to $1 \in M(i) = k$.

It is now straightforward to verify that the morphism $\sum_{i \in \bN} f_i\colon \oplus_{i \in \bN} kQ(-,i) \to M$ is a non-split epimorphism.  This shows that $M$ is not projective.  In particular, the module $P_{-0} / M$ has projective dimension at least two (it can be shown that it is exactly two here).
\end{example}

\begin{proposition}\label{proposition:ModNotHereditary}
Let $Q$ be a thread quiver.  We have that $\Mod kQ$ is hereditary if and only if $Q$ has no thread arrows.  
\end{proposition}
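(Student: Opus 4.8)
The asserted equivalence has two directions of rather different character, which I would treat separately.

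\emph{``If''.} Suppose $Q$ has no thread arrows. Then the defining 2-pushout collapses and $kQ = kQ_u$ is the $k$-linear path category of the ordinary quiver $Q_u$. Over the path category of any quiver every module admits the standard projective resolution of length one, whose two nonzero terms are direct sums of standard projectives and hence projective; thus $\gldim \Mod kQ_u \le 1$, i.e.\ $\Mod kQ$ is hereditary. No finiteness hypothesis on $Q$ is needed for this direction.

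\emph{``Only if''.} Suppose $Q$ has a thread arrow $t$, with associated bounded locally discrete linearly ordered set $\LL_t = \bN \cdot (\PP_t \stackrel{\rightarrow}{\times} \bZ) \cdot (-\bN)$ and canonical functor $j^t\colon k\LL_t \to kQ$. By Remark \ref{remark:NonuniqueThreadQuivers}(5) I may replace $Q$ by an equivalent thread quiver with the same set of thread arrows for which $j^t$ is fully faithful; this changes neither $kQ$ up to equivalence nor the presence of a thread arrow, so I assume $j^t$ fully faithful. The plan is to transport the non-heredity of $\Mod k\LL_t$ — which is precisely the content of Example \ref{Example:ThreadsNotHereditary} — up to $\Mod kQ$ along $j^t$. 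Write $u^{*} = (-)_{k\LL_t}\colon \Mod kQ \to \Mod k\LL_t$ for restriction and $u_{!} = - \otimes_{k\LL_t} kQ$ for its left adjoint. Since $j^t$ is fully faithful we have $u^{*} u_{!} \cong \id$, and since $u^{*}$ is exact, $u_{!}$ preserves projectives. The crucial point is that $u_{!}$ is moreover \emph{exact}: first, $k\LL_t$ is semi-hereditary, because every finite full subcategory of a linearly ordered set is again a finite linearly ordered set and hence has hereditary module category, so Proposition \ref{proposition:ShIsLocal} applies; likewise $kQ$ is semi-hereditary, again by Proposition \ref{proposition:ShIsLocal} (the 2-pushout imposes no relations, so each finite full subcategory of $kQ$ is the path category of a finite, necessarily acyclic, quiver) or, when $Q$ is strongly locally finite, directly by Theorem \ref{theorem:ClassificationIntroduction}. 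Then Proposition \ref{proposition:SemiHereditaryExact} shows $u_{!}$ is exact on $\mod k\LL_t$, and since $\Tor$ commutes with filtered colimits and every module is a filtered colimit of finitely presented ones, $u_{!}$ is exact on all of $\Mod k\LL_t$. (Alternatively one checks directly, using that $j^t$ is fully faithful and $kQ$ is Hom-finite, that each left $k\LL_t$-module $kQ(c, j^t(-))$ is projective — a finite direct sum of copies of the standard projective at $\min \LL_t$ together with a ``simple at $\max \LL_t$'' summand — which is exactly the vanishing of the higher $\Tor$ governing $u_{!}$.)

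With $u_{!}$ exact, applying it to a projective resolution of $N \in \Mod k\LL_t$ yields a projective resolution of $u_{!} N$, so $\Ext^{i}_{kQ}(u_{!} N, L) \cong \Ext^{i}_{k\LL_t}(N, u^{*} L)$ for every $L \in \Mod kQ$ and every $i$; taking $L = u_{!} N'$ and using $u^{*} u_{!} N' \cong N'$ gives $\Ext^{i}_{kQ}(u_{!} N, u_{!} N') \cong \Ext^{i}_{k\LL_t}(N, N')$. By Example \ref{Example:ThreadsNotHereditary} (read over $k\LL_t$, i.e.\ with $\PP = \PP_t$) the module $P_{-0}/M$ has projective dimension at least two, hence $\Ext^{2}_{k\LL_t}(P_{-0}/M, N') \ne 0$ for some $N'$; therefore $\Ext^{2}_{kQ}(u_{!}(P_{-0}/M), u_{!} N') \ne 0$ and $\Mod kQ$ is not hereditary. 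The one genuinely delicate point is the exactness — not just the right-exactness — of $u_{!}$, which is what keeps the obstruction living in $\Mod k\LL_t$ from being destroyed upon passage to $\Mod kQ$; the Ext-comparison and the ``if'' direction are routine.
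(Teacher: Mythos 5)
Your overall strategy is sound and genuinely different from the paper's, but it is also heavier, and the one step you yourself flag as delicate --- full exactness of $u_! = -\otimes_{k\LL_t} kQ$ --- is exactly where your justification is weakest. The paper avoids this point entirely: arguing by contradiction, it assumes $\Mod kQ$ hereditary, observes that the inclusion of $K=\ker(f\otimes_{k\LL_t}kQ)$ into $\oplus_m kQ(-,j^tB_m)$ is then a \emph{split} monomorphism (its cokernel is a submodule of a projective, hence projective), and restricts back along $j^t$; since restriction is exact and $\bigl((-)\otimes_{k\LL_t}kQ\bigr)_{k\LL_t}\cong\id$, the split mono restricts to a split mono $\ker f \to \oplus_m k\LL_t(-,B_m)$, so every $N\in\Mod k\LL_t$ would have projective dimension at most one, contradicting Example \ref{Example:ThreadsNotHereditary}. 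This uses only the exactness of restriction and the contradiction hypothesis --- no exactness of induction, no semi-heredity of $kQ$, and no Ext comparison. What your route buys, if completed, is a stronger and reusable statement (induction along $j^t$ is exact and induces isomorphisms on all Ext groups), but at the cost of extra input.

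The concrete weak point: to invoke Proposition \ref{proposition:SemiHereditaryExact} you need $kQ$ to be semi-hereditary for an \emph{arbitrary} thread quiver, and neither of your justifications establishes this. Theorem \ref{theorem:ClassificationIntroduction} gives semi-heredity only for strongly locally finite $Q$, and your parenthetical argument via Proposition \ref{proposition:ShIsLocal} is not correct as stated: a finite full subcategory of $kQ$ is not literally the path category of a quiver (its Hom-spaces consist of all morphisms of $kQ$ between the chosen objects, with the induced compositions), and the definition of a thread quiver does not require $Q_u$ to be acyclic. The fact you want is true --- one can identify the full subcategory of $kQ$ on a finite set of objects with a finite full subcategory of $kQ'_u$ for a suitable finite subdivision $Q'_u$ of $Q_u$, and then combine Propositions \ref{proposition:ShIsLocal} and \ref{proposition:CategoryOfProjectivesIsSemiHereditary} with the heredity of $\Mod kQ'_u$ --- but this needs to be argued, not asserted. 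Your alternative repair (direct projectivity of each left module $kQ(c,j^t(-))$) would also suffice and is probably the cleanest, but as written it is only a sketch. The filtered-colimit passage from $\mod$ to $\Mod$, the Ext comparison via adjunction, and the ``if'' direction are all fine.
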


\begin{proof}
If $Q$ has no thread arrows, then it is shown in \cite[8.2 Proposition]{GabrielRoiter92} that $\Mod kQ$ is hereditary.  Thus, assume that $Q$ has at least one thread arrow $t \in Q_t$.  As in Remark \ref{remark:NonuniqueThreadQuivers}, we may assume that the functor $j^t\colon k\LL_t \to kQ$ is fully faithful.

Seeking a contradiction, we assume that $\Mod kQ$ is hereditary.  Consider a presentation
$$\oplus_m k\LL_t(-,B_m) \stackrel{f}{\rightarrow} \oplus_n k\LL_t(-,A_n) \to N \to 0$$
of an object $N \in \Mod k\LL_t$.  Applying the functor $- \otimes_{k\LL_t} kQ$ yields the exact sequence
$$\oplus_m kQ(-,j^t B_m) \stackrel{f \otimes kQ}{\rightarrow} \oplus_n kQ(-,j^t A_n) \to N \otimes_{k\LL_t} kQ \to 0.$$
Using the heredity of $\Mod kQ$, we know that the map $K = \ker(f \otimes_{k\LL_t} kQ) \to \oplus_m kQ(-,j^t B_m)$ is a split monomorphism.  Applying the restriction functor $(-)_{k\LL_t}\colon \Mod kQ \to \Mod k\LL_t$, we find that $(K)_{k\LL_t} = \ker f \to \oplus_m kQ(j^t -,j^t B_m) = \oplus_m k\LL_t(-,B_m)$ is a split monomorphism as well.  This implies that $N$ has projective dimension at most one.  However, in Example \ref{Example:ThreadsNotHereditary} we have seen that the global dimension of $k\LL_t$ is at least two.  This is the required contradiction, and we may conclude that $\Mod kQ$ is not hereditary.
\end{proof}

\subsection{Orbit categories}\label{subsection:ClusterConstruction}

Let $\AA$ be an abelian hereditary category with Serre duality; we write $\bS\colon \Db \AA \to \Db \AA$ for the Serre functor.  The orbit category $\CC_\AA = \Db \AA / (\bS \circ [-2])$ is defined by
\begin{eqnarray*}
\Ob \CC_\AA &=& \Ob \Db \AA \\
\Hom_{\CC_\AA}(X,Y) &=& \oplus_{n \in \bZ} \Hom_{\Db \AA}(X,\bS^n Y [-2n]),
\end{eqnarray*}
where the composition is given in the obvious way.  We will refer to $\CC_\AA$ as the cluster category of $\AA$.  It was shown in \cite[Theorem 6]{Keller05} that $\CC_\AA$ can be endowed with the structure of a triangulated category such that the natural functor $\Db \AA \to \CC_\AA$ is triangulated.  Furthermore, $\CC_\AA$ is Hom-finite and is a 2-Calabi-Yau category.

\begin{remark} \label{remark:EquivalentCluster}
Derived equivalent categories give rise to equivalent cluster categories.
\end{remark}

When $\AA$ is of the form $\mod kQ$ where $Q$ is a finite acyclic quiver (or more generally, a strongly locally finite thread quiver, see \S\ref{subsection:ThreadQuivers}), then we will write $\CC_Q$ for $\CC_{\mod kQ}$.

Later we will also use the following well-known proposition.

\begin{proposition} \label{proposition:Cluster without proj-inj}
Let $\AA$ be a hereditary abelian Ext-finite category with Serre duality.  If $\AA$ has no nonzero projective or injective objects, then the composition $\AA \to \Db \AA \to \CC_\AA$ induces a bijection between the isomorphism classes in $\AA$ and the isomorphism classes in $\CC_\AA$.
\end{proposition}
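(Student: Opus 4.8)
The plan is to exploit the fact that the orbit functor $\pi\colon \Db \AA \to \CC_\AA$ is full and that the Hom-spaces in $\CC_\AA$ decompose as $\bigoplus_{n \in \bZ} \Hom_{\Db\AA}(X, \bS^n Y[-2n])$, and then to check that under the stated hypotheses (heredity plus no nonzero projectives or injectives) the only nonzero term in this sum, for $X,Y \in \AA$, is the $n=0$ term. Granting this, $\pi$ restricts to a fully faithful functor on (the image of) $\AA$, and in particular distinct isomorphism classes in $\AA$ stay distinct in $\CC_\AA$; conversely every indecomposable object of $\CC_\AA$ is $\pi$ of an indecomposable object $M[i]$ of $\Db\AA$, and since $\AA$ is hereditary every such object is a shift of an object of $\AA$, so it remains to move the shift back inside the orbit. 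Thus the two things to prove are: (i) $\Hom_{\Db\AA}(X, \bS^n Y[-2n]) = 0$ for all $n \neq 0$ when $X, Y \in \AA$; and (ii) every object of $\CC_\AA$ is isomorphic to $\pi(A)$ for some $A \in \AA$.

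For (i), recall that since $\AA$ is hereditary, $\Hom_{\Db\AA}(X, Y[m]) \cong \Ext^m_\AA(X,Y)$ vanishes unless $m \in \{0,1\}$; and $\bS$ is exact, so $\bS^n Y \in \AA[0]$ for $Y \in \AA$ (the Serre functor of a hereditary category with Serre duality preserves $\AA$, being $\tau$ followed by the shift, or directly: $\bS$ sends $\Db\AA$-objects concentrated in degree $0$ to objects concentrated in degree $0$ because $\AA$ has no higher self-extensions — more precisely $\bS$ restricts to an autoequivalence of $\AA$ precisely when $\AA$ has no projectives/injectives, which is our hypothesis). Then $\Hom_{\Db\AA}(X, \bS^n Y[-2n]) \cong \Ext^{-2n}_\AA(X, \bS^n Y)$, which is nonzero only if $-2n \in \{0,1\}$, i.e. $n = 0$. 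I would spell out the one subtlety here carefully: the assertion that $\bS$ maps $\AA$ to $\AA$ uses the absence of nonzero projectives and injectives, via $\bS A \cong \tau A$ for $A$ indecomposable (no projectives means every indecomposable is nonprojective, and $\tau$ of a nonprojective indecomposable lies in $\AA$; dually for the quasi-inverse $\bS^{-1} \cong \tau^{-1}$ using no injectives). This is the step I expect to be the main obstacle — not because it is hard, but because it is the only place where the hypothesis on projectives/injectives is genuinely used, and it needs to be invoked cleanly for both $\bS$ and $\bS^{-1}$.

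For (ii), let $Z \in \CC_\AA$; since $\Ob \CC_\AA = \Ob \Db\AA$ and $\AA$ is hereditary, $Z$ as an object of $\Db\AA$ is isomorphic to $\bigoplus_i H^i(Z)[-i]$ with each $H^i(Z) \in \AA$. In $\CC_\AA$ we have the isomorphism of functors $[2] \cong \bS$, hence $[-i] \cong$ (up to the autoequivalence $\bS^{\lceil i/2 \rceil}$ or similar) a non-negative shift by $0$ or $1$; iterating, any $[-i]$ becomes, inside $\CC_\AA$, isomorphic to either the identity or $[-1] \cong \bS^{-1}[1]$, and $\bS^{-1}$ preserves $\AA$ by (i)'s observation. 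So each summand $H^i(Z)[-i]$ is isomorphic in $\CC_\AA$ either to an object of $\AA$ or to $\bS^{-1}(H^i(Z))[1]$; but $[1]$ on $\CC_\AA$ is still not obviously landing in $\AA$, so instead I would argue directly that in $\CC_\AA$ the shift $[1]$ agrees with $\bS[-1]$, which sends $\AA$ into $\AA[-1]$, and then note $\AA[-1] = \AA[1]$ up to $\bS$, closing the loop — equivalently, every object of $\Db\AA$ is $\bS^n[-2n]$-equivalent to a direct sum of an object of $\AA$ and a shift $B[1]$ of an object $B\in\AA$, and $B[1] \cong \bS^{-1}B[-1]\cdot\bS \cong \bS^{-1}B$ in the orbit category. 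Combined with the fact from (i) that there are no nonzero morphisms in $\CC_\AA$ between an object of $\AA$ and a shift $C[1]$ with $C \in \AA$ unless that shift is itself re-identified with an object of $\AA$, Krull–Schmidt forces $Z$ itself to be isomorphic to an object of $\AA$. Finally, fullness of $\pi$ together with (i) gives that $\pi$ is fully faithful on $\AA$, hence injective on isomorphism classes, completing the bijection.
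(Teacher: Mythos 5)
Your step (i) is false, and this is a genuine gap rather than a presentational one. You compute $\Hom_{\Db\AA}(X,\bS^nY[-2n])\cong\Ext^{-2n}_\AA(X,\bS^nY)$ on the grounds that $\bS$ preserves $\AA$; but $\bS\cong\tau\circ[1]$, so $\bS$ sends $\AA$ to $\AA[1]$ --- it is $\tau$, not $\bS$, that restricts to an autoequivalence of $\AA$ when there are no nonzero projectives or injectives. The correct computation is $\bS^nY[-2n]\cong\tau^nY[-n]$, hence $\Hom_{\Db\AA}(X,\bS^nY[-2n])\cong\Ext^{-n}_\AA(X,\tau^nY)$, which survives not only for $n=0$ but also for $n=-1$, where it equals $\Ext^1_\AA(X,\tau^{-1}Y)\cong\Hom_\AA(\tau^{-1}Y,\tau X)^*$ and is generically nonzero. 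Compare Proposition~\ref{proposition:DescriptionCCLL} and Example~\ref{example:ClusterD}: the composition $\AA\to\CC_\AA$ is emphatically not fully faithful (nor is the projection $\Db\AA\to\CC_\AA$ full, as you assert at the outset --- it is faithful), and these extra morphisms are precisely what make the cluster category interesting. Since both your injectivity argument and the Krull--Schmidt step in (ii) lean on the vanishing of these cross terms, neither goes through as written.

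The statement concerns only isomorphism classes, so no Hom-space computation is needed; the repair is short and is essentially the paper's proof. Because $\AA$ has no nonzero projective or injective objects, $\tau$ restricts to an autoequivalence of $\AA$, and $\bS\circ[-2]\cong\tau\circ[-1]$ therefore shifts the (unique, by heredity) nonvanishing cohomological degree of an indecomposable by exactly one. Hence the $\bS\circ[-2]$-orbit of an indecomposable $A[i]$ with $A\in\ind\AA$ contains $\tau^iA\in\AA$ (surjectivity) and contains no other object concentrated in degree $0$ (injectivity), using the standard fact that two indecomposables of $\Db\AA$ become isomorphic in the Krull--Schmidt orbit category $\CC_\AA$ if and only if they lie in the same $\bS\circ[-2]$-orbit.
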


\begin{proof}
Since $\AA$ has no nonzero projective or injective objects, we know that $\tau\colon \Db \AA \to \Db \AA$ restricts to an autoequivalence on $\AA$.  This shows that the $(\bS \circ [-2])$-orbit of an indecomposable object in $\Db \AA$ has exactly one representative in $\AA$ (this uses the natural equivalence $\bS \circ[-2] \cong \tau \circ [1]$ and that $\tau$ induces an autoequivalence on $\AA$).
\end{proof}

\subsection{Functorially finite subcategories and approximations}
\label{subsection:FunctoriallyFinite}

Let $\TT$ be a full subcategory of a {Hom-finite} category $\CC$.  We will say that $\TT$ is \emph{contravariantly finite} in $\CC$ if for every object $X \in \CC$, there is a map $T_0 \to X$ with $T_0 \in \TT$ through which every map $T'_0 \to X$ with $T'_0 \in \TT$ factors. Formulated differently, for each $X \in \CC$, the $\TT$-module $\Hom_\CC(-,X)$ is finitely generated.  The map $T_0 \to X$ is called a \emph{right $\TT$-approximation} of $X$.  A \emph{minimal right $\TT$-approximation} of $X$ is a right $\TT$-approximation such that in addition the map is right minimal.

Dually, we will say that $\TT$ is \emph{covariantly finite} in $\CC$ if for every object $X \in \CC$, there is a map $X \to T_0$ with $T_0 \in \TT$ through which every map $X \to T'_0$ with $T'_0 \in \TT$ factors.  Since $\CC$ is assumed to be Hom-finite, this can be formulated by saying that the $\TT$-module $\Hom_\CC(X,-)^*$ is finitely cogenerated.  The map $X \to T_0$ is called a \emph{left $\TT$-approximation} of $X$ and will be called a \emph{minimal left $\TT$-approximation} if it is left minimal.

When $\TT$ is both covariantly and contravariantly finite in $\CC$, we will say that $\TT$ is \emph{functorially finite} in $\CC$.

\subsection{Cluster-tilting subcategories} \label{subsection:ClusterTilting}

Let $\XX$ be a full subcategory of a 2-Calabi-Yau triangulated category $\CC$.  For each $n \in \bZ$, we will denote by $\XX^{\perp_n}$ and ${}^{\perp_n}\XX$ the following full subcategories of~$\CC$:
\[\XX^{\perp_n} = \{C \in \CC \mid \Hom_\CC (X,C[n]) = 0\} \mbox{ and }{}^{\perp_n} \XX = \{C \in \CC \mid \Hom_\CC (C,X[n]) = 0\}.\]

We will also use the following notation:
\begin{align*}
\XX^\perp &= \bigcap_{n \in \bZ} \XX^{\perp_n} = \{C \in \CC \mid \forall n \in \bZ: \Hom_\CC (X,C[n]) = 0 \} \\
{}^\perp \XX &= \bigcap_{n \in \bZ} {}^{\perp_n} \XX = \{C \in \CC \mid \forall n \in \bZ: \Hom_\CC (C,X[n]) = 0 \}.
\end{align*}

A full subcategory $\TT \subseteq \CC$ is called a \emph{rigid subcategory} if {$\Ext^1_\CC(T,T') = 0$} for all $T,T' \in \TT$.  We say a full subcategory is a \emph{maximal rigid} subcategory or a \emph{weak cluster-tilting subcategory} if it is not properly contained in another rigid subcategory.  Formulated differently, the subcategory $\TT \subseteq \CC$ is maximal rigid if
$$\TT = \{ X \in \CC \mid \Ext^1(T \oplus X, T' \oplus X) = 0 \textrm{ for all } T,T' \in \TT \}. $$

A \emph{cluster-tilting subcategory} of $\CC$ is a functorially finite subcategory $\TT \subseteq \CC$ such that
$$\TT = {}^{\perp_1} \TT = \TT^{\perp_1}.$$

\begin{remark} \label{remark:DefClusterTilting}
\begin{enumerate}
\item It is clear that a cluster-tilting subcategory is maximal rigid.
\item\label{enumerate:MaxRigidVsClusterTilting} It follows from \cite{BuanMarshVatne10, BurbanIyamaKellerReiten08} that not every functorially finite and maximal rigid subcategory is a cluster-tilting subcategory. However, if $\CC$ has a cluster-tilting subcategory or if every indecomposable object of $\CC$ is rigid, then the cluster-tilting subcategories are exactly the functorially finite maximal rigid subcategories (the second claim is easy, the first claim is \cite[Theorem 2.6]{ZhouZhu11}).
\item The category $\CC$ is a dualizing $k$-variety by \cite[Proposition 2.11]{IyamaYoshino08} and hence so is a cluster-tilting subcategory $\TT$ by \cite{AuslanderSmalo81}.  In particular, $\mod \TT$ is abelian.
\item It is shown in \cite[Theorem 2.4]{DehyKeller08} that for any two cluster-tilting subcategories, $\TT$ and $\TT'$ of $\CC$, we have $\lvert\ind \TT\rvert = \lvert\ind \TT'\rvert$.
\end{enumerate}
\end{remark}

Let $\TT$ be a cluster-tilting subcategory of $\CC$.  For an object $X \in \CC$, consider a triangle $T_1 \to T_0 \to X \to T_1[1]$ where $T_0 \to X$ is a minimal right $\TT$-approximation of $X$.  Using the fact that $\TT$ is cluster-tilting, one can then show that $T_1 \in \TT$ (see \cite[Proposition 2.1(b)]{KellerReiten07} or \cite[Theorem 3.1]{IyamaYoshino08}).  A triangle of this form will be called a \emph{right $\TT$-approximation triangle} of $X$.  We define a \emph{left $\TT$-approximation triangle} in a similar way.  Note that a right $\TT$-approximation triangle of $X$ also gives a left $\TT[1]$-approximation of $X$.

The existence of a right $\TT$-approximation for $X$ implies that $\Hom_\CC(-,X) \in \mod \TT$.  Likewise, the existence of a left $\TT$-approximation triangle for $X[-2]$ implies that $\Hom_\CC(-,X) \cong \Hom_\CC(X[-2],-)^*$ is finitely cogenerated.  Moreover, it has been shown in \cite[Proposition 2.1]{KellerReiten07} (see also \cite[Theorem A]{BuanMarshReiten07}, \cite[Corollary 4.4]{KonigZhu08} and \cite[Corollary 6.5]{IyamaYoshino08}) that the correspondence $X \mapsto \Hom_\CC(-,X)$ defines an equivalence $\CC / [\TT[1]] \to \mod \TT$. {As shown in \cite[Proposition 4.3(a)]{AdachiIyamaReiten14} and \cite[Proposition 4.7]{KonigZhu08}, this equivalence nicely interacts with the Auslander-Reiten translation and almost split sequences. For easy reference, we shall summarize these facts in the following proposition. 

\begin{proposition}\label{proposition:AbelianQuotient}
Let $\CC$ be a Hom-finite Krull-Schmidt triangulated 2-Calabi-Yau category.

\begin{enumerate}
\item\label{enumerate:AbelianQuotient1} For any cluster-tilting subcategory $\TT \subseteq \CC$, we have that $\TT$ is coherent (thus, $\mod \TT$ is abelian) and the correspondence $X \mapsto \Hom_\CC(-,X)|_\TT$ defines an equivalence $H_\TT\colon \CC / [\TT[1]] \to \mod \TT$.

\item\label{enumerate:AbelianQuotient2} Under the latter equivalence, the category of projectives of $\mod \TT$ corresponds to $\TT \subseteq \CC / [\TT[1]]$ and the category of injectives of $\mod \TT$ corresponds to $\bS \TT = \TT[2] \subseteq \CC / [\TT[1]]$.

\item\label{enumerate:AbelianQuotient3} Let $X \in \CC$ be indecomposable.  If $X \not\in \TT[1]$, then $H_\TT(X[1]) \cong \tau H_\TT(X)$ in $\mod\TT$.  If, moreover, $X \not\in \TT$, then an Auslander-Reiten triangle $\tau X \to M \to X \to \tau X[1]$ in $\CC$ induces the almost split sequence $0 \to H_\TT (\tau X) \to H_\TT (M) \to H_\TT(X) \to 0$ in $\mod\TT$.
\end{enumerate}
\end{proposition}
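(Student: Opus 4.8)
The plan is to treat the three items in order, since each builds on the previous, reducing as far as possible to the results already recalled from \cite{KellerReiten07} and the cited \cite{KonigZhu08, AdachiIyamaReiten14}. For part (1), the equivalence $H_\TT\colon \CC/[\TT[1]] \to \mod\TT$, $X \mapsto \Hom_\CC(-,X)|_\TT$, is precisely \cite[Proposition 2.1]{KellerReiten07}, recalled just above, so the only thing to add is that $\TT$ is coherent, i.e.\ $\mod\TT$ is abelian. Here I would invoke Remark \ref{remark:DefClusterTilting}(3): $\CC$ is a dualizing $k$-variety by \cite[Proposition 2.11]{IyamaYoshino08}, and a functorially finite subcategory of a dualizing $k$-variety is again dualizing by \cite{AuslanderSmalo81}; this also supplies the duality $D$ on $\mod\TT$ used below. (Alternatively, $\mod\TT$ is abelian simply because it is equivalent to the abelian category $\CC/[\TT[1]]$.)

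For part (2): since idempotents split and $\TT$ has finite coproducts, the projective objects of $\mod\TT$ are exactly the representables $\TT(-,T) = \Hom_\CC(-,T)|_\TT = H_\TT(T)$ with $T \in \TT$, and as $H_\TT$ is essentially surjective these are the images of $\TT \subseteq \CC/[\TT[1]]$. For the injectives, $\mod\TT$ being a dualizing $k$-variety, its indecomposable injectives are the standard injectives $D\TT(T,-)$ with $T \in \ind\TT$; the defining isomorphism of the Serre functor, together with $\bS T = T[2]$, gives for $T,T' \in \TT$ a natural identification
\[
H_\TT(T[2])(T') = \Hom_\CC(T',T[2]) = \Hom_\CC(T',\bS T) \cong \Hom_\CC(T,T')^{*} = \bigl(D\TT(T,-)\bigr)(T'),
\]
whence $H_\TT(\bS T) = H_\TT(T[2]) \cong D\TT(T,-)$, and the injectives correspond to $\bS\TT = \TT[2] \subseteq \CC/[\TT[1]]$.

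For part (3): I would first dispose of the case $X \in \TT$ (then $H_\TT(X)$ is projective so $\tau H_\TT(X) = 0$, while $X[1] \in \TT[1]$ so $H_\TT(X[1]) = 0$), and otherwise assume $X \notin \TT$ and $X \notin \TT[1]$. As $\CC$ is $2$-Calabi-Yau the Auslander-Reiten triangle of $X$ reads $X[1] \xrightarrow{u} M \xrightarrow{p} X \xrightarrow{w} X[2]$. Applying the homological functor $\Hom_\CC(T',-)$ for every $T' \in \TT$ and assembling, the long exact sequences should collapse to a short exact sequence $0 \to H_\TT(X[1]) \to H_\TT(M) \to H_\TT(X) \to 0$ in $\mod\TT$: exactness in the middle is the homological property; $p_{*}$ is surjective because no morphism $T' \to X$ with $T' \in \TT$ is a retraction (otherwise $X \in \TT$), whence $w_{*} = 0$; and $u_{*}$ is injective because no morphism $T'[1] \to X$ is a retraction (otherwise $X \in \TT[1]$), whence $(w[-1])_{*} = 0$. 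To identify $H_\TT(X[1])$ with $\tau H_\TT(X)$ and to see the sequence is almost split, I would transport the defining property of the Auslander-Reiten triangle through $\CC \to \CC/[\TT[1]]$ and $H_\TT$: a non-retraction $N \to H_\TT(X)$ lifts to a morphism $Y \to X$ in $\CC$ that is still not a retraction (a retraction in $\CC$ stays one in the quotient), hence factors through $p$, so $H_\TT(M) \to H_\TT(X)$ is right almost split; together with right minimality of $p$ this makes the displayed sequence the almost split sequence ending in the non-projective $H_\TT(X)$, so $H_\TT(X[1]) \cong \tau H_\TT(X)$.

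I expect the main obstacle to be this last identification, specifically checking that right minimality of $p\colon M \to X$ survives passage to $\CC/[\TT[1]]$: one must show that morphisms $\TT[1] \to X$ also factor through $p$ (again using $X \notin \TT[1]$) and then control, modulo $[\TT[1]]$, the correction term of a would-be non-minimal endomorphism of $M$. If this becomes awkward, the fallback is a purely module-theoretic route: take a right $\TT$-approximation triangle $T_1 \to T_0 \to X \to T_1[1]$ with $T_1 \in \TT$ (rigidity of $\TT$ forces $\Hom_\CC(\TT,T_1[1]) = 0$, so this descends to a projective presentation $\TT(-,T_1) \to \TT(-,T_0) \to H_\TT(X) \to 0$), compute $\tau H_\TT(X)$ as the kernel of the Nakayama functor applied to this presentation, use part (2) to identify that functor on projectives with the shift $[2]$, and read $H_\TT(X[1])$ off a rotation of the same triangle; the almost split sequence statement then follows from this first identity together with \cite[Proposition 4.3(a)]{AdachiIyamaReiten14}. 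In all cases the content beyond \cite{KellerReiten07, KonigZhu08, AdachiIyamaReiten14} is only routine bookkeeping.
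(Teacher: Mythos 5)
The paper does not actually prove this proposition: it is presented explicitly as a summary of facts quoted from \cite{KellerReiten07}, \cite{KonigZhu08} and \cite{AdachiIyamaReiten14}, so your write-up supplies an argument where the paper supplies only references. Your argument is correct. Parts (1) and (2) are fine as stated; the Serre-duality computation identifying $H_\TT(T[2])$ with the standard injective $D\TT(T,-)$ is exactly the right way to obtain the second half of (2). In part (3) the long-exact-sequence analysis and the verification that $H_\TT(p)$ is right almost split are both sound; the one thing worth adding is that the obstacle you flag at the end --- transporting right minimality of $p$ through $\CC/[\TT[1]]$ --- can be bypassed entirely. A non-split short exact sequence $0 \to A \to B \xrightarrow{g} C \to 0$ in which $g$ is right almost split and $A$ is indecomposable is automatically almost split: right almost splitness forces $C$ to be indecomposable and (by non-splitness) non-projective, the minimal right almost split map $E \to C$ splits off from $g$, and one gets $A \cong \tau C \oplus B'$ for a complement $B'$ of $E$ in $B$, so indecomposability of $A$ kills $B'$. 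In your situation $A = H_\TT(X[1])$ is indecomposable because $X[1] \notin \TT[1]$ (equivalently $X \notin \TT$): the ideal $[\TT[1]](X[1],X[1])$ is then proper in the local ring $\End_\CC(X[1])$, so $\End_{\CC/[\TT[1]]}(X[1])$ is again local. Non-splitness follows by the same token, since a section of $H_\TT(p)$ lifts to $s\colon X \to M$ with $p\circ s - \id_X \in [\TT[1]](X,X) \subseteq \rad\End_\CC(X)$, which would make $p$ a retraction. With these two observations your main line of argument closes on its own, and the Nakayama-functor fallback (which is essentially the proof of \cite[Proposition 4.7]{KonigZhu08}) is not needed, though it is also valid.
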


Regarding homological properties of $\TT$, it has been shown in \cite{KellerReiten07} that $\mod \TT$ is 1-Gorenstein and stably Calabi-Yau. It will be useful to record the following relation of the Ext groups in $\CC$ and in $\mod\TT$.

\begin{lemma} \label{lemma:Ext-via-H}
Let $\CC$ be a Hom-finite Krull-Schmidt triangulated 2-Calabi-Yau category with a cluster-tilting subcategory $\TT$.  If $X,Y \in \CC$ have no indecomposable summands in $\TT[1]$, then
\[ \big(\Hom_\TT(H_\TT(Y),\tau H_\TT(X))\big)^* \cong [\TT[1]](X,Y[1]) \subseteq \Ext^1_\CC(X,Y). \]
If, moreover, $\mod\TT$ is hereditary, then
\[ \Ext^1_\TT(H_\TT(X),H_\TT(Y)) \cong [\TT[1]](X,Y[1]) \subseteq \Ext^1_\CC(X,Y). \]
\end{lemma}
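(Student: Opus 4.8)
The plan is to prove the first isomorphism directly, working with a right $\TT$-approximation triangle of $X$, and then to deduce the second one formally from it via the Auslander--Reiten formula (Lemma~\ref{lemma:AR-formula}) applied inside $\mod\TT$. Both sides of each statement are additive in $X$ and in $Y$ (for the left-hand sides use that $H_\TT$, $\tau$ and $(-)^*$ are additive), so I would first reduce to $X$ and $Y$ indecomposable; the hypothesis then simply reads $X,Y\notin\TT[1]$.

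For the first isomorphism, fix a right $\TT$-approximation triangle $T_1\xrightarrow{u}T_0\xrightarrow{f}X\xrightarrow{g}T_1[1]$ with $f$ a \emph{minimal} right $\TT$-approximation, so that $T_0,T_1\in\TT$ (see \S\ref{subsection:ClusterTilting}). The first step is to check that a morphism $X\to Y[1]$ factors through an object of $\TT[1]$ exactly when it factors through $g$: if $\phi=ba$ with $a\colon X\to T[1]$, $b\colon T[1]\to Y[1]$ and $T\in\TT$, then $af\in\Hom_\CC(T_0,T[1])=\Ext^1_\CC(T_0,T)=0$ by rigidity of $\TT$, so $a$, and hence $\phi$, factors through $g$ (the converse being trivial since $T_1[1]\in\TT[1]$). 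Thus $[\TT[1]](X,Y[1])=\im(g^*)$; applying $\Hom_\CC(-,Y[1])$ to the rotated triangle and using the shift isomorphisms $\Hom_\CC(T_i[1],Y[1])\cong\Hom_\CC(T_i,Y)$ identifies this with $\coker\big(\Hom_\CC(T_0,Y)\xrightarrow{-\circ u}\Hom_\CC(T_1,Y)\big)$, i.e., writing $M:=H_\TT Y\in\mod\TT$, with $\coker\big(M(u)\colon M(T_0)\to M(T_1)\big)$.

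The second step is to recognise this cokernel via the transpose in $\mod\TT$. Applying $H_\TT$ to the triangle and using $H_\TT(T_1[1])=\Ext^1_\CC(-,T_1)|_\TT=0$ gives a projective presentation $H_\TT(T_1)\xrightarrow{H_\TT u}H_\TT(T_0)\to H_\TT X\to 0$ (the $H_\TT(T_i)$ being the representable projectives), and I would argue that it is \emph{minimal}: $H_\TT(T_0)\to H_\TT X$ is a projective cover because $f$ is minimal, and — this is exactly where $X\notin\TT[1]$ enters — $u$ cannot vanish on an indecomposable summand $T''$ of $T_1$, since otherwise $\cone(u)\cong X$ would contain the summand $T''[1]\in\TT[1]$; from this one deduces $\ker H_\TT(u)\subseteq\rad H_\TT(T_1)$. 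Hence $\Tr(H_\TT X)=\coker\big(\TT(T_0,-)\xrightarrow{\TT(u,-)}\TT(T_1,-)\big)$ and $\tau H_\TT X=D\Tr(H_\TT X)$. Tensoring this presentation of $\Tr(H_\TT X)$ with $M$ over $\TT$ (right exactness, together with $M\otimes_\TT\TT(T,-)\cong M(T)$) yields $M\otimes_\TT\Tr(H_\TT X)\cong\coker(M(u))$, which is finite-dimensional, and the adjunction $\Hom_{\mod\TT}(M,DL)\cong D(M\otimes_\TT L)$ then turns it into $\big(\Hom_\TT(H_\TT Y,\tau H_\TT X)\big)^*$. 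Chaining these isomorphisms gives the first formula, the inclusion into $\Ext^1_\CC(X,Y)=\Hom_\CC(X,Y[1])$ being the defining one. I expect the delicate point to be precisely the minimality claim — needed so that the transpose computed from the approximation triangle is the genuine Auslander--Reiten transpose and not one inflated by a projective summand — together with keeping the co-Yoneda and tensor--hom--duality bookkeeping in $\mod\TT$ straight; everything else is formal.

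For the second isomorphism, suppose in addition that $\mod\TT$ is hereditary. Then $\TT$ is semi-hereditary, and being a dualizing $k$-variety (Remark~\ref{remark:DefClusterTilting}(3)) it is a semi-hereditary dualizing $k$-variety, so $\mod\TT$ is hereditary with Serre duality by \cite[Corollary 4.9]{BergVanRoosmalen14}. Applying the Auslander--Reiten formula (Lemma~\ref{lemma:AR-formula}) in $\mod\TT$ to the indecomposables $H_\TT X$ and $H_\TT Y$ gives $\Ext^1_\TT(H_\TT X,H_\TT Y)\cong\big(\Hom_\TT(H_\TT Y,\tau H_\TT X)\big)^*$, which together with the first formula yields $\Ext^1_\TT(H_\TT X,H_\TT Y)\cong[\TT[1]](X,Y[1])$ as subspaces of $\Ext^1_\CC(X,Y)$.
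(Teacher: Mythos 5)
Your proposal is correct, but it is organised differently from the paper: the paper's proof of the first isomorphism consists of a citation to \cite[Lemma 3.3]{Palu08} (see also \cite[Proposition 4.3(b)]{AdachiIyamaReiten14}), and only the deduction of the second isomorphism via the Auslander--Reiten formula is carried out in the text. What you have written out in full is essentially the content of the cited result: reduce to indecomposables, show via rigidity of $\TT$ that $[\TT[1]](X,Y[1])$ is exactly the image of precomposition with $X\to T_1[1]$, identify this with $\coker(M(u))$ by the long exact sequence, and recognise that cokernel as $D\Hom_\TT(H_\TT Y, D\Tr H_\TT X)$ via the transpose and tensor--hom duality. You correctly isolate the one genuinely delicate point, namely that the presentation $H_\TT(T_1)\to H_\TT(T_0)\to H_\TT X\to 0$ obtained from the approximation triangle is \emph{minimal} (otherwise $\Tr$, and hence $\tau$, would be computed only up to projective, resp.\ injective, summands), and your argument for it --- a summand of $T_1$ killed by $u$ would force $X\cong\cone(u)$ to acquire a summand in $\TT[1]$, combined with full faithfulness of $H_\TT$ on $\TT$ --- is sound. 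For the second isomorphism your route coincides with the paper's; you are in fact slightly more careful, since you verify that $\mod\TT$ has Serre duality (via semi-heredity of $\TT$ and the dualizing $k$-variety property) before invoking Lemma~\ref{lemma:AR-formula}, a hypothesis the paper leaves implicit. The trade-off is the usual one: the paper's citation is shorter and defers the bookkeeping to the literature, while your version is self-contained and makes visible exactly where the hypothesis that $X$ and $Y$ have no summands in $\TT[1]$ is used.
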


\begin{proof}
The first part was proved in~\cite[Lemma 3.3]{Palu08} (see also \cite[Proposition 4.3(b)]{AdachiIyamaReiten14}).  The second part follows from the first one by the Auslander-Reiten formula {(Lemma~\ref{lemma:AR-formula}).}
\end{proof}
}

Finally, we will use the following result (see \cite{BuanMarshReinekeReitenTodorov06} and \cite[Lemma 4.4]{LiuPaquette15} for representations of finite quivers and (infinite) strongly locally finite quivers, respectively).  We give a proof for the benefit of the reader.

\begin{proposition}\label{proposition:CTFromEnoughProj} Let $\AA$ be an abelian Ext-finite hereditary category with Serre duality.
\begin{enumerate}
\item\label{emunerate:FundamantalDomain} Let $A \in \CC_\AA$ be indecomposable.  There is a unique (up to isomorphism) lift $\tilde{A} \in \Db \AA$ of $A$ satisfying the condition that either $H^{-1}(\tilde{A})$ is nonzero and projective, or $H^{0}(\tilde{A}) \not= 0$.
\item Let $\PP \subseteq \AA$ be the category of projective objects.  If $\AA$ has enough projectives, then the image of $\PP$ under the composition $F\colon \AA \to \Db \AA \to \CC_\AA$ is a cluster-tilting subcategory.
\end{enumerate}
\end{proposition}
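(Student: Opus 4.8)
The plan is to prove (1) as a ``fundamental domain'' statement and then deduce (2) from it by a handful of $\Hom$- and $\Ext$-computations in $\CC_\AA$. Throughout I use that $\AA$ is hereditary, so every indecomposable of $\Db\AA$ has the form $M[i]$ with $M\in\ind\AA$ and $i\in\bZ$; that the autoequivalence defining $\CC_\AA$ is $G:=\bS\circ[-2]=\tilde{\tau}\circ[-1]$ (since $\bS\cong\tilde{\tau}\circ[1]$); and the following description of $G$ on indecomposables, obtained by writing $G^{n}(M[i])=M_{n}[i_{n}]$ with $M_{n}\in\ind\AA$: if $M_{n}$ is nonprojective then $M_{n+1}=\tau M_{n}$ and $i_{n+1}=i_{n}-1$, while if $M_{n}$ is projective then $M_{n+1}=\bS M_{n}$ and $i_{n+1}=i_{n}-2$, where $\bS M_{n}$ is an indecomposable \emph{injective} object of $\AA$ (a consequence of heredity, Serre duality, and $\Ext^{\ge1}_{\AA}(M_{n},-)=0$). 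Dually $G^{-1}$ raises $i$ by $1$ through noninjectives and by $2$ through injectives, so $(i_{n})_{n\in\bZ}$ is a strictly decreasing sequence of integers, unbounded above and below.

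\emph{Proof of (1).} A lift of an indecomposable $A\in\CC_\AA$ is indecomposable in $\Db\AA$ (no nonzero object becomes zero in the orbit category), hence of the form $M[i]$; moreover any two indecomposable lifts of $A$ lie in a common $G$-orbit (a standard feature of orbit categories when $G$ acts freely on indecomposables, which here follows from strict monotonicity of $(i_{n})$). The set $\FF:=\{N[0]\mid N\in\ind\AA\}\cup\{P[1]\mid P\in\ind\PP\}$ is precisely the set of indecomposables of $\Db\AA$ satisfying the stated cohomological condition, so it suffices to show that each $G$-orbit meets $\FF$ in exactly one object. By strict monotonicity there is a unique $n_{0}$ with $i_{n_{0}}\ge1>i_{n_{0}+1}$. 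If $i_{n_{0}}=1$ and $M_{n_{0}}$ is projective, then $M_{n_{0}}[1]\in\FF$; otherwise the step at $n_{0}$ satisfies $i_{n_{0}}-i_{n_{0}+1}=1$, so $i_{n_{0}+1}=0$ and $M_{n_{0}+1}[0]\in\FF$. Strict monotonicity of $(i_{n})$, together with the observation that a step from $i=1$ down to $i=0$ has size $1$ and hence cannot originate at a projective, shows this is the \emph{only} element of the orbit lying in $\FF$.

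\emph{Part (2): rigidity and the $\perp_{1}$-identity.} For $P,P'\in\ind\PP$, $\Ext^{1}_{\CC_\AA}(FP,FP')=\bigoplus_{n}\Hom_{\Db\AA}(P,G^{n}(P'[1]))$; the $n=0$ term is $\Ext^{1}_{\AA}(P,P')=0$, and for $n\ne0$ the $n$-th term equals $\Ext^{i_{n}}_{\AA}(P,M_{n})$ with $i_{n}\le-1$ (when $n\ge1$) or $i_{n}\ge2$ (when $n\le-1$), hence vanishes by projectivity of $P$, respectively heredity of $\AA$; so $F\PP$ is rigid. For the reverse inclusion $(F\PP)^{\perp_{1}}\subseteq\add F\PP$, take an indecomposable $C\in(F\PP)^{\perp_{1}}$ with lift $\tilde{C}$ as in (1). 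If $\tilde{C}=M[0]$ with $M$ nonprojective, then by (1) the $\FF$-element of the orbit of $M[1]$ is $\tau M$, and the computation above gives $\Ext^{1}_{\CC_\AA}(FP,FM)\cong\Hom_{\AA}(P,\tau M)$; since $\tau M\ne0$ and $\AA$ has enough projectives, this is nonzero for some $P\in\ind\PP$, contradicting $C\in(F\PP)^{\perp_{1}}$. If $\tilde{C}=P'[1]$, then by (1) the $\FF$-element of the orbit of $P'[2]$ is the nonzero injective $\bS P'$, and likewise $\Ext^{1}_{\CC_\AA}(FP,FP'[1])\cong\Hom_{\AA}(P,\bS P')\ne0$ for some $P$ --- again a contradiction. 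Hence $\tilde{C}=M[0]$ with $M$ projective, i.e.\ $C\in\add F\PP$. Therefore $F\PP=(F\PP)^{\perp_{1}}={}^{\perp_{1}}(F\PP)$, the last identity because $\CC_\AA$ is $2$-Calabi-Yau.

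It remains to see that $F\PP$ is functorially finite. For contravariant finiteness, let $C$ be indecomposable. If $\tilde{C}=P'[1]$, then $\Hom_{\CC_\AA}(F\PP,C)=\Ext^{1}_{\CC_\AA}(F\PP,FP')=0$ by rigidity. If $\tilde{C}=M[0]$, a projective resolution $0\to P_{1}\to P_{0}\to M\to0$ in $\AA$ (of length $\le1$ by heredity, with $P_{0},P_{1}$ projective) yields a triangle $FP_{1}\to FP_{0}\to FM\to FP_{1}[1]$ in $\CC_\AA$; applying $\Hom_{\CC_\AA}(FP,-)$ and using $\Ext^{1}_{\CC_\AA}(FP,FP_{1})=0$ shows $FP_{0}\to FM$ is a right $F\PP$-approximation. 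Covariant finiteness follows formally: since $\CC_\AA$ is $2$-Calabi-Yau, Serre duality reads $\Hom_{\CC_\AA}(T,X)\cong\Hom_{\CC_\AA}(X,T[2])^{*}$ naturally, so ``$F\PP$ contravariantly finite'' is equivalent to ``$F\PP[2]$ covariantly finite'', which is equivalent to ``$F\PP$ covariantly finite'' since $[2]$ is an autoequivalence. Thus $F\PP$ is a cluster-tilting subcategory. I expect the $\perp_{1}$-computation to be the main obstacle: it depends on reading off correctly from (1) which object of $\AA$ represents each shifted orbit --- in particular $M[1]\leftrightarrow\tau M$ and $P'[2]\leftrightarrow\bS P'$ --- so that the vanishing $\Ext^{\ge1}_{\AA}(P,-)=0$ for $P$ projective and the ``enough projectives'' hypothesis can be combined to pin $C$ inside $\add F\PP$.
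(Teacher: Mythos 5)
Your proof is correct and follows essentially the same route as the paper's: a fundamental-domain analysis of the action of $\bS\circ[-2]$ on indecomposables $M[i]$ for part (1), then ``enough projectives'' to force $(F\PP)^{\perp_1}\subseteq F\PP$, and approximations built from projective presentations for functorial finiteness. One harmless slip: in the case $i_{n_0}=2$ with $M_{n_0}$ projective the step has size $2$, not $1$, but the conclusion $i_{n_0+1}=0$ still holds, so nothing is affected.
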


\begin{proof}
\begin{enumerate}
\item Let $A \in \CC_\AA$ be an indecomposable object, and let $\overline{A} \in \Db \AA$ be any lift of $A$.  Since $\overline{A}$ is indecomposable as well, and $\AA$ is hereditary, we know that there is a unique $i \in \bZ$ such that $H^i(\overline{A}) \not= 0$.

If $H^i(\overline{A})$ is projective, then $H^i(\bS\overline{A})$ is nonzero and injective in $\AA$, otherwise $H^{i-1}(\bS\overline{A})$ is nonzero (see \cite[Corollary I.3.4]{ReVdB02}).  Likewise, if $H^i(\overline{A})$ is injective, then $H^i(\bS^{-1}\overline{A})$ is nonzero and projective in $\AA$, otherwise $H^{i+1}(\bS^{-1}\overline{A})$ is nonzero.

From this we infer that the $\bS\circ [-2]$-orbit of $\overline{A}$ contains an object $\tilde{A}$ such that either $H^{0}(\tilde{A})$ is nonzero, or $H^{-1}(\tilde{A})$ is nonzero and projective.

\item Let $A \in \ind (F(\PP)^{\perp_1})$, so that $\Hom_{\CC_\AA}(F(\PP), A[1]) = 0$.  From the above argument, we deduce the existence of a lift $\widetilde{A[1]}$ of $A[1]$ such that either $H^{0}(\widetilde{A[1]})$ is nonzero, or $H^{-1}(\widetilde{A[1]})$ is nonzero and projective.  Since $\AA$ has enough projectives and $\Hom_{\CC_\AA}(F(\PP), A[1]) = 0$, we may exclude the former case.  Hence $\widetilde{A[1]} \in \PP[1]$ and $A \in F(\PP)$.

All that remains is to show that $F(\PP)$ is functorially finite.  Let $A \in \ind \CC_\AA$, and let $\tilde{A}$ be a lift as in (\ref{emunerate:FundamantalDomain}).  Since $\AA$ has enough projective objects, there is a right $\PP[0]$-approximation $P \to \tilde{A}$ (note that $P = 0$ if $H^{0}(\tilde{A}) = 0$, thus is $A \in \PP[1]$).  It is easy to check that $P \to \tilde{A}$ stays a right $\PP[0]$-approximation after applying $\Db \AA \to \CC_\AA$.  Showing that $F(\PP)$ is covariantly finite is similar.
\end{enumerate}
\end{proof}

\subsection{Torsion and mutation pairs} \label{subsection:pairs}

Let $\CC$ be a 2-Calabi-Yau triangulated category.  A pair of full subcategories $(\XX,\YY)$ of $\CC$ is called a \emph{torsion pair} if $\Hom(\XX,\YY) = 0$ and every object $C \in \CC$ lies in a triangle $X \to C \to Y \to X[1]$ where $X \in \XX$ and $Y \in \YY$.  In this case, $\XX$ is a contravariantly finite and extension-closed subcategory of $\CC$ and $\YY$ is a covariantly finite and extension-closed subcategory of $\CC$. In fact, these properties characterize the classes in a torsion pair:

\begin{lemma}\label{lemma:TorsionPairs} \cite[Proposition 2.3]{IyamaYoshino08}
Let $\XX$ be a contravariantly finite and extension-closed subcategory of $\CC$. Then $(\XX,\XX^{\perp_0})$ is a torsion pair. Dually, if $\YY$ is covariantly finite and extension-closed in $\CC$, then $({}^{\perp_0} \YY, \YY)$ is a torsion pair.
\end{lemma}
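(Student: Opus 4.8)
The plan is to establish the first assertion by a direct construction and to obtain the second by passing to the opposite category. The vanishing $\Hom_\CC(\XX,\XX^{\perp_0})=0$ is immediate from the definition of $\XX^{\perp_0}$, so the real work is to produce, for a given $C\in\CC$, a triangle $X\to C\to Y\to X[1]$ with $X\in\XX$ and $Y\in\XX^{\perp_0}$. First I would choose a \emph{minimal} right $\XX$-approximation $f\colon X_0\to C$; such a map exists because $\XX$ is contravariantly finite and $\CC$ is Krull--Schmidt. Completing $f$ to a triangle $X_0\xrightarrow{f}C\xrightarrow{g}Y\xrightarrow{h}X_0[1]$ produces the candidate triangle, and everything then reduces to checking that $Y\in\XX^{\perp_0}$; once the two torsion-pair axioms hold, the identity $\XX={}^{\perp_0}(\XX^{\perp_0})$ and the claimed finiteness/extension-closedness properties follow formally.

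To prove $Y\in\XX^{\perp_0}$, I would fix $X\in\XX$ and a morphism $x\colon X\to Y$, and show $x=0$. The composite $hx\colon X\to X_0[1]$ is the connecting map of a triangle $X_0\xrightarrow{i}E\xrightarrow{p}X\xrightarrow{hx}X_0[1]$, and since $X_0,X\in\XX$ and $\XX$ is extension-closed, $E\in\XX$. A rotated instance of the morphism-of-triangles axiom (TR3), applied to $\id_{X_0}$ and $x$, then yields $\phi\colon E\to C$ with $\phi i=f$ and $g\phi=xp$. As $E\in\XX$ and $f$ is a right $\XX$-approximation, $\phi$ factors as $\phi=f\sigma$ for some $\sigma\colon E\to X_0$; hence $f\,(\sigma i)=f$, and right minimality of $f$ forces $\sigma i$ to be an automorphism of $X_0$. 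Therefore $i$ is a split monomorphism, the triangle $X_0\xrightarrow{i}E\to X\xrightarrow{hx}X_0[1]$ splits, and $hx=0$. Consequently $x$ factors through $g$, say $x=gy$ with $y\colon X\to C$; since $X\in\XX$ we may write $y=fz$, and then $x=gfz=0$ because $gf=0$. This gives $\Hom_\CC(X,Y)=0$ for all $X\in\XX$, so $(\XX,\XX^{\perp_0})$ is a torsion pair.

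The dual statement I would obtain by remarking that $\CC^{\mathrm{op}}$ is again a Hom-finite Krull--Schmidt triangulated category, that a covariantly finite extension-closed $\YY\subseteq\CC$ is a contravariantly finite extension-closed subcategory of $\CC^{\mathrm{op}}$, and that $ {}^{\perp_0}\YY$ formed in $\CC$ equals $\YY^{\perp_0}$ formed in $\CC^{\mathrm{op}}$; applying the first part in $\CC^{\mathrm{op}}$ then gives that $({}^{\perp_0}\YY,\YY)$ is a torsion pair in $\CC$. The step I expect to be the crux is the appeal to minimality of the approximation to conclude $hx=0$: without it one only gets $f\,(\sigma i)=f$, which does not pin down $x$ (indeed the cone of a non-minimal approximation can fail to lie in $\XX^{\perp_0}$). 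The two hypotheses on $\XX$ are exactly what make this argument run — extension-closedness is what places $E$ in $\XX$, and contravariant finiteness is what lets $\phi$ factor through $f$ so that minimality can be invoked.
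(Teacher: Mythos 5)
Your proof is correct. The paper does not reprove this lemma but simply cites \cite[Proposition 2.3]{IyamaYoshino08}, and your argument is essentially the standard one given there: form the triangle on a \emph{minimal} right $\XX$-approximation, use extension-closedness to place the middle term $E$ of the triangle induced by $hx$ inside $\XX$, and use right minimality to split that triangle and force $hx=0$. You also correctly flag the one genuinely delicate point — minimality is indispensable, since the cone of a non-minimal approximation acquires summands of the form $X''[1]$ that need not lie in $\XX^{\perp_0}$.
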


If $(\XX,\YY)$ is a torsion pair, we will say that $(\XX[1],\YY)$ is a \emph{cotorsion pair}.

Let $\ZZ$ be a full rigid subcategory of $\CC$.  For a subcategory $\XX \subseteq \CC$, we consider
\begin{enumerate}
\item the subcategory $\mu_\ZZ(\XX)$ consisting of all objects $M \in \CC$ such that there is a triangle
$$M \to Z_X \stackrel{f}{\rightarrow} X \to M[1]$$
where $f\colon Z_X \to X$ is a right $\ZZ$-approximation of $X \in \XX$, and
\item the subcategory $\mu^{-}_\ZZ(\XX)$ consisting of all objects $M' \in \CC$ such that there is a triangle
$$X \stackrel{g}{\rightarrow} Z^X \to M' \to X[1]$$
where $g\colon X \to Z^X$ is a left $\ZZ$-approximation of $X \in \XX$.
\end{enumerate}

\begin{definition}
We say that $(\XX, \YY)$ is a \emph{$\ZZ$-mutation pair} if and only if
$$\mbox{$\ZZ \subseteq \XX \subseteq \mu_\ZZ(\YY)$ and $\ZZ \subseteq \YY \subseteq \mu^{-}_\ZZ(\XX)$.}$$
\end{definition}

\begin{remark}
It has been shown in \cite[Proposition 2.6(2)]{IyamaYoshino08} that $\XX = \mu_\ZZ(\YY)$ and $\YY = \mu^{-}_\ZZ(\XX)$ for a $\ZZ$-mutation pair $(\XX, \YY)$.
\end{remark}

\begin{example}\label{example:RigidMutationPair}
When $\ZZ$ is a functorially finite rigid subcategory of $\CC$, then $(\ZZ^{\perp_1}, {^{\perp_1}\ZZ})$ is a $\ZZ$-mutation pair (this is straightforward to check, or can be obtained as a corollary from \cite[Proposition 2.7]{IyamaYoshino08}).

More generally, \cite[Proposition 2.7]{IyamaYoshino08} says that given any additive full subcategory $\XX \subseteq \ZZ^{\perp_1}$ which contains $\ZZ$, the $\ZZ$-mutation pair $(\ZZ^{\perp_1}, {^{\perp_1}\ZZ})$ restricts to a $\ZZ$-mutation pair $(\XX, \YY)$, where $\YY = \mu^{-}_\ZZ(\XX)$.
\end{example}

Following \cite[Definition 5.2]{IyamaYoshino08}, we say that a functorially finite subcategory $\DD$ of $\CC$ is an \emph{almost complete cluster-tilting subcategory} if there is a cluster-tilting subcategory $\XX$ with $\DD \subseteq \XX$ such that $\ind \XX \setminus \ind \DD$ consists of a single element.  We will use the following theorem (see \cite[Theorem 5.3]{IyamaYoshino08}).

\begin{theorem}\label{theorem:IyamaYoshinoMutation}
Let $\CC$ be a Krull-Schmidt triangulated 2-Calabi-Yau category.  Any almost complete cluster-tilting subcategory $\DD$ of $\CC$ is
contained in exactly two cluster-tilting subcategories, $\XX$ and $\YY$ of $\CC$.  Both $(\XX,\YY)$ and $(\YY,\XX)$ form $\DD$-mutation pairs.
\end{theorem}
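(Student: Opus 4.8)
The plan is to construct the second cluster-tilting subcategory by mutating $\XX$ at its one indecomposable outside $\DD$, check that it really is cluster-tilting, rule out any third one, and then read off the two mutation-pair statements; the identification of $\YY$ and the absence of a third one are the genuinely hard points.

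\textit{Existence.} Since $\DD$ is almost complete there is a cluster-tilting subcategory $\XX$ with $\DD\subseteq\XX$ and $\ind\XX=\ind\DD\cup\{X\}$ for a single indecomposable $X$, and $\DD$ is functorially finite. As $\XX$ is rigid and contains $\DD$ we have $\XX\subseteq\DD^{\perp_1}$, so by Example~\ref{example:RigidMutationPair} the $\DD$-mutation pair $(\DD^{\perp_1},{}^{\perp_1}\DD)$ restricts to a $\DD$-mutation pair $(\XX,\YY)$ with $\YY=\mu^{-}_\DD(\XX)$. Inspecting the left $\DD$-approximation triangles of the objects of $\XX$ --- the $\DD$-summands contribute only zero, while the summand $X$ contributes a triangle
\[ X\xrightarrow{\ g\ }D^{0}\xrightarrow{\ p\ }X^{*}\xrightarrow{\ q\ }X[1] \]
with $g$ a minimal left $\DD$-approximation --- shows that $\ind\YY=\ind\DD\cup\{X^{*}\}$. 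Moreover $X\not\cong X^{*}$: otherwise $q\in\Ext^{1}_\CC(X,X)=0$, so the triangle splits and $X$ is a direct summand of $D^{0}\in\DD$, contradicting $X\notin\ind\DD$. Hence $\XX\neq\YY$, so we obtain two distinct cluster-tilting subcategories containing $\DD$ once we know $\YY$ is one.

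\textit{$\YY$ is cluster-tilting.} It is functorially finite (one adjoins a single object to the functorially finite $\DD$), so by Remark~\ref{remark:DefClusterTilting}(\ref{enumerate:MaxRigidVsClusterTilting}) --- applicable because $\CC$ has a cluster-tilting subcategory --- it suffices to show $\YY$ is maximal rigid. That $\Ext^{1}_\CC(X^{*},\DD)=0$ is immediate from the triangle: applying $\Hom_\CC(-,D)$ and using that $g$ is a left $\DD$-approximation (so precomposition with $g$ is surjective onto $\Hom_\CC(X,D)$) together with $\Ext^{1}_\CC(D^{0},D)=0$ makes $\Ext^{1}_\CC(X^{*},D)$ the cokernel of a surjection, hence zero; the $2$-Calabi-Yau symmetry $\Ext^{1}(A,B)\cong\Ext^{1}(B,A)^{*}$ then yields $\Ext^{1}_\CC(\DD,X^{*})=0$ as well. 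The remaining vanishing $\Ext^{1}_\CC(X^{*},X^{*})=0$, and the maximality of $\YY$, are extracted from the triangle together with the rigidity of $\XX$ by a careful analysis of a minimal right $\XX$-approximation triangle of a test object: one decomposes its terms into their $\add\DD$- and $\add X$-parts and uses all the available $\Ext^{1}$-vanishing simultaneously; alternatively this can be run through the abelian quotient $H_\XX\colon\CC/[\XX[1]]\to\mod\XX$ of Proposition~\ref{proposition:AbelianQuotient}. \emph{This step is the technical heart of the proof} --- everything around it is formal triangle chasing and the $2$-Calabi-Yau symmetry, whereas here one must control precisely how the $X$-multiplicities in the approximation triangle interact with the $\DD$-approximation data.

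\textit{Uniqueness and the mutation pairs.} Let $\TT'\supseteq\DD$ be any cluster-tilting subcategory. By the cardinality invariance of $\ind$ for cluster-tilting subcategories (Remark~\ref{remark:DefClusterTilting}) and since $\DD$ is almost complete, $\ind\TT'=\ind\DD\cup\{X'\}$ for a single indecomposable $X'$. If $X'\cong X$ then $\TT'=\XX$. Otherwise $\Ext^{1}_\CC(X,X')\neq 0$, for $\Ext^{1}_\CC(X,X')=0$ would make $\add(\ind\DD\cup\{X,X'\})$ a rigid subcategory strictly larger than the maximal rigid $\XX$; by the $2$-Calabi-Yau symmetry $\Ext^{1}_\CC(X',X)\neq 0$ as well. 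Running the same approximation-triangle analysis as above with $X'$ in place of $X^{*}$ forces the minimal right $\XX$-approximation triangle of $X'$ to agree with the exchange triangle, whence $X'\cong X^{*}$ and $\TT'=\YY$; so $\XX$ and $\YY$ are the only two. Finally, $(\XX,\YY)$ is a $\DD$-mutation pair by construction, and applying Example~\ref{example:RigidMutationPair} to the cluster-tilting subcategory $\YY\subseteq\DD^{\perp_1}$ produces the $\DD$-mutation pair $(\YY,\mu^{-}_\DD(\YY))$; here $\mu^{-}_\DD(\YY)$ is again a cluster-tilting subcategory containing $\DD$ and, by the same splitting argument, different from $\YY$, hence equal to $\XX$. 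Thus $(\YY,\XX)$ is a $\DD$-mutation pair too, which completes the proof.
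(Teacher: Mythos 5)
The first thing to say is that the paper does not prove this statement at all: Theorem~\ref{theorem:IyamaYoshinoMutation} is quoted verbatim from \cite[Theorem 5.3]{IyamaYoshino08}, so there is no internal argument to compare yours against. Judged on its own, your proposal has a sensible skeleton (set $\YY=\mu^{-}_\DD(\XX)$, verify it is cluster-tilting, exclude a third complement), and the easy parts are carried out correctly: the computation of $\Ext^1_\CC(X^*,\DD)=0$ from the approximation triangle, the 2-Calabi-Yau symmetry, and the splitting argument for $X\not\cong X^*$ are all fine.

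However, the two steps that carry the entire theorem are not actually proved. First, you never establish $\Ext^1_\CC(X^*,X^*)=0$ or the maximal rigidity of $\YY$; the passage ``a careful analysis of a minimal right $\XX$-approximation triangle \dots using all the available $\Ext^1$-vanishing simultaneously'' is a description of where an argument should go, not an argument. The reason Iyama--Yoshino do \emph{not} argue directly in $\CC$ is that this step is genuinely delicate: their proof passes to the reduction $\CC_{[\DD]}=\DD^{\perp_1}/[\DD]$, where $\add X$ becomes a cluster-tilting subcategory, $X^*$ is identified with $X\langle 1\rangle$ for the autoequivalence $\langle 1\rangle$, and the vanishing and maximality become transparent; one then transports $\Ext^1$ back to $\CC$ via \cite[Lemma 4.8, Theorem 4.9]{IyamaYoshino08}. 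You have not shown that your direct decomposition into $\add\DD$- and $\add X$-parts closes up. Second, the uniqueness step has the same defect (``forces the minimal right $\XX$-approximation triangle of $X'$ to agree with the exchange triangle'' is asserted, not derived), and it is further compromised: the cardinality invariance $\lvert\ind\TT'\rvert=\lvert\ind\XX\rvert$ of Remark~\ref{remark:DefClusterTilting} does \emph{not} imply that $\ind\TT'\setminus\ind\DD$ is a singleton when $\ind\DD$ is infinite --- you would need the symmetric-difference form $\lvert\ind\TT'\setminus\ind\XX\rvert=\lvert\ind\XX\setminus\ind\TT'\rvert$ of \cite[Theorem 2.4]{DehyKeller08} --- and invoking \cite{DehyKeller08} here risks circularity, since the exchange-triangle formalism that paper relies on rests on the very theorem you are proving. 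Finally, the indecomposability of $X^*$ (needed for your identification $\ind\YY=\ind\DD\cup\{X^*\}$) is also left unjustified. So the proposal is a reasonable outline, but the actual content of the theorem remains unproved.
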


The following proposition (see \cite[Lemma 4.1]{JorgensenPalu13}) allows us to find almost complete cluster-tilting subcategories.

\begin{proposition}\label{proposition:JorgensenPaluCofinite}
Let $\CC$ be a Krull-Schmidt triangulated 2-Calabi-Yau category.  Let $\XX \subseteq \CC$ be a cluster-tilting subcategory and let $\DD \subseteq \XX$.  If $\ind \XX \setminus \ind \DD$ is finite, then $\DD$ is functorially finite in $\CC$.
\end{proposition}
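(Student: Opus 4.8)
The plan is to prove that $\DD$ is contravariantly finite in $\CC$; covariant finiteness will then follow by the dual argument. I would argue by induction on $n = |\ind\XX \setminus \ind\DD|$. Recall that $\XX$, being cluster-tilting, is functorially finite, that $\CC$ is a dualizing $k$-variety by \cite[Proposition 2.11]{IyamaYoshino08}, and that a functorially finite subcategory of a dualizing $k$-variety is again a dualizing $k$-variety \cite{AuslanderSmalo81}; in particular such a subcategory has sink (minimal right almost split) morphisms for each of its indecomposable objects. The case $n = 0$ is trivial since then $\DD = \XX$. For $n \geq 1$, fix $Z \in \ind\XX \setminus \ind\DD$ and put $\DD' = \add(\ind\DD \cup \{Z\})$; then $|\ind\XX \setminus \ind\DD'| = n-1$, so $\DD'$ is functorially finite in $\CC$ by induction, hence a dualizing $k$-variety.

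The heart of the reduction is the observation that it suffices to produce a single right $\DD$-approximation $q\colon B \to Z$ of $Z$ with $B \in \DD$. Granting such a $q$, let $X \in \CC$ be arbitrary, choose a right $\DD'$-approximation $p\colon T' \to X$, and split off $T' = D_0 \oplus Z^{\oplus m}$ with $D_0 \in \DD$. One then checks directly that the morphism $D_0 \oplus B^{\oplus m} \to X$ with components $p|_{D_0}$ and $(p|_{Z^{\oplus m}}) \circ q^{\oplus m}$ is a right $\DD$-approximation of $X$: any morphism $f\colon D' \to X$ with $D' \in \DD \subseteq \DD'$ factors through $p$, and the $Z^{\oplus m}$-component of the resulting morphism $D' \to T'$ factors through $q^{\oplus m}$ because $D' \in \DD$. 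This is the step that uses the inductive hypothesis (through functorial finiteness of $\DD'$) and the assumed right $\DD$-approximation of $Z$.

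To construct the right $\DD$-approximation of $Z$, I would take the sink morphism $g\colon E \to Z$ of $Z$ inside the dualizing $k$-variety $\DD'$. Since $\ind\DD' = \ind\DD \sqcup \{Z\}$, we may write $E = E_0 \oplus Z^{\oplus c}$ with $E_0 \in \DD$. If $c = 0$, then $g\colon E_0 \to Z$ is the required approximation: for an indecomposable $D' \in \DD$ we have $D' \not\cong Z$, so every morphism $D' \to Z$ fails to be a split epimorphism and hence factors through $g$, the general case following by additivity. The case $c > 0$ is the main obstacle. It occurs exactly when $Z$ is a summand of the source of its own sink morphism, i.e.\ when $\End_\CC(Z) \neq k$; I expect this to be excluded by invoking that a rigid indecomposable object of a $2$-Calabi-Yau category over an algebraically closed field has endomorphism ring $k$ (the ``no loops'' phenomenon), which applies to $Z \in \ind\XX$. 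If one wishes to avoid that input, the alternative is to study the $\DD$-module $\Hom_\CC(-,Z)|_\DD$ directly: it is the restriction of $\rad \DD'(-,Z)$, which is finitely generated over $\DD'$ since $\mod\DD'$ is abelian, and one must show that the contribution of the $Z^{\oplus c}$-summand of $E$ can be absorbed; this comes down to controlling the infinite radical of $\DD'$ at $Z$, which is the delicate point. Once the loop-free situation is in hand, the dual argument with source morphisms and left approximations yields covariant finiteness, completing the induction and the proof.
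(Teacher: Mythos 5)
Your overall architecture is sound and close in spirit to the argument of \cite[Lemma 4.1]{JorgensenPalu13}, which is all the paper itself offers (it cites that lemma rather than giving a proof): reduce by induction to removing a single indecomposable $Z$, produce a right $\DD$-approximation of $Z$ via a sink map in the dualizing $k$-variety $\DD'$, and splice it into a right $\DD'$-approximation of an arbitrary object. The reduction step and the case $c=0$ are correct as written.

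The genuine gap is your treatment of the case $c>0$. The claim you invoke --- that a rigid indecomposable object of a $2$-Calabi--Yau category over an algebraically closed field has endomorphism ring $k$ (``no loops'') --- is false in general: there exist $2$-Calabi--Yau triangulated categories whose (maximal) rigid or cluster-tilting objects have loops, hence rigid indecomposables with $\End_\CC(Z) \ne k$ (see \cite{BuanMarshVatne10,BurbanIyamaKellerReiten08}); this is exactly why condition (3) in Definition~\ref{definition:ClusterStructure} is a nontrivial hypothesis and why Theorem~\ref{theorem:ClusterStructure} needs a proof. Moreover, Proposition~\ref{proposition:JorgensenPaluCofinite} is applied in this paper to arbitrary (not necessarily directed) cluster-tilting subcategories and feeds into the proof that there are no loops, so assuming absence of loops here would be circular even where it is true. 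Your fallback is explicitly left open, but it is in fact easy and does not involve the infinite radical of $\DD'$: writing the sink map as $g=(g_0,g_1)\colon E_0\oplus Z^{\oplus c}\to Z$, every non-split-epimorphism $h\colon D'\to Z$ with $D'\in\ind\DD$ satisfies $h=g_0a_0+g_1b_1$, and each component of $b_1\colon D'\to Z^{\oplus c}$ is again a non-split epimorphism onto $Z$, so one can iterate. After $N$ iterations the $Z$-contribution is a composite of $N$ radical maps between objects of $\add Z$, whose matrix entries lie in $(\rad\End_\CC(Z))^N$; since $\End_\CC(Z)$ is a finite-dimensional local algebra, this vanishes for $N$ large, and $\bigoplus_{k=0}^{N-1}E_0^{\oplus c^k}\to Z$ is the desired right $\DD$-approximation. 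With this replacement for your ``no loops'' step, the proof is complete.
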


\begin{corollary}\label{corollary:Mutation}
Let $\CC$ be a Krull-Schmidt triangulated 2-Calabi-Yau category and let $\XX$ be a cluster-tilting subcategory of $\CC$.  For each $X \in \ind \XX$, there is a $Y \in \ind \CC$, nonisomorphic to $X$, such that $\YY = \add[(\ind \XX \setminus \{X\}) \cup \{Y\}]$ is a cluster-tilting subcategory.  Moreover (writing $\DD = \add(\ind \XX \setminus \{X\})$), for each $X,Y$ as above, there are triangles
$$X \stackrel{f}{\rightarrow} D \stackrel{g}{\rightarrow} Y \mbox{ and } Y \stackrel{s}{\rightarrow} D' \stackrel{t}{\rightarrow} X$$
where $f$ and $s$ are minimal left $\DD$-approximations, and $g$ and $t$ are minimal right $\DD$-approximations.
\end{corollary}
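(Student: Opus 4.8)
The plan is to deduce this from the Iyama--Yoshino machinery recalled above. Set $\DD = \add(\ind\XX \setminus \{X\})$. Since $\XX$ is cluster-tilting and $\ind\XX \setminus \ind\DD = \{X\}$ is finite, Proposition~\ref{proposition:JorgensenPaluCofinite} shows that $\DD$ is functorially finite in $\CC$; together with $\DD \subseteq \XX$ this says precisely that $\DD$ is an almost complete cluster-tilting subcategory. By Theorem~\ref{theorem:IyamaYoshinoMutation}, $\DD$ is contained in exactly two cluster-tilting subcategories: one of them is $\XX$, and I call the other $\YY$. Unravelling the mutation-pair condition (and using that, by Remark~\ref{remark:DefClusterTilting}(4), all cluster-tilting subcategories have the same $|\ind|$) one gets that each of $\XX,\YY$ differs from $\DD$ in a single indecomposable object, so $\ind\YY = \ind\DD \cup \{Y\}$ for some indecomposable $Y$. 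If $Y \cong X$ then $\YY = \XX$, which is excluded, so $Y \not\cong X$. This proves the existence part, and records (Theorem~\ref{theorem:IyamaYoshinoMutation}) that both $(\XX,\YY)$ and $(\YY,\XX)$ are $\DD$-mutation pairs.

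Now fix any such $Y$. Since $\DD$ is covariantly finite, pick a minimal left $\DD$-approximation $f\colon X \to D$ and complete it to a triangle $X \stackrel{f}{\rightarrow} D \stackrel{g}{\rightarrow} Y' \to X[1]$. Then $Y' \in \mu^{-}_\DD(\XX)$, and since $(\XX,\YY)$ is a $\DD$-mutation pair, $\mu^{-}_\DD(\XX) = \YY$ by \cite[Proposition 2.6(2)]{IyamaYoshino08}; hence $Y' \in \YY$. I claim $Y' \cong Y$. First, $Y'$ has no indecomposable summand $D_1$ in $\DD$: such a $D_1$ would satisfy $\Hom_\CC(D_1,X[1]) = \Ext^1_\CC(D_1,X) = 0$ because both objects lie in the rigid subcategory $\XX$, which forces the triangle to split off the trivial triangle on $D_1$ and hence contradicts the left-minimality of $f$. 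So $Y' \cong Y^{\oplus m}$ with $m \geq 1$. Second, applying the same reasoning to $Y$ itself — using $Y \in \YY = \mu^{-}_\DD(\XX)$, peeling off the $\DD$-summands of the relevant object of $\XX$ (these split off because the source of a left $\DD$-approximation splits off its $\DD$-summands), and using left-minimality — the minimal left $\DD$-approximation triangle with cone $Y$ has source a single copy of $X$; comparing with the $m$-fold sum of the triangle above and using that $Y$ is indecomposable gives $m = 1$ and $Y' \cong Y$. Finally $g$ is automatically a right $\DD$-approximation of $Y$: any map $\DD \to Y$ composed with $Y \to X[1]$ lands in $\Ext^1_\CC(\DD,X) = 0$, so it factors through $g$ by the weak-kernel property; and $g$ is right minimal since $X$ is indecomposable (a summand of $D$ killed by $g$ would split off a copy of itself from $X$). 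This produces the first triangle $X \stackrel{f}{\rightarrow} D \stackrel{g}{\rightarrow} Y \to X[1]$ with the asserted minimality. Dually, starting from a minimal right $\DD$-approximation $t\colon D' \to X$ and using that $(\YY,\XX)$ is a $\DD$-mutation pair (so $\mu_\DD(\XX) = \YY$), one obtains the second triangle $Y \stackrel{s}{\rightarrow} D' \stackrel{t}{\rightarrow} X \to Y[1]$ with $s$ a minimal left $\DD$-approximation and $t$ a minimal right $\DD$-approximation.

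The main obstacle is the bookkeeping in the second paragraph: one must verify that the approximation triangles ``see'' exactly the one new indecomposable $Y$, with multiplicity one and with no stray summand from $\DD$. Ruling out $\DD$-summands of the cone uses minimality of the approximation together with the rigidity of $\XX$ (so that connecting morphisms between objects of $\XX$ vanish), and ruling out higher multiplicities of $Y$ uses the indecomposability of $X$ and $Y$ and the additivity of minimal approximations. Everything else is a direct application of Proposition~\ref{proposition:JorgensenPaluCofinite}, Theorem~\ref{theorem:IyamaYoshinoMutation}, and the description of $\DD$-mutation pairs from \cite{IyamaYoshino08}.
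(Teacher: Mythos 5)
Your proof is correct and takes essentially the same route as the paper, whose proof consists of citing Theorem~\ref{theorem:IyamaYoshinoMutation} and Proposition~\ref{proposition:JorgensenPaluCofinite}; you have simply made explicit the bookkeeping (no $\DD$-summands in the cone, multiplicity one, minimality of $g$ and $t$) that the paper delegates to \cite[Theorem 5.3]{IyamaYoshino08}. The only thin spot is the first-paragraph assertion that $\ind \YY \setminus \ind \DD$ is a single isomorphism class, but your second-paragraph argument, run for an arbitrary indecomposable of $\YY$ not in $\DD$, supplies exactly that, so the proof stands.
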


\begin{proof}
This is an application of Theorem \ref{theorem:IyamaYoshinoMutation} and Proposition \ref{proposition:JorgensenPaluCofinite}.
\end{proof}

\begin{definition}\label{definition:ExchangeGraph}
Let $\CC$ be a Krull-Schmidt triangulated 2-Calabi-Yau category.  The \emph{exchange graph} of $\CC$ is the graph whose vertices are the cluster-tilting subcategories, and where there is an edge $\xymatrix@1 {\XX \ar@{-}[r] & \YY}$ if and only if $\XX$ and $\YY$ are as in Theorem \ref{theorem:IyamaYoshinoMutation}, meaning that there is an almost complete cluster-tilting subcategory $\DD$ such that both $(\XX,\YY)$ and $(\YY,\XX)$ are $\DD$-mutation pairs.
\end{definition}

\begin{remark}
There is an edge $\xymatrix@1 {\XX \ar@{-}[r] & \YY}$ in the exchange graph of $\CC$ if and only if $\XX \cap \YY$ is an almost complete cluster-tilting subcategory.  It follows from Corollary \ref{corollary:Mutation} that the edges incident with a cluster-tilting subcategory $\XX$ are naturally in bijection with the objects of $\ind \XX$.  The exchange graph of $\CC$ does not need to be connected.
\end{remark}
\section{Directed cluster-tilting subcategories}\label{section:TriangleFree}

Let $\CC$ be a Krull-Schmidt 2-Calabi-Yau triangulated category.  Assume that $\CC$ has a cluster-tilting object $T \in \CC$.  One of the main results of \cite{KellerReiten08} is that the quiver of $T$ is acyclic if and only if the algebra $\End(T)$ is hereditary.  The main result in this section (Theorem \ref{theorem:Trianglefree}) generalizes this result to more general cluster-tilting subcategories.

The following definitions are related to the concept of acyclic quivers.

\begin{definition}\label{definition:Directed}
Let $\CC$ be a Krull-Schmidt category.  A \emph{path} of length $n$ in $\CC$ is a sequence $X_0, X_1, \ldots, X_n$ of indecomposable objects such that for every $i \in \{0, 1, \ldots, n-1\}$ there is a nonzero noninvertible morphism $X_i \to X_{i+1}$.  A \emph{cycle} in $\CC$ is a path $X_0, X_1, \ldots, X_n$ with $n \geq 1$ and $X_0 \cong X_n$.  We say that $\CC$ is \emph{directed} if it contains no cycles.
\end{definition}

The following theorem strengthens~\cite[Corollary 2.1]{KellerReiten08}, and it refines Proposition~\ref{proposition:ShIsLocal} for the special case when $\aa = \TT$ is a cluster-tilting subcategory of $\CC$.

\begin{theorem}\label{theorem:Trianglefree}
Let $\CC$ be a Krull-Schmidt 2-Calabi-Yau triangulated category with a cluster-tilting subcategory $\TT$.  If $\TT$ has no cycles of length three, then $\mod \TT$ is hereditary.
\end{theorem}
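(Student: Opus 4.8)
The plan is to argue by contradiction, adapting the proof of \cite[Corollary~2.1]{KellerReiten08} — which treats cluster-tilting \emph{objects} — to cluster-tilting \emph{subcategories}. Recall from the discussion preceding Lemma~\ref{lemma:Ext-via-H} that $\mod\TT$ is abelian and, by \cite{KellerReiten07}, is $1$-Gorenstein and stably Calabi--Yau. An abelian category with enough projectives is hereditary exactly when every subobject of a projective is projective; so, assuming $\mod\TT$ is \emph{not} hereditary, fix an indecomposable $T\in\ind\TT$ whose simple top $S_T=\TT(-,T)/\rad\TT(-,T)$ has $\operatorname{pd}S_T\ge 2$. Since $\mod\TT$ is $1$-Gorenstein, $\operatorname{pd}S_T=\infty$, so the minimal projective resolution of $S_T$ does not terminate and no syzygy $\Omega^nS_T$ is projective. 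The aim is to extract from this resolution a cycle $T\to T'\to T''\to T$ of length three in $\TT$.

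The bridge to $\CC$ is the equivalence $H_\TT\colon \CC/[\TT[1]]\xrightarrow{\ \sim\ }\mod\TT$ of Proposition~\ref{proposition:AbelianQuotient}, which sends $\TT$ to the projectives and $\TT[2]$ to the injectives of $\mod\TT$. For $Y\in\CC$ without summands in $\TT[1]$, a minimal right $\TT$-approximation triangle $T^Y_1\xrightarrow{h_Y}T^Y_0\to Y\to T^Y_1[1]$ yields, after applying $\Hom_\CC(\TT,-)$ and using rigidity $\Ext^1_\CC(\TT,\TT)=0$, a minimal projective presentation $\TT(-,T^Y_1)\xrightarrow{\TT(-,h_Y)}\TT(-,T^Y_0)\to H_\TT(Y)\to 0$, and the first syzygy $\Omega H_\TT(Y)$ is the image of $\TT(-,w_Y)$ for the rotated connecting morphism $w_Y\colon Y[-1]\to T^Y_1$. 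Applying this repeatedly, the minimal resolution of $S_T$ is realized as the image under $H_\TT$ of an infinite chain of radical morphisms $\cdots\to T^{(2)}\to T^{(1)}\to T^{(0)}=T$ in $\TT$, coming from right $\TT$-approximation triangles of objects $Y_0,Y_1,Y_2,\dots\in\CC$ with $H_\TT(Y_0)=S_T$ and $H_\TT(Y_{i+1})\cong\Omega H_\TT(Y_i)$; non-termination says no $Y_i$ lies in $\TT\cup\TT[1]$.

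The last, and hardest, step is to fold this infinite configuration into a cycle of length \emph{three}. Here one uses the $2$-Calabi--Yau (Serre) duality of $\CC$ together with Lemma~\ref{lemma:Ext-via-H}, which turns the part of $\Ext^1_\CC$ lying in $[\TT[1]]$ into $\Ext^1_{\mod\TT}$: the nonvanishing of the connecting morphisms $w_{Y_i}$ against $\TT$ — which is precisely what records the non-projectivity of the syzygies — is dual to the existence of nonzero morphisms \emph{out of} the approximation terms $T^{(i)}$ back toward earlier terms of the resolution. Composing one such returning morphism with two consecutive morphisms of the resolution produces a nonzero radical composite $T\to T'\to T''\to T$ in $\TT$, contradicting the hypothesis. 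The delicate point — which is what makes the weakening from ``directed'' to ``no $3$-cycles'' work, as advertised in the introduction — is that this pairing matches a \emph{single} syzygy step with a \emph{single} shift by $[-1]$ in $\CC$, so that two resolution morphisms plus one Serre-dual morphism already close the loop; na\"{\i}vely one would only obtain a long cycle, and controlling the length is exactly where care is needed.

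Finally, I note that one cannot shortcut this by reducing, via Proposition~\ref{proposition:ShIsLocal}, to a finite full subcategory $\DD\subseteq\TT$ and asking whether $\End_\CC(\bigoplus_{D\in\ind\DD}D)$ is hereditary: such a $\DD$ need not be a cluster-tilting subcategory of any $2$-Calabi--Yau category, and heredity of $\End_\CC(\bigoplus_{D\in\ind\DD}D)$ does not follow from a cycle condition on $\DD$ alone — so the argument must stay inside $\CC$ and genuinely use that $\TT$ is cluster-tilting.
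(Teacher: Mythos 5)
Your setup---arguing by contradiction from a module of projective dimension at least two and translating its minimal projective resolution into right $\TT$-approximation triangles and radical morphisms in $\TT$ via $H_\TT$---matches the spirit of the paper's proof. But the decisive step, actually producing a cycle of length three, is not carried out, and the configuration you describe is not the one that works. You assert that Serre duality together with Lemma~\ref{lemma:Ext-via-H} yields a ``returning morphism'' from a later resolution term back to an earlier one, so that ``two consecutive morphisms of the resolution'' plus this return map close a $3$-cycle. You never identify which indecomposables this morphism connects, nor why it is nonzero and noninvertible; and there is no evident reason for consecutive resolution terms to close up this way: Serre duality gives $\Hom_\CC(T_2,T_0)\cong\Hom_\CC(T_0,T_2[2])^*$, which is not controlled by the data you have. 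This is exactly the point you flag as ``where care is needed,'' and no care is then taken. The paper closes the loop by a different mechanism: it keeps only the first three terms $(-,T_0)\to(-,T_1)\to(-,T_2)\to X\to 0$ of a minimal resolution, picks an indecomposable summand $T'_0$ of $T_0$, and uses the Iyama--Yoshino exchange triangles at $T'_0$ (Corollary~\ref{corollary:Mutation}), $T'_0\xrightarrow{l}D\xrightarrow{r}(T'_0)^*\to T'_0[1]$ and $(T'_0)^*\xrightarrow{l^*}E\xrightarrow{r^*}T'_0\to(T'_0)^*[1]$. The $3$-cycle is $T'_0\to D'\to E'\to T'_0$ for suitable indecomposable summands $D'$ of $D$ and $E'$ of $E$---objects produced by the mutation, not resolution terms---with middle map a component of $l^*\circ r$ and third leg a component of $r^*$. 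The resolution enters only to show $l^*\circ r\neq 0$: one factors $T'_0\to T_1$ through the approximation $l$ (here the no-$3$-cycle hypothesis is used once, to rule out a cycle $T'_0\to T'_0\to T'_0\to T'_0$), shows $g\circ j\neq 0$ by minimality of the resolution, and chases this through the exchange triangles using $\Ext^1_\CC(T'_0,T_2)=0$. Without some such mechanism your sketch does not produce the required cycle.

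A secondary gap is your reduction to simple tops. When $\ind\TT$ is infinite, $S_T=(-,T)/\rad(-,T)$ need not lie in $\mod\TT$ (its radical need not be finitely generated), and the implication ``$\mod\TT$ not hereditary $\Rightarrow$ some simple has projective dimension at least two'' is a fact about artinian or semiperfect rings that does not automatically transfer to functor categories over an infinite $\TT$. The paper sidesteps this entirely by working with an arbitrary $X$ of projective dimension at least two and only the beginning of its minimal resolution; the $1$-Gorenstein property and the infinite resolution play no role.
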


\begin{proof}
Seeking a contradiction, assume that $\mod \TT$ is not hereditary.  Let $X \in \mod \TT$ be an object with projective dimension at least two and consider the beginning of a minimal projective resolution
$$(-,T_0) \to (-,T_1) \to (-,T_2) \to X \to 0$$
where $T_0 \not= 0$.  Let $f: T_0 \to T_1$ and $g:T_1 \to T_2$ be the corresponding maps in $\TT$ via the Yoneda Lemma.  Note that every map $h:T \to T_1$ with $g \circ h = 0$ factors through $f$, i.e. the map $f$ is a weak kernel of $g$ in $\TT$.

Let $T'_0 \in \ind\TT$ be an indecomposable direct summand of $T_0$.  Using Corollary \ref{corollary:Mutation}, there are triangles
$$\mbox{$T'_0 \stackrel{l}{\rightarrow} D \stackrel{r}{\rightarrow} (T'_0)^* \to T'_0[1]$ and $(T'_0)^* \stackrel{l^*}{\rightarrow} E \stackrel{r^*}{\rightarrow} T'_0 \to (T'_0)^*[1],$}$$
with $(T'_0)^*$ indecomposable, $D,E \in \add(\ind \TT \setminus \{T'_0\})$, and where $l: T'_0 \to D$ is a left $\add(T \setminus \{T'_0\})$-approximation of $T'_0$.

Next we find that the morphism $f': T'_0 \to T_1$ factors through the left $\add(\ind\TT \setminus \{T'_0\})$-approximation of $T'_0$ as
$$f': T'_0 \stackrel{l}{\rightarrow} D \stackrel{j}{\rightarrow} T_1.$$
To see this, we decompose $T_1$ as $T_1 = (T'_0)^{\oplus n} \oplus T'_1$ where $T'_1$ has no summand isomorphic to $T'_0$. Since the composition $T'_0 \to T_0 \to T_1$ is a radical map (because the map $(-,T_0) \to (-,T_1)$ is part of a minimal projective resolution), $f'$ must factor as $T'_0 \stackrel{f''}\to T'_1 \to T_1$ where $T'_1 \to T_1$ is the split inclusion (otherwise we would have a cycle $T'_0 \to T'_0 \to T'_0 \to T'_0$). Now $T'_1 \in \add(\ind \TT \setminus \{T'_0\})$ and so $f''$ factors through the left approximation $l$, as claimed.

We will show that $g \circ j: D \to T_1 \to T_2$ is nonzero.  Seeking a contradiction, assume that $g \circ j = 0$.  This implies that $j: D \to T_1$ factors through $f:T_0 \to T_1$ since $f$ is a weak kernel of $g$, and hence yields a factorization of $f': T'_0 \to T_1$ as $T'_0 \to D \to T_0 \to T_1$.  The minimality of the projective resolution of $X$ yields that the composition  $T'_0 \to D \to T_0$ is a split monomorphism.  In particular, the map $T'_0 \to D$ is a split monomorphism which is a contradiction since $D \in \add(\ind \TT \setminus \{T'_0\})$.  We have shown that $g \circ j \not= 0$.

We will now show that $l^* \circ r: D \to (T'_0)^* \to E$ is a nonzero radical map. Clearly, $l^* \circ r$ is a radical map since both $l^*$ and $r$ are. Note that the composition $g \circ (j \circ l): T'_0 \to D \to T_1 \to T_2$ is zero, so that $g \circ j$ factors as below:
\[\xymatrix{T'_0 \ar[r]^{l} & D \ar[r]^{r} \ar[d]_{g \circ j}& (T'_0)^* \ar[r] \ar@{-->}[dl]^{h} & T'_0[1] \\ & T_2}\]
Since $g \circ j$ is nonzero, we know that map $h: (T'_0)^* \to T_2$ is nonzero.  Furthermore, we see from the exchange triangle $(T'_0)^* \stackrel{l^*}{\rightarrow} E \stackrel{r^*}{\rightarrow} T'_0 \to (T'_0)^*[1]$ and $\Ext^1(T'_0,T_2) = 0$ (since $T'_0, T_2 \in \TT$) that $h$ factors through the map $l^*\colon (T'_0)^* \to E$:
\[\xymatrix{& & {T'_0[-1]} \ar[d] & \\
T'_0 \ar[r]^{l} & D \ar[r]^{r} \ar[d]_{g \circ j}& (T'_0)^* \ar[r] \ar[dl]^{h} \ar[d]^{l^*} & T'_0[1] \\
& T_2 & E \ar@{-->}[l]^{e} \ar[d]^{r^*} \\ && T'_0}\]
Since $e \circ l^* \circ r = h \circ r = g \circ j \not= 0$, we infer that $l^* \circ r$ is nonzero.

We then find nonzero direct summands $D'$ and $E'$ of $D$ and $E$ respectively such that $\rad(D',E') \not= 0$. Thus, we have found a sequence of nonzero nonisomorphisms $T'_0 \to D' \to E' \to T'_0$ in $\TT$, contradicting the hypothesis that $\CC$ is has no cycles of length three.
\end{proof}

\begin{remark}
It is shown in \cite[Theorem 5.7]{BertaniOklandOpperman11} that there exists a Krull-Schmidt 2-Calabi-Yau triangulated category with a cluster-tilting object $T = T_1 \oplus T_2 \oplus T_3 \oplus T_4$ whose quiver is given by
\[\xymatrix@1{T_1 & T_2 \ar[l] \ar@<2pt>[r]^\alpha & T_3 \ar@<2pt>[l]^\beta \ar[r] & T_4.}\]
The generating relations are $\beta\alpha\beta$ and $\alpha\beta\alpha$.  Hence, $\add T$ is not directed; there is for instance a cycle (in the sense of Definition~\ref{definition:Directed})
\[\xymatrix{T_2 \ar[r]^\alpha & T_3 \ar[r]^{\alpha\beta} & T_3 \ar[r]^\beta & T_2.}\]
Yet there exist no cycles $A \to B \to C \to A$ where $A,B,C$ are pair-wise nonisomorphic.  This shows that Theorem~\ref{theorem:Trianglefree} does not hold if one only excludes the existence of cycles $A \to B \to C \to A$ where $A,B,C$ are pair-wise nonisomorphic.
\end{remark}

\begin{corollary}\label{corollary:Trianglefree}
Let $\CC$ be a Krull-Schmidt 2-Calabi-Yau category.  Assume that $\CC$ has a cluster-tilting subcategory $\TT$.  The following are equivalent:
\begin{enumerate}
\item\label{enumerate:SemiHereditary} $\TT$ is semi-hereditary,
\item\label{enumerate:Directed} $\TT$ is directed,
\item\label{enumerate:Trianglefree} $\TT$ has no cycles of length three,
\item\label{enumerate:ThreadQuiver} $\TT \cong kQ$ for a strongly locally finite thread quiver $Q$.
\end{enumerate}
If $\Mod \TT$ is hereditary, then the previous points are equivalent to
\begin{enumerate}
\item[(5)] the Auslander-Reiten quiver of $\TT$ is acyclic.
\end{enumerate}
\end{corollary}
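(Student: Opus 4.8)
The plan is to close the cycle of implications $(3)\Rightarrow(1)\Rightarrow(4)\Rightarrow(2)\Rightarrow(3)$ among the first four conditions, and then to handle~(5) separately under the additional hypothesis that $\Mod\TT$ is hereditary.

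First, $(2)\Rightarrow(3)$ is immediate from Definition~\ref{definition:Directed}. For $(3)\Rightarrow(1)$: since $\TT$ is a cluster-tilting subcategory of a 2-Calabi-Yau category it is a dualizing $k$-variety (Remark~\ref{remark:DefClusterTilting}, via \cite[Proposition 2.11]{IyamaYoshino08} and \cite{AuslanderSmalo81}), so in particular $\TT$ is coherent and $\mod\TT$ is abelian; if moreover $\TT$ has no cycle of length three, then $\mod\TT$ is hereditary by Theorem~\ref{theorem:Trianglefree}, and ``$\mod\TT$ abelian and hereditary'' is precisely the definition of $\TT$ being semi-hereditary. For $(1)\Rightarrow(4)$: a semi-hereditary $\TT$ is a semi-hereditary dualizing $k$-variety, so Theorem~\ref{theorem:ClassificationIntroduction} identifies it with $kQ$ for a strongly locally finite thread quiver~$Q$.

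The implication $(4)\Rightarrow(2)$ is the main remaining point and the only place where the combinatorial structure of $kQ$ enters. I would first observe that a strongly locally finite thread quiver has an acyclic underlying quiver $Q_u$: an oriented cycle through a vertex $v$ would yield paths of every length ending at $v$, hence an infinite-dimensional indecomposable projective representation $P_v$, contradicting strong local finiteness. Next, using the description of $kQ$ as a 2-pushout (see~\S\ref{subsection:ThreadQuivers} and \cite{BergVanRoosmalen14}), the indecomposable objects of $kQ$ are the vertices of $Q_u$ together with the interior points of the bounded locally discrete linearly ordered sets $\LL_t$ replacing the thread arrows $t$, with $\min\LL_t$ and $\max\LL_t$ glued to the source and target of~$t$; and any nonzero noninvertible morphism between indecomposables is supported on an oriented path in the ``expanded'' quiver obtained from $Q_u$ by inserting each chain $\LL_t$ in place of~$t$. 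Since $Q_u$ is acyclic and each $\LL_t$ is linearly (hence acyclically) ordered, this expanded quiver is acyclic, so there is no path from an indecomposable of $kQ$ to itself; that is, $kQ$ is directed. This establishes the equivalence of (1)--(4). The only real obstacle here is organizing the description of $kQ$ from \cite{BergVanRoosmalen14}, since Theorem~\ref{theorem:Trianglefree} already carries all of the homological weight.

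Finally, suppose $\Mod\TT$ is hereditary. Since $\TT$ is coherent, kernels of maps between finitely presented modules are finitely presented, so given $M\in\mod\TT$ together with a surjection $P\twoheadrightarrow M$ from a finitely generated projective module, the kernel is finitely presented, and by $\gldim\Mod\TT\le 1$ it is projective; hence $\mod\TT$ is hereditary, $\TT$ is semi-hereditary, and (1)--(4) all hold. The implication $(2)\Rightarrow(5)$ is valid in general: an oriented cycle in the Auslander--Reiten quiver of $\TT$ is a cyclic sequence of irreducible morphisms, each nonzero and noninvertible, hence a cycle in the sense of Definition~\ref{definition:Directed}; so a directed $\TT$ has acyclic Auslander--Reiten quiver, which gives~(5). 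Thus under the running hypothesis all five conditions hold, and in particular they are equivalent. One should bear in mind that without the assumption on $\Mod\TT$, condition~(5) is strictly weaker than (1)--(4) --- cf.\ Example~\ref{example:ClusterD} --- which is exactly why it is only added to the list in this restricted setting.
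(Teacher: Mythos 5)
Your argument is correct in substance, but it closes the cycle of implications by a genuinely different route than the paper. The paper proves $(1)\Rightarrow(2)$ directly and purely homologically: by \cite[Proposition 1.1]{AuslanderReiten75}, every morphism between indecomposables of a Hom-finite semi-hereditary category is a monomorphism, so a cycle of nonzero noninvertible morphisms would compose to a monic endomorphism of an indecomposable, which in a Hom-finite Krull--Schmidt category is invertible, forcing the first map of the cycle to be a split --- hence invertible --- monomorphism, a contradiction. This avoids the thread-quiver combinatorics entirely; the classification $(1)\Leftrightarrow(4)$ is then quoted exactly as you do. Your substitute, proving $(4)\Rightarrow(2)$ from the structure of $kQ$, does work but contains the one imprecise step: the claim that every nonzero noninvertible morphism in $kQ$ ``is supported on an oriented path in the expanded quiver'' is false if ``path'' means a finite composite of arrows, since inside an infinite thread $\LL_t$ there are nonzero morphisms between elements not joined by any finite chain of consecutive steps (e.g.\ from the minimum to the maximum of $\bN\cdot(-\bN)$). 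What your argument actually needs --- and what is true --- is only that the generating morphisms of the 2-pushout (those coming from $kQ_u$ and from the posets $\LL_t$) strictly increase a partial order on $\ind kQ$ determined by reachability in the acyclic $Q_u$ together with the linear orders $\LL_t$; since every morphism of $kQ$ is a linear combination of composites of such generators, no cycle can occur. Phrased this way the step is sound, but it leans on structural facts about the 2-pushout from \cite{BergVanRoosmalen14} that should be cited rather than asserted. Your observation that $Q_u$ must be acyclic because an oriented cycle forces infinite-dimensional projectives is correct and is implicitly used by the paper as well.

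On the addendum (5) you are in fact more explicit than the paper: you spell out why heredity of $\Mod\TT$ together with coherence of $\TT$ forces $\mod\TT$ to be hereditary, so that (1)--(4) hold automatically and Proposition~\ref{proposition:ModNotHereditary} rules out thread arrows; the paper compresses this into one sentence. Concluding that all five statements are simply true under the hypothesis, hence equivalent, is logically legitimate; the paper instead identifies $Q$ with the Auslander--Reiten quiver of $\TT$ and matches its acyclicity with directedness, which amounts to the same thing.
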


\begin{proof}
Let $\TT$ be a Hom-finite additive semi-hereditary category.  It follows from \cite[Proposition 1.1]{AuslanderReiten75} that every morphism in $\TT$ between indecomposable objects is a monomorphism.  Hence, a Hom-finite semi-hereditary category is directed.

It is obvious that if $\TT$ is directed, then $\TT$ has no cycles of length three.

If $\TT$ has no cycles of length three, then it follows from Theorem \ref{theorem:Trianglefree} that $\TT$ is semi-hereditary.

Note that $\CC$ is a dualizing $k$-variety due to \cite[Proposition 2.11]{IyamaYoshino08}, and $\TT$ is a dualizing $k$-variety due to \cite[Proposition 2.10(1)]{IyamaYoshino08}.  The equivalence (\ref{enumerate:SemiHereditary}) $\Leftrightarrow$ (\ref{enumerate:ThreadQuiver}) follows from \cite[Theorem 7.21]{BergVanRoosmalen14}.

If $\Mod \TT$ is hereditary, then Proposition \ref{proposition:ModNotHereditary} shows that $\TT \cong kQ$ where $Q$ is a quiver (or a thread quiver without thread arrows).  Here, $Q$ is the Auslander-Reiten quiver of $\TT$.  We know that the category $\TT$ is directed if and only if $Q$ is acyclic.  This establishes the final equivalence.
\end{proof}

The condition that $\Mod \TT$ is hereditary cannot be removed from Corollary \ref{corollary:Trianglefree}, as is illustrated by the following example. 

\begin{example}\label{example:ClusterD}
Let $\AA$ be the category $\repc D_\bZ$ discussed in \cite[\S 4.2]{vanRoosmalen06}, i.e. let $D_\bZ$ be the poset whose underlying set is $\bZ \coprod \{Q_1, Q_2\}$ and whose order relation is given by
\[ X < Y \Leftrightarrow \begin{cases} \mbox{$X,Y \in \bZ$ and $X <_\bZ Y$, or} \\ \mbox{$X \in \bZ$ and $Y \in \{Q_1,Q_2\}.$} \end{cases} \]
The indecomposable objects are given by
\begin{align*}
A_{i,j} &= \coker((kD_\bZ)(-,i-1) \to (kD_\bZ)(-,j)), \\
A^{1}_{i} &= \coker((kD_\bZ)(-,i-1) \to (kD_\bZ)(-,Q_{1})), \\
A^{2}_{i} &= \coker((kD_\bZ)(-,i-1) \to (kD_\bZ)(-,Q_{2})), \\
B_{i,j} &= \coker((kD_\bZ)(-,i-1) \oplus (k\DL)(-,j-1) \to (kD_\bZ)(Q_{1},-) \oplus (kD_\bZ)(Q_{2},-)),
\end{align*}
where $i \leq j$ in $\bZ$ (for $B_{i,j}$, we also assume that $i < j$).  We know (see \cite[Proposition 4.8]{vanRoosmalen06}) that $\repc D_\bZ$ is a hereditary category with Serre duality and without any nonzero projective and injective objects; the Auslander-Reiten translate is given by:
\begin{align*}
\tau A_{i,j} &\cong A_{i-1,j-1}, \\
\tau A^{1}_{i} &\cong A^2_{i-1}, \\
\tau A^{2}_{i} &\cong A^1_{i-1}, \\
\tau B_{i,j} &\cong B_{i-1,j-1}.
\end{align*}

We write $\CC$ for the category $\Db (\repc D_\bZ) / \bS[-2]$.  It follows from \S\ref{subsection:ClusterConstruction} that
\begin{align*}
\Ob \CC &= \Ob \repc D_\bZ, \\
\Hom_\CC(X,Y) &= \Hom_{\AA} (X,Y) \oplus \Ext^1_{\Db \AA}(X, \tau^{-1} Y).
\end{align*}

Consider the full subcategory $\TT = \add(\{A^1_i\}_{i \in \bZ})$ of $\CC$; it can be readily confirmed that $\TT$ is a maximal rigid subcategory.  For the indecomposable objects of $\CC$, we have the following right $\TT$-approximation triangles:
\[ \xymatrix@R=3pt{
A^1_i \ar[r] & A^{1}_{j+1} \ar[r] & A_{i,j} \ar[r] & A^1_i[1] \\
0 \ar[r] & A^1_i \ar[r] & A^1_i \ar[r] & 0 \\
A^1_{i+1} \ar[r] & 0 \ar[r] & A^2_i \ar[r] & A^1_{i+1}[1] \\
A^1_{j+1} \ar[r] & A^1_{i} \ar[r] & B_{i,j} \ar[r] & A^1_{j+1}[1]
} \]
This shows that $\TT$ is a cluster-tilting subcategory of $\CC$.  In $\CC$, there is a sequence of nonzero morphisms {$A^{1}_0 \to A^1_1 = A^2_0[-1] \to A^1_{-1} \to A^{1}_0$}; thus $\TT$ is not directed (as there is a cycle of length three).  However, the Auslander-Reiten quiver of $\TT$ is given by a linearly ordered quiver of type $A_\infty^\infty$:
\[ \xymatrix@1{
\cdots \ar[r] & \cdot \ar[r] & \cdot \ar[r] & \cdot \ar[r] & \cdots
} \]
Hence, the Auslander-Reiten quiver of $\TT$ is acyclic.
\end{example}

It is shown in Corollary \ref{corollary:Trianglefree} that thread quivers classify the possible directed cluster-tilting subcategories.  The following theorem shows that every strongly locally finite thread quiver occurs as a cluster-tilting subcategory of some Krull-Schmidt 2-Calabi-Yau category.

\begin{theorem}\label{theorem:AllThreadQuiversOccur}
Let $Q$ be a strongly locally finite thread quiver.  The category $\Db(\mod kQ) / (\bS \circ [-2])$ is an algebraic Krull-Schmidt 2-Calabi-Yau category which admits a cluster-tilting subcategory equivalent to $kQ$.
\end{theorem}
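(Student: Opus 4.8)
The plan is to recognise $\CC := \Db(\mod kQ)/(\bS\circ[-2])$ as the cluster category $\CC_Q$ of the hereditary abelian category $\AA := \mod kQ$ and to read off its cluster-tilting subcategory from Proposition~\ref{proposition:CTFromEnoughProj}. First I would record that $\AA$ is abelian, hereditary and has Serre duality: since $Q$ is strongly locally finite, Theorem~\ref{theorem:ClassificationIntroduction} makes $kQ$ a semi-hereditary dualizing $k$-variety, so by the characterisation recalled in \S\ref{subsection:Dualizing} (that is, \cite[Corollary~4.9]{BergVanRoosmalen14}) the category $\mod kQ$ is abelian, hereditary and has Serre duality; in particular it is Ext-finite and $\Db\AA$ carries a Serre functor $\bS$, so $\CC$ is defined. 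Then the construction recalled in \S\ref{subsection:ClusterConstruction} (namely \cite[Theorem~6]{Keller05}) endows $\CC=\CC_Q$ with a triangulated structure for which $\Db\AA\to\CC$ is triangulated, and makes $\CC$ Hom-finite and $2$-Calabi-Yau; it is algebraic because that construction runs through differential graded categories.

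Next I would check that $\CC$ is Krull-Schmidt. Write $\Phi=\bS\circ[-2]$. Since $\CC$ is Hom-finite, $\Phi$ acts freely on the indecomposables of $\Db\AA$: if $\Phi^nX\cong X$ for some indecomposable $X$ and some $n\neq 0$, then $\End_\CC(X)$ would contain $\bigoplus_{k\in n\bZ}\Hom_{\Db\AA}(X,\Phi^kX)\cong\bigoplus_{k\in n\bZ}\End_{\Db\AA}(X)$, which is infinite-dimensional. Consequently, for $X$ indecomposable in $\Db\AA$ (which is Krull-Schmidt, so $\End_{\Db\AA}(X)$ is local) the ring $\End_\CC(X)=\End_{\Db\AA}(X)\oplus\bigoplus_{n\neq 0}\Hom_{\Db\AA}(X,\Phi^nX)$ is local, because any $f\colon X\to\Phi^nX$ whose composite with some $\Phi^nX\to X$ were invertible would make $X$ a summand of the indecomposable $\Phi^nX$, forcing $X\cong\Phi^nX$ and contradicting freeness; thus the components with $n\neq 0$ lie in $\rad\End_\CC(X)$. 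Hence every object of $\CC$ is a finite direct sum of objects with local endomorphism rings, so $\CC$ is Krull-Schmidt by \cite[Theorem~27.6]{AndersonFuller92}.

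For the cluster-tilting subcategory I would first identify the full subcategory $\PP\subseteq\AA$ of projective objects with $kQ$. Every object of $\mod kQ$ is finitely presented, hence a quotient of a finite direct sum of standard projectives $kQ(-,A)$, so $\AA$ has enough projectives; and since $kQ$ is coherent, $\AA$ is abelian, so any projective object of $\AA$ is a direct summand of such a finite sum and therefore --- $kQ$ having finite direct sums and split idempotents --- isomorphic to some $kQ(-,A)$. By the Yoneda Lemma, $A\mapsto kQ(-,A)$ is a fully faithful functor $kQ\to\AA$ with essential image $\PP$, so $\PP\simeq kQ$. Now Proposition~\ref{proposition:CTFromEnoughProj}(2) applies and shows that the image $F(\PP)$ of $\PP$ under $F\colon\AA\to\Db\AA\to\CC$ is a cluster-tilting subcategory of $\CC$. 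Finally $F$ restricts to an equivalence $\PP\to F(\PP)$: for $P,P'\in\PP$ one has $\Hom_\CC(FP,FP')=\bigoplus_{n\in\bZ}\Hom_{\Db\AA}\big(P,\Phi^nP'\big)$, the $n=0$ term is $\Hom_\AA(P,P')$, and a routine case check --- using that $P,P'$ are projective, that $\AA$ is hereditary, and that $\bS$ shifts cohomological degree on $\AA$ by at most one (as recalled via \cite[Corollary~I.3.4]{ReVdB02} in the proof of Proposition~\ref{proposition:CTFromEnoughProj}) --- shows that for $n\neq 0$ the object $\Phi^nP'$ sits in a cohomological degree on which $\Hom_{\Db\AA}(P,-)$ vanishes. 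Thus $F(\PP)\simeq\PP\simeq kQ$.

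No single step is deep here; the substance is in assembling Theorem~\ref{theorem:ClassificationIntroduction}, the orbit-category formalism of \S\ref{subsection:ClusterConstruction}, and Proposition~\ref{proposition:CTFromEnoughProj}. The points that genuinely require care --- and where I would be most careful --- are the Krull-Schmidt verification (since it is not directly quotable in this generality) and the degree bookkeeping establishing that $F|_\PP$ is fully faithful; both, however, are finite calculations driven by heredity of $\AA$ and projectivity of the objects of $kQ$.
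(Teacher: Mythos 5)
Your proof is correct and follows essentially the same route as the paper's: identify $\mod kQ$ as hereditary Ext-finite with Serre duality and enough projectives (with $kQ$ as its projectives), invoke Keller's orbit-category theorem for the algebraic 2-Calabi-Yau structure, and apply Proposition~\ref{proposition:CTFromEnoughProj}. The only difference is that you spell out two steps the paper delegates to citations — the Krull--Schmidt verification (which is exactly the argument of \cite[Proposition 1.2]{BuanMarshReinekeReitenTodorov06} that the paper says "carries over") and the full faithfulness of $F|_\PP$, which the paper leaves implicit — and both of your verifications are sound.
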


\begin{proof}
It follows from \cite[Theorem 1]{BergVanRoosmalen14} that $\mod kQ$ is a hereditary Ext-finite category with Serre duality and enough
projectives; the category of projectives is equivalent to $kQ$ (via the Yoneda embedding $kQ \to \mod kQ: A \mapsto (-,A)$).  It follows from \cite[Theorem 6]{Keller05} that $\CC = \Db(\mod kQ) / (\bS \circ [-2])$ is an algebraic triangulated 2-Calabi-Yau category and from \cite[Proposition 1.2]{BuanMarshReinekeReitenTodorov06} that $\CC$ is Krull-Schmidt (the proof carries over to this setting).  It then follows from Proposition \ref{proposition:CTFromEnoughProj} that $\CC$ admits a cluster-tilting subcategory equivalent to $kQ$.
\end{proof}

\section{Iyama-Yoshino reductions}\label{section:Reduction}

In this section, we recall the definition of and some results about the Iyama-Yoshino reduction (see \cite{IyamaYoshino08}, also called the Calabi-Yau reduction).  Our main results in this section are Theorems \ref{theorem:Covering} and \ref{theorem:HereditaryBongartzComplement}, describing these reductions for Krull-Schmidt 2-Calabi-Yau categories with a directed cluster-tilting suibcategory.

\subsection{Preliminaries on Iyama-Yoshino reductions}\label{Subsection:IyamaYoshinoReduction}

Let $\CC$ be a Krull-Schmidt 2-Calabi-Yau category.  Let $\ZZ$ be a full rigid functorially finite subcategory of $\CC$.  We consider the category $\CC_{[\ZZ]} = \ZZ^{\perp_1} / [\ZZ]$.

We may endow $\CC_{[\ZZ]}$ with an auto-equivalence $\langle 1 \rangle: \CC_{[\ZZ]} \to \CC_{[\ZZ]}$ given in the following way.  Let $f: X \to Y$ be a morphism in $\CC_{[\ZZ]}$ and let $\tilde{f}: X \to Y$ be a lift in $\ZZ^{\perp_1} \subseteq \CC$.  Consider the commutative square in $\CC$:
\[\xymatrix{X \ar[r] \ar[d]^{\tilde{f}} & X' \ar[d]^{\tilde{g}} \\ Y \ar[r] &Y'}\]
where the maps $X \to X'$ and $Y \to Y'$ are minimal left $\ZZ$-approximations.  We may extend this commutative square to a morphism of triangles in $\CC$:
  \[\xymatrix{X \ar[r] \ar[d]^f & X' \ar[d]^g \ar[r] & X \langle 1 \rangle \ar[r] \ar@{-->}[d]^{\tilde{h}} & X[1] \ar[d]^{f[1]}\\
	Y \ar[r] &Y' \ar[r] & Y \langle 1 \rangle \ar[r] & Y[1]}\]
One can check that $X \langle 1 \rangle, Y \langle 1 \rangle \in \ZZ^{\perp_1}$.  The image of $\tilde{h}: X \langle 1 \rangle \to Y \langle 1 \rangle$ under the functor $\ZZ^{\perp_1} \to \ZZ^{\perp_1} / [\ZZ]$ is $f \langle 1 \rangle$.  It is shown in \cite[Proposition 2.6]{IyamaYoshino08} that this correspondence is indeed a well-defined equivalence $\CC_{[\ZZ]} \to \CC_{[\ZZ]}$.

Let $A \to B \to C \to A[1]$ be a triangle in $\CC$, and assume that $A,B,C \in \ZZ^{\perp_1}$.  Under the quotient functor $\ZZ^{\perp_1} \to \ZZ^{\perp_1} / [\ZZ] = \CC_{[\ZZ]}$, this triangle induces a complex $A \to B \to C \to A \langle 1 \rangle$.  We can impose the structure of a triangulated category on $\CC_{[\ZZ]}$ by taking as the class of triangles all complexes obtained in this way (see \cite[Theorem 4.2]{IyamaYoshino08}).

If $\CC$ is an algebraic triangulated category, then $\CC_{[\ZZ]}$ with the above induced triangulated structure is also an algebraic triangulated category (see \cite[Proposition 4.3]{AmiotOppermann12}).

Finally, we remark that $\CC_{[\ZZ]}$ is a 2-Calabi-Yau category (see \cite[Theorem 4.7]{IyamaYoshino08}).

\subsection{Reducing the cluster-tilting subcategory to a cluster-tilting object}

Let $\CC$ be a Krull-Schmidt 2-Calabi-Yau category with a directed cluster-tilting subcategory $\TT$.  Our goal in this section is to prove Theorem \ref{theorem:Covering} below, where we say that $\CC$ can be understood ``locally.''  By this, we mean that any question about a finite set of objects in $\CC$ can be reduced to a question pertaining to finitely many objects in a 2-Calabi-Yau category with a cluster-tilting object.  To make this more precise, we will need the following definition.

\begin{definition}\label{definition:LivesIn}Let $\ZZ$ be a functorially finite and extension-closed full subcategory of $\CC$.  We say that an object $X \in \CC$ \emph{lives in} the reduced category $\CC_{[\ZZ]}$ if $X \in \ZZ^{\perp_1}$ and $[\ZZ](X,X) = 0$.  Similarly, we say that a full subcategory $\DD \subseteq \CC$ lives in $\CC_{[\ZZ]}$ if every $X \in \DD$ lives in $\CC_{[\ZZ]}$.
\end{definition}

\begin{remark}
Thus an object $X \in \CC$ lives in the reduced category $\CC_{[\ZZ]}$ if $\Hom_\CC(X,X) = \Hom_{\CC_{[\ZZ]}}(X,X)$.  If $X \oplus X[n]$ lives in $\CC_{[\ZZ]}$, then $\Ext^n_\CC(X,X) = \Ext^n_{\CC_{[\ZZ]}}(X,X)$.
\end{remark}

\begin{theorem}\label{theorem:Covering}
Let $\CC$ be a Krull-Schmidt 2-Calabi-Yau category with a directed cluster-tilting subcategory $\TT$.  For any object $X \in \CC$, there is a functorially finite and rigid full subcategory $\ZZ \subseteq \CC$ such that
\begin{enumerate}
\item $X$ lives in $\CC_{[\ZZ]}$, and
\item there is an object $T \in \TT$ which lives in $\CC_{[\ZZ]}$ and is a cluster-tilting object in $\CC_{[\ZZ]}$.
\end{enumerate}
If $\CC$ is algebraic, then $\CC_{[\ZZ]} \cong \CC_{\End T}$.
\end{theorem}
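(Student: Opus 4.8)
The plan is to cut $\TT$ down to a finite cluster-tilting object while keeping $X$ alive, by absorbing the ``infinite part'' of $\TT$ into the reduction subcategory $\ZZ$. First I would use Corollary~\ref{corollary:Trianglefree} to identify $\TT$ with $kQ$ for a strongly locally finite thread quiver $Q$; in particular $\TT$ is semi-hereditary, so by Proposition~\ref{proposition:ShIsLocal} the endomorphism ring of any object of $\TT$ with finitely many indecomposable summands is a finite-dimensional hereditary algebra. Then I would locate $X$: a minimal right $\TT$-approximation triangle $T_1\to T_0\to X\to T_1[1]$ exists, and under the equivalence $H_\TT\colon\CC/[\TT[1]]\to\mod\TT$ of Proposition~\ref{proposition:AbelianQuotient} the module $H_\TT(X)$ has projective presentation $(-,T_1)\to(-,T_0)\to H_\TT(X)\to 0$, so $H_\TT(X)$ and (since $\mod\TT$ is hereditary, by Theorem~\ref{theorem:Trianglefree}) also $\tau H_\TT(X)$ are finitely presented. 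A short computation with this triangle, $2$-Calabi--Yau duality and the Auslander--Reiten formula (Lemma~\ref{lemma:AR-formula}) shows that for $Z\in\TT$ one has $\Ext^1_\CC(Z,X)=0$ precisely when $Z$ lies outside the support of $\tau H_\TT(X)$.

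The heart of the proof is the choice of $\ZZ$. I would produce a cluster-tilting subcategory $\RR$ of $\CC$ and a finite full subcategory $\TT_0\subseteq\TT\cap\RR$ such that, writing $\ZZ:=\add(\ind\RR\setminus\ind\TT_0)$ and $T:=\bigoplus_{A\in\ind\TT_0}A\in\TT$: the complement $\ZZ$ is cofinite in the cluster-tilting subcategory $\RR$, hence functorially finite by Proposition~\ref{proposition:JorgensenPaluCofinite} (and rigid); $\TT_0$ is chosen ``saturated'' near $X$, so that no nonzero morphism from $\TT_0$ into $\ZZ$ is followed by a nonzero morphism back into $\TT_0$ (a factorization $A\to Z\to A$ with $A\in\ind\TT_0$, $Z\in\ZZ$ would in particular give a $3$-cycle, impossible in the directed category $\TT$); and $\ZZ$ avoids the support of $\tau H_\TT(X)$ and lies ``outside'' $X$. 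Granting this, $\RR\supseteq\ZZ$ gives, by Iyama--Yoshino reduction~\cite{IyamaYoshino08}, a cluster-tilting subcategory $\RR/[\ZZ]$ of $\CC_{[\ZZ]}$, which equals $\add T$ (the images of the elements of $\ind\TT_0$ remain indecomposable, pairwise nonisomorphic and nonzero) and has finitely many indecomposables, so $T$ is a cluster-tilting object of $\CC_{[\ZZ]}$; the saturation gives $[\ZZ](T,T)=0$ and rigidity of $\RR$ gives $T\in\ZZ^{\perp_1}$, so $T$ lives in $\CC_{[\ZZ]}$; and $\Ext^1_\CC(\ZZ,X)=0$ together with $[\ZZ](X,X)=0$ (again by saturation, now carried out inside the finite reduced category) show that $X$ lives in $\CC_{[\ZZ]}$.

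When $\TT$ has no threads, finitely presented $\TT$-modules have finite support, so one may simply take $\RR=\TT$ and $\TT_0$ any finite full subcategory that is saturated for the directed order and contains the supports of $H_\TT(X)$ and $\tau H_\TT(X)$; then $\ZZ=\add(\ind\TT\setminus\ind\TT_0)$ works directly. With a thread present, these supports can be infinite and the saturation of a finite set need not be finite, so one must instead keep in $\TT_0$ only a finite interval of the thread around $T_0$ and $T_1$ and replace the infinitely many remaining indecomposables of the thread by a functorially finite rigid ``fan'' of objects of $\CC$ obtained from $\TT$ by Iyama--Yoshino mutation (Corollary~\ref{corollary:Mutation}), bounding a finite region containing $X$; one then checks that the resulting $\RR$ is still a cluster-tilting subcategory, for which it suffices --- since $\CC$ already has a cluster-tilting subcategory --- to verify that $\RR$ is functorially finite and maximal rigid (Remark~\ref{remark:DefClusterTilting}). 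Finally, when $\CC$ is algebraic, $\CC_{[\ZZ]}$ is algebraic by~\cite{AmiotOppermann12} and $2$-Calabi--Yau by~\cite{IyamaYoshino08}, and carries the cluster-tilting object $T$ with $\End_{\CC_{[\ZZ]}}(T)\cong\End_\CC(T)=\End_\TT(T)$ a finite-dimensional hereditary algebra by the first paragraph; hence by the Keller--Reiten theorem~\cite{KellerReiten08} it is triangle-equivalent to $(\Db\mod\End T)/(\bS\circ[-2])=\CC_{\End T}$.

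The main obstacle is precisely this thread case of the construction of $\ZZ$: one must exhibit a single controlled family of mutations of $\TT$ turning the infinitely many interior indecomposables of each thread into a functorially finite rigid subcategory that bounds a finite region around $X$ and $T_0\oplus T_1$, and verify --- via functorial finiteness and maximal rigidity (Remark~\ref{remark:DefClusterTilting}) --- that the new configuration is a cluster-tilting subcategory of $\CC$; by contrast, the final triangle equivalence and all the ``living in'' bookkeeping are routine once $\ZZ$ is in hand.
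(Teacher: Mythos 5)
Your overall skeleton matches the paper's: one wants a cluster-tilting subcategory $\RR$ of $\CC$ containing a finite piece $\add(T)$ of $\TT$, so that $\ZZ=\add(\ind\RR\setminus\ind\add(T))$ is cofinite in $\RR$ (hence functorially finite by Proposition~\ref{proposition:JorgensenPaluCofinite}) and rigid, and then one reduces. But the step you yourself flag as ``the main obstacle'' --- producing $\RR$ when $\TT$ has threads --- is exactly the content of the proof, and you do not carry it out. The paper does it in one uniform stroke, with no case distinction between threads and no threads: take $\FF=\add(T)$ and let $\UU=\mu_\FF(\TT)$ be the other half of the $\FF$-mutation pair $(\TT,\UU)$ (\S\ref{subsection:pairs}, Example~\ref{example:RigidMutationPair}); by \cite[Theorem 5.1]{IyamaYoshino08} $\UU$ is automatically a cluster-tilting subcategory, so $\ZZ=\add(\ind\UU\setminus\ind\FF)$ is rigid and cofinite in $\UU$. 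There is no need for a ``controlled family'' of single-indecomposable mutations via Corollary~\ref{corollary:Mutation}, nor for a separate verification that the new configuration is maximal rigid and functorially finite.

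Two of your auxiliary arguments are also only valid in the easy case $\ZZ\subseteq\TT$ and break precisely in the thread case. First, a factorization $A\to Z\to A$ with $A\in\ind\TT_0$ and $Z\in\ZZ$ is \emph{not} a cycle in $\TT$ unless $Z\in\TT$; directedness of $\TT$ says nothing about cycles passing through objects outside $\TT$ (and $\CC$ itself is far from directed, being $2$-Calabi--Yau). The paper instead proves $[\ZZ](T,T)=0$ by observing that under $H_\TT\colon\CC/[\TT[1]]\simeq\mod\TT$ a morphism in $\FF$ is a morphism between projectives of the hereditary category $\mod\TT$, while no object of $\ZZ$ is projective there since $\ind\ZZ\cap\ind\TT=\emptyset$. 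Second, your support criterion for $\Ext^1_\CC(Z,X)=0$ is derived only for $Z\in\TT$, so it cannot be used to show $X\in\ZZ^{\perp_1}$ when $\ZZ\not\subseteq\TT$; the paper proves $X\oplus X[1]\in\ZZ^{\perp_1}$ by a diagram chase through \emph{two} layers of $\TT$-approximation triangles (of $X\oplus X[1]$ and then of $T_1[2]$ --- this is why Notation~\ref{notation:DT} requires both), again exploiting heredity of $\mod\TT$. So the proposal is not a proof: its central construction is missing, and the shortcuts offered in its place do not survive the presence of thread arrows, which is the only case where the theorem is not already classical.
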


\begin{proof}
This follows directly from Lemma \ref{lemma:Covering} below.
\end{proof}

We will use the following notation.

\begin{notation}\label{notation:DT}
Let $T \in \TT$ and $X \in \CC$.  Let $T_1 \to T_0 \to X \oplus X[1] \to T_1[1]$ be a right $\TT$-approximation triangle of $X \oplus X[1]$ (thus, $T_0,T_1 \in \TT$), and let $T'_1 \to T'_0 \to T_1[2] \to T'_1[1]$ be a right $\TT$-approximation triangle of $T_1[2]$.

We will write $\DD_T$ for the full subcategory of $\CC$ consisting of all $X \in \CC$ such that $T_1 \oplus T_0 \oplus T'_1 \oplus T'_0 \in \add(T)$.
\end{notation}

\begin{lemma}\label{lemma:AboutDT}
\begin{enumerate}
\item For each $T \in \TT$, the subcategory $\DD_T \subseteq \CC$ is closed under direct sums, summands, and isomorphisms.
\item If $\add T \subseteq \add T'$, then $\DD_{T} \subseteq \DD_{T'}$.
\end{enumerate}
\end{lemma}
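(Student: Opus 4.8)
The plan is to exploit that, for a fixed object $X$, the four objects $T_0, T_1, T'_0, T'_1$ occurring in Notation~\ref{notation:DT} depend only on $\TT$ and on the isomorphism class of $X$, and \emph{not} on $T$: throughout we take \emph{minimal} right $\TT$-approximations, which are unique up to (noncanonical) isomorphism, so a minimal right $\TT$-approximation triangle of $X \oplus X[1]$ determines $T_1$ as its cocone up to isomorphism, and similarly $T'_0, T'_1$ are determined from $T_1[2]$. The only way in which $T$ enters the definition of $\DD_T$ is through the requirement $T_1 \oplus T_0 \oplus T'_1 \oplus T'_0 \in \add(T)$.

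\emph{Part (1).} Closure under isomorphism is immediate: if $X \cong Y$, then $X \oplus X[1] \cong Y \oplus Y[1]$, an isomorphism transports a minimal right $\TT$-approximation triangle of one to one of the other, and likewise for the second approximation; hence the object $T_1 \oplus T_0 \oplus T'_1 \oplus T'_0$ attached to $X$ is isomorphic to the one attached to $Y$, and $\add(T)$ is closed under isomorphism. For closure under direct sums and summands, I would use that minimal right $\TT$-approximations are additive in the approximated object — a standard fact (e.g.\ via projective covers in $\mod \TT$: the minimal right $\TT$-approximation of $Z$ represents the projective cover of $\Hom_\CC(-,Z)|_\TT$). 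Consequently the minimal right $\TT$-approximation triangle of $(X \oplus Y) \oplus (X \oplus Y)[1] \cong (X \oplus X[1]) \oplus (Y \oplus Y[1])$ is the direct sum of those of $X \oplus X[1]$ and $Y \oplus Y[1]$ (a direct sum of triangles being a triangle), so that $T_0, T_1$ for $X \oplus Y$ are $T_0^X \oplus T_0^Y$ and $T_1^X \oplus T_1^Y$; applying the same reasoning to $T_1[2] \cong T_1^X[2] \oplus T_1^Y[2]$ gives the analogous decomposition of $T'_0, T'_1$. Therefore
\[ (T_1 \oplus T_0 \oplus T'_1 \oplus T'_0)_{X \oplus Y} \;\cong\; (T_1 \oplus T_0 \oplus T'_1 \oplus T'_0)_{X} \oplus (T_1 \oplus T_0 \oplus T'_1 \oplus T'_0)_{Y}. \]
Since $\add(T)$ is closed under finite direct sums and under direct summands, $X \oplus Y \in \DD_T$ whenever $X, Y \in \DD_T$, and conversely $X, Y \in \DD_T$ whenever $X \oplus Y \in \DD_T$.

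\emph{Part (2).} By the remark at the start, the objects $T_0, T_1, T'_0, T'_1$ attached to a given $X$ do not depend on $T$. Hence if $X \in \DD_T$, i.e.\ $T_1 \oplus T_0 \oplus T'_1 \oplus T'_0 \in \add(T)$, then this object lies in $\add(T')$ as well, because $\add(T) \subseteq \add(T')$; and this says exactly that $X \in \DD_{T'}$.

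The statement is essentially bookkeeping, and the only point that requires a little care — which I would flag as the ``main obstacle'', though it is routine — is the additivity of minimal right $\TT$-approximations (hence of the approximation triangles) under direct sums; I would handle it via uniqueness of minimal approximations together with the existence of projective covers in $\mod \TT$.
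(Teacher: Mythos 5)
Your proposal is correct; the paper itself offers no argument here (its proof reads ``This is straightforward''), and the route you take --- that the test object $T_1 \oplus T_0 \oplus T'_1 \oplus T'_0$ depends only on $X$ up to isomorphism by uniqueness of minimal right $\TT$-approximations, and is additive in $X$ because minimal right approximations are additive --- is exactly the intended bookkeeping. The one fact you flag, that a direct sum of right minimal morphisms is right minimal in a Krull-Schmidt category, is indeed standard (and can be checked directly by a $2\times 2$ matrix argument or, as you suggest, via projective covers in $\mod\TT$), so there is no gap.
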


\begin{proof}
This is straightforward.
\end{proof}

\begin{proposition}\label{proposition:ColimitOfDT}
Let $\CC$ be a Krull-Schmidt 2-Calabi-Yau category with a directed cluster-tilting subcategory $\TT$.  We have
$$\CC \cong 2\colim_{T \in \TT} \DD_T.$$
\end{proposition}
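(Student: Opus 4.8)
The plan is to make the $2$-colimit explicit and then check it pointwise. First I would record that the objects of $\TT$, preordered by $T \preceq T' \Leftrightarrow \add T \subseteq \add T'$, form a filtered category: $\TT$ is nonempty, and for $T,T' \in \TT$ the object $T\oplus T'$ again lies in $\TT$ and dominates both $T$ and $T'$. By Lemma~\ref{lemma:AboutDT}(2), the assignment $T \mapsto \DD_T$ together with the inclusions $\DD_T \hookrightarrow \DD_{T'}$ for $T \preceq T'$ is a functor from this filtered preorder to the $2$-category of $k$-linear categories, all of whose transition functors are inclusions of full subcategories of the fixed ambient category $\CC$. For such a filtered diagram the $2$-colimit is computed as the full subcategory $\bigcup_{T \in \TT}\DD_T$ of $\CC$: an object is represented by a pair $(T,X)$ with $X \in \DD_T$, two such pairs $(T,X)$ and $(T',X')$ are identified precisely when $X \cong X'$ inside a common $\DD_{T''}$ and hence in $\CC$, and the morphism spaces are filtered colimits of the vector spaces $\Hom_{\DD_T}(X,Y) = \Hom_\CC(X,Y)$, so they coincide with $\Hom_\CC(X,Y)$. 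Thus the proposition reduces to the statement that $\bigcup_{T\in\TT}\DD_T = \CC$, i.e.\ that every object of $\CC$ lies in some $\DD_T$.

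Next I would verify this last point directly from Notation~\ref{notation:DT}. Let $X \in \CC$. Since $\TT$ is cluster-tilting it is functorially finite, so $X \oplus X[1]$ admits a right $\TT$-approximation triangle $T_1 \to T_0 \to X\oplus X[1] \to T_1[1]$, and by the fact recalled in~\S\ref{subsection:ClusterTilting} both $T_0$ and $T_1$ lie in $\TT$. Applying the same to $T_1[2]$ yields a right $\TT$-approximation triangle $T'_1 \to T'_0 \to T_1[2] \to T'_1[1]$ with $T'_0,T'_1 \in \TT$. As $\TT$ is closed under finite direct sums, $T := T_1 \oplus T_0 \oplus T'_1 \oplus T'_0 \in \TT$, and then $T_1 \oplus T_0 \oplus T'_1 \oplus T'_0 = T \in \add(T)$, so $X \in \DD_T$ by the very definition of $\DD_T$. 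Hence every object of $\CC$ lives in some $\DD_T$, and combined with the first step this gives $\CC \cong 2\colim_{T\in\TT}\DD_T$.

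The argument is essentially formal once these two ingredients are in place, so the only point I would treat with care is the first step: spelling out that a filtered $2$-colimit of full-subcategory inclusions is their union, and that $(\TT,\preceq)$ is genuinely filtered. I do not expect to need the directedness hypothesis on $\TT$ for this particular proposition — it is the running assumption of the section and is used elsewhere — since here only the cluster-tilting property of $\TT$ enters, through the existence of approximation triangles whose outer terms again lie in $\TT$.
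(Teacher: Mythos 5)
Your proof is correct and follows essentially the same route as the paper: the paper's proof simply observes that every object of $\CC$ lies in some $\DD_T$ and then delegates the formal identification of the filtered union of full subcategories with the $2$-colimit to \cite[Proposition A.3.6]{Waschkies04}, which is exactly the step you spell out by hand. Your remark that directedness of $\TT$ is not needed here is also accurate; only functorial finiteness of $\TT$ and the closure properties of Lemma~\ref{lemma:AboutDT} enter.
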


\begin{proof}
It is clear that, for every $C \in \CC$, there is a $T \in \TT$ such that $C \in \DD_T \subseteq \CC$.  The statement follows from \cite[Proposition A.3.6]{Waschkies04}.
\end{proof}

\begin{lemma}\label{lemma:Covering}
For each $T \in \TT$, there is a functorially finite and rigid full subcategory $\ZZ \subseteq \CC$ such that
\begin{enumerate}
\item every object of $\DD_T$ lives in $\CC_{[\ZZ]}$,
\item $T$ lives in $\CC_{[\ZZ]}$, and
\item $T$ is a cluster-tilting object in $\CC_{[\ZZ]}$.
\end{enumerate}
If $\CC$ is algebraic, then $\CC_{[\ZZ]} \cong \CC_{\End T}$.
\end{lemma}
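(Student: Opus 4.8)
The plan is to exhibit $\ZZ$ as the complement of $\add T$ inside a carefully chosen cluster-tilting subcategory of $\CC$, and then to extract conditions (1)--(3) and the final identification from Iyama--Yoshino theory together with the Keller--Reiten characterisation of $\CC_{\End T}$. Concretely, I would first seek a cluster-tilting subcategory $\UU$ of $\CC$ with $\add T\subseteq\UU$ such that, setting $\ZZ=\add(\ind\UU\setminus\ind\add T)$, no nonzero morphism between summands of $T$ factors through $\add\ZZ$ and, more generally, every object of $\DD_T$ lies in $\ZZ^{\perp_1}$ and has no nonzero self-endomorphism factoring through $\add\ZZ$. Granting such a $\UU$, the rest is formal: $\ZZ$ is rigid (being a subcategory of the rigid $\UU$) and functorially finite by Proposition~\ref{proposition:JorgensenPaluCofinite}, since $\ind\UU\setminus\ind\ZZ=\ind\add T$ is finite; condition~(2) holds because $T\in\ZZ^{\perp_1}$ (rigidity of $\UU$) and $[\ZZ](T,T)=0$ (see Definition~\ref{definition:LivesIn}); condition~(3) holds because the Iyama--Yoshino correspondence \cite{IyamaYoshino08} carries the cluster-tilting subcategory $\UU\supseteq\ZZ$ to the cluster-tilting subcategory $\UU/[\ZZ]$ of $\CC_{[\ZZ]}$, and this image equals $\add_{\CC_{[\ZZ]}}(T)$ since the quotient functor kills $\ZZ$ and $T$ lives in $\CC_{[\ZZ]}$; and condition~(1) is the property imposed on $\DD_T$. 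For the last sentence, $\CC_{[\ZZ]}$ is algebraic by \cite[Proposition~4.3]{AmiotOppermann12} and $2$-Calabi--Yau by \cite[Theorem~4.7]{IyamaYoshino08}, the object $T$ is a cluster-tilting object there with $\End_{\CC_{[\ZZ]}}(T)\cong\End_\CC(T)$ (because $T$ lives in $\CC_{[\ZZ]}$), and $\End_\CC(T)$ is a finite-dimensional hereditary, hence acyclic, algebra since $\add T$ is a finite full subcategory of the semi-hereditary category $\TT$ (Proposition~\ref{proposition:ShIsLocal} and Corollary~\ref{corollary:Trianglefree}); Keller--Reiten \cite{KellerReiten08} then yields a triangle equivalence $\CC_{[\ZZ]}\cong\CC_{\End T}$.

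The crux is the construction of $\UU$, and this is where I expect the real work to lie. The obvious candidate $\ZZ=\add(\ind\TT\setminus\ind\add T)$ (so $\UU=\TT$) does not work: if two indecomposable summands of $T$ are the endpoints of a thread of the thread quiver describing $\TT$ (Corollary~\ref{corollary:Trianglefree}), then each indecomposable $Z$ in the interior of that thread admits a nonzero morphism from the lower endpoint and a nonzero morphism to the upper endpoint whose composite is nonzero, so that $[\ZZ](T,T)\neq 0$. The remedy is to alter $\TT$ only along the finitely many threads spanned by summands of $T$: the endpoint summands together with the interior of such a thread span, inside $\CC$, a subcategory of the cluster-category type attached to a single thread, and inside it one can choose a new cluster-tilting configuration that still contains the two endpoint summands but whose remaining objects receive no nonzero morphism from $\add T$ at all. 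Directedness of $\TT$ is used repeatedly to forbid cycles and thereby to pin down which $\Hom$-spaces vanish. Verifying that the resulting $\UU$ is again a cluster-tilting subcategory, and that the complement $\ZZ$ has the two no-factoring properties (for $T$ and for all of $\DD_T$), is the technical heart of the proof.

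Finally, for $X\in\DD_T$ one checks $X\in\ZZ^{\perp_1}$ and $[\ZZ](X,X)=0$ by feeding the two right $\TT$-approximation triangles from Notation~\ref{notation:DT}, whose terms all lie in $\add T$, into the long exact sequences obtained by applying $\Hom_\CC(\ZZ,-)$ and $\Hom_\CC(-,\ZZ)$: rigidity of $\UU$ kills the degree-$1$ terms supported on $\add T$, the $2$-Calabi--Yau duality converts the remaining degree-$2$ terms into $\Hom$-groups, and the morphism-disjointness of $\ZZ$ from $\add T$ obtained in the previous step kills those. Here the point of resolving $X\oplus X[1]$ rather than $X$ alone in the definition of $\DD_T$ is precisely that one approximation triangle then controls both $\Hom_\CC(\ZZ,X)$ and $\Ext^1_\CC(\ZZ,X)$ simultaneously.
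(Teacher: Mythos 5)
Your skeleton matches the paper's: find a cluster-tilting subcategory $\UU\supseteq\add(T)$, set $\ZZ=\add(\ind\UU\setminus\ind\add(T))$, deduce rigidity of $\ZZ$ from that of $\UU$ and functorial finiteness from Proposition~\ref{proposition:JorgensenPaluCofinite}, get~(3) from \cite[Theorem 4.9]{IyamaYoshino08}, and the final identification from \cite{AmiotOppermann12} and \cite{KellerReiten08}; you also correctly see that $\UU=\TT$ fails. But there are two genuine gaps. First, you never actually construct $\UU$: ``altering $\TT$ along the threads spanned by summands of $T$'' is a heuristic, not an argument, and it is not what the paper does. The paper takes $\UU=\mu_{\FF}(\TT)$ with $\FF=\add(T)$, i.e.\ the Iyama--Yoshino mutation of $\TT$ at $\FF$ (the objects $U$ sitting in triangles $T'\to F\to U\to T'[1]$ with $T'\to F$ a left $\FF$-approximation of $T'\in\TT$). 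This is cluster-tilting by \cite[Theorem 5.1]{IyamaYoshino08}, contains $\FF$, and satisfies $\ind\ZZ\cap\ind\TT=\emptyset$; combined with the heredity of $\mod\TT\cong\CC/[\TT[1]]$, whose projectives are exactly $\TT$, this yields $[\ZZ](T,T)=0$, so condition~(2) is \emph{derived} rather than imposed.

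Second, and more seriously, your verification of~(1) does not go through as sketched. Applying $\Hom(Z,-)$ to the approximation triangle of $Y=X\oplus X[1]$ leaves the term $\Hom(Z,T_1[2])\cong\Hom(T_1,Z)^*$, and this is \emph{not} killed by the ``morphism-disjointness'' you invoke: what is established at that stage is only that morphisms \emph{between summands of $T$} do not factor through $\ZZ$, which says nothing about $\Hom(\add T,\ZZ)$ or $\Hom(\ZZ,\add T)$ (each $Z\in\ZZ$ is a cone over a map from $\TT$ into $\add T$, so such Hom-spaces are not formally zero). Establishing $\Hom(Z,T_1[2])=0$ is precisely the hardest part of the paper's proof and is the reason Notation~\ref{notation:DT} records the \emph{second} approximation triangle $T_1'\to T_0'\to T_1[2]\to T_1'[1]$: one applies $\Hom(Z,-)$ to it, kills $\Hom(Z,T_1'[1])$ by rigidity of $\UU$, shows that every map $Z\to T_0'$ factors through $\TT[1]$ (because $\Hom(-,Z)|_\TT$ has no projective summands in the hereditary category $\mod\TT$), and concludes that any map $Z\to T_1[2]$ factors as $Z\to T[1]\to T_0'\to T_1[2]$, which vanishes since $\Ext^1(T,T_1)=0$. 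Without this step, and without the subsequent appeal to \cite[Lemma 4.8]{IyamaYoshino08} to transfer $\Hom(X,X)\cong\Ext^1(X,\tau X)$ to the quotient, the proof of~(1) is incomplete.
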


\begin{proof}
To avoid confusing notation, we will write $\FF$ for $\add(T)$.  Consider the full subcategory $\UU$ of $\CC$ such that $(\TT,\UU)$ is an $\FF$-mutation pair, thus $\UU = \mu_\FF(\TT)$), or put differently, $\UU$ is the full subcategory spanned by all $U \in \CC$ such that there is a triangle
\begin{equation}\label{equation:Ztriangle}
T' \to F \to U \to T'[1]
\end{equation}
where $T' \in \TT, F \in \FF$ and the map $T' \to F$ is a left $\FF$-approximation of $T'$.  We wish to show that $\ZZ = \add(\ind \UU \setminus \ind \FF)$ is the category in the statement of the lemma.  Note that $(\ind \ZZ) \cap (\ind \TT) = \emptyset$.

It follows from \cite[Theorem 5.1]{IyamaYoshino08} that $\UU$ is a cluster-tilting subcategory of $\CC$, so that $\Hom(\UU, \UU[1]) = 0$ and hence $\Hom(\ZZ, \ZZ[1]) = 0$.  Furthermore, Proposition \ref{proposition:JorgensenPaluCofinite} yields that $\ZZ$ is a functorially finite subcategory of $\CC$.  We may thus consider the Iyama-Yoshino reduction $\CC_{[\ZZ]}$ of $\CC$ with respect to $\ZZ$.

We will show that no nonzero morphism in $\FF$ factors through $\ZZ$; this would then establish the second statement of the lemma.  Let $f:F_1 \to F_2$ be a morphism in $\FF$ which factors through $\ZZ$.  By Proposition \ref{proposition:AbelianQuotient}, we know that $\CC / (\TT[1]) \cong \mod \TT$ (the category of projectives in $\CC / (\TT[1])$ is $\TT$), and by Theorem \ref{theorem:Trianglefree}, we know this category is hereditary.  The morphism $f:F_1 \to F_2$ is hence a morphism between projective objects.  Since $(\ind \ZZ) \cap (\ind \TT) = \emptyset$, no (nonzero) object in $\ZZ$ is projective in $\CC / \t \TT$ and we know that $f$ can only factor through an object of $\ZZ$ if $f = 0$.  This establishes the second statement of the lemma.

By \cite[Theorem 4.9]{IyamaYoshino08}, we know that $\FF \subseteq \ZZ^{\perp_1}$ will correspond to a cluster-tilting subcategory of $\CC_{[\ZZ]}$.  Since $\FF \subseteq \TT$ and $\ind \FF$ is finite, this proves the third statement.

We now turn our attention to the first part of the lemma.  Thus, let $X \in \DD_T$; we want to show that $X$ lives in $\CC_{[\ZZ]}$.  We will write $Y = X \oplus X[1]$.

First, we prove the claim that $Y \in \ZZ^{\perp_1}$.  Let $Z \in \ZZ$.  By applying $\Hom(Z,-)$ to the right $\TT$-approximation triangle for $Y$, we find the exact sequence
$$(Z, T_0[1]) \to (Z,Y[1]) \to (Z,T_1[2]).$$
Note that $(Z,T_0[1]) = 0$ since $Z, T_0 \in \UU$.  Thus, to show that $\Hom(Z,Y[1]) = 0$, it suffices to show that $(Z,T_1[2]) = 0$.

We apply $\Hom(Z,-)$ to the right $\TT$-approximation triangle of $T_1[2]$ to find an exact sequence
$$(Z,T'_0) \to (Z, T_1[2]) \to (Z,T'_1[1]).$$
Since $Z, T'_1 \in \UU$, we know that $(Z,T'_1[1])=0$.

By Proposition \ref{proposition:AbelianQuotient}, we know that the category $\CC / [\tau \TT] \cong \mod \TT$ and hence is hereditary by Theorem~\ref{theorem:Trianglefree}.  Moreover, the projective objects are given by $\TT$.  Since $\Hom(-,Z)|_\TT$ has no projective summands in $\mod\TT$ (given that $\ind \ZZ \cap \ind \TT = \emptyset$), we know that $(\CC / [\tau \TT])(Z, T'_0) = 0$.  Thus, every map in $\CC(Z,T'_0)$ factors though $\tau \TT$.  Consider the following diagram where the rows are triangles and where the map $Z \to T_1[2]$ is any map
\[\xymatrix{
T \ar[r] & F_T  \ar[r] & Z \ar[r] \ar[d] & T[1] \\
T'_1 \ar[r] & T'_0  \ar[r] & T_1[2] \ar[r] & T'_1[1]}\]
Since $Z, T'_1 \in \UU$, we know that the composition $Z \to T_1[2] \to T'_1[1]$ is zero.  Hence, the map $Z \to T_1[2]$ factors as $Z \to T'_0 \to T_1[2]$.  We have already established that every map $Z \to T'_0$ factors through $\tau \TT$ so that the composition $F_T \to Z \to T_0$ is zero.  We see that the map $Z \to T'_0$ factors as $Z \to T[1] \to T'_0$.

Summarizing, the map $Z \to T_1[2]$ factors as $Z \to T[1] \to T'_0 \to T_1[2]$.  But since $\Hom(T[1], T_1[2]) = 0$, we see that the original map $Z \to T_1[2]$ is zero as well.  We have shown that $\Hom(Z,T_1[2]) = 0$ and hence that $\Hom(Z,Y[1]) = 0$, for all $Z \in \ZZ$.

It follows from \cite[Lemma 4.8]{IyamaYoshino08} that the passage $\ZZ^{\perp_1} \to \ZZ^{\perp_1} / [\ZZ]$ preserves $\Ext^1(X,\t X) \cong \Hom(X,X)^*$.  This shows that $X \in \CC$ lives in $\ZZ^{\perp_1} / [\ZZ]$.

When $\CC$ is algebraic, it follows from \cite[Proposition 4.3]{AmiotOppermann12} that $\ZZ^{\perp_1} / [\ZZ]$ is algebraic, so that \cite{KellerReiten08} implies that $\ZZ^{\perp_1} / [\ZZ] \cong \CC_{\End T}$.  This proves the final statement.
\end{proof}

\begin{example}
Let $\CC$ be the 2-Calabi-Yau Krull-Schmidt category with cluster-tilting subcategory $\TT$ from Example \ref{example:ClusterD}.  It follows from Theorem \ref{theorem:Covering} that every object $X \in \CC$ lives in a subquotient that is equivalent to a cluster category of type $D$.  
\end{example}

\subsection{On the Bongartz complement}

As usual, let $\CC$ be a Krull-Schmidt 2-Calabi-Yau category with a directed cluster-tilting subcategory $\TT$.
In this section let $\WW$ be any fixed functorially finite rigid subcategory of $\CC$.  By Theorem \ref{theorem:BongartzComplement}, we know that there is a cluster-tilting subcategory $\RR$ of $\CC$ which contains $\WW$.  The main result in this section is that we can choose $\RR$ such that $\RR$ is directed in $\CC_{[\WW]}$ (see Theorem \ref{theorem:HereditaryBongartzComplement} below).

We follow the notation from Appendix \ref{section:Bongartz}; in particular $\FF$ and $\RR$ are as in Construction \ref{construction:Bongartz}.  That is, for each $T \in \TT$ we consider a right $\WW$-approximation $f\colon W \to T[1]$ and the triangle
\begin{equation}\label{equation:Bongartz}
\xymatrix@1{
T \ar[r]^t & F_T \ar[r] & W \ar[r]^{f} & T[1].
}
\end{equation}
We then let $\FF = \add \{F_T\}_{T \in \TT}$ and $\RR = \add (\FF \cup \WW)$, and we will in fact show that this particular $\RR$ is directed in $\CC_{[\WW]}$.

\begin{remark}
The proof presented here follows \cite[\S 4.2]{Jasso13}.
\end{remark}

We consider the functor
\begin{align*}
H_\TT: \CC &\to \mod \TT \\
C & \mapsto \Hom(-,C)|_\TT.
\end{align*}
Recall from Proposition \ref{proposition:AbelianQuotient} that $H_\TT$ induces an equivalence $\CC / [\TT[1]] \to \mod \TT$. {To simplify the notation, we often write $\overline{C}$ instead of $H_\TT(C)$ for the image of $C \in \CC$ in $\CC / [\TT[1]] \cong \mod \TT$. Similarly if $\RR'\subseteq \CC$, we write $\overline{\RR'}$ instead of $H_\TT(\RR')$ for the essential image of $H_\TT|_{\RR'}$ Furthermore, we recall (see again Proposition~\ref{proposition:AbelianQuotient}) that for every indecomposable $C \in \CC$ with no indecomposable summands in $\TT[1]$ we have $H_\TT(C[1]) \cong \tau H_\TT(C)$.}

Let $\GG$ be a full additive subcategory of an abelian category $\AA$.  We will write $\Fac_\AA \GG$ for the full subcategory of $\AA$ whose objects are quotients of objects of $\GG$.  We will write $\Fac \GG$ if the ambient category $\AA$ is clear.

\begin{lemma}\label{lemma:TrianglefreeReductions}
Let $\RR'$ be any functorially finite and rigid subcategory of $\CC$.
\begin{enumerate}
\item\label{enumerate:TrianglefreeReductions1} {$\overline{\RR'}$} is a functorially finite and rigid subcategory of $\mod \TT$,
\item\label{enumerate:TrianglefreeReductions2} the category {$\Fac_{\mod \TT} \overline{\RR'}$} is a torsion class in $\mod \TT$,
\item\label{enumerate:TrianglefreeReductions3} {if $\RR'$ is a cluster-tilting subcategory of $\CC$, then $\overline{\RR'}$ is precisely the category of Ext-projective objects in $\Fac_{\mod \TT} \overline{\RR'}$.}
\end{enumerate}
\end{lemma}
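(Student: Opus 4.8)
The plan rests on two facts that are available throughout this section: $\mod\TT$ is hereditary (Theorem~\ref{theorem:Trianglefree}, since $\TT$ is directed), and $H_\TT$ realises an equivalence $\CC/[\TT[1]]\simeq\mod\TT$ carrying $\TT$ onto the projectives and satisfying $H_\TT(C[1])\cong\tau H_\TT(C)$ for every $C$ without indecomposable summands in $\TT[1]$ (Proposition~\ref{proposition:AbelianQuotient}). For part~(1), rigidity is a direct computation: since $H_\TT$ annihilates $\TT[1]$ we may assume the indecomposable summands of $\RR'$ lie outside $\TT[1]$, and then for indecomposable $R,R'\in\RR'$ the hereditary version of Lemma~\ref{lemma:Ext-via-H} gives $\Ext^1_\TT(H_\TT R,H_\TT R')\cong[\TT[1]](R,R'[1])\subseteq\Ext^1_\CC(R,R')=0$, and additivity of $\Ext^1$ upgrades this to all of $\overline{\RR'}$. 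For functorial finiteness I would push a right $\RR'$-approximation $R_0\to C$ through the (full) quotient functor $Q\colon\CC\to\CC/[\TT[1]]$: any morphism $Q(R')\to Q(C)$ lifts to some $R'\to C$, which factors through $R_0\to C$, so $Q(R_0)\to Q(C)$ is a right $\overline{\RR'}$-approximation in $\CC/[\TT[1]]\simeq\mod\TT$; the covariant case is dual.

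For part~(2), closure of $\Fac_{\mod\TT}\overline{\RR'}$ under quotients is immediate, so the point is closure under extensions. Given $0\to A\to B\to C\to 0$ together with epimorphisms $\alpha\colon G_A\twoheadrightarrow A$ and $\gamma\colon G_C\twoheadrightarrow C$ with $G_A,G_C\in\overline{\RR'}$, I would apply $\Hom_\TT(G_C,-)$ to $0\to\ker\alpha\to G_A\to A\to 0$: since $\mod\TT$ is hereditary ($\Ext^2_\TT=0$) and $\overline{\RR'}$ is rigid (part~(1)), this forces $\Ext^1_\TT(G_C,A)=0$, so the pullback of the given extension along $\gamma$ splits and produces a map $G_C\to B$ over $\gamma$. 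The induced map $G_A\oplus G_C\to B$ (built from $G_A\to A\hookrightarrow B$ and $G_C\to B$) has image containing $A$ and mapping onto $C$, hence is surjective, so $B\in\Fac\overline{\RR'}$; thus $\Fac\overline{\RR'}$ is a torsion class.

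For part~(3), that every object of $\overline{\RR'}$ is Ext-projective in $\Fac\overline{\RR'}$ is the same dévissage: for $\overline R\in\overline{\RR'}$ and $X\in\Fac\overline{\RR'}$ resolve $0\to K\to G\to X\to 0$ with $G\in\overline{\RR'}$ and use $\Ext^1_\TT(\overline R,G)=0$, $\Ext^2_\TT(\overline R,K)=0$. The converse is where cluster-tiltingness of $\RR'$ enters. Given $M$ Ext-projective in $\Fac\overline{\RR'}$, lift it to $\hat M\in\CC$ with no summands in $\TT[1]$; it suffices to show $\Ext^1_\CC(R,\hat M)=0$ for every $R\in\ind\RR'$, for then $\hat M\in\RR'^{\perp_1}=\RR'$ and $M=H_\TT(\hat M)\in\overline{\RR'}$. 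If $R=T[1]\in\TT[1]$, then $\Ext^1_\CC(T[1],\hat M)=\Hom_\CC(T,\hat M)=M(T)$; but rigidity of $\RR'$ gives $\Hom_\CC(T,R')=\Ext^1_\CC(T[1],R')=0$ for all $R'\in\RR'$, so every object of $\overline{\RR'}$, and hence (taking quotients) every object of $\Fac\overline{\RR'}$, vanishes at $T$, whence $M(T)=0$. If $R\notin\TT[1]$, then combining the hereditary part of Lemma~\ref{lemma:Ext-via-H} with the identification $\Hom_{\CC/[\TT[1]]}(R,\hat M[1])\cong\Hom_\TT(\overline R,\tau M)$ yields the short exact sequence
\[ 0\longrightarrow\Ext^1_\TT(\overline R,M)\longrightarrow\Ext^1_\CC(R,\hat M)\longrightarrow\Hom_\TT(\overline R,\tau M)\longrightarrow 0 ; \]
the left-hand term is $0$ by the Ext-projectivity direction just established (applied to $\overline R\in\overline{\RR'}$ and $M\in\Fac\overline{\RR'}$), and the right-hand term is $\cong\Ext^1_\TT(M,\overline R)^*$ by the Auslander-Reiten formula (Lemma~\ref{lemma:AR-formula}), which is $0$ since $M$ is Ext-projective in $\Fac\overline{\RR'}\ni\overline R$. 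Hence $\Ext^1_\CC(R,\hat M)=0$ in all cases.

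I expect the main obstacle to be the converse half of (3): assembling the short exact sequence above — i.e. controlling the portion of $\Ext^1_\CC(R,\hat M)$ invisible to $\mod\TT$ — and correctly handling the indecomposable summands of $\RR'$ that happen to lie in $\TT[1]$; everything else is routine bookkeeping with heredity and rigidity. A secondary nuisance is to make sure $\overline{\RR'}$ is treated as a genuine summand-closed additive subcategory of $\mod\TT$ (or, equivalently, to allow finite direct sums of objects of $H_\TT(\RR')$ throughout the $\Fac$ arguments).
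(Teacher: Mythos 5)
Your arguments for (1) and (3) follow essentially the same route as the paper: rigidity and functorial finiteness of $\overline{\RR'}$ are transported through the equivalence $\CC/[\TT[1]] \simeq \mod\TT$, and the converse half of (3) is handled exactly as in the paper by splitting $\Ext^1_\CC(R,\hat M)$ into the $[\TT[1]]$-part (identified with $\Ext^1_\TT(\overline R,M)$ via Lemma~\ref{lemma:Ext-via-H}) and the quotient part (identified with $\Hom_\TT(\overline R,\tau M)\cong\Ext^1_\TT(M,\overline R)^*$ via Proposition~\ref{proposition:AbelianQuotient} and the Auslander--Reiten formula). Your treatment of the summands $R=T[1]\in\RR'\cap\TT[1]$ by evaluating at $T$ is just the vector-space dual of the paper's argument with the injective $\overline{R[1]}$, so that part is fine.

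The one genuine omission is in (2). You verify that $\Fac_{\mod\TT}\overline{\RR'}$ is closed under quotients and extensions (your pullback-splitting argument is correct and is a slightly more explicit version of the paper's remark that $\Ext^1_\TT(-,-)$ is right exact in a hereditary category), but you then declare it a torsion class. In this paper a torsion class must come with a torsion radical, i.e.\ the inclusion $\Fac\overline{\RR'}\hookrightarrow\mod\TT$ must admit a right adjoint $t$, and this is genuinely used later (the functor $t_\WW$ appears throughout Lemma~\ref{lemma:EndomorphismVSQuotient} and Theorem~\ref{theorem:HereditaryBongartzComplement}). Since $\mod\TT$ need not be noetherian or a length category when $\TT$ is infinite, closure under quotients and extensions does not by itself produce a largest torsion subobject: the sum of all subobjects lying in $\Fac\overline{\RR'}$ is a priori an infinite sum that need not stabilize. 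The repair is short and uses your part (1): for $X\in\mod\TT$ take a right $\overline{\RR'}$-approximation $f\colon R\to X$ and check that $\im f$ is the largest subobject of $X$ belonging to $\Fac\overline{\RR'}$; this image is then $t(X)$. You should add this step to make (2) complete.
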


{
\begin{proof}
\begin{enumerate}
\item Let $M \in \mod \TT$, and consider a lift $\tilde{M} \in \CC$ (thus, $H_\TT(\tilde{M}) \cong M$).  Since $\RR'$ is functorially finite in $\CC$, there is a right $\RR'$-approximation $R \to \tilde{M}$.  It is now easy to check that $\overline{R} \to M$ is a right $\overline{\RR'}$-approximation of $M$.  The existence of left $\overline{\RR'}$-approximations can be checked in a similar way.
The rigidity of $\overline{\RR'}$ follows from \cite[Theorem 4.9]{KonigZhu08} (or from Lemma~\ref{lemma:Ext-via-H}).

\item Clearly $\Fac \overline{\RR'}$ is closed under quotient objects.  Since $\mod \TT$ is hereditary, we know that $\Ext^1_\TT(-,-)$ is right exact.  From $\overline{\RR'}$ being rigid, we infer that $\Ext^1(\overline{\RR'}, \Fac \overline{\RR'}) = 0$.  Using this, it is easy to see that $\Fac \overline{\RR'}$ is closed under extensions in $\mod\TT$.

To show that $\Fac \overline{\RR'}$ is a torsion class, we need to show that the embedding $\Fac \overline{\RR'} \to \mod \TT$ has a right adjoint $t\colon \mod \TT \to \Fac \overline{\RR'}$.  Since $\Fac \overline{\RR'}$ is closed under quotient objects, it suffices to show that each $X \in \mod \TT$ has a largest subobject lying in $\Fac \overline{\RR'}$.  Indeed, this subobject is then $t(X)$.  Here, $t(X)$ is the image of a right $\overline{\RR'}$-approximation $f:R \to X$.

\item Since $\Ext^1_\TT(\overline{\RR'}, \Fac \overline{\RR'}) = 0$ by the proof of part (\ref{enumerate:TrianglefreeReductions2}), every object in $\overline{\RR'}$ is Ext-projective in $\overline{\RR'}$.

Consider conversely an Ext-projective $P$ of $\Fac \overline{\RR'}$, and let $\tilde{P}$ be a lift of $P$ in $\CC$ (thus, $\tilde{P} \in \CC$ and $H_\TT(\tilde{P}) \cong P$); we choose a lift $\tilde{P}$ which has no indecomposable summands in $\TT[1]$.  We will show that $\tilde{P} \in \RR'$ by showing that $\Ext^1_\CC(\tilde{P},\RR') = 0.$

First, consider an object $R \in \RR'$ such that $R$ has no direct summands in $\TT[1]$.  As $P$ is Ext-projective in $\Fac \overline{\RR}$, we have $\Ext^1_\TT (P,\overline{R}) = 0$.  We obtain from Proposition~\ref{proposition:AbelianQuotient}(\ref{enumerate:AbelianQuotient3}) and Lemma~\ref{lemma:Ext-via-H} that $\Hom_\TT(\overline{R},\tau P) = \Hom_{\CC/[\TT[1]]}(R,\tilde{P}[1]) = 0$ and $[\TT[1]](R,\tilde{P}[1]) = 0$, respectively. Hence, $\Ext^1_\CC(R,\tilde{P}) = 0$, as required.

Next, consider an object $R \in \RR' \cap \TT[1]$.  In this case, $\overline{R[1]}$ is injective in $\CC / [\TT[1]]$.  As $P \in \Fac \overline{\RR'}$ and $\Hom_\TT(\overline{\RR'}, \overline{R[1]}) \cong \Hom_{\CC / \TT[1]} (\RR', R[1])= 0$, we find that $\Hom_\TT(P, \overline{R[1]}) = 0$.  Since $R \in \TT[1]$, we find that $[\TT[1]](-, R[1]) = 0$ and thus $[\TT[1]](\tilde{P}, R[1]) = 0$.  This implies that $\Hom_\CC(\tilde{P}, R[1]) = 0$.

We have shown that $\Ext^1_\CC(\tilde{P}, \RR') = 0$ and hence $\tilde{P} \in \RR'$.  This finishes the proof.
\qedhere
\end{enumerate}
\end{proof}

Turning back to our functorially finite rigid subcategory $\WW \subseteq \CC$, we know that there is a torsion pair $(\Fac\overline{\WW}, \overline{\WW}^{\perp_0})$. We will denote by $t_\WW\colon \mod \TT \to \Fac \overline{\WW}$ the corresponding torsion radical.

\begin{lemma}\label{lemma:EndomorphismVSQuotient}
Let $\WW$ and $\FF$ be functorially finite and rigid subcategories of $\CC$.  Assume that $\RR' = \add(\WW \cup \FF)$ is also rigid.
Then, for any $R \in \RR'$, there is an algebra-isomorphism
\[{\frac{\Hom_\TT(\overline{R},\overline{R})}{[\overline{\WW}](\overline{R},\overline{R})} \cong \Hom_\TT(\overline{R}/t_\WW(\overline{R}), \overline{R}/t_\WW(\overline{R})).}\]
\end{lemma}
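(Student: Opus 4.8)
The plan is to realize the claimed isomorphism as the map induced on endomorphism rings by the torsion-free reflection functor attached to the torsion pair $(\Fac_{\mod\TT}\overline{\WW},\overline{\WW}^{\perp_0})$, and then to prove surjectivity and injectivity separately. Concretely, I would first record that $\overline{\WW}=H_\TT(\WW)$ is a functorially finite rigid subcategory of the hereditary abelian category $\mod\TT$ (Lemma~\ref{lemma:TrianglefreeReductions}(\ref{enumerate:TrianglefreeReductions1}), Theorem~\ref{theorem:Trianglefree}), write $\mathcal T=\Fac_{\mod\TT}\overline{\WW}$ and $\mathcal F=\overline{\WW}^{\perp_0}$ for the two classes of the torsion pair with radical $t_\WW$, and let $q\colon\mod\TT\to\mathcal F$, $q(M)=M/t_\WW(M)$, be the reflection (left adjoint to the inclusion), with unit $\pi_M\colon M\twoheadrightarrow q(M)$. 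Since $q$ kills every object of $\add\overline{\WW}$, the ring homomorphism $\Hom_\TT(\overline R,\overline R)\to\Hom_\TT(q\overline R,q\overline R)$, $\psi\mapsto q(\psi)$, annihilates the ideal $[\overline{\WW}](\overline R,\overline R)$ and so induces a ring homomorphism $\overline\Phi\colon\Hom_\TT(\overline R,\overline R)/[\overline{\WW}](\overline R,\overline R)\to\Hom_\TT(\overline R/t_\WW\overline R,\overline R/t_\WW\overline R)$; the goal is to show $\overline\Phi$ is bijective.

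For surjectivity I would use that $\RR'=\add(\WW\cup\FF)$ is rigid, so $\overline{\RR'}$ is rigid in the hereditary category $\mod\TT$ and hence $\Ext^1_\TT(\overline{\RR'},\Fac_{\mod\TT}\overline{\RR'})=0$ (as in the proof of Lemma~\ref{lemma:TrianglefreeReductions}(\ref{enumerate:TrianglefreeReductions2})). Since $\overline R\in\overline{\RR'}$ and $t_\WW(\overline R)\in\mathcal T\subseteq\Fac\overline{\RR'}$, this gives $\Ext^1_\TT(\overline R,t_\WW\overline R)=0$; feeding $0\to t_\WW\overline R\to\overline R\xrightarrow{\pi}q\overline R\to 0$ into $\Hom_\TT(\overline R,-)$ then shows $\pi_\ast$ is onto, and a one-line chase with the naturality square $q(\psi)\pi=\pi\psi$ (together with $\pi$ epic) upgrades this to surjectivity of $\overline\Phi$. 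For injectivity I would observe that $q(\psi)=0$ forces $\im\psi\subseteq\ker\pi=t_\WW(\overline R)$, i.e.\ $\psi=\iota a$ with $\iota\colon t_\WW(\overline R)\hookrightarrow\overline R$, while conversely any map factoring through $\add\overline{\WW}\subseteq\mathcal T$ has image inside $t_\WW(\overline R)$; thus injectivity reduces to showing that every $\psi$ with $\im\psi\subseteq t_\WW(\overline R)$ factors through $\add\overline{\WW}$. Using the $\Ext$-vanishing above once more, this in turn reduces to the purely module-theoretic statement that $t_\WW(\overline R)$ admits an epimorphism $e$ from an object of $\add\overline{\WW}$ whose kernel lies in $\mathcal T$: given such an $e$ one has $\Ext^1_\TT(\overline R,\ker e)=0$, so $a$ lifts through $e$ and $\psi$ factors through the source of $e$.

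The remaining task is to produce that epimorphism for $R\in\RR'$. By additivity of $t_\WW$ and of $[\overline{\WW}](-,-)$ I would reduce to $R$ indecomposable, and since $\RR'=\add(\WW\cup\FF)$ this leaves two cases: $R\in\WW$, where $\overline R\in\mathcal T$ and one takes $e=\id$; and $R$ an indecomposable summand of some $F_T$. In the latter case the idea is to apply $H_\TT$ to the Bongartz triangle $T\to F_T\to W\to T[1]$ of \eqref{equation:Bongartz}: by Proposition~\ref{proposition:AbelianQuotient} this yields a four-term exact sequence $H_\TT(T)\to H_\TT(F_T)\to H_\TT(W)\to\tau H_\TT(T)$ in $\mod\TT$ in which $H_\TT(T)$ is projective, $H_\TT(W)\in\add\overline{\WW}$, and $H_\TT(f)$ is a right $\overline{\WW}$-approximation of $\tau H_\TT(T)$ (because $f$ is a right $\WW$-approximation of $T[1]$); analysing the torsion subobject $t_\WW(H_\TT(F_T))$ through this sequence, and then restricting to the direct summand $t_\WW(\overline R)$ of $t_\WW(H_\TT(F_T))$ (enlarging the kernel by the torsion preimage of the complementary summand), produces the desired $e$. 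I expect this last step --- reading off $t_\WW(H_\TT(F_T))$ from the Bongartz triangle --- to be the main obstacle: it is exactly the point at which the special shape of $\FF$, rather than mere rigidity of $\RR'$, is essential (for an arbitrary rigid $\FF$ the statement is false), and it requires a careful diagram chase in the four-term sequence keeping track of how the projectivity of $H_\TT(T)$, the approximation property of $H_\TT(f)$, and the heredity of $\mod\TT$ interact with the torsion radical.
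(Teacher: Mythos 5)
Your architecture coincides with the paper's own proof: the same induced homomorphism $e \mapsto e'$ on endomorphism rings, the same identification of its kernel with the endomorphisms whose image lies in $t_\WW(\overline R)$, and the same surjectivity argument via $\Ext^1_\TT(\overline R, t_\WW(\overline R))=0$ and lifting along $\overline R \twoheadrightarrow \overline R/t_\WW(\overline R)$. The divergence is at the kernel computation. The paper disposes of it in one sentence: since $\overline R$ is Ext-projective in $\Fac\overline{\RR'}$ and $\Fac\overline\WW \subseteq \Fac\overline{\RR'}$, a map factoring through $\Fac\overline\WW$ factors through $\overline\WW$. You instead insist that this step cannot follow from rigidity of $\RR'$ alone and must use the Bongartz shape of $\FF$. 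You are right, and it is the paper's argument (indeed the lemma as literally stated) that has the gap: Ext-projectivity in a torsion class gives no lifting property against an epimorphism $W' \twoheadrightarrow M$ with $W' \in \add\overline\WW$, because the kernel of that epimorphism need not lie in $\Fac\overline{\RR'}$. Concretely, take $\CC = \CC_Q$ with $Q\colon 1 \to 2 \leftarrow 3$ and $\TT$ the standard cluster-tilting subcategory, and let $\WW, \FF \subseteq \CC$ be the rigid subcategories with $\overline\WW = \add(I_2)$ and $\overline\FF = \add(S_1)$ in $\mod kQ$, where $I_2$ is the indecomposable of dimension vector $(1,1,1)$. Then $\Ext^1(I_2 \oplus S_1, I_2 \oplus S_1) = 0$, so $\RR' = \add(\WW \cup \FF)$ is rigid; $S_1$ is a quotient of $I_2$, so $t_\WW(S_1) = S_1$ and for $R$ with $\overline R = S_1$ the right-hand side of the lemma is $\End(0) = 0$; but $\Hom(S_1, I_2) = 0$, so $[\overline\WW](S_1,S_1) = 0$ and the left-hand side is $k$. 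The lemma therefore genuinely requires $\FF$ to be the Bongartz complement (or some substitute hypothesis), exactly as you suspect.

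That said, your proposal does not close the gap either. Your reduction of injectivity to the existence of an epimorphism from $\add\overline\WW$ onto $t_\WW(\overline R)$ whose kernel is killed by $\Ext^1_\TT(\overline R,-)$ is correct and in fact sharp: writing $K$ for the kernel of the epimorphism $W_0 \twoheadrightarrow t_\WW(\overline R)$ induced by a right $\overline\WW$-approximation of $\overline R$, the vanishing $\Ext^1_\TT(\overline R, W_0)=0$ makes the connecting map $\Hom_\TT(\overline R, t_\WW(\overline R)) \to \Ext^1_\TT(\overline R, K)$ surjective, so injectivity for $\overline R$ is \emph{equivalent} to $\Ext^1_\TT(\overline R, K) = 0$, i.e.\ to $\Hom_\TT(K, \tau\overline R) = 0$. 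But establishing this for $R \in \add(\WW \cup \FF)$ with $\FF$ the Bongartz complement is precisely the step you defer to ``a careful diagram chase in the four-term sequence'', and in the counterexample above the required epimorphism does not exist, so this is where all the content of the lemma lies. Until that analysis of the triangle $T \to F_T \to W \to T[1]$ is actually carried out, the proof is incomplete --- though, to be fair, the same step is missing from the paper.
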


\begin{proof}
\begin{enumerate}
\item For any $e \in \End_\TT(\overline{R})$, there is a commutative diagram
\[
\xymatrix{
0 \ar[r] & t_\WW(\overline{R}) \ar[r] \ar@{-->}[d]^{t_\WW(e)} & \overline{R} \ar[r] \ar[d]^{e} & \overline{R}/t_\WW(\overline{R}) \ar@{-->}[d]^{e'} \ar[r] & 0 \\
0 \ar[r] & t_\WW(\overline{R}) \ar[r] & \overline{R} \ar[r] & \overline{R}/t_\WW(\overline{R}) \ar[r]  & 0
} \]
The assignment $e \mapsto e'$ defines an algebra-homomorphism $\varphi\colon \End_\TT(\overline{R}) \to \End_\TT(\overline{R}/t_\WW(\overline{R}))$, whose kernel is given by exactly those $e \in \End_\TT(\overline{R})$ which factor through $\Fac \overline{\WW}$.  As $\overline{R}$ is Ext-projective in $\Fac \overline{\RR'}$ (see Lemma \ref{lemma:TrianglefreeReductions}) and $\Fac \overline{\WW} \subseteq \Fac \overline{\RR'}$, we find that $e$ factors through an object in $\Fac \overline{\WW}$ if and only if it factors through an object in $\overline{\WW}$.  This shows that the kernel of $\varphi\colon \End_\TT(\overline{R}) \to \End_\TT(\overline{R}/t_\WW(\overline{R}))$ is $[\overline{\WW}](\overline{R},\overline{R})$.

To show that $\varphi$ is an epimorphism, we apply the functor $\Hom_\TT(\overline{R},-)$ to the short exact sequence $0 \to t_\WW(\overline{R}) \to \overline{R} \stackrel{p}{\rightarrow} \overline{R}/t_\WW(\overline{R}) \to 0.$  Using the fact that $\Ext^1_\TT(\overline{R}, t_\WW(\overline{R})) = 0$ (since $\Ext^1_\TT(\overline{\RR'}, \Fac \overline{\RR'}) = 0$), we see that any morphism $\overline{R} \to \overline{R}/t_\WW(\overline{R})$ lifts through the projection $p\colon \overline{R} \to \overline{R}/t_\WW(\overline{R}).$  Thus, any $e' \in \End_\TT(\overline{R}/t_\WW(\overline{R}))$ gives rise to a commutative square:
\[\xymatrix{
0 \ar[r] & t_\WW(\overline{R}) \ar[r] & \overline{R} \ar[r]^-{p} \ar@{-->}[d] & \overline{R}/t_\WW(\overline{R}) \ar[d]^{e'} \ar[r] & 0 \\
0 \ar[r] & t_\WW(\overline{R}) \ar[r] & \overline{R} \ar[r]^-{p} & \overline{R}/t_\WW(\overline{R}) \ar[r]  & 0
} \]
From this follows easily that $\varphi$ is surjective.  Hence, we have shown that $\varphi$ induces an algebra-isomorphism $\End_\TT(\overline{R})/[\overline{\WW}](\overline{R},\overline{R}) \to \End_\TT(\overline{R}/t_\WW(\overline{R}))$, as required.
\qedhere
\end{enumerate}
\end{proof}

Aside from $(\Fac\overline{\WW}, \overline{\WW}^{\perp_0})$, Lemma~\ref{lemma:TrianglefreeReductions}(\ref{enumerate:TrianglefreeReductions2}) also provides us with the torsion pair $(\Fac\overline{\RR}, \overline{\RR}^{\perp_0})$ coming from the Bongartz completion $\RR$ of $\WW$.  We will be particularly interested in the intersection $\AA = \Fac\overline{\RR} \cap \overline{\WW}^{\perp_0}$ of the torsion and the torsion-free class from the respective pairs.  This category can be viewed as a generalized version of the $\tau$-rigid reduction from~\cite{Jasso13} where we also allow objects from $\TT[1]$ in $\WW$.

\begin{lemma} \label{lemma:TauRigidReduction}
Let $\TT \subseteq \CC$ be a directed cluster-tilting subcategory, $\WW \subseteq \CC$ be a functorially finite rigid subcategory and $\RR$ be the Bongartz completion of $\WW$ by $\TT$.  Then $\AA = \Fac\overline{\RR} \cap \overline{\WW}^{\perp_0}$ is closed under kernels, cokernels and extensions in $\mod\TT$.  In particular, $\AA$ is a hereditary abelian category itself.
\end{lemma}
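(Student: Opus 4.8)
The plan is to describe $\AA$ purely in terms of $\overline\WW$-orthogonality and then read off the three closure properties from ordinary homological algebra in the hereditary abelian category $\mod\TT$. Recall from Theorem~\ref{theorem:Trianglefree} and Corollary~\ref{corollary:Trianglefree} that $\mod\TT$ is hereditary abelian with Serre duality; in particular $\Ext^1_\TT(M,-)$ is right exact for every $M\in\mod\TT$, and by Lemma~\ref{lemma:TrianglefreeReductions} both $\Fac\overline\WW$ and $\Fac\overline\RR$ are (functorially finite) torsion classes, with $\Fac\overline\WW\subseteq\Fac\overline\RR$ since $\overline\WW\subseteq\overline\RR$. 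Set $\overline\WW^{\perp_1}=\{M\in\mod\TT\mid\Ext^1_\TT(\overline\WW,M)=0\}$; right exactness of $\Ext^1_\TT(\overline\WW,-)$ shows that $\overline\WW^{\perp_1}$ is closed under quotients and extensions, and it contains $\overline\WW$ (which is rigid) with $\overline\WW$ Ext-projective in it.

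\textbf{Key reduction.} I would first prove $\AA=\overline\WW^{\perp_0}\cap\overline\WW^{\perp_1}$. The inclusion ``$\subseteq$'' is free: $\AA\subseteq\overline\WW^{\perp_0}$ by definition, and $\AA\subseteq\Fac\overline\RR\subseteq\overline\WW^{\perp_1}$ because $\overline\RR$ is Ext-projective in $\Fac\overline\RR$ (Lemma~\ref{lemma:TrianglefreeReductions}) and $\overline\WW\subseteq\overline\RR$. For ``$\supseteq$'' it suffices to show $\overline\WW^{\perp_1}\subseteq\Fac\overline\RR$, i.e.\ that the Bongartz completion $\RR$ realises $\overline\WW^{\perp_1}$ as its torsion class $\Fac\overline\RR$. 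This is precisely where the particular shape of $\RR$ enters: applying $H_\TT$ to the defining triangles $T\to F_T\to W\to T[1]$ of Construction~\ref{construction:Bongartz} (Appendix~\ref{section:Bongartz}) should produce, for each $M\in\overline\WW^{\perp_1}$, an epimorphism onto $M$ from an object of $\overline\RR$; equivalently, via the Auslander--Reiten formula (Lemma~\ref{lemma:AR-formula}) one has $\overline\WW^{\perp_1}={}^{\perp_0}(\tau\overline\WW)$ and one is reduced to the classical fact that the torsion class of a Bongartz completion equals ${}^{\perp_0}(\tau\overline\WW)$, exactly as in the $\tau$-tilting reduction of~\cite{Jasso13}. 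I expect this identification to be the only nontrivial input; the remainder is formal.

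\textbf{The three closures.} Granting $\AA=\overline\WW^{\perp_0}\cap\overline\WW^{\perp_1}$, fix objects of $\AA$ and argue with the long exact sequences of $\Hom_\TT(\overline\WW,-)$ and $\Ext^1_\TT(\overline\WW,-)$. For a short exact sequence $0\to X\to E\to Y\to 0$ with $X,Y\in\AA$, exactness of $\Hom_\TT(\overline\WW,X)\to\Hom_\TT(\overline\WW,E)\to\Hom_\TT(\overline\WW,Y)$ with both ends zero forces $\Hom_\TT(\overline\WW,E)=0$, and likewise $\Ext^1_\TT(\overline\WW,X)=0=\Ext^1_\TT(\overline\WW,Y)$ forces $\Ext^1_\TT(\overline\WW,E)=0$; hence $E\in\AA$. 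For $f\colon A\to B$ in $\AA$ with $I=\im f$ and $K=\ker f$: the monomorphism $I\hookrightarrow B$ gives $I\in\overline\WW^{\perp_0}$, and from $0\to K\to A\to I\to 0$ together with $\Ext^2_\TT(\overline\WW,K)=0$ (heredity) and $\Ext^1_\TT(\overline\WW,A)=0$ one gets $\Ext^1_\TT(\overline\WW,I)=0$, so $I\in\AA$. Feeding $I\in\AA$ into $0\to I\to B\to\coker f\to 0$ and using $\Hom_\TT(\overline\WW,B)=0$, $\Ext^1_\TT(\overline\WW,I)=0$ and $\Ext^1_\TT(\overline\WW,B)=0$ gives $\coker f\in\AA$. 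Finally $K\hookrightarrow A$ gives $K\in\overline\WW^{\perp_0}$, and $0\to K\to A\to I\to 0$ with $\Hom_\TT(\overline\WW,I)=0$ (as $I\in\AA$) and $\Ext^1_\TT(\overline\WW,A)=0$ gives $\Ext^1_\TT(\overline\WW,K)=0$, so $K\in\AA$.

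\textbf{Heredity.} With the three closure properties established, $\AA$ is abelian and the inclusion $\AA\hookrightarrow\mod\TT$ is exact (kernels, cokernels and images are computed as in $\mod\TT$), so short exact sequences of $\AA$ are exactly those of $\mod\TT$ with all terms in $\AA$. Then $\AA$ inherits heredity: a $2$-extension in $\AA$ is a $2$-extension in $\mod\TT$, hence trivial there since $\Ext^2_{\mod\TT}=0$, and the zigzag of morphisms witnessing its triviality may be taken inside $\AA$ by the closure properties, so $\Ext^2_\AA=0$ and $\AA$ is hereditary abelian. The main obstacle of the whole argument is thus the Bongartz identification $\Fac\overline\RR=\overline\WW^{\perp_1}$ of the Key reduction: in the finite-rank case it is classical tilting theory, and in general it has to be extracted from the construction of $\RR$ via the triangles~(\ref{equation:Bongartz}); everything else is routine homological algebra in a hereditary abelian category.
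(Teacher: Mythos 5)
Your overall strategy is the paper's: realise $\AA$ as a Geigle--Lenzing right perpendicular category inside the hereditary category $\mod\TT$, after which closure under kernels, cokernels and extensions is the routine long-exact-sequence argument you give (this is \cite[Proposition 1.1]{GeigleLenzing91}). However, your key reduction is stated incorrectly, and the error is not cosmetic. The identity $\AA = \Ker\Hom_\TT(\overline{\WW},-)\cap\Ker\Ext^1_\TT(\overline{\WW},-)$ fails whenever $\WW$ has indecomposable summands in $\TT[1]$ --- a case the lemma must cover, since in Theorem~\ref{theorem:HereditaryBongartzComplement} the subcategory $\WW$ is an arbitrary functorially finite rigid subcategory, and the whole point of this lemma (as opposed to the $\tau$-rigid reduction of~\cite{Jasso13}) is to allow objects of $\TT[1]$ in $\WW$. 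Such summands are killed by $H_\TT$, so they contribute nothing to $\overline{\WW}$ and hence nothing to your right-hand side, yet they do constrain $\Fac\overline{\RR}$. Concretely, take $\WW=\TT[1]$: then $\overline{\WW}=0$, so your right-hand side is all of $\mod\TT$, whereas $\RR=\TT[1]$, $\overline{\RR}=0$ and $\AA=0$. The correct statement is
\[ \AA \;=\; \Ker\Hom_\TT(\QQ,-)\cap\Ker\Ext^1_\TT(\QQ,-), \qquad \QQ=\overline{\WW'}\cup\overline{\WW''[-1]}, \]
where $\WW'=\add(\ind\WW\setminus\ind\TT[1])$ and $\WW''=\WW\cap\TT[1]$; the objects of $\overline{\WW''[-1]}$ are projective $\TT$-modules, and the extra condition $\Hom_\TT(\overline{\WW''[-1]},-)=0$ is exactly what your reduction loses. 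With $\QQ$ in place of $\overline{\WW}$ your closure arguments (and the heredity conclusion) go through verbatim.

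Second, you leave the one genuinely nontrivial step --- that everything in the perpendicular category lies in $\Fac\overline{\RR}$ --- as an appeal to ``the classical fact'' from $\tau$-tilting theory. Besides needing the support version of that fact (the relevant datum here is the support $\tau$-tilting pair $(\overline{\WW'},\overline{\WW''[-1]})$, not a single $\tau$-rigid module), in the present infinite-rank setting it has to be proved, not cited. The argument is short: translating the vanishing conditions through Lemma~\ref{lemma:Ext-via-H} and the Yoneda lemma into $[\TT](\WW[-1],\tilde X)=0$, one rotates the Bongartz triangle $T_0\to F_{T_0}\to W\to T_0[1]$ against the minimal right $\TT$-approximation $T_1\to T_0\stackrel{p}{\to}\tilde X$ to factor $p$ through $F_{T_0}$, yielding an epimorphism $\overline{F_{T_0}}\to X$ in $\mod\TT$. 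Your sketch gestures at this, but as written the inclusion $\Ker\Ext^1_\TT(\overline{\WW},-)\subseteq\Fac\overline{\RR}$ you would need is false (same counterexample as above), so the proof does not close without the correction to $\QQ$.
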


\begin{proof}
Let $\WW' = \add(\ind\WW \setminus \ind\TT[1])$	and $\WW'' = \WW \cap \TT[1]$.  In particular $\overline{\WW''[-1]}$ consists of projective $\TT$-modules.  In fact, the pair $(\overline{\WW'}, \overline{\WW''[-1]})$ of subcategories of $\mod\TT$ is essentially nothing else than a support $\tau$-tilting pair in the sense of~\cite[Definition 0.3]{AdachiIyamaReiten14}.

Note that it suffices to prove that if we let $\QQ = \overline{\WW'} \cup \overline{\WW''[-1]}$, then we have
\begin{equation} \label{equation:Perpendicular}
\AA = \Ker\Hom_\TT(\QQ,-) \cap \Ker\Ext^1_\TT(\QQ,-) \quad (\subseteq \mod\TT).
\end{equation}
Indeed, the closure properties of $\AA$ then follow at once from \cite[Proposition 1.1]{GeigleLenzing91} since $\AA$ is then right perpendicular to $\QQ$ in the terminology of~\cite{GeigleLenzing91}.

Thus, let us focus on equality~\eqref{equation:Perpendicular}. Fix $X \in \mod\TT$ and let $\tilde{X}$ be a lift of $X$ to $\CC$, i.e.\ $X = H_\TT(\tilde{X})$. Moreover, we may choose $\tilde{X}$ such that it has no summand in $\TT[1]$.  Further, observe that if we invoke Lemma~\ref{lemma:Ext-via-H} and the Yoneda lemma, the vanishing conditions
\[ \Ext^1_\TT(\overline{\WW'}, X) = 0 = \Hom_\TT(\overline{\WW''[-1]}, X) \]
translate to $[\TT](\WW'[-1],\tilde{X}) = 0 = \Hom_\CC(\WW''[-1],\tilde{X})$, which can be equivalently restated as $[\TT](\WW'[-1],\tilde{X}) = 0$.

Suppose now that we chose $X \in \overline{\RR}$. Since $\RR$ is rigid, we clearly have $[\TT](\WW'[-1],\tilde{X}) = 0$. Thus,
\[ \Ext^1_\TT(\overline{\WW'},X) = 0 = \Hom_\TT(\overline{\WW''[-1]},X). \] 
and, since $\Ext^1_\TT(-,-)$ is right exact, the same equalities hold in fact for any $X \in \AA$.  Since by definition also $\Hom_\TT(\overline{\WW'},X) = 0$ for each $X \in \AA$, we have just proven the inclusion $\AA \subseteq \Ker\Hom_\TT(\QQ,-) \cap \Ker\Ext^1_\TT(\QQ,-)$.

Suppose conversely that $\Hom_\TT(\QQ,X) = 0 = \Ext^1_\TT(\QQ,X)$, so that in particular $[\TT](\WW'[-1],\tilde{X}) = 0$. If $T_1 \to T_0 \stackrel{p}\to \tilde{X} \to T_1[1]$ is the $\TT$-approximation triangle and $W[-1] \stackrel{f[-1]}\to T_0 \to F_{T_0} \to W$ is the rotation of triangle~\eqref{equation:Bongartz} then $p \circ f[-1]$ vanishes by the assumption.  Hence $p$ factors through a morphism $q\colon F_{T_0} \to \tilde{X}$.  Since $H_\TT(p)$ is an epimorphism in $\mod\TT \cong \CC/[\TT[1]]$ (see \cite[Theorem 2.3]{KonigZhu08}), so is $H_\TT(q)$ and so $X \in \Fac\overline{\RR}$.  Combining this with the assumption that $\Hom_\TT(\overline{\WW'},X) = 0$ (since $\Hom_\TT(\QQ,X) = 0$), we infer that $X \in \AA$.  Therefore $\Ker\Hom_\TT(\QQ,-) \cap \Ker\Ext^1_\TT(\QQ,-) \subseteq \AA$ holds as well.
\end{proof}

The following lemma allows us to relate objects of the Bongartz complement $\FF$ to projective objects in $\AA$.

\begin{lemma}\label{lemma:TorsionfreeIsProjective}
Let $\WW \subseteq \CC$ be a functorially finite rigid subcategory and let $\FF$ and $\RR = \add(\WW \cup \FF)$ be as above.  Further, let $\AA = \Fac\overline{\RR} \cap \overline{\WW}^{\perp_0}$.  Then for each $F \in \overline{\FF}$, the object $F/t_\WW(F)$ is a projective object in the abelian category $\AA$.
\end{lemma}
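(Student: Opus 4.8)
The plan is to first check that $F/t_\WW(F)$ actually lies in $\AA$, and then to verify the $\Ext$-vanishing characterising projective objects there, reducing everything to the two facts already established in this section: that $\overline{\RR}$ is precisely the subcategory of $\Ext$-projectives of $\Fac_{\mod\TT}\overline{\RR}$ (Lemma~\ref{lemma:TrianglefreeReductions}(\ref{enumerate:TrianglefreeReductions3}), applicable because the Bongartz completion $\RR$ is a cluster-tilting subcategory of $\CC$), and that $\AA$ is an extension-closed abelian subcategory of $\mod\TT$ (Lemma~\ref{lemma:TauRigidReduction}).

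For membership, since $\FF \subseteq \RR$ we have $\overline{\FF} \subseteq \overline{\RR} \subseteq \Fac\overline{\RR}$, and $\Fac\overline{\RR}$ is closed under quotient objects, so the quotient $F/t_\WW(F)$ of $F$ again lies in $\Fac\overline{\RR}$. By construction of the torsion radical $t_\WW$ associated with the torsion pair $(\Fac\overline{\WW},\overline{\WW}^{\perp_0})$, the quotient $F/t_\WW(F)$ is torsion-free, i.e.\ it lies in $\overline{\WW}^{\perp_0}$. Hence $F/t_\WW(F) \in \Fac\overline{\RR} \cap \overline{\WW}^{\perp_0} = \AA$.

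For projectivity, recall from Lemma~\ref{lemma:TauRigidReduction} that $\AA$ is closed under kernels and cokernels in $\mod\TT$, so the inclusion $\AA \hookrightarrow \mod\TT$ is exact; consequently it is enough to show that $\Ext^1_\TT(F/t_\WW(F),X) = 0$ for every $X \in \AA$. To see this, I would apply $\Hom_\TT(-,X)$ to the short exact sequence $0 \to t_\WW(F) \to F \to F/t_\WW(F) \to 0$. The term $\Hom_\TT(t_\WW(F),X)$ vanishes because $t_\WW(F)$ is a torsion object and $X \in \overline{\WW}^{\perp_0}$ is torsion-free, so the long exact sequence yields a monomorphism $\Ext^1_\TT(F/t_\WW(F),X) \hookrightarrow \Ext^1_\TT(F,X)$. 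Finally $\Ext^1_\TT(F,X) = 0$: indeed $F \in \overline{\FF} \subseteq \overline{\RR}$, and by Lemma~\ref{lemma:TrianglefreeReductions}(\ref{enumerate:TrianglefreeReductions3}) the objects of $\overline{\RR}$ are exactly the $\Ext$-projectives of $\Fac\overline{\RR}$, while $X \in \AA \subseteq \Fac\overline{\RR}$. Therefore $\Ext^1_\TT(F/t_\WW(F),X) = 0$, and $F/t_\WW(F)$ is projective in $\AA$.

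There is essentially no genuine obstacle here beyond bookkeeping: all the substantial content sits in Lemmas~\ref{lemma:TrianglefreeReductions} and~\ref{lemma:TauRigidReduction}, and the only points requiring a moment's care are (i) checking that the hypotheses of Lemma~\ref{lemma:TrianglefreeReductions}(\ref{enumerate:TrianglefreeReductions3}) apply, i.e.\ that $\RR$ is cluster-tilting, and (ii) that $\Ext$-computations for $F/t_\WW(F)$ may legitimately be carried out inside $\mod\TT$ rather than inside $\AA$, which is guaranteed by the closure properties recorded in Lemma~\ref{lemma:TauRigidReduction}. The degenerate case $F/t_\WW(F) = 0$ is of course trivial.
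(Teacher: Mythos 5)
Your proof is correct and follows essentially the same route as the paper: check membership of $F/t_\WW(F)$ in $\AA$ from the definitions, then apply $\Hom_\TT(-,X)$ to $0 \to t_\WW(F) \to F \to F/t_\WW(F) \to 0$ and invoke Lemma~\ref{lemma:TrianglefreeReductions}(\ref{enumerate:TrianglefreeReductions3}) to kill $\Ext^1_\TT(F,X)$. Your explicit observation that $\Hom_\TT(t_\WW(F),X)=0$ (torsion versus torsion-free) is the detail the paper leaves implicit in its ``if and only if'' step, so if anything your write-up is slightly more complete.
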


\begin{proof}
Note that $F/t_\WW(F) \in \AA$, directly from the definitions. Let $X \in \AA$ be another object. The long exact sequence coming from the application of $\Hom_\TT(-,X)$ to the short exact sequence $0 \to t_\WW(F) \to F \to F/t_\WW(F) \to 0$ shows that $\Ext^1_\AA(F/t_\WW(F),X) = \Ext^1_\TT(F/t_\WW(F),X) = 0$ if and only if $\Ext^1_\TT(F,X) = 0$. However, $\Ext^1_\TT(F,X)$ vanishes by Lemma~\ref{lemma:TrianglefreeReductions}(\ref{enumerate:TrianglefreeReductions3}) since $F \in \overline{\RR}$ and $X \in \AA \subseteq \Fac\overline{\RR}$.
\end{proof}

Now we are in a position to state and prove the second main result of this section.

\begin{theorem}\label{theorem:HereditaryBongartzComplement}
Let $\CC$ be a 2-Calabi-Yau triangulated category which admits a directed cluster-tilting subcategory.  Let $\WW$ be a functorially finite rigid subcategory of $\CC$.  The reduction $\CC_{[\WW]}$ also admits a directed cluster-tilting subcategory.
\end{theorem}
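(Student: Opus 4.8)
The plan is to use the Bongartz completion from Construction~\ref{construction:Bongartz}. Let $\RR = \add(\WW \cup \FF)$ be the cluster-tilting subcategory of $\CC$ containing $\WW$ produced by Theorem~\ref{theorem:BongartzComplement}. Since $\WW \subseteq \RR \subseteq \WW^{\perp_1}$, \cite[Theorem~4.9]{IyamaYoshino08} shows that $\RR/[\WW]$ is a cluster-tilting subcategory of $\CC_{[\WW]}$; in particular $\CC_{[\WW]}$ is a Krull-Schmidt 2-Calabi-Yau triangulated category admitting a cluster-tilting subcategory, so Theorem~\ref{theorem:Trianglefree} and Corollary~\ref{corollary:Trianglefree} apply to it. It therefore suffices to prove that $\mod(\RR/[\WW])$ is hereditary: as $\RR/[\WW]$ is coherent (Remark~\ref{remark:DefClusterTilting}), this says precisely that $\RR/[\WW]$ is semi-hereditary, hence directed by Corollary~\ref{corollary:Trianglefree}, which is the assertion of the theorem.

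The core of the argument is to identify $\mod(\RR/[\WW])$ with the hereditary abelian category $\AA = \Fac_{\mod\TT}\overline{\RR} \cap \overline{\WW}^{\perp_0}$ of Lemma~\ref{lemma:TauRigidReduction}. I would first determine the projectives of $\AA$. By Lemma~\ref{lemma:TorsionfreeIsProjective}, each object $\overline{F}/t_\WW(\overline{F})$ with $F \in \overline{\FF}$ is projective in $\AA$. Conversely, for $X \in \AA$ choose an epimorphism $\overline{R} \twoheadrightarrow X$ with $R \in \RR$; since $X \in \overline{\WW}^{\perp_0}$ and $\Hom(\Fac\overline{\WW}, \overline{\WW}^{\perp_0}) = 0$, this epimorphism vanishes on the $\overline{\WW}$-summand of $\overline{R}$, hence factors as $\overline{F} \twoheadrightarrow X$ with $F \in \overline{\FF}$, and then, for the same reason, through $\overline{F}/t_\WW(\overline{F}) \twoheadrightarrow X$. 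Thus $\AA$ has enough projectives and its subcategory $\PP_\AA$ of projective objects is the additive hull of $\{\overline{F}/t_\WW(\overline{F}) : F \in \ind\FF\}$; being abelian with enough projectives, $\AA \simeq \mod\PP_\AA$.

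It remains to identify $\PP_\AA$ with $\RR/[\WW]$ as $k$-linear categories. On objects one sends $F$ to $\overline{F}/t_\WW(\overline{F})$; on morphisms one takes the composite $\Hom_{\CC_{[\WW]}}(F,F') = \Hom_\CC(F,F')/[\WW] \longrightarrow \Hom_{\mod\TT}(\overline{F},\overline{F'}) \longrightarrow \Hom_\AA(\overline{F}/t_\WW(\overline{F}),\, \overline{F'}/t_\WW(\overline{F'}))$, which is well defined because a morphism of $\RR$ factoring through $\WW$ becomes zero after passing to $\mod\TT = \CC/[\TT[1]]$ and then killing the torsion radical $t_\WW$. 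That this map is bijective on Hom-spaces is the Hom-version of Lemma~\ref{lemma:EndomorphismVSQuotient}. Granting it, $\mod(\RR/[\WW]) \simeq \mod\PP_\AA \simeq \AA$, which is hereditary by Lemma~\ref{lemma:TauRigidReduction}, and the first paragraph concludes the proof.

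The main obstacle is the bookkeeping in the last step --- above all the treatment of indecomposable summands of $\WW$ (equivalently, of $\RR$ and $\FF$) that lie in $\TT[1]$, namely the summands of $\WW'' = \WW \cap \TT[1]$ singled out in Lemma~\ref{lemma:TauRigidReduction}. One must carefully reconcile the three quotients $\Hom_{\CC_{[\WW]}} = \Hom_\CC/[\WW]$, $\Hom_{\mod\TT} = \Hom_\CC/[\TT[1]]$ and $\Hom_\AA$, and check that the morphism map above is injective as well as surjective; Lemmas~\ref{lemma:TrianglefreeReductions}, \ref{lemma:EndomorphismVSQuotient} and \ref{lemma:TorsionfreeIsProjective} are the tools for this. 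Once the equivalence $\mod(\RR/[\WW]) \simeq \AA$ is in place, heredity of $\AA$ together with Corollary~\ref{corollary:Trianglefree} gives the theorem with no further work.
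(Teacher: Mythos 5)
Your proposal is correct and follows essentially the same route as the paper: Bongartz completion $\RR=\add(\WW\cup\FF)$, reduction to the hereditary abelian category $\AA=\Fac\overline{\RR}\cap\overline{\WW}^{\perp_0}$ of Lemma~\ref{lemma:TauRigidReduction}, projectivity of $\overline{F}/t_\WW(\overline{F})$ (Lemma~\ref{lemma:TorsionfreeIsProjective}), and the Hom-space comparison of Lemma~\ref{lemma:EndomorphismVSQuotient} combined with the inclusion $[\TT[1]](F,F')\subseteq[\WW](F,F')$ obtained from the defining triangles of the $F_T$. The only difference is packaging: the paper sidesteps the global equivalence $\mod(\RR/[\WW])\simeq\AA$ (and hence the "enough projectives" verification) by invoking Proposition~\ref{proposition:ShIsLocal} and checking only that each algebra $\End_{\CC_{[\WW]}}(F)$ is hereditary, identifying it with the endomorphism ring of the projective object $\overline{F}/t_\WW(\overline{F})$ of $\AA$ via Proposition~\ref{proposition:CategoryOfProjectivesIsSemiHereditary}.
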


\begin{proof}
Let $\TT$ be a directed cluster-tilting subcategory of $\CC$.  We want to construct a directed cluster-tilting subcategory in $\CC_{[\WW]}$.  To this end, let $\RR = \add(\FF \cup \WW)$ be the Bongartz completion of $\WW$ by $\TT$, as in Construction \ref{construction:Bongartz}. By Proposition~\ref{proposition:Bongartz}, we know that $\RR$ is a cluster-tilting subcategory of $\CC$, and by \cite[Theorem 4.9(1)]{IyamaYoshino08}, we know that $\FF$ becomes a cluster-tilting subcategory of $\CC_{[\WW]}$.  We will show that $\FF \subseteq \CC$ induces in fact a directed cluster-tilting subcategory of $\CC_{[\WW]}$.  Put differently, we want to show that $\End_{\CC_{[\WW]}}(F)$ is a hereditary algebra, for any $F \in \FF$ (see Proposition \ref{proposition:ShIsLocal}).

It follows from Theorem~\ref{theorem:Trianglefree} that $\CC/{[\TT[1]]} \cong \mod \TT$ is hereditary.  As before, consider the torsion pairs $(\Fac\overline{\WW}, \overline{\WW}^{\perp_0})$ and $(\Fac\overline{\RR}, \overline{\RR}^{\perp_0})$ in $\mod \TT$ and the hereditary abelian subcategory $\AA = \Fac\overline{\RR} \cap \overline{\WW}^{\perp_0}$ of $\mod \TT$. We again write $t_\WW\colon \mod \TT \to \Fac \WW$ for the torsion radical for $\Fac\overline{\WW}$.

Lemma \ref{lemma:EndomorphismVSQuotient} yields that
\[\End_{\CC / [\TT[1]]}(\overline{F}/t_W(\overline{F})) \cong \frac{\Hom_{\CC / [\TT[1]]}(\overline{F},\overline{F})}{[\overline{\WW}](\overline{F},\overline{F})}.\]
Lemma \ref{lemma:TorsionfreeIsProjective} implies that this algebra is hereditary, as it is the endomorphism ring of the projective object $\overline{F}/t_\WW(\overline{F})$ in the hereditary category $\AA$ (see Proposition \ref{proposition:CategoryOfProjectivesIsSemiHereditary}).  We will now prove that $\End_{\CC / [\TT[1]]}(F/t_W(F)) \cong \Hom_{\CC_{[\WW]}}(F,F)$. 

For this, consider any $T \in \TT$ and the associated triangle $T \to F_T \to W \to T[1].$
From this triangle follows that any morphism $e \in \End_\CC(F_T)$ that factors through $\TT[1]$ also factors through $\WW$, so that $[\TT[1]](F_T,F_T) \subseteq [\WW](F_T,F_T)$.  We find:
\begin{align*}
\frac{\Hom_{\CC / [\TT[1]]}(\overline{F},\overline{F})}{[H_\TT(\WW)](\overline{F},\overline{F})} &\cong
\frac{\frac{\Hom_{\CC}(F,F)}{[\TT[1](F,F)]}}{\frac{[\WW](F,F)}{[\TT[1]](F,F)}} \\
&\cong \frac{\Hom_{\CC}(F,F)}{[\WW](F,F)}\\
&\cong \Hom_{\CC_{[\WW]}}(F,F).
\end{align*}
Thus, $\End_{\CC_{[\WW]}}(F)$ is a hereditary algebra, as required.
\end{proof}

We have the following corollary to Theorem~\ref{theorem:HereditaryBongartzComplement}.

\begin{corollary}\label{corollary:BongartzQuiver}
Let $\CC$ be an algebraic 2-Calabi-Yau category with a directed cluster-tilting subcategory. Let $\UU$ be an arbitrary (not necessarily directed) cluster-tilting subcategory of $\CC$, and let $\WW$ be a full additive Krull-Schmidt subcategory of $\UU$ such that $\lvert\ind \UU \setminus \ind \WW\rvert < \infty$.  The reduction $\CC_{[\WW]}$ is of the form $\CC_Q$, for some quiver $Q$.
\end{corollary}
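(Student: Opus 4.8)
The plan is to exhibit a directed cluster-tilting \emph{object} in $\CC_{[\WW]}$ and then invoke the theorem of Keller and Reiten. First I would check that the reduction is even available and lies in the right class of categories: as $\WW$ is a full additive subcategory of the cluster-tilting subcategory $\UU$ with $\lvert\ind\UU\setminus\ind\WW\rvert<\infty$, it is rigid (being a subcategory of a rigid subcategory) and, by Proposition~\ref{proposition:JorgensenPaluCofinite}, functorially finite in $\CC$. Hence $\CC_{[\WW]}=\WW^{\perp_1}/[\WW]$ is defined and is a Hom-finite Krull-Schmidt $2$-Calabi-Yau triangulated category by~\cite[Theorem~4.7]{IyamaYoshino08}, and it is algebraic because $\CC$ is, by~\cite[Proposition~4.3]{AmiotOppermann12}.

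The next step is to locate two cluster-tilting subcategories of $\CC_{[\WW]}$. Since $\UU$ is rigid and contains $\WW$, we have $\UU\subseteq\WW^{\perp_1}$, so $\UU$ descends to a full subcategory $\overline{\UU}\subseteq\CC_{[\WW]}$; by~\cite[Theorem~4.9]{IyamaYoshino08} (as used in the proof of Theorem~\ref{theorem:HereditaryBongartzComplement}) $\overline{\UU}$ is a cluster-tilting subcategory of $\CC_{[\WW]}$ with $\ind\overline{\UU}=\ind\UU\setminus\ind\WW$, so in particular $\overline{\UU}=\add U$ for a cluster-tilting \emph{object} $U$. On the other hand, $\CC$ admits a directed cluster-tilting subcategory by hypothesis, so Theorem~\ref{theorem:HereditaryBongartzComplement} furnishes a directed cluster-tilting subcategory $\FF$ of $\CC_{[\WW]}$. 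By~\cite[Theorem~2.4]{DehyKeller08} (see Remark~\ref{remark:DefClusterTilting}) any two cluster-tilting subcategories of $\CC_{[\WW]}$ have the same number of indecomposables; comparing $\FF$ with $\overline{\UU}$ shows this number is finite, so $\FF=\add F$ for a single object $F$, which is then a directed cluster-tilting object of $\CC_{[\WW]}$. After replacing $F$ by the direct sum of one copy of each of its indecomposable summands (which does not change $\add F$) we may assume $\End_{\CC_{[\WW]}}(F)$ is basic.

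Finally I would identify $\End_{\CC_{[\WW]}}(F)$. Since $\FF=\add F$ has no cycles of length three, Theorem~\ref{theorem:Trianglefree} gives that $\mod\FF\cong\mod\End_{\CC_{[\WW]}}(F)$ is hereditary; as this algebra is finite-dimensional (because $\CC_{[\WW]}$ is Hom-finite) and basic over the algebraically closed field $k$, it is of the form $kQ$ for a finite acyclic quiver $Q$. Because $\CC_{[\WW]}$ is algebraic and has a cluster-tilting object $F$ with hereditary endomorphism algebra, the main result of~\cite{KellerReiten08} yields a triangle equivalence $\CC_{[\WW]}\simeq(\Db\mod kQ)/(\bS\circ[-2])=\CC_Q$, which is the assertion.

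I do not expect a genuine obstacle once Theorem~\ref{theorem:HereditaryBongartzComplement} is available, since that theorem carries the homological content. The one point that does require care is the passage from a directed cluster-tilting \emph{subcategory} of $\CC_{[\WW]}$ to a directed cluster-tilting \emph{object}: this uses the finiteness of $\ind\overline{\UU}$ (coming from the co-finiteness of $\WW$ in $\UU$) together with the cardinality invariance of~\cite{DehyKeller08}, and it is the only place where the hypothesis $\lvert\ind\UU\setminus\ind\WW\rvert<\infty$ is really exploited.
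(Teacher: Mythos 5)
Your proposal is correct and follows essentially the same route as the paper's proof: establish functorial finiteness of $\WW$ via Proposition~\ref{proposition:JorgensenPaluCofinite}, obtain a directed cluster-tilting subcategory of $\CC_{[\WW]}$ from Theorem~\ref{theorem:HereditaryBongartzComplement}, deduce via \cite[Theorem 2.4]{DehyKeller08} that it has only finitely many indecomposables (hence is $\add$ of a cluster-tilting object), and conclude with the characterization of \cite{KellerReiten08}. You merely spell out in more detail the intermediate step that $\UU$ descends to a cluster-tilting subcategory of the reduction with $\lvert\ind\UU\setminus\ind\WW\rvert$ indecomposables, which the paper leaves implicit.
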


\begin{proof}
It follows from Proposition \ref{proposition:JorgensenPaluCofinite} that $\WW$ is functorially finite in $\CC$, so that we can indeed consider the reduction $\CC_{[\WW]}$, and by Theorem~\ref{theorem:HereditaryBongartzComplement}, we know that $\CC_{[\WW]}$ has a directed cluster-tilting subcategory.  By \cite[Theorem 2.4]{DehyKeller08}, we know that, for a cluster-tilting subcategory $\VV$ of $\CC_{[\VV]}$, we have $\vert\ind \VV\rvert = \lvert\ind \UU \setminus \ind \WW\rvert < \infty,$ hence $\VV = \add V$, for some (cluster-tilting) object $V \in \CC_{[\WW]}.$  The statement now follows from the characterization in \cite{KellerReiten08}. 
\end{proof}
}

\section{Cluster structures}\label{section:ClusterStructures}

Let $\CC$ be a 2-Calabi-Yau category with a directed cluster-tilting subcategory $\TT \subseteq \CC$.  We will show in Theorem \ref{theorem:ClusterStructure} that $\TT$ has a cluster structure in the sense of \cite{BuanIyamaReitenScott09}.  We start by recalling the definition of a cluster structure.

\begin{definition}\label{definition:ClusterStructure}
Let $\CC$ be a Hom-finite Krull-Schmidt triangulated category.  Assume that we have a collection of subsets of $\ind \CC$ called \emph{clusters}.  Any element of $\ind \CC$ which occurs in a cluster is called a \emph{cluster variable}.

We say that the clusters give a \emph{cluster structure} on $\CC$ if the following conditions hold.
\begin{enumerate}
\item\label{enumerate:ClusterStructure1} Let $T \subseteq \ind \CC$ be a cluster.  Then, for each $M \in T$, there is a unique cluster variable $M^* \in \ind \CC$, nonisomorphic to $M$, such that $(T \setminus \{M\}) \cup \{M^*\}$ is a cluster.
\item\label{enumerate:ClusterStructure2} Let $T \subseteq \ind \CC$ be a cluster.  For each $M,M^*$ as above, there are triangles
$$M \stackrel{f}{\rightarrow} B \stackrel{g}{\rightarrow} M^* \mbox{ and } M^* \stackrel{s}{\rightarrow} B' \stackrel{t}{\rightarrow} M$$
where $f$ and $s$ are minimal left $(T \setminus \{M\})$-approximations, and $g$ and $t$ are minimal right $(T \setminus \{M\})$-approximations.
\item\label{enumerate:ClusterStructure3} The Auslander-Reiten quiver of $T$ has no loops and no 2-cycles.
\item\label{enumerate:ClusterStructure4} Let $M$ be a cluster variable in a cluster $T$, and denote by $T^*$ the unique cluster obtained by replacing $M$ with $M^*$.  Then the Auslander-Reiten quiver of $T^*$ is obtained by a Fomin-Zelevinski mutation at the vertex corresponding to $M$ in the Auslander-Reiten quiver of~$T$.
\end{enumerate}

If (\ref{enumerate:ClusterStructure1}) and (\ref{enumerate:ClusterStructure2}) hold, we say that the collection of clusters gives a \emph{weak cluster structure} on $\CC$.
\end{definition}

\begin{remark}
The definition given in \cite{BuanIyamaReitenScott09} is more general than the one we give here, as we do not include any cluster coefficients.
\end{remark}

We recall the following theorem (see \cite[Theorem II.2.1]{BuanIyamaReitenScott09}).

\begin{theorem}\label{theorem:BuanIyamaReitenScott}
Let $\CC$ be a triangulated 2-CY category having a cluster-tilting subcategory.
\begin{enumerate}
  \item The cluster-tilting subcategories determine a weak cluster structure on $\CC$.
  \item If there are no loops or 2-cycles in the Auslander-Reiten quivers of the cluster-tilting subcategories, then they determine a cluster structure on $\CC$.
\end{enumerate}
\end{theorem}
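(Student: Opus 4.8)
This is \cite[Theorem~II.2.1]{BuanIyamaReitenScott09}, and the plan is to check the four conditions of Definition~\ref{definition:ClusterStructure} in turn, the clusters being the sets $\ind\UU$ with $\UU\subseteq\CC$ a cluster-tilting subcategory; the first two conditions are essentially repackagings of results recalled above.

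For part~(1) one must verify conditions~(\ref{enumerate:ClusterStructure1}) and~(\ref{enumerate:ClusterStructure2}). Given a cluster-tilting subcategory $\UU$ and $M\in\ind\UU$, put $\DD=\add(\ind\UU\setminus\{M\})$. By Proposition~\ref{proposition:JorgensenPaluCofinite} the subcategory $\DD$ is functorially finite, hence an almost complete cluster-tilting subcategory, so Theorem~\ref{theorem:IyamaYoshinoMutation} shows that $\DD$ lies in exactly two cluster-tilting subcategories: $\UU$ and one we name $(\UU\setminus\{M\})\cup\{M^*\}$. This gives the existence and uniqueness of $M^*$, which is condition~(\ref{enumerate:ClusterStructure1}). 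The two exchange triangles $M\to B\to M^*$ and $M^*\to B'\to M$ with the stated minimality of the $\DD$-approximations are exactly Corollary~\ref{corollary:Mutation}, which is condition~(\ref{enumerate:ClusterStructure2}). So the cluster-tilting subcategories always give a weak cluster structure.

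For part~(2) one additionally checks conditions~(\ref{enumerate:ClusterStructure3}) and~(\ref{enumerate:ClusterStructure4}); the former is the hypothesis. For~(\ref{enumerate:ClusterStructure4}), fix $\UU$, $M\in\ind\UU$, $\DD=\add(\ind\UU\setminus\{M\})$ and the mutation $\UU^*=(\UU\setminus\{M\})\cup\{M^*\}$ with exchange triangles as above. I would first read the arrows of the Auslander--Reiten quiver of $\UU$ incident with $M$ off these triangles: the minimal left $\DD$-approximation $M\to B$ shows that the arrows out of $M$ are indexed, with multiplicity, by the indecomposable summands of $B$, and dually the minimal right $\DD$-approximation $B'\to M$ indexes the arrows into $M$; since the same triangles read from the other side give the left and right $\DD$-approximations of $M^*$ (with the roles of $B$ and $B'$ interchanged), the arrows incident with $M^*$ in the quiver of $\UU^*$ are controlled symmetrically, which accounts for the reversal of arrows at the mutated vertex. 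Next, for $N,N'\in\ind\DD$, I would compare $\rad_\UU(N,N')/\rad_\UU^2(N,N')$ with $\rad_{\UU^*}(N,N')/\rad_{\UU^*}^2(N,N')$ by rewriting any length-two path $N\to M\to N'$ in terms of $\DD$ and $M^*$ via the exchange triangles, translating these factorizations into the module category $\mod\UU\simeq\CC/[\UU[1]]$ (Proposition~\ref{proposition:AbelianQuotient}), where the approximation sequences govern them. Finally I would invoke the no-loop/no-$2$-cycle hypothesis to exclude cancellation between arrows $N\to N'$ and paths $N\to M\to N'$, equivalently to see that the relevant short exact sequences in $\mod\UU$ split off no extraneous summands, so that the net change in the number of arrows between each pair of vertices is exactly that prescribed by the Fomin--Zelevinsky rule.

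This last step, carried out in detail in~\cite[\S II]{BuanIyamaReitenScott09}, is where the real difficulty lies: I expect the main obstacle to be precisely the homological bookkeeping that converts the combinatorial no-loop/no-$2$-cycle assumption into the non-splitting statement in $\mod\UU$ that makes the ``$\max$'' in the Fomin--Zelevinsky exchange rule come out correctly.
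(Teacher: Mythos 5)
The paper gives no proof of this theorem: it is recalled directly from \cite[Theorem II.2.1]{BuanIyamaReitenScott09}, so the only thing to compare your argument against is that citation. Your reduction of part (1) to Proposition \ref{proposition:JorgensenPaluCofinite}, Theorem \ref{theorem:IyamaYoshinoMutation} and Corollary \ref{corollary:Mutation} is correct, and for part (2) you correctly identify the quiver-mutation computation as the substantive content and defer it to the same reference the paper relies on, so the proposal is sound and consistent with the paper's treatment.
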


\begin{example}\label{example:ClusterStructureFinite}
For a finite acyclic quiver $Q$, the category $\CC_Q$ has a cluster structure where the clusters are given by the cluster-tilting objects.  Indeed, it has been shown in \cite{BuanMarshReinekeReitenTodorov06} that $\CC_Q$ has a weak cluster structure and in \cite[Proposition 3.2]{BuanMarshReiten08} that the quivers of the cluster-tilting objects have no loops or 2-cycles.
\end{example}

\begin{lemma}\label{lemma:QuotientsOfRadicalMorphisms}
Let $\WW \subset \UU$ and let $f \in \Hom_\UU(X,Y)$.  If either $X$ or $Y$ lies in $\ind (\UU \setminus \WW)$, then $f \in \rad^1_{\UU}(X,Y) \Leftrightarrow \overline{f} \in \rad^1_{\UU / [\WW]}(X,Y)$.
\end{lemma}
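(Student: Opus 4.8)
The plan is to reduce the whole statement to the endomorphism ring of whichever of $X,Y$ is the distinguished indecomposable object. Since $\rad^1$ is self-dual and $(\UU/[\WW])^{\mathrm{op}}$ is canonically identified with $\UU^{\mathrm{op}}/[\WW^{\mathrm{op}}]$, we may assume without loss of generality that $X$ is indecomposable with $X \notin \ind\WW$; the case in which $Y$ is the distinguished object then follows by applying the result to the opposite categories. I would first recall the standard characterization of the radical of a Krull--Schmidt category: for an object $Z$ of an additive category $\DD$ and an object $X$ of $\DD$ with local endomorphism ring, a morphism $\phi\colon X \to Z$ lies in $\rad^1_{\DD}(X,Z)$ if and only if $\psi\phi$ is a non-unit of $\End_\DD(X)$ for every $\psi\colon Z \to X$. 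Indeed, if some $\psi\phi$ were invertible then $(\psi\phi)^{-1}\psi$ would be a retraction of $\phi$, contradicting $\phi\in\rad^1$; and if every $\psi\phi$ lies in $\rad\End_\DD(X)$ then every $1_X - \psi\phi$ is invertible, which is precisely the defining condition for membership in $\rad^1_{\DD}(X,Z)$.

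The key preliminary step is to check that $X$ remains indecomposable in $\UU/[\WW]$ and that the quotient functor detects units of $\End_\UU(X)$. Because $X$ is indecomposable and not isomorphic to an object of $\WW$, it is not a direct summand of any object of $\add\WW$; hence, by splitting of idempotents, the identity $1_X$ does not factor through $\add\WW$, so the two-sided ideal $[\WW](X,X)$ of $\End_\UU(X)$ is proper. Since $\End_\UU(X)$ is local, a proper ideal is contained in $\rad\End_\UU(X)$, so $\End_{\UU/[\WW]}(X) = \End_\UU(X)/[\WW](X,X)$ is again local, with Jacobson radical the image of $\rad\End_\UU(X)$. In particular $X$ is indecomposable in $\UU/[\WW]$, and for $h \in \End_\UU(X)$ we have: $h$ is invertible in $\End_\UU(X)$ if and only if $\overline h$ is invertible in $\End_{\UU/[\WW]}(X)$.

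It then remains to assemble these observations. If $f \notin \rad^1_{\UU}(X,Y)$, choose $g\colon Y\to X$ with $gf$ invertible; then $\overline g\,\overline f = \overline{gf}$ is invertible in $\End_{\UU/[\WW]}(X)$, so by the characterization above (applied in $\DD = \UU/[\WW]$ with $Z = Y$) we get $\overline f \notin \rad^1_{\UU/[\WW]}(X,Y)$. Conversely, if $\overline f \notin \rad^1_{\UU/[\WW]}(X,Y)$, choose $\overline g\colon Y\to X$ with $\overline g\,\overline f$ invertible (possible since $\End_{\UU/[\WW]}(X)$ is local) and lift $\overline g$ to some $g\colon Y \to X$ in $\UU$; then $\overline{gf}$ is invertible, hence $gf$ is invertible in $\End_\UU(X)$ by the unit-detection property, and therefore $f\notin \rad^1_{\UU}(X,Y)$. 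There is no serious obstacle here: the only place the hypotheses are genuinely used is the properness of $[\WW](X,X)$, which rests on $X\notin\ind\WW$ together with the Krull--Schmidt (idempotent-splitting) assumption, and which is exactly what keeps $X$ indecomposable---and the quotient functor unit-reflecting---after passing to $\UU/[\WW]$.
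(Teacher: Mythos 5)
Your proof is correct, and it takes a genuinely different route from the one in the paper. The paper tests radical membership against \emph{all} indecomposables $Z \in \ind\UU$: it shows $f \in \rad^1_\UU(X,Y)$ iff every composite $b \circ f \circ a \in \End_\UU(Z)$ is a non-unit, and then passes between $\UU$ and $\UU/[\WW]$ by observing that non-units in the artinian local ring $\End_\UU(Z)$ are nilpotent (for one direction) and by splitting into the cases $Z \in \ind\WW$ and $Z \notin \ind\WW$ (for the other). You instead test only against the distinguished indecomposable $X$ itself, via the criterion that $f \in \rad^1(X,Y)$ iff $gf$ is a non-unit of the local ring $\End(X)$ for every $g\colon Y \to X$; the whole lemma then collapses to the single observation that $[\WW](X,X)$ is a proper two-sided ideal of the local ring $\End_\UU(X)$ (because $X \notin \ind\WW$ and idempotents split), hence lies in $\rad\End_\UU(X)$, so the quotient ring is again local and the projection reflects units. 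Your version is shorter, avoids both the nilpotence argument (and with it any reliance on $\End_\UU(Z)$ being artinian rather than merely local) and the case split on $Z$, and it isolates cleanly the one place where the hypothesis $X \in \ind(\UU\setminus\WW)$ is used. The duality reduction to the case where $X$ is the distinguished object is also fine, since the radical and the ideal $[\WW]$ are both compatible with passing to opposite categories. The paper's argument, for its part, has the mild advantage of treating source and target symmetrically without invoking opposite categories. Both proofs are valid in the setting at hand.
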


\begin{proof}
We will assume that $X \in \ind (\UU \setminus \WW)$.  The case where $Y \in \ind (\UU \setminus \WW)$ is similar.

Let $f \in \rad^1_{\UU}(X,Y)$.  This means that for every $Z \in \ind \UU$ and any $a \in \Hom_\UU(Z,X), b \in \Hom_\UU(Y,Z)$, we have that ${b \circ f \circ a} \in \Hom_{\UU}(Z,Z)$ is not invertible, hence $b \circ f \circ a$ is nilpotent (since $\Hom_{\UU}(Z,Z)$ is an artinian local ring).  This implies that $\overline{b \circ f \circ a}$ is nilpotent in $\Hom_{\UU/[\WW]}(Z,Z) = \Hom_{\UU}(Z,Z) / [\WW](Z,Z)$, so that we may conclude that $\overline{f} \in \rad^1_{\UU / [\WW]}(X,Y)$.

For the other direction, we assume that $\overline{f} \in \rad^1_{\UU / [\WW]}(X,Y)$.  We want to show that $f \in \rad^1_\UU(X,Y)$, or thus that for any $Z \in \ind \UU$ and $a \in \Hom_\UU(Z,X), b \in \Hom_\UU(Y,Z)$, the composition $b \circ f \circ a \in \Hom_\UU(Z,Z)$ is not invertible.

If $Z \in \ind \WW$, then $a: Z \to X$ is not an isomorphism, and thus neither is the composition $b \circ f \circ a$.  We therefore need only consider the case where $Z \not\in \ind \WW$.  Since $\overline{f} \in \rad^1_{\UU / [\WW]}(X,Y)$, we know that $\overline{b \circ f \circ a}$ is nilpotent, or thus that ${(b \circ f \circ a)}^n \in [\WW](Z,Z)$ for some $n \in \bN$.  However, this implies that ${(b \circ f \circ a)}^n$ is not an isomorphism, and hence neither is $b \circ f \circ a$.
\end{proof}

\begin{theorem}\label{theorem:ClusterStructure}
Let $\CC$ be an algebraic Krull-Schmidt 2-Calabi-Yau triangulated category.  If $\CC$ has a directed cluster-tilting subcategory, then the cluster-tilting subcategories give a cluster structure on $\CC$.
\end{theorem}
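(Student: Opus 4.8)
The plan is to reduce the statement to a condition on Auslander--Reiten quivers and then transport that condition along an Iyama--Yoshino reduction. By Theorem~\ref{theorem:BuanIyamaReitenScott}(1), as soon as $\CC$ has \emph{some} cluster-tilting subcategory, the cluster-tilting subcategories already determine a weak cluster structure, so conditions~(\ref{enumerate:ClusterStructure1}) and~(\ref{enumerate:ClusterStructure2}) of Definition~\ref{definition:ClusterStructure} hold (these are exactly Corollary~\ref{corollary:Mutation}). By Theorem~\ref{theorem:BuanIyamaReitenScott}(2), to upgrade this to a full cluster structure it then suffices to verify that the Auslander--Reiten quiver of \emph{every} cluster-tilting subcategory of $\CC$ has no loops and no $2$-cycles; conditions~(\ref{enumerate:ClusterStructure3}) and~(\ref{enumerate:ClusterStructure4}) follow together. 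So the whole proof comes down to ruling out loops and $2$-cycles.

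I would argue by contradiction. Suppose some cluster-tilting subcategory $\UU \subseteq \CC$ has a loop at an indecomposable $U \in \ind\UU$, i.e.\ $\rad_\UU(U,U)/\rad^2_\UU(U,U) \neq 0$, or a $2$-cycle between $U \not\cong U'$ in $\ind\UU$, i.e.\ both $\rad_\UU(U,U')/\rad^2_\UU(U,U')$ and $\rad_\UU(U',U)/\rad^2_\UU(U',U)$ are nonzero. Put $\WW = \add(\ind\UU \setminus \{U\})$ in the first case and $\WW = \add(\ind\UU \setminus \{U,U'\})$ in the second. Then $\WW$ is a full additive Krull--Schmidt subcategory of $\UU$ with $\lvert\ind\UU \setminus \ind\WW\rvert < \infty$; it is rigid (being contained in $\UU$) and functorially finite in $\CC$ by Proposition~\ref{proposition:JorgensenPaluCofinite}. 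Since $\CC$ is algebraic, Corollary~\ref{corollary:BongartzQuiver} gives $\CC_{[\WW]} \cong \CC_Q$ for a finite acyclic quiver $Q$. Moreover $\WW \subseteq \UU$ and $\UU$ is rigid, so $\UU \subseteq \WW^{\perp_1}$, and hence by \cite[Theorem 4.9(1)]{IyamaYoshino08} the image $\overline\UU := \UU/[\WW]$ is a cluster-tilting subcategory of $\CC_{[\WW]}$; it has only the finitely many indecomposables coming from $\ind\UU \setminus \ind\WW$, so $\overline\UU = \add V$ for a cluster-tilting object $V$ of $\CC_{[\WW]} \cong \CC_Q$.

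The core step is to carry the loop (resp.\ $2$-cycle) from the Auslander--Reiten quiver of $\UU$ to that of $\overline\UU$, using Lemma~\ref{lemma:QuotientsOfRadicalMorphisms}. Write $H\colon \UU \to \overline\UU$ for the quotient functor. For $X,Y \in \ind\UU$ with $X \notin \ind\WW$ or $Y \notin \ind\WW$, Lemma~\ref{lemma:QuotientsOfRadicalMorphisms} gives $H(\rad_\UU(X,Y)) = \rad_{\overline\UU}(\overline X,\overline Y)$; applying this at every step of a two-term composition one checks $H(\rad^2_\UU(X,Y)) = \rad^2_{\overline\UU}(\overline X,\overline Y)$ for $X,Y \in \{U,U'\}$ (a factorization through $Z \in \ind\WW$ dies in $\overline\UU$, one through $Z \notin \ind\WW$ has both factors radical by the lemma, and conversely such a composition in $\overline\UU$ lifts). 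Since any morphism between objects of $\{U,U'\}$ that factors through an object of $\WW$ is a composite of two non-invertible maps, $[\WW](X,Y) \subseteq \rad^2_\UU(X,Y) \subseteq \rad_\UU(X,Y)$ there, so $H$ induces isomorphisms
\[ \rad_\UU(X,Y)/\rad^2_\UU(X,Y)\ \xrightarrow{\ \sim\ }\ \rad_{\overline\UU}(\overline X,\overline Y)/\rad^2_{\overline\UU}(\overline X,\overline Y) \qquad (X,Y \in \{U,U'\}). \]
Hence the Auslander--Reiten quiver of $\overline\UU = \add V$ still has a loop at $\overline U$ (resp.\ a $2$-cycle between $\overline U$ and $\overline U'$; if these happen to coincide we get a loop instead, which is equally forbidden). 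This contradicts Example~\ref{example:ClusterStructureFinite}, by which no cluster-tilting object of $\CC_Q$, $Q$ finite acyclic, has a loop or a $2$-cycle in its quiver. Therefore no cluster-tilting subcategory of $\CC$ has a loop or a $2$-cycle, and Theorem~\ref{theorem:BuanIyamaReitenScott}(2) concludes.

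The hard part will be the middle paragraph: showing that $H$ sends $\rad^2_\UU$ \emph{onto} $\rad^2_{\overline\UU}$ (not merely into it), so that the local shape of the Auslander--Reiten quiver at $U$ and $U'$ is genuinely preserved and cannot collapse. This is exactly where Lemma~\ref{lemma:QuotientsOfRadicalMorphisms} is needed, and one must keep track that the intermediate objects in every factorization lie outside $\WW$ so that the hypothesis of that lemma applies; everything else is assembly of results already in place.
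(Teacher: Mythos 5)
Your proposal is correct and follows essentially the same route as the paper: reduce via Theorem~\ref{theorem:BuanIyamaReitenScott} to excluding loops and $2$-cycles, pass to the Iyama--Yoshino reduction $\CC_{[\WW]}$ at the complement of the offending indecomposables, identify it with $\CC_Q$ via Corollary~\ref{corollary:BongartzQuiver}, transport irreducibility with Lemma~\ref{lemma:QuotientsOfRadicalMorphisms}, and contradict Example~\ref{example:ClusterStructureFinite}. Your middle paragraph in fact spells out the $\rad^2$-comparison (in particular that $[\WW](X,Y)\subseteq\rad^2_\UU(X,Y)$) in more detail than the paper, which simply invokes the lemma.
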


\begin{proof}
By Theorem \ref{theorem:BuanIyamaReitenScott}, we need only show that the Auslander-Reiten quivers of the cluster-tilting subcategories of $\CC$ have no loops or 2-cycles.  Let $\UU$ be a cluster-tilting subcategory of $\CC$.  Seeking a contradiction, assume that $\UU$ has a 2-cycle $X \rightleftarrows Y$; the case where $\UU$ has a loop can be handled in a similar way.

Let $\WW = \add(\ind \UU \setminus \{X,Y\})$.  Note that $\add(X \oplus Y)$ is a cluster-tilting object in $\CC_{[\WW]}$ by \cite[Theorem 4.9]{IyamaYoshino08}.  

Let $f \in \Irr_\UU(X,Y) = \rad_\UU^1(X,Y) \setminus \rad_\UU^2(X,Y)$.  Lemma \ref{lemma:QuotientsOfRadicalMorphisms} yields that $\overline{f} \in \Irr_{\UU/[\WW]}(X,Y) = \rad_{\UU/[\WW]}^1(X,Y) \setminus \rad_{\UU/[\WW]}^2(X,Y)$.  Hence, there is an arrow $X \to Y$ in the Auslander-Reiten quiver of $\UU / [\WW]$.  Similarly, we find an arrow $Y \to X$ in the Auslander-Reiten quiver of $\UU / [\WW]$.

However, it follows from Corollary \ref{corollary:BongartzQuiver} that $\CC_{[\WW]}$ is of the form $\CC_Q$, for a (finite) quiver $Q$, and it follows from \cite[Proposition 3.2]{BuanMarshReiten08} that the cluster-tilting object $X \oplus Y$ of $\CC_Q$ has no 2-cycle.  We have obtained the required contradiction.
\end{proof}

As a corollary, we obtain a positive answer to a conjecture in \cite{LiuPaquette15}.

\begin{corollary}
Let $Q$ be a strongly locally finite thread quiver.  The cluster-tilting subcategories give a cluster structure on $\CC_Q = \Db (\mod kQ) / (\bS \circ [-2])$.
\end{corollary}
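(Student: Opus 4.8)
The plan is to recognize the statement as an essentially immediate consequence of Theorem~\ref{theorem:ClusterStructure}, with all the work having been carried out in the preceding sections; what remains is only to check that $\CC_Q$ satisfies the hypotheses of that theorem. First I would invoke Theorem~\ref{theorem:AllThreadQuiversOccur}: since $Q$ is a strongly locally finite thread quiver, the orbit category $\CC_Q = \Db(\mod kQ)/(\bS\circ[-2])$ is an algebraic Krull-Schmidt 2-Calabi-Yau triangulated category, and it admits a cluster-tilting subcategory $\TT$ equivalent to $kQ$ (via the image of the projectives of $\mod kQ$, i.e.\ of the Yoneda embedding $kQ\to\mod kQ$).

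Next I would verify that this cluster-tilting subcategory is \emph{directed}. This is exactly the content of the equivalence of conditions \eqref{enumerate:Directed} and \eqref{enumerate:ThreadQuiver} in Corollary~\ref{corollary:Trianglefree}: a cluster-tilting subcategory of a Krull-Schmidt 2-Calabi-Yau category which is equivalent to $kQ'$ for a strongly locally finite thread quiver $Q'$ is directed. (Equivalently, one may argue that $kQ$ is a semi-hereditary dualizing $k$-variety by Theorem~\ref{theorem:ClassificationIntroduction}, and that a semi-hereditary cluster-tilting subcategory is directed by the implication \eqref{enumerate:SemiHereditary}$\Rightarrow$\eqref{enumerate:Directed} of Corollary~\ref{corollary:Trianglefree}, which in turn rests on \cite[Proposition~1.1]{AuslanderReiten75}.) With $\CC_Q$ algebraic Krull-Schmidt 2-Calabi-Yau and $\TT$ a directed cluster-tilting subcategory, Theorem~\ref{theorem:ClusterStructure} applies verbatim and yields that the cluster-tilting subcategories of $\CC_Q$ constitute a cluster structure in the sense of Definition~\ref{definition:ClusterStructure}.

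I do not expect any genuine obstacle here: the proof is a two-line deduction, and the substance is entirely inside Theorem~\ref{theorem:ClusterStructure}, whose argument reduces (through the Bongartz completion of Theorem~\ref{theorem:HereditaryBongartzComplement} and Corollary~\ref{corollary:BongartzQuiver}, together with the no-loop/no-2-cycle statement \cite[Proposition~3.2]{BuanMarshReiten08} for finite acyclic quivers) the absence of loops and $2$-cycles in the Auslander--Reiten quivers of cluster-tilting subcategories to the finite-rank acyclic case. The only point worth a remark is the relation to the conjecture of \cite{LiuPaquette15}: by Corollary~\ref{corollary:Trianglefree}, the categories $kQ$ with $Q$ strongly locally finite are precisely the directed cluster-tilting subcategories that can occur, so the present corollary is exactly the asserted special case of Theorem~\ref{theorem:ClusterStructure}.
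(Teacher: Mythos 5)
Your proof is correct and follows exactly the route the paper takes: the paper's own proof is the one-line "This follows from Theorems \ref{theorem:AllThreadQuiversOccur} and \ref{theorem:ClusterStructure}," and you have simply made explicit the intermediate step (via Corollary~\ref{corollary:Trianglefree}) that the cluster-tilting subcategory equivalent to $kQ$ is directed, which is indeed the hypothesis needed to apply Theorem~\ref{theorem:ClusterStructure}. No gaps.
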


\begin{proof}
This follows from Theorems \ref{theorem:AllThreadQuiversOccur} and \ref{theorem:ClusterStructure}.
\end{proof}

\begin{definition}\label{definition:Reachability}
We say that a cluster-tilting subcategory $\VV$ is \emph{reachable by mutation} (or just \emph{reachable}) from $\UU$ if there is a finite sequence of cluster-tilting subcategories $\UU = \UU_0, \UU_1, \ldots, \UU_n = \VV$ such that $\UU_i \cap \UU_{i-1}$ are almost cluster-tilting subcategories for $1 \leq i \leq n$ {(thus $|\ind \UU_i \setminus \ind \UU_{i-1}|=1$)}.  We say that a rigid object $E \in \CC$ is \emph{reachable by mutation} (or just \emph{reachable}) from $\UU$ if there is a cluster-tilting subcategory $\VV$, reachable from $\UU$, with $E \in \VV$.
\end{definition}

\begin{remark}
{A cluster-tilting object $\UU$ is reachable from a cluster-tilting object $\VV$ if and only if $\UU$ and $\VV$ are in the same connected component of the exchange graph of $\CC$ (see Definition \ref{definition:ExchangeGraph}).}
\end{remark}

\begin{corollary}\label{corollary:Bongartz}
Let $\CC$ be an algebraic 2-Calabi-Yau category with a cluster-tilting subcategory $\UU \subseteq \CC$.  Assume that $\CC$ admits a directed cluster-tilting subcategory.
\begin{enumerate}
\item Let $X \in \CC$ be a rigid object such that $\Ext^1(U,X) = 0$ for all but finitely many $U \in \ind \UU$.  There is a cluster-tilting subcategory $\VV$ of $\CC$, reachable from $\UU$, with $X \in \VV$.
\item A cluster-tilting subcategory $\VV$ is reachable from $\UU$ if and only if $\UU$ and $\VV$ differ at only finitely many indecomposable objects.  In this case $|(\ind \UU) \setminus (\ind \VV)| = |(\ind \VV) \setminus (\ind \UU)|$.
\end{enumerate}
\end{corollary}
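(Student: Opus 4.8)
The plan is to transport both statements to a reduction $\CC_{[\WW]}$ which, by Corollary~\ref{corollary:BongartzQuiver}, is the cluster category $\CC_Q$ of a finite acyclic quiver $Q$ --- a setting in which the exchange graph is connected by~\cite{BuanMarshReinekeReitenTodorov06, Hubery11} --- and then to transfer statements about mutation back to $\CC$ along the Iyama--Yoshino correspondence of~\cite[Theorem 4.9]{IyamaYoshino08}, which matches cluster-tilting subcategories of $\CC$ containing $\WW$ with cluster-tilting subcategories of $\CC_{[\WW]}$, compatibly with mutation. Throughout we use that a subcategory of a cluster-tilting subcategory whose set of indecomposables is cofinite is functorially finite (Proposition~\ref{proposition:JorgensenPaluCofinite}), and that any two cluster-tilting subcategories of a $2$-Calabi-Yau Krull--Schmidt category have the same number of indecomposables (\cite[Theorem 2.4]{DehyKeller08}).

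For~(1), set $\WW = \add\{U \in \ind\UU \mid \Ext^1(U,X) = 0\}$. By hypothesis $\ind\UU \setminus \ind\WW$ is finite, so $\WW$ is a functorially finite rigid subcategory of $\CC$ with $X \in \WW^{\perp_1}$, and Corollary~\ref{corollary:BongartzQuiver} gives $\CC_{[\WW]} \cong \CC_Q$ for a finite acyclic quiver $Q$. Next I would check that $X$ is rigid in $\CC_{[\WW]}$: applying $\Hom_\CC(X,-)$ to the defining triangle $X \to X' \to X\langle 1\rangle \to X[1]$ of the shift of $\CC_{[\WW]}$ (where $X \to X'$ is a minimal left $\WW$-approximation) and using $\Ext^1_\CC(X,X) = 0$ shows that every morphism $X \to X\langle 1\rangle$ factors through $X' \in \WW$, hence $\Ext^1_{\CC_{[\WW]}}(X,X) = 0$. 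Thus $\add X$ is a functorially finite rigid subcategory of $\CC_{[\WW]}$, so by the Bongartz completion (Theorem~\ref{theorem:IntroductionBongartzComplement}, applied inside $\CC_{[\WW]}$) there is a cluster-tilting subcategory $\overline\VV$ of $\CC_{[\WW]}$ with $X \in \overline\VV$; since $\CC_{[\WW]} \cong \CC_Q$ with $Q$ finite, $\overline\VV$ and the image $\overline\UU$ of $\UU$ are cluster-tilting objects (they have finitely many indecomposable summands). By connectedness of the exchange graph of $\CC_Q$, $\overline\VV$ is reachable from $\overline\UU$ by mutation; lifting this mutation sequence along~\cite[Theorem 4.9]{IyamaYoshino08} produces a mutation sequence from $\UU$ to the cluster-tilting subcategory $\VV \supseteq \WW$ of $\CC$ corresponding to $\overline\VV$. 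Finally $X \in \VV$: the indecomposable summands of $X$ lying in $\WW$ are in $\WW \subseteq \VV$, while the remaining ones survive in $\CC_{[\WW]}$ as indecomposable summands of $\overline\VV$ and so lie in $\VV$.

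For~(2), if $\VV$ is reachable from $\UU$ then $\ind\UU$ and $\ind\VV$ differ in finitely many indecomposables, since each single mutation alters $\ind$ at exactly one element. Conversely, assume $\ind\UU$ and $\ind\VV$ differ in finitely many indecomposables and put $\WW' = \add(\ind\UU \cap \ind\VV)$; then $\ind\UU \setminus \ind\WW'$ is finite, so $\WW'$ is functorially finite and $\CC_{[\WW']} \cong \CC_Q$ with $Q$ finite acyclic by Corollary~\ref{corollary:BongartzQuiver}. Both $\UU$ and $\VV$ contain $\WW'$, so by~\cite[Theorem 4.9]{IyamaYoshino08} they correspond to cluster-tilting objects $\overline\UU, \overline\VV$ of $\CC_{[\WW']}$ with $|\ind\overline\UU| = |\ind\UU \setminus \ind\VV|$ and $|\ind\overline\VV| = |\ind\VV \setminus \ind\UU|$; applying~\cite[Theorem 2.4]{DehyKeller08} inside $\CC_{[\WW']}$ gives $|\ind\UU \setminus \ind\VV| = |\ind\VV \setminus \ind\UU|$, which is the last assertion of~(2) (and uses only the finiteness of the symmetric difference, not reachability). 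Since the exchange graph of $\CC_Q$ is connected, there is a mutation sequence from $\overline\UU$ to $\overline\VV$, which lifts along~\cite[Theorem 4.9]{IyamaYoshino08} to a mutation sequence from $\UU$ to $\VV$; hence $\VV$ is reachable from $\UU$.

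The point that needs the most care is the compatibility of the Iyama--Yoshino correspondence with mutation: one must verify that a single Iyama--Yoshino mutation of cluster-tilting subcategories of $\CC_{[\WW]}$ at an indecomposable object (which is automatically not in $\WW$) corresponds under~\cite[Theorem 4.9]{IyamaYoshino08} to a single mutation of the corresponding cluster-tilting subcategories of $\CC$ at the lift of that object, so that ``reachable'' passes between $\CC$ and $\CC_{[\WW]}$ in both directions. Granting this (which is inherent in the Iyama--Yoshino reduction package), the remainder is bookkeeping with reductions together with the rigidity computation above.
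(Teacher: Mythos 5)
Your proof is correct and follows essentially the same route as the paper: reduce modulo the cofinite rigid subcategory $\WW$ (resp.\ $\UU\cap\VV$), identify the reduction with $\CC_Q$ for a finite acyclic quiver via Corollary~\ref{corollary:BongartzQuiver}, complete $X$ to a cluster-tilting object there (the paper cites Corollary~\ref{corollary:BongartzObjects} for this), and invoke connectedness of the exchange graph of $\CC_Q$ together with \cite[Theorem 2.4]{DehyKeller08}. The extra details you supply — the check that $X$ remains rigid in $\CC_{[\WW]}$ and the compatibility of the Iyama--Yoshino correspondence with mutation — are points the paper delegates to the cited results, and you handle them correctly.
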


\begin{proof}
\begin{enumerate}
\item Let $\WW$ be the full subcategory of $\UU$, consisting of all objects $W \in \UU$ for which $\Ext^1(W,X) = 0$.  Note that $X \in \WW^{\perp_1}$.  It follows from Corollary \ref{corollary:BongartzQuiver} that $\CC_{[\WW]} \cong \CC_Q$, for a finite quiver $Q$, and from Corollary \ref{corollary:BongartzObjects} that $\CC_{[\WW]}$ has a cluster-tilting object $R$ of which $X$ is a direct summand.  The result now follows from \cite{BuanMarshReinekeReitenTodorov06} {(see also \cite[Theorem 19]{Hubery11})}.

\item It is clear that if $\VV$ is reachable from $\UU$, then $\UU$ and $\VV$ can only differ at finitely many indecomposable objects.  For the other direction, assume that $\UU$ and $\VV$ differ at only finitely many indecomposable objects.  Let $\WW = \UU \cap \VV$.  As above, we may use Corollary \ref{corollary:BongartzQuiver} to see that $\CC_{[\WW]} \cong \CC_Q$, for a finite quiver $Q$.  Note that both $\UU$ and $\WW$ become cluster-tilting objects in $\CC_{[\WW]}$.  Again, the result now follows from the connectedness of the exchange graph of $\CC_{[\WW]} \cong \CC_Q$ (see \cite{BuanMarshReinekeReitenTodorov06} or \cite[Theorem 19]{Hubery11}).

For the last statement, note that $|\ind(\UU / [\WW])| = |\ind(\UU) \setminus \ind \WW)|$ and $|\ind(\VV / [\WW])| = |\ind(\VV) \setminus \ind \WW)|$.  Since $\UU / [\WW]$ and $\VV / [\WW]$ are cluster-tilting subcategories of $\CC_{[\WW]}$, the statement follows from \cite[Theorem 2.4]{DehyKeller08} (see also \cite[Corollary 4.5]{AdachiIyamaReiten14} or \cite[Corollary 3.7]{ZhouZhu11}).
\qedhere
\end{enumerate}
\end{proof}

\section{Triangulations of cyclic orders}\label{section:Triangulations}

In this section, we generalize the notion of a triangulation of an $n$-gon to a triangulation of a locally discrete cyclically ordered set.  In the first part, we give a short account about cyclically ordered sets.  In the second part, we define triangulations, maximality, connectedness, and local finiteness and introduce the notion of exchanging arcs. We then proceed to study some connections between these notions.

\subsection{Definitions}\label{subsection:CyclicallyOrderedDefinitions}

Let $C$ be a set and let $R \subseteq C^3$ be a ternary relation.  We say that $(C,R)$ is a \emph{cyclically ordered set} or a \emph{cyclic order} (see \cite{Novak82}) if the following conditions are satisfied: 
\begin{description}
\item[Cyclicity] $\forall a,b,c \in C: (a,b,c) \in R \Rightarrow (b,c,a) \in R,$
\item[Asymmetry] $\forall a,b,c \in C: (a,b,c) \in R \Rightarrow (c,b,a) \not\in R,$
\item[Transitivity] $\forall a,b,c,d \in C: (a,b,c) \in R \land (a,c,d) \in R \Rightarrow (a,b,d) \in R$,
\item[Totality] $\forall a,b,c \in C: \mbox{if $a,b,c$ are distinct, then $(a,b,c) \in R \lor (c,b,a) \in R$.}$
\end{description}
We will also write $R(a,b,c)$ for $(a,b,c) \in R$.

\begin{remark}\label{remark:Diagonal}
Note that $R(a,b,c)$ implies that $a,b,c$ are distinct.  Indeed, assume that $a=c$ (by cyclicity, we may reduce to this case).  Asymmetry then implies that $(a,b,c) \not\in R.$  Thus, for $R \not= \emptyset$, the set $C$ needs at least three elements.
\end{remark}

\begin{remark}
If $(C,R)$ is a cyclic order, then so is $(C,R^\circ)$ where $R^\circ = \{(c,b,a) \in C^3 \mid (a,b,c) \in R \}.$
\end{remark}

\begin{remark}\label{remark:CyclicSuborder}
Let $(C,R)$ be a cyclic order.  For any $D \subseteq C$, we have that $(D, R|_D)$, where $R|_D = R \cap D^3$, is a cyclic order.
\end{remark}

\begin{remark}
In contrast to our conventions for linearly ordered sets (see \S\ref{subsection:Lin ord sets}), we do not interpret a cyclically ordered set as a category. For such definitions, we refer to, for example, \cite{Drinfeld04, IgusaTodorov15, vanRoosmalen12b}.
\end{remark}

A \emph{morphism} $f\colon (C,R_C) \to (D,R_D)$ of cyclic orders is a map $f\colon C \to D$ such that $\forall a,b,c \in C: R_D(f(a),f(b),f(c)) \Rightarrow R_C(a,b,c)$ (see \cite{NovakNovotny85}).

We will write $\COrd$ for the category of cyclic orders with the above morphisms.

\begin{remark}\label{remark:MonoAndIso}
\begin{enumerate}
\item {A cyclic order $R$ as defined above is the cyclic analogue of a \emph{strict} linear ordering (Example~\ref{example:CyclicFromLinear} will make this more precise). One can characterize morphisms $f\colon (\LL,\le) \to (\LL',\le)$ of linearly ordered sets in an analogous way using the strict version of the ordering: $\forall a,b \in \LL: f(a)<f(b) \Rightarrow a<b$.} 

\item A morphism $f\colon (C,R_C) \to (D,R_D)$ is a monomorphism if and only if $f\colon C \to D$ is an injection.  Indeed, if $f$ is an injection, then it is clear that $f$ is a monomorphism.  For the other direction, assume that $f$ is not an injection, so that there are $a,b \in C$ such that $a \not= b$ and $f(a) = f(b)$.  Consider the morphisms {$g_a \colon (\{\ast\}, \emptyset) \to (C,R)$ and $g_b\colon (\{\ast\}, \emptyset) \to (C,R)$} given by $g_a(\ast) = a$ and $g_b(\ast) = b$.  We have $f \circ g_a = f \circ g_b$ while $g_a \not= g_b$.  This shows that $f$ is not a monomorphism.
\item A morphism $f\colon (C,R_C) \to (D,R_D)$ is an isomorphism if and only if $f\colon C \to D$ is a bijection.
\end{enumerate}
When $f$ is a monomorphism or an isomorphism, we have $\forall a,b,c \in \LL: R_D(f(a),f(b),f(c)) \Leftrightarrow R_C(a,b,c)$.
\end{remark}

\begin{remark}
The cyclic orders we consider here are slightly different from the ones considered in \cite{DyckerhoffKapranov15}: one can pass between them by adding or removing the triples $(a,b,c)$ for which $|\{a,b,c\}| \not= 3$ to or from $R$ (compare Remark \ref{remark:Diagonal} with \cite[Definition II.25(C5)]{DyckerhoffKapranov15}).  However, the morphisms considered in \cite{DyckerhoffKapranov15} carry more information than the morphisms we consider.  Consequently, Connes' cyclic category $\Lambda$ (see \cite{Connes94}) is not a full subcategory of $\COrd$ (compare with \cite{DyckerhoffKapranov15}).  However, the notions of a monomorphism and an isomorphism coincide between our approach and the approach in \cite{DyckerhoffKapranov15}.
\end{remark}

\begin{example}\label{example:CyclicOrders}
Let $(\LL, \leq)$ be a linearly ordered set and let $\Sigma\colon (\LL, \leq) \to (\LL, \leq)$ be an automorphism.  We require $\Sigma$ to be \emph{unbounded}, meaning that for every $a,b \in \LL$, there are $n,m \in \bZ$ such that $\Sigma^{n} a < b < \Sigma^{m} a$.  For an element $a \in \LL$, we write $\overline{a}$ for the $\Sigma$-orbit of $a$ in $\LL$.  The set $\LL / \Sigma$ has the following cyclic ordering $R$:
$$ (\overline{a}, \overline{b}, \overline{c}) \in R \Leftrightarrow \exists k,l \in \bZ: a < \Sigma^k b < \Sigma^l c < \Sigma(a).$$
More explicitly, by taking
\begin{enumerate}
\item $(\LL, \leq) = (\bR, \leq)$ and $\Sigma(x) = x+1$, we find a cyclic ordering on the circle $S^1 \cong \bR / \bZ$,
\item $(\LL, \leq) = (\bZ, \leq)$ and $\Sigma(x) = x+n$ (for $n \geq 2$), we find a cyclic ordering on $\bZ_n$.
\end{enumerate}
Conversely, every cyclically ordered set can be obtained in this way.  Indeed, let $(C, R)$ be any cyclically ordered set and $c \in C$.  We consider the linear ordering $\leq_c$ on $C$ given as:
\[ a <_c b \Leftrightarrow \mbox{$(a = c) \lor R(c,a,b)$,} \]
for $a \not= b$.  We set $\LL = \bZ \stackrel{\rightarrow}{\times} C$ and $\Sigma: \LL \to \LL: (n,a) \mapsto (n+1,a)$.  In this case, the map $\LL / \Sigma \to C\colon \overline{(n,a)} \to a$ is an isomorphism of cyclically ordered sets.
\end{example}

\begin{example}\label{example:CyclicFromLinear}
With any linearly ordered set $(\LL, \leq)$, we may associate a cyclic order $(\LL, R) = (\LL, \leq)_{\rm cyc}$ as follows:
\[ R(a,b,c) \Leftrightarrow (a < b < c) \lor (b < c < a) \lor (c < a < b). \]
This correspondence defines a functor $(-)_{\rm cyc}$ from the category $\LOrd$ of linearly ordered sets to the category $\COrd$ of cyclically ordered sets.  This functor is essentially surjective (see \cite[3.11 Lemma]{Novak82}).
\end{example}

\begin{definition}\label{definition:RotationallyEquivalent}
A linearly ordered set $(\LL, \leq)$ such that $(\LL, \leq)_{\rm cyc} \cong (C, R)$ is called a \emph{cut} of $(C, R)$.  Two linearly ordered sets, {$(\LL, \leq)$ and $(\LL', \leq)$}, are called \emph{cyclically equivalent} or \emph{rotationally equivalent} if $(\LL, \leq)_{\rm cyc} \cong (\LL', \leq)_{\rm cyc}$, thus if they are cuts of the same cyclically ordered set.
\end{definition}

\begin{proposition}\label{proposition:CriterionCyclicallyEquivalent}
The linearly ordered sets $(\LL, \leq)$ and $(\LL', \leq)$ are cyclically equivalent if and only if there are linearly ordered sets $(\LL_1,\leq)$ and $(\LL_2, \leq)$ such that $\LL \cong \LL_1 \cdot \LL_2$ and $\LL' \cong \LL_2 \cdot \LL_1$.
\end{proposition}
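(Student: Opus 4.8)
The plan is to prove the two implications separately; ($\Leftarrow$) is straightforward and ($\Rightarrow$) is the substantial one.

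\emph{($\Leftarrow$).} Suppose $\LL\cong\LL_1\cdot\LL_2$ and $\LL'\cong\LL_2\cdot\LL_1$. Since the functor $(-)_{\rm cyc}$ of Example~\ref{example:CyclicFromLinear} sends isomorphic linear orders to isomorphic cyclic orders, it suffices to prove $(\LL_1\cdot\LL_2)_{\rm cyc}\cong(\LL_2\cdot\LL_1)_{\rm cyc}$. Both of these cyclic orders have underlying set $\LL_1\amalg\LL_2$, and I would show that the identity map is an isomorphism between them, i.e.\ that $R(a,b,c)\Leftrightarrow R'(a,b,c)$ for pairwise distinct $a,b,c$, where $R,R'$ are the two ternary relations. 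The value of $R(a,b,c)$ depends only on the order the ambient linear order induces on $\{a,b,c\}$, and the orders underlying $\LL_1\cdot\LL_2$ and $\LL_2\cdot\LL_1$ agree on any triple lying entirely in $\LL_1$ or entirely in $\LL_2$; so only the case where $\{a,b,c\}$ meets both blocks needs attention, and there, using that a cyclic order is determined by one representative ordering of each three-element subset, one reduces to the two cases ``two of $a,b,c$ lie in $\LL_1$'' and ``two lie in $\LL_2$'', each settled by a one-line inspection showing that both $R$ and $R'$ hold.

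\emph{($\Rightarrow$), ingredients.} I would use the ``cut'' construction $\le_c$ of Example~\ref{example:CyclicOrders}. First, an elementary lemma: for a linear order $(C,\le)$ and $c\in C$, with $A=\{x\in C\mid x<c\}$ and $B=\{x\in C\mid x\ge c\}$ carrying the induced orders, one has $(C,\le)=A\cdot B$ (immediate, since $A$ is an initial segment) and $(C,\le_c)=B\cdot A$, where $\le_c$ denotes the cut of $(C,\le)_{\rm cyc}$ at $c$; the second identity is a short computation from the definitions of $\le_c$ and of $(C,\le)_{\rm cyc}$. Second, cyclic isomorphisms carry cuts to cuts: if $\phi\colon(C,\le)_{\rm cyc}\to(C',\le')_{\rm cyc}$ is an isomorphism, $c_0\in C$ and $c_0'=\phi(c_0)$, then $\phi$ is a linear-order isomorphism $(C,\le_{c_0})\to(C',\le'_{c_0'})$ — this is immediate from the defining formula for $\le_c$ together with the fact that $\phi$ is a bijection preserving and reflecting the ternary relation.

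\emph{($\Rightarrow$), main argument.} Let $\LL=(C,\le)$ and $\LL'=(C',\le')$ be cyclically equivalent via such a $\phi$ (if $\LL=\emptyset$ then $\LL'=\emptyset$ and the claim is trivial). Fix $c_0\in C$, put $c_0'=\phi(c_0)$, and let $A,B\subseteq C$ and $A',B'\subseteq C'$ be as in the lemma, so that $\LL=A\cdot B$, $\LL'=A'\cdot B'$, and $\phi$ is an isomorphism $B\cdot A\to B'\cdot A'$. Now $B'$ is an initial segment of $B'\cdot A'$, hence $\phi^{-1}(B')$ is an initial segment of $B\cdot A$; and every initial segment of $B\cdot A$ either is contained in $B$ or contains $B$. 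In the first case write $\phi^{-1}(B')=I_0$ with $I_0$ an initial segment of $B$; then $\phi$ restricts to isomorphisms $I_0\cong B'$ and $(B\setminus I_0)\cdot A\cong A'$, whence $\LL'\cong(B\setminus I_0)\cdot A\cdot I_0$ while $\LL=A\cdot I_0\cdot(B\setminus I_0)$, so $\LL_1=A\cdot I_0$ and $\LL_2=B\setminus I_0$ work. In the second case write $\phi^{-1}(B')=B\cup J_0$ with $J_0$ an initial segment of $A$; then $\phi$ restricts to isomorphisms $B\cdot J_0\cong B'$ and $A\setminus J_0\cong A'$, whence $\LL'\cong(A\setminus J_0)\cdot B\cdot J_0$ while $\LL=J_0\cdot(A\setminus J_0)\cdot B$, so $\LL_1=J_0$ and $\LL_2=(A\setminus J_0)\cdot B$ work.

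The step I expect to be the main obstacle is this last one: one must track carefully which induced order each of the subsets $I_0,B\setminus I_0,A,J_0,A\setminus J_0,B$ carries and check that the pieces reassemble as claimed — the decisive point being that $B\cdot A$ and $B'\cdot A'$ are two presentations of linear orders identified by $\phi$, so that $\phi$ respecting this block structure forces $\le$ and $\le'$ to agree on matching blocks — and one must verify that the two cases above are genuinely exhaustive. Everything else (the case checks in ($\Leftarrow$) and the two short computations in ($\Rightarrow$)) is routine.
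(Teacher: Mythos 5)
Your argument is correct and complete, but it is worth knowing that the paper does not prove this proposition at all: its ``proof'' is a citation to Nov\'ak's theorem on cuts in cyclically ordered sets. Your self-contained argument is therefore a genuinely different (and more informative) route, and it checks out against the paper's definitions. The backward direction is indeed a routine case analysis on how a triple distributes over the two blocks. For the forward direction, both ingredients are sound: with $A=\{x<c_0\}$ and $B=\{x\ge c_0\}$, the cut order $\le_{c_0}$ of Example~\ref{example:CyclicOrders} restricts to $\le$ on each of $A$ and $B$ and places all of $B$ below all of $A$, so $(C,\le_{c_0})=B\cdot A$; and since an isomorphism of cyclic orders is a bijection preserving and reflecting $R$ (Remark~\ref{remark:MonoAndIso}), it intertwines $\le_{c_0}$ with $\le'_{\phi(c_0)}$, making $\phi\colon B\cdot A\to B'\cdot A'$ an isomorphism of linear orders. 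The dichotomy for the initial segment $\phi^{-1}(B')$ is exhaustive precisely because every element of $B$ precedes every element of $A$ in $B\cdot A$, and the reassembly works as stated: in Case 1 one uses $B=I_0\cdot(B\setminus I_0)$ and associativity of $\cdot$, in Case 2 one uses $A=J_0\cdot(A\setminus J_0)$, and in both cases $\phi$ carries the complement of $\phi^{-1}(B')$ order-isomorphically onto $A'$. The degenerate cases ($\LL$ empty, or $|\LL|\le 2$ where the cyclic relation is empty) cause no trouble. So the step you flagged as the main obstacle does go through, and what you have is a correct elementary proof of the cited result.
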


\begin{proof}
See \cite[3.6 Theorem]{Novak84}.
\end{proof}

\begin{definition}
Let $(C,R)$ be a cyclically ordered set.  For $a,b \in C$, we define the open interval $(a,b) \subseteq C$ as
$$(a,b) = \{x \in \LL \mid R(a,x,b) \}.$$
We define the closed interval $[a,b]$ as $\{a,b\} \cup (a,b)$.  Half-open intervals are defined similarly.  Note that, due to asymmetry, we have $(a,b) \cap (b,a) = \emptyset.$
\end{definition}

\begin{proposition}\label{proposition:CyclicDecomposition}
Let $(C,R)$ be a cyclically ordered set.  For each $a,b \in C$ (with $a \not= b$), there is a decomposition $C = (a,b) \coprod [b,a]$.
\end{proposition}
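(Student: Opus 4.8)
The plan is to verify the two halves of the claimed decomposition separately: that $(a,b)$ and $[b,a]$ are disjoint, and that together they exhaust $C$.

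For disjointness, first recall that by Remark~\ref{remark:Diagonal} the relation $R(a,x,b)$ forces $a,x,b$ to be pairwise distinct; in particular $x \notin \{a,b\}$, so $(a,b) \cap \{a,b\} = \emptyset$. Combined with the identity $(a,b) \cap (b,a) = \emptyset$ (recorded in the definition of intervals, and itself an immediate consequence of Asymmetry), this gives
\[
(a,b) \cap [b,a] = (a,b) \cap \big(\{a,b\} \cup (b,a)\big) = \emptyset .
\]

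For covering, let $x \in C$ be arbitrary. If $x \in \{a,b\}$, then $x \in [b,a]$ and we are done. Otherwise, using the hypothesis $a \neq b$, the elements $a, x, b$ are pairwise distinct, so Totality applied to the triple $(a,x,b)$ yields $R(a,x,b)$ or $R(b,x,a)$; that is, $x \in (a,b)$ or $x \in (b,a) \subseteq [b,a]$. Hence $C = (a,b) \cup [b,a]$, and together with disjointness this is the asserted decomposition $C = (a,b) \coprod [b,a]$.

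I expect no genuine obstacle here: the statement is a direct unwinding of the Totality and Asymmetry axioms together with Remark~\ref{remark:Diagonal}. The only point requiring (minor) care is remembering that the open interval $(a,b)$ genuinely excludes its endpoints — which is precisely what Remark~\ref{remark:Diagonal} guarantees — and noting where the hypothesis $a \neq b$ is used, namely to apply Totality to $(a,x,b)$ when $x \notin \{a,b\}$.
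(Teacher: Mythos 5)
Your proof is correct and follows essentially the same route as the paper: Asymmetry (plus the observation that $R(a,x,b)$ forces $x \notin \{a,b\}$) for disjointness, and Totality applied to the triple $a,x,b$ for the covering. If anything, your write-up of the disjointness step is slightly more careful than the paper's, which states the distribution of the intersection over the union with a small typographical slip.
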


\begin{proof}
Using asymmetry, we easily see that
$$(a,b) \cap [b,a] = (a,b) \cap \big( (b,a) \cup \{a,b\} \big) = \big( (a,b) \cap (b,a) \big) \cap \big( (a,b) \cap \{a,b\} \big) = \emptyset.$$
Now, let $x \in \LL$.  If $x \in \{a,b\}$, then $x \in [b,a]$.  We may thus assume that $x \not\in\{a,b\}$.  Using totality, we have either $R(a,b,x)$ (and thus $x \in (b,a) \subseteq [b,a]$) or $R(a,x,b)$ (and thus $x \in (a,b)$).  In both cases, we have $x \in (a,b) \cup [b,a]$, as required.
\end{proof}

\begin{proposition}\label{proposition:FiniteCycles}
Let $n \geq 2$.  Any cyclically ordered set $(C,R)$ with $\# C = n$ is isomorphic to $\bZ_n$.
\end{proposition}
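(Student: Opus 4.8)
The plan is to reduce the statement to the (essentially unique) linear order on a finite set, via a cut. First I would dispose of the degenerate case $n=2$: by Remark~\ref{remark:Diagonal} no triple with at most two distinct entries lies in $R$, so $R=\emptyset$; the same is true of $\bZ_2$, and any bijection $C\to\bZ_2$ is then an isomorphism of cyclic orders by Remark~\ref{remark:MonoAndIso}(3). So from now on assume $n\ge 3$.

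For the main case, fix $c\in C$ and, following Example~\ref{example:CyclicOrders}, put $a<_c b\ \Longleftrightarrow\ (a=c)\ \lor\ R(c,a,b)$ for $a\neq b$, together with $a\le_c a$. The first step is to verify that $(C,\le_c)$ is a linearly ordered set with minimum $c$: irreflexivity is built in; totality of $<_c$ follows from Totality of $R$ and the fact that $R(c,a,b)$ and $R(c,b,a)$ cannot both hold (Asymmetry, after Cyclicity); and transitivity of $<_c$ is an instance of the Transitivity axiom for $R$, once the subcases with $c\in\{a,b,d\}$ are split off (these are immediate since $b<_c c$ is always false by Remark~\ref{remark:Diagonal}). (One may instead simply quote Example~\ref{example:CyclicOrders}, where $\le_c$ is already asserted to be a linear order.) The second step is to recover $R$ from $\le_c$, i.e.\ to show $(C,\le_c)_{\mathrm{cyc}}=(C,R)$. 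By Cyclicity of $R$ it suffices to prove $a<_c b<_c d\Rightarrow R(a,b,d)$; the reverse implication then follows because, on the three-element set $\{a,b,d\}$, the restriction of $\le_c$ already fixes a cyclic rotation, and only the rotations of $(a,b,d)$ (not of $(d,b,a)$) are compatible with the given $R(a,b,d)$ by Asymmetry. If $a=c$ the implication is the definition of $<_c$. If $a\neq c$, then $b,d\neq c$ as well (since $a<_c c$ and $b<_c c$ are impossible), so $c,a,b,d$ are four distinct elements; in particular for $n=3$ this subcase is vacuous, while for $n\ge 4$ we have $R(c,a,b)$ and $R(c,b,d)$ and want $R(a,b,d)$. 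Assuming it fails, Totality gives $R(d,b,a)$; Cyclicity turns $R(c,b,d)$ into $R(b,d,c)$ and $R(d,b,a)$ into $R(b,a,d)$, and then Transitivity applied at first coordinate $b$ yields $R(b,a,c)$, which contradicts $R(c,a,b)$ by Asymmetry. Hence $(C,\le_c)_{\mathrm{cyc}}=(C,R)$.

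Finally, a linearly ordered set of cardinality $n$ is order-isomorphic to $(\{0,1,\dots,n-1\},\le)$ (e.g.\ by induction, peeling off the minimum). Since $(-)_{\mathrm{cyc}}\colon\LOrd\to\COrd$ is a functor it takes this isomorphism to an isomorphism $(C,\le_c)_{\mathrm{cyc}}\cong(\{0,\dots,n-1\},\le)_{\mathrm{cyc}}$, and the target is $\bZ_n$ by Example~\ref{example:CyclicOrders}(2) (under the identification $\bZ/\langle x\mapsto x+n\rangle\cong\{0,\dots,n-1\}$). Composing with $(C,R)=(C,\le_c)_{\mathrm{cyc}}$ gives $(C,R)\cong\bZ_n$. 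An even shorter route avoids the cut entirely: by the essential surjectivity of $(-)_{\mathrm{cyc}}$ recorded in Example~\ref{example:CyclicFromLinear} (from \cite{Novak82}) there is a linearly ordered set $\LL$ with $(\LL,\le)_{\mathrm{cyc}}\cong(C,R)$, necessarily $|\LL|=n$, after which the last paragraph concludes exactly as above. The argument is short, and the only genuinely fiddly points are the four-element Transitivity step just displayed and the small-$n$ bookkeeping; the only other thing to check carefully, if one does not lean on the assertions in Example~\ref{example:CyclicOrders}, is that $\le_c$ really is a total order.
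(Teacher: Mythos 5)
Your proof is correct and follows essentially the same route as the paper: both reduce the statement to the uniqueness of the finite linear order of cardinality $n$ via the essential surjectivity of $(-)_{\rm cyc}$. The only difference is that the paper simply invokes \cite[3.11 Lemma]{Novak82} for the existence of a cut (and then concludes that any two $n$-element cyclically ordered sets are isomorphic, $\bZ_n$ being one of them), whereas you verify the cut construction $\leq_c$ and the identity $(C,\leq_c)_{\rm cyc}=(C,R)$ by hand --- correctly, including the four-element transitivity step --- and your ``shorter route'' at the end is exactly the paper's argument.
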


\begin{proof}
Let $(C, R_C)$ and $(D, R_D)$ be cyclically ordered sets with $\# C = \# D = n \in \bN$.  By \cite[3.11 Lemma]{Novak82}, we know that there are linear orderings on $C$ and $D$ such that $(C, R_C) \cong (C,\leq)_{\rm cyc}$ and $(D, R_D) \cong (D,\leq)_{\rm cyc}$.  Since $(C, \leq)$ and $(D, \leq)$ are linearly ordered sets with $\# C = \# D = n \in \bN$, we know that $(C, \leq) \cong (D, \leq)$.  This implies that $(C,\leq)_{\rm cyc} \cong (D,\leq)_{\rm cyc}$, and hence $(C, R_C) \cong (D, R_D)$.

Since $\bZ_n$ is a cyclically ordered set (see Example \ref{example:CyclicOrders}) with $n$ elements, we know that $(C,R)$ is isomorphic to $\bZ_n$.
\end{proof}

\begin{definition}
Let $(C,R)$ be a cyclically ordered set.  An \emph{$n$-cycle} of $C$ is a subobject of $C$ in the category $\COrd$ with $n$ elements.  By Proposition \ref{proposition:FiniteCycles}, we know that (up to isomorphism), an $n$-cycle is a monomorphism $f:\bZ_n \to C$.  We will denote this subobject by $\langle f(\overline{0}), f(\overline{1}), \ldots, f(\overline{n-1})\rangle$
\end{definition}

\begin{remark}
Every subset $\{a_0, a_1, \ldots, a_{n-1}\} \subseteq C$ gives rise to an $n$-cycle.  This follows from Remark \ref{remark:CyclicSuborder} and Proposition \ref{proposition:FiniteCycles}.
\end{remark}

\subsection{Locally discrete cyclically ordered sets}

We will say that the cyclic order $(C,R)$ is \emph{locally discrete} if and only if for every $a \in C$, there are elements $a-1,a+1 \in C \setminus \{a\}$ such that $\forall b \in C: (a,b,a+1) \not\in R \land (a-1,b,a) \not\in R$.  Note that, by totality, $a-1$ and $a+1$ are uniquely defined by this property.  We say that $a-1$ is a \emph{direct predecessor} of $a$ and $a+1$ is a \emph{direct successor} of $a$, both are \emph{neighbors} of $a$.  We define, for all $n > 0$:
\begin{eqnarray*}
a-(n+1) = (a-n)-1, \\
a+(n+1) = (a+n)+1.
\end{eqnarray*}

\begin{lemma}
For all $n,m \in \bZ$ and all $a \in C$, we have $(a+n)+m = a + (n+m)$.
\end{lemma}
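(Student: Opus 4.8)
The plan is to recognise the successor map $\sigma\colon C \to C$, $\sigma(a) = a+1$, as a bijection and the assignment $n \mapsto (a \mapsto a+n)$ as the resulting action of $\bZ$ on $C$; once this is set up, the claimed identity becomes the additivity of iterates, $\sigma^{m}\circ\sigma^{n} = \sigma^{m+n}$.

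The first --- and essentially only nontrivial --- step is to show that successor and predecessor cancel: $(a+1)-1 = a$ and $(a-1)+1 = a$ for every $a \in C$. For the first identity, observe that $a \neq a+1$ by definition, and that $a$ witnesses the property $\forall b \in C\colon (a,b,a+1)\notin R$ characterising the direct predecessor of $a+1$ --- this property is precisely the one defining $a+1$. Since, by totality, the direct predecessor of an element is uniquely determined by this property, we get $a = (a+1)-1$. The identity $(a-1)+1 = a$ is symmetric, using that $a$ witnesses $\forall b \in C\colon (a-1,b,a)\notin R$, which is the defining property of $a-1$. It follows at once that $\sigma$ is a bijection with inverse $a \mapsto a-1$: injectivity because $a+1 = a'+1$ implies $a = (a+1)-1 = (a'+1)-1 = a'$, and surjectivity because $b = (b-1)+1 = \sigma(b-1)$.

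Next I would verify, by induction on $|n|$, that $a+n = \sigma^{n}(a)$ for all $n \in \bZ$ and all $a \in C$, where $\sigma^{n}$ is the $n$-th iterate of $\sigma$ (with $\sigma^{0} = \mathrm{id}_{C}$ and $\sigma^{-n} = (\sigma^{-1})^{n}$). The cases $n = 0, \pm 1$ are immediate from the conventions $a+0 = a$ and $a\pm 1 = \sigma^{\pm 1}(a)$; the inductive step from $n$ to $n+1$ (for $n \ge 1$) is the recursion $a+(n+1) = (a+n)+1 = \sigma(\sigma^{n}(a)) = \sigma^{n+1}(a)$, and from $-n$ to $-(n+1)$ (for $n \ge 1$) the recursion $a-(n+1) = (a-n)-1 = \sigma^{-1}(\sigma^{-n}(a)) = \sigma^{-(n+1)}(a)$. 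Then $(a+n)+m = \sigma^{m}(\sigma^{n}(a)) = \sigma^{m+n}(a) = a+(n+m)$, where the middle equality is the standard additivity of iterates of a bijection (a routine induction on $|m|$, using the cancellation established above when $m$ and $n$ have opposite signs).

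The cyclic-order axioms are used only in the cancellation step $(a+1)-1 = a$; everything afterwards is elementary bookkeeping about the cyclic group $\langle\sigma\rangle$. Accordingly, the only point that needs care is invoking the uniqueness clause in the definition of local discreteness correctly, and that is short.
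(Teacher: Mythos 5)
Your proof is correct and follows essentially the same route as the paper's: the paper also reduces everything to induction in the equal-sign cases plus the cancellation identity $(a+1)-1 = a = (a-1)+1$ (which it likewise derives directly from the definition of local discreteness), and your packaging via the bijection $\sigma$ and additivity of its iterates is just a cleaner bookkeeping of that same induction. Your careful justification of the cancellation step via the uniqueness clause (guaranteed by totality) is exactly the point the paper leaves implicit.
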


\begin{proof}
If $m$ and $n$ are both positive or both negative, the statement follows easily from induction.  The other cases follow by induction and from $(a+1)-1 = a = (a-1)+1$, which follows directly from the definition.
\end{proof}

\begin{proposition}
Let $(\LL, \leq)$ be a linearly ordered set.  The cyclic order $(\LL, \leq)_{\rm cyc} = (\LL, R)$ is locally discrete if and only if
\begin{enumerate}
\item $(\LL, \leq)$ is a locally discrete linearly ordered set with no minimum and no maximum, or
\item $(\LL, \leq)$ is a locally discrete linearly ordered set with both a minimum and a maximum.
\end{enumerate}
If $(C,R)$ is a locally discrete cyclically ordered set, then every cut is of this form.
\end{proposition}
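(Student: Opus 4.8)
The plan is to reduce everything to one elementary computation. Fix $a\in\LL$ and an arbitrary $c\in\LL\setminus\{a\}$; I would determine exactly when ``$(a,b,c)\notin R$ for every $b\in\LL$'' holds, where $R=(\LL,\leq)_{\rm cyc}$. By the definition of local discreteness this is precisely the condition for $c$ to be the direct successor $a+1$ of $a$ in $(\LL,R)$, and $a$ admits a direct successor exactly when such a $c$ exists. Unwinding $R(a,b,c)\Leftrightarrow(a<b<c)\lor(b<c<a)\lor(c<a<b)$ and splitting on whether $a<c$ or $c<a$: if $a<c$ then $R(a,b,c)\Leftrightarrow a<b<c$, so the condition holds iff nothing lies strictly between $a$ and $c$, i.e.\ iff $c$ is the linear successor of $a$; if $c<a$ then $R(a,b,c)\Leftrightarrow(b<c)\lor(a<b)$, so the condition holds iff $c=\min\LL$ and $a=\max\LL$. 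Hence $a$ has a direct successor in $(\LL,R)$ if and only if either $a$ is non-maximal and has a linear successor, or $a$ is the maximum and $\LL$ has a minimum; and dually (applying the same computation to $R^{\circ}=(\LL,\geq)_{\rm cyc}$, noting $c=a-1$ is a direct predecessor of $a$ in $R$ iff it is a direct successor of $a$ in $R^{\circ}$), $a$ has a direct predecessor in $(\LL,R)$ if and only if either $a$ is non-minimal and has a linear predecessor, or $a$ is the minimum and $\LL$ has a maximum. I expect the only real care needed is in this endpoint bookkeeping, i.e.\ correctly tracking the role of $\min\LL$ and $\max\LL$.

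Granting that dichotomy, the backward implication is immediate. Assuming (1) or (2) and fixing $a\in\LL$: if $a$ is non-maximal then the linear successor supplied by local discreteness of $(\LL,\leq)$ is a direct successor of $a$; if $a=\max\LL$ then we must be in case~(2), so $\min\LL$ exists and is a direct successor of $a$. The existence of direct predecessors is symmetric, so $(\LL,R)$ is locally discrete. For the forward implication, suppose $(\LL,R)$ is locally discrete. Then every non-maximal $a$ has a direct successor, which by the dichotomy must be its linear successor (the other alternative forces $a=\max\LL$); similarly every non-minimal element has a linear predecessor, so $(\LL,\leq)$ is locally discrete as a linear order. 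Moreover, if $\LL$ has a maximum $M$, then $M$ has a direct successor in $(\LL,R)$, and since $M$ has no linear successor this direct successor can only be $\min\LL$; so $\LL$ has a minimum, and symmetrically the existence of a minimum forces the existence of a maximum. Thus $\LL$ has both extrema or neither, placing it in case~(2) or~(1).

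Finally, for the last sentence I would observe that local discreteness is an isomorphism invariant of cyclic orders, since an isomorphism both preserves and reflects the ternary relation (Remark~\ref{remark:MonoAndIso}); hence if $(\LL,\leq)$ is a cut of a locally discrete $(C,R)$, then $(\LL,\leq)_{\rm cyc}\cong(C,R)$ is locally discrete, and the equivalence just established puts $(\LL,\leq)$ in case~(1) or~(2). Throughout one tacitly assumes $\#\LL\geq 2$, so that in case~(2) the minimum and the maximum are distinct; the one-element case is degenerate and harmless.
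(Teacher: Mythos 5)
Your proof is correct, but it is genuinely different from the paper's: the paper disposes of this proposition in one line by citing \cite[3.5 Lemma]{Novak84} and gives no argument of its own. You instead prove it from scratch by the one computation that matters, namely characterizing when a fixed $c\neq a$ satisfies $\forall b:(a,b,c)\notin R$ for $R=(\LL,\leq)_{\rm cyc}$; the case split on $a<c$ versus $c<a$ correctly yields ``$c$ is the linear successor of $a$'' versus ``$c=\min\LL$ and $a=\max\LL$'', and the dual statement for predecessors follows by passing to $R^{\circ}$. Both directions of the equivalence and the final claim about cuts then fall out exactly as you describe (the last sentence needs only that isomorphisms of cyclic orders preserve and reflect $R$, as in Remark~\ref{remark:MonoAndIso}). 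What your route buys is a self-contained verification within the paper's own conventions for cyclic orders, which differ slightly from other sources (cf.\ the remark comparing with \cite{DyckerhoffKapranov15}), so one need not check that Nov\'ak's definitions match; what the paper's route buys is brevity. Your caveat about the one-element linearly ordered set is well taken: it satisfies condition~(2) vacuously while $(\LL,\leq)_{\rm cyc}$ cannot be locally discrete (there is no candidate for $a+1$ in $\LL\setminus\{a\}$), so the statement implicitly assumes $\#\LL\geq 2$; flagging this explicitly, as you do, is the honest thing to do and does not affect any later use of the proposition.
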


\begin{proof}
This follows from \cite[3.5 Lemma]{Novak84}.
\end{proof}

\begin{proposition}\label{proposition:SubobjectsOrderings}
Let $(C, R)$ be a cyclically ordered set and let $c \in C$.  The correspondence $(D, \leq_c) \mapsto (D,R|_D)$ is a bijection between the subobjects of $(C, \leq_c)$ and the subobjects of $(C, R)$.
\end{proposition}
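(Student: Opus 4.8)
The plan is to reduce the statement to the observation that, in both $\COrd$ and $\LOrd$, a subobject of a given object is nothing more than a subset of its underlying set equipped with the induced structure. To make this precise I would first record the analogue of Remark~\ref{remark:MonoAndIso} for linearly ordered sets: a morphism $f\colon(\LL,\leq)\to(\LL',\leq)$ in $\LOrd$ is a monomorphism if and only if the underlying map $f\colon\LL\to\LL'$ is injective (one direction is immediate; for the other the argument of Remark~\ref{remark:MonoAndIso}(2) applies verbatim, testing against the one-element linearly ordered set), and for such a monomorphism one moreover has $f(a)<f(b)\Leftrightarrow a<b$ for all $a,b\in\LL$: the implication ``$\Rightarrow$'' is the defining property of a morphism in the strict formulation of Remark~\ref{remark:MonoAndIso}(1), and ``$\Leftarrow$'' follows from totality and asymmetry of the orders, since $a<b$ together with $f(b)<f(a)$ would force $b<a$. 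In particular the corestriction of any monomorphism $\iota\colon(D,\leq_D)\to(\LL,\leq)$ onto its image is an isomorphism in $\LOrd$; the corresponding fact in $\COrd$ is already contained in Remark~\ref{remark:MonoAndIso}.

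With this in hand, I would identify the subobjects in both categories. For a linearly ordered set $(\LL,\leq)$ the assignment sending a subset $D\subseteq\LL$ to the class of the inclusion $D\hookrightarrow\LL$, where $D$ carries the restricted order, is a bijection from the power set of $\LL$ onto the set of subobjects of $(\LL,\leq)$: each such inclusion is a monomorphism; by the previous paragraph every monomorphism factors as an isomorphism onto its image followed by the corresponding inclusion, hence represents the same subobject as that inclusion; and distinct subsets yield inclusions with distinct images, hence inequivalent monomorphisms. The identical argument, now invoking Remark~\ref{remark:CyclicSuborder} (so that $(D,R|_D)$ is again a cyclic order) together with the biconditional recorded at the end of Remark~\ref{remark:MonoAndIso}, shows that $D\mapsto[(D,R|_D)\hookrightarrow(C,R)]$ is a bijection from the power set of $C$ onto the set of subobjects of $(C,R)$.

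Finally, applying the first bijection to the linearly ordered set $(C,\leq_c)$ and the second to $(C,R)$, both sides of the asserted correspondence become identified with the power set of $C$, and under these identifications the map $(D,\leq_c)\mapsto(D,R|_D)$ of the statement is simply the identity on subsets of $C$; hence it is a bijection. One can in addition check that the two structures induced on a given $D$ agree, i.e.\ that the cyclic order associated with the restriction of $\leq_c$ to $D$ is exactly $R|_D$, since $(C,\leq_c)_{\rm cyc}=(C,R)$ and forming $(-)_{\rm cyc}$ commutes with restriction to subsets; but this refinement is not needed for the bijection itself. I do not anticipate a genuine obstacle: the content is entirely formal once the monomorphisms of $\COrd$ and $\LOrd$ are pinned down, the only point requiring care being the routine bookkeeping of subobjects up to isomorphism over the ambient object.
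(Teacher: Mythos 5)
Your proof is correct. The paper itself dismisses this statement as straightforward and gives no argument, and your identification of the subobjects in both $\COrd$ and $\LOrd$ with subsets of the underlying set equipped with the induced structure (via the characterization of monomorphisms as injections) is exactly the routine verification being omitted.
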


\begin{proof}
{This is} straightforward.
\end{proof}

\begin{proposition}\label{proposition:Countable}
Let $(\LL, \leq)$ be a linearly ordered set.  
\begin{enumerate}
\item If $\LL$ is locally discrete and $\LL$ is a subposet of {$(\bR, \leq)$}, then $\LL$ is countable.
\item If $\LL$ is countable, then $\LL$ is a subposet of {$(\bQ,\leq)$}.
\end{enumerate}
\end{proposition}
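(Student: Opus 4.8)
The plan is to treat the two parts independently; each reduces to producing an order-embedding into a countable dense linear order, and neither is genuinely difficult.

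For part (1), I would identify $\LL$ with its image inside $(\bR,\le)$ and use local discreteness to separate its points by rationals. Since $\LL$ is locally discrete (in the sense of \S\ref{subsection:Lin ord sets}), every nonmaximal $x\in\LL$ has a successor $x+1\in\LL$, and by definition of the successor no element of $\LL$ lies strictly between $x$ and $x+1$. For each such $x$ I would pick a rational $q_x$ with $x<q_x<x+1$, using density of $\bQ$ in $\bR$. The key point is that $x\mapsto q_x$ is injective: if $x<y$ are nonmaximal elements of $\LL$, then $y$ is an element of $\LL$ exceeding $x$, hence $y\ge x+1$ (this is exactly what it means for $x+1$ to be the least element of $\LL$ above $x$), and therefore $q_x<x+1\le y<q_y$. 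Thus the nonmaximal elements of $\LL$ inject into $\bQ$ and form a countable set; as $\LL$ has at most one maximal element, $\LL$ itself is countable.

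For part (2), I would use Cantor's classical fact that $(\bQ,\le)$ is a dense linear order without endpoints and hence universal for countable linear orders. Enumerating $\LL=\{a_0,a_1,\dots\}$, I would build an order-preserving injection $f\colon\LL\to\bQ$ by recursion: given $f$ already defined order-preservingly on $\{a_0,\dots,a_{n-1}\}$, set $L=\{f(a_i): i<n,\ a_i<a_n\}$ and $U=\{f(a_i): i<n,\ a_i>a_n\}$; one has $\max L<\min U$ whenever both sets are nonempty, so --- since $\bQ$ is dense and has no endpoints --- there is a rational lying strictly above all of $L$ and strictly below all of $U$ (with the obvious adjustments when $L$ or $U$ is empty), and I put $f(a_n)$ equal to such a rational. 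An injective order-preserving map between linear orders automatically reflects order, so $f$ is an order-embedding and $\LL$ is isomorphic to a subposet of $(\bQ,\le)$; a finite $\LL$ is covered by the same construction, which then simply terminates.

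The only points that need care are bookkeeping ones: in part (1), that the real intervals $(x,x+1)$ attached to distinct nonmaximal $x$ are pairwise disjoint --- which is precisely the successor property --- and that one must separately allow for a maximal element; in part (2), that ``subposet of $(\bQ,\le)$'' is to be read up to isomorphism, i.e.\ as the existence of an order-embedding rather than a literal inclusion. I do not expect any substantial obstacle beyond these.
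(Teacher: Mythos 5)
Your proof is correct, but for part (1) it takes a different route from the paper. The paper's argument is a covering argument: for each $n>0$ it forms the set $\LL_n$ of those $l\in\LL$ that are the \emph{unique} element of $\LL$ in the grid interval $[\lambda/n,(\lambda+1)/n)$ containing them, observes that each $\LL_n$ injects into $\bZ$, and uses local discreteness (the neighbors of $l$ sit at positive real distance from $l$) to show every $l$ lies in some $\LL_n$. You instead observe that the successor gaps $(x,x+1)\subseteq\bR$ are pairwise disjoint nonempty open intervals and separate them by rationals; this is a slightly more direct argument and avoids the small verification, implicit in the paper, that uniqueness of $l$ in its grid interval is equivalent to the absence of its neighbors there. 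Both proofs ultimately rest on the separability of $\bR$, and your injectivity step ($x<y$ forces $y\ge x+1$, hence $q_x<x+1\le y<q_y$) is exactly the needed disjointness. For part (2) the paper declines to give a proof ("standard and easy"), and your recursive Cantor-style embedding into $\bQ$ is the standard argument; your reading of "subposet" as "up to order-isomorphism" is the intended one. No gaps.
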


\begin{proof}
The second statement is standard and easy to prove; we will only prove the first statement.

First, assume that $\LL$ is a subposet of $\bR$; we want to show that $\LL$ is countable.  To this end, we will construct, for each integer $n > 0$, a countable subset $\LL_n \subseteq \LL$, and then show that $\cup_{n > 0} \LL_n = \LL$.

So, fix an integer $n> 0$.  We define
$$\LL_n = \left\{l \in \LL \mid \exist \lambda \in \bZ: \LL \cap \left[\frac{\lambda}{n}, \frac{\lambda+1}{n}\right) = \{l\} \right\}.$$
Note that for each $l \in \LL_n$, there is a unique $\lambda \in \bZ$ such that $l \in  [\frac{\lambda}{n}, \frac{\lambda+1}{n})$.  This gives an injection $\LL_n \to \bZ$ and shows that $\LL_n$ is countable.

We are left with showing that $\cup_{n > 0} \LL_n = \LL$.  Since $\bR = \cup_{\lambda \in \bZ} [\frac{\lambda}{n}, \frac{\lambda+1}{n})$, we know that for each $l \in \LL$, there is a $\lambda \in \bZ$ such that $l \in [\frac{\lambda}{n}, \frac{\lambda+1}{n})$.  Since $\LL$ is locally discrete, we know that $l$ is the unique element in $\LL \cap [\frac{\lambda}{n}, \frac{\lambda+1}{n})$ if and only if no neighbors of $l$ lie in $[\frac{\lambda}{n}, \frac{\lambda+1}{n})$.  Since $l$ has at most two neighbors, this will be the case when $n$ is large enough.  Hence, $\cup_{n > 0} \LL_n = \LL$ and $\LL$ is countable.
\end{proof}

The following corollary among others says that, at least as long as we are concerned with countable cyclically ordered sets, we can draw them on a circle. This gives for instance a precise meaning to Figures~\ref{figure:Crossing}, \ref{figure:NoncrossingSetsOfArcs} and~\ref{figure:LocallyFiniteTriangle} below.

\begin{corollary}\label{corollary:Countable}
Let $(C, R)$ be a cyclically ordered set.  
\begin{enumerate}
\item If $C$ is locally discrete and a cyclically ordered subset of $S^1$, then $C$ is countable.
\item If $C$ is countable, then $C$ is a cyclically ordered subset of $\bQ / \bZ \subset S^1$.
\end{enumerate}
\end{corollary}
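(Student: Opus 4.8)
The plan is to reduce both parts to their linear analogues in Proposition~\ref{proposition:Countable} by passing to a cut. Recall from Example~\ref{example:CyclicOrders} that for any $c\in C$ the relation $\le_c$ is a cut of $(C,R)$, i.e.\ $(C,\le_c)_{\rm cyc}\cong(C,R)$, and from Example~\ref{example:CyclicFromLinear} that $(-)_{\rm cyc}$ sends a subposet of $(\bR,\le)$ to a cyclically ordered subset of $S^1=\bR/\bZ$ and a subposet of $(\bQ,\le)$ to one of $\bQ/\bZ$. The one computational fact I would record first and use in both parts is the ``opening up'' identity: for $\tilde p\in\bR$, identifying $S^1\setminus\{\bar p\}$ with the interval $(\tilde p,\tilde p+1)\subseteq\bR$ via the representative lying in that interval turns the restriction of $R_{S^1}$ to $S^1\setminus\{\bar p\}$ into exactly $((\tilde p,\tilde p+1),\le)_{\rm cyc}$ restricted to the image. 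This is a direct unravelling of the condition $R_{S^1}(\bar a,\bar b,\bar c)\Leftrightarrow\exists k,l\in\bZ:\,a<b+k<c+l<a+1$ from Example~\ref{example:CyclicOrders}; equivalently, the quotient map $\bR\to\bR/\bZ$ restricted to any half-open unit interval is a monomorphism of cyclic orders (it is not one globally).

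For part~(1): $S^1$ is not locally discrete (the circle has a point strictly between any two distinct points), so $C\subsetneq S^1$ and we may pick $p\in S^1\setminus C$. Opening the circle at $p$ realises $(C,R)$ as $(C,\le)_{\rm cyc}$ for a linear order $\le$ on $C$ which, by construction, is a subposet of $\bR$. Since $(C,R)$ is locally discrete and every cut of a locally discrete cyclic order is again locally discrete (the Proposition preceding Proposition~\ref{proposition:SubobjectsOrderings}, resting on \cite[3.5 Lemma]{Novak84}), $(C,\le)$ is a locally discrete subposet of $\bR$; Proposition~\ref{proposition:Countable}(1) then gives that $C$ is countable. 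It is essential here to cut \emph{outside} $C$: cutting at a point of $C$ produces a bounded locally discrete order $\bN\cdot(\PP\stackrel{\rightarrow}{\times}\bZ)\cdot(-\bN)$, which need not embed into $\bR$ at all.

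For part~(2): fix $c\in C$; the cut $(C,\le_c)$ is a countable linearly ordered set, so by Proposition~\ref{proposition:Countable}(2) it order-embeds into $(\bQ,\le)$, and composing with an order isomorphism $\bQ\cong\bQ\cap(0,1)$ (Cantor back-and-forth) yields an order-embedding $\phi\colon(C,\le_c)\hookrightarrow\bQ\cap(0,1)$. Composing $\phi$ with $\bR\to\bR/\bZ=S^1$ gives an injection $\bar\phi\colon C\to\bQ/\bZ$ (injective since $\phi(C)\subseteq(0,1)$ meets each $\bZ$-coset at most once). By the opening-up identity applied on the unit interval containing $\phi(C)$, together with $\phi$ being order-reflecting, $\bar\phi$ is a morphism of cyclic orders $(C,R)=(C,\le_c)_{\rm cyc}\to(S^1,R_{S^1})$; being injective it is a monomorphism, hence by Remark~\ref{remark:MonoAndIso} an isomorphism of $(C,R)$ onto the cyclically ordered subset $\bar\phi(C)\subseteq\bQ/\bZ$. (If $\#C\le 2$ then $R=\emptyset$ by Remark~\ref{remark:Diagonal} and any injection into $\bQ/\bZ$ does the job.)

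The only step that is not pure bookkeeping is the opening-up identity of the first paragraph — that on a fundamental unit interval the circular order of $\bR/\bZ$ agrees with the $(-)_{\rm cyc}$ of the usual order, and hence that the relevant restricted quotient maps are monomorphisms of cyclic orders. Once that is nailed down, parts (1) and (2) are transcriptions of Proposition~\ref{proposition:Countable}(1) and~(2) respectively.
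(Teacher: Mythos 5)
Your proof is correct and follows essentially the same route as the paper, whose proof simply cites Propositions \ref{proposition:SubobjectsOrderings} and \ref{proposition:Countable}: cut the ambient circle (resp.\ embed a cut of $C$ into a fundamental interval of $\bQ/\bZ$) and invoke the linear-order statement; your ``opening-up identity'' is precisely the content of Proposition \ref{proposition:SubobjectsOrderings} combined with Example \ref{example:CyclicOrders}. One aside is mistaken, though harmless to your argument: it is not essential to cut outside $C$ in part (1), since cutting the ambient $S^1$ at any point $c$ --- even $c \in C$ --- realizes $(C,\leq_c)$ as a locally discrete subposet of $(S^1,\leq_c)\cong[0,1)\subset\bR$, to which Proposition \ref{proposition:Countable}(1) applies directly; your objection confuses the abstract order type of the cut with its given realization inside $S^1$.
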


\begin{proof}
This follows directly from Propositions \ref{proposition:SubobjectsOrderings} and \ref{proposition:Countable}.
\end{proof}

\subsection{Arcs and triangulations}

We will now look at triangulations of a locally discrete cyclically ordered set $(C, R)$.

\begin{definition}
\begin{enumerate}
\item An edge of $C$ is a pair $\{a,b\} \subseteq C$ such that $a$ and $b$ are neighbors.
\item An \emph{arc} or a \emph{diagonal} of $C$ is a pair $\{a,b\} \subseteq C$ such that $a \not= b$ and $\{a,b\}$ is not an edge.  The set of all arcs in $(C,R)$ is denoted by $\arc(C,R)$ or just $\arc(C)$ if there is no cause for confusion.
\end{enumerate}
We will often write $ab$ for the edge or the arc $\{a,b\}$; note that $ab = ba$.
\end{definition}

\begin{remark}
An arc in the cyclically ordered set $(C,R)$ is a subset $\{a,b\} \subseteq C$ such that there are elements $c,d \in \LL$ with $R(a,c,b)$ and $R(b,d,a)$.  Put differently, $ab$ is an arc in $C$ if and only if there are $c,d \in C$ such that $\langle a,c,b,d \rangle$ is a 4-cycle.
\end{remark}

The following definition and subsequent remarks are illustrated in Figure~\ref{figure:Crossing}.

\begin{figure}
  \centering
	
  \begin{tikzpicture}
    \TikzCross{0}{0}
		\TikzAdjacentPair{5}{0}
    \TikzDisjointPair{10}{0}
  \end{tikzpicture}

\caption{A pair of crossing arcs, adjacent arcs and noncrossing nonadjacent arcs, respectively.}
\label{figure:Crossing}
\end{figure}
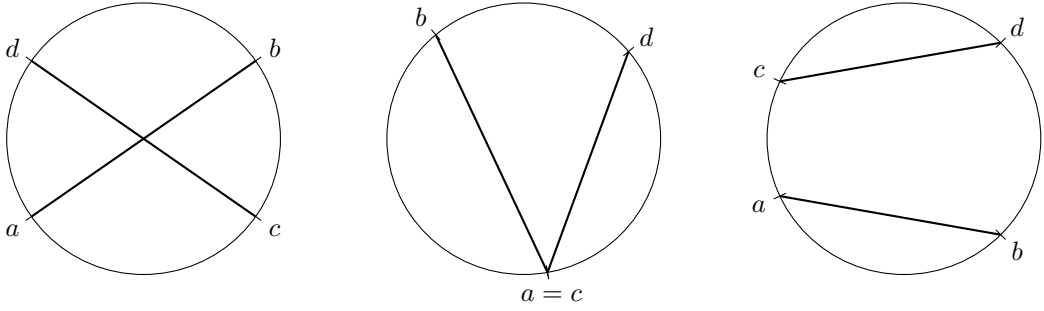

\begin{definition}
Let $ab, cd \in \arc(C)$.  We say that $ab$ and $cd$ are \emph{crossing} if and only if $R(a,c,b) \land R(b,d,a)$ or $R(a,d,b) \land R(b,c,a)$.  Two arcs which are not crossing are said to be \emph{noncrossing}.  A set of arcs such that every two arcs are noncrossing is called a \emph{set of pairwise noncrossing arcs}.

Two distinct arcs $ab$ and $cd$ are called \emph{adjacent} if $\{a,b\} \cap \{c,d\} \not= \emptyset$.
\end{definition}

\begin{remark}
The arcs $ab$ and $cd$ cross if and only if either $acbd$ or $adbc$ is a 4-cycle.
\end{remark}

\begin{remark}
If $ab$ and $cd$ are crossing arcs, then $\{a,b\} \cap \{c,d\} = \emptyset$.
\end{remark}

\begin{definition}
Let $(C,R)$ be a locally discrete cyclically ordered set and let $S \subseteq \arc(C)$ be a set of pairwise noncrossing arcs on $C$.  An $n$-cycle $f: \bZ_n \to C$ is called an \emph{$n$-gon} in $S$ if and only if for all $i \in \bZ_n$, we have that $\{f(i), f(i+1)\}$ is either an edge in $C$ or an arc in $S$.  A 3-gon in $C$ is also called a \emph{triangle}.

Similar to the notation of edges, we will write $a_0 a_1 \ldots a_{n-1}$ for an $n$-gon $\langle a_0,a_1,\ldots, a_{n-1} \rangle$.
\end{definition}

\begin{remark}
\begin{enumerate}
\item The 2-gons in $S$ are exactly the edges in $C$ and the arcs in $S$.
\item By asymmetry, if $abc$ is a triangle in $S$, then $acb$ is not a triangle in $S$.
\end{enumerate}
\end{remark}

\begin{lemma}\label{lemma:Noncrossing}
Let $ab, cd \in \arc(C)$.
\begin{enumerate}
\item The arcs $ab, cd$ cross if and only if either $c \in (a,b)$ and $d \in (b,a)$ or $d \in (a,b)$ and ${c} \in (b,a)$
\item the arcs $ab, cd$ do not cross if and only if either $c,d \in [a,b]$ or $c,d \in [b,a]$.
\end{enumerate}
\end{lemma}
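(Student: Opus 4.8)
The plan is to reduce both parts to unwinding the definition of the open intervals, together with the disjoint decomposition of $C$ coming from Proposition~\ref{proposition:CyclicDecomposition}. First I would record the partition I want to use: by definition $x\in(a,b)$ means $R(a,x,b)$, and by Remark~\ref{remark:Diagonal} such an $x$ differs from both $a$ and $b$; hence $[b,a]=\{a,b\}\cup(b,a)$ is a disjoint union, and combined with Proposition~\ref{proposition:CyclicDecomposition} this gives the three-block partition $C=(a,b)\sqcup\{a,b\}\sqcup(b,a)$. In particular $(b,a)=C\setminus[a,b]$ and, interchanging the roles of $a$ and $b$, also $(a,b)=C\setminus[b,a]$.

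Part (1) is then purely formal: by the definition of crossing, $ab$ and $cd$ cross if and only if $\big(R(a,c,b)\wedge R(b,d,a)\big)\vee\big(R(a,d,b)\wedge R(b,c,a)\big)$, and rewriting each of the four ternary relations as a membership statement ($R(a,c,b)\Leftrightarrow c\in(a,b)$, and so on) turns this verbatim into ``$c\in(a,b)$ and $d\in(b,a)$, or $d\in(a,b)$ and $c\in(b,a)$.''

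For part (2) I would simply negate part (1) and use the partition. For the ``if'' direction: if $c,d\in[a,b]$ then neither of them lies in $(b,a)$, so both disjuncts of the crossing criterion of part (1) fail and the arcs do not cross; the case $c,d\in[b,a]$ is identical after swapping $a$ and $b$, using that the crossing relation is invariant under this swap. For the ``only if'' direction, suppose the arcs do not cross but, for contradiction, neither $c,d\in[a,b]$ nor $c,d\in[b,a]$. Since $(b,a)=C\setminus[a,b]$, the first failure means at least one of $c,d$ lies in $(b,a)$; since $(a,b)=C\setminus[b,a]$, the second means at least one of them lies in $(a,b)$; and as $(a,b)\cap(b,a)=\emptyset$ these are two distinct elements. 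Renaming if necessary, we may assume $c\in(b,a)$ and $d\in(a,b)$ (or the symmetric configuration $d\in(b,a)$, $c\in(a,b)$), and then part (1) says exactly that $ab$ and $cd$ cross — contradiction.

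There is essentially no hard step here; the only thing to be careful about is that $c$ or $d$ may coincide with $a$ or $b$, i.e. the arcs may be adjacent. That is precisely why one argues with the three-block partition $C=(a,b)\sqcup\{a,b\}\sqcup(b,a)$ instead of assuming $\{a,b\}\cap\{c,d\}=\emptyset$, and why the intervals appearing in part (2) are the closed ones $[a,b]$ and $[b,a]$ rather than the open ones.
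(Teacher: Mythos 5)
Your proof is correct and follows essentially the same route as the paper: part (1) is a direct unwinding of the definitions, and part (2) is obtained as the contrapositive using the decomposition $C = (a,b) \coprod [b,a]$ of Proposition~\ref{proposition:CyclicDecomposition}. Your version merely spells out the details (the three-block partition and the careful handling of adjacent arcs) that the paper leaves implicit.
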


\begin{proof}
The first statement follows directly from the definition.  The second statement is the contrapositive of the first one and uses that $C = (a,b) \coprod [b,a]$ (Proposition \ref{proposition:CyclicDecomposition}).
\end{proof}

\begin{proposition}\label{proposition:AtMost2Triangles}
Let $(C,R)$ be a cyclically ordered set and let $S \subseteq \arc(C)$ be a set of pairwise noncrossing arcs.  An arc $ab \in S$ lies in at most one triangle of the form $axb$ and one triangle of the form $aby$.
\end{proposition}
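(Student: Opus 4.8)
The plan is to reduce the second assertion to the first and then prove the first by contradiction from the cyclic-order axioms. Since the $3$-cycle $\langle a,b,y\rangle$ and its cyclic rotation $\langle b,y,a\rangle$ denote the same subobject of $(C,R)$, a triangle of the form $aby$ lying in $S$ is exactly the same datum as a triangle of the form $bya$; as $ba=ab\in S$, the claim that there is at most one such $y$ follows from the claim that there is at most one $x$ with $axb$ a triangle in $S$, applied after interchanging $a$ and $b$ (the argument below is symmetric in $a$ and $b$). So it suffices to prove: if $axb$ and $ax'b$ are triangles in $S$, then $x=x'$.

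Suppose not. First I would unpack the hypotheses: since $\langle a,x,b\rangle$ and $\langle a,x',b\rangle$ are $3$-cycles, Remark~\ref{remark:MonoAndIso} gives $R(a,x,b)$ and $R(a,x',b)$, so $x,x'\in(a,b)$, and since $x\neq x'$ the four elements $a,x,x',b$ are pairwise distinct. By Totality on the distinct triple $a,x,x'$ we have $R(a,x,x')$ or $R(a,x',x)$, so after possibly swapping the names of $x$ and $x'$ I may assume $R(a,x,x')$.

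The heart of the argument is to produce a $4$-cycle on $\{a,x,x',b\}$. I first claim $R(x,x',b)$: if it failed, Totality on the distinct triple $x,x',b$ would give $R(b,x',x)$, hence $R(x',x,b)$ by Cyclicity; combining this with $R(x',b,a)$ (a cyclic rotation of $R(a,x',b)$) via Transitivity yields $R(x',x,a)$, contradicting $R(a,x,x')$ by Asymmetry. Thus the four distinct elements $x,x',b,a$ satisfy the four relations $R(x,x',b)$, $R(x',b,a)$, $R(b,a,x)$, $R(a,x,x')$, so by Remark~\ref{remark:CyclicSuborder} together with Proposition~\ref{proposition:FiniteCycles} the cyclic order induced on $\{a,x,x',b\}$ is $\langle x,x',b,a\rangle$, i.e.\ $\langle x,x',b,a\rangle$ is a $4$-cycle in $(C,R)$.

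It remains to harvest the contradiction. Since $\{p,q\}$ is an arc precisely when $\langle p,c,q,d\rangle$ is a $4$-cycle for suitable $c,d$, the $4$-cycle $\langle x,x',b,a\rangle$ shows that $\{x,b\}$ is an arc, and its rotation $\langle a,x,x',b\rangle$ shows that $\{a,x'\}$ is an arc. Moreover $x'\in(x,b)$ (this is $R(x,x',b)$) and $a\in(b,x)$ (this is $R(b,a,x)$, a rotation of $R(a,x,b)$), so Lemma~\ref{lemma:Noncrossing}(1) tells us that the arcs $xb$ and $ax'$ cross. But $xb$ is a side of the triangle $axb\in S$ and is an arc, hence $xb\in S$, and likewise $ax'\in S$; this contradicts the assumption that $S$ is a set of pairwise noncrossing arcs. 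Hence $x=x'$, which proves the first assertion and, by the reduction above, the second. I expect the only slightly delicate point to be the short deduction of $R(x,x',b)$ (and, more generally, keeping the cyclic rotations straight); the rest is a direct application of the axioms, Lemma~\ref{lemma:Noncrossing} and Proposition~\ref{proposition:FiniteCycles}.
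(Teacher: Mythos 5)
Your proof is correct and follows essentially the same route as the paper's: assume two triangles $axb$, $ax'b$ with $x\neq x'$, normalize via Totality to $R(a,x,x')$, produce the $4$-cycle $\langle a,x,x',b\rangle$, and conclude that the sides $xb$ and $ax'$ are crossing arcs that must both lie in $S$, a contradiction. You simply spell out the steps the paper leaves implicit (the axiom-level verification of the $4$-cycle, why the offending sides are arcs in $S$ rather than edges, and the symmetry reduction for the $aby$ case), all of which check out.
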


\begin{proof}
Let $axb$ and $azb$ be triangles in $S$.  We want to show that $x=z$.  Seeking a contradiction, assume that $x \not= z$.  By totality (and possibly changing the roles of $x$ and $z$), we may assume that $R(a,x,z)$, so that there is a 4-cycle $\langle a,x,z,b \rangle$ in $C$.  This means that $az$ and $xb$ are not edges and hence they are crossing arcs.  This is the required contradiction.
\end{proof}

\begin{definition}
Let $(C,R)$ be a cyclically ordered set and let $S \subseteq \arc(C)$ be a set of pairwise noncrossing arcs.  Let $a,b \in C$.  A \emph{path} in $S$ of length $n$ from $a$ to $b$ is a sequence $a = x_0, x_1, \ldots, x_n = b$ where $x_i x_{i+1} \in S$ for all $i \in \{0, 1, \ldots, n-1\}$.
\end{definition}

\begin{definition}\label{definition:MainTriangulationProperties}
\begin{enumerate}
\item A set $S$ of pairwise noncrossing arcs is said to be \emph{connected} if for every $a,b \in C$ incident with $S$, there is a path from $a$ to $b$.
\item A set $S$ of pairwise noncrossing arcs is \emph{maximal} if it is not a proper subset of a set $S$ of pairwise noncrossing arcs.
\item A set $S$ of pairwise noncrossing arcs is called a \emph{triangulation} if every arc in $S$ lies in exactly two triangles in $S$ and every edge lies in exactly one triangle in $S$.
\item A set $S$ of pairwise noncrossing arcs is \emph{locally finite} if every element of {$C$} is incident with finitely many arcs of $S$.
\end{enumerate}
We will say that $S$ is a \emph{connected triangulation} if $S$ is connected and a triangulation.
\end{definition}

\begin{remark}
\begin{enumerate}
\item An alternative definition of connectedness can be given as follows.  A set $S$ of pairwise noncrossing arcs is said to be connected if and only if for any arcs $ab, cd \in S$, there is a sequence of arcs $ab= x_0 y_0, x_1 y_1, \ldots, x_n y_n = cd$ where $x_i y_i$ and $x_{i+1} y_{i+1}$ are adjacent (for all $i \in \{0,1, \ldots, n-1\}$).
\item A triangulation need not be a maximal set of noncrossing arcs, nor is a maximal set of noncrossing arcs always a triangulation {(see Examples~\ref{example:NoncrossingSets}(\ref{enumerate:Example3}) and~(\ref{enumerate:Example9}) below, respectively)}.
\end{enumerate}
\end{remark}

\begin{example}\label{example:LightConeArcs}
Let $a \in {C}$, and let
$$S_a = \big\{ \{ a,x \} \mid x \not\in \{x-1,x,x+1\} \big\}.$$
It is easily checked that $S_a$ is a connected triangulation on {$C$}.
\end{example}

\begin{figure}
  \centering
	
  \begin{tikzpicture}
    \TikzNonmaxBrokenFountain{0}{0}
    \TikzMaxBrokenFountain{7}{0}
  \end{tikzpicture}

  \vspace{1em}		

  \begin{tikzpicture}
    \TikzFountainAndZigZag{0}{0}
	  \TikzDoubleZigZag{7}{0}
  \end{tikzpicture}

  \vspace{1em}
  
	\begin{tikzpicture}
	  \TikzSpaceShip{0}{0}
	  \TikzFountain{7}{0}
  \end{tikzpicture}

  \vspace{1em}
  
	\begin{tikzpicture}
	  \TikzBouncingGame{7}{0}
  \end{tikzpicture}

\caption{Noncrossing set of arcs $S_1$, $S_4$, $S_5$, $S_6$, $S_9$, $S_{10}$ and $S_{11}$ from Example~\ref{example:NoncrossingSets} (in this order). In the cases where $\LL = \{0,1\} \stackrel{\rightarrow}{\times} \bZ$, we denote the elements $(0,n) \in \LL$ simply by $n$ and the elements $(1,n) \in \LL$ by $n^*$.}
\label{figure:NoncrossingSetsOfArcs}
\end{figure}

\begin{example}\label{example:NoncrossingSets}
We give examples of sets of pairwise noncrossing arcs that illustrate some connections between the concepts of Definition \ref{definition:MainTriangulationProperties} (see table below). {We will show in Theorems~\ref{theorem:ConnectedTriangulationsAreMaximal} and~\ref{theorem:LocallyFiniteTriangulation} that these examples cover all possible combinations of the four properties (connectedness, maximality, being a triangulation and being locally finite) which can occur. The cyclically ordered sets $C$ will be given by a suitable cut $\LL$ and can be reconstructed via Example~\ref{example:CyclicFromLinear}. Seven of the sets of arcs,} $S_1$, $S_4$, $S_5$, $S_6$, $S_9$, $S_{10}$ and $S_{11}$, are depicted in Figure~\ref{figure:NoncrossingSetsOfArcs}, while the other four are just subsets of one of these seven (namely $S_2 \subseteq S_1$, $S_3 \subseteq S_9$ and $S_7,S_8 \subseteq S_{10}$).
\begin{enumerate}
\item\label{enumerate:Example1} Let $\LL = \bZ$ and let {$S_1 = \big\{ \{n, -1\} \mid n < -2 \big\} \cup \big\{ \{1,m\} \mid m > 2 \big\}$}.
\item\label{enumerate:Example2} Let $\LL = \bZ$ and let {$S_2 = \big\{ \{-3,-1\}, \{1,3\} \big\}$}.
\item\label{enumerate:Example3} Let $\LL = \{0,1\} \stackrel{\rightarrow}{\times} \bZ$ and let
$$S_3 = \big\{ \{(0,n), (0,0)\} \mid n \not\in \{-1,0,1\} \big\} \cup \big\{ \{(1,n), (1,0)\} \mid n \not\in \{-1,0,1\} \big\} $$
\item\label{enumerate:Example4} Let $\LL = \bZ$ and let $S_4 = \big\{ \{ n,0 \} \mid n < -1 \big\} \cup \big\{ \{ 1,m \} \mid m > 2 \big\}.$
\item\label{enumerate:Example5} Let $\LL = \{0,1\} \stackrel{\rightarrow}{\times} \bZ$ and 
\begin{eqnarray*}
S_5 &=& \big\{ \{(0,n), (0,0)\} \mid n \not\in \{-1,0,1\} \big\} \\
&&\cup \big\{ \{(1,n),(1,-n) \} \mid n \geq 1 \big \} \cup \big\{ \{(1,n+1),(1,-n) \} \mid n \geq 1 \big\}.
\end{eqnarray*}
\item\label{enumerate:Example6} Let $\LL = \{0,1\} \stackrel{\rightarrow}{\times} \bZ$ and 
\begin{eqnarray*}
S_6 &=& \big\{ \{(0,n),(0,-n) \} \mid n \geq 1 \big \} \cup \big\{ \{(0,n+1),(0,-n) \} \mid n \geq 1 \big\} \\
&&\cup \big\{ \{(1,n),(1,-n) \} \mid n \geq 1 \big \} \cup \big\{ \{(1,n+1),(1,-n) \} \mid n \geq 1 \big\}.
\end{eqnarray*}
\item\label{enumerate:Example7} Let $\LL = \bZ$ and $S_7 = \big\{ \{0,n\} \mid n \not\in \{-1,0,1,2\} \big\}.$
\item\label{enumerate:Example8} Let $\LL = \bZ$ and let $S_8 = \big\{ \{0,2\} \big\}$.
\item\label{enumerate:Example9} Let $\LL = \{0,1\} \stackrel{\rightarrow}{\times} \bZ$ and let
\begin{eqnarray*}
S_9 &=& \big\{ \{ (0,n), (0,0) \} \mid n \not\in \{-1,0,1\} \} \cup \big\{ \{ (1,n), (1,0) \} \mid n \not\in \{-1,0,1\} \} \\
&& \cup \{((0,0),(1,0))\},
\end{eqnarray*}
thus $S_{9} = S_4 \cup \{((0,0),(1,0))\}$.
\item\label{enumerate:Example10} Let $\LL = \bZ$ and $S_{10} = \big\{ \{0,n\} \mid n \not\in \{-1,0,1\} \big\}.$
\item\label{enumerate:Example11} Let $\LL = \{0,1\} \stackrel{\rightarrow}{\times} \bZ$ and {$S_{11} = \big\{ \{(0,n),(1,-n)\} \mid n \in \bZ \big\} \cup \big\{ \{(0,n),(1,-n+1) \} \mid n \in \bZ \big\}$}.
\end{enumerate}

The properties of these examples can be {summarized} in the following table: \\
\begin{center}
\begin{tabular}{ r | c  c  c  c  c  c  c  c c c c}
{} & 1 & 2 & 3 & 4 & 5 & 6 & 7 & 8 & 9 & 10 & 11 \\
\hline
Connected & $\times$ & $\times$ & $\times$ & $\times$ & $\times$ & $\times$ & \checkmark & \checkmark & \checkmark & \checkmark & \checkmark \\
Maximal & $\times$ & $\times$ & $\times$ & \checkmark & \checkmark & \checkmark & $\times$ & $\times$ & \checkmark & \checkmark & \checkmark \\
Triangulation & $\times$ & $\times$ & \checkmark & $\times$ & \checkmark & \checkmark & $\times$ & $\times$ & $\times$ & \checkmark & \checkmark \\
Locally finite & $\times$ & \checkmark & $\times$ & $\times$ & $\times$ & \checkmark & $\times$ & \checkmark & $\times$ & $\times$ & \checkmark  
\end{tabular}
\end{center}

Note that all the cyclically ordered sets in this example are countable, so they are subobjects of the unit circle (as cyclically ordered sets).
\end{example}

We will {also need the following lemma and proposition}.

\begin{lemma}\label{lemma:Maximal}
Let $S$ be a set of pairwise noncrossing arcs in $C$ and $|C| \geq 4$.  Assume that $S$ is maximal or a triangulation.  If $a \in C$ is not incident with $S$, then $\{ a+1,a-1 \} \in S$.
\end{lemma}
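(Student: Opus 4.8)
The plan is to first determine the local structure of $C$ around $a$, and then deal with the two alternative hypotheses on $S$ separately; the case analyses for ``maximal'' and ``triangulation'' are both short once the local structure is in hand.

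\emph{Step 1 (local structure).} First I would check, using only the axioms and local discreteness, that $R(a-1,a,a+1)$ holds and that $(a-1,a+1)=\{a\}$. For the first: by totality the distinct elements $a-1,a,a+1$ satisfy $R(a-1,a,a+1)$ or $R(a+1,a,a-1)$, and the latter says, after cycling, that $a+1\in(a-1,a)$, which is empty because $a$ is the direct successor of $a-1$. For the second: if $x\in(a-1,a+1)$ and $x\ne a$, then totality forces $R(a-1,a,x)$ (otherwise $x\in(a-1,a)=\emptyset$), and cycling the relations $R(a-1,x,a+1)$ and $R(a-1,a,x)$ and applying transitivity gives $R(x,a+1,a)$, i.e.\ $x\in(a,a+1)=\emptyset$, a contradiction. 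Combining $(a-1,a+1)=\{a\}$ with Proposition~\ref{proposition:CyclicDecomposition} yields $C=\{a\}\sqcup\{a-1,a+1\}\sqcup(a+1,a-1)$; since $|C|\ge 4$ one reads off that $a-1\ne a+1$, that $(a+1,a-1)\ne\emptyset$, and that $\{a-1,a+1\}$ is not an edge (if it were, $a+1$ would be a neighbor of $a-1$, forcing $a+1=a-2$ and hence $C=\{a,a+1,a+2\}$, contradicting $|C|\ge4$). Thus $\{a-1,a+1\}\in\arc(C)$.

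\emph{Step 2 ($S$ maximal).} Suppose $\{a-1,a+1\}\notin S$. I would show $S\cup\{\{a-1,a+1\}\}$ is again a set of pairwise noncrossing arcs, contradicting maximality. Indeed, if some $cd\in S$ crossed $\{a-1,a+1\}$, then by Lemma~\ref{lemma:Noncrossing}(1) one of $c,d$ would lie in $(a-1,a+1)=\{a\}$ and so equal $a$, making $cd$ an arc of $S$ incident with $a$ --- contrary to hypothesis. Hence $\{a-1,a+1\}$ crosses no arc of $S$, and maximality forces $\{a-1,a+1\}\in S$.

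\emph{Step 3 ($S$ a triangulation).} Here I would look at the edge $\{a-1,a\}$ of $C$. By the triangulation hypothesis it lies in exactly one triangle of $S$, say on vertices $a-1,a,z$ with $z\notin\{a-1,a\}$; the three sides $\{a-1,a\}$, $\{a,z\}$, $\{z,a-1\}$ are each an edge of $C$ or an arc of $S$. Since $a$ is incident with no arc of $S$, the side $\{a,z\}$ is not an arc of $S$, hence it is an edge of $C$, so $z$ is a neighbor of $a$; as $z\ne a-1$, we get $z=a+1$. The remaining side $\{z,a-1\}=\{a-1,a+1\}$ is not an edge of $C$ by Step 1, so it must be an arc of $S$; that is, $\{a-1,a+1\}\in S$.

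The only point requiring genuine care is Step 1: establishing from the axioms that $(a-1,a+1)=\{a\}$ (which is what makes a hypothetical crossing arc forcibly incident with $a$ in Step 2) and that $\{a-1,a+1\}$ is an arc rather than an edge when $|C|\ge 4$. Once these are settled, Steps 2 and 3 follow immediately from Lemma~\ref{lemma:Noncrossing} and the definitions of maximality and of a triangulation.
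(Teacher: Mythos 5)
Your proof is correct and follows essentially the same route as the paper's: the maximality case adds $\{a-1,a+1\}$ after noting it crosses nothing in $S$, and the triangulation case inspects the unique triangle on an edge at $a$. The only difference is that your Step 1 carefully derives $(a-1,a+1)=\{a\}$ and the arc-versus-edge dichotomy from the axioms, details the paper leaves implicit in its one-line opening remark.
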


\begin{proof}
Note that since $|C| \geq 4$, we know that $a+1$ and $a-1$ are not neighbors and thus $\{a+1,a-1\} \in \arc(C)$.

First, assume that $S$ is maximal.  If no arc of $S$ is incident with $a$, then $\{a-1,a+1 \} \in \arc(C)$ crosses no arcs in $S$.  It follows from the maximality of $S$ that $\{ a-1,a+1 \} \in S$.

Next, assume that $S$ is a triangulation.  Let $\langle x, a, a+1 \rangle$ be the triangle containing the edge $\{a, a+1\}$.  If $a \in {C}$ is not incident with $S$, then $xa \not\in S$.  This implies that $xa$ is an edge, and thus $x = a-1$.  This shows that $\{ a+1,a-1 \} \in S$.
\end{proof}

\begin{proposition}\label{proposition:CuttingUpS}
Let $S$ be a set of pairwise noncrossing arcs for $C$ and let $ab \in S$.
\begin{enumerate}
\item\label{enumerate:IyamaYoshinoMaximal} $S$ is maximal if and only $S|_{[a,b]}$ and $S|_{[b,a]}$ are maximal in $[a,b]$ and $[b,a]$, respectively.
\item\label{enumerate:IyamaYoshinoTriangulated} $S$ is a triangulation if and only if $S|_{[a,b]}$ and $S|_{[b,a]}$ are triangulations of $[a,b]$ and $[b,a]$, respectively.
\item\label{enumerate:PathPassesEndpoints} Any path from $x \in [a,b]$ to $y \in [b,a]$ passes through either $a$ or $b$.
\item\label{enumerate:IyamaYoshinoMaximalConnected} If $S|_{[a,b]}$ and $S|_{[b,a]}$ are maximal and connected in $[a,b]$ and $[b,a]$, respectively, then $S$ is maximal and connected.
\item $S$ is locally finite if and only if $S|_{[a,b]}$ and $S|_{[b,a]}$ are locally finite.
\end{enumerate}
\end{proposition}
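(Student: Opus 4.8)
The plan is to base all five statements on one structural observation. Since $ab\in S$ and $S$ is pairwise noncrossing, Lemma~\ref{lemma:Noncrossing}(2) shows that every arc of $S$ other than $ab$ has both endpoints in $[a,b]$ or both in $[b,a]$; combined with the decomposition $C=[a,b]\coprod(b,a)=(a,b)\coprod[b,a]$ of Proposition~\ref{proposition:CyclicDecomposition}, this yields a disjoint decomposition $S = S|_{[a,b]}\sqcup\{ab\}\sqcup S|_{[b,a]}$. Two further bookkeeping facts are needed. First, $\arc([a,b])=\{cd\in\arc(C)\mid c,d\in[a,b]\}\setminus\{ab\}$: the only pair of points of $[a,b]$ that is a diagonal of $C$ but an edge of the cut $[a,b]$ is $\{a,b\}$ itself, which is a genuine diagonal of $C$ precisely because $ab$ is not an edge, i.e.\ $(a,b)\neq\emptyset$. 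Thus $ab$ plays the role of an edge in each cut $[a,b]$ and $[b,a]$. Second, an arc with both endpoints in $[a,b]$ and an arc with both endpoints in $[b,a]$ never cross: crossing arcs have disjoint endpoint sets, so neither is $a$ or $b$, hence one arc would have an endpoint in $(a,b)$ and the other in $(b,a)$, which forces noncrossing by Lemma~\ref{lemma:Noncrossing}. In particular $S$ is pairwise noncrossing if and only if both restrictions are.

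Granting this, parts (1) and (5) are immediate. Adding a diagonal $cd\neq ab$ to $S$ preserving the noncrossing property is the same as adding it to whichever cut contains it, and no diagonal can be added ``at $\{a,b\}$''; hence $S$ is maximal iff both restrictions are. Likewise the arcs of $S$ incident to a point $c$ are those of $S|_{[a,b]}$ incident to $c$, together with those of $S|_{[b,a]}$ incident to $c$, together with $ab$ if $c\in\{a,b\}$, so $c$ meets finitely many arcs of $S$ iff it meets finitely many of each restriction. For part (3): given a path $x=x_0,\dots,x_n=y$ in $S$ with $x\in[a,b]$, $y\in[b,a]$ (the cases $x\in\{a,b\}$ or $y\in\{a,b\}$ being trivial), take the least $j$ with $x_j\notin[a,b]$; then $x_j\in(b,a)$, so the arc $x_{j-1}x_j$ lies in $[b,a]$ and $x_{j-1}\in[a,b]\cap[b,a]=\{a,b\}$.

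For part (2) I would first show every triangle of $S$ has all three vertices in $[a,b]$ or all three in $[b,a]$: if two vertices lay in opposite open sides, the side of the triangle joining them would be a diagonal crossing $ab$, or a pair of points that is neither an edge of $C$ nor an arc of $S$, a contradiction. Treating $ab$ as an edge of each cut, the triangles of $S$ are then exactly the triangles of $S|_{[a,b]}$ together with the triangles of $S|_{[b,a]}$. A diagonal of $S$ other than $ab$, and an edge of $C$ other than $ab$, lies in one cut and all triangles through it lie in that cut; $ab$ lies precisely in the triangles of $S$ through $a$ and $b$, which by Proposition~\ref{proposition:AtMost2Triangles} split as one per cut. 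Counting incidences gives that $S$ is a triangulation iff both restrictions are.

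For part (4), maximality of $S$ is part (1). For connectedness, the key point is that at least one of $a,b$ is incident to $S|_{[a,b]}$ (and symmetrically for $S|_{[b,a]}$): if $|[a,b]|=3$ then the open side $(a,b)$ is a single point incident to no arc of $S$, so there is nothing to connect there; if $|[a,b]|\geq 4$ and neither $a$ nor $b$ were incident to $S|_{[a,b]}$, then Lemma~\ref{lemma:Maximal} (applied to the maximal set $S|_{[a,b]}$ in $[a,b]$) would force $\{b,a+1\}\in S|_{[a,b]}$, making $b$ incident, a contradiction. Given this, any point incident to $S$ lies in $\{a,b\}$, or in an open side, say $(a,b)$, and is then incident to $S|_{[a,b]}$; by connectedness of $S|_{[a,b]}$ it is joined by a path in $S|_{[a,b]}$ to whichever of $a,b$ is incident to $S|_{[a,b]}$, and the arc $ab$ joins $a$ to $b$. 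Hence all points incident to $S$ lie in a single connected component. The main obstacle I expect is keeping the convention ``$ab$ is an edge of the cut, not a diagonal'' straight throughout parts (2) and (4) so the counting of triangles, edges and diagonals matches on both sides, together with the degenerate cut $|[a,b]|=3$ that must be handled separately in (4); the rest is the routine bookkeeping set up in the first paragraph.
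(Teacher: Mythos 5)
Your proof is correct and follows essentially the same route as the paper: Lemma~\ref{lemma:Noncrossing} to split $S$ along $ab$ into the two cuts, Proposition~\ref{proposition:AtMost2Triangles} for the triangle counts in (2), and Lemma~\ref{lemma:Maximal} together with the arc $ab$ itself to glue the two connected restrictions in (4). You are in fact more explicit than the paper (which dismisses (1), (3) and (5) as straightforward) about identifying $ab$ with an edge of each cut and about the degenerate three-point cut; the only slightly loose step is the justification that arcs lying in opposite cuts cannot cross (disjointness of endpoints does not by itself exclude $a$ or $b$ as endpoints), but that fact is elementary and correct.
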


\begin{proof}
\begin{enumerate}
  \item {This is} straightforward.
	\item Assume that $S$ is a triangulation; we will show that $S|_{[a,b]}$ is a triangulation for $[a,b]$.

By Proposition \ref{proposition:AtMost2Triangles}, we know that an arc $cd \in S|_{[a,b]}$ lies in at most two triangles in $S|_{[a,b]}$, so it suffices to find those two triangles.  We know that there are two triangles in $S$ which contain $cd$ and, using the fact that the arcs of $S$ do not cross $ab$, we can deduce that both triangles are triangles in $S|_{[a,b]}$.

The case where $cd$ is an edge in $[a,b]$, different from the edge $ab$ itself, is similar.  If $cd = ab$ (say, $a=c$ and $b=d$), we know that there are two triangles in $S$ which contain $cd$.  By Proposition \ref{proposition:AtMost2Triangles}, one of the triangles is $cxd$ for some $x \in [a,b] = [c,d]$.  This shows that $S|_{[a,b]}$ is a triangulation of $[a,b]$.

Showing that $S|_{[b,a]}$ is a triangulation of $[b,a]$ is similar.

For the other direction, assume that both $S|_{[a,b]}$ and $S|_{[b,a]}$ are triangulations of $[a,b]$ and $[b,a]$, respectively.  We want to show that $S$ is a triangulation of $\LL$.  First, we check that $ab$ lies in two triangles.  Since both $S|_{[a,b]}$ and $S|_{[a,b]}$ are triangulations and $ab$ is an edge in both $[a,b]$ and $[b,a]$, we know that there is a unique triangle in $S|_{[a,b]}$ containing $ab$ and a unique triangle in $S|_{[b,a]}$ containing $ab$.  These are two different triangles in $S$.

Consider now an edge $cd \in S$, different from $ab$.  Without loss of generality, by Lemma \ref{lemma:Noncrossing}, we may assume that $c,d \in [a,b]$.  Since $S|_{[a,b]}$ is a triangulation of $[a,b]$, we know that $cd$ lies in two triangles in $S|_{[a,b]}$.  Hence, $cd$ lies in at least two triangles in $S$.  Proposition \ref{proposition:AtMost2Triangles} then shows that $cd$ lies in exactly two triangles in $S$. 

  \item This is obvious.

  \item We know by (\ref{enumerate:IyamaYoshinoMaximal}) that $S$ is maximal.  In order to show that $S$ is connected, it suffices to show that each $x \in \LL$ incident with $S$, admits a path to $a$.  The case where $x \in \{a,b\}$ is trivial, we will exclude this case.
	
	We will assume that $x \in (a,b)$; the case where $x \in (b,a)$ is similar.  We know that $S|_{[a,b]}$ is maximal in $[a,b]$, hence $a$ or $b$ is incident with $S|_{[a,b]}$ (Lemma \ref{lemma:Maximal}).  Since $S|_{[a,b]}$ is connected, we find that there is a path from $x$ to either $a$ or $b$ in $(a,b)$.  This yields a path from $x$ to $a$ in $S$.

  \item This is obvious.
\end{enumerate}
\end{proof}

\begin{remark}
A counterexample to the converse of Proposition \ref{proposition:CuttingUpS}(\ref{enumerate:IyamaYoshinoMaximalConnected}) is given by Example \ref{example:NoncrossingSets}(\ref{enumerate:Example9}) where one chooses $\{a,b\} = \{(0,0),(1,0)\}$.
\end{remark}

\subsection{Maximal and connected triangulations}

\begin{theorem}\label{theorem:ConnectedTriangulationsAreMaximal}
If $S$ is a connected triangulation, then $S$ is maximal.
\end{theorem}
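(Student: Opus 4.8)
The plan is to argue by contradiction: suppose $S$ is a connected triangulation but not maximal, so there is an arc $ef \in \arc(C) \setminus S$ which crosses no arc of $S$. First I would dispose of trivial cases: if $|C| < 4$ then $\arc(C)$ is empty or the statement is vacuous, so assume $|C| \geq 4$. The key idea is to look at how the (hypothetical) arc $ef$ interacts with the triangles of $S$ near one of its endpoints, using Proposition~\ref{proposition:AtMost2Triangles} and Proposition~\ref{proposition:CuttingUpS}.

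The main step is to show that every point of $C$ incident with $S$ is connected (via a path in $S$) to one specific point, and then to derive a contradiction from the existence of $ef$. More precisely, I would first observe that since $S$ is a triangulation, every edge of $C$ lies in exactly one triangle, so \emph{every} point of $C$ is incident with $S$ unless its two neighbours are joined by an arc of $S$ (cf.\ Lemma~\ref{lemma:Maximal}); in the latter degenerate situation one can pass to the cut $[a{+}1,a{-}1]$ and induct. So assume all points are incident with $S$; by connectedness, $S$ is then "connected" in the strong sense that any two points are joined by a path. Now consider the arc $ef \notin S$. Since $ef$ crosses nothing in $S$, Lemma~\ref{lemma:Noncrossing} lets me split $C = [e,f] \sqcup [f,e]$ with every arc of $S$ lying entirely in one of the two closed halves; hence no path in $S$ crosses from the interior $(e,f)$ to the interior $(f,e)$ except through $e$ or $f$ (Proposition~\ref{proposition:CuttingUpS}(\ref{enumerate:PathPassesEndpoints})). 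The plan is then to examine the triangle of $S$ containing the edge adjacent to $e$ on the $(e,f)$ side, say $\langle e, g, \dots\rangle$; walking along the "fan" of triangles at $e$ (there are finitely many since each edge lies in a unique triangle and triangles at $e$ are linearly ordered between the two neighbours of $e$), I reach a triangle $exy$ of $S$ with $x$ a neighbour of $e$ on the $(e,f)$-side and $y$ the "last" vertex of the fan. I want to show the fan at $e$ cannot reach all the way to the neighbour of $e$ on the $(f,e)$-side without producing an arc that crosses $ef$ — since $ef$ is not in $S$, at some point the fan "stops", and the arc $ey$ closing the fan together with the edge $yz$ ($z$ the relevant neighbour) forces $yz$ or $ey$ to cross $ef$, contradicting the noncrossing hypothesis, unless $y \in \{e,f\}$ forces $ef \in S$.

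The hard part — and the step I expect to be the main obstacle — is making the "walk around the fan at $e$" argument fully rigorous: one must show that the triangles of $S$ incident with $e$ are finite in number and totally ordered in the cyclic sense between the two edges $\{e,e{-}1\}$ and $\{e,e{+}1\}$, that consecutive triangles in this order share an arc of $S$ emanating from $e$, and that the two "extreme" arcs of the fan are exactly the edges at $e$ (this uses that $S$ is a triangulation: each edge at $e$ is in a unique triangle, and Proposition~\ref{proposition:AtMost2Triangles} bounds the triangles on each arc by two). Once the combinatorics of the fan is set up, the contradiction is quick: the union of the triangles in the fan at $e$ covers a closed interval $[e{-}1, e{+}1]$ bounded by arcs of $S$, and $ef$ with $f \notin \{e{-}1, e, e{+}1\}$ must cross the arc bounding this interval, or else $f$ is one of the fan vertices and $ef \in S$ after all. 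An alternative, cleaner route that avoids finiteness issues is to pick the arc $ef$ and use Proposition~\ref{proposition:CuttingUpS}(\ref{enumerate:IyamaYoshinoTriangulated}): $S|_{[e,f]}$ and $S|_{[f,e]}$ are triangulations, so adding $ef$ to $S$ would make $ef$ an edge of a triangulation of each half lying in two triangles of $S \cup \{ef\}$, one in each half; but then by connectedness of $S$ there is a path in $S$ from a point of $(e,f)$ to a point of $(f,e)$ not through $e$ or $f$, contradicting Proposition~\ref{proposition:CuttingUpS}(\ref{enumerate:PathPassesEndpoints}) — provided both $(e,f)$ and $(f,e)$ are nonempty, which holds since $ef$ is an arc (hence $e,f$ non-adjacent, so each open interval contains a neighbour of $e$). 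I would present this second argument, since it is short and reduces everything to results already proved; the finiteness/fan argument is the fallback if a gap appears.
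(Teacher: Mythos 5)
Neither of your two routes closes, and the one you elect to present is the weaker of the two. Your ``clean'' argument applies Proposition~\ref{proposition:CuttingUpS}(\ref{enumerate:IyamaYoshinoTriangulated}) to the arc $ef$, but that proposition is stated and proved only for an arc \emph{belonging to} $S$; for $ef \notin S$ the claim that $S|_{[e,f]}$ is a triangulation of $[e,f]$ (equivalently, that the edge $\{e,f\}$ of $[e,f]$ lies in a triangle of $S|_{[e,f]}$) is essentially what you are trying to prove and cannot be assumed. More fatally, your final step asserts that connectedness produces a path from a point of $(e,f)$ to a point of $(f,e)$ \emph{not} passing through $e$ or $f$; connectedness only guarantees that some path exists, and such a path is perfectly entitled to pass through $e$ or $f$, so Proposition~\ref{proposition:CuttingUpS}(\ref{enumerate:PathPassesEndpoints}) is not contradicted and no contradiction is ever actually derived. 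Your fallback fan argument founders on exactly the point you flagged: in a locally discrete cyclically ordered set the fan of triangles at a vertex need not be finite (the fountain $S_a$ of Example~\ref{example:LightConeArcs} has infinitely many arcs at $a$), so the walk around the fan at $e$ need not terminate; and when the fan on the $(e,f)$-side is infinite, its arcs and their far sides may all remain inside $(e,f)$ and never cross $ef$, which is precisely the situation in which maximality is at stake.

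The missing idea is to use connectedness \emph{quantitatively}, via a minimal path, which is what the paper does. Both endpoints of the hypothetical arc $ef$ are incident with $S$ (your Lemma~\ref{lemma:Maximal} observation is correct and is also the paper's first step). Now choose a path $e=x_0,x_1,\dots,x_n=f$ in $S$ of minimal length. If $n\ge 2$, one looks at the triangle $\langle e,c,x_1\rangle$ of $S$ containing the arc $ex_1$ on the side where $f$ lies; since $cx_1$ cannot cross $ef$, the vertex $c$ sits so that $ec\in S$ and, by Proposition~\ref{proposition:CuttingUpS}(\ref{enumerate:PathPassesEndpoints}) applied to $ec$, the tail $x_1,\dots,x_n$ must pass through $e$ or $c$; minimality forces it through $c$, yielding a strictly shorter path from $e$ to $f$ --- a contradiction. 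Hence $n=1$ and $ef\in S$. I suggest you replace both of your arguments with this minimal-path shortening.
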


\begin{proof}
Let $ab \in \arc C$ such that $ab$ intersects with no arcs of $S$; we want to show that $ab \in S$.  We start by showing that $S$ has arcs incident with $a$ and arcs incident with $b$.  Assume that $a$ is not incident with $S$.  It follows from Lemma \ref{lemma:Maximal} that $\{a-1, a+1\} \in S$.  However, $ab$ crosses $\{a-1, a+1\}$.  Since $ab$ does not cross any arcs of $S$, we know that $a$ is incident with $S$.  Similarly, one shows that $b$ is incident with $S$.

For the next step, we will use connectedness.  We consider a path
$$a = x_0, x_1, x_2, \ldots, x_n = b$$
in $S$ of length $n$ from $a$ to $b$.  We will furthermore assume that $n$ is minimal with this property, so there is no shorter path from $a$ to $b$.  We will show that $n = 1$, which implies that $ab \in S$.  Seeking a contradiction, we will assume that $n \not= 1$ and thus $x_1 \not= b$.

By totality, we know that either $R(a,b, x_1)$ or $R(a, x_1, b)$. Without loss of generality, we may assume the former, so that $\langle a, b, x_1 \rangle$ is a 3-cycle in {$C$}. Since $S$ is a triangulation and $ax_1 \in S$, we know that there is a triangle $\langle a, c, x_1 \rangle$ in $S$.  By totality, we have either the 4-cycle $\langle a, c, b, x_1 \rangle$ or $\langle a, b, c, x_1 \rangle$.  Since $cx_1$ does not cross the arc $ab$, we find the latter.  Note that $ac \in S$.

It follows from Proposition \ref{proposition:CuttingUpS} that the path $x_1, x_2, \ldots, x_n = b$ passes either $a$ or $c$.  Minimality of $n$ implies that the path passes $c$, and hence $c = x_k$ for some $1 < k \leq n$.  We now have a path $a = x_0, x_k, x_{k+1}, \ldots, x_n = b$, contradicting the minimality of $n$.  This concludes the proof.
\end{proof}

\begin{proposition}
Let $S$ be a set of pairwise noncrossing arcs for {$(C,R)$} and let $ab \in S$.  The set $S \subseteq \arc (C)$ is a connected triangulation if and only $S|_{[a, b]}$ and $S|_{[b, a]}$ are connected triangulations for $[a,b]$ and $[b,a]$, respectively.
\end{proposition}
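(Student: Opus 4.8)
The plan is to reduce everything to Proposition~\ref{proposition:CuttingUpS} and Theorem~\ref{theorem:ConnectedTriangulationsAreMaximal}, regarding each of $[a,b]$ and $[b,a]$ as a locally discrete cyclically ordered set in its own right (with the restricted ternary relation). One point to keep in mind throughout is that $\{a,b\}$ is an \emph{edge} of $[a,b]$, not an arc: the interval $(a,b)$ is nonempty because $ab\in\arc(C)$, while asymmetry shows that no element of $C$ lies strictly between $b$ and $a$ inside $[a,b]$. So the triangulation statement for $[a,b]$ concerns the edge $\{a,b\}$, and ``connecting $a$ to $b$ inside $S|_{[a,b]}$'' will not be automatic.

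For the \emph{if} direction, assume $S|_{[a,b]}$ and $S|_{[b,a]}$ are connected triangulations. Applying Theorem~\ref{theorem:ConnectedTriangulationsAreMaximal} to each piece, they are also maximal; then Proposition~\ref{proposition:CuttingUpS}(\ref{enumerate:IyamaYoshinoMaximalConnected}) shows that $S$ is maximal and connected, and Proposition~\ref{proposition:CuttingUpS}(\ref{enumerate:IyamaYoshinoTriangulated}) shows that $S$ is a triangulation. Hence $S$ is a connected triangulation. This direction is immediate.

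For the \emph{only if} direction, assume $S$ is a connected triangulation. By Proposition~\ref{proposition:CuttingUpS}(\ref{enumerate:IyamaYoshinoTriangulated}) both $S|_{[a,b]}$ and $S|_{[b,a]}$ are triangulations, so the content is to establish connectedness; by symmetry I treat $S|_{[a,b]}$. Let $\langle a,c,b\rangle$ be the unique triangle of $S|_{[a,b]}$ on the edge $\{a,b\}$ (so $c\in(a,b)$). The first step is the observation that \emph{if $a$ is incident with $S|_{[a,b]}$ then $\{a,c\}\in S|_{[a,b]}$} (and symmetrically for $b$): otherwise $\{a,c\}$ would be an edge of $[a,b]$, forcing $c=\operatorname{succ}(a)$, and then any arc $\{a,w\}\in S|_{[a,b]}$ (one exists since $a$ is incident) would have $w\in(\operatorname{succ}(a),b)$ and hence cross the side $\{\operatorname{succ}(a),b\}$ of $\langle a,\operatorname{succ}(a),b\rangle$ by Lemma~\ref{lemma:Noncrossing}(1) --- impossible, unless $[a,b]$ has only three elements, in which case $S|_{[a,b]}=\emptyset$ and $a$ is not incident. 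Consequently $a,c,b$ is a path in $S|_{[a,b]}$ as soon as \emph{both} $a$ and $b$ are incident with $S|_{[a,b]}$. The second step folds an arbitrary path into the interval: given $x,y\in[a,b]$ incident with $S|_{[a,b]}$, connectedness of $S$ gives a path $P=(x_0,\dots,x_n)$ in $S$ from $x$ to $y$, and by Lemma~\ref{lemma:Noncrossing}(1) no arc of $S$ joins $(a,b)$ to $(b,a)$ (it would cross $ab$), so consecutive vertices of $P$ outside $\{a,b\}$ stay on the same side of $ab$ and $P$ switches sides only at $a$ or $b$. If $P$ never meets $\{a,b\}$ then $P$ lies entirely in $(a,b)$ and is already a path in $S|_{[a,b]}$ (the only arc of $S$ with both endpoints in $[a,b]$ that is not an arc of $[a,b]$ is $\{a,b\}$, which cannot appear). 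Otherwise, the maximal sub-stretch of $P$ from $x$ to its first visit $p\in\{a,b\}$, and from its last visit $q\in\{a,b\}$ to $y$, are paths in $S|_{[a,b]}$, and these sub-paths force $p$ and $q$ to be incident with $S|_{[a,b]}$. If $p=q$ we are done; if $p\ne q$ then $\{p,q\}=\{a,b\}$ with both $a,b$ incident with $S|_{[a,b]}$, so Step~1 supplies the path $a,c,b$ to splice in. In all cases $x$ and $y$ are connected in $S|_{[a,b]}$; hence $S|_{[a,b]}$, and symmetrically $S|_{[b,a]}$, is connected.

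The main obstacle is Step~1: because $\{a,b\}$ collapses to an edge inside each interval, the assertion that $a$ and $b$ are joined by a path in $S|_{[a,b]}$ has to be read off from the single triangle sitting on that edge, together with a careful treatment of the small-interval degeneracies where $S|_{[a,b]}$ can be empty (so that the connectedness requirement becomes vacuous). Everything else is bookkeeping on paths using Lemma~\ref{lemma:Noncrossing} and the already established Proposition~\ref{proposition:CuttingUpS}.
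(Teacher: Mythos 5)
Your proof is correct and follows essentially the same route as the paper's: the ``if'' direction is the same combination of Theorem~\ref{theorem:ConnectedTriangulationsAreMaximal} with Proposition~\ref{proposition:CuttingUpS}, and the ``only if'' direction rests on the same two ideas, namely folding a path of $S$ into $[a,b]$ via Proposition~\ref{proposition:CuttingUpS}(\ref{enumerate:PathPassesEndpoints}) and splicing in the apex $c$ of the unique triangle on the edge $\{a,b\}$, with the same treatment of the degenerate case where $ac$ or $cb$ collapses to an edge. The only difference is organizational (you truncate an arbitrary path between its first and last visits to $\{a,b\}$, whereas the paper starts from a minimal path), which does not change the substance.
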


\begin{proof}
  If $S|_{[a,b]}$ and $S|_{[b,a]}$ are connected triangulations for $[a,b]$ and $[b,a]$, respectively, then Theorem \ref{theorem:ConnectedTriangulationsAreMaximal}, together with Proposition \ref{proposition:CuttingUpS}(\ref{enumerate:IyamaYoshinoTriangulated} and \ref{enumerate:IyamaYoshinoMaximalConnected}) above, show that $S$ is a connected triangulation for $C$.
	
  For the other direction, assume that $S$ is a connected triangulation.  We know that $S|_{[a,b]}$ and $S|_{[b,a]}$ are triangulations of $[a,b]$ and $[b,a]$, respectively; we only need to show connectedness.  Let $x,y \in [a,b]$ be two elements incident with {$S|_{[a,b]}$}.
	
  Since $S$ is connected, there is a minimal path from $x$ to $y$ in $S$: $x = x_0, x_1, \ldots, x_n = y$.  The minimality and Proposition \ref{proposition:CuttingUpS}(\ref{enumerate:PathPassesEndpoints}) imply that $a$ and $b$ can each occur at most once in this path, and if both do occur, then they occur in adjacent positions.  Moreover, all of $x_0, x_1, \ldots, x_n = y$ belong to $[a,b]$.
	
  If $a$ and $b$ do not both occur, then the above path is a path in $S|_{[a,b]}$. Thus assume that $a$ and $b$ both occur in this path (necessarily in adjacent positions).  Using the fact that $S|_{[a,b]}$ is a triangulation and $ab$ is an edge in $S|_{[a,b]}$, we find a unique $c \in [a,b]$ such that $acb$ is a triangle in $S|_{[a,b]}$.

  There are two cases which could occur.  If $ac,cb \in \arc C$, then by inserting $c$ between $a$ and $b$ in the path from $x$ to $y$, we get a path from $x$ to $y$ in $S|_{[a,b]}$.  Otherwise, one of $ac$ and $bc$ is an edge; say it is $ac$ and $c = a+1$ in $C$.  In that case, since $b,c$ are the two neighbors of $a$ in the interval $[a,b]$ and $abc$ is a triangle in $S$, $a$ cannot be incident to any arc of $S|_{[a,b]}$.  Indeed, such an arc would cross $bc$.  In particular, $a$ is the end of the path, so that $a=x$ or $a=y$.  This is a contradiction, however, since we chose $x,y$ such that they were incident with $S_{[a,b]}$.  Hence, the second case cannot occur.
\end{proof}

{The rest of the subsection is devoted to a discussion of properties of connected triangulation which will in \S\ref{section:CominatoricsOfTypeA} lead to functorial finiteness of certain rigid subcategories of 2-Calabi-Yau triangulated categories.}

\begin{proposition}\label{proposition:TowardClusterTilting}
{Suppose that} $S \subseteq \arc C$ is a connected set of noncrossing arcs.
\begin{enumerate}
\item\label{enumerate:LocallyFiniteCrossings} If $S$ is maximal, then for each arc $ab \in \arc (C)$, there is a finite subset $F \subseteq C$ such that every arc of $S$ which intersects $ab$ is incident with $F$.
\item\label{enumerate:ClosestTriangle} Let $S$ be a triangulation.  For any arc $ab \not\in S$, there is a unique triangle $axy$ in $S$ such that $axby$ is a 4-cycle.
\end{enumerate}
\end{proposition}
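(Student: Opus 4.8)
For \eqref{enumerate:LocallyFiniteCrossings}: if $ab\in S$ then by pairwise non-crossing no arc of $S$ crosses $ab$, so $F=\emptyset$ works; thus assume $ab\notin S$ and let $\mathcal{X}$ be the set of arcs of $S$ crossing $ab$. I would argue by contradiction, supposing that $\mathcal{X}$ has no finite ``vertex cover'' (no finite $F$ as required). We may assume $|C|\ge 4$ (otherwise $\arc(C)=\emptyset$), and, writing $C=\{a\}\sqcup(a,b)\sqcup\{b\}\sqcup(b,a)$ as in Proposition~\ref{proposition:CyclicDecomposition}, we may assume $(a,b)$ and $(b,a)$ each have at least two elements: otherwise, say $(a,b)=\{a+1\}$, then every crossing arc is incident with $a+1$ and $F=\{a+1\}$ works. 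By Lemma~\ref{lemma:Maximal} there are $a^{\ast}\in\{a,a+1\}$ and $b^{\ast}\in\{b,b-1\}$ incident with $S$, and $a^{\ast}\ne b^{\ast}$. By connectedness choose a path $\pi$ in $S$ from $a^{\ast}$ to $b^{\ast}$, with (finite) vertex set $F_{\pi}$. Put $\mathcal{X}'=\{cd\in\mathcal{X}:\{c,d\}\cap\{a,a+1,b,b-1\}=\emptyset\}$; the arcs in $\mathcal{X}\setminus\mathcal{X}'$ are all incident with the four vertices $a,a+1,b,b-1$, so $\mathcal{X}'$ still has no finite vertex cover, and in particular $F_{\pi}$ is not one: there is $cd\in\mathcal{X}'$ with $c,d\notin F_{\pi}$. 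Fixing a cut based at $a$ (so $a$ is the minimum and $a-1$ the maximum) and running through the cyclic inequalities (using $c\notin\{a+1,b-1\}$, $d\ne a+1$, etc.), one checks that $cd$ separates $a^{\ast}$ from $b^{\ast}$, i.e.\ $a^{\ast}\in[c,d]$ and $b^{\ast}\in[d,c]$ after suitable labelling; hence by Proposition~\ref{proposition:CuttingUpS}(\ref{enumerate:PathPassesEndpoints}) the path $\pi$ meets $c$ or $d$, contradicting $c,d\notin F_{\pi}$.

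For \eqref{enumerate:ClosestTriangle}: by Theorem~\ref{theorem:ConnectedTriangulationsAreMaximal} $S$ is maximal, so part~\eqref{enumerate:LocallyFiniteCrossings} shows that only finitely many arcs of $S$ cross $ab$; call this set $\mathcal{X}$, and note $\mathcal{X}\ne\emptyset$ by maximality. Any two arcs of $S$ crossing $ab$ are non-crossing, hence nested by Lemma~\ref{lemma:Noncrossing}, so $\mathcal{X}$ is a finite chain under nesting. Let $pq\in\mathcal{X}$ be an \emph{outermost} element (largest nested interval), labelled so that, in a cut based at $a$, one has $a<p<b<q$. Then $a\in[q,p]$, $b\notin[q,p]$, and $p$ and $q$ are neighbours of the cyclically ordered set $[q,p]$, so $\{p,q\}$ is an edge of $[q,p]$. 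By Proposition~\ref{proposition:CuttingUpS}(\ref{enumerate:IyamaYoshinoTriangulated}) the set $S|_{[q,p]}$ is a triangulation of $[q,p]$, so the edge $\{p,q\}$ lies in a unique triangle $\langle q,r,p\rangle$ of $S|_{[q,p]}$, with $r$ strictly between $q$ and $p$ in $[q,p]$. I claim $r=a$. Indeed $r<p$ or $r>q$; if $r<p$ then (since $r\ne a$ would give $a<r<p<b$) the side $\{q,r\}$ of the triangle lies in $S$ — it is not an edge of $[q,p]$, as the only candidate $r=q+1$ contradicts $r<p$ — and it crosses $ab$ because $r\in(a,b)$ and $q\in(b,a)$; but its nested interval $[r,q]$ properly contains $[p,q]$, contradicting outermost-ness. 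The case $r>q$ is symmetric, using the side $\{r,p\}$. Hence $r=a$, and $\langle a,p,q\rangle$ is a triangle of $S$ with $\langle a,p,b,q\rangle$ a $4$-cycle; this is the required triangle, with $x=p$ and $y=q$.

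For uniqueness, let $\langle a,x,y\rangle$ be any triangle of $S$ with $\langle a,x,b,y\rangle$ a $4$-cycle, so $x\in(a,b)$ and $y\in(b,a)$. The side $\{x,y\}$ is not an edge of $C$ (since $x\in(a,b)$, $y\in(b,a)$ force $y\ne x\pm1$), hence is an arc of $S$; it crosses $ab$, so $\{x,y\}\in\mathcal{X}$. If $\{x,y\}\ne\{p,q\}$ then by nesting and outermost-ness $[x,y]\subsetneq[p,q]$, so $p<x$ or $y<q$. If $p<x$, the side $\{a,x\}$ lies in $S$ (it is not an edge, as $x=a+1$ would force $p<a+1$) and crosses $\{p,q\}\in S$ — a contradiction; if $y<q$ then $y\ne a-1$ (because $q\le a-1$), so the side $\{a,y\}$ lies in $S$ and crosses $\{p,q\}$, again a contradiction. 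Hence $\{x,y\}=\{p,q\}$, and since $x\in(a,b)$, $y\in(b,a)$ this forces $x=p$, $y=q$, so $\langle a,x,y\rangle=\langle a,p,q\rangle$.

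The plan therefore reduces everything to two elementary but fiddly ingredients: the repeated determination of which side of a given arc a given vertex lies on (routine after fixing a cut of $(C,R)$ and arguing in the associated linear order), and, in \eqref{enumerate:ClosestTriangle}, the two ``larger crossing arc'' arguments that pin down $r=a$ (and their analogue in the uniqueness proof). The only genuinely global input is part~\eqref{enumerate:LocallyFiniteCrossings}, used in \eqref{enumerate:ClosestTriangle} to guarantee that $\mathcal{X}$ is finite so that an outermost crossing arc exists; and that part in turn depends on connectedness precisely through the path argument with Proposition~\ref{proposition:CuttingUpS}(\ref{enumerate:PathPassesEndpoints}). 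The part I expect to be the main obstacle is making the side-of-an-arc bookkeeping in \eqref{enumerate:LocallyFiniteCrossings} and the edge/degenerate cases in \eqref{enumerate:ClosestTriangle} fully watertight without drowning the reader in cyclic-order inequalities.
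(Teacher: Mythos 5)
Your part~(\ref{enumerate:LocallyFiniteCrossings}) is correct and follows essentially the paper's route: the cover is the vertex set of a path in $S$ joining (neighbours of) $a$ and $b$, and Proposition~\ref{proposition:CuttingUpS}(\ref{enumerate:PathPassesEndpoints}) forces every crossing arc to meet that path; your uniform reduction to $a^*\in\{a,a+1\}$ and $b^*\in\{b,b-1\}$ via Lemma~\ref{lemma:Maximal} is a harmless variant of the paper's case analysis.

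Part~(\ref{enumerate:ClosestTriangle}), however, has a genuine gap at its first step. Part~(\ref{enumerate:LocallyFiniteCrossings}) does \emph{not} imply that only finitely many arcs of $S$ cross $ab$: it only says that all crossing arcs are incident with a finite set $F$, and infinitely many arcs of $S$ may share a single endpoint. Concretely, for the fountain $S_{10}$ of Example~\ref{example:NoncrossingSets}(\ref{enumerate:Example10}) (a connected triangulation of $\bZ$) and the arc $ab=\{-2,2\}$, the set $\mathcal{X}$ consists of all $\{0,n\}$ with $\lvert n\rvert\geq 3$, which is infinite; this is exactly why Proposition~\ref{proposition:LocallyFiniteCrossings}(2) needs the extra hypothesis that $S$ be locally finite. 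Since $\mathcal{X}$ is in general an infinite chain under nesting, the existence of an \emph{outermost} element $pq$ is not automatic, and that existence is essentially equivalent to the existence of the triangle $axy$ you are trying to produce, so the argument begs the question here. (In the example above the maximum happens to be $\{0,-3\}$, but you give no reason why the chain could not fail to attain its supremum in a general locally discrete cyclic order such as the one coming from $\bZ\cdot\bZ\cdot\bZ$.) The paper avoids this by a different mechanism: after disposing of the cases where $a$ or $b$ is not incident with $S$ via Lemma~\ref{lemma:Maximal}, it takes a \emph{minimal} path $a=l_0,l_1,\dots,l_n=b$ in $S$ and shows, using the two triangles containing $al_1$ (Proposition~\ref{proposition:AtMost2Triangles}) together with Proposition~\ref{proposition:CuttingUpS}(\ref{enumerate:PathPassesEndpoints}), that one of them is the required triangle. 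Your derivation of $r=a$ from the triangle on the edge $\{p,q\}$ of $[q,p]$, and your uniqueness argument, are fine once an outermost crossing arc is known to exist, but that existence must be supplied by an argument that genuinely uses connectedness (as the minimal-path argument does), not by an appeal to finiteness.
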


\begin{proof}
\begin{enumerate}
\item We assume first that both $a$ and $b$ are incident with $S$.  Let $a= x_0, x_1, \ldots, x_n = b$ be a path from $a$ to $b$ in $S$.  We let $F = \{x_i\}_{0 < i < n}$.  Let $cd \in S$ be an arc which crosses $ab$.  It follows from Proposition \ref{proposition:CuttingUpS} that the path from $a$ to $b$ passes $c$ or $d$, hence $\{c,d\} \cap F \not= \emptyset$.  In this case, our choice of $F$ suffices.

We now turn our attention to the case where either $a$ or $b$ would not be incident with $S$.  We start with the following observation.  If an arc $cd$ crosses $ab$, then the arc $cd$ crosses $\{a+1,b\}$ or $a+1 \in \{c,d\}$.  Indeed, if $cd$ croses $ab$, then we may assume (up to renaming) that $c \in (a,b) = [a+1,b)$ and $d \in (b,a) \subset (b,a+1)$.  Thus, either $c = a+1$ or $cd$ crosses $\{a+1,b\}.$

If neither $a$ nor $b$ were incident with $S$, then by Lemma \ref{lemma:Maximal}, we know that $a+1$ and $b+1$ are incident with $S$.  As above, we may thus find a finite set $F'$ such that every arc $cd$ which crosses $\{a+1,b+1\}$ is incident with $F'$.  The set
$F$ from the statement of the proposition is the $F = F' \cup \{a+1,b+1\}$.

The other cases are handled similarly.

\item We start with uniqueness.  Let $axy$ and $ax'y'$ be triangles as in the statement of the proposition.  If $x \not= x'$ we may, without loss of generality, assume that $R(a,x,x')$.  We find a 5-cycle $\langle a, x, x', b, y \rangle$.  The arc $ax'$ intersects the arc $xy$.  This is a contradiction, and hence $x = x'$.  Similarly, we show that $y = y'$, establishing the uniqueness.

We proceed by showing that such a triangle $axy$ exists.  If $a$ is not incident with $S$, then Lemma \ref{lemma:Maximal} yields that $\{a-1, a+1\} \in S$ and hence the triangle $\langle a-1,a,a+1\rangle$ is the requested triangle.

We may thus assume that $a$ is incident with $S$.  Lemma \ref{lemma:Maximal} yields that either $\{b-1,b+1\} \in S$ or $b$ is incident with $S$.  We start by considering the latter case.  Thus, assume that $b$ is incident with $S$ and consider a path $a = l_0, l_1, \ldots, l_n = b$.  We will choose a path such that $n$ is minimal.  Since $ab \not\in S$, we know that $n \geq 2$.  Further, by minimality, we know that $a = l_i$ if and only if $i = 0$.

Since $S$ is a triangulation, we know that $al_1$ lies in two triangles in $S$: $a y l_1$ and $a l_1 z$ (see Proposition \ref{proposition:AtMost2Triangles}).  By totality, we have two possibilities: either $R(a, l_1, b)$ or $R(a,b,l_1)$.  Assume that $R(a,l_1,b)$; we claim that the triangle $a l_1 z$ is the triangle from the statement of the proposition.  We need to verify that $R(l_1,b,z)$.

By totality, we have either $R(l_1,b,z)$ or $R(l_1,z,b)$.  Seeking a contradiction, assume $R(l_1,z,b)$.  In this case, $l_1 \in [a,z]$ and $b \in [z,a]$ so that every path from $l_1$ to $b$ passes either $a$ or $z$ (see Proposition \ref{proposition:CuttingUpS}).  We consider the path $l_1, l_2 \ldots, l_n = b$ from $l_1$ to $b$.  We know that this path does not pass $a$ and hence, there is a $j$ such that $z = l_j$.  From the minimality, we infer that $z=l_1$.  However, since $a l_1 z$ is a triangle, we have $z \not= l_1$.  This is a contradiction, and hence $R(l_1,z,b)$.  We thus have a 4-cycle $\langle a, l_1, b, z \rangle$ as requested.

When $R(a,b,l_1)$ instead of $R(a, l_1, b)$, one proves that the triangle $a y l_1$ is the triangle from the statement of the proposition.  The proof is similar.

The only remaining case is the case where $b$ is not incident with $S$.  In this case, we have $\{b-1,b+1\} \in S$.  If $\{a, b-1\}, \{a,b+1\} \in S$, then the triangle is $\langle a, b-1, b+1\rangle$.  If either $\{a,b-1\} \not\in S$ {or} $\{a,b+1\} \not\in S$ (say the former), we may apply the previous part of the proof and find a triangle $axy$ such that $R(x,b-1,y)$.  It is then readily verified that $R(x,b,y)$ and thus that $axy$ is the requested triangle.
\qedhere
\end{enumerate}
\end{proof}

\begin{proposition}\label{proposition:ExtremalCuttingPoints}
Let $C,R$ be a locally discrete cyclically ordered set and let $S \subseteq \arc(C)$ be a connected triangulation.  Let $\langle a,x,b,y \rangle$ be a 4-cycle in $C$.  If $xy \in S$, then
\begin{enumerate}
\item\label{emumerate:AnticlockwiseTriangle} there is a triangle $xcd$ in $S$ such that $xc$ crosses $ab$ and $xd$ does not cross $ab$, and
\item\label{emumerate:ClockwiseTriangle} there is a triangle $xef$ in $S$ such that $xf$ crosses $ab$ and $xe$ does not cross $ab$.
\end{enumerate}
\end{proposition}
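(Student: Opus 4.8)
The plan is to analyse the triangles of $S$ incident with the vertex $x$, exploiting that the arc $ab$ crosses $xy\in S$. First I would record the preliminaries: since $\langle a,x,b,y\rangle$ is a $4$-cycle we have $x\in(a,b)$ and $y\in(b,a)$, so in particular $x$ lies strictly between $a$ and $b$ and hence $ab\in\arc(C)$; moreover $ab$ and $xy$ are the two diagonals of the $4$-cycle, so they cross, and therefore $ab\notin S$ because $S$ is pairwise noncrossing. I would also note, using Lemma~\ref{lemma:Noncrossing}, Proposition~\ref{proposition:CyclicDecomposition} and $x\in(a,b)$, that for $d\in C$ the arc $xd$ crosses $ab$ if and only if $d\in(b,a)$; in particular $xy$ crosses $ab$, as it must. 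The one recurring device I would isolate at the outset is the following ``crossing-a-side'' observation: if $\langle x,p,q\rangle$ is a triangle of $S$ and $y$ lies strictly between $p$ and $q$ on the side of the side $\{p,q\}$ away from $x$, then $\{x,y\}$ crosses $\{p,q\}$ (which is then necessarily an arc of $S$, not an edge, since $y$ lies strictly between $p$ and $q$), contradicting that $S$ is pairwise noncrossing.

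For part~(\ref{emumerate:AnticlockwiseTriangle}) I would look at the pair $\{x,a\}$ and split into cases. If $\{x,a\}$ is an edge of $C$ or already lies in $S$, then by the triangulation hypothesis together with Proposition~\ref{proposition:AtMost2Triangles} there is a triangle of $S$ of the form $\langle x,c,a\rangle$; the crossing-a-side observation (applied with $y\in(b,a)$) forces $c\in(b,a)$, so $xc$ crosses $ab$ while $xa$ does not, and $\langle x,c,a\rangle$ is the desired triangle. If instead $\{x,a\}$ is an arc not in $S$, I would apply Proposition~\ref{proposition:TowardClusterTilting}(\ref{enumerate:ClosestTriangle}) to the arc $\{x,a\}$, obtaining a triangle $\langle x,c,d\rangle\in S$ such that $\langle x,c,a,d\rangle$ is a $4$-cycle; thus $c\in(x,a)$ and $d\in(a,x)$. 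Again the crossing-a-side observation forces $c\in(b,a)$ (so $xc$ crosses $ab$), while $(a,x)$ is disjoint from $(b,a)$ since these intervals lie on opposite sides of $a$ (so $xd$ does not cross $ab$). In all cases this proves part~(\ref{emumerate:AnticlockwiseTriangle}).

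Part~(\ref{emumerate:ClockwiseTriangle}) would be handled symmetrically, with $b$ playing the role of $a$: if $\{x,b\}$ is an edge of $C$ or lies in $S$ I would use an appropriate one of the (at most two, by Proposition~\ref{proposition:AtMost2Triangles}) triangles of $S$ having $\{x,b\}$ as a side, of the form $\langle x,b,f\rangle$; otherwise I would apply Proposition~\ref{proposition:TowardClusterTilting}(\ref{enumerate:ClosestTriangle}) to the arc $\{x,b\}$ to get a triangle $\langle x,e,f\rangle\in S$ with $\langle x,e,b,f\rangle$ a $4$-cycle. Either way $e\in(x,b)$ (with $e=b$ in the first case), which is disjoint from $(b,a)$, so $xe$ does not cross $ab$; and although a priori $f\in(b,x)=(b,a)\cup\{a\}\cup(a,x)$, the crossing-a-side observation (with $y\in(b,a)$) excludes $f=a$ and $f\in(a,x)$, leaving $f\in(b,a)$, so $xf$ crosses $ab$. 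The orientation $R(x,e,f)$ holds simply because $\langle x,e,f\rangle$ is a triangle.

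I expect the only real obstacle to be organisational rather than conceptual: no idea beyond Propositions~\ref{proposition:AtMost2Triangles} and~\ref{proposition:TowardClusterTilting} and the arithmetic of cyclic orders is needed, but one has to state the crossing-a-side observation in a form uniform enough to cover every case, carry out cleanly the bookkeeping of which of the intervals $(a,b)$, $(b,a)$, $(x,a)$, $(a,x)$, $(x,b)$, $(b,x)$ a given point lies in (where transitivity of $R$ does all the work), and not lose track of the degenerate possibilities that $\{x,a\}$ or $\{x,b\}$ is an edge of $C$ or already belongs to $S$.
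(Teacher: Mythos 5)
Your proposal is correct and follows essentially the same route as the paper: split on whether $\{x,a\}$ (resp.\ $\{x,b\}$) is already a side available to $S$ or an arc not in $S$, produce the triangle from the triangulation property or from Proposition~\ref{proposition:TowardClusterTilting}(\ref{enumerate:ClosestTriangle}), and then pin down the third vertex using that no side of a triangle of $S$ may cross $xy \in S$. Your explicit treatment of the degenerate case where $\{x,a\}$ or $\{x,b\}$ is an edge of $C$ is in fact slightly more careful than the paper's own argument, which silently folds that case into the application of Proposition~\ref{proposition:TowardClusterTilting}.
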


\begin{proof}
We only prove the first statement; the second one is similar.

We consider two cases.  First, assume that $ax \in S$.  Since {$S$} is a triangulation, we know that there is a triangle $xca$.  Let $d = a$; we claim that the triangle $xcd$ is the triangle from the statement of the proposition.  It is clear that $xd = xa$ does not cross $ab$.  We turn our attention to $xc$.  We add $c$ to the 4-cycle $\langle a,x,b,y \rangle$; since $ac$ does not cross $xy$, we find the 5-cycle $\langle a,x,b,y,c \rangle$.  This shows that $ab$ crosses $xc$.

The second case we consider is where $ax \not\in S$.  Here, we can use Proposition \ref{proposition:TowardClusterTilting}(\ref{enumerate:ClosestTriangle}) to find a triangle $xcd$ {in $S$} such that $\langle x,c,a,d \rangle$ is a 4-cycle.  Adding $b$ to this 4-cycle (using the fact that $b \in (x,a)$), gives either the 5-cycle $\langle x,b,c,a,d \rangle$ or the 5-cycle $\langle x,c,b,a,d \rangle$.  However, if $\langle x,c,b,a,d \rangle$ were a 5-cycle, then (using the fact that $y \in (b,a)$) $\langle x,c,b,y,a,d \rangle$ would be a 6-cycle and $xy,cd \in S$ would cross.  We conclude that $\langle x,b,c,a,d \rangle$ is a 5-cycle.  Hence, $xc$ crosses $ab$ and $xd$ does not cross $ab$.
\end{proof}

\begin{definition} \label{definition:RotationMap}
Let $(C,R)$ be a locally discrete cyclically ordered set.  We consider the function $\rho\colon \arc(C) \to \arc(C)$ {defined} by $\rho(\{a,b\}) = \{a-1,b-1\}$.
\end{definition}

\begin{corollary}\label{Corollary:TowardsFunctoriallyFiniteness}
Let $(C,R)$ be a locally discrete cyclically ordered set and let $S \subseteq \arc(C)$ be a connected triangulation.  For any arc $ab \in \arc(C)$, there is a finite set $\{x_i y_i\}_i \subseteq S$ such that every $x_i y_i$ crosses $ab$ and if an arc $xy \in S$ crosses $ab$, then $xy$ crosses $\{\rho(x_i y_i)\}_i$.
\end{corollary}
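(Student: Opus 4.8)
If $ab\in S$ then no arc of $S$ crosses $ab$ (the arcs of $S$ are pairwise noncrossing) and the empty family works, so I would assume $ab\notin S$ from now on. By Theorem~\ref{theorem:ConnectedTriangulationsAreMaximal} the set $S$ is maximal, so Proposition~\ref{proposition:TowardClusterTilting}(\ref{enumerate:LocallyFiniteCrossings}) yields a finite set $F\subseteq C$ such that every arc of $S$ crossing $ab$ is incident with $F$. After discarding from $F$ the elements incident with no crossing arc we have $a,b\notin F$, and each $v\in F$ lies in $(a,b)$ or in $(b,a)$. The core of the plan is: for each $v\in F$, say with $v\in(a,b)$ (the case $v\in(b,a)$ being symmetric, with $a$ and $b$ interchanged), the arcs of $S$ through $v$ crossing $ab$ have their second endpoint in $(b,a)$; I would use Proposition~\ref{proposition:ExtremalCuttingPoints} applied to the $4$-cycles $\langle a,v,b,y\rangle$ (for $vy\in S$ crossing $ab$) to single out the finitely many \emph{extremal} such arcs at $v$ — those occurring as the crossing side of a triangle of $S$ whose other side at $v$ does not cross $ab$ — and in particular an arc $\{v,m_v\}\in S$ that is \emph{innermost on the $b$-side}, i.e.\ with $m_v$ lying, in the cut of $(b,a)$ beginning just after $b$, below every endpoint of a crossing arc of $S$ through $v$. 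I would let $\{x_iy_i\}_i$ be the (finite) union over $v\in F$ of these extremal crossing arcs; each of them crosses $ab$ by construction.

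For the verification that every arc $xy\in S$ crossing $ab$ crosses some $\rho(x_iy_i)$: such $xy$ is incident with $F$, so relabelling I may take $x\in F$, and (up to the symmetric cases) $x\in(a,b)$, $y\in(b,a)$. I would pick the extremal arc $\{x,m\}$ at $x$, on the $b$-side of the portion of the fan containing $\{x,y\}$, so that $m\le y$ in the chosen cut of $(b,a)$. Then I would work in the cut of $C$ that lists $(b,a)$ and then $(a,b)$ consecutively (so $y<x$), in which the arc $\{x,y\}$ splits $C\setminus\{x,y\}$ into the pieces $(y,x)=\{z:y<z<x\}$ and $(x,y)$. A short, routine check shows that $x-1$ (the predecessor of $x$) lies in $(y,x)$, while $m-1$ — splitting the trivial cases $m=b+1$ and $m>b+1$ — lies strictly between $x$ and $y$, hence in $(x,y)$, and differs from $x$ and $y$. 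Thus $x-1$ and $m-1$ sit on opposite sides of $\{x,y\}$, which is exactly saying that $\{x,y\}$ crosses $\{x-1,m-1\}=\rho(\{x,m\})$, as required.

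The delicate part is the selection step in the first paragraph. Because $S$ need not be locally finite, the fan of triangles of $S$ around a vertex $v\in F$ — and hence the set of crossing arcs of $S$ through $v$ — may be infinite. One has to use that every edge of $C$ at $v$ lies in exactly one triangle of $S$ and every arc of $S$ at $v$ in exactly two, so that this fan is either a single finite chain joining the edges $\{v,v-1\}$ and $\{v,v+1\}$ or two infinite chains issuing from these edges, and then combine Proposition~\ref{proposition:ExtremalCuttingPoints} with the triangulation and connectedness hypotheses to conclude that (a) there are only finitely many extremal crossing arcs at each $v\in F$, and (b) for every crossing arc $xy\in S$ with $x\in F\cap(a,b)$ the relevant portion of the fan really does possess a $b$-side innermost crossing arc $\{x,m\}$ with $m\le y$. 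This is where the rotation $\rho$ is essential and where the argument genuinely uses that $S$ is a \emph{connected} triangulation rather than merely a maximal set of noncrossing arcs.
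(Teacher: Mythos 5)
Your overall route is the same as the paper's: maximality via Theorem~\ref{theorem:ConnectedTriangulationsAreMaximal}, the finite hitting set $F$ from Proposition~\ref{proposition:TowardClusterTilting}(\ref{enumerate:LocallyFiniteCrossings}), a ``transition'' triangle at each $v\in F$ from Proposition~\ref{proposition:ExtremalCuttingPoints}, and then the shift-by-$\rho$ crossing check. Your final computation (that $x-1$ and $m-1$ land on opposite sides of $xy$) is correct and is exactly what the paper does. But the step you explicitly defer --- point (b), the existence at each $v\in F$ of an \emph{innermost} crossing arc $\{v,m_v\}\in S$ with $m_v\le y$ for \emph{every} crossing arc $\{v,y\}\in S$ --- is the actual content of the argument, and it is not automatic: the set of far endpoints of crossing arcs through $v$ may be infinite, and an infinite subset of a linearly ordered set need not have a minimum. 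As stated, your proof has a hole precisely there.

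The gap closes quickly, and without any analysis of the whole fan at $v$ (so your worry (a) about ``finitely many extremal arcs per vertex'' is moot --- one arc per vertex suffices, and finiteness of the family then comes for free from finiteness of $F$). Apply Proposition~\ref{proposition:ExtremalCuttingPoints}(\ref{emumerate:ClockwiseTriangle}) once, to the $4$-cycle $\langle a,v,b,z\rangle$ built from any single crossing arc $vz\in S$, obtaining a triangle $vef$ in $S$ with $vf$ crossing $ab$ and $ve$ not crossing $ab$; the latter forces $e\in(v,b]$. Now let $\{v,y\}\in S$ be an arbitrary crossing arc with $y\ne f$. By totality either $\langle a,v,b,f,y\rangle$ or $\langle a,v,b,y,f\rangle$ is a $5$-cycle. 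In the second case $f\in(y,a)$ while $e\in(v,b]$, so the arc $ef\in S$ would cross $vy\in S$ --- impossible. Hence $f\in(b,y)$, i.e.\ $m_v=f$ is the innermost endpoint you need. Note that connectedness enters only through Proposition~\ref{proposition:TowardClusterTilting}(\ref{enumerate:LocallyFiniteCrossings}) (to get $F$ finite) and through the hypotheses of Proposition~\ref{proposition:ExtremalCuttingPoints}; the innermost-arc step itself only uses that $S$ is a triangulation and pairwise noncrossing.
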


\begin{proof}
By Theorem \ref{theorem:ConnectedTriangulationsAreMaximal}, we know that $S$ is maximal, and hence Proposition \ref{proposition:TowardClusterTilting}(\ref{enumerate:LocallyFiniteCrossings}) yields the existence a finite set $\{x_i\}_i \subseteq C$ such that if an arc of $S$ crosses $ab$, then that arc is incident with $\{x_i\}_i$.  We choose a minimal such set.

From the minimality, it follows that for each $x_i$, there is a $z_i \in C$ such that $x_i z_i$ crosses $ab$.  We may apply Proposition \ref{proposition:ExtremalCuttingPoints}(\ref{emumerate:ClockwiseTriangle}) to find a triangle $x_i e_i f_i$ such that $x_i f_i$ crosses $ab$ and $x_i e_i$ does not cross $ab$.

We set $x_i y_i = x_i f_i$, and claim that the set $\{x_i y_i\}_i \subseteq S$ is the set from the proposition.  Note that $x_i y_i$ crosses $ab$, as required.  Next, we consider an arc $xy$ which crosses $ab$.  Hence, $\{x_i\}_i \subseteq C$ is incident with $xy$, say $x = x_i$. Possibly after renaming $a$ and $b$, we may assume that there is a 4-cycle $\langle a,x,b,y \rangle$.

If $y = y_i$, then $x_i y_i = xy$ and $xy$ crosses $\rho(x_i y_i)$ as required.  Thus, assume that $y \not= y_i$.  In this case, since $xy$ crosses $ab$, there is either a 5-cycle $\langle a,x,b,y,y_i \rangle$ or $\langle a,x,b,y_i,y \rangle$.  We will exclude the former possibility.  Recall that the arc $x e_i$ did not cross $ab$, implying that $e_i \in (x, b]$.  If $\langle a,x,b,y,y_i \rangle$ were a 5-cycle, then $e_i y_i = e_i f_i \in S$ would cross $xy \in S$, which is a contradiction.  Hence, $\langle a,x,b,y_i,y \rangle$ is a 5-cycle.

We find that $xy$ crosses $\rho(x_iy_i)$, as required.
\end{proof}

\subsection{Exchangeable and obtainable arcs}

We now turn our attention to exchangeable arcs.  Before giving the definition (see Definition \ref{definition:Flips} below), we need the following proposition.

\begin{proposition}\label{proposition:AtMostOneFlip}
Let $S$ be a maximal set of pairwise noncrossing arcs.  For any $ab \in S$, there is at most one maximal set of pairwise noncrossing arcs $T$ such that $S \not= T$ and $S \setminus \{ ab \} \subseteq T$.  Furthermore, for such a $T$, we have $S \setminus T = \{ab\}$ and $|T \setminus S| = 1$.
\end{proposition}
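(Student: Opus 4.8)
The plan is to funnel all the real work into a single statement about ``flip candidates''. Suppose $T$ is a maximal set of pairwise noncrossing arcs with $S\setminus\{ab\}\subseteq T$ and $S\neq T$. First, $ab\notin T$: otherwise $S\subseteq T$, and since $S$ is maximal this forces $S=T$. Hence $S\cap T=S\setminus\{ab\}$, so $S\setminus T=\{ab\}$ and $T=(S\setminus\{ab\})\cup(T\setminus S)$. Call an arc $e$ \emph{good} if $e$ crosses $ab$ but crosses no arc of $S\setminus\{ab\}$. Every $e\in T\setminus S$ is good: it lies in the noncrossing set $T\supseteq S\setminus\{ab\}$, so it crosses nothing in $S\setminus\{ab\}$; and since $e\notin S$ while $S$ is maximal, $e$ must cross some arc of $S$, necessarily $ab$. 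Also $T\setminus S\neq\emptyset$, since $T\setminus S=\emptyset$ would give $T=S\setminus\{ab\}\subsetneq S$, contradicting maximality of $T$. Therefore, as soon as we prove that \emph{there is at most one good arc}, we obtain $T\setminus S=\{e\}$ for that unique good arc $e$, whence $|T\setminus S|=1$ and $T=(S\setminus\{ab\})\cup\{e\}$ is uniquely determined; this gives all three assertions at once.

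To prove uniqueness of the good arc, cut the cyclic order along $ab$. Since $ab\in S$ and $S$ is maximal, Proposition~\ref{proposition:CuttingUpS}(\ref{enumerate:IyamaYoshinoMaximal}) shows that $S|_{[a,b]}$ and $S|_{[b,a]}$ are maximal in $[a,b]$ and $[b,a]$. Pass to a cut $\leq_a$ of $(C,R)$ with least element $a$ (Example~\ref{example:CyclicOrders}), so that $(a,b)=\{z:a<_a z<_a b\}$, $(b,a)=\{z:z>_a b\}$ and $C=[a,b]\sqcup(b,a)$. A good arc necessarily crosses $ab$, hence has the form $e=\{c,d\}$ with $c\in(a,b)$ and $d\in(b,a)$. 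By Lemma~\ref{lemma:Noncrossing}, any arc of $S\setminus\{ab\}$ has both endpoints in $[a,b]$ or both in $[b,a]$ (it does not cross $ab$); and using the decomposition $C=(c,d)\sqcup[d,c]$ (Proposition~\ref{proposition:CyclicDecomposition}) one checks that such an arc crosses $e$ precisely when, having both endpoints in $[a,b]$, it \emph{straddles} $c$ (one endpoint in $[a,c)$ and one in $(c,b]$), respectively, having both endpoints in $[b,a]$, it straddles $d$.

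The key local claim is then: no arc of $S|_{[a,b]}$ straddles $c$ if and only if each of $\{a,c\}$ and $\{c,b\}$ is either an edge of $[a,b]$ (that is, $c=a+1$, resp.\ $c=b-1$) or belongs to $S$. The ``if'' direction is immediate, since a straddling arc would cross $\{a,c\}$ or $\{c,b\}$; the ``only if'' direction uses maximality of $S|_{[a,b]}$: an arc $\{a,c\}\notin S$ must cross some arc of $S|_{[a,b]}$, and any such arc straddles $c$ (similarly for $\{c,b\}$). The analogous statement holds for $d$ in $[b,a]$. Now if $c<_a c'$ were two elements of $(a,b)$ both satisfying this condition, then $c'=a+1$ is impossible (as $a<_a c<_a c'$) and $c=b-1$ is impossible (as $c<_a c'<_a b$), so the edge exceptions are unavailable, forcing $\{a,c'\}\in S$ and $\{c,b\}\in S$; but $\{a,c'\}$ and $\{c,b\}$ cross, contradicting that $S$ is noncrossing. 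Hence $c$ is uniquely determined by $S$ and $ab$, and symmetrically so is $d$; therefore the good arc $e=\{c,d\}$ is unique (if it exists), which completes the argument.

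The only real obstacle is the bookkeeping hidden behind ``one checks'' above: making the straddling characterization precise through the cut and the decomposition $C=(c,d)\sqcup[d,c]$, and treating the degenerate positions where $[a,b]$ (or $[b,a]$) is so small that $\{a,c\}$ or $\{c,b\}$ is an edge rather than an arc. None of this is deep---it is elementary interval arithmetic in a fixed cut---but it is the place where care is needed, every other step being a formal consequence of maximality.
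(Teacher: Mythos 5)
Your proof is correct. The reduction in your first paragraph---that all three assertions follow once one shows there is at most one arc crossing $ab$ but no arc of $S\setminus\{ab\}$---is exactly the (more tersely stated) reduction in the paper, and your derivation of $S\setminus T=\{ab\}$ and $|T\setminus S|=1$ makes explicit what the paper leaves implicit. Where you genuinely diverge is in the uniqueness step. The paper takes two candidates $x_1y_1$ and $x_2y_2$, assumes $x_1\neq x_2$ with $R(a,x_1,x_2)$, observes that $ax_2$ is then an arc crossing $x_1y_1$ and hence not in $S$, invokes maximality of $S$ to produce an arc $cd\in S$ crossing $ax_2$, and checks that this $cd$ must then cross $x_2y_2$ --- a direct two-candidate contradiction via a $6$-cycle. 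You instead characterize the admissible endpoints: $c\in(a,b)$ can be an endpoint of a flip if and only if each of $\{a,c\}$ and $\{c,b\}$ is an edge or an arc of $S$, and two such points $c$, $c'$ would force the crossing arcs $\{a,c'\},\{c,b\}\in S$. Your route is longer (it leans on Proposition~\ref{proposition:CuttingUpS} and the straddling bookkeeping you flag at the end, all of which does check out), but it buys an explicit description of the flip: its endpoints are the unique points of $(a,b)$ and $(b,a)$ joined to both $a$ and $b$ by edges or arcs of $S$. This is in effect the uniqueness half of the $4$-gon characterization that the paper only establishes later, in Proposition~\ref{proposition:FlipsAndTriangles}. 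In both arguments the engine is the same: maximality of $S$ manufactures an arc of $S$ that a second candidate would be forced to cross.
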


\begin{proof}
Let $S' = S \setminus \{ab\}$.  Let $x_1 y_1$ and $x_2 y_2$ be two arcs which are not in $S$ and which do not cross any arcs in $S'$.  By the maximality of $S$, the arcs $x_1 y_1$ and $x_2 y_2$ need to cross $ab$.  Up to renaming, we may assume (see Lemma \ref{lemma:Noncrossing}) that $x_1,x_2 \in (a,b)$ and $y_1,y_2 \in (b,a)$.  
We want to show that $x_1 = x_2$ and $y_1 = y_2$ so that $x_1 y_1 = x_2 y_2$.  Seeking a contradiction, assume that $x_1 \not= x_2$.  By totality, we may then assume that (up to renaming) $R(a,x_1,x_2)$.  This gives a 5-cycle $\langle a,x_1,x_2,b,y_2 \rangle$.

Hence, $a$ and $x_2$ are not neighbors and we have $ax_2 \in \arc (C)$.  Since $ax_2$ crosses $x_1 y_1$, we know that $a x_2 \not\in S'$.  The maximality of $S$ implies that there is an arc $cd \in S$ such that $cd$ and $a x_2$ cross.  Using the fact that $cd$ does not cross $ab$, we may, up to renaming, assume that there is 6-cycle $\langle a,c,x_2,d,b,y_2 \rangle$ (or possibly a 5-cycle $\langle a,c,x_2,d=b,y_2 \rangle$).

However, this implies that $cd$ crosses $x_2 y_2$ --- a contradiction since $cd \in S'$ and $x_2 y_2$ does not cross any arcs in $S'$.  We obtain the required contradiction and may conclude that $x_1 = x_2$, as required.

Similarly, one shows that $y_1 = y_2$ and hence $x_1 y_1 = x_2 y_2$.  We conclude that there is at most one arc (different from $ab$) that we can add to $S'$ and obtain a set of pairwise noncrossing arcs.  This proves the statement.
\end{proof}

\begin{definition}\label{definition:Flips}
Let $S$ be a maximal set of pairwise noncrossing arcs, and let $ab \in S$ be an arc.  If there exists a $T$ as in Proposition \ref{proposition:AtMostOneFlip}, then we say that $ab$ is \emph{exchangeable} in $S$; we write $\mu_{ab} S$ for $T$.  We say that $T$ is obtained \emph{by exchanging} $ab$ and the arc in $T \setminus S$ is called the \emph{flip} of $ab$.
\end{definition}

\begin{example}\label{example:Flipping}
The following table indicates the cases of Example \ref{example:NoncrossingSets} where all arcs are exchangeable.  Since Definition \ref{definition:Flips} only makes sense for maximal sets, we restrict to those.
\begin{center}
\begin{tabular}{ r | c  c  c  c  c  c }
{} & 4 & 5 & 6 & 9 & 10 & 11 \\
\hline
Connected & $\times$ & $\times$ & $\times$ & \checkmark & \checkmark & \checkmark \\
Maximal & \checkmark & \checkmark & \checkmark & \checkmark & \checkmark & \checkmark \\
Triangulation & $\times$ & \checkmark & \checkmark & $\times$ & \checkmark & \checkmark \\
Locally finite & $\times$ & \checkmark & $\times$ & $\times$ & $\times$ & \checkmark \\ 
All arcs exchangeable & \checkmark & \checkmark & \checkmark & $\times$ & \checkmark & \checkmark \\
\end{tabular}
\end{center}
\end{example}

\begin{proposition}\label{proposition:FlipsAndTriangles}
Let $S$ be a maximal set of pairwise noncrossing arcs.  An arc $ab$ is exchangeable in $S$ if and only if there is a 4-gon $axby$ in $S$.  In this case, the flip of $ab$ is $xy$.
\end{proposition}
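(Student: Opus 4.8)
The plan is to prove both implications by combining the uniqueness statement of Proposition~\ref{proposition:AtMostOneFlip} with the explicit crossing criterion of Lemma~\ref{lemma:Noncrossing}. Write $S' = S \setminus \{ab\}$ throughout. First I would record the basic fact that whenever $\langle a,x,b,y\rangle$ is a $4$-cycle one has $R(a,x,b)$ and $R(b,y,a)$, so $x \in (a,b)$ and $y \in (b,a)$; by Lemma~\ref{lemma:Noncrossing}(1) this immediately gives that $xy$ and $ab$ cross, that $xy \neq ab$, and, since arcs of $S$ are pairwise noncrossing and $ab \in S$, that $xy \notin S$. I would also use freely a few elementary interval identities in a cyclic order, all immediate from the axioms or by passing to a cut (Example~\ref{example:CyclicFromLinear}, Proposition~\ref{proposition:CyclicDecomposition}): for a $4$-cycle $\langle a,x,b,y\rangle$ we have $[a,b] = [a,x]\cup[x,b]$ with $[a,x]\subseteq[y,x]$ and $[x,b]\subseteq[x,y]$, and the only points of $[a,b]$ that lie in $[b,a]$ are $a$ and $b$.

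The combinatorial core I would isolate as a single observation: \emph{if $\langle a,x,b,y\rangle$ is a $4$-cycle and $cd\in\arc(C)$ has both endpoints in $[a,b]$ and crosses $xy$, then $cd = ab$, or $cd$ crosses $ax$, or $cd$ crosses $xb$.} This follows because, using $[a,b] = [a,x]\cup[x,b]$, if both $c,d\in[a,x]\subseteq[y,x]$ (or both in $[x,b]\subseteq[x,y]$) then $cd$ does not cross $xy$ by Lemma~\ref{lemma:Noncrossing}(2); so after swapping $c,d$ one may assume $c\in[a,x)$ and $d\in(x,b]$, and then inspecting the cyclic order of $\{a,c,x,d,b\}$ in the four cases according as $c=a$ or not and $d=b$ or not yields the claim. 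With this in hand the \emph{if} direction runs as follows. Assume $\langle a,x,b,y\rangle$ is a $4$-gon in $S$. I claim $S'\cup\{xy\}$ is a set of pairwise noncrossing arcs: if some $cd\in S'$ crossed $xy$, then $cd$ and $ab$ both lie in $S$ hence do not cross, so by Lemma~\ref{lemma:Noncrossing}(2) and the symmetry $x\leftrightarrow y$ we may assume $c,d\in[a,b]$, and the observation leaves three cases — $cd=ab$ is impossible as $cd\in S'$, while if $cd$ crosses $ax$ or $xb$ then that side is an \emph{arc} (crossing being a relation between arcs), hence, being a side of the $4$-gon, lies in $S$, contradicting noncrossingness of $S$. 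So $S'\cup\{xy\}$ is noncrossing; extend it to a maximal noncrossing set $T$ by Zorn's lemma. Since $xy\in T\setminus S$ we have $T\neq S$ and $S\setminus\{ab\}\subseteq T$, so Proposition~\ref{proposition:AtMostOneFlip} applies: $ab$ is exchangeable, $\mu_{ab}S = T$, and $T\setminus S=\{xy\}$, i.e.\ the flip of $ab$ is $xy$ (Definition~\ref{definition:Flips}).

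For the \emph{only if} direction, suppose $ab$ is exchangeable; let $T=\mu_{ab}S$ and let $xy$ be the flip, so $xy\notin S$, $S'\subseteq T$, and $S'\cup\{xy\}$ is noncrossing. Since $S$ is maximal and $xy\notin S$, $xy$ crosses some arc of $S$; it cannot cross anything in $S'$, so it crosses $ab$, and by Lemma~\ref{lemma:Noncrossing}(1), after naming the endpoints suitably, $\langle a,x,b,y\rangle$ is a $4$-cycle. It remains to show each of the four sides is an edge of $C$ or an arc of $S$; by the symmetry of the square that fixes the pair of diagonals $\{ab\}$, $\{xy\}$ it suffices to treat $\{a,x\}$. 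If $\{a,x\}$ is an arc not in $S$, then, sharing the endpoint $a$ with $ab$, it does not cross $ab$, so by maximality of $S$ it crosses some $ef\in S'$; as $ef$ does not cross $ab$, Lemma~\ref{lemma:Noncrossing}(2) together with ``only $a,b$ of $[a,b]$ lie in $[b,a]$'' forces $e,f\in[a,b]$, and a direct computation with the sides $(a,x)$, $(x,a)$ of the chord $ax$ then gives $e\in(a,x)$, $f\in(x,b]$, so $x\in(e,f)$ and $y\in(f,e)$; hence $ef$ crosses $xy$, contradicting that $S'\cup\{xy\}$ is noncrossing. Therefore $\langle a,x,b,y\rangle$ is a $4$-gon in $S$, and its flip is $xy$ by construction.

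The only real obstacle is bookkeeping: carefully justifying the interval containments and carrying out the small case analyses in the central observation and in the ``only if'' step without drowning in the three cyclic-order axioms. I expect the cleanest route there is to fix a cut of $(C,R)$ at a suitably chosen point and argue in the resulting linearly ordered set, where ``$ab$ crosses $cd$'' becomes the familiar statement that the pairs $\{a,b\}$ and $\{c,d\}$ interleave; everything else is then a routine assembly of the pieces above and the bijection with $\mu_{ab}$ furnished by Proposition~\ref{proposition:AtMostOneFlip}.
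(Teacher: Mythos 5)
Your proof is correct and follows essentially the same route as the paper's: the ``only if'' direction proceeds exactly as in the text (the flip crosses $ab$, giving the $4$-cycle $\langle a,x,b,y\rangle$, and each side is shown to be an edge or an arc of $S$ by deriving a crossing contradiction from maximality), the only cosmetic difference being that you contradict the noncrossingness of $S'\cup\{xy\}$ where the paper contradicts that of $S$ itself. For the ``if'' direction the paper merely says ``readily verified''; your argument via the interval observation, a Zorn extension, and Proposition~\ref{proposition:AtMostOneFlip} is a clean and complete way to supply that verification.
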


\begin{proof}
If there is a 4-gon $axby$ in $S$, then it is readily verified that $\mu_{ab} S = (S \setminus \{ab\}) \cup \{xy\}$.  For the other direction, let $ab$ be an exchangeable arc in $S$ and let $xy$ be the flip.  Since $xy$ crosses $ab$, we may, up to possibly switching the role of $x$ and $y$, assume that $axby$ is a 4-cycle in {$C$}.  We wish to show that $axby$ is a 4-gon {in $S$}.

We will only show that $ax$ is either an edge or an arc in $S$; the other cases are similar. Seeking a contradiction, assume that $ax$ is neither an edge in $C$ nor an arc in $S$, thus $ax \in \arc(C) \setminus S$.  By maximality of $S$, the arc $ax$ crosses an arc $cd \in S$.  To fix notation, we will choose $c,d$ such that $R(a,c,d)$, so that there is a 4-cycle $\langle a,c,x,d \rangle$ (Lemma~\ref{lemma:Noncrossing}).  As $cd$ does not cross $xy$, we can use Lemma~\ref{lemma:Noncrossing} together with $x \in (c,d)$ to see that $y \in [c,d]$.  We may exclude $y = c$ (as $axby$ is a 4-gon), so there is either a 5-gon $\langle a,c,x,y,d \rangle$, or $y=d$ and hence a 4-gon $\langle a,c,x,y=d \rangle$.  Again using the fact that $axby$ is a 4-gon, we find either $\langle a,c,x,b,y,d \rangle$ or $\langle a,c,x,b,y=d \rangle$.  This shows that $ab$ crosses $cd$: a contradiction.  We have established that $ax$ is either an edge or an arc in $S$.
\end{proof}

\begin{remark}\label{remark:FlippingAlternateDefinition}
Based on Proposition \ref{proposition:FlipsAndTriangles}, one could remove the maximality condition in Definition \ref{definition:Flips} and define an exchangeable arc in a set $S$ of pairwise noncrossing arcs to be an arc $ab$ lying in a 4-gon $axby$ in $S$.  The flip would then be $xy$.  In this paper, we require maximality since it is closer to how mutation is defined in cluster categories.
\end{remark}

\begin{corollary}\label{corollary:FlipsAndTriangles}
Let $S$ be a maximal set of pairwise noncrossing arcs.  An arc $ab$ is exchangeable if and only if $ab$ lies in two triangles.
\end{corollary}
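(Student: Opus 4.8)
The plan is to deduce this directly from Proposition~\ref{proposition:FlipsAndTriangles}, which already translates exchangeability of $ab$ in $S$ into the existence of a $4$-gon $axby$ in $S$; it then only remains to match ``$4$-gon containing $ab$'' with ``two triangles containing $ab$''. Throughout I would use that an exchangeable arc lies in $S$ (Definition~\ref{definition:Flips}), so $\{a,b\}$ is an arc and not an edge of $C$; consequently any triangle of $S$ having $\{a,b\}$ as one of its sides has that side equal to the arc $ab\in S$.

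For the forward implication, suppose $ab$ is exchangeable in $S$. By Proposition~\ref{proposition:FlipsAndTriangles} there is a $4$-gon $axby$ in $S$; in particular $a,x,b,y$ are distinct, $\langle a,x,b,y\rangle$ is a $4$-cycle in $C$, and each of $ax$, $xb$, $by$, $ya$ is an edge of $C$ or an arc of $S$. Taking the subsets $\{a,x,b\}$ and $\{a,b,y\}$ (using Remark~\ref{remark:CyclicSuborder} and Proposition~\ref{proposition:FiniteCycles}) yields the $3$-cycles $\langle a,x,b\rangle$ and $\langle a,b,y\rangle$; the sides of the first are $ax$, $xb$ and $ab$, and of the second are $ab$, $by$ and $ya$, all of which are edges of $C$ or arcs of $S$. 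Hence both are triangles in $S$ containing $ab$, and they are distinct since $x\neq y$. So $ab$ lies in at least — and therefore, by Proposition~\ref{proposition:AtMost2Triangles}, exactly — two triangles.

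For the converse, assume $ab\in S$ lies in two distinct triangles of $S$. Any triangle of $S$ having $\{a,b\}$ as a side is, up to cyclic relabelling, either of the form $axb$ (the $3$-cycle $\langle a,x,b\rangle$) or of the form $aby$ (the $3$-cycle $\langle a,b,y\rangle$), for its third vertex. By Proposition~\ref{proposition:AtMost2Triangles} there is at most one triangle of each form, so the two triangles must be some $axb$ and $aby$ with $x\neq y$. I would then check that $\langle a,x,b,y\rangle$ is a $4$-cycle in $C$: passing to the linear cut $\leq_a$ at $a$ (Example~\ref{example:CyclicOrders}), the relations $R(a,x,b)$ and $R(a,b,y)$ give $a<_a x<_a b<_a y$, and the claim follows via Proposition~\ref{proposition:SubobjectsOrderings}. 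Since $\langle a,x,b\rangle$ and $\langle a,b,y\rangle$ are triangles in $S$, each of $ax$, $xb$, $ay$, $by$ is an edge of $C$ or an arc of $S$; together with $ab\in S$ this shows $\langle a,x,b,y\rangle$ is a $4$-gon in $S$, and Proposition~\ref{proposition:FlipsAndTriangles} then gives that $ab$ is exchangeable.

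The only point requiring care — the ``main obstacle'', such as it is — is the cyclic-order bookkeeping in the converse: one must know that the third vertices of the two triangles lie on opposite sides of the arc $ab$ and so genuinely assemble into a single $4$-cycle $\langle a,x,b,y\rangle$ rather than interleaving. Passing to a linear cut at $a$ reduces this to an elementary comparison, and everything else is just unwinding the definitions of $n$-gon and triangle.
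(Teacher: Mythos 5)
Your proof is correct and is exactly the argument the paper intends: the corollary is stated without proof as an immediate consequence of Proposition~\ref{proposition:FlipsAndTriangles}, the point being precisely that a $4$-gon $axby$ in $S$ containing the arc $ab\in S$ is the same data as the two triangles $axb$ and $aby$, with Proposition~\ref{proposition:AtMost2Triangles} guaranteeing in the converse direction that the two triangles have their third vertices on opposite sides of $ab$ and hence assemble into a $4$-gon. Your careful handling of the cut $\leq_a$ and of the fact that a triangle with side $\{a,b\}$ forces $ab\in S$ just makes explicit what the paper leaves implicit.
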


\begin{corollary}\label{corollary:Flipping}
Let $S$ be a maximal set of pairwise noncrossing arcs.  If $S$ is a triangulation, then every arc is exchangeable.
\end{corollary}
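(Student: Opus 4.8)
The plan is to read off the statement directly from the characterization of exchangeability already established. By Definition~\ref{definition:MainTriangulationProperties}(3), the hypothesis that $S$ is a triangulation says precisely that every arc of $S$ lies in exactly two triangles of $S$ (and every edge in exactly one). In particular, \emph{every} arc $ab \in S$ lies in two triangles. On the other hand, Corollary~\ref{corollary:FlipsAndTriangles} (which itself rests on Proposition~\ref{proposition:FlipsAndTriangles}, and ultimately on Proposition~\ref{proposition:AtMostOneFlip}) tells us that, for a maximal set of pairwise noncrossing arcs $S$, an arc $ab$ is exchangeable in $S$ if and only if $ab$ lies in two triangles. Since $S$ is assumed maximal, we may apply this equivalence to each arc of $S$, and conclude that every arc of $S$ is exchangeable. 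So the proof is a one-line combination: ``triangulation'' supplies the ``two triangles'' condition, ``maximal'' lets Corollary~\ref{corollary:FlipsAndTriangles} convert this into exchangeability.

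There is essentially no obstacle here: all the genuine content — showing that a flip, when it exists, is unique (Proposition~\ref{proposition:AtMostOneFlip}), and identifying exchangeable arcs with those sitting in a $4$-gon and hence in two triangles (Proposition~\ref{proposition:FlipsAndTriangles} and Corollary~\ref{corollary:FlipsAndTriangles}) — has already been carried out. The only point worth stating carefully is the scope of the quantifier ``every arc'': it refers to every arc \emph{belonging to $S$}, since exchangeability is only defined for elements of $S$ (Definition~\ref{definition:Flips}); with that reading the deduction is immediate. Thus the write-up will simply cite Corollary~\ref{corollary:FlipsAndTriangles} together with the defining property of a triangulation, and no further case analysis or geometry is needed.
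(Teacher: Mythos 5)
Your proof is correct and coincides with the paper's (implicit) argument: the defining property of a triangulation puts every arc of $S$ in two triangles, and Corollary~\ref{corollary:FlipsAndTriangles}, which applies because $S$ is maximal, converts this into exchangeability. Your remark on the scope of ``every arc'' (arcs of $S$ only, since exchangeability is defined only for elements of $S$) is the right reading and matches the paper's usage.
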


\begin{proposition}
Let $S$ be a maximal set of pairwise noncrossing arcs and let $ab \in S$ be an exchangeable arc.  If the flip of $ab$ with respect to $S$ is $xy$, then $xy$ is exchangeable in $\mu_{ab} S$ and the flip of $xy$ with respect to $\mu_{ab} S$ is $ab$.
\end{proposition}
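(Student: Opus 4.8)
The plan is to reduce the statement to the characterization of exchangeable arcs in terms of $4$-gons given in Proposition~\ref{proposition:FlipsAndTriangles}, and then to observe that a $4$-gon is symmetric in its two diagonals. First I would invoke Proposition~\ref{proposition:FlipsAndTriangles}: since $ab$ is exchangeable in $S$ with flip $xy$, there is a $4$-gon $axby$ in $S$, and moreover (as recorded in the proof of that proposition) $\mu_{ab}S = (S \setminus \{ab\}) \cup \{xy\}$. Recall also that $\mu_{ab}S$ is again a maximal set of pairwise noncrossing arcs, by Definition~\ref{definition:Flips} together with Proposition~\ref{proposition:AtMostOneFlip}.

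Next I would verify that $\langle x,b,y,a\rangle$ is a $4$-gon in $\mu_{ab}S$ whose two diagonals are $xy$ and $ab$. Since $\langle a,x,b,y\rangle$ is a $4$-cycle in $C$, so is its cyclic rotation $\langle x,b,y,a\rangle$ (the two describe the same subobject of $C$); its pairs of opposite vertices are $\{x,y\}$ and $\{a,b\}$, so the arcs $xy$ and $ab$ are precisely its diagonals. It then remains to check that each of the four sides $xb$, $by$, $ya$, $ax$ is an edge of $C$ or an arc of $\mu_{ab}S$. But each of these is a side of the $4$-gon $axby$ in $S$, hence an edge of $C$ or an arc of $S$; and none of them can equal $ab$, since $a,x,b,y$ are pairwise distinct. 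Therefore each side is an edge of $C$ or an arc of $(S \setminus \{ab\}) \cup \{xy\} = \mu_{ab}S$, so $\langle x,b,y,a\rangle$ is indeed a $4$-gon in $\mu_{ab}S$, and it contains the arc $xy$.

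Finally I would apply Proposition~\ref{proposition:FlipsAndTriangles} a second time, now to the maximal set $\mu_{ab}S$ and the arc $xy$: the $4$-gon $x\,b\,y\,a$ in $\mu_{ab}S$ witnesses that $xy$ is exchangeable in $\mu_{ab}S$, and the same proposition identifies the flip of $xy$ as the opposite diagonal $ba = ab$. This is exactly the assertion of the proposition. The only delicate point is the bookkeeping of cyclic labels, i.e.\ making sure that ``$axby$ is a $4$-gon'' and ``$x\,b\,y\,a$ is a $4$-gon'' refer to one and the same $4$-cycle, with the roles of its two diagonals merely interchanged; this is immediate from the cyclicity axiom and from the fact that a $4$-gon only records which pairs of its vertices are joined by a side, so no real obstacle arises.
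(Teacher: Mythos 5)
Your proof is correct and takes essentially the same route as the paper, whose entire argument is ``This follows directly from Proposition~\ref{proposition:FlipsAndTriangles}''; you have simply spelled out that reduction, correctly noting that the $4$-gon $axby$ witnessing exchangeability of $ab$ in $S$ survives into $\mu_{ab}S$ (its sides are distinct from the diagonal $ab$) and, read as $xbya$, witnesses exchangeability of $xy$ with flip $ab$.
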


\begin{proof}
This follows directly from Proposition \ref{proposition:FlipsAndTriangles}.
\end{proof}

As can be seen from Example \ref{example:Flipping}, the converse of Corollary \ref{corollary:Flipping} does not hold.  Indeed, while it follows from Proposition \ref{proposition:FlipsAndTriangles} that every arc lies in two triangles, it is not true that every edge lies in a triangle.  The next result indicates that the converse does hold, if one additionally requires the set $S$ to be connected.

\begin{theorem}\label{theorem:FlippingIsTriangulated}
Let $S$ be a set of pairwise noncrossing arcs.  If $S$ is connected, then the following are equivalent:
\begin{enumerate}
\item $S$ is a triangulation,
\item $S$ is maximal and every arc is exchangeable.
\end{enumerate}
\end{theorem}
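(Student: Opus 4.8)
The plan is to prove $(1)\Rightarrow(2)$ in one line and then concentrate on $(2)\Rightarrow(1)$. For $(1)\Rightarrow(2)$: if $S$ is a connected triangulation, then $S$ is maximal by Theorem~\ref{theorem:ConnectedTriangulationsAreMaximal}, and every arc of $S$ is exchangeable by Corollary~\ref{corollary:Flipping}. For $(2)\Rightarrow(1)$, assume $S$ is connected, maximal and has every arc exchangeable; the goal is to show that every arc of $S$ lies in exactly two triangles of $S$ and every edge of $C$ lies in exactly one. (The case $|C|=3$ is trivial, since then $\arc(C)=\emptyset$, so $S=\emptyset$; thus I would assume $|C|\ge 4$.) The statement about arcs follows immediately: by Corollary~\ref{corollary:FlipsAndTriangles} an arc is exchangeable if and only if it lies in two triangles, and by Proposition~\ref{proposition:AtMost2Triangles} these are necessarily one triangle of the form $axb$ and one of the form $aby$, so ``two'' means ``exactly two''.

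For an edge $e=\{a,a+1\}$, the ``at most one triangle'' half is a degenerate case of Proposition~\ref{proposition:AtMost2Triangles}: a triangle containing $e$ must have the form $\langle a,a+1,y\rangle$ (a triangle $\langle a,y,a+1\rangle$ would need $y\in(a,a+1)=\emptyset$), and the argument of Proposition~\ref{proposition:AtMost2Triangles} shows there is at most one such. For the ``at least one triangle'' half, I would first handle the degenerate cases with Lemma~\ref{lemma:Maximal}: if $a$ is not incident with $S$, then $\{a-1,a+1\}\in S$, and $\langle a,a+1,a-1\rangle$ is a triangle of $S$ containing $e$ (its sides being the edges $\{a,a+1\}$, $\{a-1,a\}$ and the arc $\{a-1,a+1\}\in S$); symmetrically, if $a+1$ is not incident with $S$ then $\{a,a+2\}\in S$ and $\langle a,a+1,a+2\rangle$ is a triangle of $S$ containing $e$.

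The heart of the argument is the case where both $a$ and $a+1$ are incident with $S$. Since $\{a,a+1\}$ is an edge it is not in $S$, so connectedness gives a path $a=x_0,x_1,\dots,x_n=a+1$ in $S$; I would take it of minimal length, so $n\ge2$ and the $x_i$ are pairwise distinct. Writing $d=x_1$, the arc $\{a,d\}\in S$ satisfies $d\in(a+1,a-1)$, $a+1\in(a,d)$ and $a-1\notin(a,d)$ (all checked in the linear cut $\leq_a$). The crucial observation is that $d$ is the ``innermost'' endpoint of an arc of $S$ at $a$ on the side of $e$: if some $d'\in(a+1,d)$ had $\{a,d'\}\in S$, then by Proposition~\ref{proposition:CuttingUpS}(\ref{enumerate:PathPassesEndpoints}) the subpath $x_1,\dots,x_n$ --- which is a path from $x_1\in[d',a]$ to $a+1\in[a,d']$ --- would have to pass through $a$ or $d'$, hence through $d'=x_k$ for some $k\ge2$, yielding the strictly shorter path $a,x_k,\dots,x_n$, a contradiction. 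Now, $\{a,d\}$ being exchangeable, Corollary~\ref{corollary:FlipsAndTriangles} and Proposition~\ref{proposition:AtMost2Triangles} produce a triangle $\langle a,w,d\rangle$ of $S$ with $w\in(a,d)$; its side $\{a,w\}$ is an edge of $C$ or an arc of $S$. An arc of $S$ is impossible (it would make $w$ an arc-endpoint at $a$ lying in $(a+1,d)$, against the innermost property just established), so $\{a,w\}$ is an edge, forcing $w\in\{a-1,a+1\}$, and $a-1\notin(a,d)$ leaves $w=a+1$. Hence $\langle a,a+1,d\rangle$ is a triangle of $S$ containing $e$, which finishes the proof.

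The hard part will be precisely this last case: the trick is to extract the right arc $\{a,d\}$ at $a$ from a \emph{shortest} path (a naive ``arc at $a$ closest to $e$'' need not exist, since the fan of arcs of $S$ at $a$ may be infinite) and then to read off the triangle on $e$ from the exchangeability of $\{a,d\}$. The small cyclic-order facts used along the way (that $a+1\in(a,d)$, that $a-1\notin(a,d)$, and the interval membership statements feeding Proposition~\ref{proposition:CuttingUpS}) are routine after fixing the linear cut $\leq_a$, but they are load-bearing.
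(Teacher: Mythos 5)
Your proof is correct and follows essentially the same route as the paper's: reduce via Lemma~\ref{lemma:Maximal} to the case where both $a$ and $a+1$ are incident with $S$, take a shortest path from $a$ to $a+1$, and use exchangeability of its first arc $\{a,x_1\}$ together with Proposition~\ref{proposition:AtMost2Triangles} and Proposition~\ref{proposition:CuttingUpS}(\ref{enumerate:PathPassesEndpoints}) to force the triangle on the correct side to be $\langle a,a+1,x_1\rangle$. Your intermediate ``innermost endpoint'' step is a mild repackaging of the paper's shorter-path contradiction, and your explicit verification that an edge lies in \emph{at most} one triangle is a small point the paper leaves implicit.
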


\begin{proof}
Assume that $S$ is a connected triangulation.  We know from Theorem \ref{theorem:ConnectedTriangulationsAreMaximal} that $S$ is maximal, and from Proposition \ref{proposition:FlipsAndTriangles} that every arc of $S$ is exchangeable.

For the other direction, assume that $S$ is connected, maximal, and that every arc is exchangeable; we need to show that $S$ is a triangulation. It follows from Proposition \ref{proposition:FlipsAndTriangles} that every arc of $S$ lies in two triangles.  Let $xy$ be any edge, say $y = x+1$.

By Lemma \ref{lemma:Maximal}, we know that either $x$ and $x+1$ are both incident with $S$, or $S$ contains one of the arcs $\{x+1, x-1\}$ or $\{x, x+2 \}$.  In the latter case $\{x,x+1\}$ lies in a triangle, so we may assume the former, {i.e. both $x$ and $x+1$ are incident with $S$.}

Since $S$ is connected, we know that there is a path $x = x_0, x_1, \ldots, x_n = x+1$.  We will assume that the length $n$ of the path is minimal.  By Proposition~\ref{proposition:FlipsAndTriangles}, we know that the arc $\{x,x_1\}$ lies in two triangles and by Proposition \ref{proposition:AtMost2Triangles}, we know that there is a triangle $xcx_1$, thus $x_1 \in (c,x)$.  Note that $x+1 \in (x,c]$.

It follows from Proposition \ref{proposition:CuttingUpS} that the path $x_1, x_2, \ldots, x_n = x+1$ passes either $x$ or $c$.  Minimality of $n$ implies that the path passes $c$, and hence $c = x_k$ for some $1 < k \leq n$.  We now have a path $a = x_0, x_k, x_{k+1}, \ldots, x_n = x+1$.  The minimality of $n$ implies that $k=n$ and hence there is a triangle $\langle x, x+1, x_1 \rangle$.  This finishes the proof.
\end{proof}

\begin{proposition}
Let $S$ be a maximal set of noncrossing arcs.  Let $ab \in S$ be an exchangeable arc.
\begin{enumerate}
\item If $S$ is a triangulation, so is $\mu_{ab} S$.
\item If $S$ is connected, so is $\mu_{ab} S$.
\item If $cd \in S$ is exchangeable and $cd \not= ab$, then $cd \in \mu_{ab} S$ is exchangeable.
\end{enumerate}
\end{proposition}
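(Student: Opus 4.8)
The plan is to build everything on Proposition~\ref{proposition:FlipsAndTriangles}. Since $ab$ is exchangeable in $S$, that proposition gives a $4$-gon $axby$ in $S$ with flip $xy$, so that $\mu_{ab}S=(S\setminus\{ab\})\cup\{xy\}$, where the sides $ax,xb,by,ya$ are each an edge of $C$ or an arc of $S$. The key preliminary observation I would prove is that the triangles of $\mu_{ab}S$ are obtained from those of $S$ by deleting exactly the two triangles $\langle a,x,b\rangle$ and $\langle a,b,y\rangle$ and inserting exactly the two triangles $\langle a,x,y\rangle$ and $\langle x,b,y\rangle$. Indeed, deleting $ab$ can destroy only triangles having $ab$ as a side, and by Corollary~\ref{corollary:FlipsAndTriangles} together with Proposition~\ref{proposition:AtMost2Triangles} the arc $ab$ lies in $S$ in exactly the two triangles $\langle a,x,b\rangle,\langle a,b,y\rangle$ (coming from the sides of the $4$-gon); dually, inserting the arc $xy$ can create only triangles having $xy$ as a side, and by Proposition~\ref{proposition:AtMost2Triangles} the only possible ones are $\langle a,x,y\rangle$ and $\langle x,b,y\rangle$, both of which genuinely are triangles of $\mu_{ab}S$ because all their sides are among $ax,xb,by,ya,xy$.

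Parts~(1) and~(3) then reduce to a cancellation count using this description. Fix an arc or edge $e$ of $\mu_{ab}S$ with $e\neq xy$. The number of triangles of $S$ containing $e$ that get deleted is the number of $\langle a,x,b\rangle,\langle a,b,y\rangle$ of which $e$ is a side, namely $[e\in\{ax,xb\}]+[e\in\{by,ya\}]$; the number of inserted triangles of which $e$ is a side is $[e\in\{ax,ya\}]+[e\in\{xb,by\}]$. Since the four arcs $ax,xb,by,ya$ are pairwise distinct, both quantities equal $[e\in\{ax,xb,by,ya\}]$, so they cancel, and $e$ lies in the same number of triangles of $\mu_{ab}S$ as of $S$. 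If $S$ is a triangulation, this — together with the fact established above that $xy$ lies in exactly two triangles of $\mu_{ab}S$ — shows that in $\mu_{ab}S$ every arc lies in two triangles and every edge in one, i.e.\ $\mu_{ab}S$ is a triangulation; this is~(1). If $cd\in S$ is exchangeable with $cd\neq ab$, then $cd\neq xy$ (as $xy\notin S$) and, by Corollary~\ref{corollary:FlipsAndTriangles}, $cd$ lies in exactly two triangles of $S$, hence by the cancellation in exactly two triangles of $\mu_{ab}S$; since $\mu_{ab}S$ is maximal (Proposition~\ref{proposition:AtMostOneFlip}), Corollary~\ref{corollary:FlipsAndTriangles} gives that $cd$ is exchangeable in $\mu_{ab}S$, which is~(3).

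For~(2) I would argue with the adjacency description of connectedness from the remark after Definition~\ref{definition:MainTriangulationProperties}: as $xy\in\mu_{ab}S$, it is enough to link every arc of $\mu_{ab}S$ to $xy$ by a chain of pairwise adjacent arcs of $\mu_{ab}S$. Given an arc $f\neq xy$ of $\mu_{ab}S$, connectedness of $S$ provides a reduced chain of adjacent arcs $f=f_0,f_1,\dots,f_m=ab$ in $S$; then $f_0,\dots,f_{m-1}$ is a chain of adjacent arcs of $\mu_{ab}S$ and $f_{m-1}$ shares an endpoint $v\in\{a,b\}$ with $ab$. I would then show that at least one of the two $4$-gon sides meeting $v$ is a genuine arc of $S$; such a side $s$ lies in $\mu_{ab}S$, is adjacent to $f_{m-1}$ via $v$ and to $xy$ via $x$ or $y$, and so completes the chain, after which any two arcs of $\mu_{ab}S$ are chained through $xy$ and $\mu_{ab}S$ is connected. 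The main obstacle is precisely this last claim, i.e.\ the degenerate case in which both $4$-gon sides at $v$, say $v=a$, are edges of $C$: then $x$ and $y$ are the two neighbours of $a$, so the second endpoint of any arc of $S$ incident with $a$ other than $ab$ lies strictly between $x$ and $y$ and hence crosses $xy$ — impossible in the noncrossing set $\mu_{ab}S$ — which forces $f_{m-1}$ to meet $b$ instead; and if both $4$-gon sides at $b$ were also edges then all four sides would be edges, forcing $C=\{a,x,b,y\}$, in which case $S=\{ab\}$ and $\mu_{ab}S=\{xy\}$ is trivially connected. Thus in all cases the chain to $xy$ exists, and $\mu_{ab}S$ is connected.
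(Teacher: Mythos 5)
Your proof is correct, and for parts (1) and (3) it takes a genuinely different and arguably cleaner route than the paper. Your organizing lemma -- that passing from $S$ to $\mu_{ab}S$ deletes exactly the two triangles $\langle a,x,b\rangle$, $\langle a,b,y\rangle$ and inserts exactly $\langle a,x,y\rangle$, $\langle x,b,y\rangle$ -- is sound (any lost triangle must have $ab$ as a side and any new one must have $xy$ as a side, and Proposition~\ref{proposition:AtMost2Triangles} pins down both pairs), and the cancellation count using the pairwise distinctness of the four sides $ax,xb,by,ya$ then yields (1) immediately and reduces (3) to Corollary~\ref{corollary:FlipsAndTriangles} applied to the maximal set $\mu_{ab}S$. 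The paper instead proves (1) by cutting $C$ along the four sides of the 4-gon and invoking Proposition~\ref{proposition:CuttingUpS}(\ref{enumerate:IyamaYoshinoTriangulated}) on each piece, and proves (3) by tracking the 4-gon $cudv$ witnessing exchangeability of $cd$ and repairing it when $ab$ happens to be one of its sides; your counting argument avoids that case analysis entirely, at the modest cost of stating the bookkeeping lemma up front. For (2) the two arguments are close in substance: the paper splits on whether $S\setminus\{ab\}$ is already connected and reroutes a minimal vertex path through $x$ (or $x,y$), whereas you link every arc to $xy$ by an adjacency chain; both hinge on the same degenerate-case analysis, namely that if both 4-gon sides at an endpoint $v$ of $ab$ are edges of $C$ then $x,y$ are the neighbours of $v$ and any other arc of $S$ at $v$ would cross $xy$, so no arc of $S\setminus\{ab\}$ meets $v$. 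Your handling of that case, including the observation that if all four sides are edges then $C=\{a,x,b,y\}$ and $\mu_{ab}S=\{xy\}$, is complete.
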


\begin{proof} We write $T$ for $\mu_{ab} S$.  Let $xy$ be the flip of $ab$.  Note that there is a 4-gon $axby$ in $S$ and $T$.
\begin{enumerate}
\item Using Proposition \ref{proposition:CuttingUpS}(\ref{enumerate:IyamaYoshinoTriangulated}) four times, we find the following statement: $T$ is a triangulation for $C$ if and only if $T|_{[a,x]}$ is a triangulation for $[a,x]$, $T|_{[x,b]}$ is a triangulation for $[x,b]$, $T|_{[b,y]}$ is a triangulation for $[b,y]$, $T|_{[y,a]}$ is a triangulation for $[y,a]$, and $\{xy\}$ is a triangulation for $axby$.

Since $T|_{[a,x]} = S|_{[a,x]}$, $T|_{[x,b]} = S|_{[x,b]}$, $T|_{[b,y]} = S|_{[b,y]}$, and $T|_{[y,a]} = S|_{[y,a]}$, the first four properties follow from Proposition \ref{proposition:CuttingUpS}(\ref{enumerate:IyamaYoshinoTriangulated}).  The last property is trivial.

\item {Recall from Proposition \ref{proposition:FlipsAndTriangles} that there is a 4-gon $axby$ where $xy$ is the flip of $ab$.  Note that the statement is trivial if $C = \{a,b,x,y\}$, so we may assume that at least one of $ax,xb,by,ya \in \arc(C)$ (and hence also in $S$).  Without loss of generality, assume that $ax \in S$ and hence $x$ is incident with $S \setminus \{ab\}$.

First, assume that $S \setminus \{ab\}$ is connected.  To check that $\mu_{ab} S = (S \setminus \{ab\}) \cup \{xy\}$ is connected, we need only check that, for any $c \in C$ incident with $S \setminus \{ab\}$, there is a path from $c$ to $y$.  We obtain such a path from a path from $c$ to $x$ and the arc $xy$.

Next, assume that $S \setminus \{ab\}$ is not connected.  Let $c,d \in C$ incident with $S \setminus \{ab\}$ such that there is no path from $c$ to $d$ in $S \setminus \{ab\}$.  Let $c = x_0, x_1, \ldots, x_n = d$ be a path in $S$ (recall that $S$ is connected), and assume that the path has minimal length.  Since this path requires the arc $ab$, we know that $a$ and $b$ occur in adjacent positions in this path.

We want to replace $a,b$ with either $a,x,b$ or $a,x,y,b$.  This is possible if either $xb \in S$ or $by \in S$.  We can thus finish the proof by contradiction: assume that $xb$ and $by$ are edges, so that $[x,y] = \{b-1,b,b+1\}$.  As $axby$ is a 4-gon in $S$ (and hence also $S \setminus \{ab\}$), we may infer that $b$ is not incident with $S \setminus \{ab\}$.  Indeed, any $z \in C$ such that $bz \in S \setminus \{ab\}$ needs to satisfy $z \in (a,x)$ (but then $bz$ crosses $ax$), $z \in (y,a)$ (impossible if $ya$ is an edge, and $bz$ would cross the arc $ya$ otherwise), or $z \in [x,y]$ (which can be excluded as $[x,y] = \{b-1,b,b+1\}$).  Hence, $b$ can only occur in a minimal path from $c$ to $d$ if $b=c$ or $b=d$ (as the only successor and only predecessor of $b$ in this path is $a$).  As $b$ is then not incident with $S \setminus \{ab\}$, we may exclude $b=c$ and $b=d$, completing the proof.
}

\item Since the arc $cd \in S$ is exchangeable, it follows from Proposition \ref{proposition:FlipsAndTriangles} that there is a 4-gon $cudv$ in $S$.  This is a 4-gon in $T$, except if $ab$ is one of the edges of the 4-gon.

Assume first that $a = c$ and $b = u$.  Note that $ab$ lies in the triangles $aby$ and $abd = cud$.  Proposition \ref{proposition:AtMost2Triangles} implies that $y=d$.  There is thus a 4-gon $cxdv = axyv$ in $T$.  This implies that $cd$ is exhangeable in $T$.  For the case where $a = u$ and $b = c$, note that there are triangles $axb$ and $udc = adb$ so that Proposition \ref{proposition:AtMost2Triangles} implies that $x=d$.  We find the 4-gon $cyxv$ in $T$.  The other cases are similar. \qedhere
\end{enumerate}
\end{proof}

\begin{corollary}
Let $(C,R)$ be a cyclic order and $S \subseteq \arc(C)$ be a set of pairwise noncrossing arcs.  If $S$ is a connected triangulation, then for every $ab \in S$, $\mu_{ab} S$ exists and is a connected triangulation.
\end{corollary}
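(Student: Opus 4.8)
The plan is to deduce this as a formal consequence of the results already assembled in this subsection; no genuinely new argument is needed. First I would apply Theorem~\ref{theorem:ConnectedTriangulationsAreMaximal} to the connected triangulation $S$ to conclude that $S$ is maximal. With maximality in hand, Corollary~\ref{corollary:Flipping} applies and yields that \emph{every} arc of $S$ is exchangeable; in particular the given arc $ab\in S$ is exchangeable, which is exactly the assertion that $\mu_{ab}S$ exists in the sense of Definition~\ref{definition:Flips}.

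It then remains to verify that $T:=\mu_{ab}S$ is again a connected triangulation. Here I would invoke the proposition immediately preceding the corollary (the one stating that, for a maximal set $S$ of pairwise noncrossing arcs and an exchangeable arc $ab\in S$, the flip $\mu_{ab}S$ preserves being a triangulation and preserves connectedness): since $S$ is maximal and $ab$ is exchangeable, part~(1) of that proposition gives that $T$ is a triangulation because $S$ is, and part~(2) gives that $T$ is connected because $S$ is. Combining the two, $T$ is a connected triangulation, which completes the argument. (Part~(3) of that proposition is not needed for the present statement, though together with Corollary~\ref{corollary:Flipping} it is consistent with, and re-derives, the fact that every arc of $T$ is again exchangeable.)

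There is essentially no obstacle: the entire content of the corollary is the bookkeeping in the first paragraph, namely checking that the two hypotheses demanded by the preceding proposition — maximality of $S$ and exchangeability of $ab$ — are actually available, and both follow from Theorem~\ref{theorem:ConnectedTriangulationsAreMaximal} and Corollary~\ref{corollary:Flipping} respectively. The proof is thus a two- or three-line assembly of Theorem~\ref{theorem:ConnectedTriangulationsAreMaximal}, Corollary~\ref{corollary:Flipping}, and parts~(1)--(2) of the preceding proposition.
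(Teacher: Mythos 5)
Your proof is correct and is exactly the assembly the paper intends (it omits the proof precisely because the corollary follows immediately from Theorem~\ref{theorem:ConnectedTriangulationsAreMaximal}, Corollary~\ref{corollary:Flipping}, and parts (1)--(2) of the preceding proposition). No gaps.
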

 
\begin{definition}
Let $S \subseteq \arc(C)$ be a maximal triangulation.  An arc $ab \in \arc(C)$ is \emph{obtainable} from $S$ if there is a sequence $S = S_0, S_1, \ldots, S_n$ of maximal triangulations such that $|S_i \setminus S_{i+1}| = 1$ (for all $i \in \{0,1, \ldots, n-1\}$) and $ab \in S_n$.
\end{definition}

\begin{proposition}
Let $S \subseteq \arc(C)$ be a maximal triangulation.  An arc $ab \in \arc(C)$ is obtainable if and only if $ab$ crosses only finitely many arcs in $S$.
\end{proposition}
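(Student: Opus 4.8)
The plan is to prove both implications, the forward one by a short bookkeeping of crossing numbers and the backward one by induction on the number of arcs of $S$ that $ab$ crosses.

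For the direction ``obtainable $\Rightarrow$ finitely many crossings'', suppose $ab$ is obtainable via maximal triangulations $S = S_0, S_1, \ldots, S_n$ with $\lvert S_i \setminus S_{i+1}\rvert = 1$ and $ab \in S_n$. By Proposition~\ref{proposition:AtMostOneFlip}, applied with $S = S_i$ and the unique arc of $S_i \setminus S_{i+1}$ playing the role of the removed arc, we also have $\lvert S_{i+1} \setminus S_i\rvert = 1$; thus $S_i$ and $S_{i+1}$ differ by exactly one arc in each direction. Writing $c_i$ for the number of arcs of $S_i$ crossing $ab$, each step changes $c_i$ by at most $1$. Since $ab \in S_n$ and $S_n$ consists of pairwise noncrossing arcs, $c_n = 0$, whence $c_0 \le n < \infty$; that is, $ab$ crosses only finitely many arcs of $S$.

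For the converse I would induct on the number $c$ of arcs of $S$ crossing $ab$. If $c = 0$, then $S \cup \{ab\}$ is again a set of pairwise noncrossing arcs, so maximality of $S$ forces $ab \in S$ and the empty mutation sequence works. Suppose $c \ge 1$. Orienting each crossing arc as $\{p_i, q_i\}$ so that $b \in (p_i, q_i)$ (hence $a \in (q_i, p_i)$, using Lemma~\ref{lemma:Noncrossing} and Proposition~\ref{proposition:CyclicDecomposition}), a direct computation with the cyclic-order axioms shows that these finitely many arcs are nested, i.e.\ the closed intervals $[p_i, q_i]$ are linearly ordered by inclusion --- this uses that no two arcs of $S$ cross, together with Lemma~\ref{lemma:Noncrossing}. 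Choose a $\subseteq$-minimal one, $\{p, q\}$. Since $S$ is a maximal triangulation, $\{p,q\}$ is exchangeable by Corollary~\ref{corollary:Flipping}, and by Proposition~\ref{proposition:FlipsAndTriangles} there is a $4$-gon $\langle p, x, q, y\rangle$ in $S$ with $x \in (p,q)$ and $y \in (q,p)$ whose flip is $\{x,y\}$; hence $\mu_{\{p,q\}} S = (S \setminus \{p,q\}) \cup \{x,y\}$ is again a maximal triangulation (maximality by Proposition~\ref{proposition:AtMostOneFlip} via Definition~\ref{definition:Flips}, and a triangulation by the stability of triangulations under flips proved earlier in this section) differing from $S$ in exactly one arc. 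The key claim is that $x = b$: since $x$ and $b$ both lie in $(p,q)$, by totality either $x = b$, or $R(p,x,b)$, or $R(p,b,x)$; in the case $R(p,x,b)$ one checks, working in the linear cut at $a$, that the side $\{x,q\}$ of the $4$-gon is an arc --- its endpoints cannot be neighbours, as $b$ lies strictly between them --- which crosses $ab$ and has $[x,q] \subsetneq [p,q]$, contradicting minimality, and the case $R(p,b,x)$ gives the same contradiction using the side $\{p,x\}$. Therefore $x = b$, so the flip $\{b,y\}$ shares the vertex $b$ with $ab$ and hence does not cross $ab$. Consequently $ab$ crosses exactly $c-1$ arcs of $\mu_{\{p,q\}}S$; the induction hypothesis yields a mutation sequence from $\mu_{\{p,q\}}S$ to a maximal triangulation containing $ab$, and prepending the single flip $S \to \mu_{\{p,q\}}S$ exhibits $ab$ as obtainable from $S$.

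The main obstacle is the combinatorial heart of the inductive step: proving that the crossing arcs are totally ordered by nesting and that flipping the innermost one yields exactly the triangle with apex $b$. This is where the finiteness hypothesis is genuinely used (to guarantee a $\subseteq$-minimal crossing arc), and it must be carried out using only the cyclic-order axioms together with Lemma~\ref{lemma:Noncrossing}, Proposition~\ref{proposition:CyclicDecomposition} and Proposition~\ref{proposition:FlipsAndTriangles}. By contrast, the base case, the crossing-number bookkeeping in the forward direction, and the assertion that a flip of a maximal triangulation is again a maximal triangulation are routine given the results already established.
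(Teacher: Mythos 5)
Your proof is correct. The forward implication is the same crossing-number bookkeeping as in the paper, and your backward implication follows the paper's overall strategy (induct on the number $c$ of arcs of $S$ crossing $ab$, flipping at each step one crossing arc whose replacement is incident with an endpoint of $ab$, so that $c$ drops by exactly one), but the key step is carried out differently. The paper selects the arc to flip via Proposition~\ref{proposition:TowardClusterTilting}(\ref{enumerate:ClosestTriangle}): it takes the triangle $axy$ with $\langle a,x,b,y\rangle$ a $4$-cycle and flips $xy$, the ``outermost'' crossing arc, whose flip is then incident with $a$. That proposition, however, is stated and proved only for \emph{connected} sets of noncrossing arcs (its proof uses a path from $a$ to $b$ in $S$), a hypothesis not assumed in the present statement, so the paper's argument silently imports connectedness. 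You instead prove directly that the finitely many crossing arcs are nested on the side containing $b$, flip the innermost one, and show its flip has $b$ as an endpoint; this is self-contained, uses only Lemma~\ref{lemma:Noncrossing}, Corollary~\ref{corollary:Flipping} and Proposition~\ref{proposition:FlipsAndTriangles}, and in particular covers non-connected maximal triangulations such as those of Examples~\ref{example:NoncrossingSets}(\ref{enumerate:Example5}) and~(\ref{enumerate:Example6}). Your nesting and minimality arguments check out: in the linear cut at $a$ every crossing arc satisfies $p_i<b<q_i$, two noncrossing arcs whose $b$-sides overlap must be nested, and the two cases $R(p,x,b)$ and $R(p,b,x)$ each produce a side of the $4$-gon that is a strictly smaller crossing arc of $S$, contradicting minimality. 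So the extra care in your version is not redundant; it actually repairs the dependence on connectedness in the paper's proof.
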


\begin{proof}
Since $S_i$ and $S_{i+1}$ differ in at most one arc, we know that $S_0$ and $S_n$ differ in at most $n$ arcs.  If $ab$ crosses infinitely many arcs in $S$, then $ab$ will cross infinitely many arcs in $S_n$.  Hence, if $ab$ is obtainable from $S$ then $ab$ crosses only finitely many arcs in $S$.

For the other implication, assume that $ab$ crosses only finitely many arcs in $S$.  Let $n$ be the number of arcs of $S$ crossing $ab$ and let $S = S_0$.  Let $axy$ be the triangle from Proposition \ref{proposition:TowardClusterTilting}(\ref{enumerate:ClosestTriangle}).  Since $S_0$ is a maximal triangulation, it follows from Proposition \ref{corollary:Flipping} that the arc $xy \in S_0$ is exchangeable.  Let $S_1 = \mu_{xy} S_0$.  The arc $ab$ now crosses $n-1$ arcs in $S_1$.  Iterating this procedure gives a sequence $S = S_0, S_1, \ldots, S_n$ where $ab$ crosses no arcs in $S_n$.  Maximality of $S_n$ then implies that $ab \in S_n$, as required.  
\end{proof}

\subsection{Locally finite triangulations}

\begin{theorem}\label{theorem:LocallyFiniteTriangulation}
Let $S$ be a set of pairwise noncrossing arcs.  If $S$ is locally finite, then $S$ is a triangulation if and only if $S$ is maximal.
\end{theorem}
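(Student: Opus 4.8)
The plan is to prove the two implications separately. For finite $C$ the statement is the classical fact that the triangulations of a polygon are exactly its maximal sets of noncrossing diagonals, so I would assume $C$ infinite from the outset. The main inputs will be Proposition~\ref{proposition:CuttingUpS} (cutting $S$ along one of its arcs), Lemma~\ref{lemma:Maximal}, Proposition~\ref{proposition:AtMost2Triangles}, Lemma~\ref{lemma:Noncrossing}, and the elementary observation that an edge of a locally discrete cyclic order has empty open interior, hence cannot separate two points and is never a member of a crossing pair.

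For ``$S$ a locally finite triangulation $\Rightarrow$ $S$ maximal'', I would first note that local finiteness is genuinely needed, since a locally finite triangulation need not be connected (e.g.\ $S_6$ in Example~\ref{example:NoncrossingSets}), so Theorem~\ref{theorem:ConnectedTriangulationsAreMaximal} cannot be invoked directly. Given $pq\in\arc(C)$ crossing no arc of $S$, I would suppose $pq\notin S$ and fix the cut $\le_p$ with $p$ minimal, so $p<p+1<q<p-1$. Then I would construct a strictly $\le_p$-increasing sequence $s_0=p+1,\,s_1,\,s_2,\dots$ lying in $(p,q)$, with each $\{p,s_i\}$ an edge of $C$ or an arc of $S$, by repeatedly taking at $\{p,s_i\}$ the triangle $\langle p,s_i,s_{i+1}\rangle$ of $S$ whose third vertex $s_{i+1}$ lies in $(s_i,p)$ --- the unique one if $\{p,s_i\}$ is an edge (only for $i=0$), the outer of the two if $\{p,s_i\}$ is an arc of $S$ (forced once $i\ge 1$, as then $s_i\ne p\pm1$). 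The point is that $\{s_i,s_{i+1}\}$ cannot separate $p$ from $q$ (an arc of $S$ would cross $pq$; an edge separates nothing) and $s_{i+1}\ne q$ (else $\langle p,s_i,q\rangle$ would be a triangle of $S$, with $qp=pq$ neither an edge nor in $S$), so by Lemma~\ref{lemma:Noncrossing} one gets $s_i<_p s_{i+1}<_p q$. The sequence is then infinite with distinct terms, producing infinitely many arcs of $S$ at $p$ and contradicting local finiteness; hence $pq\in S$.

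For ``$S$ maximal and locally finite $\Rightarrow$ $S$ a triangulation'', the core lemma I would isolate is: in any locally discrete cyclic order with at least three points, a maximal locally finite set $S'$ of noncrossing arcs has every edge in exactly one triangle. ``At most one'' would follow by an argument parallel to Proposition~\ref{proposition:AtMost2Triangles}: two triangles $\langle a,a+1,x\rangle$, $\langle a,a+1,z\rangle$ with $x\ne z$ would make $\{a+1,z\}$ and $\{a,x\}$ a crossing pair in $S'$ after ordering the four points cyclically, the edge alternatives being excluded by local discreteness. ``At least one'': if $a$ is not incident with $S'$, Lemma~\ref{lemma:Maximal} gives $\{a-1,a+1\}\in S'$ and $\langle a-1,a,a+1\rangle$ is the triangle; if $a$ is incident with $S'$, local finiteness lets me list the arc-endpoints at $a$ in the order $\le_a$ and take the least one $d$, whereupon $\{a+1,d\}$ crosses no arc of $S'$ (such an arc would cross $\{a,d\}\in S'$), so by maximality $\{a+1,d\}$ is an edge of $C$ or an arc of $S'$ and $\langle a,a+1,d\rangle$ is the triangle. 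Granting this lemma, every edge of $C$ lies in exactly one triangle of $S$; and for an arc $ab\in S$, Proposition~\ref{proposition:CuttingUpS}(\ref{enumerate:IyamaYoshinoMaximal}) makes $S|_{[a,b]}$ and $S|_{[b,a]}$ maximal and locally finite in the locally discrete cyclic orders $[a,b]$ and $[b,a]$, in which $\{a,b\}$ is an edge; applying the lemma on each side yields a triangle of $S$ through $ab$ on each side, while Proposition~\ref{proposition:AtMost2Triangles} forbids any further ones. Thus $ab$ lies in exactly two triangles, and $S$ is a triangulation.

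I expect the main obstacle to be not conceptual but the interval bookkeeping that pervades both implications: at each step one must identify precisely which cyclic interval an auxiliary vertex occupies, and whenever a pair of the shape $\{a\pm1,\cdot\}$ appears one must rule out that it is an edge of $C$ (which by local discreteness would wedge an unwanted point strictly between neighbours). Once this is handled carefully, both directions are short.
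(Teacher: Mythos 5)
Your proof is correct. The implication ``maximal and locally finite $\Rightarrow$ triangulation'' follows essentially the paper's route: both arguments produce the triangle through an edge $\{a,a+1\}$ by locating the $\le_a$-least arc-endpoint $d$ at $a$ (the paper's $a_1$) and forcing $\{a+1,d\}$ into $S$ by maximality --- the paper does this by restricting to $[a,a_1]$ and invoking Lemma~\ref{lemma:Maximal} there, you do it by checking directly that $\{a+1,d\}$ crosses nothing, and the two computations are interchangeable --- and both then treat an arc $ab\in S$ by cutting along it via Proposition~\ref{proposition:CuttingUpS} and running the edge case in $[a,b]$ and $[b,a]$, with Proposition~\ref{proposition:AtMost2Triangles} capping the count at two. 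The converse is where you genuinely diverge. The paper uses local finiteness positively: it forms the finite fan $\langle a,a_1,\dots,a_n\rangle$ of arcs at $a$, inserts $b$, and extracts an arc of $S$ crossing $ab$ from the triangle sitting over $aa_1$ or $aa_k$, at the cost of a case split on the position of $b$. You use it in the contrapositive: iterating the ``outer'' triangle from the edge $\{p,p+1\}$, with the observations that $\{s_i,s_{i+1}\}$ cannot separate $p$ from $q$ and that $s_{i+1}\ne q$ confining the staircase to $(p,q)$, yields infinitely many arcs of $S$ at $p$. Your version trades the paper's positional casework for the check that the recursion is well defined (which you supply: the unique triangle over the edge $\{p,p+1\}$ has its apex in $(p+1,p)$, and a triangulation gives each arc exactly one triangle of each orientation); it is arguably more uniform, subsumes the finite case, and correctly recognizes that Theorem~\ref{theorem:ConnectedTriangulationsAreMaximal} is unavailable here since locally finite triangulations such as $S_6$ need not be connected. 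The only point to smooth in a final write-up is the degenerate step where $(s_i,q)$ is already empty, in which case the contradiction arrives immediately rather than from local finiteness --- harmless, but worth a sentence.
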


\begin{proof}
Assume that $S$ is maximal.  We will start by showing that every edge in $C$ lies in a triangle of $S$.  Let $\{a,a+1\}$ be any edge.  Since $S$ is locally finite, we may consider the $(n+1)$-cycle $\langle a, a_1, \ldots, a_n\rangle$ consisting of $a$ and all elements $a_i \in C$ such that $a a_i \in S$.

We will work in the interval $[a,a_1]$.  Note that $a+1 \in (a,a_1)$.  Since $S|_{[a,a_1]}$ is a maximal set of pairwise noncrossing arcs on $[a,a_1]$ (by Proposition \ref{proposition:CuttingUpS} (\ref{enumerate:IyamaYoshinoMaximal})) and since $a$ is not incident with any arcs of $S|_{[a,a_1]}$ (by construction), we may use Lemma \ref{lemma:Maximal} to see that there is an arc $\{a+1,a_1\} \in S|_{[a,a_1]} \subseteq S$.  This gives a triangle $\langle a, a+1, a_1 \rangle$, and hence the edge $\{a,a+1\}$ lies in a triangle in $S$.

Next, let $ab \in S$; we want to show that there are triangles $axb$ and $aby$ in $S$.  We know that $S|_{[a,b]}$ and $S|_{[b,a]}$ are maximal (by Proposition \ref{proposition:CuttingUpS} (\ref{enumerate:IyamaYoshinoMaximal})) and locally finite (easy) on $[a,b]$ and $[b,a]$, respectively.  Furthermore, $ab$ is an edge in both $[a,b]$ and in $[b,a]$.  We have previously established that $ab$ lies in one triangle in $S|_{[a,b]}$ and in one triangle in $S|_{[b,a]}$.  This shows that $ab$ lies in two triangles in $S$.

This establishes that $S$ is a triangulation.

For the other direction, assume that $S$ is a triangulation.  Let $ab \in \arc(C)$ be any arc which does not cross $S$.  We want to show that $ab \in S$.  Since $ab \in \arc(C)$, we know that $a$ and $b$ are not neighbors.

By Lemma \ref{lemma:Maximal}, we know that either $a$ is incident with $S$ or $\{a-1, a+1\} \in S$.  Since $\{a-1, a+1\}$ would cross $ab$, we may exclude this case and assume that $a$ is incident with $S$.  Consider the $(n+1)$-cycle $\langle a, a_1, \ldots, a_n\rangle$ consisting of $a$ and all elements $a_i \in C$ such that $a a_i \in S$.  Seeking a contradiction, assume that there is no arc $ab \in S$, thus that $b \not= a_i$ for any $i$.  We then extend this $(n+1)$-cycle by adding $b$: $\langle a, a_1, \ldots, a_k, b, a_{k+1}, \ldots, a_n \rangle$.  Since $a$ is incident with $S$, we know that $n \not= 0$.

If $k = 0$,  then we start with the $(n+1)$-cycle: $\langle a, b, a_{1}, \ldots, a_n \rangle$.  Since $S$ is {a triangulation}, the arc $aa_1$ is contained in a triangle $axa_1$.  However, by construction of the $(n+1)$-cycle $\langle a, a_1, \ldots, a_n\rangle$, we know that there is no arc $ax \in S$.  Hence, $ax$ is an edge in $C$ (thus $x = a+1$) and there is an arc $xa_1 \in S$.  This arc crosses the arc $ab$, which is the required contradiction.

If $k \not= 0$, then the $(n+1)$-cycle is the $(n+1)$-cycle: $\langle a, a_1, \ldots, a_k, b, a_{k+1}, \ldots, a_n \rangle$ and the contradiction is obtained  by considering a triangle $a a_k y$.

We conclude that $ab \in S$, and hence that $S$ is maximal.
\end{proof}

{Locally finite triangulations provide a combinatorial model for locally bounded cluster-tilting subcategories in \S\ref{section:CominatoricsOfTypeA}. We will need the following proposition to establish this connection.}

\begin{proposition}\label{proposition:LocallyFiniteCrossings}
Let $S \subseteq \arc C$ be a set of noncrossing arcs.
\begin{enumerate}
\item If every arc $ab \in \arc(C)$ crosses only finitely many arcs in $S$, then $C$ is locally finite,
\item if $S$ is maximal and connected, then $C$ being locally finite implies that every arc $ab \in \arc(C)$ crosses only finitely many arcs in $S$.
\end{enumerate}
\end{proposition}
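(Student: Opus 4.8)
The plan is to treat the two implications separately; each is short, and the second one is essentially a corollary of Proposition~\ref{proposition:TowardClusterTilting}.

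For~(1) I would prove the contrapositive. Assume $S$ is not locally finite, so that some $a\in C$ is incident with infinitely many pairwise distinct arcs $\{a,b_i\}\in S$. Since $S$ contains an arc, $C$ has at least four elements, hence $a-1$ and $a+1$ are distinct and not neighbours, and so $e=\{a+1,a-1\}\in\arc(C)$. The crux is that $e$ crosses \emph{every} arc of $S$ incident with $a$. Indeed, fix $i$; as $\{a,b_i\}$ is an arc, $b_i$ differs from $a$ and from its neighbours $a\pm 1$, so totality gives $R(a,a+1,b_i)$ or $R(b_i,a+1,a)$, and the second alternative would, by cyclicity, put $b_i$ in the interval $(a,a+1)$, which is empty because $a+1$ is the direct successor of $a$; hence $R(a,a+1,b_i)$. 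Symmetrically $R(b_i,a-1,a)$, using that $(a-1,a)=\emptyset$. By Lemma~\ref{lemma:Noncrossing} this says precisely that $e$ and $\{a,b_i\}$ cross. Thus $e\in\arc(C)$ crosses infinitely many arcs of $S$, contradicting the hypothesis. (If $\arc(C)=\emptyset$ then $S=\emptyset$ and the statement is trivial.)

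For~(2) I would invoke Proposition~\ref{proposition:TowardClusterTilting}(\ref{enumerate:LocallyFiniteCrossings}): since $S$ is maximal and connected, for the given arc $ab\in\arc(C)$ there is a finite set $F\subseteq C$ such that every arc of $S$ crossing $ab$ has an endpoint in $F$. Because $S$ is locally finite, for each $x\in F$ the collection of arcs of $S$ incident with $x$ is finite; taking the union over $x\in F$ yields a finite set of arcs of $S$ which, by the choice of $F$, contains every arc of $S$ that crosses $ab$. Hence $ab$ crosses only finitely many arcs of $S$.

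I do not anticipate a real obstacle. Part~(2) is pure bookkeeping once Proposition~\ref{proposition:TowardClusterTilting} is available, and part~(1) is a direct computation inside the cyclic order; the only place demanding care is the verification that $\{a+1,a-1\}$ is genuinely an arc and genuinely crosses each $\{a,b_i\}$, which is exactly where the local discreteness of $C$ enters, through the emptiness of the intervals $(a,a+1)$ and $(a-1,a)$.
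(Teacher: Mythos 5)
Your proof is correct and follows essentially the same route as the paper: part (1) rests on the observation that every arc incident with $a$ must cross the arc $\{a-1,a+1\}$ (you simply phrase it contrapositively and spell out the local-discreteness details the paper leaves implicit), and part (2) is the same direct application of Proposition~\ref{proposition:TowardClusterTilting}(\ref{enumerate:LocallyFiniteCrossings}) together with the finiteness bookkeeping over the finite set $F$.
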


\begin{proof}
For the first statement, assume that every arc $ab \in C$ crosses finitely many arcs in $S$.  Since for each $l \in \LL$ the arc $\{l-1,l+1\}$ crosses only finitely many arcs, we infer that $l$ can only be incident to finitely many arcs.  Hence, $S$ is locally finite.

The second statement follows directly from Proposition \ref{proposition:TowardClusterTilting}(\ref{enumerate:LocallyFiniteCrossings}).
\end{proof}

{A basic question is, for which locally discrete cyclically ordered sets $(C,R)$ does a locally finite connected triangulation even exist?  This problem is closely related to one of our main results (Theorem~\ref{theorem:IntroductionCountableReachable}, see also Theorems~\ref{theorem:LocallyFiniteClusterTilting} and~\ref{theorem:ExistenceOfCCMap} below).  The rest of the section is devoted to providing a complete answer to this question.}

\begin{construction}\label{Construction:ConstructionLocallyFinite}
{Suppose that $C$ is countable, and fix a bijection $\varphi: \bN \to C$}.  We will construct a set of pairwise noncrossing arcs $S \subseteq \arc(C)$ inductively.

We start with $S_0 = \emptyset$.  For each $n > 0$, we set
\[S_n = S_{n-1} \cup \left\{ \varphi(i)\varphi(n) \in \arc(C) \left| \begin{array}{ll}\mbox{$i < n$, and} \\ \mbox{$S_{n-1} \cup \{ \varphi(i) \varphi(n) \}$ is a set of pairwise noncrossing arcs} \end{array} \right. \right\}.\]

We then set $S = \cup_{i \in \bN} S_i$.
\end{construction}

\begin{remark}
Since each $S_n$ is a set of pairwise noncrossing arcs, so is $S$.
\end{remark}

\begin{lemma}\label{lemma:CreatingLocallyFinite}
Let $\langle a_0, a_1, \ldots,a_{n-1}, a_n \rangle$ be the $(n+1)$-cycle given by $\varphi(\{0,1, \ldots, n\})$ for $n \geq 3$ and where $a_n = \varphi(n)$.  We have $a_0 a_{n-1} \in S_{n-1}.$
\end{lemma}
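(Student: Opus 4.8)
The plan is to show that $a_0 a_{n-1}$ is an arc which the construction is forced to include by stage $n-1$. Write $a_0 = \varphi(i)$ and $a_{n-1} = \varphi(j)$; since $a_n = \varphi(n)$ and $\varphi$ is a bijection, we have $i,j \in \{0,1,\ldots,n-1\}$, and $i \ne j$ because the $(n+1)$-cycle has $n+1 \ge 4$ distinct vertices (so in particular $a_0 \ne a_{n-1}$). Set $m = \max(i,j)$, so that $m \le n-1$ and $\min(i,j) < m$. I claim $a_0 a_{n-1}$ is added at stage $m$, whence $a_0 a_{n-1} \in S_m \subseteq S_{n-1}$.

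First I would check that $a_0 a_{n-1} \in \arc(C)$, i.e.\ that $a_0$ and $a_{n-1}$ are not neighbors in $C$. From the $(n+1)$-cycle we have $R(a_{n-1},a_n,a_0)$, exhibiting an element strictly between $a_{n-1}$ and $a_0$, and, since $n \ge 3$, also $R(a_0,a_1,a_{n-1})$. Local discreteness of $C$ then rules out both $a_0 = a_{n-1}+1$ and $a_{n-1} = a_0+1$, so $a_0 a_{n-1}$ is a genuine arc.

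The crux is that $S_{m-1} \cup \{a_0 a_{n-1}\}$ is a set of pairwise noncrossing arcs, so that the defining clause of Construction \ref{Construction:ConstructionLocallyFinite} indeed puts $\varphi(\min(i,j))\varphi(m) = a_0 a_{n-1}$ into $S_m$. A short induction along the construction shows that every arc of $S_{m-1}$ has both endpoints in $\varphi(\{0,\ldots,m-1\}) \subseteq \varphi(\{0,\ldots,n-1\}) = \{a_0,\ldots,a_{n-1}\}$. On the other hand, by the description of the cyclic order on an $(n+1)$-cycle (Proposition \ref{proposition:FiniteCycles} together with the decomposition $C = (a_0,a_{n-1}) \coprod [a_{n-1},a_0]$ from Proposition \ref{proposition:CyclicDecomposition}, and using that the inclusion of the finite subset into $(C,R)$ both preserves and reflects $R$, see Remark \ref{remark:MonoAndIso}), we get $a_1,\ldots,a_{n-2} \in (a_0,a_{n-1})$ and $a_n \in (a_{n-1},a_0)$, hence $a_0,a_1,\ldots,a_{n-1} \in [a_0,a_{n-1}]$. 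Thus every arc of $S_{m-1}$ has both its endpoints in the closed interval $[a_0,a_{n-1}]$ and so does not cross $a_0 a_{n-1}$ by Lemma \ref{lemma:Noncrossing}(2); and since arcs sharing an endpoint never cross, the arcs added at stage $m$ are also mutually noncrossing. Therefore $a_0 a_{n-1}$ is added at stage $m$, giving $a_0 a_{n-1} \in S_m \subseteq S_{n-1}$. The only real bookkeeping is keeping track of which cyclic order — that of $C$ or that of the finite subset — the intervals are computed in; once the preservation/reflection of $R$ under the inclusion is invoked this is immediate, so I do not anticipate any genuine obstacle.
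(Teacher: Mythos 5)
Your proof is correct and follows essentially the same route as the paper's: identify the stage $m=\max\{\varphi^{-1}(a_0),\varphi^{-1}(a_{n-1})\}\le n-1$, observe that all arcs of $S_{m-1}$ have endpoints in $\varphi(\{0,\dots,m-1\})\subseteq\{a_0,\dots,a_{n-1}\}\subseteq[a_0,a_{n-1}]$, and invoke Lemma~\ref{lemma:Noncrossing} to conclude the arc is admitted at stage $m$. The only difference is that you spell out the non-neighbor check and the preservation of the cyclic order under the inclusion of the finite cycle, which the paper leaves implicit.
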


\begin{proof}
Note that $a_0$ and $a_{n-1}$ are not neighbors in $C$ so that $a_1 a_{n-1} \in \arc(C)$.  Consider $j = \max\{ \varphi^{-1}(a_0), \varphi^{-1}(a_{n-1}) \}$.  The arcs in $S_{j-1}$ are only incident with elements from $\varphi(\{0,1, \ldots, j-1\}) \subset C$.  Since $(a_{n-1},a_0) \cap \varphi(\{0,1, \ldots, j-1\}) = \emptyset$, we know by Lemma \ref{lemma:Noncrossing} that the arc $a_0 a_{n-1}$ does not cross any arcs in $S_{j-1}$ and thus $a_0 a_{n-1} \in {S_{j}} \subseteq S_{n-1}$.
\end{proof}

\begin{proposition}\label{proposition:CountableLocallyFiniteTriangulation}
{Let $S \subseteq \arc(C)$ be an output from Construction~\ref{proposition:LocallyFiniteCrossings}. Then:}
\begin{enumerate}
\item $S$ is maximal,
\item $S$ is connected,
\item $S$ is locally finite,
\item $S$ is {a triangulation}.
\end{enumerate}
\end{proposition}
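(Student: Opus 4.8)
The plan is to prove the four properties in the order \textbf{maximal}, \textbf{locally finite}, \textbf{triangulation}, \textbf{connected}, obtaining the third for free from the first two via Theorem~\ref{theorem:LocallyFiniteTriangulation}. Throughout I would write $P_n=\varphi(\{0,1,\dots,n\})$, which is a cyclic suborder of $C$ with $n+1$ elements (Remark~\ref{remark:CyclicSuborder} and Proposition~\ref{proposition:FiniteCycles}); for $n\ge 3$ let $\alpha_n,\beta_n$ denote the two $P_n$-neighbours of $\varphi(n)$ (distinct, since $|P_n|\ge 4$). Note that every arc of $S_m$ has both endpoints in $P_m$, so $S_m\subseteq S_{m'}$ for $m\le m'$ and an arc $\varphi(i)\varphi(j)$ lies in $S$ if and only if it is added at stage $\max(i,j)$.

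Maximality is immediate: if $ab\in\arc(C)$ crosses no arc of $S$ and $n=\max(\varphi^{-1}(a),\varphi^{-1}(b))$, then at stage $n$ the arc $ab$ occurs as a candidate $\varphi(i)\varphi(n)$, it lies in $\arc(C)$, and it crosses no arc of $S_{n-1}\subseteq S$; hence $ab\in S_n\subseteq S$. The key technical observation, call it $(\star)$, is that for $n\ge 3$ \emph{every arc added at stage $n$ is $\varphi(n)\alpha_n$ or $\varphi(n)\beta_n$}. This I would deduce from Lemma~\ref{lemma:CreatingLocallyFinite} (which gives $\alpha_n\beta_n\in S_{n-1}$) together with Lemma~\ref{lemma:Noncrossing} and Proposition~\ref{proposition:CyclicDecomposition}: an added arc $\varphi(i)\varphi(n)$ may not cross $\alpha_n\beta_n$, while $\varphi(n)$ is the \emph{unique} point of $P_n$ lying in one of the two open arcs determined by $\alpha_n\beta_n$; the non-crossing condition of Lemma~\ref{lemma:Noncrossing} then forces the $P_{n-1}$-point $\varphi(i)$ into the closed interval on $\varphi(n)$'s side, i.e.\ $\varphi(i)\in\{\alpha_n,\beta_n\}$.

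For local finiteness, fix $c=\varphi(k)\in C$. Any arc of $S$ incident with $c$ has the form $c\varphi(j)$ and is added at stage $\max(k,j)$; only finitely many have $\max(k,j)\le k$, and if $\max(k,j)=j>k$ then by $(\star)$ the vertex $c$ is a $P_j$-neighbour of $\varphi(j)$. Since $C$ is locally discrete, $c\pm1$ exist; choosing $N\ge k$ with $c-1,c+1\in P_N$, one checks that for $n>N$ the point $c$ cannot be a $P_n$-neighbour of $\varphi(n)$ — for instance, if $c$ were the $P_n$-successor of $\varphi(n)$ then $(\varphi(n),c)\cap P_n=\emptyset$, contradicting $c-1\in(\varphi(n),c)$ and $c-1\in P_N\subseteq P_n$; the predecessor case is symmetric using $c+1$. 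Hence only finitely many arcs of $S$ meet $c$. Theorem~\ref{theorem:LocallyFiniteTriangulation} then upgrades ``maximal $+$ locally finite'' to ``triangulation''.

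Connectedness I would prove by induction on $n$ of the statement $(\dagger_n)$: \emph{all elements of $C$ incident with $S_n$ lie in a single connected component of the graph with vertex set $C$ and edge set $S_n$}. The base $n=2$ holds because $S_2$ consists of arcs among $\varphi(0),\varphi(1),\varphi(2)$, any two of which share a vertex. For the step ($n\ge3$): $(\star)$ gives $S_n=S_{n-1}\cup A_n$ with $A_n\subseteq\{\varphi(n)\alpha_n,\varphi(n)\beta_n\}$, while $\alpha_n\beta_n\in S_{n-1}$ puts $\alpha_n$ and $\beta_n$ into the single component supplied by $(\dagger_{n-1})$; if $A_n=\emptyset$ then $\varphi(n)$ is still isolated and $(\dagger_n)$ is inherited, and if $A_n\ne\emptyset$ then $\varphi(n)$ gets attached to $\alpha_n$ or $\beta_n$, merely enlarging that one component. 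Since any two elements incident with $S$ are already incident with some $S_N$, $(\dagger_N)$ produces a path between them inside $S_N\subseteq S$. The main obstacle — and the place where essentially all the work sits — is establishing $(\star)$ and carrying out the bookkeeping with the finite cyclic suborders $P_n$ and their neighbours; once $(\star)$ is available, local finiteness and connectedness are short, and the triangulation property costs nothing beyond Theorem~\ref{theorem:LocallyFiniteTriangulation}.
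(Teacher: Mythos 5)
Your proof is correct and follows essentially the same route as the paper's: the paper also gets maximality directly from the construction, uses Lemma~\ref{lemma:CreatingLocallyFinite} to see that the arcs added at stage $n$ can only join $\varphi(n)$ to its two neighbours in the finite subpolygon $\varphi(\{0,\dots,n\})$ (your $(\star)$), deduces connectedness by induction and local finiteness from the fact that $\varphi(n)\pm 1$ eventually lie in that subpolygon, and obtains the triangulation property from Theorem~\ref{theorem:LocallyFiniteTriangulation}. The only difference is presentational: you isolate $(\star)$ as a standalone observation, whereas the paper re-derives it inline in both the connectedness and local-finiteness arguments.
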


\begin{proof}
\begin{enumerate}
\item Let $\varphi(i) \varphi(j) \in \arc(C)$.  Assume without loss of generality that $i < j$.  If $\varphi(i) \varphi(j)$ does not cross any arcs in $S_{j-1} \subseteq S$, then $\varphi(i) \varphi(j) \in S_j \subseteq S$.  Thus, either $\varphi(i) \varphi(j)$ {crosses an arc in $S$} or $\varphi(i) \varphi(j) \in S$.  Hence, $S$ is maximal.

\item We will prove that $S_n$ is connected by induction on $n$.  From this it follows that {$S = \cup_n S_n$} is connected.  Since $S_0 = \emptyset$ and $S_1 = \{\varphi(0), \varphi(1)\}$, we know that the statement is true for $n \leq 1$.

For $n = 2$, we either have that $S_2 = S_1$ and hence {$S_2$} is connected, or all arcs in $S_2 \setminus S_1$ are incident with either $\varphi(0)$ or $\varphi(1)$.  We easily infer that $S_2$ is connected.

Let $n \geq 3$.  We consider the $(n+1)$-cycle given by $\varphi(\{0,1, \ldots, n\}):$
$$\langle a_0, a_1, \ldots,a_{n-1}, a_n \rangle.$$
Up to rotation, we may assume that $\varphi(n) = a_n$.  By Lemma \ref{lemma:CreatingLocallyFinite}, we know that $a_0 a_{n-1} \in S_{n-1}$.  Hence, any arc in $S_n \setminus S_{n-1}$ is incident with either $a_0$ or $a_{n-1}$.  Since $S_{n-1}$ is connected, so is $S_n$. 

\item We claim that, for each $n \in \bN$, there is an $N \in \bN$ such that all arcs in $S$ incident with $\varphi(n)$ lie in $S_N$.  Let {$N = \max\{\varphi^{-1}(\varphi(n)-1), \varphi^{-1}(\varphi(n)+1)\}$}. Thus, $\varphi(n) \pm 1 \in \varphi(\{0,1, \ldots, N\})$.

Seeking a contradiction, let $k > N$ (note that this implies that $k \geq 4$) be the smallest natural number such that $S_k \setminus S_{k-1}$ has an arc incident with $\varphi(n)$.  Consider the $(k+1)$-cycle given by $\varphi(\{0,1, \ldots, k\}):$
$$\langle a_0, a_1, \ldots,a_{k-1}, a_k \rangle.$$
Up to rotation, we may assume that $\varphi(k) = a_k$.  By Lemma \ref{lemma:CreatingLocallyFinite}, we know that $a_0 a_{k-1} \in S_{n-1}$, so that $S_k \setminus S_{k-1} \subseteq \{a_0 a_k, a_{k-1} a_k\}$.
{Since $\varphi(n)-1, \varphi(n), \varphi(n)+1 \in \{a_0, a_1, \ldots,a_{k-1}\}$, there exists $1 \le i \le k-2$ such that $a_{i-1} = \varphi(n)-1$, $a_i = \varphi(n)$ and $a_{i+1} = \varphi(n)+1$.  Hence, $S_k \setminus S_{k-1}$ has no arcs incident with $a_i = \varphi(n)$ and we have established our initial claim.}

Since $S_N$ is finite, this proves that $S$ is locally finite.

\item This follows from Theorem \ref{theorem:LocallyFiniteTriangulation}.
\end{enumerate}
\end{proof}

{
\begin{figure}
  \centering
  \begin{tikzpicture}
    \TikzLocFinTriangle{0}{0}
  \end{tikzpicture}

\caption{A locally finite connected triangulation for $(C,R) = (\LL, \leq)_{\rm cyc}$, where $\LL$ is the linearly ordered set $\{0,1,2\} \stackrel{\rightarrow}{\times} \bZ$.}
\label{figure:LocallyFiniteTriangle}
\end{figure}
}

Now we can characterize the situation when a locally finite connected triangulation exists (see Figure~\ref{figure:LocallyFiniteTriangle} for an example of such a triangulation).

\begin{theorem}\label{theorem:CountableLocallyFiniteTriangulation}
Let $(C,R)$ be a locally discrete cyclically ordered set.  There is a locally finite connected triangulation of $(C,R)$ if and only if $C$ is countable.
\end{theorem}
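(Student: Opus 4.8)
The plan is to prove the two implications separately, with essentially all of the substance already supplied by the preceding results. For the ``if'' direction, when $C$ is countably infinite a locally finite connected triangulation is exactly what Construction~\ref{Construction:ConstructionLocallyFinite} together with Proposition~\ref{proposition:CountableLocallyFiniteTriangulation} produce, since that proposition asserts the set $S$ built there is simultaneously maximal, connected, locally finite and a triangulation. When $C$ is finite (so $C \cong \bZ_n$ by Proposition~\ref{proposition:FiniteCycles}) one takes instead the classical fan triangulation from a fixed vertex, which is visibly connected and, being finite, locally finite. So the only implication that really needs an argument is the ``only if'' direction.

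For that direction I would assume $S \subseteq \arc(C)$ is a locally finite connected triangulation and show $C$ is countable. Set $C_S \subseteq C$ to be the set of elements incident with at least one arc of $S$. The first step is that $C_S$ is countable: regard $C_S$ as the vertex set of the graph $G$ whose edges are the arcs of $S$; then connectedness of $S$ says precisely that $G$ is connected, and local finiteness of $S$ says precisely that every vertex of $G$ has finite degree. Fixing any $v_0 \in C_S$ (the case $C_S = \emptyset$ being trivial), an induction on $n$ shows that the set of vertices at graph-distance at most $n$ from $v_0$ is finite, and every vertex lies in some such set; hence $C_S$ is a countable union of finite sets.

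The second step is to control the elements of $C$ that lie outside $C_S$. We may assume $|C| \geq 4$, the finitely-many-element case being immediate. Since $S$ is a triangulation, Lemma~\ref{lemma:Maximal} applies: for every $a \in C$ not incident with $S$ we have $\{a-1,a+1\} \in S$, so in particular $a-1$ is incident with $S$, i.e.\ $a-1 \in C_S$. Because the predecessor map $x \mapsto x-1$ is a bijection of $C$ (with inverse the successor map), its restriction gives an injection $C \setminus C_S \hookrightarrow C_S$; therefore $|C \setminus C_S| \leq |C_S| \leq \aleph_0$, and $C = C_S \sqcup (C \setminus C_S)$ is countable.

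I do not expect a serious obstacle: the two natural worries — that $C_S$ might be uncountable, or that $C \setminus C_S$ might be large — are both dispatched directly by tools already in hand (connectedness plus local finiteness for the first; Lemma~\ref{lemma:Maximal} together with the global bijectivity of the predecessor map for the second). The only point that needs care is the passage from $C_S$ to $C$, namely remembering that elements of $C$ need not be incident with $S$ at all, and that it is the triangulation hypothesis — not merely maximality or connectedness — which forces each such element to be pinched between the two endpoints of a single arc of $S$.
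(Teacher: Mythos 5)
Your proof is correct and follows essentially the same route as the paper: the ``if'' direction is Construction~\ref{Construction:ConstructionLocallyFinite} and Proposition~\ref{proposition:CountableLocallyFiniteTriangulation}, and the ``only if'' direction is the same ``connected locally finite graph has countably many vertices'' argument, which the paper runs on a single graph with vertex set all of $C$ and edge set $S$ together with the edges of $C$, whereas you split it into $C_S$ countable plus an injection $C\setminus C_S\hookrightarrow C_S$ via Lemma~\ref{lemma:Maximal}. (One tiny quibble: Lemma~\ref{lemma:Maximal} already holds under maximality alone, so your closing remark that the triangulation hypothesis is what is needed there, ``not merely maximality,'' is not quite right, but this does not affect the argument.)
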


\begin{proof}
First, assume that there is a locally finite connected triangulation $S$ of $C,R$.  Lemma \ref{lemma:Maximal} shows that every element of $C$ is either incident with an arc of $S$, or connected by an edge of $C$ to such an element.  Consider a graph whose vertices are the elements of $C$ and whose edges are given by the arcs of $S$ and the edges of $C$.  Since this graph is locally finite and connected, we infer that $C$ is countable.

The other direction follows from Proposition \ref{proposition:CountableLocallyFiniteTriangulation}.
\end{proof}
\section{\texorpdfstring{Cluster combinatorics of type $A$}{Cluster combinatorics of type A}}\label{section:CominatoricsOfTypeA}

In this section, we will discuss a class of examples of triangle-free cluster categories---an infinite version of cluster categories of Dynkin type~$A$. The combinatorics of cluster-tilting subcategories are reminiscent of the finite case studied by Caldero et al.\ in~\cite{CalderoChapotonSchiffler06}, but as was discussed in the last section, one should be careful as infinite triangulations bring new and sometimes peculiar phenomena. Our class of examples includes in particular the $\infty$-gon and its triangulations (studied by Holm and J\o{}rgensen~\cite{HolmJorgensen12}) and infinite Dynkin quivers of type $\mathbb{A}^\infty_\infty$ and triangulations of the infinite strip (studied by Liu and Paquette~\cite{LiuPaquette15}).

\subsection{Representations of linearly ordered sets}

Let $\PP$ be a linearly ordered set, and let $\LL = \PP \stackrel{\rightarrow}{\times} \bZ$ be the lexicographical product. That is, $\LL$ is an unbounded locally discrete linearly ordered set (see \S\ref{subsection:Lin ord sets}). We interpret $\LL$ as a poset category in the usual way. As before, we write $\Rep \LL$ for the category $\Fun(\LL^\circ, \Mod k)$ of contravariant functors from $\LL$ to $\Mod k$, and $\rep \LL$ for the full subcategory of finitely presented objects.  We write $\repc \LL$ for the full subcategory of $\Rep \LL$ consisting of those objects which are finitely presented and finitely copresented in $\Rep \LL$.

\begin{proposition}
The category $\repc \LL$ is a hereditary abelian category with Serre duality.
\end{proposition}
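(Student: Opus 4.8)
The plan is to reduce heredity to finite chains via the local criterion for semi-heredity, and then to produce a Serre functor by exhibiting the Auslander--Reiten translation explicitly. First I would check that the $k$-linear category $k\LL$ (note $\repc\LL = \modc k\LL$) is semi-hereditary. By Proposition~\ref{proposition:ShIsLocal} it suffices to verify semi-heredity for full subcategories of $k\LL$ with finitely many objects; such a subcategory involves only finitely many elements $\ell_1 < \cdots < \ell_m$ of $\LL$, and the full additive subcategory they span is $k$-linearly equivalent to the path category of the linearly oriented quiver of type $A_m$, which is a hereditary algebra. Since $k\LL$ is Hom-finite, Remark~\ref{remark:modcHereditary} then gives that $(k\LL)^\circ$ is semi-hereditary as well and that $\modc k\LL$ is an abelian hereditary category. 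It is also Ext-finite (Hom-spaces between finitely presented modules are finite dimensional, and $\Ext^1$ is computed from a finite projective resolution in the hereditary category $\mod k\LL$, with which $\repc\LL$ shares its extensions), so it only remains to establish Serre duality.

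For this, recall from \S\ref{subsection:Serre and CY} and \cite[Theorem~A]{ReVdB02} that a hereditary Ext-finite abelian category $\AA$ with no nonzero projective or injective object has Serre duality as soon as it has Auslander--Reiten sequences (then $\bS = \tau[1]$). I would verify these hypotheses for $\AA = \repc\LL$. The indecomposables are exactly the interval modules $M_{[a,b]}$ with $a \le b$ in $\LL$ --- the type-$A$ analogue of Gabriel's classification, which can be extracted from the representation theory of thread quivers in \cite{BergVanRoosmalen14} or deduced from the finite-chain case as above. Since $\LL = \PP \stackrel{\rightarrow}{\times}\bZ$ is locally discrete with neither a maximum nor a minimum, each $M_{[a,b]}$ sits in a non-split short exact sequence $0 \to M_{[a-1,a-1]} \to M_{[a-1,b]} \to M_{[a,b]} \to 0$, so it is not projective, and dually $0 \to M_{[a,b]} \to M_{[a,b+1]} \to M_{[b+1,b+1]} \to 0$ shows it is not injective; hence $\repc\LL$ has no nonzero projectives or injectives. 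The shift $\ell \mapsto \ell+1$ is a poset automorphism of $\LL$, so restriction along it is an autoequivalence $\tau$ of $\repc\LL$ with $\tau M_{[a,b]} = M_{[a-1,b-1]}$, and I would check that
\[ 0 \to M_{[a-1,b-1]} \to M_{[a-1,b]} \oplus M_{[a,b-1]} \to M_{[a,b]} \to 0 \]
(with $M_{[a,b-1]} = 0$ when $a = b$) is an almost split sequence. This yields an Auslander--Reiten sequence ending at, and --- $\tau$ being an equivalence --- also starting at, every indecomposable; by \cite[Theorem~A]{ReVdB02} it follows that $\Db(\repc\LL)$ has Auslander--Reiten triangles and therefore a Serre functor.

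The step I expect to require the most care is the combinatorial input to the second part: identifying the indecomposables as interval modules and verifying that the displayed sequences are indeed the almost split ones (equivalently, that the candidate $\tau$ is the Auslander--Reiten translation). An alternative that offloads some of this onto already-available results is to pass to the bounded completion $\LL_b = \bN \cdot \LL \cdot (-\bN)$: then $k\LL_b = kQ$ for the strongly locally finite thread quiver $Q$ consisting of a single thread arrow labelled $\PP$, so $\mod k\LL_b$ is hereditary with Serre duality by Theorem~\ref{theorem:ClassificationIntroduction} and \cite[Theorem~1]{BergVanRoosmalen14}; extension by zero identifies $\repc\LL$ with the Serre subcategory of $\mod k\LL_b$ of modules supported on the convex subset $\LL$, and one checks this subcategory is stable under the Auslander--Reiten translation of $\mod k\LL_b$, so that the Serre functor of $\Db(\mod k\LL_b)$ restricts to one on $\Db(\repc\LL)$. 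The delicate point there is the compatibility of extension by zero with finite presentability (it is automatic on interval modules, which is why the classification resurfaces) and the identification of the restricted Serre functor.
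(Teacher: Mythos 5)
Your argument is correct. Note, though, that the paper does not actually prove this proposition: its ``proof'' is the citation ``See \cite{Ringel02} or \cite[Proposition 4.6]{vanRoosmalen06}'', and what you have written is essentially a reconstruction of the argument in those references, assembled from tools internal to the paper. The two halves of your proof line up with how the paper itself treats the surrounding material: heredity via Proposition~\ref{proposition:ShIsLocal} applied to finite chains plus Remark~\ref{remark:modcHereditary} is exactly the mechanism the paper uses elsewhere (e.g.\ in Example~\ref{Example:ThreadsNotHereditary}), and your identification of $\tau$ with the shift $\ell \mapsto \ell-1$, the non-existence of projectives and injectives, and the mesh $0 \to X_{a-1,b-1} \to X_{a-1,b}\oplus X_{a,b-1} \to X_{a,b}\to 0$ reappear verbatim in Propositions~\ref{proposition:ClassifyingRepcLL} and~\ref{proposition:TauIsShifting}, so the route through \cite[Theorem A]{ReVdB02} (legitimate here precisely because there are no projectives or injectives, so AR sequences in $\repc\LL$ give AR triangles at every indecomposable of $\Db(\repc\LL)$) is the intended one. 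The single genuinely load-bearing input you cannot derive purely formally is the classification of the indecomposables of $\repc\LL$ as interval modules and the verification that the displayed sequences are almost split; you correctly flag this, and the paper itself only cites it (\cite[Lemma 4.5 and Proposition 4.6]{vanRoosmalen06}). Your alternative via the bounded completion $\LL_b$ and Theorem~\ref{theorem:ClassificationIntroduction} is also viable and buys you Serre duality ``for free'' from the thread-quiver machinery, but, as you observe, it does not avoid the classification (needed to see that extension by zero hits exactly the finitely presented $k\LL_b$-modules supported on $\LL$ and that this subcategory is $\tau^{\pm 1}$-stable), so it is not materially shorter.
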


\begin{proof}
See \cite{Ringel02} or \cite[Proposition 4.6]{vanRoosmalen06}.
\end{proof}

\begin{remark}
The category $\rep \LL$ has nonzero projective objects, but no nonzero injective objects; it does not satisfy Serre duality.  This is the reason we consider the category $\repc \LL$.
\end{remark}

We will recall some features of the category $\repc \LL$.

\begin{proposition}\label{proposition:ClassifyingRepcLL}
Every indecomposable object in $\repc \LL$ is of the form
$$X_{a,b} = \im (k\LL(-,b) \to k\LL(a,-)^*),$$
where $a \leq b$.  Furthermore, $\repc \LL$ has no nonzero projective or injective objects.
\end{proposition}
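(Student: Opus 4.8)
\textbf{Plan for the proof of Proposition~\ref{proposition:ClassifyingRepcLL}.}

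The plan is to reduce everything to the theory of representations of an unbounded locally discrete linearly ordered set, which is well understood (e.g.\ \cite{Ringel02}, \cite[\S4]{vanRoosmalen06}). First I would fix notation: for $a \le b$ in $\LL$, set $X_{a,b} = \im\bigl(k\LL(-,b) \to k\LL(a,-)^*\bigr)$, where the map $k\LL(-,b) \to k\LL(a,-)^*$ is the canonical one sending the identity at $b$ to (the functional dual to) the identity at $a$; concretely $X_{a,b}$ is the ``interval module'' supported on $\{x \in \LL \mid a \le x \le b\}$ with one-dimensional value at each such $x$ and identity transition maps. I would first check that each such $X_{a,b}$ is finitely presented and finitely copresented, hence lies in $\repc \LL$: finite presentation comes from the short exact sequences expressing $X_{a,b}$ as a cokernel/subquotient of standard projectives $k\LL(-,-)$ (using that $a$ has a predecessor or $b$ a successor when the relevant boundary is needed, and local discreteness of $\LL$ guarantees this), and finite copresentation is the dual statement using standard injectives $k\LL(-,-)^*$; the dual of $X_{a,b}$ in $\Mod k\LL^\circ$ is again an interval module, so the argument is symmetric.

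Next I would prove that these are \emph{all} the indecomposables. Given an indecomposable $M \in \repc \LL$, finite presentation says $M$ has a projective presentation $\bigoplus_i k\LL(-,b_i) \to \bigoplus_j k\LL(-,c_j) \to M \to 0$ with finitely many terms, so $M$ is supported on a bounded interval of $\LL$, which (being a bounded locally discrete linearly ordered set) is order-isomorphic to a finite linearly ordered set or to a set of the form $\bN \cdot (\PP' \stackrel{\rightarrow}{\times} \bZ) \cdot (-\bN)$ restricted suitably --- but since $M$ is finitely presented \emph{and} finitely copresented, its support is in fact a \emph{finite} interval (an infinite locally discrete interval would force either infinitely many generators or infinitely many cogenerators). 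So $M$ is a finite-dimensional representation of a finite linearly ordered set, i.e.\ of a quiver of type $A_n$ with linear orientation; by Gabriel's theorem such indecomposables are exactly the interval modules, which are precisely the $X_{a,b}$. The one point needing care is the passage ``finitely presented $+$ finitely copresented $\Rightarrow$ finite-dimensional support'': I would argue that finite presentation bounds the support above (it is contained in $\bigcup_j \{x \le c_j\}$ intersected with a cogenerated piece) and finite copresentation bounds it below and kills any infinite ``tail'', using the explicit description of bounded locally discrete sets from \S\ref{subsection:Lin ord sets} together with Example~\ref{Example:ThreadsNotHereditary}'s style of argument (an infinite increasing chain of subobjects cannot terminate in a finitely presented module, and dually).

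For the second assertion --- no nonzero projectives or injectives in $\repc\LL$ --- I would note that the standard projective $k\LL(-,b)$ is supported on $\{x \le b\}$, which since $\LL = \PP \stackrel{\rightarrow}{\times}\bZ$ is unbounded below is an infinite locally discrete set with a maximum but no minimum, hence $k\LL(-,b)$ is not finitely copresented (this is exactly the phenomenon isolated in Example~\ref{Example:ThreadsNotHereditary} and Proposition~\ref{proposition:ModNotHereditary}) and so does not lie in $\repc\LL$; therefore $\repc\LL$ contains no nonzero projective object of $\Rep\LL$. One must additionally rule out objects that are projective \emph{within the abelian category $\repc\LL$} without being projective in $\Rep\LL$: here I would invoke that $\repc\LL$ is hereditary with Serre duality (the preceding Proposition), so a projective object would be Ext-projective, and via the Auslander--Reiten formula (Lemma~\ref{lemma:AR-formula}) $\Ext^1(X_{a,b}, -) \cong \Hom(-, \tau X_{a,b})^*$ with $\tau X_{a,b} = X_{a-1,b-1}$ always nonzero (every indecomposable has a predecessor under $\tau$ because $\LL$ is unbounded below), so no indecomposable is Ext-projective; dually none is Ext-injective. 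I expect the main obstacle to be the careful bookkeeping in the ``finitely presented $+$ finitely copresented forces finite support'' step --- making the boundedness argument airtight for an arbitrary $\PP$ rather than hand-waving it --- but this is precisely the kind of statement already packaged in \cite{Ringel02} and \cite[Proposition~4.6]{vanRoosmalen06}, which I would cite to keep the proof short.
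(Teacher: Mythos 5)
There is a genuine error in your classification step. You assert that if $M\in\repc \LL$ is finitely presented and finitely copresented then its support is a \emph{finite} interval, so that $M$ reduces to an indecomposable representation of a linearly oriented $A_n$ quiver. This is false whenever $\PP$ has more than one element. Take $\LL=\{0,1\}\stackrel{\rightarrow}{\times}\bZ$ and consider the interval module $X_{a,b}$ with $a=(0,0)$ and $b=(1,0)$: its support $[a,b]$ contains all of $\{(0,n) \mid n\geq 0\}$ and $\{(1,m)\mid m\leq 0\}$ and is therefore infinite, yet $X_{a,b}$ admits the length-one projective resolution $0\to k\LL(-,a-1)\to k\LL(-,b)\to X_{a,b}\to 0$ and the dual injective copresentation (cf.\ Remark~\ref{remark:Resolutions}), so it lies in $\repc\LL$. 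Thus one generator and one relation are perfectly compatible with infinite support; your parenthetical claim that an infinite locally discrete support would force infinitely many generators or cogenerators is wrong, and the reduction to Gabriel's theorem collapses. The classification genuinely has to deal with these ``long'' interval modules, and this is exactly the content of \cite[Lemma~4.5]{vanRoosmalen06} (equivalently \cite{Ringel02}), which the paper cites for the statement itself --- not merely for boundedness bookkeeping.

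Your treatment of the second assertion is correct but heavier than necessary. Once the classification is known, one does not need Serre duality or the Auslander--Reiten formula: since $\LL$ is unbounded and locally discrete, every $X_{a,b}$ receives a nonsplit epimorphism from $X_{a-1,b}$ (so it cannot be projective, as any epimorphism onto a projective splits) and admits a nonsplit monomorphism into $X_{a,b+1}$ (so it cannot be injective). This is the paper's one-line argument. Your detour through the standard projectives of $\Rep\LL$ is, as you note yourself, insufficient on its own, and your $\tau$-based argument, while valid, quietly relies on Proposition~\ref{proposition:TauIsShifting} and on Serre duality for $\repc\LL$, both of which are again imported from the same external sources.
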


\begin{proof}
The first statement follows from \cite[Lemma 4.5]{vanRoosmalen06}.  Since each object $X_{a,b}$ admits a nonsplit epimorphism $X_{a-1,b} \to X_{a,b}$ and a nonsplit monomorphism $X_{a,b} \to X_{a,b+1}$, the second statement follows.
\end{proof}

\begin{remark}\label{remark:Resolutions}
The indecomposable object $X_{a,b}$ has a projective resolution
$$0 \to k\LL(-,a-1) \to k\LL(-,b) \to X_{a,b} \to 0$$
and an injective resolution
$$0 \to X_{a,b} \to k\LL(a,-)^* \to k\LL(b+1,-)^* \to 0$$
in $\Rep \LL$.
\end{remark}

\begin{proposition}\label{proposition:TauIsShifting}
The category $\repc \LL$ has Auslander-Reiten triangles.  The Auslander-Reiten translation is given by $\tau X_{a,b} \cong X_{a-1,b-1}$.
\end{proposition}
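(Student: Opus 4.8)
The plan is to establish both claims of Proposition~\ref{proposition:TauIsShifting} by exploiting the hereditary structure of $\repc \LL$ together with Serre duality, which we already know holds by the previous proposition. Since $\repc \LL$ is a hereditary abelian category with Serre duality and no nonzero projective or injective objects (Proposition~\ref{proposition:ClassifyingRepcLL}), the general theory recalled in \S\ref{subsection:Serre and CY} (in particular the criterion of \cite{ReVdB02}) guarantees that $\Db(\repc \LL)$ has Auslander-Reiten triangles, and hence so does $\repc \LL$ itself: every indecomposable object $A \in \repc \LL$ is nonprojective, so it sits in an Auslander-Reiten sequence $0 \to \tau A \to M \to A \to 0$, and these are the Auslander-Reiten triangles restricted to the heart. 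So the first sentence is essentially a citation-level consequence of what is already set up.

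For the computation of $\tau$, the strategy is to identify $\tau X_{a,b}$ via the Auslander-Reiten formula (Lemma~\ref{lemma:AR-formula}): $\Ext^1(X_{a,b}, Y) \cong \Hom(Y, \tau X_{a,b})^*$ for all indecomposable $Y$. First I would compute $\Ext^1(X_{a,b}, X_{c,d})$ directly using the projective resolution $0 \to k\LL(-,a-1) \to k\LL(-,b) \to X_{a,b} \to 0$ from Remark~\ref{remark:Resolutions}. Applying $\Hom(-, X_{c,d})$ and using $\Hom(k\LL(-,x), X_{c,d}) \cong X_{c,d}(x)$ by Yoneda, one gets $\Ext^1(X_{a,b}, X_{c,d})$ as the cokernel of the restriction map $X_{c,d}(b) \to X_{c,d}(a-1)$ induced by $a-1 \le b$. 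Since $X_{c,d} = \im(k\LL(-,d) \to k\LL(c,-)^*)$ is (up to the natural identifications) the "interval" functor supported on $[c,d]$ with one-dimensional values there and zero outside, this cokernel is one-dimensional precisely when $a-1 \in [c,d]$ but $b \notin [c,d]$, i.e. when $c \le a-1 \le d < b$, and is zero otherwise. Dually, $\Hom(X_{c,d}, X_{a-1,b-1})$ is one-dimensional exactly when there is a nonzero map of intervals, which happens iff $a-1 \le c$ and $c \le b - 1 \le d$ — wait, one must be careful about the direction; concretely $\Hom(X_{c,d}, X_{e,f}) \ne 0$ iff $e \le c \le f \le d$ fails and instead the supports overlap compatibly, namely iff $e \le c$ and $c \le f$ and ... — I would pin down the precise Hom condition from \cite[Lemma 4.5]{vanRoosmalen06} or by a direct check on the interval functors, and then verify that the set of $(c,d)$ with $\Ext^1(X_{a,b},X_{c,d}) \ne 0$ coincides exactly with the set of $(c,d)$ with $\Hom(X_{c,d}, X_{a-1,b-1}) \ne 0$, both being one-dimensional in those cases. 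By the Auslander-Reiten formula and the fact that an indecomposable object of a Krull-Schmidt category with Serre duality is determined up to isomorphism by the functor $\Hom(-, -)$ it represents (Yoneda), this forces $\tau X_{a,b} \cong X_{a-1,b-1}$.

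An alternative, possibly cleaner route I would consider as a backup: construct the Auslander-Reiten sequence ending at $X_{a,b}$ explicitly. One expects the middle term to be $X_{a-1,b} \oplus X_{a,b-1}$ (with the convention that $X_{a,b-1} = 0$ if $a > b-1$, i.e. if $a = b$) and the sequence $0 \to X_{a-1,b-1} \to X_{a-1,b} \oplus X_{a,b-1} \to X_{a,b} \to 0$, built from the canonical nonsplit mono $X_{a-1,b-1} \to X_{a-1,b}$ and epi $X_{a-1,b} \to X_{a,b}$ etc. Then one checks it is almost split: non-split is clear, and any non-retraction $Z \to X_{a,b}$ factors through the middle term — this can be verified on indecomposable $Z = X_{c,d}$ by the interval combinatorics. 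This is more hands-on but avoids invoking the Auslander-Reiten formula.

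The main obstacle I anticipate is purely bookkeeping: getting the interval endpoints and the boundary cases ($a = b$, so $X_{a,b-1}$ degenerates; the passage between "$\im(k\LL(-,b) \to k\LL(a,-)^*)$" and the concrete description of its value at each point of $\LL$; and the direction of the various restriction maps) exactly right so that the $\Ext^1$ support and the $\Hom$ support into $X_{a-1,b-1}$ genuinely match. There is no deep difficulty — $\repc \LL$ is hereditary with a completely explicit classification of indecomposables — but the argument only works if these elementary identifications are stated precisely, so I would spend most of the write-up carefully fixing the combinatorial description of $X_{a,b}$ and of $\Hom(X_{c,d}, X_{e,f})$ before doing the two short computations.
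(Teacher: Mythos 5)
Your proposal is correct, but it is worth noting that the paper itself offers no argument at all here: its proof is a one-line citation to \cite[Proposition 1]{Ringel02} and \cite[Proposition 4.6]{vanRoosmalen06}, so what you have written is a genuine (and correct) reconstruction of the content of those references rather than a parallel to anything in the text. Your bookkeeping does check out: from the projective resolution $0 \to k\LL(-,a-1) \to k\LL(-,b) \to X_{a,b} \to 0$ and Yoneda one gets $\Ext^1(X_{a,b},X_{c,d}) \cong \coker\bigl(X_{c,d}(b) \to X_{c,d}(a-1)\bigr)$, which is one-dimensional precisely when $c \le a-1 \le d < b$, and since $\LL$ is locally discrete this coincides with the condition $c \le a-1 \le d \le b-1$ characterizing $\Hom(X_{c,d},X_{a-1,b-1}) \ne 0$ (the Hom rule you were unsure about is $\Hom(X_{a,b},X_{c,d}) \ne 0 \Leftrightarrow a \le c \le b \le d$, exactly as the paper records later in the proof of Proposition~\ref{proposition:DimensionsInCCLL}). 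The only step I would tighten is the final deduction: knowing that $\Hom(-,\tau X_{a,b})$ and $\Hom(-,X_{a-1,b-1})$ have the same one-dimensional support on indecomposables is not literally an application of Yoneda, but here it does suffice because $\tau X_{a,b}$ is indecomposable and hence some $X_{e,f}$ by Proposition~\ref{proposition:ClassifyingRepcLL}, and testing against $X_{a-1,a-1}$ forces $e=a-1$ while testing against $X_{a-1,b-1}$ together with $\End(X_{e,f}) \ne 0$ forces $f=b-1$. Alternatively, your backup route of exhibiting the almost split sequence $0 \to X_{a-1,b-1} \to X_{a-1,b} \oplus X_{a,b-1} \to X_{a,b} \to 0$ directly avoids this issue entirely and is essentially what Ringel does; either version is a complete proof, at the cost of a page of interval combinatorics that the paper chose to outsource.
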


\begin{proof}
See \cite[Proposition 1]{Ringel02} or \cite[Proposition 4.6]{vanRoosmalen06}.
\end{proof}

We recall the following result from \cite[Final Remark]{Ringel02} (see also \cite{vanRoosmalen06}).

\begin{proposition} \label{proposition:DerivedEqPosets}
Let $\LL$ and $\LL'$ be linearly ordered sets.  The categories $\repc \LL$ and $\repc \LL'$ are derived equivalent if and only if $\LL$ and $\LL'$ are rotationally equivalent (see Definition \ref{definition:RotationallyEquivalent}).
\end{proposition}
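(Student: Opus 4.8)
The statement to prove is Proposition~\ref{proposition:DerivedEqPosets}: for linearly ordered sets $\LL, \LL'$, the categories $\repc \LL$ and $\repc \LL'$ are derived equivalent if and only if $\LL$ and $\LL'$ are rotationally equivalent. Since this is attributed to Ringel (and to the second author's thesis), the cleanest route is to cite those, but a self-contained plan runs as follows. First I would establish the "only if'' direction via Auslander-Reiten combinatorics. A derived equivalence $\Db(\repc\LL) \simeq \Db(\repc\LL')$ is automatically compatible with Serre functors (Serre duality is intrinsic to the triangulated category), hence with $\tau = \bS[-1]$, and it induces a bijection on indecomposable objects commuting with $\tau$. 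By Proposition~\ref{proposition:ClassifyingRepcLL} the indecomposables of $\repc\LL$ are the $X_{a,b}$ with $a \le b$ in $\LL$, and by Proposition~\ref{proposition:TauIsShifting} we have $\tau X_{a,b} \cong X_{a-1,b-1}$. So the AR-quiver of $\repc\LL$ is a combinatorial object built functorially from $(\LL,\le)$: its vertices are pairs $a\le b$, the arrows come from the nonsplit mono $X_{a,b}\to X_{a,b+1}$ and nonsplit epi $X_{a-1,b}\to X_{a,b}$ (Proposition~\ref{proposition:ClassifyingRepcLL}'s proof), and $\tau$ is the "shift by $-1$ in both coordinates'' map. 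I would argue that from this $\tau$-equivariant AR-quiver one can reconstruct the unbounded locally discrete linearly ordered set $\LL = \PP \stackrel{\rightarrow}{\times}\bZ$ up to rotational equivalence: the "slices'' (objects $X_{a,a}$, the simples, say) together with the successor maps recover a cut of the associated cyclically ordered set $(\LL,\le)_{\rm cyc}$, and any two cuts of the same cyclic order are rotationally equivalent by Definition~\ref{definition:RotationallyEquivalent} and Proposition~\ref{proposition:CriterionCyclicallyEquivalent}.

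For the "if'' direction, suppose $\LL$ and $\LL'$ are rotationally equivalent, so by Proposition~\ref{proposition:CriterionCyclicallyEquivalent} there are linearly ordered sets $\LL_1, \LL_2$ with $\LL \cong \LL_1 \cdot \LL_2$ and $\LL' \cong \LL_2 \cdot \LL_1$. The plan is to exhibit a tilting-type object (or a concrete triangle equivalence) relating $\Db(\repc\LL)$ and $\Db(\repc\LL')$. The conceptual reason this works: both $(\LL,\le)_{\rm cyc}$ and $(\LL',\le)_{\rm cyc}$ are \emph{the same} cyclically ordered set $(C,R)$, and $\repc$ of a cut depends only on the cyclic order up to derived equivalence — concatenating $\LL_1 \cdot \LL_2$ versus $\LL_2 \cdot \LL_1$ is precisely a "rotation of the cut,'' which at the level of $\Db$ is implemented by a reflection/tilting functor. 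Concretely I would build the tilting complex in $\Db(\repc\LL)$ whose summands are the indecomposable projectives-or-shifts-thereof corresponding, under the identification of indecomposables with arcs/intervals, to the standard "fan'' triangulation based at the cut point of $\LL'$; applying Proposition~\ref{proposition:CTFromEnoughProj}-style reasoning (or directly checking the tilting axioms using the projective and injective resolutions in Remark~\ref{remark:Resolutions}) shows its endomorphism ring, computed in the derived category, is equivalent to $\repc\LL'$, or rather that the derived category it generates is $\Db(\repc\LL')$. Since $\repc\LL$ has no projectives or injectives, one must be slightly careful: the relevant objects live in $\Db(\repc\LL)$, not in $\repc\LL$ itself, and one uses that $\Db(\repc\LL)$ can be computed via the larger category $\Rep\LL$ or $\rep\LL$ which does have enough projectives.

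The main obstacle I anticipate is the "if'' direction: making the "rotation of a cut = derived equivalence'' heuristic into a rigorous tilting argument, because $\repc\LL$ genuinely lacks projective and injective objects, so there is no naive progenerator and one cannot simply quote Morita-type theory. The cleanest workaround is probably to pass through $\Db(\repc\LL) \simeq \Db(\rep\LL)$-type identifications only on the level of bounded-above or bounded-below complexes and use the explicit resolutions of Remark~\ref{remark:Resolutions} to verify $\Hom_{\Db}(T, T[n]) = 0$ for $n \ne 0$ directly for the candidate tilting complex $T$; this is a finite-length computation in each degree since the $X_{a,b}$ have length-two projective and injective resolutions. Given that the result is already in the literature (\cite[Final Remark]{Ringel02}, and \cite{vanRoosmalen06}), in the paper itself I would simply cite those references, which is exactly what the displayed proof does.

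\begin{proof}
See \cite[Final Remark]{Ringel02} (see also \cite{vanRoosmalen06}).
\end{proof}
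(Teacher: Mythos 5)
Your proposal is correct and takes the same route as the paper: the paper introduces this proposition with ``We recall the following result from \cite[Final Remark]{Ringel02} (see also \cite{vanRoosmalen06})'' and supplies no proof beyond that citation, which is exactly what your displayed proof does. Your preliminary sketch of a from-scratch argument (AR-quiver reconstruction for ``only if,'' a tilting/rotation-of-the-cut argument for ``if'') is a reasonable outline of what the cited sources actually carry out, but it is not needed here.
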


\subsection{\texorpdfstring{The associated cluster category $\CC_\LL$}{The associated cluster category}}

In this subsection, we will give some properties of the cluster category $\CC_\LL$ associated to the category $\repc \LL$, thus $\CC_\LL \cong (\Db \repc \LL) / \bS[-2]$.

\begin{remark}
If $\LL$ is a bounded linearly ordered locally discrete set, then $\repc \LL \cong \rep \LL$.  For such a linearly ordered set $\LL$, we have $\LL \cong \bN \cdot (\PP \stackrel{\rightarrow}{\times} \bZ) \cdot (-\bN)$, and thus
\[\Db(\repc \LL) \simeq \Db(\mod kQ),\]
where $Q$ is a thread quiver with a single thread arrow $\xymatrix@1{x \ar@{..>}[r]^-{\PP'} & y}$.  Hence, $\CC_\LL$ has a directed cluster-tilting subcategory (this uses Proposition \ref{proposition:CTFromEnoughProj}).  Since any unbounded linearly ordered locally discrete set $\LL'$ is rotationally equivalent to a bounded one, it follows from Proposition \ref{proposition:DerivedEqPosets} that $\CC_\LL$ has a directed cluster-tilting subcategory as well (see also \cite[Example 8.4]{BergVanRoosmalen14} or \cite[Remark 7.8]{vanRoosmalen06}).
\end{remark}

The following proposition holds since $\repc \LL$ has no nonzero projective or injective objects (see Proposition~\ref{proposition:Cluster without proj-inj}).

\begin{proposition}\label{proposition:DescriptionCCLL}
We have
\begin{eqnarray*}
\Ob \CC_\LL &=& \repc \LL \\
\Hom_{\CC_\LL}(X,Y) &\cong& \Hom_{\repc \LL}(X,Y) \oplus \Ext_{\repc \LL}^1(X, \tau^{-} Y) \\
\Ext^1_{\CC_\LL}(X,Y) &\cong& \Ext_{\repc \LL}^1(X,Y) \oplus \Ext_{\repc \LL}^1(Y,X)^*.
\end{eqnarray*}
\end{proposition}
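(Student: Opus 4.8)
The plan is to identify the three displayed isomorphisms in Proposition~\ref{proposition:DescriptionCCLL} as direct consequences of the orbit category construction in \S\ref{subsection:ClusterConstruction} combined with the fact, established in Proposition~\ref{proposition:ClassifyingRepcLL}, that $\repc \LL$ is a hereditary abelian category with Serre duality having \emph{no} nonzero projective or injective objects. First I would invoke Proposition~\ref{proposition:Cluster without proj-inj}: since $\repc\LL$ has no nonzero projectives or injectives, the composition $\repc\LL \to \Db\repc\LL \to \CC_\LL$ induces a bijection on isomorphism classes, which gives the identification $\Ob\CC_\LL = \repc\LL$ (more precisely, every object of $\CC_\LL$ is isomorphic to one coming from $\repc\LL$). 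This uses that $\tau \cong \bS\circ[-1]$ restricts to an autoequivalence of $\repc\LL$, as in the cited proof.

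For the $\Hom$-formula, I would start from the definition
\[
\Hom_{\CC_\LL}(X,Y) = \bigoplus_{n\in\bZ}\Hom_{\Db\repc\LL}(X,\bS^nY[-2n]),
\]
valid for $X,Y\in\repc\LL$ viewed as stalk complexes in degree $0$. Since $\repc\LL$ is hereditary, $\Db\repc\LL$ has a simple structure: $\Hom_{\Db\repc\LL}(X[0],Z[m]) = 0$ for $m\notin\{0,1\}$ when $X,Z\in\repc\LL$. Now $\bS^nY[-2n]$ is, up to the shift bookkeeping, $\tau^n Y[-n]$ because $\bS = \tau\circ[1]$; so $\Hom_{\Db\repc\LL}(X,\bS^nY[-2n]) = \Hom_{\Db\repc\LL}(X, \tau^nY[-n])$, which is $\Hom_{\repc\LL}(X,\tau^nY)$ for $n=0$, is $\Ext^1_{\repc\LL}(X,\tau^{-1}Y)$ for $n=-1$ (the $-n = 1$ case), and vanishes for all other $n$ since $\tau^n$ preserves $\repc\LL$ and heredity kills the higher/negative $\Ext$'s. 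Summing the two surviving terms yields $\Hom_{\CC_\LL}(X,Y)\cong \Hom_{\repc\LL}(X,Y)\oplus\Ext^1_{\repc\LL}(X,\tau^{-}Y)$. The $\Ext^1$-formula is then immediate: $\Ext^1_{\CC_\LL}(X,Y) = \Hom_{\CC_\LL}(X,Y[1])$, and applying the already-derived $\Hom$-formula with $Y$ replaced by $Y[1]$ picks out $\Hom_{\repc\LL}(X,Y[1]) = \Ext^1_{\repc\LL}(X,Y)$ from the $n=0$ term and $\Ext^1_{\repc\LL}(X,\tau^{-1}Y[1])$ from the $n=-1$ term; using $\tau^{-1}Y[1] \cong \bS^{-1}Y[2][-1] = \bS^{-1}Y[1]$ and the Serre duality isomorphism $\Ext^1_{\repc\LL}(X,\bS^{-1}Y) \cong \Hom(\bS^{-1}Y, \bS X)^{*}$... more cleanly, one rewrites $\Ext^1_{\repc\LL}(X,\tau^- Y)\cong \Ext^1_{\repc\LL}(Y,X)^{*}$ directly via the Auslander-Reiten formula (Lemma~\ref{lemma:AR-formula}): $\Ext^1_{\repc\LL}(X,\tau^{-1}Y)\cong\Hom_{\repc\LL}(\tau^{-1}Y,\tau X)^{*}\cong\Hom_{\repc\LL}(Y,X)^{*}$, wait—one must use instead $\Ext^1(Y,X)\cong\Hom(X,\tau Y)^{*}$, so the dual of the $\Ext^1_{\CC_\LL}(X,Y[1])$ second summand is arranged to be $\Ext^1_{\repc\LL}(Y,X)^{*}$.

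The main obstacle, and the only place where care is genuinely needed, is the bookkeeping of shifts and the repeated use of the natural isomorphism $\bS\circ[-2]\cong\tau\circ[-1]$ together with heredity to see that only the $n=0$ and $n=-1$ terms in the infinite direct sum survive; once that is pinned down everything else is a formal manipulation with the Serre duality isomorphism and Lemma~\ref{lemma:AR-formula}. I would present the argument for the $\Hom$-formula in full (it is short), then deduce the $\Ext^1$-formula by substituting $Y[1]$ and symmetrizing via the Auslander-Reiten formula, noting that both $\repc\LL$ being hereditary with Serre duality (Proposition preceding Proposition~\ref{proposition:ClassifyingRepcLL}) and the absence of projectives/injectives (Proposition~\ref{proposition:ClassifyingRepcLL}) are exactly what make these reductions valid. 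In fact this is a special case of the general orbit-category computation and I would remark that it is well known, citing \S\ref{subsection:ClusterConstruction}, so the write-up can be kept brief.
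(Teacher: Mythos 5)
Your overall strategy is exactly the paper's: the paper gives no written proof beyond the remark that the proposition ``holds since $\repc \LL$ has no nonzero projective or injective objects (see Proposition~\ref{proposition:Cluster without proj-inj})'', and your computation via the orbit-category sum, heredity, and the Auslander--Reiten formula is the intended argument. The object identification and the $\Hom$-formula (only $n=0$ and $n=-1$ survive, giving $\Hom_{\repc\LL}(X,Y)\oplus\Ext^1_{\repc\LL}(X,\tau^{-}Y)$) are correct as you wrote them.

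The $\Ext^1$ derivation, however, contains a slip. You cannot substitute $Y[1]$ into the already-derived $\Hom$-formula as if $Y[1]$ were an object of $\repc\LL$: redoing the orbit sum, $\Hom_{\CC_\LL}(X,Y[1])=\bigoplus_{n}\Hom_{\Db\repc\LL}(X,\tau^{n}Y[1-n])$, and heredity forces $1-n\in\{0,1\}$, so the surviving indices are $n=0$ and $n=+1$, not $n=0$ and $n=-1$. The $n=0$ term is $\Ext^1_{\repc\LL}(X,Y)$ as you say, but the second summand is $\Hom_{\repc\LL}(X,\tau Y)$ coming from $n=1$; the term $\Ext^1_{\repc\LL}(X,\tau^{-1}Y[1])=\Hom_{\Db\repc\LL}(X,\tau^{-1}Y[2])$ produced by your substitution actually vanishes by heredity. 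Likewise the identification $\Ext^1_{\repc\LL}(X,\tau^{-}Y)\cong\Ext^1_{\repc\LL}(Y,X)^{*}$ you first propose is false in general: Lemma~\ref{lemma:AR-formula} gives $\Hom(\tau^{-1}Y,\tau X)^{*}\cong\Hom(Y,\tau^{2}X)^{*}$, not $\Hom(Y,X)^{*}$. You do land on the correct identity at the end, namely $\Hom_{\repc\LL}(X,\tau Y)\cong\Ext^1_{\repc\LL}(Y,X)^{*}$, and combined with the corrected index bookkeeping this is exactly what finishes the proof; but as written the middle of the $\Ext^1$ paragraph does not parse as a proof and should be replaced by the direct computation of the surviving terms for $Y[1]$.
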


As an easy consequence, one obtains that Hom and Ext spaces between indecomposables are at most one-dimensional.

\begin{proposition}\label{proposition:DimensionsInCCLL}
For indecomposable $X,Y \in \ind \CC_\LL$, we have
\begin{enumerate}
\item $\Hom_{\CC_\LL}(X,X) \cong k$ and $\Ext^1_{\CC_\LL}(X,X) = 0$, and
\item $\dim \Hom_{\CC_\LL}(X,Y) \leq 1.$
\end{enumerate}
\end{proposition}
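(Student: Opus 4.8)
The plan is to deduce both statements from the explicit description of morphism and extension spaces in $\CC_\LL$ provided by Proposition~\ref{proposition:DescriptionCCLL}, together with the known facts about $\repc\LL$. For part (1), I would first observe that $\Ext^1_{\repc\LL}(X,X)$ vanishes for every indecomposable $X \in \repc\LL$: since $\repc\LL$ is hereditary with Serre duality, the Auslander--Reiten formula (Lemma~\ref{lemma:AR-formula}) gives $\Ext^1_{\repc\LL}(X,X) \cong \Hom_{\repc\LL}(X,\tau X)^*$, and by Propositions~\ref{proposition:ClassifyingRepcLL} and~\ref{proposition:TauIsShifting} we have $X \cong X_{a,b}$ and $\tau X \cong X_{a-1,b-1}$, so a nonzero map $X_{a,b} \to X_{a-1,b-1}$ would, via the classification of maps between the $X_{c,d}$ (monomorphisms and epimorphisms coming from the poset structure), be impossible because $a-1 < a$ and $b-1 < b$; one checks directly from the projective/injective resolutions in Remark~\ref{remark:Resolutions} that $\Hom(X_{a,b},X_{a-1,b-1}) = 0$. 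Plugging this into the formula $\Ext^1_{\CC_\LL}(X,X) \cong \Ext^1_{\repc\LL}(X,X) \oplus \Ext^1_{\repc\LL}(X,X)^*$ yields $\Ext^1_{\CC_\LL}(X,X) = 0$. For $\Hom_{\CC_\LL}(X,X) \cong k$, I would use that $X$ is indecomposable in the Krull--Schmidt category $\CC_\LL$, so $\End_{\CC_\LL}(X)$ is local; combined with $\Hom_{\CC_\LL}(X,X) = \Hom_{\repc\LL}(X,X) \oplus \Ext^1_{\repc\LL}(X,\tau^- X)$, where $\Hom_{\repc\LL}(X,X) \cong k$ (as $X$ is indecomposable in the hereditary category $\repc\LL$, which is Hom-finite with $\Hom$-spaces between the $X_{a,b}$ at most one-dimensional) and $\Ext^1_{\repc\LL}(X,\tau^- X) \cong \Hom_{\repc\LL}(\tau^- X, \tau X)^* = \Hom_{\repc\LL}(X_{a+1,b+1},X_{a-1,b-1})^* = 0$ by the same monotonicity argument, we conclude $\Hom_{\CC_\LL}(X,X) \cong k$.

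For part (2), the formula $\Hom_{\CC_\LL}(X,Y) \cong \Hom_{\repc\LL}(X,Y) \oplus \Ext^1_{\repc\LL}(X,\tau^- Y)$ reduces the claim to showing that each of the two summands is at most one-dimensional, \emph{and} that they cannot both be nonzero. The first reduction is the core computation: writing $X \cong X_{a,b}$ and $Y \cong X_{c,d}$, I would apply $\Hom_{\repc\LL}(-,X_{c,d})$ to the projective resolution $0 \to k\LL(-,a-1) \to k\LL(-,b) \to X_{a,b} \to 0$ of Remark~\ref{remark:Resolutions} and use the Yoneda lemma: $\Hom(k\LL(-,b),X_{c,d}) = X_{c,d}(b)$ is at most one-dimensional (the functors $X_{c,d}$ take values that are $0$ or $k$, being images of $k\LL(-,d) \to k\LL(c,-)^*$). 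Hence $\Hom_{\repc\LL}(X_{a,b},X_{c,d})$, being a subquotient of $X_{c,d}(b)$, is at most one-dimensional. Dually, $\Ext^1_{\repc\LL}(X_{a,b},X_{c,d})$ is computed as the cokernel of $X_{c,d}(b) \to X_{c,d}(a-1)$, again at most one-dimensional, so $\Ext^1_{\repc\LL}(X,\tau^- Y)$ is at most one-dimensional.

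The remaining point — that $\Hom_{\repc\LL}(X,Y)$ and $\Ext^1_{\repc\LL}(X,\tau^- Y)$ are not simultaneously nonzero — is where I expect the main subtlety to lie, and I would handle it via the interval combinatorics. Translating via the Auslander--Reiten formula, $\Ext^1_{\repc\LL}(X,\tau^- Y) \cong \Hom_{\repc\LL}(Y, X)^*$ (using $\tau X \cong \tau^-$-adjustment carefully: $\Ext^1(X, \tau^- Y) \cong \Hom(\tau^- Y, \tau X)^* \cong \Hom(Y, \tau^2 X /\ldots)$ — I would instead use $\Ext^1_{\repc\LL}(X,Z)\cong\Hom_{\repc\LL}(Z,\tau X)^*$ with $Z = \tau^- Y$, giving $\Hom_{\repc\LL}(\tau^- Y, \tau X)^* \cong \Hom_{\repc\LL}(Y,\tau^2 X)^*$, hmm — better: directly $\Ext^1_{\repc\LL}(X,\tau^-Y)\cong\Hom_{\repc\LL}(\tau^-Y,\tau X)^*$, and since $\tau$ is an autoequivalence this is $\cong\Hom_{\repc\LL}(Y, \tau^2 X)^*$). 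So the claim becomes: for indecomposables $X,Y$, one cannot have both $\Hom(X,Y)\neq 0$ and $\Hom(Y,\tau^2 X)\neq 0$. I would prove this by a direct case analysis on the relative positions of the intervals $[a,b]$, $[c,d]$ associated to $X_{a,b}$, $Y_{c,d}$: a nonzero map $X_{a,b}\to X_{c,d}$ forces $c\le a$ and $c\le b\le d$ (roughly, $[a,b]$ and $[c,d]$ overlap with $c\le a$), while a nonzero map $X_{c,d}\to X_{a-2,b-2}$ (coming from $\tau^2 X$) forces the analogous inequalities shifted by two, and these two sets of inequalities are incompatible. Once both summands are established to be at most one-dimensional and mutually exclusive, $\dim\Hom_{\CC_\LL}(X,Y)\le 1$ follows. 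I would streamline the write-up by recording the precise $\Hom$ and $\Ext^1$ vanishing criteria for the $X_{a,b}$ in terms of the posets $\LL$ as a preliminary observation (essentially a restatement of well-known facts from \cite{Ringel02}, \cite{vanRoosmalen06}), and then both parts become short.
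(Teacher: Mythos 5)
Your proposal is correct and follows essentially the same route as the paper: everything reduces, via Proposition~\ref{proposition:DescriptionCCLL}, to the explicit (non)vanishing criteria for $\Hom$ and $\Ext^1$ between the interval modules $X_{a,b}$, which the paper simply records ($\Hom_{\repc\LL}(X_{a,b},X_{c,d})\neq 0$ iff $a\le c\le b\le d$, and dually for $\Ext^1$) and which you derive from the resolutions of Remark~\ref{remark:Resolutions}, making the mutual-exclusivity of the two summands explicit where the paper leaves it implicit. The only quibble is that your stated $\Hom$-criterion has one inequality reversed ($c\le a$ should be $a\le c$); this does not affect the argument, since under either convention the $\tau^2$-shift renders the two systems of inequalities incompatible.
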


\begin{proof}
This follows from Proposition \ref{proposition:DescriptionCCLL} by a direct computation. Indeed, $\Hom_{\repc \LL}(X_{a,b},X_{c,d})$ is one-dimensional if $a \le c \le b \le d$ in $\LL$ and vanishes otherwise. Similarly, since $\Hom_{\repc \LL}(X,Y) \cong \Ext^1_{\repc \LL}(Y,\tau X)^*$ (see~\S\ref{subsection:Serre and CY}), the dimension of $\Ext^1_{\repc \LL}(X_{a,b},X_{c,d})$ is $1$ if $c+1 \le a \le d+1 \le b$ and vanishes otherwise.
\end{proof}

Similar to the case of $A_n$, the cluster category $\CC_\LL$ can be described using arcs in $(\LL, \leq)_{\rm cyc}$.  
For the next proposition, recall the rotation map $\rho\colon \arc(C) \to \arc(C)$ from Definition~\ref{definition:RotationMap}.

\begin{proposition}\label{proposition:ObjectsArcs}
Let $\LL$ be an unbounded linearly ordered locally discrete set.  The map
\begin{eqnarray*}
\Phi: \arc(\LL) &\to& \ind \CC_\LL \\
ab &\mapsto& \mbox{$X_{a+1,b-1}$ ($a \leq b$ in $\LL$)}
\end{eqnarray*}
is a bijection.  Furthermore,
\begin{enumerate}
\item\label{enumerate:CrossingArcsMeansExt} the arcs $ab$ and $cd$ intersect if and only if
$$\Ext^1_{\CC_\LL}(\Phi(ab), \Phi(cd)) \cong \Ext^1_{\CC_\LL}(\Phi(cd), \Phi(ab))^* \not= 0,$$
\item\label{enumerate:TauMeansRho} $\tau \Phi(ab) \cong \Phi(\rho(ab))$.
\end{enumerate}
\end{proposition}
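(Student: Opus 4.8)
The statement has three parts: that $\Phi$ is a bijection, part (\ref{enumerate:CrossingArcsMeansExt}) relating crossings to nonvanishing of $\Ext^1$, and part (\ref{enumerate:TauMeansRho}) identifying $\tau$ with $\rho$. The plan is to read everything off the explicit description of $\CC_\LL$ in Proposition~\ref{proposition:DescriptionCCLL} together with the classification of indecomposables $X_{a,b}$ (with $a \leq b$ in $\LL$) from Proposition~\ref{proposition:ClassifyingRepcLL}. First I would check that $\Phi$ is well-defined and bijective: an arc $\{a,b\}$ in $(\LL,\leq)_{\rm cyc}$, after choosing the cut so that $a \leq b$ in $\LL$, is by definition a pair that is not an edge and not a singleton, i.e.\ $a+1 \leq b-1$ in $\LL$; conversely every pair $c \leq d$ in $\LL$ arises as $c = a+1$, $d = b-1$ from the arc $\{a,b\} = \{c-1, d+1\}$, which is genuinely an arc since $c-1$ and $d+1$ are not neighbours. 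Since $\ind\CC_\LL$ is exactly $\{X_{c,d} \mid c \leq d\}$ by Proposition~\ref{proposition:ClassifyingRepcLL}, this gives a bijection. One subtlety to address here is that an arc $\{a,b\}$ has two cuts available (either $a<b$ or $b<a$ in a chosen cut) and one must check $\Phi$ does not depend on that choice — but this is exactly the content of the rotational equivalence of cuts (Proposition~\ref{proposition:DerivedEqPosets}, or more concretely the fact that cyclically $\{a+1,b-1\}$ as an unordered pair of ``inner endpoints'' is well-defined), so I would note this and move on.

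For part (\ref{enumerate:CrossingArcsMeansExt}), I would combine two computations. On the category side, Proposition~\ref{proposition:DimensionsInCCLL} (and its proof) already records that $\Ext^1_{\repc\LL}(X_{a,b}, X_{c,d})$ is one-dimensional precisely when $c+1 \leq a \leq d+1 \leq b$ in $\LL$, and that $\Hom_{\repc\LL}(X_{a,b},X_{c,d})$ is one-dimensional precisely when $a \leq c \leq b \leq d$. Feeding these into the formula $\Ext^1_{\CC_\LL}(X,Y) \cong \Ext^1_{\repc\LL}(X,Y) \oplus \Ext^1_{\repc\LL}(Y,X)^*$ from Proposition~\ref{proposition:DescriptionCCLL} shows that, writing $X = \Phi(ab) = X_{a+1,b-1}$ and $Y = \Phi(cd) = X_{c+1,d-1}$, the group $\Ext^1_{\CC_\LL}(X,Y)$ is nonzero iff one of the two inequality chains holds for the pairs $(a+1,b-1)$, $(c+1,d-1)$. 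On the combinatorial side, Lemma~\ref{lemma:Noncrossing} says $ab$ and $cd$ cross iff ($c \in (a,b)$ and $d \in (b,a)$) or ($d \in (a,b)$ and $c \in (b,a)$); unwinding $(a,b)$, $(b,a)$ in the chosen cut where $a \leq b$, this translates into exactly the same pair of inequality chains on $a,b,c,d$ shifted by one. So the bulk of this part is a bookkeeping check that the two translations match; I would present it as a short case analysis (cross via $\langle a,c,b,d\rangle$ versus via $\langle a,d,b,c\rangle$) and verify each case gives a nonvanishing $\Ext^1$ in $\CC_\LL$ and conversely. The built-in symmetry $\Ext^1_{\CC_\LL}(X,Y) \cong \Ext^1_{\CC_\LL}(Y,X)^*$ is immediate from Proposition~\ref{proposition:DescriptionCCLL}, so ``intersect'' is symmetric on both sides and no orientation issue arises.

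For part (\ref{enumerate:TauMeansRho}), I would use Proposition~\ref{proposition:TauIsShifting}, which gives $\tau X_{c,d} \cong X_{c-1,d-1}$ in $\repc\LL$, together with the fact (recorded in \S\ref{subsection:ClusterConstruction}, via $\bS\circ[-2] \cong \tau\circ[1]$ and Proposition~\ref{proposition:Cluster without proj-inj}) that $\tau$ on $\CC_\LL$ is induced by $\tau$ on $\repc\LL$ since $\repc\LL$ has no nonzero projectives or injectives. Then $\tau\Phi(ab) = \tau X_{a+1,b-1} \cong X_{a,b-2} = X_{(a-1)+1,\,(b-1)-1} = \Phi(\{a-1,b-1\}) = \Phi(\rho(ab))$, using Definition~\ref{definition:RotationMap}. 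This is essentially a one-line calculation once the preceding facts are in place.

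The main obstacle is the bookkeeping in part (\ref{enumerate:CrossingArcsMeansExt}): one must be careful about the off-by-one shifts between arc endpoints $a,b$ and module indices $a+1,b-1$, about which of the two ``interleavings'' of a 4-cycle $\langle a,c,b,d\rangle$ corresponds to which $\Ext$ summand, and about the edge cases where some of $a,b,c,d$ coincide or are neighbours (so that one of the candidate arcs degenerates to an edge, forcing the corresponding $\Ext$ to vanish on the representation side as well). I would handle this by fixing once and for all a cut of $(\LL,\leq)_{\rm cyc}$ in which $a < b$, rewriting both the crossing condition (Lemma~\ref{lemma:Noncrossing}) and the $\Ext$-nonvanishing condition (proof of Proposition~\ref{proposition:DimensionsInCCLL}) as explicit inequalities in that linear order, and observing they coincide; the symmetry in $cd \leftrightarrow ab$ then lets one avoid separately treating the case $c<a$.
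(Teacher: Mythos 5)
Your proposal is correct and follows essentially the same route as the paper: the bijection is the composite of the bijections from Propositions~\ref{proposition:ClassifyingRepcLL} and~\ref{proposition:DescriptionCCLL}, part~(\ref{enumerate:CrossingArcsMeansExt}) is the computation recorded in the proof of Proposition~\ref{proposition:DimensionsInCCLL} fed into the $\Ext^1$-formula of Proposition~\ref{proposition:DescriptionCCLL}, and part~(\ref{enumerate:TauMeansRho}) is Proposition~\ref{proposition:TauIsShifting}. The only remark is that the worry about the choice of cut is moot here, since $\LL$ carries a fixed linear order and ``$a\leq b$ in $\LL$'' already normalizes the unordered pair.
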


\begin{proof}
The map $\Phi$ is the composition of the bijections

\begin{center}
\begin{tabular}{ccccc}
$\arc \LL$ & $\to$ & $\ind (\repc \LL)$ & $\to$ & $\ind \CC_\LL$ \\
$ab$ &$\mapsto$ & $X_{a+1,b-1}$ & $\mapsto$ & $X_{a+1,b-1}$
\end{tabular}
\end{center}

\noindent
of Propositions \ref{proposition:ClassifyingRepcLL} and \ref{proposition:DescriptionCCLL}. Furthermore, (\ref{enumerate:CrossingArcsMeansExt}) can be verified directly by the same computation as in the proof of Proposition~\ref{proposition:DimensionsInCCLL}, while (\ref{enumerate:TauMeansRho}) follows from Proposition \ref{proposition:TauIsShifting}.
\end{proof}

\begin{corollary}\label{corollary:MorphismsArcs}
Let $ab, ac \in \arc(\LL)$ be two different arcs.  The following are equivalent:
\begin{enumerate}
\item[(a)] $ab$ crosses $\rho(cd)$, and
\item[(b)] $\Hom_{\CC_\LL}(\Phi(cd), \Phi(ab)) \not= 0$.
\end{enumerate}
If $ab$ and $cd$ do not cross, then the following are equivalent:
\begin{enumerate}
\item\label{enumerate:AdjacentArcs} $ab$ and $cd$ are equal or adjacent (i.e., $\{a,b\} \cap \{c,d\} \not= \emptyset$), and
\item\label{enumerate:HomInADirection} $\Hom_{\CC_\LL}(\Phi(ab), \Phi(cd)) \not= 0$ or $\Hom_{\CC_\LL}(\Phi(cd), \Phi(ab)) \not= 0$.
\end{enumerate}
\end{corollary}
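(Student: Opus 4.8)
The plan is to translate everything to the category $\repc\LL$ via Proposition~\ref{proposition:DescriptionCCLL} and then do a direct computation with the intervals $X_{a,b}$, exactly as in the proof of Proposition~\ref{proposition:DimensionsInCCLL}. Write $\Phi(ab)=X_{a+1,b-1}$ and $\Phi(cd)=X_{c+1,d-1}$. Recall the two basic facts, valid in $\LL$: $\Hom_{\repc\LL}(X_{p,q},X_{r,s})\ne0$ iff $p\le r\le q\le s$, and $\Ext^1_{\repc\LL}(X_{p,q},X_{r,s})\ne0$ iff $r+1\le p\le s+1\le q$; moreover by Proposition~\ref{proposition:DescriptionCCLL}, $\Hom_{\CC_\LL}(X,Y)\cong\Hom_{\repc\LL}(X,Y)\oplus\Ext^1_{\repc\LL}(X,\tau^-Y)$, and by Proposition~\ref{proposition:TauIsShifting} $\tau^- X_{r,s}\cong X_{r+1,s+1}$. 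I will also use Proposition~\ref{proposition:ObjectsArcs}(\ref{enumerate:TauMeansRho}), $\tau\Phi(ab)=\Phi(\rho(ab))$, and Lemma~\ref{lemma:Noncrossing} for the combinatorial characterisation of crossing in terms of closed intervals $[a,b],[b,a]$.

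For the first equivalence, the computation is: $\Hom_{\CC_\LL}(\Phi(cd),\Phi(ab))\ne0$ iff $\Hom_{\repc\LL}(X_{c+1,d-1},X_{a+1,b-1})\ne0$ or $\Ext^1_{\repc\LL}(X_{c+1,d-1},\tau^- X_{a+1,b-1})\ne0$. The first disjunct means $c+1\le a+1\le d-1\le b-1$, i.e. $c\le a\le d\le b$ (with the necessary strictness so that these are honest arcs), which by Lemma~\ref{lemma:Noncrossing} is precisely the statement that $ab$ crosses $\{c-1,d-1\}=\rho(cd)$ in one of the two possible configurations; the second disjunct is $\Ext^1_{\repc\LL}(X_{c+1,d-1},X_{a+2,b})\ne0$, i.e. $a+2\le c+1\le b+1\le d-1$, i.e. $a+1\le c\le b\le d-2$, which is the other crossing configuration for $ab$ and $\rho(cd)$. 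Taking the union of the two configurations gives exactly "$ab$ crosses $\rho(cd)$". (One should be slightly careful about the boundary cases where $c$ or $d$ is a neighbour of an endpoint of $ab$; these are exactly the cases excluded by "$ab$ crosses $\rho(cd)$" since a crossing forces the four endpoints to be distinct and non-adjacent, which is why $\rho(cd)\in\arc(C)$ automatically — and if $\rho(cd)$ happens to be an edge rather than an arc, both sides are visibly false.)

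For the second equivalence, assume $ab$ and $cd$ do not cross. By Proposition~\ref{proposition:ObjectsArcs}(\ref{enumerate:CrossingArcsMeansExt}) this means $\Ext^1_{\CC_\LL}(\Phi(ab),\Phi(cd))=0=\Ext^1_{\CC_\LL}(\Phi(cd),\Phi(ab))$, so under Proposition~\ref{proposition:DescriptionCCLL} the spaces $\Hom_{\repc\LL}(\Phi(ab),\tau\Phi(cd))$ and $\Hom_{\repc\LL}(\Phi(cd),\tau\Phi(ab))$ both vanish (these are the $\Ext^1$ summands of $\Ext^1_{\CC_\LL}$ read off via Serre duality), hence $\Hom_{\CC_\LL}(\Phi(ab),\Phi(cd))=\Hom_{\repc\LL}(\Phi(ab),\Phi(cd))$ and similarly with $ab,cd$ swapped. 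So it suffices to show: for noncrossing $ab,cd$, one has $\Hom_{\repc\LL}(X_{a+1,b-1},X_{c+1,d-1})\ne0$ or $\Hom_{\repc\LL}(X_{c+1,d-1},X_{a+1,b-1})\ne0$ iff $\{a,b\}\cap\{c,d\}\ne\emptyset$. Using Lemma~\ref{lemma:Noncrossing}, noncrossing means (WLOG) $c,d\in[a,b]$; write the cut so that $a<c,d<b$ or one of $c,d$ equals $a$ or $b$. A short case analysis on which of the relations $a\le c\le b\le d$, $c\le a\le d\le b$, etc. can hold under the noncrossing hypothesis shows that a nonzero Hom forces exactly one shared endpoint, and conversely a shared endpoint (say $a=c$, $b\ne d$, noncrossing) gives $a+1\le?$ — here one gets $X_{a+1,b-1}$ and $X_{a+1,d-1}$ nested, hence a nonzero map one way or the other depending on the order of $b,d$. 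I expect this final case check to be the only mildly fiddly part: keeping track of strict versus non-strict inequalities and the $\pm1$ shifts so that "$c+1\le a+1\le d-1\le b-1$ together with noncrossing" really does force $c=a$ or $d=b$, and no interior solution survives. Everything else is bookkeeping with the two dimension formulas.
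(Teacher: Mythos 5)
Your overall strategy --- unpacking $\Hom_{\CC_\LL}$ via Proposition~\ref{proposition:DescriptionCCLL} and computing with the two dimension formulas from the proof of Proposition~\ref{proposition:DimensionsInCCLL} --- is the right one, and it is essentially how the paper treats this corollary (no separate argument is given beyond those computations).  The first equivalence is correct in structure, up to an off-by-one: with the convention $\Ext^1_{\repc\LL}(X_{p,q},X_{r,s})\ne0$ iff $r+1\le p\le s+1\le q$, the second summand $\Ext^1_{\repc\LL}(X_{c+1,d-1},X_{a+2,b})$ is nonzero iff $a+3\le c+1\le b+1\le d-1$, i.e.\ $a+2\le c\le b\le d-2$, not $a+1\le c$; your version would wrongly include the case $c=a+1$, where $\rho(cd)$ is incident to $a$ rather than crossing $ab$.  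With the corrected inequalities the two summands do match the two crossing configurations of $ab$ with $\rho(cd)$ exactly.

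The second half contains a genuine error.  From $\Ext^1_{\CC_\LL}(\Phi(ab),\Phi(cd))=0$ you correctly deduce $\Hom_{\repc\LL}(\Phi(ab),\tau\Phi(cd))=0=\Hom_{\repc\LL}(\Phi(cd),\tau\Phi(ab))$, but the extra summand of $\Hom_{\CC_\LL}(\Phi(cd),\Phi(ab))$ is $\Ext^1_{\repc\LL}(\Phi(cd),\tau^{-}\Phi(ab))\cong\Hom_{\repc\LL}(\Phi(ab),\tau^2\Phi(cd))^*$, which is governed by $\tau^2$, not $\tau$, and so does not vanish for noncrossing arcs.  Concretely, take $\LL=\bZ$, $ab=\{0,3\}$ and $cd=\{3,6\}$: these are noncrossing and adjacent, both $\Hom_{\repc\LL}(X_{1,2},X_{4,5})$ and $\Hom_{\repc\LL}(X_{4,5},X_{1,2})$ vanish, yet $\Hom_{\CC_\LL}(\Phi(cd),\Phi(ab))\supseteq\Ext^1_{\repc\LL}(X_{4,5},X_{2,3})\ne0$.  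Hence both your intermediate claim (``noncrossing implies $\Hom_{\CC_\LL}=\Hom_{\repc\LL}$'') and the reduced statement you propose to verify are false, and the final case check would ``refute'' the corollary precisely for arcs glued end to end at a shared vertex.  The repair is to keep the Ext summand in play, most cleanly by deducing the second equivalence from the first: $\Hom_{\CC_\LL}$ is nonzero in some direction iff $ab$ crosses $\rho(cd)$ or $cd$ crosses $\rho(ab)$, and one then checks combinatorially that for noncrossing $ab$, $cd$ this happens iff $\{a,b\}\cap\{c,d\}\ne\emptyset$.
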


We will also consider a map $\Psi$ which maps a set $S \subseteq \arc(\LL)$ to the additive closure of $\Phi(S)$ in~$\CC_\LL$.

\begin{proposition}\label{proposition:PsiBijection}
The map $\Psi$ is a bijection between subsets of $\arc(\LL)$ and full replete (= closed under isomorphisms) Karoubi subcategories of $\CC_\LL$.
\end{proposition}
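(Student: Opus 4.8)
The plan is to deduce this directly from Proposition~\ref{proposition:ObjectsArcs} together with the fact that $\CC_\LL$ is a Krull--Schmidt category. First I would record that $\CC_\LL = (\Db \repc \LL)/(\bS\circ[-2])$ is indeed Krull--Schmidt: it is Hom-finite by Proposition~\ref{proposition:DescriptionCCLL} (or by the discussion in~\S\ref{subsection:ClusterConstruction}), and idempotents split in it since it is a triangulated orbit category of the type considered in~\S\ref{subsection:ClusterConstruction} (alternatively one argues as in \cite[Proposition 1.2]{BuanMarshReinekeReitenTodorov06}). Consequently every object of $\CC_\LL$ is a finite direct sum of indecomposables, uniquely up to permutation and isomorphism.

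The second and main step is the following purely categorical observation, which I would state and prove for an arbitrary Krull--Schmidt category $\CC$: the assignment $\DD \mapsto \Ob\DD \cap \ind\CC$, sending a full replete Karoubi subcategory (i.e.\ a full subcategory closed under isomorphism, finite direct sums and direct summands) to the set of isomorphism classes of indecomposables it contains, is a bijection onto the set of subsets of $\ind\CC$, with inverse $T \mapsto \add(T)$. For the verification: given such a $\DD$ and $T = \Ob\DD\cap\ind\CC$, the inclusion $\add(T)\subseteq\DD$ is clear, and conversely any $D\in\DD$ is, by Krull--Schmidt, a finite direct sum of indecomposable summands of $D$, each of which lies in $\DD$ (closure under summands) and hence in $T$, so $D\in\add(T)$; thus $\DD=\add(T)$. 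In the other direction $\add(T)$ is manifestly full, replete and closed under finite direct sums, and it is closed under summands because, by uniqueness of the Krull--Schmidt decomposition, every indecomposable summand of an object of $\add(T)$ is isomorphic to an object of $T$; moreover $\Ob\add(T)\cap\ind\CC = T$. Hence the two assignments are mutually inverse bijections.

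The third step is to assemble these. By Proposition~\ref{proposition:ObjectsArcs}, $\Phi\colon \arc(\LL)\to\ind\CC_\LL$ is a bijection, so $S\mapsto\Phi(S)$ is a bijection between subsets of $\arc(\LL)$ and subsets of $\ind\CC_\LL$. Since $\Psi(S)=\add(\Phi(S))$ by definition, $\Psi$ is the composite of this bijection with the bijection $T\mapsto\add(T)$ from the second step (applied to $\CC=\CC_\LL$), and is therefore itself a bijection onto the full replete Karoubi subcategories of $\CC_\LL$; its inverse sends $\DD$ to $\Phi^{-1}(\Ob\DD\cap\ind\CC_\LL)$.

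There is no real obstacle in this argument: everything reduces to the Krull--Schmidt theorem and the already-established bijection $\Phi$. The only point deserving care is the claim in the second step that a full replete Karoubi subcategory is recovered as the additive closure of the indecomposables it contains, which is precisely where the Krull--Schmidt hypothesis on $\CC_\LL$ enters; I would make sure to spell out that both closure under summands and the uniqueness part of the decomposition are used there.
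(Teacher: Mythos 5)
Your argument is correct and coincides with what the paper intends (its own proof is just ``This is obvious''): the content is exactly the standard Krull--Schmidt correspondence between subsets of $\ind \CC_\LL$ and full replete additive subcategories closed under summands, composed with the bijection $\Phi$ of Proposition~\ref{proposition:ObjectsArcs}. Your write-up simply makes explicit where the Krull--Schmidt property of $\CC_\LL$ is used, which is a reasonable elaboration of the same approach.
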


\begin{proof}
This is obvious.
\end{proof}

\subsection{\texorpdfstring{Maximal rigid and cluster-tilting subcategories of $\CC_\LL$}{Maximal rigid and cluster-tilting subcategories}}

In Proposition \ref{proposition:ObjectsArcs}, we have established a bijection between sets of arcs on the cyclically ordered set $(\LL, \leq)_{\rm cyc}$ and the indecomposable objects in the cluster category $\CC_\LL$.  Our goal is to describe those sets of arcs $S \subseteq \arc(\LL)$ for which $\Psi(S) \subseteq \CC_\LL$ is a cluster-tilting subcategory (see Corollary \ref{corollary:WhenClusterTilting} below).  We will do this by comparing the properties on $S$ discussed in Section \ref{section:Triangulations} (namely noncrossing, maximal, connected, triangulation, and whether $S$ allows flips) to properties of $\Psi(S)$.  This will be done in Proposition \ref{proposition:ClusterTiltingSomeProperties}.

We will use the following definition.

\begin{definition}\label{definition:AllowMutation}
Let $\CC$ be a Hom-finite Krull-Schmidt triangulated category, and let $\TT \subseteq \CC$ be a maximal rigid subcategory.  Let $a \in \ind \TT$.  We say that \emph{$\TT$ \emph{allows mutation} at $a \in \ind \TT$} if
\begin{enumerate}
\item\label{enumerate:AllowMutation1} there is a unique rigid indecomposable $a^* \in \ind \CC$, nonisomorphic to $a$, such that $\add ((\ind \TT \setminus \{a\}) \cup \{a^*\})$ is a maximal rigid subcategory of $\CC$, and
\item\label{enumerate:AllowMutation2} there are triangles
$$a \stackrel{f}{\rightarrow} B \stackrel{g}{\rightarrow} a^* \rightarrow a[1] \mbox{ and } a^* \stackrel{s}{\rightarrow} B' \stackrel{t}{\rightarrow} a \rightarrow a^*[1]$$
where $f$ and $s$ are minimal left $\add(\ind \TT \setminus \{a\})$-approximations, and $g$ and $t$ are minimal right $\add(\ind \TT \setminus \{a\})$-approximations.
\end{enumerate}
\end{definition}

\begin{proposition}\label{proposition:AllowMutation}
Let $\TT \subseteq \CC_\LL$ be a maximal rigid subcategory.  If $\TT$ satisfies (\ref{enumerate:AllowMutation1}) from Definition \ref{definition:AllowMutation}, then $\TT$ satisfies (\ref{enumerate:AllowMutation2}).
\end{proposition}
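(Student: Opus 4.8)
The statement is that for a maximal rigid subcategory $\TT \subseteq \CC_\LL$, the purely combinatorial exchange condition~(\ref{enumerate:AllowMutation1}) automatically produces the exchange triangles~(\ref{enumerate:AllowMutation2}). The plan is to translate everything into the combinatorics of arcs via the bijection $\Psi$ of Proposition~\ref{proposition:PsiBijection} and the dictionary of Proposition~\ref{proposition:ObjectsArcs} and Corollary~\ref{corollary:MorphismsArcs}. First I would fix $a \in \ind\TT$ satisfying~(\ref{enumerate:AllowMutation1}), so that there is a unique rigid indecomposable $a^*$ with $\TT' = \add((\ind\TT \setminus \{a\}) \cup \{a^*\})$ maximal rigid; write $ab := \Psi^{-1}$-preimage of the arc corresponding to $a$ (say the arc $pq$), and let $rs$ be the arc corresponding to $a^*$. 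Since $\TT$ and $\TT'$ are both maximal rigid and differ only in this one indecomposable, the sets of arcs $S = \Psi^{-1}(\TT)$ and $S' = \Psi^{-1}(\TT')$ are maximal sets of pairwise noncrossing arcs differing in one arc, so $pq$ is \emph{exchangeable} in the sense of Definition~\ref{definition:Flips} and $rs = \mu_{pq}S$'s new arc, i.e. the flip of $pq$. By Proposition~\ref{proposition:FlipsAndTriangles} there is then a $4$-gon $p\,x\,q\,y$ in $S$ with $\{x,y\}$ the endpoints so that $rs = xy$ and $px, xq, qy, yp$ are each edges or arcs in $S$.

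\textbf{Building the triangles.} The $4$-gon $pxqy$ gives four objects $\Phi(px), \Phi(xq), \Phi(qy), \Phi(yp) \in \ind\TT$ (with the convention that if one of these pairs is an edge of $C$ rather than an arc, the corresponding object is zero). The next step is to exhibit the two exchange triangles explicitly. I expect the candidate for $B$ in $a \to B \to a^* \to a[1]$ to be $\Phi(px) \oplus \Phi(xq)$ — reading off the "anticlockwise pair of sides" of the $4$-gon — and the candidate for $B'$ in $a^* \to B' \to a \to a^*[1]$ to be $\Phi(py) \oplus \Phi(qy)$. To produce the triangle, I would use the description of $\CC_\LL$ from Proposition~\ref{proposition:DescriptionCCLL} together with the projective/injective resolutions of the $X_{a,b}$ from Remark~\ref{remark:Resolutions}: the map $\Phi(pq) \to \Phi(px) \oplus \Phi(xq)$ is the obvious pair of nonzero morphisms between indecomposables of $\repc\LL$ whose existence is guaranteed by Corollary~\ref{corollary:MorphismsArcs}(\ref{enumerate:AdjacentArcs}) (these arcs are pairwise adjacent at the vertices $p$, $q$, $x$), and one checks that its cone is $\Phi(xy) = a^*$ by a direct computation with the $X_{a,b}$'s — the cone of a map $X_{p+1,q-1} \to X_{p+1,x-1} \oplus X_{x+1,q-1}$ in $\CC_\LL$ being $X_{x+1,\ldots}$-type, matching the arc $xy$. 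Dimension counts (Proposition~\ref{proposition:DimensionsInCCLL}, all Hom/Ext spaces at most one-dimensional) make these verifications essentially forced once the nonvanishing of the relevant Homs is known.

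\textbf{Minimality and approximation.} Having the triangles, it remains to check that $a \to B$ is a \emph{minimal left} $\add(\ind\TT\setminus\{a\})$-approximation and $B \to a^*$ a \emph{minimal right} one (and symmetrically for the other triangle). Left-approximation: given $a \to T$ with $T \in \add(\ind\TT\setminus\{a\})$, decompose $T$ into indecomposables and use the arc dictionary — any arc $cd \in S$ with a nonzero map $\Phi(pq) \to \Phi(cd)$ satisfies, by Corollary~\ref{corollary:MorphismsArcs}, that $pq$ crosses $\rho(cd)$ or $pq,cd$ are adjacent; a case analysis on the position of $cd$ relative to the $4$-gon $pxqy$ (which cuts $C$ into the four intervals $[p,x],[x,q],[q,y],[y,p]$, using Proposition~\ref{proposition:CuttingUpS} and Lemma~\ref{lemma:Noncrossing}) shows the map factors through $\Phi(px) \oplus \Phi(xq)$. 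Minimality follows because $\Phi(px), \Phi(xq)$ are non-isomorphic indecomposables (distinct arcs) with no common summand and the structure maps are radical. That $B \to a^*$ is a right approximation is then automatic: applying $\Hom(-,?)$ and the $2$-Calabi-Yau / rigidity of $\TT$ to the triangle, or simply noting that a right $\TT$-approximation triangle with $a^* \notin \TT$ has its other two terms in $\TT$ and comparing with the triangle already constructed.

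\textbf{Main obstacle.} The conceptual content is light once the translation is set up; the real work — and the place where care is needed — is the explicit identification of the cone of $\Phi(pq) \to \Phi(px)\oplus\Phi(xq)$ with $\Phi(xy)$ in the orbit category $\CC_\LL$, since the Hom-spaces there mix $\Hom$ and $\Ext^1$ in $\repc\LL$ (Proposition~\ref{proposition:DescriptionCCLL}), so one must keep track of which component a given morphism lives in and handle separately the degenerate cases where one or two of the four "sides" $px,xq,qy,yp$ are edges of $C$ rather than arcs (so the corresponding $\Phi$-value is $0$ and the $4$-gon degenerates to a triangle or to the two arcs $pq, xy$ themselves). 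I would organize the argument so that the generic case is done once in full and the degenerate cases are dispatched by remarking that they are simpler specializations.
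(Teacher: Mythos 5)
Your overall strategy---translating condition (\ref{enumerate:AllowMutation1}) into exchangeability of the arc $pq$ and reading the exchange triangles off the quadrilateral $pxqy$ supplied by Proposition~\ref{proposition:FlipsAndTriangles}---could in principle be made to work, but the key step is carried out incorrectly. The middle terms of the two exchange triangles are the two pairs of \emph{opposite} sides of the quadrilateral, namely $\Phi(yp)\oplus\Phi(xq)$ and $\Phi(px)\oplus\Phi(qy)$, not the adjacent pairs $\Phi(px)\oplus\Phi(xq)$ and $\Phi(py)\oplus\Phi(qy)$ that you propose. Concretely, writing $\Phi(pq)=X_{p+1,q-1}$ and $\Phi(px)=X_{p+1,x-1}$ and using the Hom/Ext computations underlying Propositions~\ref{proposition:DescriptionCCLL} and~\ref{proposition:DimensionsInCCLL}, one finds $\Hom_{\CC_\LL}(\Phi(pq),\Phi(px))=0$ (the nonzero morphism between these two objects goes in the \emph{other} direction), so the map $\Phi(pq)\to\Phi(px)\oplus\Phi(xq)$ whose cone you want to compute has zero first component, and its cone is not $\Phi(xy)$. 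The correct non-split extension in $\repc\LL$ is $0\to X_{p+1,q-1}\to X_{p+1,y-1}\oplus X_{x+1,q-1}\to X_{x+1,y-1}\to 0$: the middle term is ``union plus intersection'' of the two supports, which is exactly the opposite-sides pair $\Phi(py)\oplus\Phi(xq)$. Note also that Corollary~\ref{corollary:MorphismsArcs} for adjacent noncrossing arcs only asserts a nonzero Hom in \emph{one} of the two directions, so it cannot be invoked to produce the maps you need without determining that direction. A secondary gap: your remark that the right-approximation property of $B\to a^*$ is ``automatic'' via right $\TT$-approximation triangles appeals to a fact stated in \S\ref{subsection:ClusterTilting} only for cluster-tilting subcategories, whereas $\TT$ here is merely maximal rigid.

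For comparison, the paper's proof avoids the arc model entirely. It takes an \emph{arbitrary} non-split triangle $a\to B\to a^*\to a[1]$ (which exists because $\Ext^1(a^*,a)\neq 0$), shows $\Ext^1(U,B)=0$ for all $U\in\TT$ using the one-dimensionality of Hom-spaces (Proposition~\ref{proposition:DimensionsInCCLL}) together with an Auslander--Reiten triangle argument to handle $\Hom(a,B[1])$, concludes $B\in\TT$ from the maximality of $\TT$ (each summand of $B$ is rigid, hence in $\TT$), and then reads off the approximation properties. If you wish to salvage your combinatorial route, you must correct the middle terms to the opposite-side pairs, fix the directions of the morphisms, and carry out the cone identification in $\CC_\LL$ itself (not only in $\repc\LL$), including the degenerate cases where some sides are edges.
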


\begin{proof}
It follows easily from (\ref{enumerate:AllowMutation1}) that $a^* \not \in \TT$ and that $\Ext_{\CC_\LL}^1(a, a^*) \not= 0 \not= \Ext_{\CC_\LL}^1(a^*, a)$.  Let
$$a \stackrel{f}{\rightarrow} B \stackrel{g}{\rightarrow} a^* \to a[1]$$
be any nonsplit triangle.  For any $T \in \add(\ind \TT \setminus \{a\})$, we can use $\Ext^1(T,a) = 0 = \Ext^1(T,a^*)$ to find $\Ext^1(T,B) = 0$.  Applying the functor $\Hom(a[-1],-)$ to this triangle, we find the exact sequence:
$$\Hom(a,a[1]) \to \Hom(a,B[1]) \to \Hom(a,a^*[1]) \to \Hom(a,a[2]).$$
Using the fact that $a[2] \cong \bS a$ and $\dim \Hom(a,a[2]) \le 1$, we see that any nonzero map $w\colon a \to a[2]$ extends to an Auslander-Reiten triangle $a[1] \to E \to a \overset{w}\to a[2]$ and, hence, the composition $a \overset{w}\to a[2] \overset{f[2]}\to B[2]$ vanishes. In particular, $w$ factors through $a^*[1] \to a[2]$. This means that $\Hom(a,a^*[1]) \to \Hom(a,a[2])$ is surjective and, looking at the dimensions given in Proposition~\ref{proposition:DimensionsInCCLL}, we see that it is also bijective and, hence, $\Hom(a,B[1]) = 0$.

So far, we have shown that $\Ext^1(U,B) = 0$ for all $U \in \TT$.
Let $b$ be an indecomposable direct summand of $B$.  It follows from Proposition \ref{proposition:DimensionsInCCLL} that $\Ext^1(b,b) = 0$ and thus from the maximality of $\TT$ that $b \in \TT$.  We conclude that $B \in \TT$.

We can now use $\Ext^1(T, a) = 0$ to show that any morphism $T \to a^*$ factors through $g: B \to a^*$.  This shows that $g$ is a minimal right $\add(\ind \TT \setminus \{a\})$-approximation.  The other statements are proved similarly.

\end{proof}

Now we can relate the combinatorial properties of collections of arcs in $\LL$ to properties of subcategories of $\CC_\LL$. The definitions of locally finite collections and locally bounded categories are given in Definition~\ref{definition:MainTriangulationProperties} and~\S\ref{subsection:Reps of categories}, respectively.

\begin{proposition}\label{proposition:ClusterTiltingSomeProperties}
Let $\LL$ be an unbounded linearly ordered locally discrete set and let $S \subset \arc(\LL)$.  The set $S$ is a set of pairwise noncrossing arcs if and only if $\Phi(S)$ is rigid.  In this case, we also have:
\begin{enumerate}
\item $S$ is connected if and only if $\Psi(S)$ is indecomposable,
\item $S$ is maximal if and only if $\Psi(S)$ is a maximal rigid subcategory of $\CC_\LL$,
\item $S$ is locally finite if and only if $\Psi(S)$ is locally bounded, and
\item if $S$ is maximal, then every arc in $S$ is exchangeable if and only if $\Psi(S)$ allows all mutations.
\end{enumerate}
\end{proposition}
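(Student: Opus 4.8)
The plan is to let the bijection $\Psi$ of Proposition~\ref{proposition:PsiBijection} transport each combinatorial property of $S$ to the matching categorical property of $\Psi(S)$, using the arc/indecomposable dictionary recorded in Propositions~\ref{proposition:ObjectsArcs} and~\ref{proposition:DimensionsInCCLL} and in Corollary~\ref{corollary:MorphismsArcs}. The base statement --- $S$ is pairwise noncrossing iff $\Phi(S)$ is rigid --- is immediate: rigidity of $\Psi(S)=\add\Phi(S)$ is checked on indecomposable summands, $\Ext^1_{\CC_\LL}(\Phi(ab),\Phi(ab))=0$ always by Proposition~\ref{proposition:DimensionsInCCLL}, and for distinct arcs $\Ext^1_{\CC_\LL}(\Phi(ab),\Phi(cd))\neq 0$ iff $ab$ and $cd$ cross by Proposition~\ref{proposition:ObjectsArcs}(\ref{enumerate:CrossingArcsMeansExt}). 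From now on $S$ is pairwise noncrossing, so $\Psi$ restricts to an inclusion-preserving bijection between pairwise noncrossing subsets of $\arc(\LL)$ and rigid full replete Karoubi subcategories of $\CC_\LL$; one also notes that a maximal rigid subcategory of $\CC_\LL$ is automatically replete and closed under direct summands (adjoining isomorphic copies or summands of its objects preserves the $\Ext^1$-vanishing), so the maximal rigid subcategories are among the $\Psi(T)$ with $T$ pairwise noncrossing.

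For (1): by the equivalent formulation of connectedness in the remark following Definition~\ref{definition:MainTriangulationProperties}, $S$ is connected iff the graph on $S$ whose edges join equal or adjacent arcs is connected. By Corollary~\ref{corollary:MorphismsArcs}, for noncrossing arcs this coincides with the graph on $\ind\Psi(S)=\Phi(S)$ with an edge $x$--$y$ whenever $\Hom_{\CC_\LL}(x,y)\neq 0$ or $\Hom_{\CC_\LL}(y,x)\neq 0$; connectedness of the latter is exactly indecomposability of the additive Krull--Schmidt category $\Psi(S)$ by the characterization in Appendix~\ref{section:IndecomposableCTS}. For (2): $\Psi$ is a monotone bijection between pairwise noncrossing sets and rigid replete Karoubi subcategories, so maximal elements correspond --- on one side the maximal pairwise noncrossing sets, on the other (by the previous paragraph) the maximal rigid subcategories.

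For (3): fix an arc $ab\in S$. By Corollary~\ref{corollary:MorphismsArcs} the objects $\Phi(cd)$ ($cd\in S$) that are $\Hom$-related to $\Phi(ab)$ in either direction are $\Phi(ab)$ itself together with the $\Phi(cd)$ for $cd\in S$ adjacent to $ab$, and the set of arcs of $S$ adjacent to $ab$ is the union of those incident with $a$ and those incident with $b$. Hence if $S$ is locally finite then each $\Phi(ab)$ is $\Hom$-related to finitely many objects of $\Psi(S)$, so $\Psi(S)$ is locally bounded; conversely, if $\Psi(S)$ is locally bounded and $a\in C$ lies on some arc $ab\in S$, then every arc of $S$ incident with $a$ is adjacent to $ab$ and so corresponds to one of the finitely many objects $\Hom$-related to $\Phi(ab)$, so $a$ meets only finitely many arcs of $S$ (and if $a$ meets none there is nothing to show).

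For (4): assume $S$ is maximal, so $\Psi(S)$ is maximal rigid by (2) and Definition~\ref{definition:AllowMutation} applies. By Proposition~\ref{proposition:AllowMutation}, $\Psi(S)$ allows mutation at $\Phi(ab)$ iff there is a \emph{unique} rigid indecomposable $x^*\not\cong\Phi(ab)$ with $\add\big((\Phi(S)\setminus\{\Phi(ab)\})\cup\{x^*\}\big)$ maximal rigid. Writing $x^*=\Phi(cd)$ and using the dictionary again, this subcategory is maximal rigid iff $(S\setminus\{ab\})\cup\{cd\}$ is a maximal pairwise noncrossing set, so the condition becomes: there is a unique arc $cd\neq ab$ with $(S\setminus\{ab\})\cup\{cd\}$ maximal pairwise noncrossing. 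By Proposition~\ref{proposition:AtMostOneFlip}, any maximal pairwise noncrossing $T\neq S$ containing $S\setminus\{ab\}$ has exactly this form and is unique when it exists, so the displayed condition is precisely exchangeability of $ab$ in $S$ in the sense of Definition~\ref{definition:Flips}. Quantifying over all arcs yields the equivalence in (4). The step I expect to be the main obstacle is the bookkeeping in (4): one must align the three uniqueness assertions --- Definition~\ref{definition:AllowMutation}(1), Proposition~\ref{proposition:AllowMutation}, and Proposition~\ref{proposition:AtMostOneFlip} --- and rule out a spurious candidate $x^*$ lying in $\Psi(S)$ or outside its Karoubi closure; parts (1)--(3) are then routine translations through the dictionary.
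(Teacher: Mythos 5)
Your proof is correct and follows essentially the same route as the paper: the rigidity/noncrossing dictionary from Proposition~\ref{proposition:ObjectsArcs}, Corollary~\ref{corollary:MorphismsArcs} together with Proposition~\ref{proposition:DecompositionAdditiveCategories} for (1) and the adjacency count for (3), and Propositions~\ref{proposition:AllowMutation} and~\ref{proposition:AtMostOneFlip} for (4). You merely spell out the details of (2) and (4) that the paper dismisses as ``clear'' and ``straightforward,'' including the correct observation that the spurious candidate $x^*\in\Psi(S)$ is excluded because $S\setminus\{ab\}$ cannot be maximal.
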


\begin{proof}
It follows from Proposition \ref{proposition:ObjectsArcs} that $S$ is a set of pairwise noncrossing arcs if and only if $\Psi(S)$ is rigid.  Assume now for the rest of the proof that $S$ is a set of pairwise noncrossing arcs.
\begin{enumerate}
\item This statement follows from Proposition \ref{proposition:DecompositionAdditiveCategories} and Corollary \ref{corollary:MorphismsArcs}.
\item This is clear.
\item Suppose that $S$ is not locally finite, that is, there is $a \in \mathcal{L}$ with infinitely many incident arcs in $S$. For any pair $ab$, $ac$ of such arcs, $\Phi(ab)$ and $\Phi(ac)$ admit a non-zero morphism in one direction by Corollary~\ref{corollary:MorphismsArcs}. Hence, $\Psi(S)$ is not locally bounded.

\begin{sloppypar} % Formatting aid (least evil, hopefully)
Conversely, if $\Psi(S)$ is not locally bounded, there is an arc $ab \in S$ such that $\Hom_{\CC_\LL}(\Phi(ab), \Phi(cd)) \not= 0$ or $\Hom_{\CC_\LL}(\Phi(cd), \Phi(ab)) \not= 0$, for infinitely many arcs $cd \in S$.  Hence, there are infinitely many arcs in $S$ adjacent to $ab$ by Corollary~\ref{corollary:MorphismsArcs}. In particular, one of $a$ and $b$ is incident with infinitely many arcs in $S$ and $S$ is not locally finite.
\end{sloppypar}

\item This is straightforward, based on Proposition \ref{proposition:AllowMutation}. \qedhere
\end{enumerate}
\end{proof}

Before stating the main result of the section, we observe the following.

\begin{lemma}\label{lemma:CompositionInTypeA}
Let $A,C \in \ind \CC_\LL$ such that $\Hom(A,C) \not= 0$.  For any $B \in \ind \CC_\LL$ with $\Hom(A,B) \not= 0 \not= \Hom(B,C)$, the composition map $\Hom(B,C) \otimes_k \Hom(A,B) \to \Hom(A,C)$ is an isomorphism.
\end{lemma}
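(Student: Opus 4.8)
The statement is about compositions of nonzero maps between indecomposables in $\CC_\LL$, and the natural approach is to translate everything into the combinatorics of arcs in $(\LL,\le)_{\rm cyc}$ via the bijection $\Phi$ of Proposition~\ref{proposition:ObjectsArcs}, together with the Hom-description of Proposition~\ref{proposition:DescriptionCCLL} and the dimension count of Proposition~\ref{proposition:DimensionsInCCLL} (all Hom-spaces between indecomposables are at most one-dimensional). Since all the relevant spaces $\Hom(A,B)$, $\Hom(B,C)$, $\Hom(A,C)$ are one-dimensional by hypothesis, the composition map $\Hom(B,C)\otimes_k\Hom(A,B)\to\Hom(A,C)$ is a linear map between one-dimensional spaces, so it is an isomorphism if and only if it is nonzero, i.e.\ if and only if for a nonzero $g\colon B\to C$ and a nonzero $f\colon A\to B$ we have $g\circ f\ne 0$. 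So the whole content is: nonzero composable maps between indecomposables of $\CC_\LL$ compose to something nonzero, under the standing hypothesis that $\Hom(A,C)\ne 0$ as well.

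\textbf{Key steps.} First I would reduce to an explicit description of $\Hom$ in terms of the parameters $a\le b$ of $X_{a,b}$. Writing $A = X_{a_1,b_1}$, $B = X_{a_2,b_2}$, $C = X_{a_3,b_3}$, Proposition~\ref{proposition:DescriptionCCLL} gives $\Hom_{\CC_\LL}(X,Y)\cong\Hom_{\repc\LL}(X,Y)\oplus\Ext^1_{\repc\LL}(X,\tau^{-}Y)$, and from the proof of Proposition~\ref{proposition:DimensionsInCCLL} we know $\Hom_{\repc\LL}(X_{a,b},X_{c,d})\ne0$ iff $a\le c\le b\le d$, while $\Ext^1_{\repc\LL}(X_{a,b},X_{c,d})\ne0$ iff $c+1\le a\le d+1\le b$; combining with $\tau^{-}X_{c,d}\cong X_{c+1,d+1}$ gives the condition for the $\Ext$-summand of $\Hom_{\CC_\LL}$. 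Equivalently, translating through $\Phi$, a nonzero $\Hom_{\CC_\LL}(\Phi(pq),\Phi(rs))$ corresponds to the two arcs $pq$, $rs$ being noncrossing and either equal/adjacent (the ``$\Hom$'' summand) or in the ``$\rho$-crossing'' relation of Corollary~\ref{corollary:MorphismsArcs}. The plan is then to do a finite case analysis on which of the two summands each of the three nonzero Hom-spaces lives in, and in each case exhibit explicit generators $f,g$ and check $g\circ f\ne 0$ directly using the concrete description of the maps (the standard projection/inclusion maps between the $X_{a,b}$ in $\repc\LL$, and the fact that the $\CC_\LL$-structure is an orbit category so composition is componentwise). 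A cleaner alternative, which I would try first, is to use that $\Hom_{\CC_\LL}(A,-)$ restricted to the ``interval'' of indecomposables through which nonzero maps out of $A$ factor behaves like a composition of monomorphisms/epimorphisms, so that the composite of two nonzero maps can only vanish if the relevant parameters ``overshoot'', which is exactly excluded by the hypothesis $\Hom(A,C)\ne0$.

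\textbf{Main obstacle.} The delicate point is the interaction between the two summands in $\Hom_{\CC_\LL}$: a nonzero $f\colon A\to B$ might be a ``genuine'' morphism in $\repc\LL$ while $g\colon B\to C$ lies in the $\Ext^1(-,\tau^{-}-)$ part (or vice versa), and one has to make sure the composite, which a priori could land in either summand of $\Hom_{\CC_\LL}(A,C)$, is actually nonzero there. This is where the hypothesis $\Hom_{\CC_\LL}(A,C)\ne0$ is essential: it tells us in advance which summand is available, and the orbit-category bookkeeping (composition $\Hom(X,\bS^mZ[-2m])\otimes\Hom(X',Y')$ summed over shifts) together with the Serre-duality pairing on $\repc\LL$ then forces the relevant component of $g\circ f$ to be the nonzero pairing of two nonzero elements. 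I expect the bulk of the work to be organizing the (at most four) cases and writing the generators explicitly; conceptually the only real subtlety is this ``which summand'' matching, and once that is pinned down each case is a one-line check of non-vanishing of a pairing between one-dimensional spaces.
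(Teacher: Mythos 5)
Your reduction of the statement to ``nonzero composable maps compose to something nonzero'' and your idea of splitting into cases according to the decomposition $\Hom_{\CC_\LL}(X,Y)\cong\Hom_{\repc\LL}(X,Y)\oplus\Ext^1_{\repc\LL}(X,\tau^{-}Y)$ of Proposition~\ref{proposition:DescriptionCCLL} are the right starting point, but the case you single out as the ``main obstacle'' is not the dangerous one, and the dangerous one cannot be settled by any pairing argument. Composition in the orbit category is graded: the summand $\Ext^1_{\repc\LL}(X,\tau^{-}Y)$ is $\Hom_{\Db}(X,(\bS\circ[-2])^{-1}Y)$, so if \emph{both} $f\in\Hom(A,B)$ and $g\in\Hom(B,C)$ lie in that summand, then $g\circ f$ lies in $\Hom_{\Db}(A,(\bS\circ[-2])^{-2}C)=\Ext^2_{\repc\LL}(A,\tau^{-2}C)$, which is zero because $\repc\LL$ is hereditary (and, since $\repc\LL$ has no nonzero projectives or injectives, there is no wrap-around that could shift the degree). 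In this case every composite vanishes identically, so your plan would have to show instead that this configuration is incompatible with the hypothesis $\Hom(A,C)\ne0$ --- and it is not. Take $\LL=\bZ$, $A=X_{4,14}$, $B=X_{2,12}$ and $C=X_{0,10}$, so that $B\cong\bS A$ and $C\cong\bS^2A$ in $\CC_\LL$. Using Propositions~\ref{proposition:DescriptionCCLL} and~\ref{proposition:TauIsShifting} together with the explicit criteria you quote, one checks that all three Hom-spaces are one-dimensional while $\Hom(A,B)$ and $\Hom(B,C)$ are each concentrated in the $\Ext^1(-,\tau^{-}(-))$ summand; hence the composition map is zero. So carrying your computation through does not prove the lemma --- it isolates a configuration in which the asserted isomorphism fails, which means the statement can only hold under an extra hypothesis (for instance, that at least one of the two maps is a genuine morphism of $\repc\LL$, i.e.\ lives in the degree-zero summand); in the two remaining mixed cases and the degree-zero case your Serre-duality pairing argument does work.

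For comparison, the paper disposes of the lemma in two lines: it asserts the statement for the cluster category of $A_n$ and reduces the general case to that one via Theorem~\ref{theorem:Covering} (applied to $A\oplus B\oplus C$, which lives in a reduction $\CC_{[\ZZ]}\cong\CC_{\End T}$ with $\End T$ of type $A$). This is a genuinely different, and much shorter, route than yours, but it does not engage with the point above: the same configuration of three indecomposables survives the reduction (the problematic triple $M$, $\bS M$, $\bS^2M$ already occurs among modules over $kA_n$ for $n$ large), so the $A_n$ case is not the formality the paper suggests. Your direct approach has the merit of forcing this degree-$(-2)$ issue into the open; to turn it into a proof of what the paper actually needs (the factorization step in Theorem~\ref{theorem:WhenClusterTilting}), you would have to verify that the middle objects $T_i$ produced by Corollary~\ref{Corollary:TowardsFunctoriallyFiniteness} avoid the excluded configuration.
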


\begin{proof}
This is easily confirmed to hold when $Q = A_n$.  We can use Theorem \ref{theorem:Covering} to reduce the general case to this case.
\end{proof}

Now we are ready to characterize cluster-tilting objects combinatorially, generalizing both~\cite[Theorem 4.4]{HolmJorgensen12} and~\cite[Theorem 6.9]{LiuPaquette15}.

\begin{theorem}\label{theorem:WhenClusterTilting}
Let $\LL$ be an unbounded linearly ordered locally discrete set.  A set of pairwise noncrossing arcs $S$ of arcs on $\LL$ is a connected triangulation if and only if $\Phi(S)$ is a cluster-tilting subcategory.
\end{theorem}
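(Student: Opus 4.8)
The plan is to translate the two directions of the equivalence through the dictionaries already established: the bijection $\Phi\colon \arc(\LL)\to\ind\CC_\LL$ of Proposition~\ref{proposition:ObjectsArcs}, which turns crossings of arcs into nonvanishing of $\Ext^1_{\CC_\LL}$, and the characterization of cluster-tilting subcategories available in $\CC_\LL$. Recall that by Remark~\ref{remark:DefClusterTilting}(\ref{enumerate:MaxRigidVsClusterTilting}), since $\CC_\LL$ has a (directed) cluster-tilting subcategory, the cluster-tilting subcategories of $\CC_\LL$ are exactly the functorially finite maximal rigid subcategories. So the target statement is: $S$ is a connected triangulation $\iff$ $\Psi(S)$ is maximal rigid and functorially finite. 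By Proposition~\ref{proposition:ClusterTiltingSomeProperties}, $S$ pairwise noncrossing $\iff$ $\Phi(S)$ rigid, and $S$ maximal $\iff$ $\Psi(S)$ maximal rigid; so the real content is to show that, for a connected $S$ which is a maximal set of pairwise noncrossing arcs, $S$ being a triangulation is equivalent to $\Psi(S)$ being functorially finite.

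\textbf{The forward direction.} Suppose $S$ is a connected triangulation. By Theorem~\ref{theorem:ConnectedTriangulationsAreMaximal}, $S$ is maximal, so $\Psi(S)$ is maximal rigid by Proposition~\ref{proposition:ClusterTiltingSomeProperties}(2). It remains to prove functorial finiteness, i.e.\ that for each $A\in\ind\CC_\LL$ there is a right (and, dually, left) $\Psi(S)$-approximation. Write $A=\Phi(ab)$ for a unique arc $ab$. If $ab\in S$ then $A\in\Psi(S)$ and we are done. Otherwise, I would use the combinatorial control provided in \S\ref{section:Triangulations}: Corollary~\ref{Corollary:TowardsFunctoriallyFiniteness} yields a finite set $\{x_iy_i\}_i\subseteq S$ of arcs crossing $ab$ such that every arc $xy\in S$ crossing $ab$ crosses some $\rho(x_iy_i)$. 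Translating via Proposition~\ref{proposition:ObjectsArcs}(\ref{enumerate:TauMeansRho}) and Corollary~\ref{corollary:MorphismsArcs}: any $\Phi(xy)\in\Psi(S)$ with $\Hom(\Phi(xy),A)\neq 0$ satisfies $\Hom(\Phi(xy),\Phi(x_iy_i))\neq 0$ for some $i$. Taking $T_0=\bigoplus_i\Phi(x_iy_i)$ (a finite sum, so an object of $\Psi(S)$), I claim $T_0\to A$, built from suitable nonzero morphisms, is a right $\Psi(S)$-approximation of $A$: any map $\Phi(xy)\to A$ factors through $T_0$ because of Lemma~\ref{lemma:CompositionInTypeA} — the composition $\Hom(\Phi(x_iy_i),A)\otimes\Hom(\Phi(xy),\Phi(x_iy_i))\to\Hom(\Phi(xy),A)$ is an isomorphism, and all these Hom-spaces are at most one-dimensional by Proposition~\ref{proposition:DimensionsInCCLL}, so after rescaling the chosen morphisms one gets the required factorization. (One must also deal with arcs $xy\in S$ \emph{not} crossing $ab$ that still admit a morphism to $A$; these are the arcs adjacent to $ab$ by Corollary~\ref{corollary:MorphismsArcs}, and near $a$ and $b$ there are finitely many of them by connectedness and maximality — again via Proposition~\ref{proposition:TowardClusterTilting}(\ref{enumerate:LocallyFiniteCrossings}) — so they can be absorbed into $T_0$.) The left approximation is dual, using the $(-)^*$-symmetry of $\Ext^1$ in Proposition~\ref{proposition:ObjectsArcs}(\ref{enumerate:CrossingArcsMeansExt}) and the dual of Corollary~\ref{Corollary:TowardsFunctoriallyFiniteness}.

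\textbf{The reverse direction.} Suppose $\Psi(S)$ is cluster-tilting. Then $\Psi(S)$ is maximal rigid, so $S$ is a maximal set of pairwise noncrossing arcs, hence connected-or-not; but a cluster-tilting subcategory is indecomposable as an additive category when $\CC_\LL$ is indecomposable — or, more carefully, I would argue directly that $S$ is connected and a triangulation. For connectedness: a cluster-tilting subcategory can be mutated at each indecomposable (Corollary~\ref{corollary:Mutation}), so every arc of $S$ is exchangeable; by Theorem~\ref{theorem:FlippingIsTriangulated} it then suffices to prove $S$ is connected to conclude it is a triangulation, so connectedness is the crux. Here I would invoke Proposition~\ref{proposition:ClusterTiltingSomeProperties}(1) contrapositively: if $S$ is disconnected then $\Psi(S)$ decomposes as a nontrivial additive product $\Psi(S_1)\oplus\Psi(S_2)$ with $\Hom(\Psi(S_1),\Psi(S_2))=0=\Hom(\Psi(S_2),\Psi(S_1))$; and a cluster-tilting subcategory of the indecomposable category $\CC_\LL$ cannot split this way — any such splitting would, via $H_\TT$ and the standard almost-split-sequence analysis (Proposition~\ref{proposition:AbelianQuotient}), force a corresponding decomposition of $\CC_\LL$, contradicting indecomposability. (If one prefers to avoid that, one can instead use the flip structure: a disconnected maximal $S$ in which every arc is exchangeable produces, via Proposition~\ref{proposition:CuttingUpS} and Lemma~\ref{lemma:Maximal}, an edge lying in no triangle, so $S$ fails to be a triangulation, which by Corollary~\ref{corollary:FlipsAndTriangles}/Theorem~\ref{theorem:FlippingIsTriangulated} is consistent only if... — this is the route that needs care.) Once $S$ is connected and all arcs exchangeable, Theorem~\ref{theorem:FlippingIsTriangulated} gives that $S$ is a triangulation, and connectedness was just shown, so $S$ is a connected triangulation.

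\textbf{Main obstacle.} I expect the hard part to be the functorial-finiteness argument in the forward direction — specifically, promoting the purely combinatorial finiteness from Corollary~\ref{Corollary:TowardsFunctoriallyFiniteness} to an actual approximation triangle in $\CC_\LL$, keeping track of which morphisms compose nontrivially (Lemma~\ref{lemma:CompositionInTypeA}) and verifying minimality. The one-dimensionality of all Hom/Ext spaces (Proposition~\ref{proposition:DimensionsInCCLL}) makes the bookkeeping tractable but it still has to be done carefully on both sides (Hom \emph{into} $A$ and, via Serre duality, Hom \emph{out of} $A$). The reverse direction is comparatively soft, reducing to the indecomposability/connectedness correspondence already packaged in Proposition~\ref{proposition:ClusterTiltingSomeProperties} and Theorem~\ref{theorem:FlippingIsTriangulated}.
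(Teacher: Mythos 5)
Your architecture coincides with the paper's: reduce to ``maximal rigid and functorially finite'' via Remark~\ref{remark:DefClusterTilting}(\ref{enumerate:MaxRigidVsClusterTilting}), obtain the direction ``cluster-tilting $\Rightarrow$ connected triangulation'' from indecomposability (your route (a) is exactly Theorem~\ref{theorem:IndecomposableClustertiltingSubcategories}), the cluster structure and Theorem~\ref{theorem:FlippingIsTriangulated}, and obtain functorial finiteness in the other direction from Corollary~\ref{Corollary:TowardsFunctoriallyFiniteness} combined with Lemma~\ref{lemma:CompositionInTypeA} and the one-dimensionality of Hom spaces. The reverse direction is fine as you state it.

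The gap is in your functorial-finiteness argument, in the bookkeeping of the rotation $\rho$. By Corollary~\ref{corollary:MorphismsArcs}, the arcs $xy$ with $\Hom(\Phi(xy),\Phi(ab))\ne 0$ are exactly those with $ab$ crossing $\rho(xy)$, equivalently those crossed by $\rho^{-1}(ab)=\{a+1,b+1\}$ --- \emph{not} those crossing $ab$, and likewise ``$xy$ crosses $\rho(x_iy_i)$'' translates to $\Hom(\Phi(x_iy_i),\Phi(xy))\ne 0$, which is the wrong direction for a right approximation. Because you apply Corollary~\ref{Corollary:TowardsFunctoriallyFiniteness} to $ab$ itself, your finite set $\{x_iy_i\}$ controls the wrong family of arcs, and you are forced to patch the discrepancy by absorbing the arcs adjacent to $ab$, claiming there are only finitely many of them ``by connectedness and maximality.'' That claim is unjustified and false in general: a connected triangulation need not be locally finite (Example~\ref{example:NoncrossingSets}(\ref{enumerate:Example10})), so a vertex of $ab$ may be incident with infinitely many arcs of $S$, infinitely many of which can have nonzero Hom to $\Phi(ab)$ when the interval $(a,b)$ is infinite; Proposition~\ref{proposition:TowardClusterTilting}(\ref{enumerate:LocallyFiniteCrossings}) bounds the arcs \emph{crossing} a fixed arc by a finite hub set, not the arcs \emph{incident} with a fixed vertex. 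The repair is exactly what the paper does: construct the \emph{left} approximation of $\Phi(ab)$ by applying Corollary~\ref{Corollary:TowardsFunctoriallyFiniteness} to $\rho(ab)$ (equivalently, for the right approximation, to $\rho^{-1}(ab)$). Then Corollary~\ref{corollary:MorphismsArcs} characterizes all arcs with nonzero Hom with no leftover adjacency cases, and Lemma~\ref{lemma:CompositionInTypeA} gives the factorization exactly as you intend.
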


\begin{proof}
Assume that $\Phi(S)$ is a cluster-tilting subcategory of $\CC_\LL$.  It follows from Theorem \ref{theorem:IndecomposableClustertiltingSubcategories} that $\Phi(S)$ is indecomposable, and hence, by Proposition \ref{proposition:ClusterTiltingSomeProperties}, we know that $S$ is connected.  Furthermore, since $\Phi(S)$ is maximal rigid, Proposition \ref{proposition:ClusterTiltingSomeProperties} shows that $S$ is maximal.  Finally, since $\CC_\LL$ has a cluster structure (Theorem \ref{theorem:ClusterStructure}), we know that $\Phi(S)$ allows for mutation at every indecomposable object (in the sense of Definition \ref{definition:AllowMutation}) and hence every arc of $S$ is exchangeable, again by Proposition \ref{proposition:ClusterTiltingSomeProperties}. Theorem~\ref{theorem:FlippingIsTriangulated} implies that $S$ is a triangulation.

For the other direction, assume that $S$ is a connected triangulation.  It follows from Theorem \ref{theorem:ConnectedTriangulationsAreMaximal} that $S$ is maximal.  Using Proposition \ref{proposition:ClusterTiltingSomeProperties}, we infer that $\Psi(S)$ is a (connected) maximal rigid subcategory.  In view of Remark~\ref{remark:DefClusterTilting}(\ref{enumerate:MaxRigidVsClusterTilting}), we need only show that $\Psi(S)$ is functorially finite in $\CC_\LL$.  We will show that $\Psi(S)$ is covariantly finite in $\CC_\LL$, proving that $\Psi(S)$ is contravariantly finite is dual.

Let $ab \in \arc(\LL)$ be any arc (so that $\Phi(ab)$ is any indecomposable object in $\CC_\LL$).  We wish to find a left $\Psi(S)$-approximation $\Phi(ab) \to T$. If $ab \in S$, then we can choose $T = \Phi(ab)$.

Thus, assume that $ab \not\in S$ and let $\{x_i y_i\}_i \subseteq S$ be the finite set of Corollary~\ref{Corollary:TowardsFunctoriallyFiniteness} applied to $\rho(ab) \in \arc(\LL)$.  If we write $T_i = \Phi(x_i y_i)$, we specifically get that $x_i y_i$ crosses $\rho(ab)$ and that $\Hom(\Phi(ab),T_i) \ne 0$ by Corollary~\ref{corollary:MorphismsArcs}.
We claim that the map $\varphi: \Phi(ab) \to T \otimes_{\End T} \Hom(\Phi(ab),T)$, where $T = \oplus_i T_i$, is a left $\Psi(S)$-approximation.

Let $X \in \CC_\LL$ be any indecomposable object (say $X = \Phi(xy)$) and $f\colon \Phi(ab) \to X$.  We need to show that the map $f$ factors through $\varphi$.  By Lemma~\ref{lemma:CompositionInTypeA}, it suffices to show that $\Hom(T,X) \not= 0$ if $f \not= 0$.  However, if $f \not= 0$, we know by Corollary \ref{corollary:MorphismsArcs} that $xy$ and $\rho(ab)$ cross. This shows that $xy$ crosses the set $\{\rho(x_i x_i)\}_i$, which implies that $\Hom(T,X) \not= 0$, as required.  We conclude that $\Psi(S)$ is covariantly finite in $\CC_\LL$.
\end{proof}

\begin{corollary}\label{corollary:WhenClusterTilting}
Let $\LL$ be an unbounded linearly ordered locally discrete set, and let $S \subseteq \arc \LL$ be a set of pairwise noncrossing arcs on $\LL$.  The following are equivalent:
\begin{enumerate}
\item\label{enumerate:WhenClusterTilting1} $S$ is a connected triangulation,
\item\label{enumerate:WhenClusterTilting2} $S$ is connected, maximal, and every arc can be flipped,
\item\label{enumerate:WhenClusterTilting3} $\Phi(S)$ is a cluster-tilting subcategory,
\item\label{enumerate:WhenClusterTilting4} $\Phi(S)$ is indecomposable, maximal {rigid}, and every indecomposable object can be mutated.
\end{enumerate}
\end{corollary}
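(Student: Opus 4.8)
The plan is to observe that this corollary is essentially a bookkeeping exercise assembling three results already established: Theorem~\ref{theorem:WhenClusterTilting}, Theorem~\ref{theorem:FlippingIsTriangulated}, and Proposition~\ref{proposition:ClusterTiltingSomeProperties}. Concretely, I would prove the chain of equivalences
\[ (\ref{enumerate:WhenClusterTilting3}) \iff (\ref{enumerate:WhenClusterTilting1}) \iff (\ref{enumerate:WhenClusterTilting2}) \iff (\ref{enumerate:WhenClusterTilting4}). \]

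For $(\ref{enumerate:WhenClusterTilting1}) \iff (\ref{enumerate:WhenClusterTilting3})$: this is exactly Theorem~\ref{theorem:WhenClusterTilting}, so nothing further is needed (note $\Phi(S)$ here is understood as the full additive subcategory $\Psi(S)$ it generates, as in the statement of that theorem). For $(\ref{enumerate:WhenClusterTilting1}) \iff (\ref{enumerate:WhenClusterTilting2})$: both sides assert that $S$ is connected, and under that hypothesis Theorem~\ref{theorem:FlippingIsTriangulated} gives that $S$ is a triangulation if and only if $S$ is maximal and every arc of $S$ is exchangeable; adding the common hypothesis of connectedness back in yields precisely the equivalence of $(\ref{enumerate:WhenClusterTilting1})$ and $(\ref{enumerate:WhenClusterTilting2})$. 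For $(\ref{enumerate:WhenClusterTilting2}) \iff (\ref{enumerate:WhenClusterTilting4})$: I would apply Proposition~\ref{proposition:ClusterTiltingSomeProperties} term by term. By that proposition, $S$ being a set of pairwise noncrossing arcs (standing hypothesis of the corollary) is equivalent to $\Phi(S)$ being rigid; then $S$ connected $\iff$ $\Psi(S)$ indecomposable, $S$ maximal $\iff$ $\Psi(S)$ maximal rigid, and, once $S$ is maximal, ``every arc of $S$ is exchangeable'' $\iff$ ``$\Psi(S)$ allows mutation at every indecomposable object''. Translating ``$\Psi(S)$ allows all mutations'' into ``every indecomposable object of $\Phi(S)$ can be mutated'' (Definition~\ref{definition:AllowMutation}) identifies $(\ref{enumerate:WhenClusterTilting2})$ with $(\ref{enumerate:WhenClusterTilting4})$.

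Since every link in this chain is already available, there is no genuine obstacle; the only points requiring a little care are purely notational: (i) consistently reading $\Phi(S)$ as the additive/Karoubi closure $\Psi(S)$ so that the word ``subcategory'' makes sense and $\ind \Psi(S) = \Phi(S)$, and (ii) checking that the hypothesis ``$S$ maximal'' needed to invoke Proposition~\ref{proposition:ClusterTiltingSomeProperties}(4) is indeed present on both sides of $(\ref{enumerate:WhenClusterTilting2}) \iff (\ref{enumerate:WhenClusterTilting4})$ (it is, as ``maximal'' appears in $(\ref{enumerate:WhenClusterTilting2})$ and ``maximal rigid'' in $(\ref{enumerate:WhenClusterTilting4})$). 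I would close with a one-line remark that, combined with Corollary~\ref{corollary:WhenClusterTilting}, this gives the promised intrinsic description of cluster-tilting subcategories of $\CC_\LL$ not referring to functorial finiteness.
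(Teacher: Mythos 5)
Your proof is correct and follows exactly the same route as the paper: the paper's proof also cites Theorem~\ref{theorem:WhenClusterTilting} for (\ref{enumerate:WhenClusterTilting1})$\Leftrightarrow$(\ref{enumerate:WhenClusterTilting3}), Theorem~\ref{theorem:FlippingIsTriangulated} for (\ref{enumerate:WhenClusterTilting1})$\Leftrightarrow$(\ref{enumerate:WhenClusterTilting2}), and Proposition~\ref{proposition:ClusterTiltingSomeProperties} for (\ref{enumerate:WhenClusterTilting2})$\Leftrightarrow$(\ref{enumerate:WhenClusterTilting4}). Your additional remarks on reading $\Phi(S)$ as its additive closure $\Psi(S)$ are sensible bookkeeping but do not change the argument.
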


\begin{proof}
Theorem \ref{theorem:WhenClusterTilting} states that (\ref{enumerate:WhenClusterTilting1}) and (\ref{enumerate:WhenClusterTilting3}) are equivalent, and it follows from Theorem \ref{theorem:FlippingIsTriangulated} that (\ref{enumerate:WhenClusterTilting1}) and (\ref{enumerate:WhenClusterTilting2}) are equivalent.  Finally, Proposition \ref{proposition:ClusterTiltingSomeProperties} yields that (\ref{enumerate:WhenClusterTilting2}) and (\ref{enumerate:WhenClusterTilting4}) are equivalent.
\end{proof}

\begin{corollary}\label{corollary:LocallyBoundedClusterTilting}
Let $\LL$ be an unbounded linearly ordered locally discrete set.  There is a locally bounded cluster-tilting subcategory for $\CC_\LL$ if and only if $\LL$ is countable.
\end{corollary}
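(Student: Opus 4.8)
The plan is to transport the characterization of locally finite connected triangulations (Theorem~\ref{theorem:CountableLocallyFiniteTriangulation}) through the dictionary established in Proposition~\ref{proposition:ObjectsArcs}, Proposition~\ref{proposition:PsiBijection} and Proposition~\ref{proposition:ClusterTiltingSomeProperties}. Recall from Example~\ref{example:CyclicFromLinear} that to $\LL$ we associate the cyclically ordered set $(C,R) = (\LL, \leq)_{\rm cyc}$; since $\LL$ is unbounded locally discrete, $(C,R)$ is locally discrete, and $C = \LL$ as a set, so ``$\LL$ is countable'' is the same as ``$C$ is countable''. The bijection $\Phi\colon \arc(\LL) \to \ind \CC_\LL$ and the induced bijection $\Psi$ between subsets of $\arc(\LL)$ and full replete Karoubi subcategories of $\CC_\LL$ translate the four combinatorial properties into categorical ones: by Theorem~\ref{theorem:WhenClusterTilting}, $S$ is a connected triangulation iff $\Psi(S)$ is a cluster-tilting subcategory, and by Proposition~\ref{proposition:ClusterTiltingSomeProperties}(3), $S$ is locally finite iff $\Psi(S)$ is locally bounded. (Note that $\Psi(S) = \add \Phi(S)$, so ``$\Phi(S)$ is a cluster-tilting subcategory'' in the statement means the additive closure of the arcs in $S$ is cluster-tilting.)

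First I would prove the forward implication. Suppose $\CC_\LL$ admits a locally bounded cluster-tilting subcategory. By Remark~\ref{remark:EquivalentCluster}, cluster-tilting subcategories are in particular Karoubi, hence of the form $\Psi(S)$ for a unique $S \subseteq \arc(\LL)$. By Theorem~\ref{theorem:WhenClusterTilting}, $S$ is a connected triangulation, and by Proposition~\ref{proposition:ClusterTiltingSomeProperties}(3), local boundedness of $\Psi(S)$ gives that $S$ is locally finite. Thus $S$ is a locally finite connected triangulation of $(C,R)$, and Theorem~\ref{theorem:CountableLocallyFiniteTriangulation} forces $C = \LL$ to be countable.

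For the converse, assume $\LL$ is countable, hence so is $C = (\LL,\leq)_{\rm cyc}$. By Theorem~\ref{theorem:CountableLocallyFiniteTriangulation} (whose nontrivial direction is Construction~\ref{Construction:ConstructionLocallyFinite} together with Proposition~\ref{proposition:CountableLocallyFiniteTriangulation}), there is a locally finite connected triangulation $S$ of $(C,R)$. Then $\Psi(S)$ is a full replete Karoubi subcategory of $\CC_\LL$ which, by Theorem~\ref{theorem:WhenClusterTilting}, is a cluster-tilting subcategory, and by Proposition~\ref{proposition:ClusterTiltingSomeProperties}(3), is locally bounded since $S$ is locally finite. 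This produces the required locally bounded cluster-tilting subcategory, completing the proof.

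I do not expect a genuine obstacle here: all the substantive work has been done in Theorems~\ref{theorem:WhenClusterTilting} and~\ref{theorem:CountableLocallyFiniteTriangulation} and in Proposition~\ref{proposition:ClusterTiltingSomeProperties}, and this corollary is essentially a matter of stringing those equivalences together. The only minor point worth stating carefully is that every cluster-tilting (a fortiori maximal rigid) subcategory of $\CC_\LL$ is Karoubi and full replete, so that it lies in the image of $\Psi$ and the combinatorial translation applies; this is immediate since cluster-tilting subcategories are closed under summands and isomorphisms by definition.
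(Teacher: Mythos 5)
Your proof is correct and follows exactly the paper's argument: combine Theorem~\ref{theorem:WhenClusterTilting} and Proposition~\ref{proposition:ClusterTiltingSomeProperties} to translate ``locally bounded cluster-tilting subcategory'' into ``locally finite connected triangulation,'' then apply Theorem~\ref{theorem:CountableLocallyFiniteTriangulation}. The only blemish is the citation of Remark~\ref{remark:EquivalentCluster} for the (true, and immediate from the definition $\TT=\TT^{\perp_1}$) fact that cluster-tilting subcategories are full, replete and closed under summands --- that remark is about derived equivalences and is not the right reference.
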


\begin{proof}
It follows from Theorem \ref{theorem:WhenClusterTilting} and Proposition \ref{proposition:ClusterTiltingSomeProperties} that $\CC_\LL$ has a locally bounded cluster-tilting subcategory if and only if $\LL$ has a locally finite triangulation.  The latter is satified if and only if $\LL$ is countable (see Theorem \ref{theorem:CountableLocallyFiniteTriangulation}).
\end{proof}

\section{Comparing algebraic cluster categories with directed cluster-tilting subcategories}\label{section:Comparing}

We will now consider algebraic cluster categories.  Let $\CC$ and $\CC'$ be algebraic Krull-Schmidt 2-Calabi-Yau categories with directed cluster-tilting subcategories $\TT$ and $\TT'$, respectively.  Assume that there is an equivalence $\Phi: \TT \to \TT'$.  If $\ind \TT$ is finite, then it follows from \cite{KellerReiten08} that $\CC \cong \CC'$ as triangulated categories.  Although we cannot prove that $\CC$ and $\CC'$ are equivalent in general, we will use ideas from \S\ref{section:Reduction} to compare $\CC$ and $\CC'$.

\begin{construction}\label{construction:F}
We define a map $F: \Ob \CC \to \Ob \CC'$ as follows: for $X \in \CC$, we choose a minimal right $\TT$-approximation triangle $T_1 \stackrel{f}{\rightarrow} T_0 \to X \to T_1[1]$ and let 
$$F(X) = \cone(\Phi(f): \Phi(T_1) \to \Phi(T_0)).$$
\end{construction}

\begin{remark}
The construction of $F$ above requires, for each object $X \in \CC$, a choice of a minimal right $\TT$-approximation and a choice for the cone of $\Phi(f)$.  However, the isomorphism class of $F(X)$ is independent of these choices, as the following lemma indicates.
\end{remark}

\begin{lemma}\label{lemma:Nonfunctor1}
For all $X_1, X_2 \in \CC$, we have:
\begin{enumerate}
\item $T^{(X_1)}_1 \to T^{(X_1)}_0 \to X_1 \to T^{(X_1)}_1[1]$ is a minimal right $\TT$-approximation triangle of $X_1$ if and only if $\Phi(T^{(X_1)}_1) \to \Phi(T^{(X_1)}_0) \to F(X_1) \to \Phi(T^{(X_1)}_1)[1]$ is a minimal right $\TT'$-approximation triangle of $F(X_1)$,
\item $F(X_1) \cong F(X_2)$ if and only if $X_1 \cong X_2$, and
\item $F(X_1 \oplus X_2) \cong F(X_1) \oplus F(X_2)$.
\end{enumerate}
\end{lemma}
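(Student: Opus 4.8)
The key fact to exploit is that minimal right $\TT$-approximation triangles are essentially unique, and that $\Phi\colon\TT\to\TT'$ is an equivalence, so it sends minimal maps to minimal maps and preserves the property of being a weak kernel. Fix $X_1$, and let $T_1\xrightarrow{f}T_0\to X_1\to T_1[1]$ be the chosen minimal right $\TT$-approximation triangle used in defining $F(X_1)$. By construction $F(X_1)=\cone(\Phi(f))$, so we have a triangle $\Phi(T_1)\xrightarrow{\Phi(f)}\Phi(T_0)\to F(X_1)\to\Phi(T_1)[1]$ in $\CC'$. The content of part (1) is that this triangle is itself a \emph{minimal right $\TT'$-approximation triangle} of $F(X_1)$, and conversely that \emph{any} minimal right $\TT'$-approximation triangle of $F(X_1)$ arises by applying $\Phi$ to a minimal right $\TT$-approximation triangle of $X_1$.

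\textbf{Part (1).} First I would recall (from the discussion after Remark~\ref{remark:DefClusterTilting}, i.e. \cite[Proposition 2.1(b)]{KellerReiten07} or \cite[Theorem 3.1]{IyamaYoshino08}) that in a triangle $T_1\to T_0\to X\to T_1[1]$ with $T_0,T_1\in\TT$, the map $T_0\to X$ is a right $\TT$-approximation if and only if the map $T_1\to T_0$ is a weak kernel of the natural map onto the cokernel in $\mod\TT$ — more usefully: $T_0\to X$ is a right $\TT$-approximation precisely when $\Hom_\CC(\TT,T_0)\to\Hom_\CC(\TT,X)$ is surjective, equivalently when $T_1\to T_0$ induces a surjection of $\TT$-modules onto $\ker(\Hom_\CC(\TT,T_0)\to\Hom_\CC(\TT,X))$ composed appropriately; in any case, the defining property ``$T_0\to X$ is a right $\TT$-approximation'' translates under the equivalence $\CC/[\TT[1]]\simeq\mod\TT$ into the statement that $(-,T_1)\to(-,T_0)\to H_\TT(X)\to 0$ is exact, i.e.\ $\Phi(f)$ (via $\Phi^*\colon\mod\TT\to\mod\TT'$) is a projective presentation. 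Since $\Phi$ is an equivalence $\TT\to\TT'$, it induces an equivalence $\mod\TT\simeq\mod\TT'$ carrying projective presentations to projective presentations and minimal ones to minimal ones; hence $\Phi(T_1)\xrightarrow{\Phi(f)}\Phi(T_0)$ is the start of a minimal projective presentation of the $\TT'$-module corresponding to $F(X_1)$, which is exactly the assertion that $\Phi(T_1)\to\Phi(T_0)\to F(X_1)\to\Phi(T_1)[1]$ is a minimal right $\TT'$-approximation triangle. The ``only if'' direction is then immediate: if $T_1'\to T_0'\to X_1\to T_1'[1]$ is \emph{another} minimal right $\TT$-approximation triangle of $X_1$, then by uniqueness of minimal right $\TT$-approximations it is isomorphic (as a triangle) to the chosen one, so applying $\Phi$ and taking cones gives an object isomorphic to $F(X_1)$; this also shows $F(X_1)$ is well-defined up to isomorphism. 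And conversely a minimal right $\TT'$-approximation triangle of $F(X_1)$ is, up to isomorphism, the one we constructed, which is $\Phi$ applied to the chosen triangle. The only mild subtlety is that ``minimal right approximation'' is the same as ``minimal projective presentation in $\mod\TT$'' — this is standard and follows from $H_\TT$ being an equivalence $\CC/[\TT[1]]\to\mod\TT$ (Proposition~\ref{proposition:AbelianQuotient}) together with the fact that $T_1\in\TT$ for approximation triangles.

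\textbf{Part (2).} If $X_1\cong X_2$, choose the same (transported) approximation triangle for both, so $F(X_1)\cong F(X_2)$ trivially. Conversely, suppose $F(X_1)\cong F(X_2)$. By part (1), a minimal right $\TT'$-approximation triangle of $F(X_1)$ is $\Phi(T_1^{(X_1)})\to\Phi(T_0^{(X_1)})\to F(X_1)\to\cdots$, and similarly for $F(X_2)$; since $F(X_1)\cong F(X_2)$, uniqueness of minimal right $\TT'$-approximation triangles gives an isomorphism of triangles, hence in particular isomorphisms $\Phi(T_i^{(X_1)})\cong\Phi(T_i^{(X_2)})$ and a commuting square identifying $\Phi(f^{(X_1)})$ with $\Phi(f^{(X_2)})$ up to these isomorphisms. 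Applying the quasi-inverse of $\Phi$ (an equivalence $\TT'\to\TT$), we get an isomorphism of triangles $T_1^{(X_1)}\to T_0^{(X_1)}\to X_1\to T_1^{(X_1)}[1]$ and $T_1^{(X_2)}\to T_0^{(X_2)}\to X_2\to\cdots$ in $\CC$, whence $X_1\cong X_2$. The point is purely that $F$ and ``take a minimal approximation triangle and apply $\Phi$'' are inverse to each other at the level of isomorphism classes.

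\textbf{Part (3).} The minimal right $\TT$-approximation triangle of $X_1\oplus X_2$ is the direct sum of those of $X_1$ and $X_2$: indeed if $T_i^{(X_1)}\to\cdots$ and $T_i^{(X_2)}\to\cdots$ are minimal right $\TT$-approximation triangles, their direct sum is a triangle with middle-left terms in $\TT$, the map $T_0^{(X_1)}\oplus T_0^{(X_2)}\to X_1\oplus X_2$ is a right $\TT$-approximation (a direct sum of surjections on $\Hom_\CC(\TT,-)$), and it is minimal because a direct sum of right minimal maps is right minimal. Now $\Phi$ is additive, so $\Phi$ applied to this sum triangle is the direct sum of $\Phi$ applied to each summand triangle, and cones commute with finite direct sums; hence $F(X_1\oplus X_2)=\cone(\Phi(f^{(X_1)}\oplus f^{(X_2)}))\cong\cone(\Phi(f^{(X_1)}))\oplus\cone(\Phi(f^{(X_2)}))=F(X_1)\oplus F(X_2)$.

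\textbf{Main obstacle.} None of the steps involves a hard computation; the one place requiring care is the translation ``minimal right $\TT$-approximation triangle $\leftrightarrow$ minimal projective presentation in $\mod\TT$'' and the observation that the equivalence $\TT\to\TT'$ induces an equivalence $\mod\TT\to\mod\TT'$ respecting this — so that minimality (right minimality of the approximation) is genuinely preserved. Once that bookkeeping is in place, parts (1)--(3) follow formally from the uniqueness of minimal right approximations in the two 2-Calabi-Yau categories and the additivity of $\Phi$ and of $\cone$.
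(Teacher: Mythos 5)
Your proof is correct and follows essentially the same route as the paper's: the paper verifies the right-$\TT'$-approximation property directly from $\Ext^1_{\CC'}(T',\Phi(T_1^{(X_1)}))=0$ for $T'\in\TT'$ and notes that minimality (the map $\Phi(T_1^{(X_1)})\to\Phi(T_0^{(X_1)})$ being radical) is preserved under the equivalence $\Phi$, which is precisely the content of your translation through (minimal) projective presentations in $\mod\TT\simeq\mod\TT'$ — the rigidity of $\TT'$ is what identifies the cokernel of $(-,\Phi(T_1^{(X_1)}))\to(-,\Phi(T_0^{(X_1)}))$ with $\Hom_{\CC'}(-,F(X_1))|_{\TT'}$. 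Parts (2) and (3) are handled the same way in both arguments, via uniqueness of minimal approximation triangles and additivity of $\Phi$ and of cones.
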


\begin{proof}
\begin{enumerate}
\item We will prove one direction, namely that if $T^{(X_1)}_1 \to T^{(X_1)}_0 \to X_1 \to T^{(X_1)}_1[1]$ is a minimal right $\TT$-approximation triangle of $X_1$, then $\Phi(T^{(X_1)}_1) \to \Phi(T^{(X_1)}_0) \to F(X_1) \to \Phi(T^{(X_1)}_1)[1]$ is a minimal right $\TT'$-approximation triangle of $F(X_1)$.  

Let $f \in  \Hom_{\CC'}(T',F(X_1))$ be any morphism where $T' \in \TT'$.  Since $\TT'$ is a cluster-tilting subcategory of $\CC'$, we know that $\Ext^1(T', \Phi(T^{(X_1)}_1)) = 0$ and hence $f$ factors through the morphism $\Phi(T^{(X_1)}_0) \to F(X_1)$.  This shows that $\Phi(T^{(X_1)}_0) \to F(X_1)$ is a right $\TT'$-approximation.

The right $\TT'$-approximation is minimal if and only if the morphism $\Phi(T^{(X_1)}_1) \to \Phi(T^{(X_1)}_0)$ is radical.  This property is being kept under equivalence, and hence $\Phi(T^{(X_1)}_1) \to \Phi(T^{(X_1)}_0)$ is radical since $T^{(X_1)}_1 \to T^{(X_1)}_0$ is radical.

\item We fix an isomorphism $X_1 \to X_2$.  Using the properties of minimal right $\TT$-approximations, we get a commutative diagram
$$\xymatrix{
T^{(X_1)}_1 \ar[r] \ar@{-->}[d] & T^{(X_1)}_0 \ar[r] \ar@{-->}[d] & X_1 \ar[r] \ar[d] & T^{(X_1)}_1[1] \ar@{-->}[d] \\
T^{(X_2)}_1 \ar[r] & T^{(X_2)}_0 \ar[r] & X_2 \ar[r] & T^{(X_2)}_1[1]
}$$
where the rows are minimal right $\TT$-approximation triangles and the vertical arrows are isomorphisms.  Applying the equivalence $\Phi: \TT \to \TT'$ on the left square gives a commutative square in $\TT'$:
$$\xymatrix{
\Phi(T^{(X_1)}_1) \ar[r] \ar[d] & \Phi(T^{(X_1)}_0) \ar[d] \\
\Phi(T^{(X_2)}_1) \ar[r] & \Phi(T^{(X_2)}_0)
}$$
where the vertical maps are isomorphisms.  It is now clear that the cones of the horizontal maps, $F(X_1)$ and $F(X_2)$, are isomorphic.

The other implication follows from the first statement.

\item Consider the following minimal right $\TT$-approximation triangles:
$$\xymatrix@R=5pt{
T^{(X_1)}_1 \ar[r] & T^{(X_1)}_0 \ar[r]& X_1 \ar[r] & T^{(X_1)}_1[1],\\
T^{(X_2)}_1 \ar[r] & T^{(X_2)}_0 \ar[r] & X_2 \ar[r] & T^{(X_2)}_1[1].
}$$
We find the following right minimal $\TT'$-approximations:
{
$$\xymatrix@R=5pt@C-5pt{
\Phi(T^{(X_1)}_1) \oplus \Phi(T^{(X_2)}_1) \ar[r] & \Phi(T^{(X_1)}_0) \oplus \Phi(T^{(X_2)}_0) \ar[r]& F(X_1) \oplus F(X_2) \ar[r] & [\Phi(T^{(X_1)}_1) \oplus \Phi(T^{(X_2)}_1)][1],\\
\Phi(T^{(X_1)}_1 \oplus T^{(X_2)}_1) \ar[r] & \Phi(T^{(X_1)}_0 \oplus T^{(X_2)}_0) \ar[r]& F(X_1 \oplus X_2) \ar[r] & [\Phi(T^{(X_1)}_1 \oplus T^{(X_2)}_1)][1].
}$$
}
This shows that $F(X_1 \oplus X_2) \cong F(X_1) \oplus F(X_2)$, as required.
\end{enumerate}
\end{proof}

We will now compare the endomorphism rings of $X$ and $F(X)$.  In particular, for each $X \in \CC$, we want to construct an algebra-isomorphism
$$F_X: \End (X) \to \End (FX).$$
For this, we consider the diagram in Figure \ref{figure:Nonfunctor}, which we will now explain.

\begin{figure}
$$\xymatrix{
& \ZZ^{\perp_1} \ar[r] \ar[d] & \CC \\
\Db \mod \Lambda \ar[d]_{- \otimes_{\Lambda} \Lambda'} \ar[r]^-{\pi} & \CC_{[\ZZ]} \ar@{-->}[d]^G \\
\Db \mod \Lambda'\ar[r]^-{\pi'} & \CC'_{[\ZZ']} \\
& (\ZZ')^{\perp_1} \ar[r] \ar[u] & \CC'
}$$
	\caption{Comparing algebraic cluster categories $\CC$ and $\CC'$ using Iyama-Yoshino reductions.}
	\label{figure:Nonfunctor}
\end{figure}

We start with an element $X \in \CC$, and we choose a minimal $T \in \TT$ such that {$X \in \DD_T$ and $F(X) \in \DD_{\Phi(T)}$} (see Notation~\ref{notation:DT}, the existence of such a minimal $T$ follows from Lemma~\ref{lemma:AboutDT}).  It follows from Lemma \ref{lemma:Covering} that $X$ lives in $\CC_{[\ZZ]}$ and $X'$ lives in $\CC'_{[\ZZ']}$ for well-chosen rigid subcategories $\ZZ \subseteq \CC$ and $\ZZ' \subseteq \CC'$.  Moreover, $T \in \CC_{[\ZZ]}$ and $\Phi(T) \in \CC'_{[\ZZ']}$ are cluster-tilting objects; we write $\Lambda = \End(T)$ and $\Lambda' = \End(\Phi(T))$. Clearly $\Lambda \cong \Lambda'$ as algebras.  It follows from Corollary \ref{corollary:Trianglefree} that $\TT$ is semi-hereditary, so that Proposition~\ref{proposition:ShIsLocal} implies that $\Lambda$ is hereditary. 

\begin{sloppypar}
It follows from \cite{KellerReiten08} that $\CC_{[\ZZ]} \cong (\Db \mod \Lambda)/{\bS \circ [-2]}$; in particular there is a functor $\pi\colon \Db \mod \Lambda \to \CC_{[\ZZ]}$ such that the restriction $\pi|_{\add(\Lambda[0])}: \add \Lambda[0] \to \add(T)$ is an equivalence.  Similarly, we have a functor $\pi': \Db \mod \Lambda' \to \CC'_{[\ZZ']}$ such that the restriction $\pi'|_{\add(\Lambda'[0])}: \add \Lambda'[0] \to \add(T')$ is an equivalence.  The 2-universal property of orbit categories yields a (unique up to unique natural isomorphism) equivalence $G: \CC_{[\ZZ]} \stackrel{\sim}{\rightarrow} \CC'_{[\ZZ']}$ such that the square
$$\xymatrix{
\Db \mod \Lambda \ar[d]_{- \otimes_{\Lambda} \Lambda'} \ar[r]^-{\pi} & \CC_{[\ZZ]} \ar@{-->}[d]^G \\
\Db \mod \Lambda'\ar[r]^-{\pi'} & \CC'_{[\ZZ']}
}$$
essentially commutes. In fact, by \cite[Theorem 4]{Keller05} we can choose $G$ to be a triangle functor.
\end{sloppypar}

It follows from Lemma \ref{lemma:Covering} that $\add T \subseteq \ZZ^{\perp_1}$ and that the composition $\add T \to \ZZ^{\perp_1} \to \CC_{[\ZZ]}$ is fully faithful.  Similarly, we can see $\add T'$ as a full subcategory of $\CC'_{[\ZZ']}$.

The above commutative square shows that $G|_{\add T} \cong \Phi|_{\add T}$.

Let $T_1 \to T_0 \to X \to T_1[1]$ be a minimal right $\TT$-approximation triangle of an object $X \in \CC$.  Following \S\ref{Subsection:IyamaYoshinoReduction}, we know that there is a triangle $T_1 \to T_0 \to X \to T_1 \langle 1 \rangle$ in $\CC_{[\ZZ]}$.  Likewise (see Lemma \ref{lemma:Nonfunctor1}) we know that there is a triangle $\Phi(T_1) \to \Phi(T_0) \to F(X) \to \Phi(T_1) \langle 1 \rangle$ in $\CC'_{[\ZZ']}$.

Using the fact that $G$ is a triangle functor and that $G|_{\add T} \cong \Phi|_{\add T}$, we find that $F(X) \cong G(X)$.  {Summarized}, we have
$$\begin{array}{rclr}
\End_\CC(X) & \cong & \End_{\CC_{[\ZZ]}}(X) & \mbox{since $X$ lives in $\CC_{[\ZZ]}$ by Lemma \ref{lemma:Covering}} \\
&\cong& \End_{\CC'_{[\ZZ']}}(G(X)) & \mbox{since $G$ is an equivalence} \\
&\cong& \End_{\CC'_{[\ZZ']}}(F(X)) & \mbox{since $F(X) \cong G(X)$} \\
&\cong& \End_{\CC'}(F(X)) & \mbox{since $F(X)$ lives in $\CC'_{[\ZZ']}$ by Lemma \ref{lemma:Covering}.} \\
\end{array}$$

We are now ready to show the following proposition.

\begin{proposition}\label{proposition:EndomorphismsUnderF} Let $X,Y \in \CC$.
\begin{enumerate}
\item\label{enumerate:EndomorphismsUnderF} There is an algebra-isomorphism $F_X: \End (X) \to \End(F(X))$,
\item\label{enumerate:ExtensionsUnderF} $\Ext^1_\CC(X,Y) \cong \Ext^1(F(X), F(Y))$.
\end{enumerate}
\end{proposition}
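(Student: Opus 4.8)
The plan is to prove both statements by reducing to the setting where a cluster-tilting object exists, exactly as was done in the discussion preceding the proposition. For part (\ref{enumerate:EndomorphismsUnderF}), given $X \in \CC$, first I would choose (using Lemma~\ref{lemma:AboutDT}) a minimal $T \in \TT$ such that $X \in \DD_T$ \emph{and} $F(X) \in \DD_{\Phi(T)}$; such a minimal $T$ exists since $\DD_T$ is closed under summands and increases with $\add T$. Then Lemma~\ref{lemma:Covering}, applied on the $\CC$-side and on the $\CC'$-side, produces functorially finite rigid subcategories $\ZZ \subseteq \CC$ and $\ZZ' \subseteq \CC'$ such that $X$ lives in $\CC_{[\ZZ]}$ with $T$ a cluster-tilting object there, and $F(X)$ lives in $\CC'_{[\ZZ']}$ with $\Phi(T)$ a cluster-tilting object there; moreover $\CC_{[\ZZ]} \cong \CC_{\End T}$ and $\CC'_{[\ZZ']} \cong \CC_{\End \Phi(T)}$ as algebraic triangulated categories. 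Since $\Phi$ is an equivalence $\TT \to \TT'$, it restricts to an algebra-isomorphism $\End_\CC(T) \cong \End_{\CC'}(\Phi(T))$, so the two orbit categories $\CC_{[\ZZ]}$ and $\CC'_{[\ZZ']}$ are triangle-equivalent. By the $2$-universal property of orbit categories (and \cite[Theorem~4]{Keller05}), this equivalence can be chosen to be a triangle functor $G\colon \CC_{[\ZZ]} \to \CC'_{[\ZZ']}$ with $G|_{\add T} \cong \Phi|_{\add T}$. One then checks $F(X) \cong G(X)$: both fit into triangles $T_1 \to T_0 \to (-) \to T_1\langle 1 \rangle$ (in $\CC_{[\ZZ]}$) resp. $\Phi(T_1) \to \Phi(T_0) \to (-) \to \Phi(T_1)\langle 1\rangle$ (in $\CC'_{[\ZZ']}$) coming from a fixed minimal right $\TT$-approximation triangle of $X$ (using Lemma~\ref{lemma:Nonfunctor1}(1)), and since $G$ is a triangle functor agreeing with $\Phi$ on $\add T$, it sends the first triangle to (an isomorph of) the second, forcing $G(X) \cong F(X)$. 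The chain of isomorphisms
\[
\End_\CC(X) \cong \End_{\CC_{[\ZZ]}}(X) \cong \End_{\CC'_{[\ZZ']}}(G(X)) \cong \End_{\CC'_{[\ZZ']}}(F(X)) \cong \End_{\CC'}(F(X))
\]
then gives the desired algebra-isomorphism $F_X$; the first and last isomorphisms hold because $X$ lives in $\CC_{[\ZZ]}$ and $F(X)$ lives in $\CC'_{[\ZZ']}$ (Lemma~\ref{lemma:Covering}), and the middle one because $G$ is an equivalence.

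For part (\ref{enumerate:ExtensionsUnderF}), I would run the same argument but applied to $X \oplus Y \oplus (X\oplus Y)[1]$ rather than to $X$ alone. Concretely, choose $T \in \TT$ large enough that all of $X$, $Y$, $X[1]$, $Y[1]$ lie in $\DD_T$ and all of $F(X),F(Y),F(X)[1],F(Y)[1]$ lie in $\DD_{\Phi(T)}$ — again possible by Lemma~\ref{lemma:AboutDT} and the fact that $F$ commutes with shifts (this needs checking: a minimal right $\TT$-approximation triangle of $X$ rotates to a triangle exhibiting $X[1]$ in terms of $\TT$-objects, and applying $\Phi$ and taking cones commutes with rotation, so $F(X[1]) \cong F(X)[1]$; alternatively one records this as part of the setup). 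Then $\ZZ$ and $G$ can be chosen so that $X$, $Y$, $X[1]$, $Y[1]$ all live in $\CC_{[\ZZ]}$ and $F(X) \cong G(X)$, $F(Y) \cong G(Y)$. Because $X \oplus Y \oplus X[1] \oplus Y[1]$ lives in $\CC_{[\ZZ]}$, the remark following Definition~\ref{definition:LivesIn} gives $\Ext^1_\CC(X,Y) \cong \Ext^1_{\CC_{[\ZZ]}}(X,Y)$; applying the triangle equivalence $G$ and then descending along $(\ZZ')^{\perp_1} \to \CC'_{[\ZZ']}$ on the $\CC'$-side (again using that $F(X)\oplus F(Y)\oplus F(X)[1]\oplus F(Y)[1]$ lives in $\CC'_{[\ZZ']}$) yields $\Ext^1_{\CC_{[\ZZ]}}(X,Y) \cong \Ext^1_{\CC'_{[\ZZ']}}(F(X),F(Y)) \cong \Ext^1_{\CC'}(F(X),F(Y))$, as wanted.

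The main obstacle, and the step deserving the most care, is verifying that the choices can be made \emph{simultaneously} on both sides — i.e. that one $T$, hence one reduction $\CC_{[\ZZ]}$ and one equivalence $G$, can be used to handle $X$, $Y$ and their shifts all at once, and that $G$ genuinely identifies $F$ with its restriction. This is where Lemma~\ref{lemma:AboutDT}(2) (monotonicity of $\DD_T$ in $T$) and Lemma~\ref{lemma:Nonfunctor1}(1) (compatibility of $F$ with minimal right $\TT$-approximation triangles) do the heavy lifting: the former lets us enlarge $T$ until it dominates every object we care about, the latter pins down $F(X)$ intrinsically inside $\CC'_{[\ZZ']}$ so that the triangle functor $G$ has no freedom to disagree with it. A secondary point to be careful about is that $F$ respects shifts, which should be dispatched by the same cone-of-$\Phi$ construction applied to rotated approximation triangles. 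Everything else is the bookkeeping chain of isomorphisms displayed above, all of which is already essentially written out in the text preceding the proposition.
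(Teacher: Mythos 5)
Part (\ref{enumerate:EndomorphismsUnderF}) of your proposal coincides with the paper's argument: the paper's proof of that item is literally ``this has been established above,'' referring to the same discussion you reproduce (minimal $T$ with $X \in \DD_T$ and $F(X) \in \DD_{\Phi(T)}$, the two reductions, the triangle equivalence $G$ from the $2$-universal property with $G|_{\add T} \cong \Phi|_{\add T}$, the identification $F(X) \cong G(X)$ via Lemma~\ref{lemma:Nonfunctor1}, and the four-step chain of isomorphisms).

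For part (\ref{enumerate:ExtensionsUnderF}) your route diverges from the paper's and has two soft spots. First, the claimed shift-compatibility $F(X[1]) \cong F(X)[1]$ does not follow from rotating the approximation triangle: rotating $T_1 \to T_0 \to X \to T_1[1]$ three times yields $T_1[1] \to T_0[1] \to X[1] \to T_1[2]$, which is a right $\TT[1]$-approximation triangle of $X[1]$, not a right $\TT$-approximation triangle, so it does not compute $F(X[1])$ from Construction~\ref{construction:F}. (The statement is true, but is most cleanly deduced \emph{a posteriori} from $F(-) \cong G(-)$ with $G$ a triangle functor --- which makes your use of it in setting up the choice of $T$ circular as written.) Second, and more importantly, the isomorphism $\Ext^1_\CC(X,Y) \cong \Ext^1_{\CC_{[\ZZ]}}(X,Y)$ is not merely a matter of $X$, $Y$, $X[1]$, $Y[1]$ living in the reduction: $\Ext^1$ in $\CC_{[\ZZ]}$ is $\Hom_{\CC_{[\ZZ]}}(X, Y\langle 1\rangle)$, computed with the induced suspension $\langle 1\rangle$ (the cone of a left $\ZZ$-approximation of $Y$), which differs from $Y[1]$; the remark after Definition~\ref{definition:LivesIn} is only about endomorphisms and does not supply this identification. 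What is actually needed is \cite[Lemma 4.8]{IyamaYoshino08}, which gives $\Ext^1_\CC(X,Y) \cong \Ext^1_{\CC_{[\ZZ]}}(X,Y)$ for all $X,Y \in \ZZ^{\perp_1}$, and this is exactly what the paper invokes: it only requires $X,Y \in \DD_T \subseteq \ZZ^{\perp_1}$ (and correspondingly $F(X), F(Y) \in \DD_{\Phi(T)} \subseteq (\ZZ')^{\perp_1}$), applies that lemma on both sides, and transports across the triangle equivalence $G$. Replacing your ``lives in'' step by this citation repairs the argument and simultaneously removes the need for the shift-compatibility of $F$.
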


\begin{proof}
\begin{enumerate}
\item This has been established above.
\item Let $T \in \CC$ such that {$X,Y \in \DD_T \subseteq \CC$ and $F(X),F(Y) \in \DD_{\Phi(T)} \subseteq \CC'$}.  In this case, there are rigid and functorially finite subcategories $\ZZ \subseteq \CC$ and $\ZZ' \subseteq \CC'$ such that {$\DD_T \subseteq \ZZ^{\perp_1}$ and $\DD_{\Phi(T)} \subseteq (\ZZ')^{\perp_1}$} (see Lemma~\ref{lemma:AboutDT}).

It then follows from \cite[Lemma 4.8]{IyamaYoshino08} that $\Ext^1_\CC(X,Y) \cong \Ext^1_{\CC_{[\ZZ]}}(X,Y)$ and $\Ext^1_{\CC'}(X,Y) \cong \Ext^1_{\CC'_{[\ZZ']}}(F(X),F(Y))$.  Using the knowledge that $G(X) \cong F(X)$ and $G(Y) \cong F(Y)$ in $\CC'_{[\ZZ]}$ and that $G: \CC_{[\ZZ]} \to \CC'_{[\ZZ']}$ is a triangle equivalence, we see that $\Ext^1(X,Y) \cong \Ext^1(F(X), F(Y))$.
\end{enumerate}
\end{proof}

\begin{corollary}\label{corollary:IndecomposablesUnderF}
$F: \Ob \CC \to \Ob \CC'$ maps indecomposable objects to indecomposable objects.
\end{corollary}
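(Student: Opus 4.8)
The plan is to reduce the statement to the endomorphism-ring criterion for indecomposability. Recall that in a Krull-Schmidt category an object is indecomposable if and only if its endomorphism ring is local, and that, in any additive category, a nonzero object whose endomorphism ring is local is automatically indecomposable (the only idempotents of a local ring are $0$ and $1$).

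So, given an indecomposable object $X \in \CC$, I would argue as follows. Since $\CC$ is Krull-Schmidt, $\End_\CC(X)$ is a local ring. By Proposition~\ref{proposition:EndomorphismsUnderF}(\ref{enumerate:EndomorphismsUnderF}) there is an algebra-isomorphism $F_X\colon \End_\CC(X) \to \End_{\CC'}(F(X))$, so $\End_{\CC'}(F(X))$ is local as well; in particular it is a nonzero ring, whence $F(X) \neq 0$. A nonzero object with local endomorphism ring is indecomposable, so $F(X)$ is indecomposable, as claimed. (Alternatively, one could combine Lemma~\ref{lemma:Nonfunctor1}(2)--(3) with the uniqueness of Krull-Schmidt decompositions to rule out a nontrivial splitting of $F(X)$, but the endomorphism-ring argument is more direct.)

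There is essentially no obstacle here: all of the substance is already contained in Proposition~\ref{proposition:EndomorphismsUnderF}, whose proof in turn rests on the identification $F(X) \cong G(X)$ for a triangle equivalence $G$ between suitably chosen Iyama-Yoshino reductions of $\CC$ and $\CC'$.
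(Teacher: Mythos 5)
Your proof is correct and is exactly the argument the paper intends: the corollary is stated without proof immediately after Proposition~\ref{proposition:EndomorphismsUnderF}, and the intended justification is precisely the locality of $\End_{\CC'}(F(X))$ transported through the algebra-isomorphism $F_X$. Nothing further is needed.
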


\begin{corollary}\label{corollary:EndomorphismsUnderF}
For each $X \in \CC$, there is an equivalence $F_X\colon \add_\CC X \to \add_{\CC'} F(X)$, whose action on objects is given by $F$.
\end{corollary}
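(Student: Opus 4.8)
The plan is to reuse the triangle equivalence $G\colon \CC_{[\ZZ]} \to \CC'_{[\ZZ']}$ of Figure~\ref{figure:Nonfunctor}, after observing that both $\add_\CC X$ and $\add_{\CC'} F(X)$ embed fully faithfully into the respective Iyama--Yoshino reductions. Concretely, I would first fix $X$ and choose, exactly as in the construction preceding Proposition~\ref{proposition:EndomorphismsUnderF}, an object $T \in \TT$ with $X \in \DD_T$ and $F(X) \in \DD_{\Phi(T)}$, together with the associated rigid subcategories $\ZZ \subseteq \CC$, $\ZZ' \subseteq \CC'$ and the triangle equivalence $G$ with $G|_{\add T} \cong \Phi|_{\add T}$ and $G(X) \cong F(X)$. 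Since $\DD_T$ is closed under direct summands (Lemma~\ref{lemma:AboutDT}) and $X \in \DD_T$, every object of $\add_\CC X$ lies in $\DD_T$, hence lives in $\CC_{[\ZZ]}$ by Lemma~\ref{lemma:Covering}; moreover $[\ZZ](Y,Y') = 0$ for all $Y,Y' \in \add_\CC X$ because $[\ZZ](X^n,X^n) = M_n([\ZZ](X,X)) = 0$. Consequently the quotient functor $\ZZ^{\perp_1} \to \CC_{[\ZZ]}$ restricts to a fully faithful functor $\iota\colon \add_\CC X \to \CC_{[\ZZ]}$; as $\CC$ is Krull--Schmidt and $\iota$ carries a splitting idempotent of $X^n$ in $\CC$ to its image in $\CC_{[\ZZ]}$, the essential image of $\iota$ is precisely $\add_{\CC_{[\ZZ]}} X$. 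The identical discussion on the primed side gives a fully faithful $\iota'\colon \add_{\CC'} F(X) \to \CC'_{[\ZZ']}$ with essential image $\add_{\CC'_{[\ZZ']}} F(X) = \add_{\CC'_{[\ZZ']}} G(X)$.

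Constructing the equivalence is then soft. The equivalence $G$ restricts to an equivalence $\add_{\CC_{[\ZZ]}} X \to \add_{\CC'_{[\ZZ']}} G(X)$, so $G \circ \iota$ is fully faithful with essential image $\add_{\CC'_{[\ZZ']}} F(X)$, which is exactly the essential image of $\iota'$. Factoring $G \circ \iota$ through the fully faithful $\iota'$ produces a functor $F_X\colon \add_\CC X \to \add_{\CC'} F(X)$, unique up to natural isomorphism, with $\iota' \circ F_X \cong G \circ \iota$; it is fully faithful because $G\circ\iota$ and $\iota'$ are, and essentially surjective because $G\circ\iota$ already reaches every object of $\add_{\CC'_{[\ZZ']}} F(X)$ up to isomorphism. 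Hence $F_X$ is an equivalence.

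The step requiring care is matching $F_X$ with the object-level map $F$. For $Y \in \add_\CC X$ the isomorphism $\iota' F_X(Y) \cong G(Y)$ holds in $\CC'_{[\ZZ']}$, and it lifts to an isomorphism $F_X(Y) \cong G(Y)$ in $\CC'$ because both objects live in $\CC'_{[\ZZ']}$ and $[\ZZ']$ vanishes between objects of $\add_{\CC'} F(X)$. So it remains to prove $G(Y) \cong F(Y)$ for all $Y \in \add_\CC X$. For $Y = X$ this is part of the construction in the text. For an indecomposable summand $X_i$ of $X$, one has $X_i \in \DD_T$ and, using Lemma~\ref{lemma:Nonfunctor1}(3) together with the closure of $\DD_{\Phi(T)}$ under summands, also $F(X_i) \in \DD_{\Phi(T)}$; hence the argument of the text---``$G$ is a triangle functor and $G|_{\add T} \cong \Phi|_{\add T}$, therefore $G(Z) \cong F(Z)$''---applies verbatim with $Z = X_i$, giving $G(X_i) \cong F(X_i)$. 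Writing $Y \cong \bigoplus_i X_i^{a_i}$ and applying Lemma~\ref{lemma:Nonfunctor1}(2),(3) then yields $G(Y) \cong \bigoplus_i F(X_i)^{a_i} \cong F(Y)$, so $F_X(Y) \cong F(Y)$ for every $Y$. Finally, transporting the functor structure of $F_X$ along a chosen family of isomorphisms $F_X(Y) \cong F(Y)$ gives a naturally isomorphic equivalence whose action on objects is literally $F$; Corollary~\ref{corollary:IndecomposablesUnderF} is not logically needed but confirms that this prescription sends indecomposables to indecomposables as it should.

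I expect essentially all the difficulty to be concentrated in the last paragraph---reconciling the abstractly constructed equivalence with the merely object-level definition of $F$, for which one must genuinely recall how $F_X$ (equivalently $F$) was built out of $G$ and track compatibility with direct-sum decompositions. The material in the first two paragraphs is routine bookkeeping about the reductions $\CC_{[\ZZ]}$ and $\CC'_{[\ZZ']}$ and the behaviour of equivalences on additive closures. (An alternative, more Morita-theoretic route would be to use Proposition~\ref{proposition:EndomorphismsUnderF}(1) to get an algebra isomorphism $\End_\CC(X)\cong\End_{\CC'}(F(X))$, identify $\add_\CC X$ and $\add_{\CC'}F(X)$ with categories of finitely generated projectives over these algebras, and then check that the idempotent cutting out a summand $X_i$ of $X$ is carried to one conjugate to the idempotent cutting out $F(X_i)$; this again reduces to the same compatibility of $F_X$ with $G$ and with direct sums.)
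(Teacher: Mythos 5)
Your proposal is correct, but it takes a genuinely different route from the paper. The paper's entire proof is the single sentence ``This follows directly from Proposition~\ref{proposition:EndomorphismsUnderF}(\ref{enumerate:EndomorphismsUnderF})'': one identifies $\add_\CC X$ with the finitely generated projective modules over $\End_\CC(X)$ and $\add_{\CC'}F(X)$ with those over $\End_{\CC'}(F(X))$, and the algebra isomorphism $F_X$ induces the desired equivalence --- i.e.\ precisely the Morita-theoretic route you relegate to your final parenthesis. Your main argument instead goes back to the construction of $F$: you embed $\add_\CC X$ and $\add_{\CC'}F(X)$ fully faithfully into the reductions $\CC_{[\ZZ]}$ and $\CC'_{[\ZZ']}$ (correctly upgrading ``$[\ZZ](Y,Y)=0$'' to the vanishing of $[\ZZ]$ on all of $\add_\CC X$ via $[\ZZ](X^n,X^n)=0$), restrict the triangle equivalence $G$, and then verify summand by summand that $G(Y)\cong F(Y)$ using Lemma~\ref{lemma:Nonfunctor1}. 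What your approach buys is exactly the point the paper's one-liner leaves implicit: the abstractly induced equivalence really does act on objects by $F$, which in the Morita route amounts to checking that the idempotent cutting out a summand $X_i$ of $X$ corresponds under $F_X$ to one cutting out $F(X_i)$ --- the same compatibility you establish. Both arguments are sound; yours is longer but more self-contained.
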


\begin{proof}
This follows directly from Proposition \ref{proposition:EndomorphismsUnderF}(\ref{enumerate:EndomorphismsUnderF}).
\end{proof}

\begin{theorem}\label{theorem:EquivalentClusterStructures}
Let $\CC$ and $\CC'$ be Krull-Schmidt algebraic 2-Calabi-Yau categories with cluster-tilting subcategories, $\TT$ and $\TT'$, respectively.  Assume that $\TT$ and $\TT'$ are directed and equivalent.

There is a map $F: \Ob \CC \to \Ob \CC'$ satisfying the following properties:
\begin{enumerate}
\item\label{enumerate:FBijectionObjects} $F$ induces a bijection on isomorphism classes of objects,
\item\label{enumerate:FBijectionCT} $F$ induces a bijection on cluster-tilting subcategories,
\item\label{enumerate:FMutation} $F$ respects mutations.
\end{enumerate}
\end{theorem}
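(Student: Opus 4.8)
The plan is to assemble Theorem~\ref{theorem:EquivalentClusterStructures} from the object-level map $F$ of Construction~\ref{construction:F} together with the local comparison machinery already developed in this section and in \S\ref{section:Reduction}. Property~(\ref{enumerate:FBijectionObjects}) is immediate: Lemma~\ref{lemma:Nonfunctor1}(2) shows that $F$ is injective on isomorphism classes, and surjectivity follows by running the same construction with the roles of $\CC$ and $\CC'$ interchanged (using the equivalence $\Phi^{-1}\colon \TT' \to \TT$), since for $X' \in \CC'$ with minimal right $\TT'$-approximation triangle $T'_1 \to T'_0 \to X' \to T'_1[1]$ the object $\cone(\Phi^{-1}(T'_1 \to T'_0))$ maps back to $X'$ under $F$ by Lemma~\ref{lemma:Nonfunctor1}(1). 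So $F$ is a bijection on $\ind$, and by Lemma~\ref{lemma:Nonfunctor1}(3) it is additive, hence a bijection on all isomorphism classes.

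Next I would prove~(\ref{enumerate:FBijectionCT}). The key input is Proposition~\ref{proposition:EndomorphismsUnderF}: for all $X,Y \in \CC$ we have an algebra-isomorphism $\End_\CC(X) \cong \End_{\CC'}(F(X))$ and $\Ext^1_\CC(X,Y) \cong \Ext^1_{\CC'}(F(X),F(Y))$. The first consequence is that $F$ preserves and reflects rigidity of subcategories: $\Psi \subseteq \CC$ is rigid iff $F(\Psi) \subseteq \CC'$ is rigid. Since a cluster-tilting subcategory is in particular maximal rigid (Remark~\ref{remark:DefClusterTilting}(1)) and, because $\CC$ and $\CC'$ both have cluster-tilting subcategories, the cluster-tilting subcategories are \emph{exactly} the functorially finite maximal rigid ones (Remark~\ref{remark:DefClusterTilting}(\ref{enumerate:MaxRigidVsClusterTilting})), it suffices to check that $F$ and $F^{-1}$ preserve maximal rigidity and functorial finiteness. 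Maximality is formal from the $\Ext^1$-isomorphism applied to all pairs of indecomposables together with the bijection on $\ind$. For functorial finiteness I would argue locally: given a cluster-tilting subcategory $\RR \subseteq \CC$ (with $F(\RR)$ maximal rigid) and an arbitrary object $X' = F(X) \in \CC'$, Theorem~\ref{theorem:Covering} (applied in $\CC$, using that $\CC$ has the directed cluster-tilting subcategory $\TT$) provides $\ZZ \subseteq \CC$ with $X \oplus X[1]$ living in $\CC_{[\ZZ]}$ and a cluster-tilting object $T \in \TT$ living there; enlarging $\ZZ$ to also capture finitely many objects of $\RR$ and using Theorem~\ref{theorem:HereditaryBongartzComplement}/Corollary~\ref{corollary:BongartzQuiver} we can pass to $\CC_{[\ZZ]} \cong \CC_{\End T} \cong \CC_Q$ for a finite quiver $Q$, where every subcategory is functorially finite; then transport the approximation of $X$ by $\RR$ back up through the reduction, and across to $\CC'$ via the triangle equivalence $G$ of Figure~\ref{figure:Nonfunctor} (which identifies $F(X)$ with $G(X)$). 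This is the step I expect to be the main obstacle: one has to be careful that the rigid subcategory $\ZZ$ chosen to straighten $X$ can simultaneously be enlarged to expose a right $\RR$-approximation and that the whole picture on the $\CC$-side transfers coherently to the $\CC'$-side, since $G$ is only defined on a reduction $\CC_{[\ZZ]}$ and $F$ itself is not a functor. I would handle this by fixing, once and for all for the given pair $(X,\RR)$, a single $\ZZ$ large enough for both purposes, invoking Lemma~\ref{lemma:AboutDT} to see the relevant objects lie in $\DD_T \subseteq \ZZ^{\perp_1}$, and then using that approximations in $\CC_{[\ZZ]}$ lift to approximations in $\CC$ (as in the proof of Lemma~\ref{lemma:Covering}).

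Finally, for~(\ref{enumerate:FMutation}) I would spell out what ``respects mutations'' means and then verify it. If $\RR \subseteq \CC$ is a cluster-tilting subcategory and $R \in \ind\RR$, Corollary~\ref{corollary:Mutation} gives the unique $R^* \in \ind\CC$ and the two exchange triangles $R \to D \to R^* \to R[1]$ and $R^* \to D' \to R \to R^*[1]$ with $D,D' \in \add(\ind\RR \setminus \{R\})$ minimal (co)approximations. I claim $F$ sends this mutation to the mutation of $F(\RR)$ at $F(R)$, i.e.\ $F(R^*) \cong (F(R))^*$ with respect to $F(\RR)$, and $F$ sends the exchange triangles to the exchange triangles. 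Again this is a local statement: choose $\ZZ \subseteq \CC$ (enlarging as above) so that $R, R^*, D, D'$ and a cofinite piece of $\RR$ all live in $\CC_{[\ZZ]} \cong \CC_Q$; inside the honest triangulated equivalence $G\colon \CC_{[\ZZ]} \xrightarrow{\sim} \CC'_{[\ZZ']}$, Iyama--Yoshino mutation (Theorem~\ref{theorem:IyamaYoshinoMutation}) is preserved on the nose because $G$ is a triangle functor carrying cluster-tilting subcategories to cluster-tilting subcategories, and $G(R^*)$ is then forced to be $(G(R))^*$ by the uniqueness clause of Theorem~\ref{theorem:IyamaYoshinoMutation}; since $G$ agrees with $F$ on objects living in the reduction (the identification $F(-) \cong G(-)$ established in the discussion preceding Proposition~\ref{proposition:EndomorphismsUnderF}), the same holds for $F$. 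Minimality of the approximations is preserved because $G$ preserves radical morphisms (equivalences do), just as in the proof of Lemma~\ref{lemma:Nonfunctor1}(1). Combining the three parts completes the proof; the only genuinely delicate bookkeeping, as noted, is the uniform choice of the reduction $\ZZ$ in parts~(\ref{enumerate:FBijectionCT}) and~(\ref{enumerate:FMutation}) so that all relevant finitely many objects live in it at once, which is exactly what Lemma~\ref{lemma:AboutDT} and Proposition~\ref{proposition:ColimitOfDT} are set up to provide.
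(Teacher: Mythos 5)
Your part~(\ref{enumerate:FBijectionObjects}) is exactly the paper's argument ($F'\circ F \cong \mathrm{id}$ and $F\circ F' \cong \mathrm{id}$ on objects via Lemma~\ref{lemma:Nonfunctor1}), and your reliance on Proposition~\ref{proposition:EndomorphismsUnderF} and Corollary~\ref{corollary:EndomorphismsUnderF} is the right skeleton. The gap is in the central device you propose for parts~(\ref{enumerate:FBijectionCT}) and~(\ref{enumerate:FMutation}): fixing ``once and for all for the given pair $(X,\RR)$, a single $\ZZ$'' so that $X$, the approximation data, and ``a cofinite piece of $\RR$'' all live in $\CC_{[\ZZ]} \cong \CC_Q$ with $Q$ finite. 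This cannot be done when $\ind\RR$ is infinite (the case of interest): in a reduction admitting a cluster-tilting object, every rigid subcategory has only finitely many indecomposables, so no such $\ZZ$ exposes more than finitely many objects of $\RR$. But both the statement ``$F(\RR)$ is contravariantly finite in $\CC'$'' and the statement ``$F(\mu_R\RR)$ is the mutation of $F(\RR)$ at $F(R)$'' are quantified over \emph{all} of $\ind\RR$ (every test map $U' \to X'$ with $U' \in F(\RR)$ must factor; the exchange triangles are approximations with respect to the infinite category $\add(\ind\RR\setminus\{R\})$). Verifying these inside one reduction only controls the finitely many objects visible there, so the argument as written does not close.

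The repair is essentially what the paper does, and it avoids choosing a global reduction. For~(\ref{enumerate:FBijectionCT}) one first checks $F(\UU) = F(\UU)^{\perp_1}$ directly from Proposition~\ref{proposition:EndomorphismsUnderF}(\ref{enumerate:ExtensionsUnderF}) (no appeal to maximal rigidity or \cite[Theorem 2.6]{ZhouZhu11} is needed), and then takes as candidate right $F(\UU)$-approximation of $X'$ the map $\Hom(F(U),X')\otimes F(U) \to X'$, where $U \to F'(X')$ is a right $\UU$-approximation in $\CC$; the factorization property is verified \emph{one test object $U'$ at a time} by transporting the finitely many relevant objects through the additive equivalence $\add_{\CC'}Z' \simeq \add_\CC F'(Z')$ of Corollary~\ref{corollary:EndomorphismsUnderF}. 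Since the quantifier ranges over individual objects, a per-object comparison suffices and the infinitude of $\RR$ is harmless. For~(\ref{enumerate:FMutation}) no reduction and no exchange triangles are needed at all: once~(\ref{enumerate:FBijectionObjects}) and~(\ref{enumerate:FBijectionCT}) are known, $F(\UU\cap\VV) = F(\UU)\cap F(\VV)$ is an almost complete cluster-tilting subcategory, and the uniqueness clause of Theorem~\ref{theorem:IyamaYoshinoMutation} forces $F(\VV)$ to be the mutation of $F(\UU)$ at $F(X)$. Your heavier route through $G\colon \CC_{[\ZZ]}\to\CC'_{[\ZZ']}$ would additionally identify the exchange triangles, but only after the quantification problem above is resolved.
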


\begin{proof}
Let $\Phi: \TT \to \TT'$ be an equivalence, and let $\Phi': \TT' \to \TT$ be a quasi-inverse.  We let $F: \Ob \CC \to \Ob \CC'$ and $F': \Ob \CC' \to \Ob \CC$ be the maps from Construction \ref{construction:F}, based on $\Phi$ and $\Phi'$, respectively.

\begin{enumerate}
\item This follows from $F' \circ F (X) \cong X$ and $F \circ F' (X') \cong X'$, for all $X \in \CC$ and $X \in \CC'$, as is readily verified.
\item Let $\UU \subseteq \CC$ be any cluster-tilting subcategory. We will first show that $F(\UU) = F(\UU)^{\perp_1}$ (so that also $F(\UU) = {}^{\perp_1} F(\UU)$ since $\CC'$ is 2-Calabi-Yau). Clearly $F(\UU) \subseteq F(\UU)^{\perp_1}$ by Proposition~\ref{proposition:EndomorphismsUnderF}(\ref{enumerate:ExtensionsUnderF}). If on the other hand $Y \in F(\UU)^{\perp_1}$, we choose $X \in \CC$ such that $F(X) \cong Y$. Then $X \in \UU^{\perp_1}$, again by Proposition~\ref{proposition:EndomorphismsUnderF}(\ref{enumerate:ExtensionsUnderF}), so that $X \in \UU$. It follows that $Y \in F(\UU)$.

We will now prove that $F(\UU)$ is functorially finite in $\CC'$.  Let $X' \in \CC'$ and let $f\colon U \to F'(X')$ in $\CC$ be a right $\UU$-approximation map for $F'(X')$.  We claim that the canonical map $\epsilon\colon \Hom(F(U), X') \otimes F(U) \to X'$ is a right $F(\UU)$-approximation of $X'$.  This would then establish that $F(\UU)$ is {contravariantly} finite in $\CC'$.

Since every map $F(U) \to X'$ factors through $\epsilon\colon \Hom(F(U), X') \otimes F(U) \to X'$, we only need show that for every morphism $g'\colon U' \to X'$ (with $U' \in F(U)$), there is a map $f'\colon F(U) \to X'$ such that $g'$ factors through $f'$, that is to say, for every $g'\colon U' \to X'$, there are morphisms $h'\colon U' \to F(U)$ and $f'\colon F(U) \to X'$ such that the diagram
\[
\xymatrix{&U' \ar[d]^{g'} \ar[ld]_{h'} \\ F(U) \ar[r]_{f'} & X'}
\]
commutes.  It follows from the equivalence $F'_{Z'}\colon \add_{\CC'} Z' \to \add_\CC F'(Z')$ from Corollary \ref{corollary:EndomorphismsUnderF} that it suffices to show that for every $g\colon F'(U') \to F'(X')$, there is a morphism $f_1\colon U \to F'(X')$ such that $g$ factors through $f_1$. Here, we may choose $f_1$ to be the right $\UU$-approximation $f\colon U \to F'(X')$.  The equivalence $F'_{Z'}\colon \add_{\CC'} Z' \to \add_\CC F'(Z')$ now establishes the existence of the morphism $f'\colon F(U) \to X'$ in $\CC'$, as required.

That $F(U)$ is {covariantly} finite in $\CC'$ is shown similarly.

\item Let $\UU, \VV \subseteq \CC$ be cluster-tilting subcategories of $\CC$, such that $\VV$ is the mutation of $\UU$ at an indecomposable object $X \in \UU$.  We have already verified in (\ref{enumerate:FBijectionCT}) that $F(\UU)$ and $F(\VV)$ are cluster-tilting subcategories of $\CC'$.

It follows from (\ref{enumerate:FBijectionObjects}) that $F(\UU \cap \VV)$ is an almost complete cluster-tilting subcategory (see~\S\ref{subsection:pairs}), and Theorem~\ref{theorem:IyamaYoshinoMutation} yields that $F(\VV)$ is the mutation of $F(\UU)$.
\qedhere
\end{enumerate}
\end{proof}

\begin{remark}
Let $\LL$ be an unbounded linearly ordered locally discrete set, and consider the associated cyclically ordered set $(C_\LL, R) = (\LL, \leq)_{\rm cyc}$.  We will consider the associated categories $\CC_{\phi}(C_\LL)$ from \cite[\S 2]{IgusaTodorov15} (here, {$\phi\colon C \to C, c \mapsto c+1$}) and $\CC_\LL$ from \S\ref{section:CominatoricsOfTypeA}.  Note that in both categories, the objects correspond to arcs in $(C,R)$ (this is \cite[Lemma 2.1.6]{IgusaTodorov15} for $\CC_{\phi}(C_\LL)$ and Proposition \ref{proposition:ObjectsArcs} for $\CC_\LL$).

The categories $\CC_{\phi}(C_\LL)$ and $\CC_\LL$ are both algebraic and have equivalent (directed) cluster-tilting subcategories (given by a triangulation as in Example \ref{example:LightConeArcs}).  Theorem \ref{theorem:EquivalentClusterStructures} thus allows for a comparison between the cluster-tilting subcategories.

In general, the clusters considered in \cite[Definition 2.4.6]{IgusaTodorov15} are not indecomposable (as additive categories), and hence do not correspond to cluster-tilting subcategories.  Indeed, consider, for example, the case where $\LL = \bZ \cdot \bZ$, and consider the triangulation of Example \ref{example:NoncrossingSets}(\ref{enumerate:Example6}).  This is a cluster in the sense of \cite[Definition 2.4.6]{IgusaTodorov15} but is not connected in the sense of Definition \ref{definition:MainTriangulationProperties} and hence does not correspond to a cluster-tilting subcategory (see Theorem \ref{theorem:WhenClusterTilting}).  Therefore, it is cluster in the sense of Theorem \ref{theorem:ClusterStructure}.
\end{remark}

\section{About the existence of cluster maps}\label{section:ClusterMaps}

Let $\CC$ be an algebraic Krull-Schmidt 2-Calabi-Yau triangulated category with a directed cluster-tilting subcategory $\TT$. Assume furthermore that $\ind \TT$ is countable.  In this section, we prove that there exists a cluster map $\ind \CC \to \bQ(x_i)_{i \in \bN}$.  The motivation here is to allow for a better understanding of cluster algebras of infinite rank studied in~\cite{GrabowskiGratz14,Gratz15}.

The existence of the cluster map will follow from \cite{JorgensenPalu13} after we show that $\CC$ has a locally bounded cluster-tilting subcategory $\UU$. Note that this other cluster-tilting subcategory $\UU$ often \emph{cannot} be chosen to be directed {(or semi-hereditary, see Corollary~\ref{corollary:Trianglefree})}. We will exhibit a particular example in Remark~\ref{remark:LocFinAndDirectedMayBeIncompatible}.

We start by recollecting some properties of thread quivers from \cite{BergVanRoosmalen14}.

\subsection{About thread quivers}

Apart from the information recalled in \S\ref{subsection:ThreadQuivers}, we will need the following concepts (which were used in the proof of Theorem~\ref{theorem:ClassificationIntroduction} in \cite{BergVanRoosmalen14}). Let $\TT$ be a semi-hereditary dualizing $k$-variety.  For $X,Y \in \ind \TT$, we write $[X,Y]$ for the full replete (= closed under isomorphisms) additive subcategory of $\CC$ such that for all $Z \in \ind \CC$, we have $Z \in \ind [X,Y] \Leftrightarrow \Hom_\TT(X,Z) \not= 0$ and $\Hom_\TT(Z,Y) \not= 0$.

\begin{definition}
An indecomposable object $X \in \ind \TT$ is called a \emph{thread object} if there is a left almost split map $X \to M$ in $\TT$ and a right almost split map $N \to X$ in $\TT$ where $M$ and $N$ are indecomposable.  In this case, we write $X^+$ for $M$ and $X^-$ for $N$.

For $X,Y \in \ind \TT$, the subcategory $[X,Y]$ is called a \emph{thread} if every indecomposable object in $[X,Y]$ is a thread object in $\TT$.  A thread is called \emph{maximal} if it is not a proper subset of another thread.  It is called \emph{infinite} if it contains infinitely many nonisomorphic indecomposables.

An indecomposable object in $\TT$ which is not a thread object is called a \emph{nonthread object}.
\end{definition}

We recall some relevant properties.

\begin{proposition}\label{proposition:MaximalThreads}
Let $\TT$ be a semi-hereditary dualizing $k$-variety and let $[X,Y] \subseteq \TT$ be a thread.
\begin{enumerate}
\item The thread $[X,Y]$ is contained in a maximal thread $[X',Y'] \subseteq \TT$.
\item There is a bounded linearly ordered locally discrete set $\LL$ such that $[X,Y] {\simeq} k\LL$.
\end{enumerate}
\end{proposition}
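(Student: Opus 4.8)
\textbf{Proof proposal for Proposition~\ref{proposition:MaximalThreads}.}

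The plan is to analyze the structure of a thread $[X,Y]$ using the almost split maps that exist inside $\TT$. For part~(1), I would argue by extending the interval $[X,Y]$ in both directions along the unique chains of irreducible maps. The key point is that a thread object $Z$ has, by definition, an \emph{indecomposable} predecessor $Z^-$ and an \emph{indecomposable} successor $Z^+$ in $\TT$. Starting from the endpoints $X$ and $Y$, I would set $X' = \lim_{n} X^{-(n)}$ and $Y' = \lim_n Y^{+(n)}$ in the naive sense: keep moving to the predecessor $X^-$ as long as that object is still a thread object, and dually for successors of $Y$. This process defines a thread $[X',Y']$ containing $[X,Y]$ which is maximal: if $[X',Y'] \subsetneq [X'',Y'']$ were a strictly larger thread, then either $X'' $ lies strictly before $X'$ or $Y''$ lies strictly after $Y'$; but then $(X')^-$ (resp.\ $(Y')^+$) would be a thread object inside $[X'',Y'']$, contradicting the way $X'$ (resp.\ $Y'$) was chosen as the last thread object along the chain. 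One must be slightly careful to check the process actually defines objects of $\TT$ — since $\TT$ is a dualizing $k$-variety and hence Krull-Schmidt, the predecessor/successor objects $Z^\pm$ are genuine indecomposables of $\TT$, so no limiting argument is needed: $X'$ and $Y'$ are simply the objects reached after finitely or infinitely many steps, and in the infinite case the maximal thread is unbounded in that direction but is still a full subcategory of $\TT$.

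For part~(2), I would show that a thread, as a full subcategory of $\TT$, is equivalent as a $k$-category to $k\LL$ for a suitable bounded locally discrete linearly ordered set $\LL$. First, inside a thread every indecomposable object $Z$ lies in a unique two-sided infinite-or-finite chain $\cdots \to Z^{--} \to Z^- \to Z \to Z^+ \to Z^{++} \to \cdots$ of irreducible maps, with all intermediate objects indecomposable; this gives the underlying set of $\LL$ a linear order (declare $Z \leq W$ iff $\Hom_\TT(Z,W) \neq 0$, equivalently iff $W$ is reached from $Z$ by applying $(-)^+$ finitely many times). By Corollary~\ref{corollary:Trianglefree} (applied via the fact that $\TT$, being a semi-hereditary dualizing $k$-variety, is directed, so every morphism between indecomposables is a monomorphism by \cite[Proposition 1.1]{AuslanderReiten75}), every nonzero morphism between indecomposables in the thread is a monomorphism, composites of irreducible maps along the chain are nonzero, and $\Hom$-spaces between indecomposables in a thread are one-dimensional (this is where I expect to invoke the "thread object" hypothesis most heavily: the left/right almost split maps being to indecomposables forces $\rad^2$ to behave, so $\Hom_\TT(Z,W)$ is spanned by a single composite of irreducibles). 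Hence the full subcategory on the indecomposables of $[X,Y]$ has exactly the $\Hom$-spaces of the poset category $k\LL$, and composition is just "concatenate paths", so $[X,Y] \simeq k\LL$. Boundedness of $\LL$ is exactly the statement that $[X,Y]$ has a first object $X$ and a last object $Y$ (for the thread $[X,Y]$ itself, not necessarily for its maximal enlargement), and local discreteness is immediate since each $Z$ has a unique direct successor $Z^+$ and unique direct predecessor $Z^-$.

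The main obstacle I anticipate is the careful verification in part~(2) that $\Hom$-spaces inside a thread are at most one-dimensional and that composites of irreducible maps do not vanish — this is what really pins down the category to be $k\LL$ rather than some deformation, and it uses both semi-heredity (via directedness and the monomorphism property) and the definition of thread objects in an essential way. A clean route is to reduce to finite full subcategories: by Proposition~\ref{proposition:ShIsLocal} any finite full subcategory of $\TT$ is semi-hereditary, a finite directed semi-hereditary $k$-variety whose indecomposables form a single chain of irreducible maps is (by the classical theory of representations of linear $A_n$-type quivers, or directly) equivalent to $k\{1 < 2 < \cdots < n\}$, and then one passes to the colimit over finite subintervals. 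Most of the remaining work — checking that the order is total, that it is locally discrete, that the resulting equivalence is $k$-linear — is routine bookkeeping that I would not spell out in detail. I would cite \cite[\S7]{BergVanRoosmalen14} for the portions of this that were already carried out there in the course of proving Theorem~\ref{theorem:ClassificationIntroduction}.
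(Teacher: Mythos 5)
Your proposal attempts a direct argument where the paper's proof is a one-line citation of \cite[Corollaries 6.6 and 6.3]{BergVanRoosmalen14}; a self-contained proof would be welcome, but both halves of yours rest on the same false premise, namely that the linear order on a thread is generated by the successor relation $Z \mapsto Z^+$. The model for a maximal thread is $k\LL$ with $\LL = \bN \cdot (\PP \stackrel{\rightarrow}{\times} \bZ) \cdot (-\bN)$, and already for $\PP = \emptyset$ the maximum $-0$ is not of the form $X^{+(n)}$ for the minimum $X = 0$, even though $\Hom(0,-0) \neq 0$.

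For (1), this means your extension process $X \mapsto X^- \mapsto X^{--} \mapsto \cdots$ need not terminate: if, say, a thread arrow $s$ ends at the vertex where the thread containing $[X,Y]$ begins, the predecessors descend through the copy of $-\bN$ sitting at the top of $\LL_s$ forever, all of them thread objects. Your fallback in the non-terminating case --- a thread ``unbounded in that direction'' --- is not a thread at all: by definition a thread is an interval $[X',Y']$ with endpoints, and part (2) asserts that $\LL$ is \emph{bounded}. Worse, in the situation above the genuine maximal thread has as its left endpoint an element far below the entire $-\bN$-tail, which is never reached by iterating $(-)^-$; so the maximal thread strictly contains the union of your ascending chain, and the union itself is not a thread. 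A correct proof has to produce the endpoint $X'$ globally (via the thread-quiver description, or a Zorn-type argument showing that every chain of threads is bounded above by a thread), not one irreducible map at a time.

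For (2), the same issue undermines the claim that every $\Hom(Z,W)$ is spanned by a composite of irreducible maps and that finite full subcategories of a thread ``form a single chain of irreducible maps'': for $Z,W$ in different blocks of $\LL$ the nonzero morphism $Z \to W$ is not such a composite. The facts you actually need --- that the Hom-relation on a thread is a total order, that Hom-spaces are at most one-dimensional, and that composites of nonzero maps between thread objects are nonzero --- are precisely the content of \cite[\S 6]{BergVanRoosmalen14} (e.g.\ Proposition 6.2 and Corollary 6.3 there). So your argument is sound only where it defers to that reference, which is exactly what the paper's proof does.
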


\begin{proof}
The first statement is \cite[Corollary 6.6]{BergVanRoosmalen14}, the second statement \cite[Corollary 6.3]{BergVanRoosmalen14}.
\end{proof}

\begin{proposition}\label{proposition:ThreadQuivers}
Let $\TT$ be a semi-hereditary dualizing $k$-variety and let $\{[X_i,Y_i]\}_i$ be the set of all maximal infinite threads in $\TT$.  We write $\ZZ = \bigoplus_i [X_i, Y_i]$ and $\QQ = \add(\ind \TT \setminus \ind \ZZ)$.
\begin{enumerate}
\item The map $\ZZ \to \TT$ has a left and a right adjoint,
\item the map $\QQ \to \TT$ is fully faithful and has a left and a right adjoint,
\item a nonzero morphism from $Z_i \in [X_i, Y_i]$ to $Z_j \in [X_j, Y_j]$ factors through $\QQ \subseteq \TT$ if and only if $i \not= j$, and
\item there is a strongly locally finite quiver $Q$ such that $\QQ {\simeq} kQ$.
\end{enumerate}
\end{proposition}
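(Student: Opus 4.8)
The plan is to reduce the whole proposition to an explicit analysis inside a concrete thread-quiver model of $\TT$. By Corollary~\ref{corollary:Trianglefree} and Theorem~\ref{theorem:ClassificationIntroduction}, $\TT \simeq kQ$ for a strongly locally finite thread quiver $Q$, and by Remark~\ref{remark:NonuniqueThreadQuivers}(5) we may choose $Q$ so that all structural functors $f^t\colon k2_t \to kQ_u$ and $j^t\colon k\LL_t \to kQ$ of \S\ref{subsection:ThreadQuivers} are fully faithful; fix such a $Q$. The first task is to match the intrinsic data of the statement with this model: one shows that the thread objects of $\TT$ are exactly the indecomposables lying in the image of some $j^t$ together with those vertices of $Q_u$ that are internal to a string of standard arrows (a single incoming and a single outgoing arrow in $Q_u$), and that every infinite thread is contained in exactly one of the subcategories $j^t(k\LL_t)$, possibly enlarged by the adjacent internal strings. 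Since each $\LL_t$ contains a copy of $\bN$, these enlarged subcategories are infinite, and they are precisely the maximal infinite threads; thus, after renaming, $\ZZ = \bigoplus_i [X_i,Y_i]$ is identified with a subcategory containing $\bigoplus_t j^t(k\LL_t)$, and $\QQ$ with the path category of the quiver $Q'$ obtained from $Q_u$ by deleting the internal vertices absorbed into the threads and removing the thread arrows.

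For parts (1) and (2): fully faithfulness of $\QQ \hookrightarrow \TT$ follows from Remark~\ref{remark:NonuniqueThreadQuivers}(4) for the $kQ_u$-part and is clear for the deletion of internal vertices. Existence of left and right adjoints to $\ZZ \hookrightarrow \TT$ and to $\QQ \hookrightarrow \TT$ reduces to showing that minimal one-sided approximations by these subcategories are not merely minimal but functorial (hence universal). For a maximal infinite thread $[X_i,Y_i] \simeq k\LL_i$, with $\LL_i$ bounded locally discrete (Proposition~\ref{proposition:MaximalThreads}(2)), one checks that for any $T \in \TT$ the set of thread objects $Z$ with $\Hom_\TT(T,Z)\neq 0$ is an up-set of $\LL_i$, that there is a unique nonzero morphism $T \to Z_0$ into its bottom element $Z_0$, and that composing with $Z_0 \to Z$ gives an isomorphism $\Hom_{k\LL_i}(Z_0,Z) \stackrel{\sim}{\rightarrow} \Hom_\TT(T,Z)$ for all $Z \in \LL_i$; this exhibits $T \mapsto Z_0$ as the object part of a left adjoint, and dually on the other side. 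The adjoints for $\QQ$ are produced the same way, using that $\QQ$ is closed ``away from the thread interiors''.

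For part (3), a convexity argument suffices and needs no model. Since $\TT$ is semi-hereditary, hence directed (Corollary~\ref{corollary:Trianglefree}), $\Hom_\TT(A,B)\neq 0$ and $\Hom_\TT(B,C)\neq 0$ force $\Hom_\TT(A,C)\neq 0$ and forbid cycles. Take $Z_i \in [X_i,Y_i]$, $Z_j \in [X_j,Y_j]$ and $0\neq f\colon Z_i \to Z_j$. If $i=j$ and $f$ factored as $Z_i \to Q \to Z_j$ with $Q\in\QQ$ indecomposable, then composing the nonzero maps $X_i \to Z_i \to Q$ and $Q \to Z_j \to Y_i$ would give $Q \in [X_i,Y_i] \subseteq \ZZ$, a contradiction; so for $i=j$ no such factorization exists. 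If $i\neq j$, then $f$ lies in the span of paths from $Z_i$ to $Z_j$ in $kQ$; each such path must leave the maximal infinite thread $[X_i,Y_i]$, and by maximality the first object it meets outside lies in $\QQ$, so $f$ factors through $\QQ$.

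Finally, part (4): it remains to see that the quiver $Q'$ with $\QQ \simeq kQ'$ is strongly locally finite. Local finiteness of $Q'$ is inherited from $Q_u$. For finite-dimensionality of the indecomposable projective and injective $Q'$-representations, the key point is that $\QQ$ contains no infinite thread: an infinite string of irreducible maps in $\QQ$ would, by fullness of $\QQ\hookrightarrow\TT$, be an infinite string of thread objects in $\TT$, hence lie in a maximal infinite thread of $\TT$ and therefore in $\ZZ$, contradicting $\ind\QQ \cap \ind\ZZ = \emptyset$; so no thread-quiver realization of $\QQ$ needs thread arrows, and one concludes via Proposition~\ref{proposition:ModNotHereditary} (or directly, as in the proof of Theorem~\ref{theorem:ClassificationIntroduction}) that the standard projectives and injectives of $\QQ$ have finite length, which is strong local finiteness; then $\Mod kQ'$ is hereditary and $\QQ$ is indeed the path category of $Q'$. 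I expect the main obstacle to be precisely the identification in the first paragraph together with this last step: one must pin down exactly which objects are thread objects, show that every maximal infinite thread arises from a thread arrow after enlargement by standard strings, and verify that the residual category $\QQ$ is genuinely quiver-like and still strongly locally finite; the two delicate points are (i) that almost split maps computed inside $\QQ$ agree with those in $\TT$ on the relevant objects, so that ``thread object'' is not accidentally created by passing to $\QQ$, and (ii) that the one-sided approximations used for the adjoints in (1)--(2) are genuinely universal rather than merely minimal.
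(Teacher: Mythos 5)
The first thing to say is that the paper does not prove this proposition at all: its ``proof'' consists of four citations to \cite{BergVanRoosmalen14} (Proposition 5.2, Corollaries 6.8 and 6.4, and Propositions 7.1--7.2 there), one for each item. What you have written is therefore not an alternative to the paper's argument but an attempted reconstruction of the cited results themselves. As a reconstruction, your outline is on the right track --- identifying the maximal infinite threads with the thread arrows (enlarged by adjacent finite strings of standard arrows), proving (3) by the directedness/monomorphism convexity argument, and deducing (4) by showing $\QQ$ is a semi-hereditary dualizing $k$-variety with no infinite threads and then invoking the classification. Your argument for (3) in the case $i=j$ is complete as stated (nonzero maps between indecomposables in a semi-hereditary Hom-finite category are monomorphisms, so the compositions $X_i \to Z_i \to Q$ and $Q \to Z_j \to Y_i$ are nonzero and force $Q \in [X_i,Y_i]$); for $i \ne j$ you implicitly need that $Y_i^+ \in \QQ$, which is exactly where maximality of the thread enters (compare Proposition~\ref{proposition:BeyondTheEdge}).

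The genuine gap is in (1) and (2), and it sits precisely at the point you yourself flag as delicate (ii). You construct the left adjoint to $\ZZ \hookrightarrow \TT$ by taking, for each thread $\LL_i$, ``the bottom element $Z_0$'' of the up-set $U_i = \{Z \in \LL_i \mid \Hom_\TT(T,Z) \ne 0\}$. But $\LL_i$ is only a bounded \emph{locally discrete} linear order, and a nonempty up-set of such an order need not have a minimum (e.g.\ the copy of $-\bN$ at the top of $\bN \cdot (\PP\stackrel{\rightarrow}{\times}\bZ)\cdot(-\bN)$ is an up-set with no least element). Likewise, $l(T)$ must be a \emph{finite} direct sum, so you also need that an indecomposable $T$ maps nontrivially into only finitely many of the maximal infinite threads. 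Neither point is automatic from semi-heredity; both are exactly where the dualizing-variety hypothesis (finite presentation/copresentation of the standard modules $\Hom_\TT(T,-)^*$ and $\Hom_\TT(-,T)$) has to be used, and your proposal never invokes it. Until those two finiteness statements are established, the adjoints in (1)--(2) do not exist, and since your proof of (4) routes through the functorial finiteness of $\QQ$ in $\TT$ (to conclude $\QQ$ is again a dualizing $k$-variety), the gap propagates there as well. A smaller remark: the ``fully faithful'' clause in (2) is free, since $\QQ$ is by definition a full additive subcategory of $\TT$; the content of (2) is only the adjoints.
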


\begin{proof}
The statements follow from \cite[Proposition 5.2]{BergVanRoosmalen14}, \cite[Corollary 6.8]{BergVanRoosmalen14}, \cite[Corollary 6.4]{BergVanRoosmalen14} and \cite[Propositions 7.1 and 7.2]{BergVanRoosmalen14}, respectively.
\end{proof}

\begin{proposition}\label{proposition:BeyondTheEdge}
Let $[X_i, Y_i] \subseteq \TT$ be a maximal thread and $Z \in \ind [X_i, Y_i]$.  For the left and right adjoints $l,r\colon \TT \to \QQ$ to the embedding $\QQ \to \TT$, we have $l(Z) \cong Y_i^+$ and $r(Z) \cong X_i^-$.
\end{proposition}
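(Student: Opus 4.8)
The plan is to identify $l(Z)$ through the universal property of the left adjoint. Since the inclusion $\QQ \hookrightarrow \TT$ is fully faithful with left adjoint $l$ (Proposition~\ref{proposition:ThreadQuivers}(2)), the object $l(Z)$ carries a unit morphism $\eta_Z\colon Z \to l(Z)$ in $\TT$ such that every morphism $f\colon Z \to Q$ with $Q \in \QQ$ factors \emph{uniquely} through $\eta_Z$. Hence it is enough to produce a morphism $\iota\colon Z \to Y_i^+$ with $Y_i^+ \in \QQ$ through which every such $f$ factors (not necessarily uniquely); a short piece of formal bookkeeping, carried out at the end, then upgrades this to $l(Z) \cong Y_i^+$. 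The statement $r(Z) \cong X_i^-$ is the dual assertion (reverse all arrows, use the right adjoint $r$ and the right almost split morphism $X_i^- \to X_i$), so I would only write out the case of $l$.

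I would take $\iota$ to be the composite $Z \stackrel{c_Z}{\to} Y_i \stackrel{a}{\to} Y_i^+$, where $c_Z$ is a nonzero morphism in the thread $[X_i,Y_i]$ — it exists because $Z \in [X_i,Y_i]$ forces $\Hom_\TT(Z,Y_i) \ne 0$, and we set $c_Z = \id$ if $Z = Y_i$ — and $a\colon Y_i \to Y_i^+$ is the left almost split morphism. As $\TT$ is semi-hereditary, every nonzero morphism between indecomposables is a monomorphism (\cite[Proposition~1.1]{AuslanderReiten75}; cf.\ the proof of Corollary~\ref{corollary:Trianglefree}), so $\iota \ne 0$. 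One also checks $Y_i^+ \in \QQ$: if $Y_i^+$ lay in some thread $[X_j,Y_j]$, then the irreducible morphism $a$ could not factor nontrivially through $\QQ$, which by Proposition~\ref{proposition:ThreadQuivers}(3) forces $j = i$; but then $a$ composed with a nonzero morphism $Y_i^+ \to Y_i$ (available since $Y_i = \max [X_i,Y_i]$) would be an isomorphism of $Y_i$, making $a$ split and contradicting that $a$ is not invertible. Thus $Y_i^+$ is a nonthread object, hence lies in $\QQ$.

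The core step is to show that every $f\colon Z \to Q$ with $Q \in \QQ$ factors through $\iota$. Write $\TT \cong kQ$ for a strongly locally finite thread quiver $Q$ in the normal form of Remark~\ref{remark:NonuniqueThreadQuivers}(5) (using Corollary~\ref{corollary:Trianglefree}(4)); the maximal infinite thread $[X_i,Y_i]$ is, by \cite{BergVanRoosmalen14}, the interior of the linearly ordered set $\LL_{t_i} = \bN \cdot (\PP_{t_i}\stackrel{\rightarrow}{\times}\bZ)\cdot(-\bN)$ attached to a thread arrow $t_i$ (see \S\ref{subsection:ThreadQuivers}), with $Z$ strictly between the endpoints $x_{t_i}, y_{t_i}$ and $Y_i^+ \cong y_{t_i}$. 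In the $2$-pushout presentation of $kQ$, the subcategory $k\LL_{t_i}$ is amalgamated into $kQ_u$ only along these two endpoints, so a morphism out of the interior object $Z$ to an object outside the interior of $\LL_{t_i}$ must factor through $Z \to x_{t_i}$ or $Z \to y_{t_i}$; the first option is excluded because $\Hom_\TT(Z, x_{t_i}) = 0$ (as $x_{t_i} = \min \LL_{t_i} < Z$ and $\TT$ has no cycles), so $f$ factors through the canonical morphism $Z \to y_{t_i}$, which is a scalar multiple of $\iota$. I expect this ``factoring through the endpoint'' step to be the main obstacle: because a thread may have order type $\omega + \cdots$, one cannot simply iterate the left almost split map, and a careful argument has to invoke the explicit normal form for morphisms in the amalgam $kQ$ established in \cite{BergVanRoosmalen14}.

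Finally, I would deduce $l(Z) \cong Y_i^+$ formally. Applying the factorization just proved to $f = \eta_Z$ (legitimate since $l(Z) \in \QQ$) yields $h\colon Y_i^+ \to l(Z)$ with $\eta_Z = h\iota$, while the universal property of $\eta_Z$ applied to $\iota\colon Z \to Y_i^+$ yields the unique $h'\colon l(Z) \to Y_i^+$ with $\iota = h'\eta_Z$. Then $\eta_Z = hh'\eta_Z$, so $hh' = \id_{l(Z)}$ by uniqueness; thus $h$ is a split epimorphism from the indecomposable object $Y_i^+$, and since $l(Z) \ne 0$ (because $\iota = h'\eta_Z \ne 0$) we conclude $l(Z) \cong Y_i^+$, as desired.
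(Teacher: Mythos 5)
Your overall architecture is sound and is essentially the argument the paper intends: the paper's own proof is a one-line citation of \cite[Corollary 6.4]{BergVanRoosmalen14} (which is precisely the statement that every morphism from $Z$ to $\QQ$ factors through $Z \to Y_i^+$) plus the observation that maximality of the thread forces $Y_i^+, X_i^- \in \QQ$. Your final paragraph correctly converts that factorization property into $l(Z) \cong Y_i^+$ via the unit of the adjunction, and your argument that $Y_i^+ \in \QQ$ is also correct (with one small overreach: you conclude that $Y_i^+$ is a \emph{nonthread} object, which need not hold --- it could lie in a finite or non-maximal thread --- but all you need, and all your argument shows, is that it lies in no maximal infinite thread, hence in $\QQ$).

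The genuine gap is in the core factorization step. Even in the normal form of Remark~\ref{remark:NonuniqueThreadQuivers}(5), the maximal infinite thread $[X_i,Y_i]$ need \emph{not} be the interior of $\LL_{t_i}$, and $Y_i^+$ need not be $y_{t_i}$: the maximal thread absorbs whichever endpoints (and further vertices of $Q_u$) happen to be thread objects. Concretely, for the thread quiver with a thread arrow from $a$ to $b$ (with $\PP_t = \emptyset$) followed by a standard arrow $b \to c$, the endpoint $b = y_t$ is a thread object, so the maximal infinite thread is $[x_t+1, y_t]$ and $Y_i^+ = c \neq y_{t_i}$; here your argument only yields a factorization of $f\colon Z \to Q$ through $Z \to y_{t_i} = Y_i$, which is not in $\QQ$ and is $c_Z$ rather than a multiple of $\iota$, so the argument stops one step short. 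Dually, for a standard arrow $a \to b$ followed by a thread arrow from $b$ to $c$ one gets $X_i = x_t$ and $X_i^- = a \neq x_{t_i}$, so the mirrored argument would identify $r(Z)$ with $x_{t_i} \notin \QQ$, which is false. The gap is repairable: either append the finitely many further left-almost-split factorizations needed to travel from $y_{t_i}$ to $Y_i^+$ (the overshoot lies in the strongly locally finite quiver part, hence is finite), or use the presentation of $\TT$ built from the maximal-thread decomposition itself (\cite[\S 7]{BergVanRoosmalen14}, where the thread arrows are chosen to run from $X_i^-$ to $Y_i^+$, so that their interiors are exactly the maximal infinite threads) --- which amounts to quoting \cite[Corollary 6.4]{BergVanRoosmalen14} directly, as the paper does, since that result is formulated intrinsically in terms of $Y_i^+$ rather than in terms of a chosen thread-quiver presentation.
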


\begin{proof}
This is \cite[Corollary 6.4]{BergVanRoosmalen14}, together with its dual.  Here, we use the assumption that $[X_i, Y_i]$ is maximal to conclude that $X_i^-, Y_i^+ \in \QQ$. 
\end{proof}

\begin{lemma}\label{lemma:InThreadFactoring}
Let $\ZZ_i = [X_i,Y_i]$ be a thread in $\TT$, and let $Q \in \QQ$.  Let $X,Y \in \ind \ZZ_i$ and let $f\colon X \to Y$ be a morphism.  If $f$ is nonzero, then every morphism $X \to Q$ factors through $f\colon X \to Y$.
\end{lemma}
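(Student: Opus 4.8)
The plan is to exploit the left adjoint $l\colon \TT \to \QQ$ to the inclusion $\QQ \hookrightarrow \TT$ (Proposition~\ref{proposition:ThreadQuivers}(2)) together with the explicit description of its values on the thread $\ZZ_i$ provided by Proposition~\ref{proposition:BeyondTheEdge}. Write $\eta\colon \id_\TT \to l$ for the unit of this adjunction (suppressing the inclusion functor from the notation). Its universal property says that for every $Q \in \QQ$ and every $g\colon X \to Q$ there is a (unique) $\bar g\colon l(X) \to Q$ with $g = \bar g \circ \eta_X$; in other words, every morphism $X \to Q$ with $Q \in \QQ$ factors through $\eta_X\colon X \to l(X)$. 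Since $X, Y \in \ind \ZZ_i = \ind[X_i,Y_i]$, Proposition~\ref{proposition:BeyondTheEdge} gives $l(X) \cong l(Y) \cong Y_i^+$, and $Y_i^+$ is an indecomposable object lying in $\QQ$. Identifying $l(X)$ with $Y_i^+$, it therefore suffices to show that $\eta_X$ itself factors through $f$: once $\eta_X = h \circ f$ for some $h\colon Y \to Y_i^+$, an arbitrary $g = \bar g \circ \eta_X$ factors as $g = (\bar g \circ h)\circ f$.

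To produce such an $h$, I would study the $k$-linear map $f^*\colon \Hom_\TT(Y, Y_i^+) \to \Hom_\TT(X, Y_i^+)$, $h \mapsto h \circ f$. By the adjunction isomorphism (natural in the first variable) together with $l(X) \cong Y_i^+ \cong l(Y)$, both of these $\Hom$-spaces are isomorphic to $\Hom_\QQ(Y_i^+, Y_i^+)$; in particular they are finite-dimensional (as $\TT$, being a dualizing $k$-variety, is Hom-finite) and have the same dimension. Hence it is enough to check that $f^*$ is injective, for then it is bijective and $\eta_X \in \im f^*$. Injectivity is exactly where semi-heredity of $\TT$ is used: by \cite[Proposition 1.1]{AuslanderReiten75} (as recalled in the proof of Corollary~\ref{corollary:Trianglefree}) every nonzero morphism of $\TT$ between indecomposable objects is a monomorphism, hence a composite of nonzero morphisms between indecomposables is again nonzero. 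Thus, if $h\colon Y \to Y_i^+$ is nonzero, then $h \circ f$ is a composite of two nonzero morphisms between indecomposable objects (here we use the hypothesis $f \neq 0$), hence nonzero; so $\ker f^* = 0$.

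In summary, the steps, in order, are: first, recall that $\TT$ is Hom-finite and has the property that composites of nonzero morphisms between indecomposables are nonzero; second, bring in the left adjoint $l$ and its unit, and invoke Proposition~\ref{proposition:BeyondTheEdge} to identify $l(X)$ and $l(Y)$ with $Y_i^+ \in \ind\QQ$; third, reduce the claim to ``$\eta_X$ factors through $f$'' via the universal property of the unit; fourth, deduce this factorization from injectivity (hence bijectivity) of $f^*$ between two equidimensional finite-dimensional spaces. I do not anticipate a serious obstacle here; the only points requiring a little care are verifying that $Y_i^+$ is indecomposable and contained in $\QQ$ (both are part of Proposition~\ref{proposition:BeyondTheEdge} and its proof, using maximality of the thread, cf.\ Proposition~\ref{proposition:MaximalThreads}) and that the adjunction isomorphisms identify $\Hom_\TT(X,Y_i^+)$ and $\Hom_\TT(Y,Y_i^+)$ with the same space $\Hom_\QQ(Y_i^+,Y_i^+)$, which follows immediately from naturality in the first variable and $l(X) \cong l(Y)$.
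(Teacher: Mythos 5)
Your argument is correct, but it takes a genuinely different route from the paper's. The paper's proof handles $X \cong Y$ trivially and, for $X \not\cong Y$, factors $f$ through the right almost split map $Y^- \to Y$, applies Corollary 6.4 of \cite{BergVanRoosmalen14} to the sub-thread $[X,Y^-]$ to conclude that any map $X \to Q$ factors through \emph{some} map $X \to (Y^-)^+ \cong Y$, and finally uses $\dim\Hom_\TT(X,Y)=1$ to replace that map by $f$. You instead package everything into the unit $\eta_X\colon X \to l(X) \cong Y_i^+$ of the adjunction, reduce the lemma to the single claim that $\eta_X$ factors through $f$, and obtain that claim from a dimension count: $f^*\colon \Hom_\TT(Y,Y_i^+) \to \Hom_\TT(X,Y_i^+)$ is injective because nonzero morphisms between indecomposables in the semi-hereditary category $\TT$ are monomorphisms, and both spaces have dimension $\dim\End_\QQ(Y_i^+)$ by the adjunction together with Proposition~\ref{proposition:BeyondTheEdge}. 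Both proofs ultimately rest on the same structural facts about threads (packaged either as Corollary 6.4 of \cite{BergVanRoosmalen14} or as Proposition~\ref{proposition:BeyondTheEdge}); yours avoids the case split and the one-dimensionality of $\Hom$ inside a thread, at the cost of needing the thread to be maximal so that $l(X) \cong l(Y) \cong Y_i^+$ with $Y_i^+ \in \ind\QQ$. That hypothesis is harmless here, since the $\ZZ_i$ of the statement are, by the notational conventions of the section, precisely the maximal infinite threads, but it is worth flagging that your proof does not literally cover an arbitrary (non-maximal) thread, whereas the paper's argument is insensitive to this.
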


\begin{proof}
If $X \cong Y$, then $f\colon X \to Y$ is an isomorphism and the statement follows easily.  Assume now that $X \not\cong Y$, so that the morphism $f\colon X \to Y$ factors through the right almost split map $Y^- \to Y$.  We consider now the thread $[X,Y^-]$; since $\Hom(X,Y^-) \not= 0$, we know that $[X,Y^-]$ is nonempty, and since $X,Y^- \in [X_i, Y_i]$, we know that $[X,Y^-] \subseteq [X_i, Y_i]$ is a thread. It follows from \cite[Corollary 6.4]{BergVanRoosmalen14} that $X \to Q$ factors through a morphism $X \to (Y^-)^+ \cong Y$.  Since $\dim \Hom(X,Y) = 1$ (see, for example, \cite[Proposition 6.2]{BergVanRoosmalen14}), we may assume that $X \to Q$ factors through $f\colon X \to Y$.
\end{proof}

\subsection{A locally bounded cluster-tilting subcategory}  In this subsection, we want to construct a locally bounded cluster-tilting subcategory of $\CC$.  We construct this subcategory in three steps.  First, we consider the set $\{[X_i,Y_i]\}_{i}$ of all maximal infinite threads in $\TT$.  Let $\ZZ = \bigoplus_i [X_i, Y_i]$ and $\QQ = \add(\ind \TT \setminus \ind \ZZ)$.  Since the embedding $\QQ \to \TT$ has a left and a right adjoint (see Proposition \ref{proposition:ThreadQuivers}), $\QQ$ is a functorially finite subcategory of $\CC$ and we know that  $\CC_{[\QQ]}$ is an algebraic cluster category with a cluster-tilting subcategory equivalent to $\ZZ$ (see~\S\ref{Subsection:IyamaYoshinoReduction}).

Our second step is based on Theorem \ref{theorem:EquivalentClusterStructures}.  Let $\LL_i$ be a bounded linearly ordered locally discrete set such that $[X_i,Y_i] \simeq k\LL_i$.  We know from Proposition \ref{proposition:ThreadQuivers} that the cluster-tilting subcategory of $\CC_{[\QQ]}$ is equivalent to $\ZZ = \bigoplus_i [X_i, Y_i]$.  Thus:
$$\xymatrix{
\CC & \ar[l] \QQ^{\perp_1} \ar[d]_{\pi} & \ar[l] \TT \ar[d]^{\pi} \\
& \CC_{[\QQ]} & \ar[l] \ZZ \ar[r]^-{\sim} & \bigoplus_i [X_i, Y_i] 
}$$
where $\pi: \QQ^{\perp_1} \to \CC_{[\QQ]} = \QQ^{\perp_1} / [\QQ]$ is the quotient functor.  Theorem \ref{theorem:EquivalentClusterStructures} yields that $\CC_{[\QQ]}$ has a locally bounded cluster-tilting subcategory if and only if $\bigoplus_i \CC_{\LL_i}$ has a locally bounded cluster-tilting subcategory.  Since $\ind \TT$ is countable, we know that $\ind \ZZ$ is countable, and hence each $\LL_i$ is countable.  It follows from Corollary \ref{corollary:LocallyBoundedClusterTilting} that $\CC_{\LL_i}$ has a locally bounded cluster-tilting subcategory $\WW_i$.  Hence, $\CC_{[\QQ]}$ has a locally bounded cluster-tilting subcategory $\WW \simeq \bigoplus_i \WW_i$.

In the third step, we lift the locally bounded cluster-tilting subcategory $\WW$ from $\CC_{[\QQ]}$ to a cluster-tilting subcategory $\pi^{-1}(\WW)$ of $\CC$ (see \cite[Theorem 4.9]{IyamaYoshino08}):
$$\xymatrix{
\CC & \ar[l] \QQ^{\perp_1} \ar[d]_{\pi} & \ar[l] {\pi^{-1}(\WW)} \ar[d]^{\pi} \\
& \CC_{[\QQ]} & \ar[l] \WW 
}$$
We will show in Theorem \ref{theorem:LocallyFiniteClusterTilting} below that $\pi^{-1}(\WW)$ is locally bounded.

To study the category $\pi^{-1}(\WW)$, we will consider the following parts: the full subcategory $\QQ$ of $\pi^{-1}(\WW)$, and for every maximal thread $[X_i,Y_i]$ in $\TT$, the subcategory $\VV_i = \add(\ind \pi^{-1}(\WW_i) \setminus \ind \QQ)$ of $\pi^{-1}(\WW)$.

\begin{remark} On objects, there is a disjoint union $\ind \pi^{-1}(\WW) = \ind \QQ \cup \bigcup_i \ind \VV_i.$
\end{remark}

\begin{proposition}\label{proposition:TypesOfApproximationTriangles}
Let $\ZZ_i = [X_i,Y_i] \subseteq \TT$ be a maximal infinite thread.  For each indecomposable $V \in \ind \VV_i \subseteq \CC$, there are three possibilities of minimal right $\TT$-approximation triangles:
\begin{enumerate}
\item\label{enumerate:TriangleForm1} $X \to Y \to V \to X[1]$ where $X,Y \in \ind \ZZ_i$,
\item\label{enumerate:TriangleForm2} $X \to {Y_i^+} \to V \to X[1]$ where $X \in \ind \ZZ_i$, or
\item\label{enumerate:TriangleForm3} $0 \to X \to V \to 0$ where $X \in \ind \ZZ_i$.
\end{enumerate}
Moreover, for each thread, there are only finitely many isomorphism classes of indecomposables in $\VV_i$ which have $\TT$-approximation triangles of the last two types.
\end{proposition}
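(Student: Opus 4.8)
The plan is to transport the minimal right $\TT$-approximation triangle of $V$ to the reduced category $\CC_{[\QQ]}$, where $\ZZ\simeq\bigoplus_j k\LL_j$ is a cluster-tilting subcategory, read off its possible shapes from the type $A$ combinatorics available there, and lift the outcome back to $\CC$; the only new phenomenon on lifting is the reappearance of the boundary object $Y_i^+\in\QQ$. Since $\Hom_{\CC_{[\QQ]}}(\ZZ_j,\ZZ_k)=0$ for $j\neq k$ (Proposition~\ref{proposition:ThreadQuivers}(3)), the category $\CC_{[\QQ]}$ decomposes as $\bigoplus_j\CC_{[\QQ]}^{(j)}$ compatibly with this decomposition of $\ZZ$; each $\CC_{[\QQ]}^{(j)}$ is an algebraic $2$-Calabi--Yau category with directed cluster-tilting subcategory $\ZZ_j\simeq k\LL_j$, and $\WW_i\subseteq\CC_{[\QQ]}^{(i)}$. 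Under the comparison of $\CC_{[\QQ]}^{(i)}$ with $\CC_{\LL_i}$ furnished by Theorem~\ref{theorem:EquivalentClusterStructures} and \S\ref{section:CominatoricsOfTypeA}, the subcategory $\WW_i$ arises from a locally finite connected triangulation of $(\LL_i)_{\rm cyc}$, whereas $\ZZ_i$ --- being directed, hence semi-hereditary of the form $k\LL_i$ by Corollary~\ref{corollary:Trianglefree} --- corresponds to a triangulation all of whose arcs pass through one fixed vertex, i.e.\ one of type $S_a$ as in Example~\ref{example:LightConeArcs}.

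Now put $\bar V=\pi(V)\in\WW_i$ and consider its minimal right $\ZZ_i$-approximation triangle $Z_1\to Z_0\to\bar V\to Z_1\langle1\rangle$ in $\CC_{[\QQ]}^{(i)}$, with $Z_0,Z_1\in\ZZ_i$. If $\bar V\in\ZZ_i$ then $V\in\ZZ_i$ (being indecomposable with no summand in $\QQ$), which is case~(3). If $\bar V\notin\ZZ_i$ but $Z_0=0$, then $\bar V\cong X\langle1\rangle$ with $X:=Z_1$ a nonzero indecomposable of $\ZZ_i$; lifting via the definition of $\langle1\rangle$ from \S\ref{Subsection:IyamaYoshinoReduction}, and using that the minimal left $\QQ$-approximation in $\CC$ of $X\in\ZZ_i\subseteq\TT$ is the adjunction unit $X\to l(X)=Y_i^+$ (Proposition~\ref{proposition:BeyondTheEdge}) while the cone of a minimal left $\QQ$-approximation of an object of $\TT$ has no indecomposable summand in $\QQ$ (by rigidity of $\TT$ and indecomposability of $Y_i^+$), one obtains a triangle $X\to Y_i^+\to V\to X[1]$ in $\CC$; it is a right $\TT$-approximation triangle because $\Ext^1_\CC(\TT,X)=0$, and minimal because $X\to Y_i^+$ is radical, so we are in case~(2). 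Finally, if $\bar V\notin\ZZ_i$ and $Z_0\neq0$, I would lift the reduced approximation triangle to a triangle $Z_1\to Z_0\to V'\to Z_1[1]$ in $\CC$ with all terms in $\QQ^{\perp_1}$; since $\Ext^1_\CC(\TT,Z_1)=0$ this is a minimal right $\TT$-approximation triangle of $V'$, and comparing with that of $V$ --- using $V'\cong V\oplus(\text{object of }\QQ)$ and that $\ZZ_i$ has no summand in $\QQ$ --- forces $V'\cong V$, $T_0=Z_0$, $T_1=Z_1\in\ZZ_i$. That $Z_0$ and $Z_1$ are \emph{indecomposable} is the step that uses the type $A$ situation: here $\bar V\notin\ZZ_i[1]$, so $\Hom_{\CC_{[\QQ]}^{(i)}}(-,\bar V)|_{\ZZ_i}$ is a nonzero indecomposable object of $\mod\ZZ_i\cong\CC_{[\QQ]}^{(i)}/[\ZZ_i[1]]$, which is the category of finitely presented representations of the chain $\LL_i$; every indecomposable there is an interval module, hence has simple top and indecomposable projective cover, so its minimal projective resolution has indecomposable terms. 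This is case~(1), and the three cases exhaust the possibilities.

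For the finiteness statement, note that, up to isomorphism, the case~(3) objects of $\VV_i$ are exactly the objects of $\ZZ_i\cap\WW_i$, and the case~(2) objects are exactly those of $\tau(\ZZ_i)\cap\WW_i$: indeed $\langle1\rangle=\tau$ on the $2$-Calabi--Yau category $\CC_{[\QQ]}^{(i)}$, so $X\mapsto X\langle1\rangle$ is a bijection $\ind\ZZ_i\to\ind\tau(\ZZ_i)$, and $\tau(\ZZ_i)$, being the image of the directed subcategory $\ZZ_i$ under an autoequivalence, is again a directed cluster-tilting subcategory and hence again of type $S_a$. In both cases we intersect a triangulation all of whose arcs pass through a single vertex with the locally finite triangulation attached to $\WW_i$; only finitely many arcs of the latter are incident with a prescribed vertex, so each of these two intersections is finite, and hence so is their union.

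I expect the main obstacle to be the third case: carrying out the lift of the reduced approximation triangle back to $\CC$ while ruling out spurious direct summands from $\QQ$, and proving that $T_0$ and $T_1$ are indecomposable via the interval-module description of $\mod\ZZ_i$. The remaining ingredients --- the block decomposition of $\CC_{[\QQ]}$, the identification of minimal left $\QQ$-approximations with the reflection $l(-)=Y_i^+$, and the ``fan versus locally finite'' intersection count --- should be routine given Theorems~\ref{theorem:Trianglefree} and~\ref{theorem:EquivalentClusterStructures} together with the material of \S\ref{section:Triangulations} and \S\ref{section:CominatoricsOfTypeA}.
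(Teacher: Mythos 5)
Your overall strategy coincides with the paper's: pass to $\CC_{[\QQ]}$, analyse the minimal right $\ZZ_i$-approximation triangle of $\pi(V)$ there, split into the three cases according to which terms vanish, lift back to $\CC$, and deduce the finiteness claim from the local boundedness of $\WW_i$. Cases (2) and (3) are fine, and so is the finiteness argument (the paper gets it directly from local boundedness of $\WW_i$ plus the fact that any two objects of an infinite thread admit a nonzero morphism surviving in $\CC_{[\QQ]}$; your detour through the arc model says the same thing, though the justification ``$\tau(\ZZ_i)$ is directed, hence again of type $S_a$'' is not right as stated --- directedness does not force a fan --- the correct reason is simply that $\tau$ acts on arcs by the rotation $\rho$, so the image of a fan is a fan).

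The genuine gap is in case (1), at exactly the point you flag as the main obstacle but do not close. By the definition of the triangulated structure on $\CC_{[\QQ]}$, the reduced triangle lifts only to a triangle $Z_1\oplus Q_1\to Z_0\oplus Q_2\to V\oplus Q_3\to (Z_1\oplus Q_1)[1]$ in $\CC$ with $Q_1,Q_2,Q_3\in\QQ$; you cannot simply declare the first two terms to be $Z_1$ and $Z_0$. The $\Ext^1$-vanishing arguments you invoke (rigidity of $\TT$ and $V\in\QQ^{\perp_1}$) make the connecting map a direct sum of maps and hence strip $Q_1$ and $Q_3$ from the outer terms, but they leave a middle term of the form $Z_0\oplus Q$ with $Q\in\QQ$, and your indecomposability argument for $Z_0$ inside $\mod\ZZ_i$ is blind to $Q$ because $\pi(Q)=0$. (Taking instead the cone of a lifted map $Z_1\to Z_0$ has the same problem in disguise: one must check that the cone lies in $\QQ^{\perp_1}$, and $\Hom(Q,Z_1[2])\cong\Hom(Z_1,Q)^*$ does not vanish in general.) Killing $Q$ requires Lemma~\ref{lemma:InThreadFactoring}: since the component $Z_1\to Z_0$ is a nonzero map inside the thread, every map $Z_1\to Q$ factors through it, so after a triangular automorphism of $Z_0\oplus Q$ the triangle splits off $Q$ as a direct summand of $V$, which is impossible since $V$ is indecomposable and not in $\QQ$. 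This is precisely the mechanism of the paper's proof, applied there in an extremal-summand form that simultaneously yields the indecomposability of the two $\TT$-terms; your alternative derivation of that indecomposability from the interval-module description of $\mod\ZZ_i$ is valid, but it does not substitute for the thread-factoring step.
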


\begin{proof}
We start with a minimal $\ZZ_i$-approximation triangle of $\pi(V) \in \CC_{[\QQ]}$: $X \to Y \to \pi(V) \to X\langle 1 \rangle$.  By the construction of $\CC_{[\QQ]}$, we know that the first three terms are the image of a triangle in $\CC$
$$X \oplus Q_1 \to Y \oplus Q_2 \to V \oplus Q_3 \to (X \oplus Q_1)[1]$$
(with $X,Y,V \in \QQ^{\perp_1}$ and $Q_1,Q_2,Q_3 \in \QQ$) under the functor $\pi: \QQ^{\perp_1} \to \QQ^{\perp_1} / [\QQ]$.

We observe that $Q_3, X \oplus Q_1 \in \TT$, so that $\Ext^1(Q_3, X \oplus Q_1) = 0$. Since by construction $V \in \QQ^{\perp_1}$, we also have $\Ext^1(V,Q_1) = 0$ and the rightmost map of the above triangle is a direct sum of $X\to V[1]$ and the zero map $Q_3 \to Q_1[1]$. In particular, the triangle splits into a sum of the triangles
$$ Q_1 \to Q_1 \oplus Q_3 \to Q_3 \overset{0}\to Q_1[1] \quad\mbox{and}\quad X \to Y \oplus Q \to V \to X[1] $$

Hence, we can assume that the $\ZZ_i$-approximation triangle of $\pi(V)$ is the image under $\pi$ of a triangle $X \to Y \oplus Q \to V \to X[1]$ in $\CC$ such that $X,Y,V \in \QQ^{\perp_1} \subseteq \CC$ have no indecomposable summands in $\QQ$ and $Q \in \QQ$.

Assume first that both $X$ and $Y$ are nonzero. Then $\Hom_\CC (X,Y) \ne 0$ because otherwise $\Hom_{\CC_{[\QQ]}} (X,Y) = 0$ and consequently $\pi(V) \cong X \langle 1 \rangle \oplus Y$ in $\CC_{[\QQ]}$.  Let $Y'$ be an indecomposable direct summand of $Y$ such that $\Hom(X, Y') \not= 0$ and that $Y'$ is minimal among the indecomposable direct summands of $Y$ with this property (minimal with respect to the ordering on $\ZZ_i \cong k\LL_i$). Similarly, let $X'$ be an indecomposable summand of $X$ such that $\Hom(X', Y') \not= 0$ and $X'$ is maximal with this property (again maximal with respect to the ordering on $\ZZ_i \cong k\LL_i$).

We can now apply Lemma \ref{lemma:InThreadFactoring} to see that the nonzero morphism $X' \to Y \oplus Q$ factors through $X' \to Y'$. Similarly, we deduce that the non-zero morphism $X \to Y'$ factors through $X' \to Y'$. Now we have the following commutative diagram
$$\xymatrix{
X' \ar[r] \ar[d] & Y' \ar[r] \ar[d]& V' \ar[r] \ar@{-->}[d] & X'[1] \ar[d] \\
X \ar[r] \ar[d] & Y \ar[d]\oplus Q \ar[r] & V \ar[r] \ar@{-->}[d] & X[1] \ar[d] \\
X' \ar[r] & Y' \ar[r] & V' \ar[r] & X'[1]
}$$
where the rows are triangles and the vertical compositions are isomorphisms.  This shows that $V'$ is a direct summand of $V$, from which we infer that either $V' = 0$ or $V \cong V'$.  We may exclude the former case (since this implies that $X' \cong Y'$, contradicting the minimality of the approximation triangle), so we may assume the latter case.  Here, we find the minimal right $\TT$-approximation $X' \to Y' \to V \to X'[1]$, as in~{(\ref{enumerate:TriangleForm1})}.

The next case we consider is where $Y=0$.  Here, it follows from \cite[Lemma 4.3(2)]{IyamaYoshino08} that the map $X \to Q$ is a left $\QQ$-approximation so that Proposition \ref{proposition:BeyondTheEdge} implies that the $\TT$-approximation triangle of $V$ is as in~{(\ref{enumerate:TriangleForm2})}. The case where $X = 0$ trivially leads to a $\TT$-approximation triangle as in~(\ref{enumerate:TriangleForm3}).

Now, we need only show that only finitely many $\TT$-approximation triangles have the form (\ref{enumerate:TriangleForm2}) or (\ref{enumerate:TriangleForm3}).  This means that in $\CC_{[\QQ]}$, only finitely many isomorphism classes of indecomposable objects in {$\WW_i$} lie in $\ZZ_i$ or in $\ZZ_i\langle 1 \rangle$. This follows since {$\WW_i$} is locally bounded and $\ZZ_i$ is an infinite thread.
\end{proof}

\begin{corollary}\label{corollary:WhenDoesQuiverObjectSeeATread1}
Let $Q \in \QQ \subseteq \TT$ and $V \in \VV_i$.  If $V$ has a $\TT$-approximation triangle as in Proposition~\ref{proposition:TypesOfApproximationTriangles}(\ref{enumerate:TriangleForm1}), then $\Hom_\CC(Q,V) = 0$.
\end{corollary}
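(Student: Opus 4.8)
\emph{Proof idea.}

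The plan is to identify $\Hom_\CC(Q,V)$ with a cokernel in $\CC$ and then show that cokernel vanishes by exploiting the thread structure of $\TT$. First I would apply $\Hom_\CC(Q,-)$ to the triangle $X \xrightarrow{f} Y \to V \to X[1]$ supplied by Proposition~\ref{proposition:TypesOfApproximationTriangles}(\ref{enumerate:TriangleForm1}), in which $X,Y \in \ind\ZZ_i \subseteq \ind\TT$. Since $Q$ and $X$ both lie in the rigid subcategory $\TT$, we have $\Hom_\CC(Q,X[1]) = \Ext^1_\CC(Q,X) = 0$, so the long exact sequence degenerates to the exactness of $\Hom_\TT(Q,X) \xrightarrow{f_*} \Hom_\TT(Q,Y) \to \Hom_\CC(Q,V) \to 0$; hence $\Hom_\CC(Q,V) \cong \coker f_*$, and it remains to show that every morphism $Q \to Y$ in $\TT$ factors through $f$. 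Note $f \ne 0$: otherwise the triangle splits and $V \cong Y \oplus X[1]$ is decomposable, contrary to $V$ being indecomposable with $X,Y \ne 0$. Moreover $f$ is a monomorphism, since $\TT$ is semi-hereditary by Corollary~\ref{corollary:Trianglefree} and $X,Y$ are indecomposable.

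Next I would use the right adjoint $r\colon \TT \to \QQ$ to the inclusion $\QQ \hookrightarrow \TT$, which exists by Proposition~\ref{proposition:ThreadQuivers}(2): every morphism in $\TT$ from an object of $\QQ$ into an object $Z$ factors through the counit $\epsilon_Z\colon r(Z) \to Z$. Because $X,Y$ belong to the maximal infinite thread $\ZZ_i = [X_i,Y_i]$, Proposition~\ref{proposition:BeyondTheEdge} gives $r(X) \cong r(Y) \cong X_i^-$, a nonzero indecomposable object of $\QQ$; and since $\QQ \simeq kQ$ for a (strongly locally finite, hence acyclic) quiver $Q$ by Proposition~\ref{proposition:ThreadQuivers}(4), we have $\End_\TT(X_i^-) = k$. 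Thus $r(f)\colon X_i^- \to X_i^-$ is either $0$ or invertible. It cannot be $0$: naturality of $\epsilon$ along $f$ gives $f \circ \epsilon_X = \epsilon_Y \circ r(f)$, so $r(f) = 0$ would force $f \circ \epsilon_X = 0$ and hence $\epsilon_X = 0$ (as $f$ is mono); then every morphism into $X$ from an object of $\QQ$ vanishes, so $\Hom_\QQ(-,r(X)) \cong \Hom_\TT(-,X)|_\QQ = 0$ and $r(X) = 0$ by Yoneda, contradicting $r(X) \cong X_i^- \ne 0$. Therefore $r(f)$ is invertible, $\epsilon_Y = f \circ \epsilon_X \circ r(f)^{-1}$ factors through $f$, and so every morphism $Q \to Y$ (which factors through $\epsilon_Y$) factors through $f$. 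Hence $f_*$ is surjective, $\coker f_* = 0$, and $\Hom_\CC(Q,V) = 0$.

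The only real obstacle is the first step — reading off $\Hom_\CC(Q,V)$ as a cokernel, which rests squarely on the rigidity of $\TT$; after that the vanishing of $\coker f_*$ is a short formal manipulation once the identification $r(X) \cong r(Y) \cong X_i^-$ and the one-dimensionality of $\End_\TT(X_i^-)$ are in hand. An alternative to the adjunction bookkeeping would be to note that $\epsilon_Y$ is, up to a nonzero scalar, the composite $X_i^- \to X_i \hookrightarrow Y$, which factors through $X_i \hookrightarrow X \xrightarrow{f} Y$ because $X_i \le X \le Y$ inside the thread $\ZZ_i \simeq k\LL_i$; I would expect the version via the counit to be the cleanest to write up.
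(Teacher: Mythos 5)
Your argument is correct and follows essentially the same route as the paper's: apply $\Hom_\CC(Q,-)$ to the approximation triangle, kill $\Hom_\CC(Q,X[1])$ by rigidity of $\TT$, and use Proposition~\ref{proposition:BeyondTheEdge} together with semi-heredity to conclude that $f_*\colon\Hom(Q,X)\to\Hom(Q,Y)$ is bijective. The only divergence is in the final step, where the paper notes that $f_*$ is an injection between spaces both isomorphic to $\Hom_\QQ(Q,X_i^-)$ and hence bijective by a dimension count, while you establish surjectivity directly by showing the counit $\epsilon_Y$ factors through $f$; both versions are valid.
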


\begin{proof}
Let $X \to Y \to V \to X[1]$ be the minimal right $\TT$-approximation triangle of $V$ with $X,Y \in \ind \ZZ_i$. We apply the functor $\Hom(Q,-)$ to obtain
$$\Hom(Q,X) \to \Hom(Q,Y) \to \Hom(Q,V) \to \Hom(Q,X[1]).$$
Since $\TT$ is rigid, we know that $\Hom(Q,X[1]) = 0$.  It follows from Proposition \ref{proposition:BeyondTheEdge} that $\Hom_\CC(Q,X) \cong \Hom(Q, X_i^-) \cong \Hom(Q,Y)$.  Since $\TT$ is semi-hereditary, we know that $\Hom(Q,X) \to \Hom(Q,Y)$ is a monomorphism, and thus a bijection.  We conclude that $\Hom(Q,V) = 0$. 
\end{proof}

\begin{corollary}\label{corollary:WhenDoesQuiverObjectSeeATread2}
Let $Q \in \QQ \subseteq \TT$. Then $\Hom_\CC(Q,\VV_i) \ne 0$ only if $\Hom_\CC(Q,X_i^- \oplus Y_i^+) \ne 0$.
\end{corollary}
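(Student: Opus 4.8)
The plan is to reduce to a single indecomposable object of $\VV_i$ and then to run through the three shapes of minimal right $\TT$-approximation triangle classified in Proposition~\ref{proposition:TypesOfApproximationTriangles}. If $\Hom_\CC(Q,\VV_i) \ne 0$, then, since $\VV_i$ is an additive subcategory of the Krull--Schmidt category $\CC$, there is an indecomposable $V \in \ind\VV_i$ with $\Hom_\CC(Q,V) \ne 0$; it suffices to prove $\Hom_\CC(Q, X_i^- \oplus Y_i^+) \ne 0$ for this particular $V$.

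Now I would apply Proposition~\ref{proposition:TypesOfApproximationTriangles} to $V$. If $V$ admits an approximation triangle of type~(\ref{enumerate:TriangleForm1}), Corollary~\ref{corollary:WhenDoesQuiverObjectSeeATread1} gives $\Hom_\CC(Q,V)=0$, contradicting the choice of $V$, so this case does not occur. If $V$ admits a triangle $X \to Y_i^+ \to V \to X[1]$ of type~(\ref{enumerate:TriangleForm2}) with $X \in \ind\ZZ_i \subseteq \TT$, then applying $\Hom_\CC(Q,-)$ and using that $\TT$ is rigid (hence $\Hom_\CC(Q,X[1]) = \Ext^1_\CC(Q,X) = 0$) shows that $\Hom_\CC(Q,Y_i^+) \to \Hom_\CC(Q,V)$ is surjective, so $\Hom_\CC(Q,Y_i^+)\ne 0$. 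Finally, if $V$ admits a triangle of type~(\ref{enumerate:TriangleForm3}), then $V \cong X$ for some $X \in \ind\ZZ_i$; here I would use that $Q$, $X$ and $X_i^-$ all lie in the full subcategory $\TT$ of $\CC$, that the embedding $\QQ \hookrightarrow \TT$ is fully faithful with a right adjoint $r$ (Proposition~\ref{proposition:ThreadQuivers}), and that $r(X) \cong X_i^-$ for $X$ in the maximal thread $\ZZ_i$ (Proposition~\ref{proposition:BeyondTheEdge}), to obtain $\Hom_\CC(Q,V) \cong \Hom_\TT(Q,X) \cong \Hom_\QQ(Q,r(X)) \cong \Hom_\TT(Q,X_i^-) \cong \Hom_\CC(Q,X_i^-)$, whence $\Hom_\CC(Q,X_i^-) \ne 0$. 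In all the surviving cases $\Hom_\CC(Q,X_i^- \oplus Y_i^+) \ne 0$, which is the claim.

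Since the combinatorial and homological heavy lifting is already packaged in Propositions~\ref{proposition:TypesOfApproximationTriangles} and~\ref{proposition:BeyondTheEdge} and in Corollary~\ref{corollary:WhenDoesQuiverObjectSeeATread1}, I do not expect a genuine obstacle. The one place demanding a little care is the type~(\ref{enumerate:TriangleForm3}) case: one must invoke the adjunction for $\QQ \hookrightarrow \TT$ in the correct variance (the \emph{right} adjoint, acting on the target slot) and keep track of the identification $r(X)\cong X_i^-$ uniformly as $X$ ranges over $\ind\ZZ_i$, rather than for a single object.
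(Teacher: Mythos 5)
Your proof is correct and follows essentially the same route as the paper: excluding the first triangle type via Corollary~\ref{corollary:WhenDoesQuiverObjectSeeATread1}, using rigidity of $\TT$ to factor maps through $Y_i^+$ in the second case, and invoking Proposition~\ref{proposition:BeyondTheEdge} (the right adjoint to $\QQ \hookrightarrow \TT$) in the third. The adjunction is applied with the correct variance, so no issues.
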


\begin{proof}
If $V \in \VV_i$ is such that $\Hom(Q,V) \ne 0$, then the minimal right $\TT$-approximation triangle of $V$ must be as in Proposition~\ref{proposition:TypesOfApproximationTriangles}(\ref{enumerate:TriangleForm2}) or (\ref{enumerate:TriangleForm3}).  In the case of Proposition~\ref{proposition:TypesOfApproximationTriangles}(\ref{enumerate:TriangleForm2}), any map $f\colon Q \to V$ factors through $Y_i^+$ since $Q,X \in \TT$ and $\Ext^1(Q,X) = 0$. Hence $\Hom(Q,Y_i^+) \ne 0$. In the other case, any map $f\colon Q \to V \cong X$ factors through $X_i^-$ thanks to Proposition~\ref{proposition:BeyondTheEdge}.  Thus, $\Hom(Q,X_i^-) \ne 0$ in that case.
\end{proof}

We will also use the dual of Proposition \ref{proposition:TypesOfApproximationTriangles} and its corollaries.

\begin{proposition}\label{proposition:TypesOfApproximationTrianglesDual}
Let $\ZZ_i = [X_i,Y_i] \subseteq \TT$ be a maximal infinite thread.  For each indecomposable $Z \in \ZZ_i \subseteq \CC$, there are three possibilities of right $\TT$-approximation triangles:
\begin{enumerate}
\item\label{enumerate:TriangleForm1Dual} $V \to X \to Y \to V[1]$ where $X,Y \in \ind \ZZ_i$,
\item\label{enumerate:TriangleForm2Dual} $V \to {X_i^-} \to Y \to V[1]$ where $Y \in \ind \ZZ_i$, or
\item\label{enumerate:TriangleForm3Dual} $V \to X \to 0 \to V[1]$ where $X \in \ind \ZZ_i$.
\end{enumerate}
Moreover, for each thread, there are only finitely many isomorphism classes of indecomposables in $\VV_i$ which have $\TT$-approximation triangles of the last two types.
\end{proposition}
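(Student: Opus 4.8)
The plan is to deduce this from Proposition~\ref{proposition:TypesOfApproximationTriangles} by passing to the opposite category. Note first that, exactly as in Proposition~\ref{proposition:TypesOfApproximationTriangles}, the indexing object $V$ is to be read as ranging over $\ind\VV_i$, and the triangles in question are the minimal left $\TT$-approximation triangles of $V$; with this reading the statement is literally the dual of Proposition~\ref{proposition:TypesOfApproximationTriangles}, the two notions of approximation triangle being interchanged under passage from $\CC$ to $\CC^{op}$.

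First I would observe that $\CC^{op}$ is again an algebraic Krull--Schmidt $2$-Calabi--Yau triangulated category (the opposite of a Frobenius exact category is Frobenius, and $\underline{\EE^{op}} = (\underline\EE)^{op}$), that $\TT^{op}\subseteq\CC^{op}$ is again a directed cluster-tilting subcategory with $\ind\TT^{op}$ countable (cluster-tilting, functorial finiteness and directedness are self-dual), and that the whole construction preceding Proposition~\ref{proposition:TypesOfApproximationTriangles} is self-dual. The dictionary is: a maximal infinite thread $\ZZ_i=[X_i,Y_i]$ of $\TT$ becomes the maximal infinite thread $[Y_i,X_i]$ of $\TT^{op}$ (the internal order of the thread is reversed); $\QQ^{op}=\add(\ind\TT^{op}\setminus\ind\ZZ^{op})$ is the complementary functorially finite subcategory; $(\CC_{[\QQ]})^{op}=\CC^{op}_{[\QQ^{op}]}$; the locally bounded cluster-tilting subcategory $\WW$ of $\CC_{[\QQ]}$ becomes the locally bounded cluster-tilting subcategory $\WW^{op}$ of $\CC^{op}_{[\QQ^{op}]}$ (local boundedness and cluster-tilting being self-dual); and $\VV_i$, viewed inside $\CC^{op}$, is the corresponding piece of the lift $(\pi^{op})^{-1}(\WW^{op})$. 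The one delicate point of the dictionary is which ``beyond-the-edge'' object appears: since the thread $\ZZ_i$ of $\TT$ has right endpoint $Y_i$ whereas the reversed thread in $\TT^{op}$ has right endpoint $X_i$, the object playing the role of ``$Y_j^+$'' of Proposition~\ref{proposition:TypesOfApproximationTriangles}(\ref{enumerate:TriangleForm2}) in $\CC^{op}$ is the source $X_i^-$ of the right almost split map $X_i^-\to X_i$ in $\CC$.

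Then I would apply Proposition~\ref{proposition:TypesOfApproximationTriangles} verbatim in $\CC^{op}$ to an indecomposable $V\in\ind\VV_i=\ind\VV_i^{op}$: it produces the three possible forms of a minimal right $\TT^{op}$-approximation triangle of $V$ in $\CC^{op}$, and such a triangle is literally a minimal left $\TT$-approximation triangle of $V$ in $\CC$ (with the two $\TT$-terms interchanged; that the second term again lies in $\TT$ is furnished by the $\CC^{op}$-statement). Reversing all arrows, replacing $[1]_{\CC^{op}}$ by $[1]_{\CC}$, and using that $\ZZ_i$ and its reverse have the same indecomposable objects, the three forms of Proposition~\ref{proposition:TypesOfApproximationTriangles} translate into precisely~(\ref{enumerate:TriangleForm1Dual}),~(\ref{enumerate:TriangleForm2Dual}) and~(\ref{enumerate:TriangleForm3Dual}), with $X_i^-$ occurring in~(\ref{enumerate:TriangleForm2Dual}) by the dictionary above. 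The final finiteness assertion is the translation of the corresponding assertion in Proposition~\ref{proposition:TypesOfApproximationTriangles} and again reduces to $\WW_i$ being locally bounded while $\ZZ_i$ is infinite.

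The hard part is not any single estimate but the bookkeeping in the second paragraph: checking that every ingredient of the (somewhat involved) construction of $\VV_i$ --- the reduction $\CC_{[\QQ]}$, the transfer along Theorem~\ref{theorem:EquivalentClusterStructures}, the choice of the locally bounded $\WW$, and the lift $\pi^{-1}(\WW)$ --- is compatible with passing to $\CC^{op}$, and pinning down the endpoint/almost-split-map correspondence so that $X_i^-$ (rather than $Y_i^+$) appears in~(\ref{enumerate:TriangleForm2Dual}). An alternative route that avoids having to justify the self-duality of the construction wholesale (and in particular the algebraicity of $\CC^{op}$) is to rerun the proof of Proposition~\ref{proposition:TypesOfApproximationTriangles} with all arrows reversed, invoking the evident duals of Lemma~\ref{lemma:InThreadFactoring}, Proposition~\ref{proposition:BeyondTheEdge} and~\cite[Lemma 4.3(2)]{IyamaYoshino08}; this is more repetitive but entirely mechanical.
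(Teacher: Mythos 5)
Your proof is correct and matches the paper's intent exactly: the paper offers no argument for this proposition beyond declaring it ``the dual of Proposition~\ref{proposition:TypesOfApproximationTriangles} and its corollaries'', so your careful dualization --- including the correct reading of the misprinted quantifier ($V \in \ind \VV_i$ rather than $Z \in \ZZ_i$), the identification of the displayed triangles as left $\TT$-approximation triangles of $V$, and the endpoint dictionary sending $Y_i^+$ to $X_i^-$ under reversal of the thread --- is precisely the intended proof. Nothing further is needed.
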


\begin{corollary}\label{corollary:WhenDoesQuiverObjectSeeATreadDual}
Let $Q \in \QQ \subseteq \TT$ be a nonthread object. Then the following hold:

\begin{enumerate}
\item If $V \in \VV_i$ has a $\TT$-approximation as in Proposition~\ref{proposition:TypesOfApproximationTrianglesDual}(\ref{enumerate:TriangleForm1Dual}), then $\Hom_\CC(V,Q) = 0$.
\item $\Hom_\CC(\VV_i,Q) \ne 0$ only if $\Hom_\CC(X_i^- \oplus Y_i^+,Q) \ne 0$.
\end{enumerate}
\end{corollary}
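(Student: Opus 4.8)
The plan is to deduce this corollary from Proposition~\ref{proposition:TypesOfApproximationTrianglesDual} by the exact dual of the arguments that prove Corollaries~\ref{corollary:WhenDoesQuiverObjectSeeATread1} and~\ref{corollary:WhenDoesQuiverObjectSeeATread2}. Where those proofs apply the covariant functor $\Hom_\CC(Q,-)$ to a right $\TT$-approximation triangle ending in $V$, I would instead apply the contravariant functor $\Hom_\CC(-,Q)$ to the $\TT$-approximation triangle of $V$ furnished by Proposition~\ref{proposition:TypesOfApproximationTrianglesDual}, and use the rigidity of $\TT$, Lemma~\ref{lemma:InThreadFactoring}, and the left-adjoint half of Proposition~\ref{proposition:BeyondTheEdge} (i.e.\ $l(Z) \cong Y_i^{+}$ for $Z \in \ind[X_i,Y_i]$) in place of the right-adjoint half used in the covariant setting.

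For part~(1), fix an indecomposable $V \in \ind\VV_i$ whose $\TT$-approximation triangle has the form $V \to X \xrightarrow{f} Y \to V[1]$ with $X,Y \in \ind\ZZ_i$, as in Proposition~\ref{proposition:TypesOfApproximationTrianglesDual}(\ref{enumerate:TriangleForm1Dual}). Applying $\Hom_\CC(-,Q)$ produces the exact sequence
\[\Hom_\CC(Y,Q) \xrightarrow{f^{*}} \Hom_\CC(X,Q) \to \Hom_\CC(V,Q) \to \Hom_\CC(Y,Q[1]),\]
whose last term is $\Ext^1_\CC(Y,Q) = 0$ since $Y,Q \in \TT$ and $\TT$ is rigid. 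First I would note that $f \ne 0$: otherwise the triangle splits, giving $V \cong X \oplus Y[-1]$, which contradicts the indecomposability of $V$ because $X$ and $Y$ are both nonzero. Then Lemma~\ref{lemma:InThreadFactoring}, applied to the nonzero morphism $f\colon X \to Y$ inside the thread $\ZZ_i$, shows that every morphism $X \to Q$ factors through $f$; that is, $f^{*}$ is surjective, so the exact sequence forces $\Hom_\CC(V,Q) = 0$. (One could equally mimic the proof of Corollary~\ref{corollary:WhenDoesQuiverObjectSeeATread1} verbatim, using the adjunction isomorphisms $\Hom_\CC(X,Q) \cong \Hom_\CC(Y_i^{+},Q) \cong \Hom_\CC(Y,Q)$ from Proposition~\ref{proposition:BeyondTheEdge} together with semi-heredity of $\TT$ to see that $f^{*}$ is a bijection.)

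For part~(2), suppose $\Hom_\CC(\VV_i,Q) \ne 0$, say $\Hom_\CC(V,Q) \ne 0$ for some $V \in \ind\VV_i$. By part~(1) the $\TT$-approximation triangle of $V$ is not of type~(\ref{enumerate:TriangleForm1Dual}), hence it is of type~(\ref{enumerate:TriangleForm2Dual}) or~(\ref{enumerate:TriangleForm3Dual}). In the type~(\ref{enumerate:TriangleForm2Dual}) case, $V \to X_i^{-} \to Y \to V[1]$ with $Y \in \ind\ZZ_i$; applying $\Hom_\CC(-,Q)$ and using $\Ext^1_\CC(Y,Q) = 0$ yields a surjection $\Hom_\CC(X_i^{-},Q) \twoheadrightarrow \Hom_\CC(V,Q)$, so $\Hom_\CC(X_i^{-},Q) \ne 0$. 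In the type~(\ref{enumerate:TriangleForm3Dual}) case, $V \cong X$ with $X \in \ind\ZZ_i$, and the left-adjoint half of Proposition~\ref{proposition:BeyondTheEdge} gives $\Hom_\CC(V,Q) \cong \Hom_\CC(X,Q) \cong \Hom_\CC(Y_i^{+},Q)$, so $\Hom_\CC(Y_i^{+},Q) \ne 0$. In either case $\Hom_\CC(X_i^{-} \oplus Y_i^{+}, Q) \ne 0$, which is what is claimed.

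I do not anticipate a real obstacle: this is a routine dualization of the two preceding corollaries. The only points deserving attention are that the role of the right-adjoint half of Proposition~\ref{proposition:BeyondTheEdge} (and, in Corollary~\ref{corollary:WhenDoesQuiverObjectSeeATread1}, of the semi-heredity of $\TT$) is taken over here by Lemma~\ref{lemma:InThreadFactoring} and the left-adjoint half of Proposition~\ref{proposition:BeyondTheEdge}, and that the degenerate triangles of types~(\ref{enumerate:TriangleForm2Dual}) and~(\ref{enumerate:TriangleForm3Dual}) must be handled separately, just as in the proof of Corollary~\ref{corollary:WhenDoesQuiverObjectSeeATread2}.
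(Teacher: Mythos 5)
Your argument is correct and is precisely the dualization the paper intends (the paper omits an explicit proof, simply invoking duality after Proposition~\ref{proposition:TypesOfApproximationTrianglesDual}): apply $\Hom_\CC(-,Q)$ to each triangle type, use rigidity of $\TT$ to kill $\Ext^1_\CC(Y,Q)$, use Lemma~\ref{lemma:InThreadFactoring} to get surjectivity of $f^*$ in case~(\ref{enumerate:TriangleForm1Dual}), and use the left-adjoint half $l(Z)\cong Y_i^+$ of Proposition~\ref{proposition:BeyondTheEdge} in case~(\ref{enumerate:TriangleForm3Dual}). The only shaky point is your parenthetical alternative --- semi-heredity makes $f$ a monomorphism, which does not by itself make $\Hom_\CC(f,Q)$ injective --- but your primary argument does not rely on it.
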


Now we can now prove the following theorem.

\begin{theorem}\label{theorem:LocallyFiniteClusterTilting}
Let $\CC$ be a Krull-Schmidt 2-Calabi-Yau triangulated category with a directed cluster-tilting subcategory $\TT$.  If $\ind \TT$ is countable, then $\CC$ has a locally bounded cluster-tilting subcategory. 
\end{theorem}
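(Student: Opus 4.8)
The plan is to assemble the three-step construction of the cluster-tilting subcategory $\pi^{-1}(\WW)$ laid out just above the statement, so that the only genuinely new work is to check that $\pi^{-1}(\WW)$ is locally bounded. First I would recall: by Corollary~\ref{corollary:Trianglefree} the category $\TT$ is a semi-hereditary dualizing $k$-variety; let $\{[X_i,Y_i]\}_i$ be its maximal infinite threads, put $\ZZ=\bigoplus_i[X_i,Y_i]$ and $\QQ=\add(\ind\TT\setminus\ind\ZZ)$. By Proposition~\ref{proposition:ThreadQuivers} the embedding $\QQ\hookrightarrow\TT$ has adjoints on both sides, hence $\QQ$ is functorially finite in $\CC$; moreover $\QQ\simeq kQ$ for a strongly locally finite quiver $Q$, so $\QQ$ is locally bounded, and $\CC_{[\QQ]}$ is an algebraic $2$-Calabi-Yau category whose cluster-tilting subcategory $\ZZ$ is equivalent to $\bigoplus_i k\LL_i$ with each $\LL_i$ a bounded locally discrete linearly ordered set (Proposition~\ref{proposition:MaximalThreads}), countable since $\ind\TT$ is. Each such $\LL_i$ is (rotationally equivalent to) an unbounded one to which Corollary~\ref{corollary:LocallyBoundedClusterTilting} applies, yielding a locally bounded cluster-tilting subcategory $\WW_i$ of $\CC_{\LL_i}$; since the map $F$ of Theorem~\ref{theorem:EquivalentClusterStructures} preserves Hom-spaces between objects of its domain (apply Corollary~\ref{corollary:EndomorphismsUnderF} to $\add$ of a two-object family, using Lemma~\ref{lemma:Nonfunctor1}(3)), the transported subcategory $\WW\simeq\bigoplus_i\WW_i$ is a locally bounded cluster-tilting subcategory of $\CC_{[\QQ]}$. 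Finally \cite[Theorem 4.9]{IyamaYoshino08} lifts it to a cluster-tilting subcategory $\pi^{-1}(\WW)$ of $\CC$ with $\ind\pi^{-1}(\WW)=\ind\QQ\sqcup\bigsqcup_i\ind\VV_i$, where $\VV_i=\add(\ind\pi^{-1}(\WW_i)\setminus\ind\QQ)$; Hom-finiteness of $\pi^{-1}(\WW)$ is automatic because $\CC$ is Hom-finite.

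The core of the proof is then the following: for every indecomposable $U\in\pi^{-1}(\WW)$ the set of indecomposables $X\in\pi^{-1}(\WW)$ with $\Hom_\CC(U,X)\neq0$ or $\Hom_\CC(X,U)\neq0$ is finite. I would split according to whether $U\in\QQ$ or $U\in\VV_i$. For $U\in\QQ$: only finitely many $X\in\QQ$ qualify, by local boundedness of $\QQ$; and $\Hom_\CC(U,\VV_i)\neq0$ forces $\Hom_\CC(U,X_i^-\oplus Y_i^+)\neq0$ by Corollary~\ref{corollary:WhenDoesQuiverObjectSeeATread2}, where $X_i^-,Y_i^+\in\QQ$ by Proposition~\ref{proposition:BeyondTheEdge}, so only finitely many $i$ occur, and for each of them Corollary~\ref{corollary:WhenDoesQuiverObjectSeeATread1} restricts the relevant $V\in\VV_i$ to those whose minimal right $\TT$-approximation triangle is of type (2) or (3) in Proposition~\ref{proposition:TypesOfApproximationTriangles}, of which there are finitely many; the morphisms $\Hom_\CC(\VV_i,U)$ are bounded dually through Corollary~\ref{corollary:WhenDoesQuiverObjectSeeATreadDual} and Proposition~\ref{proposition:TypesOfApproximationTrianglesDual}. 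For $U=V\in\VV_i$: morphisms to and from $\QQ$ are controlled by the same two corollaries (a type-(1) object of $\VV_i$ has no nonzero morphism to or from a nonthread object of $\QQ$, and only finitely many objects of $\VV_i$ are of type (2) or (3)); morphisms to and from $\VV_j$ with $j\neq i$ must factor through $\QQ$, since they vanish in $\CC_{[\QQ]}$ by Proposition~\ref{proposition:ThreadQuivers}(3), and are therefore subsumed in the previous bound; and morphisms within $\VV_i$ are bounded because, away from the finitely many type-(2)/(3) exceptions, $\Hom_\CC(V,V')$ is computed by $\Hom_{\CC_{[\QQ]}}(\pi V,\pi V')$, i.e.\ by a Hom-space in the locally bounded subcategory $\WW_i$ of $\CC_{\LL_i}$, via $\pi$ and $F$.

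I expect the main obstacle to be precisely this last accounting across the pieces $\QQ$ and $\{\VV_i\}_i$: making sure that "factors through $\QQ$" combined with the trichotomy of Propositions~\ref{proposition:TypesOfApproximationTriangles} and~\ref{proposition:TypesOfApproximationTrianglesDual} genuinely bounds every cross-component Hom, that the finitely many exceptional objects of each $\VV_i$ (those with approximation triangle of type (2) or (3)) only interact with finitely much of $\pi^{-1}(\WW)$, that the thread objects which may lie in $\QQ$ are handled (via local boundedness of $\QQ$ together with the corollaries applied to the finitely many nonthread boundary objects $X_i^-,Y_i^+$), and that the identification of $\Hom_\CC$ on $\VV_i$ with a Hom-space in $\CC_{\LL_i}$ through the reduction functor $\pi$ and the comparison map $F$ is legitimate. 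None of these steps is conceptually deep, but together they make the verification an intricate bookkeeping argument rather than a one-line deduction; everything else reduces to quoting Corollary~\ref{corollary:LocallyBoundedClusterTilting}, Theorem~\ref{theorem:EquivalentClusterStructures}, and \cite[Theorem 4.9]{IyamaYoshino08}.
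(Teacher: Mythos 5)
Your proposal is correct and follows essentially the same route as the paper: the same three-step construction of $\pi^{-1}(\WW)$ via the reduction $\CC_{[\QQ]}$, Theorem~\ref{theorem:EquivalentClusterStructures} and Corollary~\ref{corollary:LocallyBoundedClusterTilting}, followed by the same case analysis ($U\in\QQ$ versus $U\in\VV_i$, with cross-component morphisms controlled by Corollaries~\ref{corollary:WhenDoesQuiverObjectSeeATread1}, \ref{corollary:WhenDoesQuiverObjectSeeATread2}, \ref{corollary:WhenDoesQuiverObjectSeeATreadDual} and the trichotomy of Propositions~\ref{proposition:TypesOfApproximationTriangles} and~\ref{proposition:TypesOfApproximationTrianglesDual}, and morphisms between distinct $\VV_i,\VV_j$ or not visible in $\CC_{[\QQ]}$ forced to factor through $\QQ$ and hence bounded by the composite of two finite bounds). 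The bookkeeping you flag as the main obstacle is exactly what the paper's proof carries out, and your outline of it is sound.
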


\begin{proof}
We will show that the cluster-tilting subcategory $\UU = \pi^{-1}(\WW)$ of $\CC$ which we constructed above is locally bounded.

We start by considering an object $Q \in \ind \QQ \subseteq \TT$.  Since $\QQ$ is locally bounded, there are only finitely many objects $Q' \in \QQ$  such that $\Hom(Q,Q') \not= 0$ or $\Hom(Q',Q) \not= 0$. It follows from Corollary~\ref{corollary:WhenDoesQuiverObjectSeeATread2} that $\Hom(Q,\VV_i) \not= 0$ only for finitely many maximal infinite threads $[X_i,Y_i]$. Even if $\Hom(Q,\VV_i) \not= 0$, there are by Proposition~\ref{proposition:TypesOfApproximationTriangles} and Corollary~\ref{corollary:WhenDoesQuiverObjectSeeATread1} only finitely many $V \in \VV_i$ such that $\Hom(Q,V) \not= 0$. Dually, $\Hom(V,Q) \not= 0$ only for finitely many $V \in \bigcup_i \ind(\VV_i)$ by Proposition~\ref{proposition:TypesOfApproximationTrianglesDual} and Corollary~\ref{corollary:WhenDoesQuiverObjectSeeATreadDual}.  Thus, there are only finitely many $U \in \ind(\UU)$ such that $\Hom(Q,U) \not= 0$ or $\Hom(U,Q) \not= 0$.

Next, we turn our attention to an object $V \in \ind \VV_i$.  Since $\QQ$ is locally bounded, there are only finitely many $Q \in \ind \QQ$ such that $\Hom(V,Q) \not= 0$ or $\Hom(Q,V) \not= 0$ by Corollaries~\ref{corollary:WhenDoesQuiverObjectSeeATread2} and~\ref{corollary:WhenDoesQuiverObjectSeeATreadDual}.

Finally, consider $V' \in \bigcup_j \ind(\VV_j)$.  By construction, we know that there are only finitely many such $V'$ for which $\Hom_{\CC_{[\QQ]}}(V,V') \not= 0$. Although it may happen that $\Hom_{\CC_{[\QQ]}}(V,V') = 0$ and $\Hom_\CC(V,V') \not= 0$, we have $\Hom_\CC(V,V') = [\QQ](V,V') \not= 0$ in such a case.  However, there are only finitely many $Q \in \ind(\QQ)$ such that $\Hom_\CC(V,Q) \ne 0$ and finitely many $V' \in \bigcup_j \ind(\VV_j)$ such that $\Hom_\CC(Q,V') \not= 0$, so there can be only finitely many $V'$ such that $[\QQ](V,V') \not= 0$.  In summary, there can be only finitely many $U \in \ind(\UU)$ such that $\Hom_\CC(V,U) \not= 0$. The proof that $\Hom_\CC(U,V) \not= 0$ only for finitely many $U \in \ind(\UU)$ is similar.
\end{proof}

\begin{remark}\label{remark:CountableNecessary}
The condition that $\ind \TT$ is countable in Theorem \ref{theorem:LocallyFiniteClusterTilting} is necessary if $\CC$ is a block.  Indeed, if $\ind \TT$ is uncountable, then $\ind \VV$ is uncountable for any cluster-tilting subcategory $\VV$ of $\CC$ (see \cite[Theorem 2.4]{DehyKeller08}), and if $\CC$ is a block, then $\VV$ is indecomposable (see Theorem \ref{theorem:IndecomposableClustertiltingSubcategories}).  Such a subcategory $\VV$ cannot be locally bounded.
\end{remark}

\subsection{A cluster map for cluster categories with locally bounded cluster-tilting subcategories}

Our main application of Theorem \ref{theorem:LocallyFiniteClusterTilting} is the existence of a cluster map.  We recall the definition; here, let $\VV$ be a cluster-tilting subcategory of $\CC$, $\Gamma_\VV$ be the connected component of the exchange graph of $\CC$ containing $\VV$, and $\EE_\VV$ be the additive category $\add \{\UU \subset \CC \mid \UU \in \Gamma_\VV \}$, so that $\EE$ is the additive subcategory generated by all objects which are reachable from $\VV$.

\begin{definition}
A map
$$\varphi_\VV: \Ob \EE_\VV \to \bQ(x_v)_{v \in \ind \VV}$$
is said to be a cluster map, if the following conditions are satisfied:
\begin{enumerate}
\item $\varphi_\VV$ is constant on isomorphism classes, i.e. if $E \cong E'$ then $\varphi_\VV(E) = \varphi_\VV(E')$,
\item $\varphi_\VV(E \oplus E') = \varphi_\VV(E) \cdot \varphi_\VV(E')$, for all $E, E' \in \EE$,
\item if $\dim \Ext^1(E,E') = 1$, then $\varphi_\VV(E) \cdot \varphi_\VV(E') = \varphi_\VV(M) + \varphi_\VV(M')$ where $M$ and $M'$ are given by the nonsplit triangles:
\[\mbox{$E \to M \to E' \to E[1]$ and $E' \to M' \to E \to E'[1]$,}\]
\item there is a cluster-tilting subcategory $\VV'$, reachable from $\VV$, for which $\varphi_\VV(\ind \VV')$ is a transcendence basis of $\bQ(x_v)_{v \in \ind \VV}$ over $\bQ$.
\end{enumerate}
\end{definition}

\begin{remark}\label{remark:ClusterAlgebraFromClusterMap}
For each rigid indecomposable $X \in \CC$, the image $\rho_\VV(X) \in \bQ(x_v)_{v \in \ind \VV}$ is a cluster variable.  The subalgebra $\bF$ of $\bQ(x_v)_{v \in \ind \VV}$ generated by these cluster variables is the cluster algebra; the clusters correspond to the cluster-tilting subcategories in $\Gamma_\VV$ (see \cite[Proposition IV.1.2]{BuanIyamaReitenScott09}).
\end{remark}

\begin{theorem}\label{theorem:ExistenceOfCCMap}
Let $\CC$ be a Krull-Schmidt 2-Calabi-Yau triangulated category with an indecomposable directed cluster-tilting subcategory $\TT$ {(in particular, $\CC$ is a block)}. If $\ind \TT$ is countable, then there exists a cluster-tilting subcategory $\VV$ of $\CC$ such that:
\begin{enumerate}
\item $\ind \VV$ is countable,
\item $\EE_\VV$ contains all rigid objects, and
\item there exists a cluster map $\rho_\VV: \Ob \EE_\VV \to \bQ(x_v)_{v \in \ind \VV}$.
\end{enumerate}
\end{theorem}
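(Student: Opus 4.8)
The plan is to take $\VV$ to be a \emph{locally bounded} cluster-tilting subcategory of $\CC$. Such a $\VV$ exists by Theorem~\ref{theorem:LocallyFiniteClusterTilting}, since $\ind\TT$ is countable (and $\CC$ is algebraic by the standing assumption of this section). Property~(1) is then immediate: all cluster-tilting subcategories of $\CC$ have the same cardinality of indecomposables by \cite[Theorem 2.4]{DehyKeller08} (see Remark~\ref{remark:DefClusterTilting}), so $|\ind\VV| = |\ind\TT|$ is countable.

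The crux is to deduce property~(2) from local boundedness of $\VV$ by feeding it into Corollary~\ref{corollary:Bongartz}(1). For this I would first record the elementary fact that local boundedness of $\VV$ forces $\Ext^1_\CC(V,X)=0$ for all but finitely many $V\in\ind\VV$, for every $X\in\CC$. To see this, take a right $\VV$-approximation triangle $V_1\to V_0\to X\to V_1[1]$ with $V_0,V_1\in\VV$ (as in \S\ref{subsection:ClusterTilting}); applying $\Hom_\CC(V,-)$ and using rigidity of $\VV$ (so that $\Hom_\CC(V,V_0[1])=\Hom_\CC(V,V_1[1])=0$) gives a monomorphism $\Ext^1_\CC(V,X)\hookrightarrow\Hom_\CC(V,V_1[2])$. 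By the $2$-Calabi--Yau property and Serre duality, $\Hom_\CC(V,V_1[2])=\Hom_\CC(V,\bS V_1)\cong\Hom_\CC(V_1,V)^*$, and since $V_1$ is a finite direct sum of objects of $\ind\VV$, local boundedness leaves only finitely many $V\in\ind\VV$ with $\Hom_\CC(V_1,V)\neq0$, hence only finitely many with $\Ext^1_\CC(V,X)\neq0$. Applying this to an arbitrary rigid object $X$, Corollary~\ref{corollary:Bongartz}(1) produces a cluster-tilting subcategory reachable from $\VV$ and containing $X$; thus $X$ is reachable from $\VV$, so $X\in\EE_\VV$, which is property~(2).

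For property~(3) I would invoke Theorem~\ref{theorem:ClusterStructure}, which says the cluster-tilting subcategories of $\CC$ form a cluster structure, and then \cite[Theorem 2.3]{JorgensenPalu13}, which converts a cluster structure into a Caldero--Chapoton (cluster) map $\rho_\VV\colon\Ob\EE_\VV\to\bQ(x_v)_{v\in\ind\VV}$ satisfying axioms~(1)--(3) of a cluster map. Axiom~(4) holds trivially with $\VV'=\VV$, since $\rho_\VV(\ind\VV)=\{x_v\mid v\in\ind\VV\}$ is a transcendence basis of $\bQ(x_v)_{v\in\ind\VV}$ over $\bQ$. This gives all three conclusions.

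The genuine difficulty of the theorem lies in its inputs --- the construction of a locally bounded cluster-tilting subcategory (Theorem~\ref{theorem:LocallyFiniteClusterTilting}) and the ``weak connectedness'' of the exchange graph (Corollary~\ref{corollary:Bongartz}) --- both of which are already established. Within this proof the only real step is the $\Ext$-finiteness observation above, and the main thing to be careful about is that this observation supplies \emph{exactly} the hypothesis needed to apply Corollary~\ref{corollary:Bongartz}(1); the rest is bookkeeping and a routine check of the cluster-map axioms.
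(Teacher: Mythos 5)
Your proof is correct and follows essentially the same route as the paper: take $\VV$ locally bounded via Theorem~\ref{theorem:LocallyFiniteClusterTilting}, get countability from \cite[Theorem 2.4]{DehyKeller08}, reachability from Corollary~\ref{corollary:Bongartz}, and the cluster map from \cite{JorgensenPalu13}. The only difference is that you spell out the Serre-duality argument showing local boundedness of $\VV$ yields $\Ext^1_\CC(V,X)=0$ for all but finitely many $V\in\ind\VV$ (the hypothesis of Corollary~\ref{corollary:Bongartz}(1)), a verification the paper leaves implicit; your version of that step is correct.
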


\begin{proof}
From Theorem \ref{theorem:LocallyFiniteClusterTilting} we know that $\CC$ has a locally bounded cluster-tilting category, say $\VV$.  It follows from \cite[Theorem 2.4]{DehyKeller08} that $\ind \VV$ is countable.  It further follows from Corollary \ref{corollary:Bongartz} that $\EE_\VV$ contains all rigid objects.  Finally, it follows from \cite{JorgensenPalu13} (see \cite[Remark 5.4]{JorgensenPalu13}, based on \cite{CalderoChapoton06, Palu08}) that a cluster map $\rho_\VV: \Ob \EE_\VV \to \bQ(x_v)_{v \in \ind \VV}$ exists.
\end{proof}

\begin{remark}
An explicit formula for the map $\rho_\VV: \Ob \EE_\VV \to \bQ(x_v)_{v \in \ind \VV}$ from Theorem \ref{theorem:ExistenceOfCCMap} is given in \cite[\S 1.8]{JorgensenPalu13} (it is a version of the Caldero-Chapoton fomula from~\cite[\S3]{CalderoChapoton06}).  In fact --as also noted in~\cite{JorgensenPalu13}-- this formula is defined on all of $\Ob \CC$ instead of merely on $\Ob \EE_\VV$.
\end{remark}

\begin{remark} \label{remark:LocFinAndDirectedMayBeIncompatible}
It may happen that the cluster-tilting category $\VV$ cannot be chosen to be directed. The smallest example of this phenomenon occurs for $\CC = \CC_\LL$, where $\LL = \{0,1,2\} \stackrel{\rightarrow}{\times} \bZ = \bZ \cdot \bZ \cdot \bZ$.

A connected triangulation which corresponds (via Theorem~\ref{theorem:WhenClusterTilting}) to a directed cluster-tilting subcategory is given in Example~\ref{example:LightConeArcs}, while a locally finite connected triangulation which defines a locally bounded cluster-tilting subcategory is depicted in Figure~\ref{figure:LocallyFiniteTriangle}. One can convince oneself that there is no connected triangulation of $(\LL, \leq)_{\rm cyc}$ which would both be locally finite and correspond to a directed cluster-tilting subcategory of $\CC$.
\end{remark}

\begin{remark}\label{remark:WhyCountable}
If $\TT$ is indecomposable and $\ind \TT$ is not countable, then no cluster-tilting subcategory of $\CC$ is locally bounded (see Remark \ref{remark:CountableNecessary}), and it follows from Corollary \ref{corollary:Bongartz} that not every rigid object is reachable.  Hence, the condition that $\ind \TT$ is countable is needed for the statement that $\EE_\VV$ contains all rigid objects.
\end{remark}

\begin{remark}\label{remark:DifferentClusterStructures}
In the setting of Theorem \ref{theorem:ExistenceOfCCMap}, the exchange graph of $\CC$ contains a connected component $\Gamma$ such that for any rigid object $X \in \CC$, there is a cluster-tilting subcategory $\UU$ of $\CC$ such that $X \in \UU$ and $\UU \in \Gamma$.  Indeed, the connected component $\Gamma$ is the one that contains the cluster-tilting subcategory $\VV$ from Theorem \ref{theorem:ExistenceOfCCMap}.

Under the cluster map $\rho_\VV: \Ob \EE_\VV \to \bQ(x_v)_{v \in \ind \VV}$, the cluster-tilting subcategories in $\Gamma$ correspond to the seeds of the cluster algebra $\bF$ (see Remark~\ref{remark:ClusterAlgebraFromClusterMap}).  Let $C_\VV$ be the set of clusters of this cluster algebra.

Let $\UU$ be a locally bounded cluster-tilting subcategory of $\CC$ and let $\Gamma_\UU$ be the connected component of the exchange graph containing $\UU$.  Following \cite[Proposition IV.1.2(a)]{BuanIyamaReitenScott09}, the cluster map $\rho_\VV: \Ob \EE_\VV \to \bQ(x_v)_{v \in \ind \VV}$ gives $\bF$ (see Remark \ref{remark:ClusterAlgebraFromClusterMap}) the structure of a cluster algebra whose seeds are given by the cluster-tilting subcategories in $\Gamma_\UU$ instead of the cluster-tilting subcategories in $\Gamma_\VV$ (we require that $\UU$ is locally bounded so that $\rho(\UU)$ is a transcendence basis of $\bQ(x_v)_{v \in \ind \VV}$ over $\bQ$);  let $C_\UU$ be the set of clusters on $\bF$ rendered in this way.

We say that a finite set $S$ of cluster variables in $\bF$ is \emph{compatible} with $C_\UU$ or $C_\VV$ if there is a cluster in $C_\UU$ or $C_\VV$, respectively, which contains $S$.  For any finite set of rigid indecomposables $\{X_i \in \CC\}_i$, the cluster variables $\{\rho_\VV(X_i)\}_i$ lie in a cluster (of both $C_\VV$ and $C_\UU$, meaning that the set $\{\rho_\VV(X_i)\}_i$ of cluster variables is compatible with both $C_\VV$ and $C_\UU$) if and only if $\oplus_i X_i$ is rigid in $\CC$ (this follows from Corollary \ref{corollary:Bongartz}).

If $\UU$ is not reachable from $\VV$ (for example, if $|\ind \VV|$ is countably infinite and $\UU = \VV[1]$) then $C_\VV \not= C_\UU$.  This shows that $\bF$ is not unistructural in the sense of \cite{AssemShramchenkoSchiffler14}, meaning that the clusters in $\bF$ are not uniquely determined by cluster variables.  In particular, \cite[Conjecture 1.2]{AssemShramchenkoSchiffler14} fails when it comes to the (infinite rank) cluster algebra $\bF$.  Moreover, the clusters are not determined by knowing all (finite) compatible sets.
\end{remark}
\appendix

\section{A Bongartz-type completion for functorially finite rigid subcategories}\label{section:Bongartz}

In this section, we will work with a Krull-Schmidt 2-Calabi-Yau triangulated category $\CC$ over a field $k$ (not necessarily algebraically closed).  The main result is to show the following theorem.

\begin{theorem}\label{theorem:BongartzComplement}
Let $\CC$ be a Krull-Schmidt 2-Calabi-Yau triangulated category, and let $\WW$ be a functorially finite rigid subcategory of $\CC$.  If $\CC$ has a cluster-tilting subcategory, then $\CC$ has a cluster-tilting subcategory $\RR$ which contains $\WW$.
\end{theorem}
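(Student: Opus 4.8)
The plan is to mimic the classical Bongartz completion in the triangulated setting, following~\cite{Jasso13}. Fix a cluster-tilting subcategory $\TT$ of $\CC$, which exists by hypothesis. For each indecomposable $T \in \ind \TT$, choose a right $\WW$-approximation $g_T \colon W_T \to T[1]$ (possible since $\WW$ is contravariantly finite) and complete it to a triangle
\[ T \xrightarrow{a_T} F_T \xrightarrow{b_T} W_T \xrightarrow{g_T} T[1]. \]
Put $\FF = \add\{F_T \mid T \in \ind \TT\}$ and $\RR = \add(\FF \cup \WW)$; this is the Bongartz completion of $\WW$ relative to $\TT$, and clearly $\WW \subseteq \RR$. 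Since $\CC$ already has a cluster-tilting subcategory, Remark~\ref{remark:DefClusterTilting}(\ref{enumerate:MaxRigidVsClusterTilting}) reduces the theorem to showing that $\RR$ is a functorially finite maximal rigid subcategory of $\CC$.

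First I would verify that $\RR$ is rigid. The vanishing $\Ext^1_\CC(\WW,\WW)=0$ is the hypothesis on $\WW$. Applying $\Hom_\CC(W',-)$ for $W' \in \WW$ to the triangle defining $F_T$ and using that $g_T$ is a right $\WW$-approximation (so $\Hom_\CC(W',W_T)\to\Hom_\CC(W',T[1])$ is onto) together with $\Ext^1_\CC(W',W_T)=0$, one gets $\Ext^1_\CC(W',F_T)=0$; hence $\Ext^1_\CC(\WW,\FF)=0$, and $\Ext^1_\CC(\FF,\WW)=0$ follows by 2-Calabi-Yau duality. For $\Ext^1_\CC(F_T,F_{T'})=0$ I would chase long exact sequences: applying $\Hom_\CC(F_T,-)$ to the triangle defining $F_{T'}$ reduces the problem (using $\Ext^1_\CC(F_T,W_{T'})=0$) to surjectivity of the connecting map $\Hom_\CC(F_T,W_{T'})\to\Ext^1_\CC(F_T,T')$, which is composition with $g_{T'}$; and every class in $\Ext^1_\CC(F_T,T')=\Hom_\CC(F_T,T'[1])$ lifts along $b_T$ to some $\psi \in \Hom_\CC(W_T,T'[1])$ because $\Ext^1_\CC(T,T')=0$, and such a $\psi$ factors through $g_{T'}$ because $W_T \in \WW$ and $g_{T'}$ is a right $\WW$-approximation. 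This gives $\RR \subseteq \RR^{\perp_1}={}^{\perp_1}\RR$.

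Next, functorial finiteness. It suffices (by 2-Calabi-Yau duality) to show $\RR$ is contravariantly finite. Given $X \in \CC$ I would start from a right $\TT$-approximation triangle $T_1 \to T_0 \to X \to T_1[1]$ and combine it with the structure triangles for $F_{T_0}$, $F_{T_1}$ and with a right $\WW$-approximation of an appropriate error term via the octahedral axiom, so that a right $\RR$-approximation of $X$ is assembled out of $F_{T_0}$ together with an object of $\WW$. Alternatively one can work in the Iyama--Yoshino reduction $\CC_{[\WW]}=\WW^{\perp_1}/[\WW]$ (the relevant mutation pair is recorded in Example~\ref{example:RigidMutationPair}): one has $\FF \subseteq \WW^{\perp_1}$, and the image $\overline{\FF}$ of $\FF$ in $\CC_{[\WW]}$ is functorially finite there if and only if $\RR$ is functorially finite in $\CC$, so the question moves to the reduced category.

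Finally, maximality of $\RR$. Let $X$ be indecomposable with $\add(\RR \cup \{X\})$ rigid; I must show $X \in \RR$. From $\Ext^1_\CC(\WW,X)=0$ we get $X \in \WW^{\perp_1}$, so $X$ passes to an object $\overline{X}$ of the 2-Calabi-Yau category $\CC_{[\WW]}$, and $\overline{X}$ is $\Ext^1$-orthogonal to the rigid subcategory $\overline{\FF}$. It therefore suffices to prove that $\overline{\FF}$ is a cluster-tilting (equivalently, since it is functorially finite, maximal rigid) subcategory of $\CC_{[\WW]}$, for then $\overline{X}\in\overline{\FF}$ and, lifting along \cite[Theorem~4.9]{IyamaYoshino08} and using $X \in \WW^{\perp_1}$, we conclude $X \in \add(\FF \cup \WW)=\RR$. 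I expect this last point --- that the subcategory $\overline{\FF}$ built from $\TT$ via the $\WW$-approximations is cluster-tilting in the reduction --- to be the main obstacle: rigidity of $\RR$ is a formal diagram chase, but maximality genuinely exploits that $g_T$ was chosen to be a $\WW$-approximation (so that $F_T$ ``absorbs'' precisely $\Ext^1_\CC(\WW,T)$), together with the invariance of the number of indecomposables in cluster-tilting subcategories (\cite[Theorem~2.4]{DehyKeller08}, cf. Remark~\ref{remark:DefClusterTilting}(\ref{enumerate:MaxRigidVsClusterTilting})), to pin down $\overline{\FF}$. Once $\RR$ is shown to be functorially finite and maximal rigid, Remark~\ref{remark:DefClusterTilting}(\ref{enumerate:MaxRigidVsClusterTilting}) finishes the proof.
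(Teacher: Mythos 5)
Your construction is exactly the paper's (Construction~\ref{construction:Bongartz}): the same triangles $T \to F_T \to W_T \to T[1]$ built on right $\WW$-approximations, the same $\RR = \add(\FF \cup \WW)$, and the same reduction of the theorem to ``functorially finite maximal rigid'' via Remark~\ref{remark:DefClusterTilting}(\ref{enumerate:MaxRigidVsClusterTilting}). Your rigidity argument is correct and is just a long-exact-sequence rephrasing of the paper's diagram chase in Proposition~\ref{proposition:Bongartz}(1), and your second suggestion for contravariant finiteness (pass to $\CC_{[\WW]}$ and build the $\FF$-approximation of $X$ out of $F_{T_X}$ for a right $\TT$-approximation $T_X \to X$) is precisely what the paper does in Proposition~\ref{proposition:Bongartz}(3).

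The genuine gap is the maximality step, and you have in effect flagged it yourself. Passing to $\CC_{[\WW]}$ and declaring that ``it suffices to prove that $\overline{\FF}$ is cluster-tilting in $\CC_{[\WW]}$'' does not make progress: maximal rigidity of $\overline{\FF}$ in $\CC_{[\WW]}$ is equivalent (via the correspondence between rigid subcategories of $\WW^{\perp_1}$ containing $\WW$ and rigid subcategories of the reduction) to maximal rigidity of $\RR$ in $\CC$, which is the thing to be proved. Moreover, the tools you invoke to pin down $\overline{\FF}$ are unavailable at that point: \cite[Theorem~2.4]{DehyKeller08} and \cite[Theorem~2.6]{ZhouZhu11} both presuppose that the ambient category already possesses a cluster-tilting subcategory, and nothing guarantees this for $\CC_{[\WW]}$ a priori --- the natural candidate $\TT$ does not land in $\WW^{\perp_1}$, and the paper's Theorem~\ref{theorem:HereditaryBongartzComplement}, which does produce a cluster-tilting subcategory of $\CC_{[\WW]}$, is itself a consequence of the present theorem. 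The paper closes the gap with a direct argument in $\CC$ (Proposition~\ref{proposition:Bongartz}(2)): given an indecomposable $X \notin \RR$ with $\add(\RR \cup \{X\})$ rigid, take the right $\TT$-approximation triangle $T_0 \xrightarrow{t} T_1 \to X \to T_0[1]$ and observe that, because $\Ext^1(\WW,X)=0$, every map $W' \to T_1[1]$ factors through $t[1]$ and hence through the chosen approximation $W \to T_0[1]$; thus the composite $W \to T_1[1]$ is itself a right $\WW$-approximation and its cocone lies in $\RR$. An octahedron built from this and the triangle defining $F_{T_0}$ produces a triangle $R \to W \to T_1[1] \to R[1]$ with $R \in \RR$ together with a map $X \to F_{T_0}[1]$ whose vanishing (forced by $\Ext^1(X,\FF)=0$) exhibits $X$ as a direct summand of $R$. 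You should replace your reduction-based sketch of maximality with an argument of this kind; everything else in your proposal matches the paper.
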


We will construct this cluster-tilting subcategory {$\RR = \add(\WW \cup \FF)$} in Construction \ref{construction:Bongartz}.  The above theorem will be obtained as a trivial corollary of Proposition \ref{proposition:Bongartz} below.

\begin{construction}\label{construction:Bongartz}
Let $\CC$ be a 2-Calabi-Yau category with a cluster-tilting subcategory $\TT \subseteq \CC$.  Let $\WW \subseteq \CC$ be a functorially finite rigid subcategory.  For every $T \in \TT$, we consider the triangle
\[\xymatrix@1{
F_T \ar[r] & W \ar[r]^{f} & T[1] \ar[r] &F_T[1]
}\]
based on the right $\WW$-approximation $f: W \to T[1]$.  Let $\FF$ be the additive subcategory $\add \{F_T\}_{T \in \TT}$ and let $\RR = \add(\FF \cup \WW)$.
\end{construction}

\begin{remark}
A similar result was presented in \cite[Definition-Proposition 4.21]{Jasso13} for when $\CC$ satisfies the additional condition of admitting a cluster-tilting object instead of a (more general) cluster-tilting subcategory.  The construction and part of the proof are adaptations of \cite{Jasso13}; the main difference is that we need to show that {$\RR$} is functorially finite in $\CC$.
\end{remark}

\begin{proposition}\label{proposition:Bongartz}
\begin{enumerate}
\item $\Ext^1(\WW, \FF) = \Ext^1(\FF, \WW) = \Ext^1(\FF, \FF) = 0$,
\item {$\RR = \add( \FF \cup \WW)$ is maximal rigid in $\CC$},
\item {$\RR = \add( \FF \cup \WW)$ is a cluster-tilting subcategory in $\CC$.}
\end{enumerate}
\end{proposition}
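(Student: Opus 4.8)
The plan is to deduce part~(3) from parts~(1) and~(2) together with a functorial finiteness argument. By part~(1) and the rigidity of $\WW$, every group $\Ext^1_\CC(R,R')$ with $R,R'\in\RR=\add(\WW\cup\FF)$ vanishes, so $\RR$ is rigid, and by part~(2) it is maximal rigid. Since $\CC$ admits a cluster-tilting subcategory (namely $\TT$), Remark~\ref{remark:DefClusterTilting}(\ref{enumerate:MaxRigidVsClusterTilting})---which invokes \cite[Theorem 2.6]{ZhouZhu11}---tells us that a functorially finite maximal rigid subcategory of $\CC$ is automatically cluster-tilting. Hence everything reduces to showing that $\RR$ is functorially finite in $\CC$; this is precisely the extra ingredient needed beyond the cluster-tilting-object situation treated in \cite{Jasso13}.

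First I would record that objects of $\TT$ (and of $\TT[1]$) admit especially good one-sided $\RR$-approximations. Rotating the defining triangle of Construction~\ref{construction:Bongartz} gives, for each $T\in\TT$, a triangle
\[
T \xrightarrow{\eta_T} F_T \xrightarrow{g_T} W_T \xrightarrow{f_T} T[1].
\]
Applying $\Hom_\CC(-,R)$ for $R\in\RR$ and using $\Ext^1_\CC(W_T,R)=0$ (from part~(1) when $R\in\FF$, and from the rigidity of $\WW$ when $R\in\WW$) shows that $\eta_T\colon T\to F_T$ is a left $\RR$-approximation of $T$ whose cone $W_T$ lies in $\RR$; dually, applying $\Hom_\CC(R,-)$ and using $\Ext^1_\CC(\RR,\FF)=0$ (again part~(1)) shows that $f_T\colon W_T\to T[1]$ is a right $\RR$-approximation of $T[1]$ whose third term $F_T$ lies in $\RR$.

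To handle an arbitrary $X\in\CC$, I would start from a right $\TT$-approximation triangle $T_1\to T_0\xrightarrow{p}X\xrightarrow{c}T_1[1]$ (available since $\TT$ is cluster-tilting) and run it through a short chain of octahedral diagrams built on the triangles above. Combining $c$ with the connecting map of the defining triangle of $T_1$ produces an object $Q$ fitting into triangles $F_{T_1}\to Q\xrightarrow{\beta}X\to F_{T_1}[1]$ and $Q\xrightarrow{\alpha}P\to W_{T_0}\to Q[1]$ in which $P$ is an extension of $W_{T_1}$ by $F_{T_0}$, hence $P\cong F_{T_0}\oplus W_{T_1}\in\RR$ because $\Ext^1_\CC(\WW,\FF)=0$. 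Since $\Ext^1_\CC(\RR,\FF)=0$, every morphism $R\to X$ with $R\in\RR$ factors through $\beta$, so it is enough to produce a right $\RR$-approximation of $Q$; and $Q$ is exhibited as the homotopy fiber of a morphism $P\to W_{T_0}$ between objects of $\RR$, while $\alpha$ itself is a left $\RR$-approximation of $Q$ with cone $W_{T_0}\in\RR$. The mirror construction, starting from a left $\TT$-approximation triangle of $X$, supplies left $\RR$-approximations. I expect the genuinely hard part to be exactly this last step: extensions of objects of $\RR$ need not be rigid, so at each octahedral stage one must verify---using the vanishing in part~(1), the maximality from part~(2), and the identity $\WW^{\perp_1}={}^{\perp_1}\WW$ furnished by $2$-Calabi-Yau duality---that the successive cones either already lie in $\RR$ or have been reduced to the case of objects of $\TT$ and $\TT[1]$ solved above; termination of the reduction rests on the fact that the cone of each $\eta_T$ (and the third term of each $f_T$-triangle) lies in the fixed subcategory $\WW$. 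Once $\RR$ is known to be functorially finite, the reduction of the first paragraph completes the proof.
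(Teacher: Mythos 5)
Your reduction of part (3) to functorial finiteness of $\RR$ via \cite[Theorem 2.6]{ZhouZhu11} matches the paper, and your preliminary observations are correct: $\eta_T\colon T\to F_T$ is a left $\RR$-approximation, $f_T\colon W_T\to T[1]$ is a right $\RR$-approximation, and $P\cong F_{T_0}\oplus W_{T_1}\in\RR$ because $\Ext^1(\WW,\FF)=0$ splits the relevant triangle. But the argument for contravariant finiteness does not close. After the octahedra you have correctly reduced ``right $\RR$-approximation of $X$'' to ``right $\RR$-approximation of $Q$'', where $Q$ sits in triangles $T_0\to Q\to W_{T_1}\to T_0[1]$ and $W_{T_0}[-1]\to Q\xrightarrow{\alpha}P\to W_{T_0}$. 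At this point you exhibit only a \emph{left} $\RR$-approximation $\alpha$ of $Q$, which is the wrong direction, and none of the available triangles controls $\Hom(R,Q)$ for $R\in\RR$: neither $\Ext^1(\RR,\TT)$ nor $\Hom(R,W_{T_0}[-1])$ has any reason to vanish. Iterating your reduction (taking a $\TT$-approximation triangle of $Q$ and repeating) produces another object of exactly the same shape, so there is no visible termination. You flag this yourself as ``the genuinely hard part'', but it is precisely the content of the statement, so this is a genuine gap. (You also take parts (1) and (2) as given; the paper proves both in detail, although they are indeed adaptations of \cite{Jasso13}.)

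The paper resolves the difficulty by not constructing $\RR$-approximations in $\CC$ directly: it passes to the Iyama--Yoshino reduction $\CC_{[\WW]}=\WW^{\perp_1}/[\WW]$ and shows that $\FF$ becomes functorially finite there, which suffices by \cite[Theorem 4.9(1)]{IyamaYoshino08}. Working modulo $[\WW]$ is exactly what makes the computation go through: for $X\in\WW^{\perp_1}$ the right $\TT$-approximation $g\colon T_X\to X$ factors through $\eta_{T_X}\colon T_X\to F_{T_X}$, because the obstruction lies in $\Hom(W_X[-1],X)\cong\Ext^1(W_X,X)=0$, and the resulting $g'\colon F_{T_X}\to X$ is a right $\FF$-approximation up to morphisms factoring through $\WW$ --- which is all that is required in $\CC_{[\WW]}$, since $\WW$ itself is functorially finite by hypothesis. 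Note that for $X\notin\WW^{\perp_1}$ this factorization genuinely fails, which is why some form of the passage to $\WW^{\perp_1}/[\WW]$ seems unavoidable and why your direct approach stalls. If you want to complete your argument, the cleanest fix is to adopt this step: first approximate $X$ by $\WW$, reduce to $\WW^{\perp_1}$, and only then run your construction modulo $[\WW]$.
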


\begin{remark}
The cluster-tilting subcategory $\RR$ is called the \emph{Bongartz completion} of $\WW$ by $\TT$ in \cite{Jasso13} (see \cite[2.1 Lemma]{Bongartz81}).
\end{remark}

\begin{proof}[{Proof of Proposition~\ref{proposition:Bongartz}}]
\begin{enumerate}
\item
By applying the functor $\Hom(W'[-1], -)$ to the above triangle (for any $W' \in \WW$), and using the fact that $f: W \to T$ is a right $\WW$-approximation, we find that $\Hom(W'[-1], F_T) = 0$ and thus also that $\Hom(F_T[-1],W') = 0$.  This shows that $\FF \subseteq \WW^{\perp_1}$.

Next, consider $T,T' \in \TT$, and let $g: F_{T'}[-1] \to F_T$ be any morphism.  Consider the following diagram
\[\xymatrix@C+10pt{
T'[-1] \ar[r] & F_{T'}[-1] \ar[r] \ar[d]^{g} \ar@{-->}[dl]_{h}& W'[-1] \ar[r]^{-f'[-1]}& T' \\
T \ar[r] & F_{T} \ar[r] & W \ar[r]_{f}& T[1]
}\]
where the rows are triangles, and the morphisms $f, f'$ are right $\WW$-approximations.  Since $\Hom(F_{T'}[-1], W) = 0$, we know that $g: F_{T'}[-1] \to F_T$ factors through a morphism $h: F_{T'}[-1] \to T$.  However, the composition $T'[-1] \to F_{T'[-1]} \to T$ is zero as well (since $\TT$ is rigid).  The morphism $h$ thus factors through a morphism $j: W'[-1] \to T$:
 \[\xymatrix@C+10pt{
W'[-2] \ar[r]^{f'[-2]}& T'[-1] \ar[r] & F_{T'}[-1] \ar[r] \ar[dl]_{h}& W'[-1] \ar@{-->}[dll]^{j} \\
W[-1] \ar[r]_{-f[-1]}& T \ar[r] & F_{T} \ar[r] & W
}\]
However, since $\Hom(W'[-1], F_T) = 0$ (as established before), we may conclude that $g=0$.  We have shown that $\FF$ is rigid.

\item We will prove that {$\RR = \add(\FF \cup \WW)$} is a maximal rigid subcategory of $\CC$ by contradiction.  Thus, assume the existence of an object $X \in \CC \setminus \RR$ such that {$\add(\RR \cup \{X\})$} is rigid.

Consider a right $\TT$-approximation triangle of $X$: $T_0 \stackrel{t}{\rightarrow} T_1 \to X \to T_0[1]$ and the triangle $F_{T_0} \to W \stackrel{f}{\rightarrow} T_0[1] \to F_{T_0}[1]$ built on the right $\WW$-approximation morphism $f: W \to T_1[1]$.

Since $\Ext^1(W',X) = 0$ for all $W' \in \WW$, any morphism $g: W' \to T_1[1]$ factors through $t[1]: T_0[1] \to T_1[1]$ and hence through $f: W \to T_1[1].$  This shows that $t[1] \circ f: W \to T_1[1]$ is a right $\WW$-approximation (albeit not necessarily minimal), and we find that $\cone(t[1] \circ f) \in \RR[1]$.  Using \cite[Proposition 1.1.11]{BeilinsonBernsteinDeligne82}, we have a commutative diagram
\[
\xymatrix{
& & X \ar@{=}[r] \ar[d] & X \ar[d]^{h} \\
F_{T_0} \ar[r] \ar[d] & W \ar[r]^{f} \ar@{=}[d] & T_0[1] \ar[r] \ar[d]^{t[1]} & F_{T_0}[1] \ar[d] \\
R \ar[r] & W \ar[r] & T_1[1] \ar[r] \ar[d] & R[1] \ar[d] \\
& & X[1] \ar@{=}[r] & X[1]\\
}
\]
where the middle two rows and the last two columns are triangles, and $R \in \RR$.  Since $\Ext^1(X, \FF) = 0$, we know that $h=0$ and hence $X$ is a direct summand of $R \in \RR$.  This shows that $\RR$ is maximal.

\item To prove that {$\add(\FF \cup \WW)$} is a cluster-tilting subcategory of $\CC$, it suffices to show that {$\add(\FF \cup \WW)$} is functorially finite (see \cite[Theorem 2.6]{ZhouZhu11}) in $\CC$, or thus, by \cite[Theorem 4.9(1)]{IyamaYoshino08}, that $\FF$ becomes functorially finite in $\CC_{[\WW]}$.

Let $X \in \WW^{\perp_1}$ and let $g:T_X \to X$ be a right $\TT$-approximation of $X$.  There is a commutative diagram
\[ \xymatrix@C+15pt{W_X[-1] \ar[r]^{-f_X[-1]} & T_X \ar[d]_{g} \ar[r]^{a_X} & F_{T_X} \ar@{-->}[ld]^{g'} \ar[r] & W_X \\ & X} \]
where the top row is a triangle (thus $F_{T_X} \in \FF$).  The existence of $g'$ follows from $\Hom(W_X[-1], X) = 0$.

We claim that the map $g': F_{T_X} \to X$ becomes the right $\FF$-approximation of $X$ after application of the functor $\pi: \WW^{\perp_1} \to \CC_{[\WW]}$.  For this, we consider an object $F \in \FF$ and a map $h: F \to X$ and show that there exists a map $q: F \to F_{T_X}$ such that $h - g' \circ q$ factors through $\WW$.

Without loss of generality, we may assume that $F \cong F_T$ for some $T \in \TT$, so there is a triangle
\[ \xymatrix{W[-1] \ar[r] & T \ar[r]^{a} & F \ar[r] & W} \]
with $T \in \TT$ and $W \in \WW$.  Since $g: T_X \to X$ is a right $\TT$-approximation, the map $h \circ a: T \to X$ factors as $g \circ p$ for some $p: T \to T_X$.  Using the fact that $f_X: W_X \to T_X[1]$ is a right $\WW$-approximation, we get a commutative diagram
\[ \xymatrix@C+15pt{W[-1] \ar[r] \ar@{-->}[d] & T \ar[d]_{p} \ar[r]^{a} & F \ar@{-->}[d]^{q} \ar[r] & W \ar@{-->}[d] \\ 
W_X[-1] \ar[r]_{-f_X[-1]} & T_X \ar[d]_{g} \ar[r]^{a_X} & F_{T_X} \ar@{-->}[ld]^{g'} \ar[r] & W_X \\ & X
} \]
We now have
\[ h \circ a = g \circ p = g' \circ a_X \circ p = g' \circ q \circ a,\]
so that $(h - g' \circ q) \circ a = 0$.  This shows that $h - g' \circ q$ factors through $\WW$ as required.   
\end{enumerate}
\end{proof}

\begin{corollary}\label{corollary:BongartzObjects}
Let $\CC$ be a Krull-Schmidt 2-Calabi-Yau triangulated category with a cluster-tilting object.  Any rigid object in $\CC$ is a direct summand of a cluster-tilting object.
\end{corollary}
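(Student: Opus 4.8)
The plan is to feed the rigid object $X$ into the Bongartz-type completion of Theorem~\ref{theorem:BongartzComplement} and then control the size of the resulting cluster-tilting subcategory. Concretely, set $\WW = \add X$. First I would check that $\WW$ is a functorially finite rigid subcategory of $\CC$. Rigidity is immediate: $X$ is rigid, so $\Ext^1_\CC(X',X'')=0$ for all $X',X''\in\WW$. For functorial finiteness, recall that $\CC$, being $2$-Calabi-Yau, is Hom-finite (a Serre functor only exists on a Hom-finite category, see \S\ref{subsection:Serre and CY}); hence for any $Y\in\CC$ the canonical evaluation morphism $X\otimes_{\End_\CC X}\Hom_\CC(X,Y)\to Y$ is a right $\WW$-approximation of $Y$, and dually $\WW$ is covariantly finite. (Alternatively, $\add X$ has only finitely many indecomposable summands, so $\Hom_\CC(-,Y)|_{\WW}$ is automatically finitely generated.) Thus $\WW$ is functorially finite and rigid.

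Next I would invoke Theorem~\ref{theorem:BongartzComplement}: since $\CC$ has a cluster-tilting object $T$, in particular it has a cluster-tilting subcategory $\add T$, so there is a cluster-tilting subcategory $\RR$ of $\CC$ with $\WW\subseteq\RR$; in particular $X\in\RR$, and indeed (following Construction~\ref{construction:Bongartz}) one may take $\RR=\add(\WW\cup\FF)$ to be the Bongartz completion of $\WW$ by $\add T$.

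Finally, I would upgrade "$X$ lies in a cluster-tilting subcategory" to "$X$ is a summand of a cluster-tilting object" using the cardinality invariance of cluster-tilting subcategories. By Remark~\ref{remark:DefClusterTilting}(4) (the theorem of Dehy--Keller), any two cluster-tilting subcategories of $\CC$ have the same number of indecomposables, so $\lvert\ind\RR\rvert=\lvert\ind(\add T)\rvert<\infty$. Hence $\RR=\add R$ for a single object $R$, which is then a cluster-tilting object of $\CC$, and $X$ is a direct summand of $R$. This completes the argument.

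I do not expect a genuine obstacle here: the statement is essentially a specialization of Theorem~\ref{theorem:BongartzComplement} combined with the finiteness coming from the presence of a cluster-tilting object. The only point that needs a (routine) word of justification is the functorial finiteness of $\add X$, and that is handled by Hom-finiteness of $\CC$ together with finiteness of $\ind(\add X)$.
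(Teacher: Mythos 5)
Your argument is correct and follows the same route as the paper: apply the Bongartz-type completion (Theorem~\ref{theorem:BongartzComplement}) to $\add X$ and then use the Dehy--Keller cardinality invariance to conclude that the resulting cluster-tilting subcategory is $\add R$ for a single object $R$. The only difference is that you spell out the (routine) functorial finiteness of $\add X$, which the paper leaves implicit.
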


\begin{proof}
Let $W$ be the rigid object in the statement of the category.  It follows from Theorem \ref{theorem:BongartzComplement} that there is a cluster-tilting subcategory $\RR$ such that $W \in \RR$ (or $\add(W) \subseteq \RR$).  It then follows from \cite{DehyKeller08} (see also \cite[Corollary 4.5]{AdachiIyamaReiten14} or \cite[Corollary 3.7]{ZhouZhu11}) that there is an object $R$ such that $\RR = \add(R)$, meaning that $R$ is a cluster-tilting object of $\CC$.  Since $W \in \RR$, we know that $W$ is a direct summand of $R$.
\end{proof}
\section{Indecomposable cluster-tilting subcategories}\label{section:IndecomposableCTS}

Let $(\AA_i)_{i \in I}$ be a family of small additive categories indexed by a set $I$.  The coproduct $\oplus_i \AA_i$ (in the 2-category of small additive categories) is a small additive category whose objects are families $(A_i)_{i \in I}$ with $A_i \in \AA_i$ (all except finitely many of these $A_i$'s are required to be zero), and where a morphism $(A_i)_i \to (B_i)_i$ is given by a family of morphisms $(f_i)_i$ with $f_i \in \Hom_{\AA_i}(A_i,B_i)$.  The composition is pointwise.  In $\oplus_i \AA_i$, we have $(A_i)_i \oplus (B_i)_i \cong (A_i \oplus B_i)_i$.

Let $\BB$ be any small additive category, and let $F_i: \AA_i \to \BB$ be a family of functors.  We consider the functor
\begin{eqnarray*}
\bigoplus_i \AA_i &\to& \BB \\
(A_i)_i &\mapsto& \oplus_i F_i(A_i).
\end{eqnarray*}
This functor is fully faithful if and only if the functors $F_i: \AA_i \to \BB$ are fully faithful and $\Hom_\BB(F_i(A_i), F_j(A_j)) = 0$ for all $i \not= j$. We have the following lemma (of which we omit the proof).

\begin{lemma}\label{lemma:WhenCoproductAdditive}
Let $\AA$ be an additive category and let $\AA_i \subseteq \AA$ be a family of full additive subcategories.  We have $\AA \simeq \oplus_i \AA_i$ if and only if $\Hom_\AA(\AA_i, \AA_j) = 0$ for $i \not= j$ and every object in $\AA$ is of the form $\oplus_i A_i$ (where only finitely many $A_i$'s are nonzero).
\end{lemma}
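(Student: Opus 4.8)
The plan is to reduce the statement to the assertion that the canonical comparison functor
\[\Theta\colon \bigoplus_i \AA_i \to \AA, \qquad (A_i)_i \mapsto \bigoplus_i A_i, \quad (f_i)_i \mapsto \bigoplus_i f_i,\]
induced by the inclusions $\AA_i \hookrightarrow \AA$, is an equivalence of additive categories. This $\Theta$ is well defined and additive: every object $(A_i)_i$ of $\bigoplus_i \AA_i$ has only finitely many nonzero components, so the coproduct $\bigoplus_i A_i$ exists in the additive category $\AA$, and composition in $\bigoplus_i \AA_i$ is pointwise. Since a functor is an equivalence precisely when it is fully faithful and essentially surjective, I would prove the two implications by treating these two properties separately, using the criterion for full faithfulness of $\Theta$ recalled in the paragraph immediately preceding the lemma.

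For the implication ($\Leftarrow$): assume $\Hom_\AA(\AA_i,\AA_j)=0$ for $i\neq j$ and that every object of $\AA$ is isomorphic to some $\bigoplus_i A_i$ with $A_i\in\AA_i$. The inclusions $\AA_i\hookrightarrow\AA$ are fully faithful because each $\AA_i$ is a full subcategory, so the criterion quoted above shows that $\Theta$ is fully faithful. Essential surjectivity of $\Theta$ is exactly the second hypothesis. Hence $\Theta$ is an equivalence and $\AA\simeq\bigoplus_i\AA_i$.

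For the implication ($\Rightarrow$): assume $\Theta$ is an equivalence. Essential surjectivity of $\Theta$ immediately yields that every object of $\AA$ is isomorphic to $\Theta((A_i)_i)=\bigoplus_i A_i$ for a suitable family $(A_i)_i$. For the Hom-vanishing, fix $i\neq j$ and objects $A_i\in\AA_i$, $A_j\in\AA_j$; write $\iota_i(A_i)$ (resp.\ $\iota_j(A_j)$) for the object of $\bigoplus_k\AA_k$ whose $i$-th (resp.\ $j$-th) component is $A_i$ (resp.\ $A_j$) and all of whose other components are zero. By the pointwise description of morphisms in $\bigoplus_k\AA_k$ we have $\Hom_{\bigoplus_k\AA_k}(\iota_i(A_i),\iota_j(A_j))=0$, while $\Theta(\iota_i(A_i))\cong A_i$ and $\Theta(\iota_j(A_j))\cong A_j$ in $\AA$. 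Full faithfulness of $\Theta$ then forces $\Hom_\AA(A_i,A_j)=0$, and since $A_i,A_j$ were arbitrary this gives $\Hom_\AA(\AA_i,\AA_j)=0$.

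The whole argument is essentially bookkeeping once the correct reading of the statement is fixed. The one point requiring care — and the closest thing to an obstacle — is to interpret "$\AA\simeq\bigoplus_i\AA_i$" as holding via the canonical functor $\Theta$ determined by the embeddings $\AA_i\subseteq\AA$ (rather than an abstract equivalence that forgets the embeddings), and, in the case of an infinite index set $I$, to keep track of the finiteness-of-support convention built into the definition of $\bigoplus_i\AA_i$ so that $\Theta$ is genuinely defined on all objects. Beyond this I expect no substantive difficulty; the proof is short enough that the authors were justified in omitting it.
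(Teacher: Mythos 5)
Your proof is correct and is exactly the argument the authors intended: the paper omits the proof of this lemma, but the full-faithfulness criterion for $\oplus_i \AA_i \to \BB$ displayed in the paragraph immediately preceding it is precisely what you apply to the canonical comparison functor $\Theta$ induced by the inclusions. Your caveat that ``$\AA \simeq \oplus_i \AA_i$'' must be read as holding via this canonical functor is the right reading and is indeed needed for the ``only if'' direction.
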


An additive category $\AA$ is called \emph{indecomposable} if it is nonzero and if it is not the coproduct of two nonzero additive categories.  When $\AA \cong \AA_1 \oplus \AA_2$, where $\AA_1$ is indecomposable, we will call $\AA_1$ a \emph{connected component} of $\AA$.

Let $\AA$ be an additive Krull-Schmidt category.  An \emph{unoriented path of length $n$} in $\AA$ is a sequence $X_0, X_1, \ldots, X_n$ of indecomposable objects such that for all $i=0, 1, \ldots, n-1$ we have that either $\Hom(X_i,X_{i+1}) \not= 0$ or $\Hom(X_{i+1},X_i) \not= 0$.  We will say that $\AA$ is \emph{path-connected} if there is an unoriented path between every two indecomposable objects of $\AA$.

\begin{proposition}\label{proposition:DecompositionAdditiveCategories}
An additive Krull-Schmidt category $\AA$ is path-connected if and only if it is indecomposable.  Furthermore, $\AA \cong \oplus_i \AA_i$ where each $\AA_i$ is indecomposable.
\end{proposition}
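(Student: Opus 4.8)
The statement to prove is Proposition~\ref{proposition:DecompositionAdditiveCategories}: an additive Krull-Schmidt category $\AA$ is path-connected if and only if it is indecomposable, and moreover $\AA \cong \oplus_i \AA_i$ with each $\AA_i$ indecomposable.

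\textbf{Plan.}
The plan is to work entirely on the level of the set $\ind\AA$ of indecomposable objects, using Lemma~\ref{lemma:WhenCoproductAdditive} to translate statements about direct-sum decompositions of $\AA$ into statements about partitions of $\ind\AA$. Define a relation on $\ind\AA$ by declaring $X \sim Y$ whenever there is an unoriented path from $X$ to $Y$; this is an equivalence relation (reflexivity by the length-$0$ path, symmetry because an unoriented path reversed is again an unoriented path, transitivity by concatenation). Let $(\ind\AA_i)_{i\in I}$ be the resulting partition into equivalence classes, and let $\AA_i$ be the full additive subcategory of $\AA$ generated by $\ind\AA_i$ (i.e. $\AA_i = \add(\ind\AA_i)$).

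First I would prove the decomposition $\AA \cong \oplus_i \AA_i$. By Lemma~\ref{lemma:WhenCoproductAdditive} it suffices to check two things: that $\Hom_\AA(\AA_i,\AA_j)=0$ for $i\ne j$, and that every object of $\AA$ is a finite direct sum of objects, one from each $\AA_i$. For the first, if $X\in\ind\AA_i$, $Y\in\ind\AA_j$ and $\Hom_\AA(X,Y)\ne 0$, then $X,Y$ lie on a common unoriented path of length one, so $i=j$; the general $\Hom$ between objects of $\AA_i$ and $\AA_j$ decomposes (Krull-Schmidt) into a matrix of such $\Hom$'s between indecomposables, hence vanishes. For the second, every object of $\AA$ is a finite direct sum of indecomposables (Krull-Schmidt), and grouping the summands according to which class $\ind\AA_i$ they belong to expresses it in the required form. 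This gives $\AA \cong \oplus_i \AA_i$. Each $\AA_i$ is nonzero, and by construction $\AA_i$ is path-connected: any two indecomposables of $\AA_i$ are joined by an unoriented path in $\AA$, and one checks that such a path can be taken inside $\AA_i$ — indeed any indecomposable appearing on a path joining two objects of $\ind\AA_i$ is itself $\sim$-equivalent to them, hence lies in $\ind\AA_i$.

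Next I would prove the equivalence ``path-connected $\Leftrightarrow$ indecomposable''. For the forward direction: suppose $\AA$ is path-connected and $\AA\cong \BB_1\oplus\BB_2$ with both $\BB_k$ nonzero. Pick indecomposables $X\in\BB_1$, $Y\in\BB_2$ (every nonzero Krull-Schmidt category has an indecomposable object). A path-connectedness argument plus $\Hom_\AA(\BB_1,\BB_2)=\Hom_\AA(\BB_2,\BB_1)=0$ forces a contradiction: along any unoriented path $X=X_0,\dots,X_n=Y$ there must be a first index where the objects switch from $\BB_1$ to $\BB_2$, giving a nonzero $\Hom$ in one direction or the other between an object of $\BB_1$ and an object of $\BB_2$, which is impossible. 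Hence $\AA$ is indecomposable. For the converse: suppose $\AA$ is indecomposable; apply the decomposition $\AA\cong\oplus_i\AA_i$ just constructed. Since $\AA$ is not a coproduct of two nonzero additive categories and each $\AA_i$ is nonzero, the index set $I$ must be a singleton, so $\AA=\AA_1$ is path-connected. This also yields the ``furthermore'' clause: the $\AA_i$ are indecomposable because each is path-connected (just shown) and path-connected implies indecomposable (forward direction).

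\textbf{Main obstacle.} There is no serious obstacle; the statement is essentially bookkeeping. The one point requiring a little care is the interplay between $\Hom$'s of general objects and $\Hom$'s of indecomposables: one must use the Krull-Schmidt property to reduce every $\Hom$-vanishing or path statement to the indecomposable level, and to know that path-connectedness of $\AA$ restricts correctly to each $\AA_i$ (i.e. that connecting paths stay within a class). Making the ``first switch along the path'' argument precise in the forward direction is the only place where one writes more than a line, but it is routine.
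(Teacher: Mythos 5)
Your proof is correct and follows essentially the same route as the paper: both decompose $\AA$ along path-connected components of $\ind\AA$ and invoke Lemma~\ref{lemma:WhenCoproductAdditive} to turn that partition into a coproduct decomposition. The only differences are cosmetic — you treat all equivalence classes at once rather than one component and its complement, and you spell out the "first switch along the path" argument that the paper dismisses as clear.
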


\begin{proof}
It is clear that $\AA$ is indecomposable if it is path-connected.  For the converse, let $A \in \ind \AA$ and let $\BB$ be the full subcategory of $\AA$ consisting of all objects $B \in \ind \AA$ such that there is an unoriented path between $A$ and $B$, thus $\add(\BB)$ is the path-connected component of $\AA$ which contains $A$.  Let $\CC = (\ind \AA) \setminus \BB$.  It follows from Lemma \ref{lemma:WhenCoproductAdditive} that $\AA = \add(\BB) \oplus \add(\CC)$ and that $\BB$ is indecomposable.

Therefore, every object $A \in \AA$ lies in a path-connected (and hence indecomposable) component of $\AA$.  This shows that $\AA \cong \oplus_i \AA_i$ where each $\AA_i$ is indecomposable.
\end{proof}

Although triangulated categories are additive categories, this notion of indecomposability is too strong for our purposes.  If $\CC$ and $\DD$ are triangulated categories, then the coproduct $\CC \oplus \DD$ defined above can be given the structure of a triangulated category in a natural way.  We will call a triangulated category a \emph{block} if and only if it is nonzero and not the coproduct of two nonzero triangulated categories, i.e.\ if it is indecomposable in the 2-category of (essentially small) triangulated categories.

\begin{remark}
An indecomposable triangulated category is a block, but the converse does not hold.  For example, $\Db(\mod k)$ is a block, but not indecomposable (as additive category).
\end{remark}

\begin{remark}
The notion of a block was introduced in \cite{Ringel05}.
\end{remark}

Let $\CC$ be a triangulated Krull-Schmidt category.  An \emph{unoriented suspended path} between indecomposable objects $A, B \in \CC$ is a sequence $A = X_0, X_1, \ldots, X_n = B$ of indecomposable objects such that for all $i=0, 1, \ldots, n-1$ we have that either $\Hom(X_i,X_{i+1}) \not= 0$, $\Hom(X_{i+1},X_i) \not= 0$, or $X_{i+1} \cong X_i[n]$ for some $n \in \bZ$.  We will say that $\CC$ is \emph{path-connected} if there is an unoriented suspended path between every two indecomposable objects of $\CC$.

\begin{lemma}\label{lemma:WhenCoproductTriangulated}
Let $\CC$ be a triangulated category and let $\CC_i \subseteq \CC$ be a family of full triangulated subcategories.  We have $\CC \simeq \oplus_i \CC_i$ if and only if $\Hom_\CC(\CC_i, \CC_j) = 0$ for $i \not= j$ and every object in $\CC$ is of the form $\oplus_i \CC_i$ (where only finitely many $\CC_i$'s are nonzero).
\end{lemma}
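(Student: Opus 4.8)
The statement is the analogue for triangulated categories of Lemma~\ref{lemma:WhenCoproductAdditive}, and the natural approach is to mimic that proof while keeping track of the extra structure (the shift functor and the triangles). First I would establish the ``only if'' direction, which is essentially formal: if $\CC \simeq \oplus_i \CC_i$ as triangulated categories, then by the very description of the coproduct the $\Hom$-spaces between different summands vanish and every object decomposes as a finite coproduct $\oplus_i C_i$ with $C_i \in \CC_i$. Here one only has to note that the equivalence is \emph{triangulated}, so the $\CC_i$ are genuinely triangulated subcategories of $\CC$ (closed under shift and cones), not merely additive ones.

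For the ``if'' direction, I would proceed as follows. Assume $\Hom_\CC(\CC_i,\CC_j)=0$ for $i\neq j$ and that every object of $\CC$ is a finite coproduct of objects from the $\CC_i$. The first task is to show each $\CC_i$ is a triangulated subcategory of $\CC$; it is full and additive by hypothesis, and closed under shifts because the hypothesis ``every object is $\oplus_i C_i$'' applied to $C_i[1]$ together with the $\Hom$-vanishing forces $C_i[1]\in\CC_i$ (any other summand would admit a nonzero map to or from $C_i[1]$, contradicting the orthogonality once we know $[1]$ is an autoequivalence; more carefully, write $C_i[1]=\oplus_j D_j$, apply $[-1]$, and use that $\CC_i$ is replete and additive). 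Closure under cones is similar: given a triangle $X\to Y\to Z\to X[1]$ with $X,Y\in\CC_i$, decompose $Z=\oplus_j Z_j$; the maps $Y\to Z_j$ and $Z_j\to X[1]$ vanish for $j\neq i$ by orthogonality, so the triangle splits off the part outside $\CC_i$ and $Z\in\CC_i$ up to isomorphism. Then I would invoke Lemma~\ref{lemma:WhenCoproductAdditive} to get the equivalence of \emph{additive} categories $\CC\simeq\oplus_i\CC_i$, and finally check that this equivalence is compatible with the triangulated structures: the shift on $\oplus_i\CC_i$ is computed componentwise (which matches $\CC$ since each $\CC_i$ is shift-closed), and a candidate triangle in $\oplus_i\CC_i$ is distinguished iff each component is distinguished in $\CC_i$ iff (using orthogonality and that direct sums of triangles are triangles) the total object is a distinguished triangle in $\CC$.

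The main obstacle, modest as it is, will be the bookkeeping in showing that the morphisms realizing a triangle in $\CC$ between objects of a single $\CC_i$ can be arranged to lie entirely in $\CC_i$, i.e.\ that $\CC_i$ is closed under cones \emph{as computed in $\CC$}. The subtlety is that a priori the cone is only well-defined up to non-unique isomorphism, so one argues: the cone $Z$ decomposes as $\oplus_j Z_j$, the connecting map $Z\to X[1]$ and the map $Y\to Z$ have all off-diagonal components zero by $\Hom_\CC(\CC_j,\CC_i)=0=\Hom_\CC(\CC_i,\CC_j)$, hence the triangle is a direct sum of the triangle $X\to Y\to Z_i\to X[1]$ and zero triangles $0\to 0\to Z_j\to 0$; the latter force $Z_j=0$ for $j\neq i$. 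Everything else is a routine transport of structure, so I would most likely state the additive part by citing Lemma~\ref{lemma:WhenCoproductAdditive} and only spell out the shift- and cone-closure of the $\CC_i$ and the verification that distinguished triangles are detected componentwise.
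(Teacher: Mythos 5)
The paper states this lemma without proof (just as it explicitly omits the proof of its additive analogue, Lemma~\ref{lemma:WhenCoproductAdditive}), so there is no argument of the authors' to compare against; your proposal supplies the expected one and is correct in substance: reduce to the additive statement and then check that the shift and the distinguished triangles are detected componentwise, using orthogonality to see that all structure maps are diagonal. One remark: the hypothesis already declares the $\CC_i$ to be full \emph{triangulated} subcategories, so they are closed under shifts and cones by assumption, and the portion of your argument devoted to proving this is redundant. That is fortunate, because as written your shift-closure argument is circular --- to rule out a nonzero summand $D_j$ of $C_i[1]$ you invoke $\Hom(\CC_j,\CC_i[1])=0$, which presupposes $\CC_i[1]\subseteq\CC_i$, i.e.\ the very closure you are trying to establish (and applying $[-1]$ to the decomposition runs into the same issue for $\CC_j$). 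Were the $\CC_i$ only assumed additive, this step would be a genuine gap; under the stated hypotheses it is harmless, and the remaining verifications (diagonal form of the maps in a triangle, completion of $X_i\to Y_i$ to a distinguished triangle inside $\CC_i$, and the forced vanishing of the off-component cones $Z_j$) go through as you sketch them.
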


\begin{proposition}\label{proposition:DecompositionTriangulatedCategories}
A triangulated Krull-Schmidt category $\CC$ is path-connected if and only if it is a block.  Furthermore, $\CC \cong \oplus_i \CC_i$ where each $\CC_i$ is a block.
\end{proposition}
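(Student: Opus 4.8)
The final statement is Proposition~\ref{proposition:DecompositionTriangulatedCategories}: a triangulated Krull-Schmidt category $\CC$ is path-connected (in the suspended sense) if and only if it is a block, and moreover $\CC \cong \oplus_i \CC_i$ with each $\CC_i$ a block. This is the exact triangulated analogue of Proposition~\ref{proposition:DecompositionAdditiveCategories}, and my plan is to mirror that proof, replacing unoriented paths by unoriented suspended paths and Lemma~\ref{lemma:WhenCoproductAdditive} by Lemma~\ref{lemma:WhenCoproductTriangulated}.

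\textbf{Plan.} First I would prove the easy direction: if $\CC$ is path-connected (in the suspended sense), then it is a block. Suppose for contradiction that $\CC \simeq \CC_1 \oplus \CC_2$ with both $\CC_1, \CC_2$ nonzero triangulated subcategories. By Lemma~\ref{lemma:WhenCoproductTriangulated} we have $\Hom_\CC(\CC_1,\CC_2) = 0 = \Hom_\CC(\CC_2,\CC_1)$, and since each $\CC_i$ is a triangulated subcategory it is closed under shifts; hence no unoriented suspended path can connect an indecomposable object of $\CC_1$ to one of $\CC_2$ (each step of such a path — a nonzero Hom in either direction, or a shift — keeps us inside whichever summand we started in). This contradicts path-connectedness, so $\CC$ is a block.

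\textbf{Main step.} For the converse and the decomposition, fix $A \in \ind\CC$ and let $\CC_A$ be the full subcategory of $\CC$ whose indecomposable objects are those $B \in \ind\CC$ admitting an unoriented suspended path from $A$ to $B$; this is the ``suspended path-connected component'' of $A$. The key points to check are: (i) $\CC_A$ is closed under shifts (immediate, since $B$ and $B[n]$ are joined by a path of length one), under direct summands (a summand of an object joined to $A$ is joined to $A$, using that a nonzero projection/inclusion is a nonzero morphism), and under finite direct sums; (ii) $\CC_A$ is a triangulated subcategory, i.e.\ closed under cones — if $X \to Y \to Z \to X[1]$ is a triangle with $X, Y \in \CC_A$, then the nonzero map $X \to Y$ (or, if that map is zero, the splitting $Z \cong Y \oplus X[1]$) shows each indecomposable summand of $Z$ is joined by a suspended path to a summand of $X$ or $Y$, hence to $A$; (iii) letting $\CC' = \add\big((\ind\CC)\setminus\ind\CC_A\big)$, we have $\Hom_\CC(\CC_A, \CC') = 0 = \Hom_\CC(\CC', \CC_A)$, because a nonzero morphism between indecomposables in opposite classes would itself be a path-step joining the two classes, a contradiction; $\CC'$ is likewise a triangulated subcategory by the same cone argument. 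Then Lemma~\ref{lemma:WhenCoproductTriangulated} gives $\CC \simeq \CC_A \oplus \CC'$, and $\CC_A$ is a block by the easy direction just proved (it is path-connected by construction). If $\CC$ is itself a block, then $\CC' = 0$ forces $\CC = \CC_A$ is path-connected, proving the equivalence. Finally, iterating (or applying the same partition-into-path-components argument simultaneously), every indecomposable of $\CC$ lies in exactly one block component, so $\CC \cong \oplus_i \CC_i$ with each $\CC_i$ a block.

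\textbf{Expected obstacle.} The only genuinely non-formal point is verifying that a path-connected component $\CC_A$ is closed under cones (point (ii) above): one must handle the case where the connecting morphism $X \to Y$ in a triangle is zero, so that the triangle degenerates and $Z \cong Y \oplus X[1]$, and argue via summands and the shift-step of a suspended path. Everything else — closure under shifts and summands, the vanishing of cross-Homs, and the final assembly via Lemma~\ref{lemma:WhenCoproductTriangulated} — is routine and parallels Proposition~\ref{proposition:DecompositionAdditiveCategories} verbatim. I would therefore present the proof compactly, saying ``the proof is parallel to that of Proposition~\ref{proposition:DecompositionAdditiveCategories}, using Lemma~\ref{lemma:WhenCoproductTriangulated} in place of Lemma~\ref{lemma:WhenCoproductAdditive} and noting that suspended path-components are closed under shifts and cones.''
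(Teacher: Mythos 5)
Your proposal is correct in structure, but note that the paper does not actually prove this proposition: its ``proof'' is a one-line citation of \cite[Lemma 4]{Ringel05}. Your direct argument --- partitioning $\ind\CC$ into suspended-path-components, checking each generates a triangulated subcategory with vanishing cross-Homs, and invoking Lemma~\ref{lemma:WhenCoproductTriangulated} --- is the standard proof and is essentially what the cited lemma of Ringel does, so writing it out is a reasonable (and more self-contained) alternative to the paper's deferral.

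One step of your sketch is justified incorrectly, precisely at the point you flag as the expected obstacle. For closure of $\CC_A$ under cones, the nonzeroness of $X \to Y$ is irrelevant, and the degenerate case $X \to Y = 0$ does not need separate treatment. The correct argument is: given a triangle $X \xrightarrow{f} Y \xrightarrow{g} Z \xrightarrow{h} X[1]$ with $X,Y \in \CC_A$ and an indecomposable summand $Z'$ of $Z$ with split inclusion $\iota\colon Z' \to Z$, either $h\iota \neq 0$, in which case $Z'$ admits a nonzero morphism to an indecomposable summand of $X[1]$ (which lies in $\CC_A$ since $\CC_A$ is closed under shifts and summands); or $h\iota = 0$, in which case $\iota$ factors as $g \circ j$ for some $j\colon Z' \to Y$, and $j \neq 0$ because $\pi g j = \id_{Z'}$ for the split projection $\pi$, so $Z'$ admits a nonzero morphism to an indecomposable summand of $Y$. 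Either way $Z' \in \CC_A$. The same case analysis shows the complementary subcategory $\CC'$ is closed under cones. With this repair the rest of your argument (vanishing of cross-Homs, Lemma~\ref{lemma:WhenCoproductTriangulated}, and the identification of path-components with blocks) goes through as you describe.
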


\begin{proof}
See \cite[Lemma 4]{Ringel05}.
\end{proof}

Our main result of this section is Theorem \ref{theorem:IndecomposableClustertiltingSubcategories}.  We start with the following proposition.

\begin{proposition}\label{proposition:IndecomposableApproximation}
Let $\CC$ be a Krull-Schmidt 2-Calabi-Yau triangulated category, and let $\TT \subseteq \CC$ be a cluster-tilting subcategory.  Let $C \in \CC$ be indecomposable.  For any minimal $\TT$-approximation triangle $T_1 \to T_0 \to C \to T_1[1]$ of $C$, the object $T_1 \oplus T_0$ lies in a single connected component of $\TT$.
\end{proposition}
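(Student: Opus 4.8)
The statement to prove is: for $C\in\CC$ indecomposable and $T_1\to T_0\to C\to T_1[1]$ a minimal right $\TT$-approximation triangle, the object $T_1\oplus T_0$ lies in a single connected component of $\TT$. The plan is to argue by contradiction. Write $\TT\cong\bigoplus_i\TT_i$ as the decomposition into connected (equivalently, path-connected) components, which exists by Proposition~\ref{proposition:DecompositionAdditiveCategories}. Then $T_0=\bigoplus_i T_0^{(i)}$ and $T_1=\bigoplus_i T_1^{(i)}$ with $T_j^{(i)}\in\TT_i$, and all but finitely many summands are zero. Suppose $T_1\oplus T_0$ meets at least two components; partition the index set as $I=I_0\sqcup I_1$ with both $\bigoplus_{i\in I_0}(T_1^{(i)}\oplus T_0^{(i)})$ and $\bigoplus_{i\in I_1}(T_1^{(i)}\oplus T_0^{(i)})$ nonzero and with no morphisms in either direction between summands indexed by $I_0$ and summands indexed by $I_1$ (this uses $\Hom_\TT(\TT_i,\TT_j)=0$ for $i\ne j$, which holds in the component decomposition).

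\textbf{Key steps.} First I would observe that the approximation map $g\colon T_0\to C$ decomposes accordingly: since $\TT$ is coherent and $H_\TT\colon\CC/[\TT[1]]\to\mod\TT$ is the equivalence from Proposition~\ref{proposition:AbelianQuotient}, the morphism $H_\TT(g)\colon\Hom_\CC(-,T_0)|_\TT\to\Hom_\CC(-,C)|_\TT$ exhibits $\Hom_\CC(-,C)|_\TT$ as a quotient of $\Hom_\CC(-,T_0)|_\TT$, and minimality says this is a projective cover; moreover $\Hom_\CC(-,T_1)|_\TT\to\Hom_\CC(-,T_0)|_\TT$ is its projective resolution continued one step. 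The decomposition $\TT=\bigoplus_i\TT_i$ induces a decomposition $\mod\TT\cong\prod_i\mod\TT_i$ (morphisms between modules supported on different components vanish), so the whole minimal presentation $(-,T_1)\to(-,T_0)\to(-,C)\to 0$ splits as a product over $i$ of minimal presentations $(-,T_1^{(i)})\to(-,T_0^{(i)})\to M^{(i)}\to0$ where $M^{(i)}$ is the restriction of $\Hom_\CC(-,C)|_\TT$ to $\TT_i$. Consequently $\Hom_\CC(-,C)|_\TT\cong\bigoplus_i M^{(i)}$ as $\TT$-modules, with the two groups $I_0,I_1$ giving a nontrivial direct sum decomposition $\Hom_\CC(-,C)|_\TT\cong N_0\oplus N_1$ with $N_0,N_1$ both nonzero. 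Now I would lift this back through the equivalence $H_\TT$: there are objects $C_0,C_1\in\CC/[\TT[1]]$ with $H_\TT(C_j)\cong N_j$, hence $C\cong C_0\oplus C_1$ in $\CC/[\TT[1]]$. Finally I must upgrade this to a genuine direct sum decomposition in $\CC$: the point is that $C$ is indecomposable in $\CC$, and I want to conclude $C_0$ or $C_1$ is zero in $\CC/[\TT[1]]$, i.e.\ one of $N_0,N_1$ vanishes, contradicting the choice of the partition. For this, note that since $g\colon T_0\to C$ is a minimal right $\TT$-approximation and $C\notin\TT[1]$ (otherwise $C$ has a trivial approximation triangle $0\to C\to C\to 0$ living in one component), the object $C$ has no summands in $\TT[1]$; then an idempotent splitting of $C$ in $\CC/[\TT[1]]$ lifts to an idempotent of $\End_\CC(C)$ modulo $[\TT[1]]$, and because $\End_\CC(C)$ is local (as $C$ is indecomposable and $\CC$ Krull--Schmidt) while $[\TT[1]](C,C)$ is contained in the radical, the idempotent $e_0\in\End_{\CC/[\TT[1]]}(C)$ corresponding to $C_0$ is either $0$ or $1$. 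Hence one of $N_0,N_1$ is zero, the desired contradiction.

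\textbf{Main obstacle.} The subtle point is the last step: passing from a direct-sum decomposition in the \emph{quotient} category $\CC/[\TT[1]]$ back to information about $C$ in $\CC$. One cannot directly conclude that a decomposition of $H_\TT(C)$ comes from a decomposition of $C$, since $H_\TT$ is only an equivalence $\CC/[\TT[1]]\xrightarrow{\ \sim\ }\mod\TT$, not $\CC\to\mod\TT$. The clean way around this is the observation that $C$, having no summands in $\TT[1]$, satisfies $\End_{\CC/[\TT[1]]}(C)=\End_\CC(C)/[\TT[1]](C,C)$, a quotient of a local ring, hence again local with only trivial idempotents; so the decomposition $H_\TT(C)\cong N_0\oplus N_1$ forces $N_0=0$ or $N_1=0$. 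I would make sure to carefully justify that the minimal presentation respects the product decomposition $\mod\TT\cong\prod_i\mod\TT_i$ — this is where the hypothesis that $T_1\to T_0$ is part of a \emph{minimal} projective resolution is essential, as a non-minimal presentation could have off-diagonal blocks; but the indecomposable projective $\TT$-modules are exactly the $\Hom_\CC(-,T)$ for $T\in\ind\TT$, each supported on a single component, and a minimal presentation of a module supported on $\bigoplus_{i\in J}\TT_i$ uses only projectives supported there, so the blocks are automatically diagonal. The rest is bookkeeping with the decomposition of additive and module categories established in Appendix~\ref{section:IndecomposableCTS}.
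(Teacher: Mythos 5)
Your route through $\mod\TT$ is genuinely different from the paper's (which works directly with cones in $\CC$), but it has a gap at the pivotal step: the assertion that $N_0$ and $N_1$ are both nonzero does not follow from your choice of partition. What your setup actually gives is $M^{(i)} = \coker\bigl((-,T_1^{(i)})\to(-,T_0^{(i)})\bigr)$, and since $f\colon T_1\to T_0$ is a radical map (this is what minimality of $T_0\to C$ buys you, via the exact sequence $(-,T_1)\to(-,T_0)\to(-,C)\to 0$), one has $M^{(i)}=0$ if and only if $T_0^{(i)}=0$. A component can therefore meet $T_1\oplus T_0$ --- so that it is legitimately placed in, say, $I_1$ --- while contributing nothing to $H_\TT(C)$, namely when $T_0^{(i)}=0$ but $T_1^{(i)}\neq 0$. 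In that case $\cone(f_i)=T_1^{(i)}[1]$ lies in $\TT[1]$ and is invisible to $H_\TT$, so $N_1$ may vanish and no contradiction arises from the module decomposition alone. Your ``main obstacle'' paragraph addresses the $\TT[1]$-blindness of $H_\TT$ only for $C$ itself, not for these potential summands of $T_1$. Relatedly, the claim that $(-,T_1)\to(-,T_0)$ continues the projective cover to a minimal presentation is unjustified: minimality of the approximation triangle only makes $(-,T_0)\to(-,C)$ a projective cover, and precisely in the problematic case above the map from $(-,T_1)$ onto the kernel fails to be one.

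The repair is to decompose $f=\oplus_i f_i$ and hence $C\cong\oplus_i\cone(f_i)$ in $\CC$ itself rather than in $\mod\TT$. Indecomposability of $C$ then forces all but one $\cone(f_i)$ to vanish, i.e.\ all but one $f_i$ to be an isomorphism, and minimality (no identity summands in $f$) forces those $f_i$ to be zero maps between zero objects. This is exactly the paper's argument; it controls $T_0$ and $T_1$ simultaneously, covers the case $C\in\TT[1]$ without a separate discussion, and avoids the passage through $H_\TT$ altogether. Your module-theoretic argument correctly shows that $T_0$ lies in a single component, and it does become complete once the missing step about components with $T_0^{(i)}=0$ and $T_1^{(i)}\neq 0$ is supplied --- but supplying it requires returning to the triangle in $\CC$ and observing that such a component forces $C$ to have a summand in $\TT[1]$, which is the paper's cone decomposition in disguise.
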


\begin{proof}
Following Proposition \ref{proposition:DecompositionAdditiveCategories}, we write $\TT = \oplus_i \TT_{(i)}$ where each $\TT_{(i)}$ is indecomposable.  This induces decompositions $T_1 \cong \oplus_i T_{(i),1}$ and $T_0 \cong \oplus_i T_{(i),0}$, and we consider the compositions
$$f_i: T_{(i),1} \to T_1 \to T_0 \to T_{(i),0}$$
so that $f = \oplus_i f_i$.  Let $C_i = \cone(f_i:  T_{(i),1} \to T_{(i),0})$.  We have a commutative diagram
\[\xymatrix{
\oplus_i T_{(i),1} \ar[r] \ar[d]_{\sim} &  \oplus_i T_{(i),0} \ar[r] \ar[d]_{\sim} & \oplus_i C_i \ar[r] \ar@{-->}[d]_{\sim} & \oplus_i T_{(i),1}[1] \ar[d]_{\sim} \\
T_1 \ar[r] & T_0 \ar[r] & C \ar[r] & T_1[1]}\]
where the rows are triangles.  Since $C$ is indecomposable, all but one of the $C_i$'s are zero, and thus all but one of the $f_i$'s are isomorphisms.  Since we started with a minimal $\TT$-approximation triangle, we infer that all but one of the $f_i$'s are zero.  This implies that $T_1$ and $T_0$ lie in the same connected component of $\TT$.
\end{proof}

\begin{lemma}\label{lemma:PerpendicularNotEmpty}
Let $\CC$ be a Krull-Schmidt 2-Calabi-Yau triangulated category, and let $\TT \subseteq \CC$ be a cluster-tilting subcategory.  If $\TT = \UU \oplus \VV$, then $\UU \subseteq \VV^\perp$.
\end{lemma}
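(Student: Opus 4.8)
The plan is to combine the equivalence $H_\TT\colon \CC/[\TT[1]] \to \mod\TT$ of Proposition~\ref{proposition:AbelianQuotient}(\ref{enumerate:AbelianQuotient1}) (sending $X$ to $\Hom_\CC(-,X)|_\TT$) with the observation that the splitting $\TT = \UU \oplus \VV$ forces a product decomposition $\mod\TT \simeq \mod\UU \times \mod\VV$. By Lemma~\ref{lemma:WhenCoproductAdditive}, the hypothesis $\TT = \UU\oplus\VV$ says precisely that $\Hom_\TT(\UU,\VV) = 0 = \Hom_\TT(\VV,\UU)$ and that every object of $\TT$ decomposes accordingly; hence every $\TT$-module is a pair consisting of a $\UU$-module and a $\VV$-module, an indecomposable $\TT$-module lies in exactly one of the two factors, and its almost split sequence lies there too. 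In particular the Auslander--Reiten translation $\tau$ of $\mod\TT$ preserves the full subcategory $\mod\UU$ of modules supported on the $\UU$-part.

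Using Serre duality $\Hom_\CC(X,Y) \cong \Hom_\CC(Y,\bS X)^*$ with $\bS \cong [2]$, it suffices to prove that $\Hom_\CC(V,U[n]) = 0$ for all $n \geq 1$, $U \in \UU$ and $V \in \VV$: the statement is symmetric in $\UU$ and $\VV$, and for $n \leq 1$ one has $\Hom_\CC(V,U[n]) \cong \Hom_\CC(U,V[2-n])^*$ with $2-n \geq 1$. The case $n=1$ is rigidity of $\TT$, and the case $n=2$ follows from $\Hom_\CC(V,U[2]) \cong \Hom_\CC(U,V)^* = 0$. For $n \geq 3$ I would identify $\Hom_\CC(V,U[n])$ with the value at $V$ of the $\TT$-module $H_\TT(U[n])$ and show $H_\TT(U[n]) \in \mod\UU$. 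The base of the induction is $H_\TT(U[2])$: by Serre duality $H_\TT(U[2])(T) = \Hom_\CC(T,\bS U) \cong \Hom_\CC(U,T)^*$, which vanishes for $T \in \VV$ because $\Hom_\TT(\UU,\VV) = 0$, so $H_\TT(U[2])$ is the standard injective at $U$ and lies in $\mod\UU$. The relation $H_\TT(X[1]) \cong \tau H_\TT(X)$ of Proposition~\ref{proposition:AbelianQuotient}(\ref{enumerate:AbelianQuotient3}), valid for any $X$ without indecomposable summands in $\TT[1]$, together with $\tau(\mod\UU) \subseteq \mod\UU$, then propagates $H_\TT(U[n]) = \tau^{\,n-2}H_\TT(U[2]) \in \mod\UU$ upward to all $n \geq 2$, so that $\Hom_\CC(V,U[n]) = H_\TT(U[n])(V) = 0$.

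The obstacle — where I expect the real work to lie — is that the step $H_\TT(U[n]) = \tau H_\TT(U[n-1])$ is available only when $U[n-1]$ has no summand in $\TT[1]$, i.e.\ when $U[n-1] \notin \TT$; so one must deal with the degrees $j \geq 2$ at which $U[j]$ happens to land in $\TT$ again (and hence $U[j+1] \in \TT[1]$ and $U[j+2] \in \bS\TT$). The key point is that $U[j] \in \TT$ forces $U[j] \in \UU$. I would prove this by a minimality argument folded into the induction: first $U[1] \notin \TT$, since otherwise $H_\TT(U[1])$ would be simultaneously the standard projective at $U[1] \neq 0$ and zero (as $U[1] \in \TT[1]$); then if $j \geq 2$ is minimal with $U[j] \in \TT$, the intermediate shifts $U[1], \dots, U[j-1]$ all avoid $\TT$, so the propagation above is undisturbed up to degree $j$ and $H_\TT(U[j]) = \tau^{\,j-2}H_\TT(U[2]) \in \mod\UU$; on the other hand $H_\TT(U[j]) = \Hom_\CC(-,U[j])|_\TT$ is the standard projective at $U[j]$, so its lying in $\mod\UU$ means exactly $U[j] \in \UU$. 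Treating $U[j] \in \UU$ as a new base object handles the next degenerate degree, and so on; once $U[j] \in \UU$ is known at each such degree, every remaining step of the induction is legitimate (at a degenerate $j$ one reads off $H_\TT(U[j]) = $ standard projective at $U[j]$, $H_\TT(U[j+1]) = 0$, $H_\TT(U[j+2]) = $ standard injective at $U[j]$, all in $\mod\UU$, and then resumes $\tau$-propagation). This gives $H_\TT(U[n]) \in \mod\UU$ for every $n$, hence $\Hom_\CC(V,U[n]) = 0$ for all $V \in \VV$ and $n \in \bZ$; letting $U$ and $V$ range over all objects yields $\UU \subseteq \VV^\perp$. (Proposition~\ref{proposition:IndecomposableApproximation} can be fed into the same bookkeeping: it ensures that the terms of a minimal $\TT$-approximation triangle of the indecomposable object $U[n]$ lie in a single connected component of $\TT$.)
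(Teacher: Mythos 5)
Your proof is correct, but it takes a genuinely different route from the paper's. The paper stays inside $\CC$: it proves $\Hom(\VV,\UU[n])=0$ by induction on $n\geq 0$ (and dually for $n\leq 0$), using minimal right $\TT$-approximation triangles controlled by Proposition~\ref{proposition:IndecomposableApproximation} and, when $U[n-1]\notin\TT$, an explicit Auslander--Reiten triangle $T_1'[1]\to M\to T_1'\to T_1'[2]$ to manufacture a nonzero map from $\UU$ to $U[n]$, which forces the approximation of $U[n]$ to lie in $\UU$. You instead transfer the problem to $\mod\TT$ via $H_\TT$, note that $\TT=\UU\oplus\VV$ splits $\mod\TT$ as $\mod\UU\times\mod\VV$, and propagate the base case $H_\TT(\bS U)=$ injective at $U$ through the shifts using Proposition~\ref{proposition:AbelianQuotient}(\ref{enumerate:AbelianQuotient3}); the Serre-duality reduction to $n\geq 1$ replaces the paper's separate treatment of negative shifts. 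Your bookkeeping at the degrees $j$ with $U[j]\in\TT$ --- where the $\tau$-formula is unavailable and one first deduces $U[j]\in\UU$ from $H_\TT(U[j])$ being a nonzero projective lying in $\mod\UU$ --- is precisely the analogue of the paper's case distinction, and it closes correctly (in particular $U[j]\in\TT$ and $U[j+1]\in\TT$ cannot both hold since $\TT\cap\TT[1]=0$, so the $\tau$-propagation always restarts two steps later from the injective at $U[j]$). The one assertion worth a sentence of justification is that $\tau$ preserves $\mod\UU$: this holds because $\Ext^1_{\mod\TT}$ vanishes between the two factors of a product of abelian categories, so a non-split almost split sequence cannot mix them. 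What your approach buys is that Proposition~\ref{proposition:IndecomposableApproximation} and the AR-triangle computation in $\CC$ become unnecessary; what the paper's buys is independence from the module-theoretic machinery of Proposition~\ref{proposition:AbelianQuotient}, keeping the appendix self-contained.
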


\begin{proof}
We need to show that $\Hom(\VV, \UU[n]) = 0$, for all $n \in \bZ$.  We will start by proving this for $n \geq 0$ by induction.  We have $\Hom(\VV, \UU) = 0$ (since $\TT \cong \UU \oplus \VV$) and $\Hom(\VV, \UU[1]) = 0$ (since $\TT$ is rigid).

Let $n \geq 2$ and let $U \in \UU$ be indecomposable.  We want to show that $\Hom(\VV, U[n]) = 0$.  Consider the minimal $\TT$-approximation triangle of $U[n-1]$:
\[\xymatrix@1{T_1 \ar[r] & T_0 \ar[r] & U[n-1] \ar[r] & T_1[1].}\]
 By the induction hypothesis, we know that $\Hom(\VV, U[n-1]) = 0$ and thus by Proposition \ref{proposition:IndecomposableApproximation} that $T_0,T_1 \in \UU$.

If $U[n-1] \in \UU$, then $\Hom(\VV,U[n]) = 0$ and we are done.  If $U[n-1] \not\in \UU$, then $T_1 \not= 0$.  Let $T_1'$ be an indecomposable direct summand of $T_1$, and consider the Auslander-Reiten triangle $T_1'[1] \to M \to T_1' \stackrel{f}{\rightarrow} T_1'[2]$ (here, we used $\bS T_1' \cong T_1'[2]$).  Since the composition $T_1' \stackrel{f}{\rightarrow} T_1'[2] \to T_1[2] \to T_0[2]$ is zero, we infer that there is a nonzero morphism $T_1' \to U[n]$, and hence $\Hom(\UU, U[n]) \not= 0$.

Let $S_1 \to S_0 \to U[n] \to S_1[1]$ be a minimal $\TT$-approximation triangle for $U[n]$.  Since $\Hom(\UU, U[n]) \not= 0$, we see that $\Hom(\UU, S_0) \not= 0$ and thus by Proposition \ref{proposition:IndecomposableApproximation} that $S_0,S_1 \in \UU$.  We now find easily that $\Hom(\VV, U[n]) = 0$.  Induction shows that $\Hom(\VV, \UU[n]) = 0$ for all $n \geq 0$.

The argument for $\Hom(\VV, \UU[n]) = 0$ for $n\le 0$ is similar.
\end{proof}

\begin{theorem}\label{theorem:IndecomposableClustertiltingSubcategories}
Let $\CC$ be a Hom-finite Krull-Schmidt 2-Calabi-Yau triangulated category, and let $\TT \subseteq \CC$ be a cluster-tilting subcategory.  The triangulated category $\CC$ is a block if and only if the additive category $\TT$ is indecomposable.
\end{theorem}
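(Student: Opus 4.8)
The plan is to prove the two implications separately. For the easy direction, suppose $\CC \cong \CC_1 \oplus \CC_2$ is a nontrivial decomposition of triangulated categories. Then any cluster-tilting subcategory $\TT$ decomposes as $\TT \cong (\TT \cap \CC_1) \oplus (\TT \cap \CC_2)$: indeed, writing each $T \in \TT$ as $T_1 \oplus T_2$ with $T_i \in \CC_i$, the component $T_i$ is rigid and $\Ext^1$-orthogonal to all of $\CC_{3-i}$, so $T_i \in \TT^{\perp_1} = \TT$ (using $\TT = \TT^{\perp_1}$); hence $T_i \in \TT \cap \CC_i$. Both pieces are nonzero because a cluster-tilting subcategory must contain objects whose images generate each block (if $\TT \cap \CC_1 = 0$ then $\CC_1 \subseteq \TT^{\perp_1} \setminus \TT$, contradicting maximality of $\TT$, since a nonzero object of $\CC_1$ is not in $\TT$). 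So $\TT$ is decomposable.

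For the substantive direction, suppose $\TT$ is decomposable, say $\TT \cong \UU \oplus \VV$ with $\UU, \VV$ nonzero full additive subcategories and $\Hom_\CC(\UU,\VV) = \Hom_\CC(\VV,\UU) = 0$ (and $\Ext^1$ between them vanishes, as $\TT$ is rigid). The goal is to produce a decomposition $\CC \cong \CC_\UU \oplus \CC_\VV$. The key input is Lemma~\ref{lemma:PerpendicularNotEmpty}, which gives $\UU \subseteq \VV^\perp$ and, symmetrically, $\VV \subseteq \UU^\perp$; thus $\Hom_\CC(\UU, \VV[n]) = \Hom_\CC(\VV, \UU[n]) = 0$ for all $n \in \bZ$. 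Now for an arbitrary indecomposable $C \in \CC$, take a minimal $\TT$-approximation triangle $T_1 \to T_0 \to C \to T_1[1]$; by Proposition~\ref{proposition:IndecomposableApproximation} the object $T_1 \oplus T_0$ lies in a single connected component of $\TT$, hence entirely in $\add\UU$ or entirely in $\add\VV$. Declare $C$ "of type $\UU$" in the first case and "of type $\VV$" in the second, and let $\CC_\UU$, $\CC_\VV$ be the additive closures of the corresponding sets of indecomposables. I would then verify, using Lemma~\ref{lemma:WhenCoproductTriangulated}, that $\CC \cong \CC_\UU \oplus \CC_\VV$: every object decomposes (Krull--Schmidt) into a sum of objects of the two types; $\Hom_\CC(\CC_\UU, \CC_\VV) = 0 = \Hom_\CC(\CC_\VV, \CC_\UU)$; and each $\CC_\UU$, $\CC_\VV$ is a triangulated subcategory (closed under shifts and cones).

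The main obstacle is checking the $\Hom$-orthogonality $\Hom_\CC(\CC_\UU, \CC_\VV) = 0$ and, relatedly, that $\CC_\UU$ and $\CC_\VV$ are each closed under cones and shifts. For a morphism $f\colon C \to C'$ with $C$ of type $\UU$ and $C'$ of type $\VV$, I would use the equivalence $H_\TT\colon \CC/[\TT[1]] \to \mod\TT$ (Proposition~\ref{proposition:AbelianQuotient}): since $\mod\TT \cong \mod\UU \times \mod\VV$ splits along the decomposition $\TT \cong \UU \oplus \VV$, and $C, C'$ have $\TT$-approximation data supported in $\UU$ and $\VV$ respectively, the modules $H_\TT(C)$, $H_\TT(C')$ live in the $\mod\UU$- and $\mod\VV$-factors respectively, forcing $\Hom_{\CC/[\TT[1]]}(C, C') = 0$; then one must separately show the remaining morphisms — those factoring through $\TT[1]$ — also vanish, which follows since any such factorization runs $C \to T[1] \to C'$ with $T$ necessarily in the $\UU$-component (because $\Hom_\CC(C, T[1]) \neq 0$ pulls $T$ into $\UU$ by the orthogonality above) and then $\Hom_\CC(T[1], C') = 0$ again by the $\Hom$-vanishing of Lemma~\ref{lemma:PerpendicularNotEmpty} applied to $C'$'s approximation data. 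Stability under shift is immediate from $(-)[1]$ being an autoequivalence and commuting with the type assignment (the approximation triangle of $C[1]$ is the shift of that of $C$); stability under cones follows because given a triangle $A \to B \to D \to A[1]$ with $A, B$ of type $\UU$, the long exact sequences obtained by applying $\Hom_\CC(\VV[n], -)$ show $\Hom_\CC(\VV, D[n]) = 0$ for all $n$, so $D$ cannot have any indecomposable summand of type $\VV$. Assembling these facts and invoking Lemma~\ref{lemma:WhenCoproductTriangulated} completes the proof.
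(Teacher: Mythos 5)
Your proof is correct and, for the substantive direction, ultimately lands on the same decomposition as the paper: the paper writes $\CC = \VV^\perp \oplus {}^\perp(\VV^\perp)$ directly, invoking Lemma~\ref{lemma:PerpendicularNotEmpty} to see that $\UU \subseteq \VV^\perp$ and Lemma~\ref{lemma:WhenCoproductTriangulated} to conclude, whereas you build the two summands $\CC_\UU$, $\CC_\VV$ from the location of the minimal $\TT$-approximation triangle (via Proposition~\ref{proposition:IndecomposableApproximation}) and then verify the hypotheses of Lemma~\ref{lemma:WhenCoproductTriangulated} by hand. These are in fact the same subcategories: since $T_0, T_1 \in \UU \subseteq \VV^\perp$ and $\VV^\perp$ is a triangulated subcategory, every indecomposable of type $\UU$ lies in $\VV^\perp$, and conversely; so $\CC_\UU = \VV^\perp$ and $\CC_\VV = \UU^\perp$. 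Your version spells out the verifications the paper leaves implicit, which is a plus; and for the easy direction your direct contrapositive ($\CC$ decomposable forces $\TT$ decomposable via $\TT = (\TT \cap \CC_1)\oplus(\TT\cap\CC_2)$) is cleaner than the paper's appeal to path-connectedness through Propositions~\ref{proposition:DecompositionAdditiveCategories} and~\ref{proposition:DecompositionTriangulatedCategories}. One slip to repair: the parenthetical claim that the approximation triangle of $C[1]$ is the shift of that of $C$ is false --- the shifted triangle has terms in $\TT[1]$, not in $\TT$, so it is not a $\TT$-approximation triangle --- and your cone argument as written quietly assumes $\Hom_\CC(\VV, A[n]) = 0$ for all $n$, i.e.\ $A \in \VV^\perp$, which is essentially the closure property being proved. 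Both issues evaporate once you record the identification $\CC_\UU = \VV^\perp$ explicitly (it follows in one line from Lemma~\ref{lemma:PerpendicularNotEmpty} and the approximation triangle), since perpendicular categories are automatically closed under shifts and cones; I recommend stating that identification early and deriving the remaining verifications from it.
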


\begin{proof}
If $\TT$ is indecomposable, then one can combine Propositions \ref{proposition:DecompositionAdditiveCategories} and \ref{proposition:DecompositionTriangulatedCategories} to see that $\CC$ is a block.

Thus, assume that $\TT$ is decomposable; we write $\TT = \UU \oplus \VV$.  Since $\bS \cong [2]$, we have $\VV^\perp = (\VV[2])^\perp = {}^\perp\VV$ as subcategories of $\CC$.  Similarly, we have $(\VV^\perp)^\perp = {}^{\perp}(\VV^\perp)$.

It follows from Lemma \ref{lemma:WhenCoproductTriangulated} that $\CC = \VV^\perp \oplus {}^\perp(\VV^\perp)$.  We have $\VV \subseteq {}^{\perp}(\VV^\perp)$ (obvious) and $\UU \subseteq \VV^\perp$ (by Lemma \ref{lemma:PerpendicularNotEmpty}), meaning that neither $\VV^\perp$ nor ${}^\perp(\VV^\perp)$ are the zero category.  This shows that $\CC$ is not a block.
\end{proof}

\def\cprime{$'$}
\providecommand{\bysame}{\leavevmode\hbox to3em{\hrulefill}\thinspace}
\providecommand{\MR}{\relax\ifhmode\unskip\space\fi MR }
% \MRhref is called by the amsart/book/proc definition of \MR.
\providecommand{\MRhref}[2]{%
  \href{http://www.ams.org/mathscinet-getitem?mr=#1}{#2}
}
\providecommand{\href}[2]{#2}

\end{document}